\newif\ifprint 
\definecolor{linkred}{rgb}{0,0,0} 
\definecolor{linkblue}{rgb}{0,0,0} 
\definecolor{linkred}{rgb}{0.7,0.2,0.2} 
\definecolor{linkblue}{rgb}{0,0.2,0.6}
\tikzstyle{none}=[inner sep=0pt]
\definecolor{hexcolor0xfefdfd}{rgb}{0.996,0.992,0.992}
\newlength{\baseunit} 
\newtheorem{thm}{Theorem}[section]
\newtheorem{lem}[thm]{Lemma}
\newtheorem{prop}[thm]{Proposition}
\newtheorem{theorem}[thm]{Theorem}
\newtheorem*{theorem*}{Theorem}
\newtheorem*{mainresult}{Main Theorem}
\newtheorem{lemma}[thm]{Lemma}
\newtheorem{corollary}[thm]{Corollary}
\newtheorem{proposition}[thm]{Proposition}
\theoremstyle{definition}
\newtheorem{defn}[thm]{Definition}
\newtheorem{example}[thm]{Example}
\newtheorem{definition}[thm]{Definition}
\theoremstyle{remark}
\newtheorem{remark}[thm]{Remark}
\newtheorem*{remark*}{Remark}
\numberwithin{equation}{section}
\DeclareRobustCommand{\gobblefive}[5]{}
\newcommand*{\SkipTocEntry}{\addtocontents{toc}{\gobblefive}}
\newcommand\cA{\mathcal{A}}  \newcommand\cC{\mathcal{C}} \newcommand\cE{\mathcal{E}} \newcommand\cH{\mathcal{H}}\newcommand\cK{\mathcal{K}}\newcommand\cL{\mathcal{L}}\newcommand\cM{\mathcal{M}}\newcommand\cN{\mathcal{N}}\newcommand\cQ{\mathcal{Q}}\newcommand\cR{\mathcal{R}}\newcommand\cS{\mathcal{S}}\newcommand\cT{\mathcal{T}}\newcommand\cU{\mathcal{U}}\newcommand\cV{\mathcal{V}}\newcommand\cW{\mathcal{W}}\newcommand\cX{\mathcal{X}}\newcommand\cY{\mathcal{Y}}\newcommand\cZ{\mathcal{Z}}
\newcommand\cEC{\mathcal{EC}}
\newcommand\cWC{\mathcal{WC}}
\newcommand\cO{\mathcal{O}}
\renewcommand\AA{\mathbb{A}}\newcommand\CC{\mathbb{C}}\newcommand\GG{\mathbb{G}}\newcommand\HH{\mathbb{H}}\newcommand\PP{\mathbb{P}}\newcommand\QQ{\mathbb{Q}}
\newcommand\ZZ{\mathbb{Z}}
  \newcommand\fm{\mathfrak{m}}
\newcommand{\bs}{{\bf s}}
\newcommand{\br}{{\bf r}}
\newcommand\id{\mathrm{id}}
\newcommand\RS{R}
\newcommand{\gM}{\overline{\mathbb{M}}}
\newcommand\hookarr{\hookrightarrow}
\newcommand{\xarr}{\xrightarrow}
\newcommand\im{\operatorname{im}}
\renewcommand{\setminus}{\smallsetminus}
\newcommand{\red}{_{\mathrm{red}}}
\renewcommand{\ss}{\operatorname{ss}}
\newcommand{\rrarrows}{\rightrightarrows}
\newcommand{\Pic}{\operatorname{Pic}}
\newcommand{\Proj}{\operatorname{Proj}}
\newcommand{\Sym}{\operatorname{Sym}}
\newcommand{\oh}{\cO}
\DeclareMathOperator{\Spec}{Spec}
\newcommand{\tensor} {\otimes}
\newcommand{\wt} {\widetilde}
\renewcommand{\tilde}{\widetilde}
\newcommand{\Spf}{\operatorname{Spf}}
\newcommand{\iso}{\stackrel{\sim}{\to}}
\renewcommand{\bar}{\overline}
\newcommand{\GL}{\operatorname{GL}}
\newcommand{\PGL}{\operatorname{PGL}}
\newcommand{\dual}{\vee}
\renewcommand{\hat}{\widehat}
\renewcommand{\AA}{{\mathbb A}}
\renewcommand{\emptyset}{\varnothing}
\renewcommand{\leq}{\leqslant}
\renewcommand{\geq}{\geqslant}
\renewcommand{\tilde}{\widetilde}
\renewcommand{\hat}{\widehat}
\renewcommand{\bar}{\overline}
\def\col{\colon\thinspace} 
\def\Cu{\operatorname{Cu}}
\def\Tn{\operatorname{Tn}_1}
\def\Tnt{\operatorname{Tn}_2}
\newcommand{\sar}[1][]{{\ar@<-0.5ex>@{^{(}->}[#1]}}
\newcommand{\sarl}[1][]{{\ar@<0.5ex>@{_{(}->}[#1]}}
\newcommand{\arr}{\rightarrow}
\def\irr{\mathrm{irr}}
\def\red{\mathrm{red}}
\def\odd{\mathrm{odd}}
\newcommand{\eps}{{\varepsilon}}
\DeclareMathOperator{\Cr}{Cr}
\DeclareMathOperator{\TT}{T}
\renewcommand{\HH}{\mathrm{H}}
\DeclareMathOperator{\Aut}{Aut}
\DeclareMathOperator{\Def}{Def}
\newtheorem*{ques*}{Question}
\def\C{\mathcal{C}}
\def\E{\mathcal{E}}
\def\F{\mathscr{F}}
\def\H{\mathcal{H}}
\def\id{\text{\rm id}}
\def\im{\text{im\,}}
\def\pn{\{p_i\}_{i=1}^{n}}
\def\qm{\{q_i\}_{i=1}^{m}}
\def\K{\mathcal{K}}
\def\M{\overline{M}}
\def\SM{\overline{\mathcal{M}}}
\def\O{\mathscr{O}}
\def\B{\mathcal{B}}
\def\T{\mathcal{T}}
\def\W{\mathcal{W}}
\def\B{\mathcal{B}}
\def\P{\mathbb{P}}
\def\Q{\mathbb{Q}}
\def\X{\mathcal{X}}
\def\U{\mathcal{U}}
\def\V{\mathcal{V}}
\def\Y{\mathcal{Y}}
\def\Z{\mathcal{Z}}
\def\sigman{\{\sigma_{i}\}_{i=1}^{n}}
\def\Spec{\text{\rm Spec\,}}
\def\Spf{\text{\rm Spf\,}}
\def\Proj{\text{\rm Proj\,}}
\def\Pic{\text{\rm Pic\,}}
\DeclareMathOperator{\Sprout}{Sprout}
\newcommand\ra{\rightarrow}
\newcommand\dra{\dashrightarrow}
\newcommand\ind{\operatorname{index}}
\def\firstf{\!\text{{\smaller $\hspace{.01cm}\frac{9}{11}$}}}
\def\secondf{\!\text{{\smaller $\hspace{.01cm}\frac{7}{10}$}}}
\def\thirdf{\!\text{{\smaller $\hspace{.03cm} \frac{2}{3}$}}}
\def\first{\text{{\smaller $9/11$}}}
\def\second{\text{{\smaller $7/10$}}}
\def\third{\text{{\smaller $2/3$}}}
\def\one{\text{{\smaller $1$}}}
\def\pe{\!+\!\small{\epsilon}}
\def\me{\!-\!\small{\epsilon}}
\def\firstfme{\!\text{{\smaller $\hspace{.01cm}\frac{9}{11}\!-\! \epsilon$}}}
\def\secondfme{\!\text{{\smaller $\hspace{.01cm}\frac{7}{10}\!-\! \epsilon$}}}
\def\thirdfme{\!\text{{\smaller $\hspace{.03cm}\frac{2}{3}\!-\! \epsilon$}}}
\def\firstme{\text{{\smaller $9/11\!-\! \epsilon$}}}
\def\secondme{\text{{\smaller $7/10\!-\! \epsilon$}}}
\def\thirdme{\text{{\smaller $2/3\!-\! \epsilon$}}}
\def\firstpe{\text{{\smaller $9/11\!+\! \epsilon$}}}
  \newcommand{\miniscule}{\@setfontsize\miniscule{4}{5}}
  \newcommand{\miniscule}{\@setfontsize\miniscule{5}{6}}
  \newcommand{\miniscule}{\@setfontsize\miniscule{5}{6}}
\newcommand{\epf}{\qed \vspace{+10pt}}
\newcommand{\gitq}{/\hspace{-0.25pc}/}
\newcommand\Mg[1]{\overline{\mathcal{M}}_{#1}}
\newcommand\co{\colon \thinspace} 
\begin{document}
\title[Log Minimal Model Program for $\M_{g,n}$: The second flip]
{Log Minimal Model Program for the moduli space of stable curves: The second flip}
\author[Alper]{Jarod Alper}
\author[Fedorchuk]{Maksym Fedorchuk}
\author[Smyth]{David Ishii Smyth}
\author[van der Wyck]{Frederick van der Wyck}

\address[Alper]{Mathematical Sciences Institute\\
Australian National University\\
Canberra, ACT 0200, Australia}
\email{\href{mailto:jarod.alper@anu.edu.au}{jarod.alper@anu.edu.au}}

\address[Fedorchuk]{Department of Mathematics\\
Boston College \\
Carney Hall 324\\
140 Commonwealth Avenue\\
Chestnut Hill, MA 02467}
\email{\href{mailto:maksym.fedorchuk@bc.edu}{maksym.fedorchuk@bc.edu}}

\address[Smyth]{Mathematical Sciences Institute\\
Australian National University\\
Canberra, ACT 0200, Australia}
\email{\href{mailto:david.smyth@anu.edu.au}{david.smyth@anu.edu.au}}

\address[van der Wyck]{Goldman Sachs International \\
120 Fleet Street \\
London EC4A 2BE}
\email{\href{mailto:frederick.vanderwyck@gmail.com}{frederick.vanderwyck@gmail.com}}

\begin{abstract}
We prove an existence theorem for good moduli spaces, 
and use it to construct the second flip in the log minimal model program for $\M_{g}$. 
In fact, our methods give a uniform self-contained
construction of the first three steps of the log minimal model program for $\M_{g}$ and $\M_{g,n}$.
\end{abstract}

\maketitle

\linespread{0.6}
\setcounter{tocdepth}{2}
\tableofcontents

\linespread{1.07}
\section{Introduction} \label{S:intro}

\noindent
In an effort to understand the canonical model of $\M_{g}$, 
Hassett and Keel introduced the log minimal model program (LMMP) for $\M_{g}$. 
For any $\alpha \in \Q \cap [0,1]$ such that $K_{\SM_{g}}+\alpha\delta$ is big, Hassett defined
\begin{equation}\label{E:log-canonical-models}
\M_{g}(\alpha):=\Proj \bigoplus_{m \geq0} \HH^0(\SM_{g}, \lfloor m(K_{\SM_{g}}+\alpha\delta) \rfloor),
\end{equation}
and asked whether the spaces $\M_{g}(\alpha)$ admit a modular interpretation \cite{hassett_genus2}. 
In \cite{HH1,HH2}, Hassett and Hyeon carried out the first two steps of this program by showing that:
\[
\M_{g}(\alpha)=\begin{cases}
\M_{g} & \text{  if }\alpha \in (\first, \one]\\
\M_{g}^{\, ps}&  \text{  if } \alpha \in (\second,\first]\\
\M_{g}^{\, c} & \text{  if } \alpha =\second\\
\M_{g}^{\, h}&  \text{  if } \alpha \in (\thirdme, \second)
\end{cases}
\]
where $\M_g^{\, ps}$, $\M_{g}^{\, c}$, and $\M_{g}^{\, h}$ are the moduli spaces of pseudostable (see  \cite{schubert}), 
c-semistable, and h-semistable curves (see \cite{HH2}), respectively. 
Additional steps of the LMMP for $\M_{g}$ are known when $g \leq 5$ 
\cite{hassett_genus2,hyeon-lee_genus3,hyeon-lee_genus4,Fed4,CMJL1,CMJL2,FSgenus5}. 
In these works, new projective moduli spaces of curves are constructed using Geometric Invariant Theory (GIT). 
Indeed, one of the most appealing features of the Hassett-Keel program is the way that it ties together 
different compactifications of $M_{g}$ obtained by varying the parameters implicit in 
Gieseker and Mumford's classical GIT construction of $\M_g$ \cite{git,gieseker}. 
We refer the reader to \cite{morrison-git} for a detailed discussion of these modified GIT constructions.

In this paper, we develop new techniques for constructing moduli spaces without GIT and apply them to construct the third step of the LMMP, 
a flip replacing Weierstrass genus $2$ tails by ramphoid cusps.
In fact, we give a uniform construction of the first three steps of the LMMP for $\M_{g}$, as 
well as an analogous program for $\M_{g,n}$. To motivate our approach, let us recall the three-step procedure used to construct $\M_{g}$  and establish its projectivity intrinsically:
\begin{enumerate}
\item Prove that the functor of stable curves is a proper Deligne-Mumford stack $\SM_{g}$ \cite{deligne-mumford}.
\item Use the Keel-Mori theorem to show that  $\SM_{g}$ has a coarse moduli space $\SM_{g} \rightarrow \M_{g}$ \cite{keel-mori}.
\item Prove that some line bundle on $\SM_{g}$ descends to an ample line bundle on $\M_{g}$ \cite{kollar-projectivity,cornalba}.
\end{enumerate}

This is now the standard procedure for constructing projective moduli spaces in algebraic geometry. 
It is indispensable in cases where a global quotient 
presentation for the relevant moduli problem is not available, or where the GIT stability analysis is intractable, and there are good reasons to 
expect both these issues to arise in further stages of the LMMP for $\M_{g}$.  Unfortunately, this procedure cannot be used to construct the  log 
canonical models $\M_{g}(\alpha)$ because potential moduli stacks $\SM_{g}(\alpha)$ may include curves with infinite automorphism groups. In 
other words, the stacks $\SM_{g}(\alpha)$ may be non-separated and therefore may not possess a Keel-Mori coarse moduli space. 
The correct fix is to replace the notion of a coarse moduli space by a good moduli space, as defined and developed 
by Alper \cite{alper_good, alper_local, alper_quotient, alper_adequate}.

In this paper, we prove a general existence theorem for good moduli spaces of non-separated
algebraic stacks (Theorem \ref{T:general-existence}) that can be viewed as a generalization of the Keel-Mori theorem \cite{keel-mori}.
This allows us to carry out a modified version of the standard three-step procedure 
in order to construct moduli interpretations for the log canonical models\footnote{Note that the natural divisor for scaling in the pointed case is 
$K_{\SM_{g,n}}+\alpha\delta+(1-\alpha)\psi=13\lambda-(2-\alpha)(\delta-\psi)$ rather than $K_{\SM_{g,n}}+\alpha \delta$; 
see \cite[p.1845]{smyth_elliptic2} for a discussion of this point.}  
\begin{equation}\label{E:log-canonical-models-2}
\M_{g,n}(\alpha):=\Proj \bigoplus_{m \geq0} \HH^0(\SM_{g,n}, \lfloor m(K_{\SM_{g,n}}+\alpha\delta+(1-\alpha)\psi) \rfloor).
\end{equation}
Specifically, for all $\alpha>\thirdme$, where $0 < \epsilon \ll 1$, we
\begin{enumerate}
\item Construct an algebraic stack $\SM_{g,n}(\alpha)$ of $\alpha$-stable curves (Theorem \ref{T:algebraicity}).
\item Construct a good moduli space $\SM_{g,n}(\alpha) \rightarrow \gM_{g,n}(\alpha)$ (Theorem \ref{T:Existence}).
\item Show that $K_{\SM_{g,n}(\alpha)}+\alpha\delta+(1-\alpha)\psi$ on $\SM_{g,n}(\alpha)$ 
descends to an ample line bundle on $\gM_{g,n}(\alpha)$, 
and conclude that $\gM_{g,n}(\alpha)\simeq \M_{g,n}(\alpha)$ (Theorem \ref{T:Projectivity}).
\end{enumerate}
In sum, we obtain the following result.
\begin{mainresult}
There exists a diagram
\begin{equation*}
\xymatrix{
\SM_{g,n} \ar@{^(->}[r]_{i_1^+} \ar[d]	& \SM_{g,n}(\firstf) \ar[dd]_{\phi_{1}}  & \SM_{g,n}(\firstfme) \ar@{^(->}[r]_{i_2^+}  \ar[d]_{\phi_1^-}  \ar@{_(->}[l]	^{i_1^-} & \SM_{g,n}(\secondf) \ar[dd]_{\phi_2} & \SM_{g,n}(\secondfme) \ar@{_(->}[l]^{i_2^-}  \ar@{^(->}[r]_{i_3^+}  \ar[d]_{\phi_2^-}	& \SM_{g,n}(\thirdf) \ar[dd]_{\phi_3} & \SM_{g,n}(\thirdfme) \ar@{_(->}[l]^{i_3^-}  \ar[d]_{\phi_3^-}\\
\M_{g,n}\ar[rd]^{j_1^+}	& 			&\M_{g,n}(\firstfme) \ar[rd]^{j_2^+} \ar[ld]_{j_1^-}	& 			& \M_{g,n}(\secondfme)\ar[rd]^{j_3^+} \ar[ld]_{j_2^-}& 			& \M_{g,n}(\thirdfme)\ar[ld]_{j_3^-} \\
& \M_{g,n}(\firstf)&	        & \M_{g,n}(\secondf)&		& \M_{g,n}(\thirdf)& 
}
\end{equation*}
where:
\begin{enumerate}
\item $\SM_{g,n}(\alpha)$ is the moduli stack of $\alpha$-stable curves, and for $c=1,2,3$:
\item $i_{c}^+$ and $i_{c}^-$ are open immersions of algebraic stacks.
\item The morphisms $\phi_c$ and $\phi_c^-$ are good moduli spaces.
\item The morphisms $j_{c}^+$ and $j_{c}^-$ are projective morphisms induced by $i_c^+$ and $i_c^-$, respectively. 
\end{enumerate}
When $n=0$, the above diagram
constitutes the steps of the log minimal model program for $\M_{g}$. In particular, $j_1^{+}$ is the first contraction, 
$j_1^{-}$ is an isomorphism, $(j_2^+, j_2^-)$ is the first flip, and $(j_3^+, j_3^-)$ is the second flip.
\end{mainresult}

The parameter $\alpha$ passes through three critical values, namely $\alpha_1=\first, \alpha_2=\second$, and $\alpha_3=\third$.  
In the open intervals $(\first, \one), (\second, \first), (\third, \second)$ and $(\thirdme, \third)$, 
the definition of $\alpha$-stability does not change, and 
consequently neither do $\SM_{g,n}(\alpha)$ or $\M_{g,n}(\alpha)$. 

The theorem is degenerate in several special cases:  
For $(g,n)=(1,1)$, $(1,2)$, $(2,0)$, the divisor $K_{\SM_{g,n}}+\alpha\delta+(1-\alpha)\psi$
hits the edge of the effective cone at $\first, \second$, and $\second$, respectively, 
and hence the diagram should be taken to terminate at these 
critical values. Furthermore, when $g=1$ and $n \geq 3$, or $(g,n)=(3,0), (3,1)$, 
$\alpha$-stability does not change at the threshold value 
$\alpha_3=\third$, so the morphisms $(i_3^+, i_3^-)$ and $(j_3^+, j_3^-)$ are isomorphisms. 
Finally, for $(g,n)=(2,1)$, $j_3^+$ is a divisorial contraction and $j_3^-$ is an isomorphism. 

\begin{remark*}
 As mentioned above, when $n=0$ and $\alpha>\secondme$, these spaces have been constructed using GIT. 
 In these cases, our definition of $\alpha$-stability agrees with the GIT semistability notions studied 
 in the work of Schubert, Hassett, Hyeon, and Morrison \cite{schubert, HH1, HH2, hyeon-morrison}.
\end{remark*}

The key observation underlying our proof of the main theorem is that at each critical value $\alpha_c \in \{\first, \second, \third\}$, the inclusions
\begin{equation*}
\SM_{g,n}(\alpha_c \pe) \hookrightarrow \SM_{g,n}(\alpha_c) \hookleftarrow \SM_{g,n}(\alpha_c \me)
\end{equation*}
can be locally modeled by an intrinsic variation of GIT problem (Theorem \ref{theorem-etale-VGIT}). 
It is this feature of the geometry which enables us to verify the hypotheses of Theorem \ref{T:general-existence}. 
We axiomatize this connection between local VGIT and the existence of good moduli spaces in Theorem \ref{T:vgit-existence}. 
In short, Theorem \ref{T:vgit-existence} says that if $\X$ is an algebraic stack with a pair of open immersions 
$\X^{+} \hookrightarrow \X \hookleftarrow \X^{-}$ which can be locally modeled by a VGIT problem, 
and if the open substack $\X^+$ and the two closed substacks $\X \setminus \X^{-}$ and $\X \setminus \X^{+}$ 
each admit good moduli spaces, then $\X$ admits a good moduli space. 
This paves the way for an inductive construction of good moduli spaces for the stacks $\SM_{g,n}(\alpha)$.

Let us conclude by briefly describing the geometry of the second flip. 
At $\alpha_3 = \third$, the locus of curves with a genus 2 Weierstrass tail 
(i.e., a genus $2$ subcurve nodally attached to the rest of the curve at a Weierstrass point),
or more generally a Weierstrass chain (see Definition \ref{defn-chains}), is flipped to the locus of curves with a ramphoid cusp ($y^2=x^5$).  
See Figure \ref{F:second-flip}.  
The fibers of $j_3^{+}$ correspond to varying moduli of Weierstrass chains, while the fibers of $j_3^-$ correspond to 
varying moduli of ramphoid cuspidal crimpings.
\begin{figure}[!thb] 
\begin{center}
\begin{tikzpicture}[scale=.85]
	\begin{pgfonlayer}{nodelayer}
		\coordinate [style=black] (0) at (-2.5, 1.75) {};
		\coordinate [style=black] (1) at (-2.75, -1.5) {};
		\coordinate [style=black] (2) at (-3.5, 1) {};
		\coordinate [style=black] (3) at (-1.5, 1) {};
		\coordinate [style=black] (4) at (-3.75, 2.25) {};
		\coordinate [style=black] (5) at (-3.25, 2.25) {};
		\coordinate [style=black] (6) at (-3.75, -2) {};
		\coordinate [style=black] (7) at (-3.25, -2) {};
		\coordinate [style=black] (8) at (-1.5+.3, 2.25) {};
		\coordinate [style=black] (9) at (-1+.3, 2.25) {};
		\coordinate [style=black] (10) at (-1.5+.3, -2) {};
		\coordinate [style=black] (11) at (-1+.3, -2) {};
		\coordinate [style=black] (12) at (1+1, 2.25) {};
		\coordinate [style=black] (13) at (1.5+1, 2.25) {};
		\coordinate [style=black] (14) at (1.5+1, -2) {};
		\coordinate [style=black] (15) at (1+1, -2) {};
		\coordinate [style=black] (16) at (3.25+1, 2.25) {};
		\coordinate [style=black] (17) at (3.75+1, 2.25) {};
		\coordinate [style=black] (18) at (3.25+1, -2) {};
		\coordinate [style=black] (19) at (3.75+1, -2) {};
		\coordinate [style=black] (20) at (2.75+1, 2) {};
		\coordinate [style=black] (21) at (2.25+1, 1.25) {};
		\coordinate [style=black] (22) at (2.5+1, -1.75) {};
		
		\coordinate [style=black] (a) at (-1-1.5, 1.25-1+.25) {};
		\coordinate [style=black] (b) at (0-1.5, 0.75-1) {};
		\coordinate [style=black] (c) at (-0.5-1.5-.25, 1.75-1) {};
		\coordinate [style=black] (d) at (-1-1.5, 1.75-1) {};
		\node (23) at (-1.5,1.2)  {\small{$g$=2}};		
		\node (23') at (-1.60,-.4)  {\tiny{Weierstrass}};
		\node (23'') at (-1.60,-.66)  {\tiny{point}};

		\node (24) at (-4.5,.1)  {\Large{$j_3^+$}};
		\node (25) at (0,.1)  {\Large{$=$}};
		\node (23) at (1.8+1,.9)  {\small{$y^2$=$x^5$}};
		\node (24) at (1.25,.1)  {\Large{$j_3^-$}};

	\end{pgfonlayer}
	\begin{pgfonlayer}{edgelayer}
		\draw [style=very thick, in=30, out=210] (0) to (1);
		\draw [style=very thick, in=210, out=30, looseness=1.25] (2) to (3);
		\draw [style=thin] (4) to (6);
		\draw [style=thin] (7) to (6);
		\draw [style=thin] (4) to (5);
		\draw [style=thin] (8) to (9);
		\draw [style=thin] (10) to (11);
		\draw [style=thin] (11) to (9);
		\draw [style=thin] (12) to (15);
		\draw [style=thin] (15) to (14);
		\draw [style=thin] (12) to (13);
		\draw [style=thin] (16) to (17);
		\draw [style=thin] (18) to (19);
		\draw [style=thin] (17) to (19);
		\draw [style=very thick, bend right=300, looseness=1.25] (20) to (21);
		\draw [style=very thick, in=0, out=45, looseness=0.75] (22) to (21);
		\draw [style=thin]  (d) to (b);
		\draw [style=thin] (d) to (a);
		\draw [style=thin]  (c) to (d);
	\end{pgfonlayer}
\end{tikzpicture}

\end{center}
\vspace{-.2cm}
\caption{Curves with a nodally attached genus $2$ Weierstrass tail are flipped to curves with a ramphoid cuspidal ($y^2=x^5$) singularity.} \label{F:second-flip}
\end{figure}
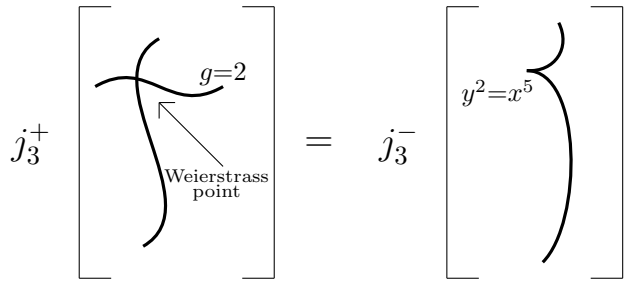
Moreover, if $(K, p)$ is a fixed curve of genus $g-2$, all curves obtained by attaching a Weierstrass genus 2 tail at $p$ or imposing a ramphoid 
cusp at $p$ are identified in $\M_{g,n}(\third)$.  This can be seen on the level of stacks since, in $\Mg{g,n}(\third)$, all such curves admit an 
isotrivial specialization to the curve $C_0$, obtained by attaching a rational ramphoid cuspidal tail to $K$ at $p$. See Figure \ref{F:isotrivial}. 

\begin{figure}[thb]
\begin{center}
\begin{tikzpicture}[scale=1]
	\begin{pgfonlayer}{nodelayer}
		\coordinate [style=black] (a) at (-2.5-2.5, 1.75) {};
		\coordinate [style=black] (b) at (-2.75-2.5, -1.5) {};
		\coordinate [style=black] (c) at (-3.5-2.5, 1) {};
		\coordinate [style=black] (d) at (-1.5-2.5, 1) {};
		\coordinate [style=black] (a0) at (-1-1.5-2.5, 1.25-1+.25) {};
		\coordinate [style=black] (b0) at (0-1.5-2.5, 0.75-1) {};
		\coordinate [style=black] (c0) at (-0.5-1.5-.25-2.5, 1.75-1) {};
		\coordinate [style=black] (d0) at (-1-1.5-2.5, 1.75-1) {};		

		\node  at (-1.5-2-.5,1.2)  {\small{$g$=2}};		
		\node  at (-1.65-2-.5,-.4)  {\tiny{Weierstrass}};
		\node  at (-1.65-2-.5,-.66)  {\tiny{point}};

		\coordinate [style=black] (a') at (-2.5+2.7, 1.75) {};
		\coordinate [style=black] (b') at (-2.75+2.7, -1.5) {};
		\coordinate [style=black] (c') at (-3.5+2.7, 1) {};
		\coordinate [style=black] (d') at (-1.5+2.7, 1) {};
		\node (23) at (1.5,1.75)  {\small{$y^2$=$x^5$}};
		\node (24) at (0,-1.8)  {$C_0$};

		\coordinate [style=black] (e) at (2.75+3.25-.5, 2) {};
		\coordinate [style=black] (f) at (2.25+3.25-.5, 1.25) {};
		\coordinate [style=black] (g) at (2.5+3.25-.5, -1.75) {};
		\node  at (4.4,1.2)  {\small{$y^2$=$x^5$}};

		\coordinate [style=black] (x) at (-.8, 1) {};
		\coordinate [style=black] (y) at (-.8+2.5, 1) {};
		\coordinate [style=black] (z) at (-.8+1.75, 1) {};

		\coordinate [style=black] (0) at (-3, 0) {};
		\coordinate [style=black] (1) at (-2.5, 0) {};
		\coordinate [style=black] (2) at (-2, 0) {};
		\coordinate [style=black] (3) at (-1.5, 0) {};
		\coordinate [style=black] (4) at (-1.25, 0) {};
		\coordinate [style=black] (5) at (-1.5, 0.25) {};
		\coordinate [style=black] (6) at (-1.5, -0.25) {};
		
		\coordinate [style=black] (0') at (3, 0) {};
		\coordinate [style=black] (1') at (2.5, 0) {};
		\coordinate [style=black] (2') at (2, 0) {};
		\coordinate [style=black] (3') at (1.5, 0) {};
		\coordinate [style=black] (4') at (1.25, 0) {};
		\coordinate [style=black] (5') at (1.5, 0.25) {};
		\coordinate [style=black] (6') at (1.5, -0.25) {};

		\node  at (-1.5+3.7,1.1)  {\small{$g$=0}};

	\end{pgfonlayer}
	\begin{pgfonlayer}{edgelayer}
		\draw [style=very thick, in=30, out=210] (a) to (b);
		\draw [style=very thick, in=210, out=30, looseness=1.25] (c) to (d);

		\draw [style=very thick, in=30, out=210] (a') to (b');
		\draw [style=very thick,in=225, out=45, looseness=1] (x) to (z);
		\draw [style=very thick,in=150, out=30, looseness=3.25] (z) to (y);
		
		\draw [style=very thick, bend right=300, looseness=1.25] (e) to (f);
		\draw [style=very thick, in=0, out=45, looseness=0.75] (g) to (f);
		
		\draw [bend left, looseness=1.50] (0) to (1);
		\draw [bend right, looseness=1.50] (1) to (2);
		\draw [bend left, looseness=1.5] (2) to (3);
		\draw [bend right] (3) to (4);
		\draw (6) to (4);
		\draw (4) to (5);

		\draw [bend left, looseness=1.50] (0') to (1');
		\draw [bend right, looseness=1.50] (1') to (2');
		\draw [bend left, looseness=1.5] (2') to (3');
		\draw [bend right] (3') to (4');
		\draw (6') to (4');
		\draw (4') to (5');
		
		\draw [style=thin]  (d0) to (b0);
		\draw [style=thin] (d0) to (a0);
		\draw [style=thin]  (c0) to (d0);

	\end{pgfonlayer}
\end{tikzpicture}
\end{center}
\vspace{-.5cm}
\caption{The curve $C_0$ is the nodal union of a genus $g-2$ curve $K$ and a rational ramphoid cuspidal tail.  All curves obtained by either attaching a Weierstrass genus 2 tail to $K$ at $p$, or imposing a ramphoid cusp on $K$ at $p$, isotrivially specialize to $C_0$. 
Observe that $\Aut(C_0)$ is not finite.} \label{F:isotrivial}
\end{figure}
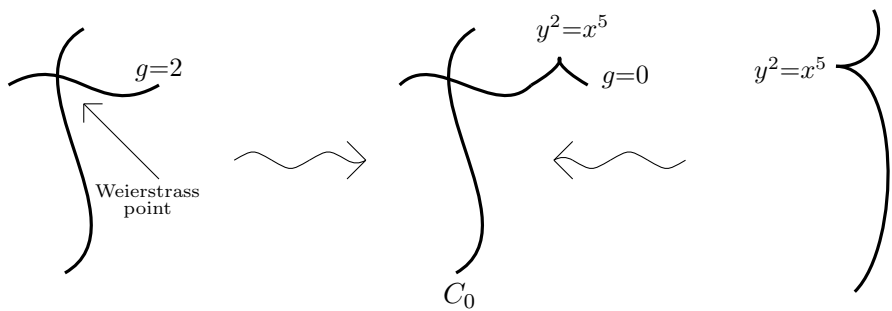

\subsection*{Outline of the paper}

In Section \ref{S:alpha-stability}, we define the notion of $\alpha$-stability for $n$-pointed curves 
and prove that they are deformation open conditions. 
We conclude that $\SM_{g,n}(\alpha)$, the stack of $n$-pointed $\alpha$-stable curves of genus $g$, is algebraic. 
We also characterize the closed 
points of $\SM_{g,n}(\alpha_c)$ for each critical value $\alpha_c$. 
In Section \ref{section-local-vgit}, we develop the machinery of local quotient presentations and local 
variation of GIT, and compute the VGIT chambers associated to closed points in $\SM_{g,n}(\alpha_c)$ 
for each critical value $\alpha_c$. In particular, we show that the inclusions 
$\SM_{g,n}(\alpha_c\pe) \hookrightarrow \SM_{g,n}(\alpha) \hookleftarrow \SM_{g,n}(\alpha_c \me)$ are cut out 
by these chambers.  
In Section \ref{S:existence}, we prove three existence theorems for good moduli spaces, 
and apply these to give an inductive proof that the stacks 
$\SM_{g,n}(\alpha)$ admit good moduli spaces. 
In Section \ref{S:projectivity}, we give a direct proof that the line bundle 
$K_{\SM_{g,n}(\alpha_c + \epsilon)}+\alpha_c\delta+(1-\alpha_c)\psi$ is nef on $\SM_{g,n}(\alpha_c \pe)$ for each critical value $\alpha_c$, 
and use this to show that the good moduli spaces of $\SM_{g,n}(\alpha)$ are the corresponding log canonical models $\M_{g,n}(\alpha)$.

\SkipTocEntry\subsection*{Notation}

We work over a fixed algebraically closed field  $\CC$ of characteristic zero.
An $n$-pointed curve $(C, \{p_i\}_{i=1}^{n})$ is a connected, reduced, proper $1$-dimensional $\CC$-scheme 
$C$ with $n$ distinct smooth marked points $p_i \in C$.  A curve $C$ has an $A_k$-singularity at  
$p \in C$ if $\widehat{\cO}_{C,p}  \simeq \CC[[x,y]]/(y^2-x^{k+1})$.
An $A_1$- (resp., $A_2$-, $A_3$-, $A_4$-) singularity is also called a \emph{node} (resp., \emph{cusp, tacnode, ramphoid cusp}).

Line bundles and divisors, such as $\lambda$, $\delta$, $K$, and $\psi$, 
on the stack of pointed curves with at-worst $A$-singularities, 
are discussed in \S \ref{S:main-result-positivity}.

We use the notation $\Delta = \Spec R$ and $\Delta^* = \Spec K$, where $R$ is
a discrete valuation ring with fraction field $K$; we set $0$, $\eta$
and $\bar{\eta}$ to be the closed point, the generic point and the
geometric generic point respectively of $\Delta$. We say that a flat family $\C \ra \Delta$ is an \emph{isotrivial specialization}
if $\C \times_{\Delta} \Delta^* \ra \Delta^*$ is isotrivial.

\subsection*{Acknowledgments}

We thank Brendan Hassett and Ian Morrison for their enthusiastic and long-standing support of this project.  
In particular, we are grateful to Ian Morrison for detailed comments and suggestions on the earlier version of this paper.
We also thank Joe Harris, David Hyeon, Johan de Jong,  Se\'{a}n Keel, and Ravi Vakil for many useful conversations and suggestions.

This project originated from the stimulating environment provided by MSRI's Algebraic Geometry program in the spring of 2009.  
During the preparation of this paper, the first author was partially supported by an NSF Postdoctoral Research Fellowship grant 0802921, 
short-term research grants provided by Harvard University and the IHES, and the Australian Research Council grant DE140101519.
The second author was partially supported by NSF grant DMS-1259226 and the Australian National University MSRVP fund. 
The third author was partially supported by NSF grant DMS-0901095 and the Australian Research Council grant DE140100259.  
The third and fourth author were also partially supported by a Columbia University short-term visitor grant. 
A portion of this work was revised when the second author visited the Max-Planck-Institute for Mathematics in Bonn.

\section{$\alpha$-stability}
\label{S:alpha-stability}

In this section, we define $\alpha$-stability (Definition \ref{defn-stability}) and show that it is an open condition. 
We conclude that $\SM_{g,n}(\alpha)$, the stack of $n$-pointed $\alpha$-stable curves of genus $g$, 
is an algebraic stack of finite type over $\CC$ (see Theorem \ref{T:algebraicity}). 
We also give a complete description of the closed points of $\SM_{g,n}(\alpha_c)$ 
for $\alpha_c \in \{\third, \second, \first\}$ (Theorem \ref{T:ClosedCurves}).

\subsection{Definition of $\alpha$-stability}
\label{S:definition-stability}
The basic idea is to modify Deligne-Mumford stability by designating certain curve singularities as `stable,' and certain subcurves as `unstable.' 
We begin by defining the unstable subcurves associated to the first three steps of the log MMP for $\Mg{g,n}$.

\begin{defn}[Tails and Bridges]  \label{defn-stable-tails}
\hfill
\begin{enumerate}
\item
An \emph{elliptic tail} is a $1$-pointed curve $(E,q)$ of
arithmetic genus $1$ which admits a finite, surjective, degree $2$
map $\phi\co E \rightarrow \P^{1}$ ramified at $q$.
\item
An \emph{elliptic bridge} is a $2$-pointed curve
$(E,q_1,q_2)$ of arithmetic genus $1$ which admits a finite,
surjective, degree $2$ map $\phi\co E \rightarrow \P^{1}$ such that
$\phi^{-1}(\{\infty\})=\{q_1+q_2\}$.  
\item
A \emph{Weierstrass genus $2$ tail} (or simply \emph{Weierstrass tail}) is a $1$-pointed curve $(E,q)$ of
arithmetic genus $2$ which admits a finite, surjective, degree $2$
map $\phi\co E \rightarrow \P^{1}$ ramified at $q$.
\end{enumerate}
We use the term {\it $\alpha_c$-tail} to mean an elliptic tail if $\alpha_c = \first$, an elliptic bridge if $\alpha_c = \second$,
and a Weierstrass tail if $\alpha_c = \third$.
\end{defn}

\vspace{-.7cm}
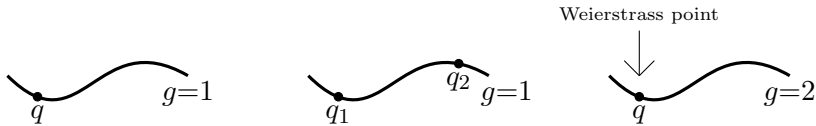
\begin{figure}[hbt]
\begin{center}
\begin{tikzpicture}[scale=.8]
\tikzstyle{blackfill}=[circle,fill=black,draw=Black]
	\begin{pgfonlayer}{nodelayer}
		\coordinate [style=black] (0) at (-1.5-5, 0) {};
		\coordinate [style=black] (1) at (1.5-5, 0) {};
		\filldraw [black] (-1-5,-.35) circle (2pt);
		
		\coordinate [style=black] (a) at (-1.5, 0) {};
		\coordinate [style=black] (b) at (1.5, 0) {};
		\filldraw [black] (-1,-.35) circle (2pt);
		\filldraw [black] (1,.2) circle (2pt);

		\coordinate [style=black] (0') at (-1.5+5, 0) {};
		\coordinate [style=black] (1') at (1.5+5, 0) {};
		\filldraw [black] (-1+5,-.35) circle (2pt);
		
		\coordinate [style=black] (x) at (4, 0) {};
		\coordinate [style=black] (y1) at (4, .75) {};
		\coordinate [style=black] (y2) at (3.75, .25) {};
		\coordinate [style=black] (y3) at (4.25, .25) {};

		\node at (-6,-.65)  {$q$};
		\node  at (-3.5,-.3)  {$g$=1};

		\node  at (-1,-.65)  {$q_1$};
		\node  at (1,-.1)  {$q_2$};
		\node  at (1.8,-.3)  {$g$=1};

		\node at (4,-.65)  {$q$};
		\node  at (6.5,-.3)  {$g$=2};
		
		\node at (4, 1.0) {\tiny{Weierstrass point}};
		\node at (4, 1) {}; 

		\end{pgfonlayer}
	\begin{pgfonlayer}{edgelayer}
		\draw [style=very thick, in=150, out=-45, looseness=1.50] (0) to (1);
		\draw [style=very thick, in=150, out=-45, looseness=1.50] (0') to (1');	
		\draw [style=very thick, in=150, out=-45, looseness=1.50] (a) to (b);
		\draw (x) to (y1);
		\draw (x) to (y2);
		\draw (x) to (y3);

	\end{pgfonlayer}
\end{tikzpicture}
\end{center}
\vspace{-.5cm}
\caption{An elliptic tail, elliptic bridge, and Weierstrass tail.}\label{F:tails}
\end{figure}

\begin{remark*}
If $(E,q)$ is an elliptic or Weierstrass tail, then $E$ is irreducible. If $(E,q_1,q_2)$ is an elliptic bridge, 
then $E$ is irreducible or $E$ is a union of  two smooth rational curves.
\end{remark*}

Unfortunately, we cannot describe our $\alpha$-stability conditions purely in terms of tails and bridges. As seen in \cite{HH2}, one extra layer of 
combinatorial description is needed, and this is encapsulated in our definition of \emph{chains}.

\begin{definition} [Chains] \label{defn-chains}
An \emph{elliptic chain of length $r$} is a $2$-pointed curve $(E, p_1, p_2)$ which admits a finite, surjective morphism
\[
\gamma \co  \coprod_{i=1}^{r}(E_i,q_{2i-1},q_{2i}) \to (E, p_1, p_2)
\]
such that:
\begin{enumerate}
\item $(E_i,q_{2i-1},q_{2i})$ is an elliptic bridge for $i=1, \ldots, r$.
\item $\gamma$ is an isomorphism when restricted to $E_i \setminus \{q_{2i-1},q_{2i}\}$ for $i=1, \ldots, r$.
\item $\gamma(q_{2i})=\gamma(q_{2i+1})$ is an $A_{3}$-singularity for $i=1, \ldots, r-1$.
\item $\gamma(q_1)=p_1$ and $\gamma(q_{2r})=p_{2}$.
\end{enumerate}
A \emph{Weierstrass chain of length $r$} is a $1$-pointed curve $(E, p)$ which admits a finite, surjective morphism
\[
\gamma \co  \coprod_{i=1}^{r-1}(E_i,q_{2i-1},q_{2i}) \coprod (E_r, q_{2r-1}) \to (E, p)
\]
such that:
\begin{enumerate}
\item $(E_i,q_{2i-1},q_{2i})$ is an elliptic bridge for $i=1, \ldots, r-1$, and $(E_r, q_{2r-1})$ is a Weierstrass tail.
\item $\gamma$ is an isomorphism when restricted to $E_i \setminus \{q_{2i-1},q_{2i}\}$ (for $i=1, \ldots, r-1$) and $E_r \setminus \{q_{2r-1}\}$.
\item $\gamma(q_{2i})=\gamma(q_{2i+1})$ is an $A_{3}$-singularity for $i=1, \ldots, r-1$.
\item $\gamma(q_1)=p$.
\end{enumerate}
An elliptic (resp., Weierstrass) chain of length $1$ is simply an elliptic bridge (resp., Weierstrass tail).
\end{definition}

\begin{figure}[bht]
\begin{center}
\begin{tikzpicture}[scale=.55]
\tikzstyle{blackfill}=[circle,fill=black,draw=Black]
	\begin{pgfonlayer}{nodelayer}
		\coordinate [style=black] (0) at (-5.5-1.7, 0) {};
		\coordinate [style=black] (1) at (-4-1.7, 1.25) {};
		\coordinate [style=black] (2) at (-2.25-1.7, 1.25) {};
		\coordinate [style=black] (3) at (-1-1.7, 1.25) {};
		\coordinate [style=black] (4) at (1.5-1.7+.1, 1.25) {};
		\coordinate [style=black] (5) at (3-1.7+.1, 1.25) {};
		\coordinate [style=black] (6) at (4.75-1.7+.19, 1.25) {};
		\coordinate [style=black] (7) at (6.25-1.7+.19, 0) {};
		\node at (-8,0.7)  {(A)};

		\filldraw [black] (-6.8,-0) circle (3pt);
		\node  at (-6.8,-.45)  {$p_1$};

		\filldraw [black] (4.2,-0) circle (3pt);
		\node  at (4.2,-.45)  {$p_2$};
		
		\node  at (3.3,1.65)  {$1$};
		\node  at (-0.1,1.65)  {$1$};
		\node  at (-2.8,1.65)  {$1$};
		\node  at (-5.8,1.65)  {$1$};
		
		\coordinate [style=black] (B0) at (-5.5-1.7+15.001, 0) {};
		\coordinate [style=black] (B1) at (-4-1.7+15.001, 1.25) {};
		\coordinate [style=black] (B2) at (-2.25-1.7+15.001, 1.25) {};
		\coordinate [style=black] (B3) at (-1-1.7+15.001, 1.25) {};
		\coordinate [style=black] (B4) at (1.5-1.7+.1+15.001, 1.25) {};
		\coordinate [style=black] (B5) at (3-1.7+.1+15.001, 1.25) {};
		\coordinate [style=black] (B6) at (4.75-1.7+.29+15.001, 1.1) {};
		\coordinate [style=black] (B7) at (6.25-1.7+.19+15.001+1, 0) {};
		\node at (-8+15.001,0.7)  {(B)};

		\filldraw [black] (-6.8+15.001,-0) circle (3pt);
		\node  at (-6.8+15.001,-.45)  {$p$};

		\node  at (3.3+15.001,1.6)  {$2$};
		\node  at (-0.1+15.001,1.65)  {$1$};
		\node  at (-2.8+15.001,1.65)  {$1$};
		\node  at (-5.8+15.001,1.65)  {$1$};

		\coordinate [style=black] (x) at (17.5+1, .7) {};
		\coordinate [style=black] (y1) at (19+1, .75+.8) {};
		\coordinate [style=black] (y2) at (17.5+.15+1, .7+.4) {};
		\coordinate [style=black] (y3) at (17.5+.5+1, .7)  {};

		\node at (19+1, 2.4) {\tiny{Weierstrass point}};
		\node at (19+1, 2) {}; 

		\end{pgfonlayer}
	\begin{pgfonlayer}{edgelayer}
		\draw (x) to (y1);
		\draw (x) to (y2);
		\draw (x) to (y3);

		\draw [style=very thick, in=-15, out=0, looseness=1.75] (0) to (1);
		\draw [style=very thick, in=-15, out=195, looseness=10.5] (2) to (3);
		\draw [style=very thick, in=-15, out=195, looseness=9] (4) to (5);
		\draw [style=very thick, in=180, out=210, looseness=2.00] (6) to (7);
		
		\draw [style=very thick, in=-15, out=0, looseness=1.75] (B0) to (B1);
		\draw [style=very thick, in=-15, out=195, looseness=10.5] (B2) to (B3);
		\draw [style=very thick, in=-15, out=195, looseness=9] (B4) to (B5);
		\draw [style=very thick, in=180, out=210, looseness=2.00] (B6) to (B7);
	\end{pgfonlayer}
\end{tikzpicture}
\end{center}
\vspace{-.5cm}
\caption{Curve (A) (resp., (B)) is an elliptic (resp., Weierstrass) chain of length 4.}\label{F:elliptic-chain}
\end{figure}
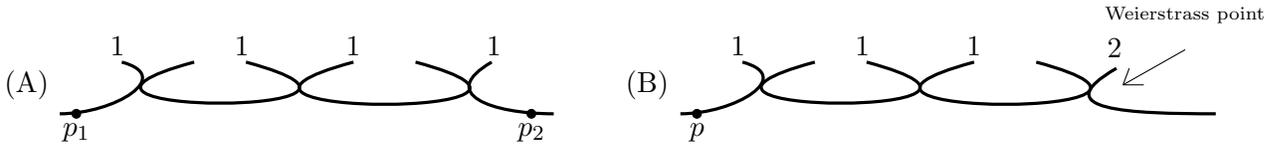

When describing tails and chains as subcurves, 
it is important to specify the singularities along which the tail or chain is 
attached. This motivates the following pair of definitions.

\begin{definition}[Gluing morphism]
A \emph{gluing morphism} $\gamma \co (E, \qm) \rightarrow (C, \pn)$ between two pointed curves 
is a finite morphism $E \rightarrow C$, which is an open immersion when restricted to $E-\{q_1, \ldots, q_m\}$. 
We do not require the points $\{\gamma(q_i)\}_{i=1}^{m}$ to be distinct. 
\end{definition}
\begin{definition}[Tails and Chains with Attaching Data]\label{D:Attaching} Let $(C, \pn)$ be an $n$-pointed curve. We say that
$(C, \pn)$ has
\begin{enumerate}
\item  \emph{$A_k$-attached elliptic tail}
if there is a gluing morphism $\gamma \co (E, q) \to (C, \pn)$ such that
\begin{enumerate}
\item $(E,q)$ is an elliptic tail.
\item $\gamma(q)$ is an $A_k$-singularity of $C$, or $k=1$ and $\gamma(q)$ is  a marked point.
\end{enumerate}
\item \emph{$A_{k_1}/A_{k_2}$-attached elliptic chain}
if there is a gluing morphism $\gamma \co (E, q_1, q_2) \to (C, \pn)$ such that
\begin{enumerate}
\item $(E,q_1, q_2)$ is an elliptic chain.
\item $\gamma(q_i)$ is an $A_{k_i}$-singularity of $C$, or $k_i=1$ and $\gamma(q_i)$ is  a marked point $(i=1,2)$.
\end{enumerate}
\item \emph{$A_k$-attached Weierstrass chain}
if there is a gluing morphism $\gamma \co (E, q) \to (C, \pn)$ such that
\begin{enumerate}
\item $(E,q)$ is a Weierstrass chain.
\item $\gamma(q)$ is an $A_k$-singularity of $C$, or $k=1$ and $\gamma(q)$ is  a marked point.
\end{enumerate}
\end{enumerate}
Note that this definition entails an essential, systematic abuse of notation: 
when we say that a curve has an $A_1$-attached tail or chain, 
we always allow the $A_1$-attachment points to be marked points. 
\end{definition}

We can now define $\alpha$-stability.
\begin{definition}[$\alpha$-stability] \label{defn-stability}
For $\alpha \in (\third \me, 1]$, we say that an $n$-pointed curve $(C, \pn)$ is \emph{$\alpha$-stable}  
if $\omega_{C}(\Sigma_{i=1}^{n}p_i)$ is ample and:

\begin{enumerate}
\item[] For $\alpha\in (\first,\one)$: $C$ has only $A_1$-singularities.
\item[] For $\alpha=\first$: $C$ has only $A_1,A_2$-singularities.
\item[] For $\alpha\in (\second, \first)$: $C$ has only $A_1,A_2$-singularities, and does not contain:
\begin{itemize}
\item $A_1$-attached elliptic tails.
\end{itemize}
\item[] For $\alpha=\second$:  $C$ has only $A_1, A_2, A_3$-singularities, and does not contain:
\begin{itemize}
	\item $A_1, A_3$-attached elliptic tails.
\end{itemize}
\item[] For $\alpha\in (\third, \second)$: $C$ has only $A_1,A_2,A_3$-singularities, and does not contain:
\begin{itemize}
	\item $A_1, A_3$-attached elliptic tails,
	\item $A_1/A_1$-attached elliptic chains.
\end{itemize}
\item[] For $\alpha=\third$: $C$ has only $A_1, A_2, A_3, A_4$-singularities, and does not contain:
\begin{itemize}
	\item $A_1, A_3, A_4$-attached elliptic tails, 
	\item $A_1/A_1, A_1/A_4, A_4/A_4$-attached elliptic chains.
\end{itemize}
\item[] For $\alpha\in (\thirdme, \third)$: $C$ has only $A_1,A_2,A_3,A_4$-singularities, and does not contain:
\begin{itemize}
	\item $A_1, A_3, A_4$-attached elliptic tails, 
	\item $A_1/A_1, A_1/A_4, A_4/A_4$-attached elliptic chains,
	\item $A_1$-attached Weierstrass chains.
	\end{itemize}
\end{enumerate}

A family of $\alpha$-stable curves is a flat and proper family whose geometric fibers are $\alpha$-stable. 
We let $\SM_{g,n}(\alpha)$ denote the stack of $n$-pointed $\alpha$-stable curves of arithmetic genus $g$.
\end{definition}
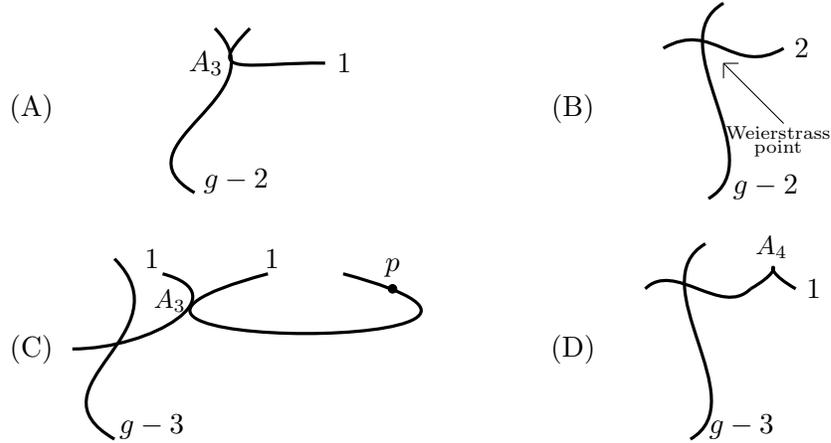
\begin{figure}[hbt]
\begin{center}
\begin{tikzpicture}[scale=0.8]
\tikzstyle{blackfill}=[circle,fill=black,draw=Black]
	\begin{pgfonlayer}{nodelayer}
	
		\node [style=black] (a0) at (-2-4-3.12-1, 1.5) {};
		\node [style=black] (a1) at (-2-4-3.12-1, -1.5) {};
		\node [style=black] (a2) at (-1-.08-4-3.12-1, 1.5) {};
		\node [style=black] (a3) at (.25-.08-4-3.12-1, 0.75) {};
		\node at (-13,0)  {(A)};
		\node at (-11+.9,.75)  {${\small A_3}$};
		\node at (-8+.2,.75)  {$1$};
		\node at (-11+1.4,-1.2)  {$g-2$};

		\coordinate [style=black] (b0) at (-2.5+2-1, 1.75) {};
		\coordinate [style=black] (b1) at (-2.75+2-1, -1.5) {};
		\coordinate [style=black] (b2) at (-3.5+2-1, 1) {};
		\coordinate [style=black] (b3) at (-1.5+2-1, 1) {};
		
		\coordinate [style=black] (a) at (-1-1.5+2-1, 1.25-1+.25) {};
		\coordinate [style=black] (b) at (0-1.5+2-1, 0.75-1) {};
		\coordinate [style=black] (c) at (-0.5-1.5-.25+2-1, 1.75-1) {};
		\coordinate [style=black] (d) at (-1-1.5+2-1, 1.75-1) {};
		
		\node at (-1.65+1,-.4)  {\tiny{  Weierstrass}};
		\node  at (-1.65+1,-.66)  {\tiny{  point}};
		\node at (-4,0)  {(B)};
		\node at (-.2,1)  {$2$};
		\node at (-4+3+.2,-1.3)  {$g-2$};

		\coordinate [style=black] (c0) at (-5.5-1.7-4.12-1, 0-4) {};
		\coordinate [style=black] (c1) at (-4-1.7-4.12-1, 1.25-4) {};
		\coordinate [style=black] (c2) at (-2.25-1.7-4.12-1, 1.25-4) {};
		\coordinate [style=black] (c3) at (-1-1.7-4.12-1, 1.25-4) {};
		\coordinate [style=black] (c4) at (-2-4-2.12-3.5, 1.5-4) {};
		\coordinate [style=black] (c5) at (-2-4-2.12-3.5, -1.5-4) {};

		\node at (-13,-4)  {(C)};
		\node at (-12+1.3,-3.2)  {{\small $A_3$}};
		\node at (-11,-5.3)  {$g-3$};
		\node at (-11,-2.5)  {$1$};
		\node at (-11+2,-2.5)  {$1$};

		\filldraw [black] (-8+1,-3.35+.35) circle (2pt);
		\node at (-8+1,-3.35+.7)  {$p$};
		
		\coordinate [style=black] (d1) at (-2.5+1.7-1, 1.75-4) {};
		\coordinate [style=black] (d2) at (-2.75+1.7-1, -1.5-4) {};
		\coordinate [style=black] (d3) at (-1.8-1, 1-4) {};
		\coordinate [style=black] (d4) at (-.8+1.5-1, 1-4) {};
		\coordinate [style=black] (d5) at (-.8+0.75-1, 1-4) {};

		\node at (-4,-4)  {(D)};
		\node at (0,1-4)  {$1$};
		\node at (-.7,1-4+.7)  {{\small $A_4$}};
		\node at (-1.2,-5.3)  {$g-3$};
	\end{pgfonlayer}
	\begin{pgfonlayer}{edgelayer}
		\draw [style=very thick, in=150, out=-45, looseness=1.50] (a0) to (a1);
		\draw [style=very thick, in=179, out=225, looseness=2.25] (a2) to (a3);
	
		\draw [style=very thick, in=30, out=210] (b0) to (b1);
		\draw [style=very thick, in=210, out=30, looseness=1.25] (b2) to (b3);
		
		\draw [style=thin]  (d) to (b);
		\draw [style=thin] (d) to (a);
		\draw [style=thin]  (c) to (d);
		
		\draw [style=very thick, in=-15, out=0, looseness=1.75] (c0) to (c1);
		\draw [style=very thick, in=-15, out=195, looseness=10.5] (c2) to (c3);
		\draw [style=very thick, in=150, out=-45, looseness=1.50] (c4) to (c5);

		\draw [style=very thick, in=30, out=210] (d1) to (d2);
		\draw [style=very thick,in=225, out=45, looseness=1] (d3) to (d5);
		\draw [style=very thick,in=150, out=30, looseness=3.25] (d5) to (d4);
	\end{pgfonlayer}
\end{tikzpicture}
\end{center}
\caption{Curve (A) has an $A_3$-attached elliptic tail; it is never $\alpha$-stable.  Curve (B) has an $A_1$-attached Weierstrass tail; 
it is $\alpha$-stable for $\alpha \ge \third$.  Curve (C) has an $A_1/A_1$-attached elliptic chain of length $2$; 
it is $\alpha$-stable for $\alpha \ge \second$.  Curve (D) has an $A_1/A_4$-attached elliptic bridge; it is never $\alpha$-stable.}
\label{F:alpha-curves}
\end{figure}

\begin{remark*}
Our definition of an elliptic chain is similar, but not identical to, the definition of an open tacnodal elliptic chain appearing in \cite[Definition 2.4]{HH2}. 
Whereas open tacnodal elliptic chains are built out of arbitrary curves of arithmetic genus one, our elliptic chains are built out of elliptic bridges. 
Nevertheless, it is easy to see that our definition of $(\secondme)$-stability agrees with the definition of h-semistability in \cite[Definition 2.7]{HH2}.
\end{remark*}
		
It will be useful to have a uniform way of referring to the singularities allowed and the subcurves excluded at each stage of the LMMP. 
Thus, for any $\alpha \in (\thirdme, \one]$, we use the term \emph{$\alpha$-stable singularity} to refer to any allowed singularity at the given
value of $\alpha$. For example, a $\secondf$-stable singularity is a node, cusp, or tacnode. Similarly, we use the term 
\emph{$\alpha$-unstable subcurve} to refer to any 
excluded subcurve at the given value of $\alpha$. 
For example, a $\secondf$-unstable subcurve is simply an $A_1$ or $A_3$-attached elliptic tail. 
With this terminology, we may say that a curve is $\alpha$-stable if it has only $\alpha$-stable singularities and no $\alpha$-unstable subcurves. 
Furthermore, if $\alpha_c \in \{\third,\second,\first\}$ is a critical value, we use the term \emph{$\alpha_c$-critical singularity} to refer to the 
newly-allowed singularity at $\alpha=\alpha_c$ and \emph{$\alpha_c$-critical subcurve} to refer to the newly disallowed subcurves at 
$\alpha=\alpha_c - \epsilon$. 
Thus, a $\secondf$-critical singularity is a tacnode, and a $\secondf$-critical subcurve is an elliptic chain with $A_1/A_1$-attaching.

Before plunging into the deformation theory and combinatorics of $\alpha$-stable curves necessary 
to prove Theorem \ref{T:algebraicity} and carry out the VGIT analysis in Section \ref{section-local-vgit},
we take a moment to contemplate on the features of $\alpha$-stability that underlie our arguments
and to give some intuition behind the items of Definition \ref{defn-stability}. The following 
are the properties of $\alpha$-stability that are desired and that we prove to be true for all $\alpha \in (\thirdme, 1]$:
\begin{enumerate}
\item $\alpha$-stability is deformation open.
\item The stack $\bar{\cM}_{g,n}(\alpha)$ of all $\alpha$-stable curves has a good moduli space, and
\item The line bundle $K+\alpha\delta+(1-\alpha)\psi$ on $\bar{\cM}_{g,n}(\alpha)$ descends to an ample line bundle on the good moduli space. 
\end{enumerate}

We will verify (1) in Proposition \ref{P:Openness} (see also Definition \ref{defn-loci}) and 
deduce Theorem \ref{T:algebraicity}. Note that we disallow $A_3$-attached elliptic tails at $\alpha=\second$,
so that $A_1/A_1$-attached elliptic bridges form a closed locus in $\Mg{g,n}(7/10)$.

Existence of good moduli space in (2) requires that the automorphism of every \emph{closed} 
$\alpha$-stable curve 
is reductive. We verify this necessary condition in Proposition \ref{P:reductive-stabilizer},
and turn around to use it as an ingredient in the \emph{proof of existence} for the good moduli space (Corollary \ref{C:local-quotient}
and Theorem \ref{T:vgit-existence}). 

Statement (3) implies that the action of the stabilizer on the fiber of the line bundle $K+\alpha\delta+(1-\alpha)\psi$ at every point is trivial.
As explained in \cite{afs}, this condition places strong restrictions on what curves with $\GG_m$-action
can be $\alpha$-stable. 
For example, at $\alpha=\second$, the fact that a nodally attached $A_{3/2}$-atom (i.e., the tacnodal union of a smooth rational curve with 
a cuspidal rational curve) is disallowed by character considerations provides different heuristics 
for why we disallow $A_3$-attached elliptic tails. 
At $\alpha=\third$,
the fact that a nodally attached $A_{3/4}$-atom (i.e., the tacnodal union of a smooth rational curve with a ramphoid cuspidal rational curve) is disallowed by character considerations 
explains why we disallow $A_1/A_4$-attached elliptic chains (as this $A_{3/4}$-atom is an $A_1/A_4$-attached elliptic bridge). 
\begin{prop}\label{P:reductive-stabilizer}
$\Aut(C,\pn)^\circ$ is a torus for every $\alpha$-stable curve $(C,\pn)$. Consequently, $\Aut(C,\pn)$ is reductive.
\end{prop}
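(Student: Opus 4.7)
The plan is to show that $\Aut(C,\pn)^\circ$ is affine and contains no $\GG_a$-subgroup; a connected affine algebraic group without a $\GG_a$-subgroup is necessarily a torus, because its unipotent radical vanishes and any non-commutative connected reductive group contains $\GG_a$ root subgroups. Since the component group $\Aut(C,\pn)/\Aut(C,\pn)^\circ$ is finite, reductivity of the full $\Aut(C,\pn)$ will follow at once once $\Aut(C,\pn)^\circ$ is identified as a torus.

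For affineness, by Chevalley's structure theorem $\Aut(C,\pn)^\circ$ is an extension of an abelian variety $A$ by a linear algebraic group. A nontrivial $A$ would act faithfully through translations on a smooth component of $C$ of geometric genus $\ge 1$; but ampleness of $\omega_C(\sum p_i)$ forces any such component to meet the rest of $C$ or to carry a marked point, and the subgroup of translations fixing such an additional point is finite. Hence $A = 0$ and $\Aut(C,\pn)^\circ$ is a linear algebraic group.

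Now suppose for contradiction that $\GG_a \hookrightarrow \Aut(C,\pn)^\circ$. Call an irreducible component of $C$ a \emph{moving component} if $\GG_a$ acts non-trivially on it and a \emph{fixed component} otherwise; by assumption, some moving component $C_i$ exists. Since $\GG_a$-actions on smooth projective curves of genus $\ge 1$ are trivial, $\tilde C_i \cong \PP^1$, with a unique $\GG_a$-fixed point $\infty_i$ and open orbit $\tilde C_i \setminus \{\infty_i\} \cong \AA^1$. As $\GG_a$ is connected, its action on the finite set of singular points of $C$ is trivial, and its action on each finite fiber $\nu^{-1}(q) \subset \tilde C$ over such a fixed point is likewise trivial. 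Hence every preimage on $\tilde C_i$ of a singularity or marked point of $C$ is $\GG_a$-fixed, and therefore equals $\infty_i$. In particular, $C_i$ cannot carry a self-$A_1$ or self-$A_3$ singularity (these would contribute two distinct preimages to $\tilde C_i$, forced to coincide), so the image $q := \nu(\infty_i) \in C$ is of one of the following types: smooth (possibly marked), an $A_2$- or $A_4$-self-singularity of $C_i$, or an $A_1$- or $A_3$-attaching to another component.

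A direct local computation completes the argument, and is the main place where the $A$-singularity hypothesis is used. In a local parameter $u$ at $\infty_i$, the $\GG_a$-action takes the form $u \mapsto u - su^2 + O(u^3)$; this preserves $\widehat\cO_{C,q}$ in the smooth, $A_1$, $A_2$, and $A_3$ cases, but fails for an $A_4$-self-singularity, since $u^2 \mapsto u^2 - 2su^3 + \cdots$ acquires a nonzero $u^3$-coefficient, which lies outside $k[[u^2, u^5]]$. For each $\GG_a$-compatible case, the contribution to $\omega_C(\sum p_i)|_{\tilde C_i}$ at $\infty_i$ is at most $+2$, attained by an $A_2$-self-singularity or an $A_3$-attachment. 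Combined with $\deg \omega_{\tilde C_i} = -2$, this yields $\deg \omega_C(\sum p_i)|_{\tilde C_i} \le 0$, contradicting ampleness of $\omega_C(\sum p_i)$ on $C$. Hence no moving component exists, the assumed $\GG_a \hookrightarrow \Aut(C,\pn)^\circ$ is trivial, and the proof is complete.
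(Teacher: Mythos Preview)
Your argument is correct, but the paper's route is much shorter and structurally different. The paper observes directly that $\Aut(C,\pn)^\circ$ preserves each irreducible component and hence embeds in the product, over components $C_i$, of the automorphism group of the normalization $\tilde C_i$ fixing all preimages of singular and marked points. Ampleness of $\omega_C(\sum p_i)$ is then used to show that each component of geometric genus $1$ carries at least one such special point and each component of geometric genus $0$ carries at least two; hence every factor in the product is either finite or isomorphic to $\GG_m$, and $\Aut(C,\pn)^\circ$, being a connected subgroup of a torus, is itself a torus. No Chevalley structure theorem, no argument by contradiction, and no local singularity computation are invoked.

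Your approach, by contrast, reduces to excluding $\GG_a$-subgroups and requires the explicit local check that $\GG_a$ cannot preserve an $A_4$-crimping. This is not wasted effort: the paper's ``at least two special points'' claim is not literally true for a rational component whose only special point on the normalization is a self-$A_4$---for instance, the unmarked ramphoid cuspidal $\PP^1$ in $\bar{\cM}_{2,0}(2/3)$---yet the conclusion still holds there because, as your computation shows, the $\GG_a$-part of $\Aut(\PP^1,\infty)$ fails to preserve the $A_4$-structure and only $\GG_m$ survives. So your argument is longer but patches a small imprecision in the paper's proof.
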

\begin{proof} Automorphisms in $\Aut(C,\pn)^\circ$ do not permute irreducible 
components. Every geometric genus $1$ irreducible component has at least one special (singular or marked) point, 
and every geometric genus $0$ irreducible component has at least two special points. It follows that the only 
irreducible components with a positive dimensional automorphism group are rational curves with two special points,
whose automorphism group is $\GG_m$. The claim follows. 
\end{proof}

\begin{remark*}
We should note that our proof of Proposition \ref{P:reductive-stabilizer} uses features of $\alpha$-stability 
that hold only for $\alpha > 2/3-\epsilon$. We expect that for lower values of $\alpha$, the yet-to-be-defined,
$\alpha$-stability will allow for $\alpha$-stable curves with non-reductive stabilizers. For example, a
curve with an $A_5$-attached $\PP^1$ can have $\GG_a$ as its automorphism group. However, 
we believe that for a correct definition of $\alpha$-stability, it will hold to be true that the stabilizers of 
all \emph{closed} points will be reductive. 
\end{remark*}

\subsection{Deformation openness}  
\label{S:deformation-openness}
Our first main result is the following theorem.
\begin{theorem}\label{T:algebraicity}
For $\alpha \in (\thirdme, 1]$, the stack $\bar{\cM}_{g,n}(\alpha)$ of $\alpha$-stable curves is algebraic and of finite type over $\Spec \CC$. Furthermore, for each critical value $\alpha_c \in \{\third, \second, \first\}$, we have open immersions:
\[
\bar{\cM}_{g,n}(\alpha_c+\epsilon) \hookrightarrow \bar{\cM}_{g,n}(\alpha_c) \hookleftarrow \bar{\cM}_{g,n}(\alpha_c-\epsilon).
\]
\end{theorem}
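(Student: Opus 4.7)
My plan is to realize $\bar{\cM}_{g,n}(\alpha)$ as an open substack of a known algebraic stack and then to deduce the open immersions as a formal consequence. As a first step I would show that the stack $\cU_{g,n}$ of $n$-pointed connected proper curves of arithmetic genus $g$ with $\omega_C\bigl(\sum_{i=1}^{n} p_i\bigr)$ ample and only $A_k$-singularities for $k \leq 4$ is algebraic and of finite type over $\CC$. Such $(C,\pn)$ are uniformly pluricanonically embedded by a fixed power of $\omega_C\bigl(\sum_{i=1}^{n} p_i\bigr)$, so $\cU_{g,n}$ is a quotient stack of an open subscheme of the appropriate Hilbert scheme of pointed curves. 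Granted this, the theorem reduces to showing that $\alpha$-stability is a deformation open condition on $\cU_{g,n}$ for each $\alpha \in (\third-\epsilon,1]$.

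The openness of $\alpha$-stability decomposes into two assertions. First, the condition that $\cC_b$ has only the prescribed $A_j$-singularities is open because the loci of fibers with a worse singularity can be cut out on the universal curve by Fitting ideals of $\Omega_{\cC/\cU_{g,n}}$. Second, for each forbidden subcurve type $T$ appearing in Definition \ref{defn-stability}, I would prove that
\[
Z_T = \{\, b \in B : \cC_b \text{ contains a subcurve of type } T \,\}
\]
is closed in every $B \to \cU_{g,n}$. The natural approach is to build a stack $\fT_T$ parameterizing gluing morphisms $\gamma \co (E, q_\bullet) \to (\cC_b, \pn)$ of type $T$, and to check that the forgetful morphism $\fT_T \to B$ is proper; then $Z_T$ is its closed image. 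Properness would follow from the valuative criterion: given a one-parameter family of ambient curves together with a type-$T$ subcurve on the generic fiber, the flat limit in the special fiber is again a subcurve of the same arithmetic genus, although its attaching singularities or internal structure may degenerate. One then checks that any such degeneration remains on the forbidden list at the given $\alpha$.

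For the open immersions at a critical value $\alpha_c$, once openness of $\alpha$-stability has been established it suffices to verify the inclusions on geometric points. For $\bar{\cM}_{g,n}(\alpha_c+\epsilon) \hookrightarrow \bar{\cM}_{g,n}(\alpha_c)$: a $(\alpha_c+\epsilon)$-stable curve has no $\alpha_c$-critical singularity (namely $A_2, A_3, A_4$ for $\alpha_c=\first,\second,\third$ respectively); inspection of Definition \ref{defn-stability} shows that every subcurve forbidden at $\alpha_c$ but not at $\alpha_c+\epsilon$ requires attachment at exactly that critical singularity, so the new exclusions are vacuous and the curve is $\alpha_c$-stable. For $\bar{\cM}_{g,n}(\alpha_c-\epsilon) \hookrightarrow \bar{\cM}_{g,n}(\alpha_c)$ the allowed singularity types are identical while the list of forbidden subcurves strictly enlarges, so the inclusion is immediate.

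The main obstacle I anticipate is the specialization analysis in the valuative-criterion step, particularly at $\alpha_c = \third$, where the degenerations of Weierstrass chains are intricate: bridges in a chain may collide, an $A_1$-attachment may specialize to an $A_3$- or $A_4$-attachment, and a Weierstrass point may migrate to a node. The combinatorial coherence of Definition \ref{defn-stability}, namely that the set of $\alpha$-unstable subcurves is closed under every such degeneration, is what ultimately forces $Z_T$ to be closed; verifying this coherence by a case analysis is where the real work lies.
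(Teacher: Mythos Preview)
Your proposal follows the paper's approach: realize $\bar{\cM}_{g,n}(\alpha)$ as an open substack of the algebraic stack $\cU_{g,n}(A_\ell)$ by excising the locus of curves with $\alpha$-unstable subcurves, and then read off the open immersions from the set-theoretic inclusions. The paper's Proposition~\ref{P:Openness} is precisely the closedness statement, and it is proved by a specialization analysis (Corollaries~\ref{C:Attaching1}--\ref{C:Attaching2}, Lemmas~\ref{L:HmLimits}--\ref{L:LimitWChain}) of the kind you anticipate.

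One point needs correction. You propose to show $\fT_T \to B$ is proper for each fixed forbidden type $T$ and take $Z_T$ as its closed image. But $\fT_T \to B$ is typically \emph{not} proper: a family of $A_1$-attached Weierstrass tails can have as flat limit a reducible genus-$2$ curve that contains an $A_1/A_1$-attached elliptic bridge but no Weierstrass tail at all (Lemma~\ref{L:HmLimits}(3)(b)), so the limit escapes type $T$ entirely and the valuative criterion fails. What survives is the weaker fact---which suffices---that the limit still contains some $\alpha$-unstable subcurve of a type \emph{already} excluded at the given $\alpha$. The paper captures this by proving, for instance, that the Weierstrass-chain locus $\W^{A_1}$ is closed not in $\cU_{g,n}$ but only in $\cU_{g,n}(A_4) \setminus \bigl(\T^{A_1} \cup \T^{A_3} \cup \B^{A_1/A_1}\bigr)$, which is exactly $\bar{\cM}_{g,n}(2/3)$. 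The excisions are therefore performed in a definite order, each inside the complement of the previously removed loci. Your last paragraph shows you sense this (``combinatorial coherence''), but the per-type properness framing should be replaced by either closedness of the union of all $\alpha$-unstable loci under specialization, or---equivalently and as the paper does---closedness of each newly forbidden locus relative to the already-constructed $\bar{\cM}_{g,n}(\alpha_c)$.
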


Let $\U_{g,n}$  be the stack of flat, proper families  of curves $(\pi\co \C \rightarrow T, \sigman)$, 
where the sections $\sigman$ are distinct and lie in the smooth locus of $\pi$, 
$\omega_{\C/T}(\Sigma_{i=1}^{n} \sigma_i)$ is relatively ample, and the 
geometric fibers 
of $\pi$ are $n$-pointed curves of arithmetic genus $g$ with only $A$-singularities.  Since $\U_{g,n}$ parameterizes canonically polarized 
curves, $\U_{g,n}$ is algebraic and finite type over 
$\mathbb{C}$. Let $\U_{g,n}(A_\ell) \subset \U_{g,n}$ be the open substack parameterizing curves with at worst $A_1, \ldots, A_\ell$ 
singularities. 
We will show that each $\SM_{g,n}(\alpha)$ can be obtained from a suitable $\U_{g,n}(A_\ell)$ by excising a finite collection of closed substacks.
As a result, we obtain a proof of Theorem \ref{T:algebraicity}.
 
\begin{defn} \label{defn-loci}
We let $\T^{A_k}, \B^{A_{k_1}/A_{k_2}}, \W^{A_k}$ denote the following constructible subsets of $\U_{g,n}$:
\begin{align*}
\T^{A_k}:=& \text{ Locus of curves containing an $A_{k}$-attached elliptic tail}.\\
\B^{A_{k_1}/A_{k_2}}:=& \text{ Locus of curves containing an $A_{k_1}/A_{k_2}$-attached elliptic chain}.\\
\W^{A_k}:=& \text{ Locus of curves containing an $A_{k}$-attached Weierstrass chain}.
\end{align*}
\end{defn}
With this  notation, we can describe our stability conditions (set-theoretically) as follows:
\begin{align*}
\SM_{g,n}(\firstpe)&=\U_{g,n}(A_1)\vspace{0.6cm}\\
 \SM_{g,n}(\first)&=\U_{g,n}(A_2)\\
  \SM_{g,n}(\firstme)&=\SM_{g,n}(\first)-\T^{A_1}\vspace{0.6cm}\\
 \SM_{g,n}(\second)&=\U_{g,n}(A_3)-\bigcup_{i \in \{1,3\}} \T^{A_i}\\
 \SM_{g,n}(\secondme)&=\SM_{g,n}(\second)- \B^{A_1/A_1}\vspace{0.6cm}\\
 \SM_{g,n}(\third)&=\U_{g,n}(A_4)-\bigcup_{i \in \{1,3,4\}} \T^{A_i}- \bigcup_{i,j \in \{1,4\}} \B^{A_i/A_j}\\
 \SM_{g,n}(\thirdme)&=\SM_{g,n}(\third)-\W^{A_1}
\end{align*}
Here, when we write $\SM_{g,n}(\first)-\T^{A_1}$, we mean of course $\SM_{g,n}(\first)-\bigl(\T^{A_1} \cap \SM_{g,n}(\first)\bigr)$, and similarly for each of the 
subsequent set-theoretic subtractions.

We must show that at each stage the collection of loci 
$\T^{A_k}$, $\B^{A_{k_1}/A_{k_2}}$, and $\W^{A_k}$ that we excise is closed. 
We break this analysis into two steps:
In Corollaries \ref{C:Attaching1} and \ref{C:Attaching2}, 
we analyze how the attaching singularities of an $\alpha$-unstable subcurve degenerate, 
and in Lemmas \ref{L:HmLimits} and \ref{L:LimitChain}, we analyze degenerations of $\alpha$-unstable curves.
We combine these results to prove the desired statement in Proposition \ref{P:Openness}.

\begin{definition}[Inner/Outer Singularities]
\label{D:inner-outer} 
We say that an $A_k$-singularity $p \in C$ is \emph{outer} if it lies on two distinct irreducible components of $C$, 
and \emph{inner} if it lies on a single irreducible component. (N.B. If $k$ is even, then any $A_k$-singularity is necessarily inner.)
\end{definition}

Suppose $\C \ra \Delta$ is a family of curves with at worst $A$-singularities, 
where $\Delta$ is the spectrum of a DVR.
Denote by $C_{\bar{\eta}}$ the geometric generic fiber and by $C_0$ the central fiber. 
We are interested in how the singularities of $C_{\bar{\eta}}$ degenerate in $C_0$. 
By deformation theory, an $A_{k}$-singularity can deform to a collection 
of $\{A_{k_1}, \dots, A_{k_r}\}$ singularities if and only if $\sum_{i=1}^{r} (k_i+1) \leq k+1$. In the following proposition, we refine this result for 
outer singularities. 

\begin{prop}\label{P:limits-outer}
Let $p \in C_0$ be an $A_{m}$-singularity, and suppose that $p$ is the limit of an outer singularity $q \in C_{\bar{\eta}}$. 
Then $p$ is outer (in particular, $m$ is odd) and each singularity of $C_{\bar{\eta}}$ that approaches $p$ must be outer and must lie on the 
same two irreducible components of $C_{\bar{\eta}}$ as $q$. Moreover, the collection of singularities approaching $p$ is necessarily
of the form $\{A_{2k_1+1}, A_{2k_2+1}, \dots, A_{2k_r+1}\}$, where $\sum_{i=1}^r (2k_i+2)=m+1$, and there exists a simultaneous normalization of the family $\C \ra \Delta$ along this set of generic singularities.
\end{prop}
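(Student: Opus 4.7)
The plan is to reduce the entire proposition to a single formal-local factorization question and then solve it by a dimension argument. Choose formal coordinates at $p$ so that $\widehat{\cO}_{\C,p} \cong \CC[[x,y,t]]/(y^{2}-F(x,t))$ with $F(x,0)=x^{m+1}$; this normal form is obtained by completing the square in $y$ and taking $t$ to be a uniformizer of $R$. I will show that under the hypotheses $F$ is necessarily a square in $\CC[[x,t]]$, and that this single fact implies every assertion of the proposition.

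To prove $F=G^{2}$, let $\bar A$ and $\bar B$ be the closures in $\C$ of the two irreducible components of $C_{\bar\eta}$ on which the two branches of the outer singularity $q$ lie. These are distinct two-dimensional irreducible components of the surface $\C$, both passing through $p$, so each contributes at least one minimal prime (equivalently, a formal branch) of $\widehat{\cO}_{\C,p}$. The key observation is that these contributions are disjoint: if a single formal branch were contained in both $\bar A$ and $\bar B$, it would lie in $\bar A \cap \bar B$, a subscheme of dimension at most $1$; but every minimal prime of the hypersurface ring $\CC[[x,y,t]]/(y^{2}-F)$ has coheight $2$, a contradiction. Hence $y^{2}-F$ is reducible in $\CC[[x,y,t]]$, and since it is quadratic in $y$ the only possibility is $y^{2}-F=(y-G)(y+G)$ for some $G\in\CC[[x,t]]$, giving $F=G^{2}$.

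The remaining conclusions then follow formally from $F=G^{2}$. Setting $t=0$ gives $G(x,0)=\pm x^{(m+1)/2}$, forcing $m$ to be odd and exhibiting $p$ as an outer $A_{m}$-singularity with smooth formal branches $V_{\pm}=\{y=\pm G\}$; these map finitely and birationally to $\bar A$ and $\bar B$ (respectively) near $p$, and the global map $\bar A^{\nu}\sqcup \bar B^{\nu}\to \bar A \cup \bar B \subset \C$ provides the required simultaneous normalization of $\C\to\Delta$ along the closure in $\C$ of the generic singularities. Writing $G(x,\bar\eta)=u\cdot\prod_j(x-\alpha_j)^{k_j+1}$ in $\overline{K}[[x]]$ with $u$ a unit and the $\alpha_j$ distinct, the singularities of the geometric generic fiber specializing to $p$ are precisely the $A_{2k_j+1}$-singularities at $(\alpha_j,0)$; their two local branches $y=\pm G$ lie on $V_{+}$ and $V_{-}$, hence on $\bar A$ and $\bar B$, hence on $A$ and $B$, so each is outer on the same two components as $q$. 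The sum $\sum_j(2k_j+2) = m+1$ is the $x$-degree of $F$ after Weierstrass preparation. The main obstacle is precisely the global factorization step: local analytic factorization of $y^{2}-F$ at each individual generic $A$-singularity is automatic and yields no information, so the dimension-count argument, which converts the global hypothesis on the components of $C_{\bar\eta}$ into the global identity $F=G^{2}$, is doing the essential work.
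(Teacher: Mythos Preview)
Your argument is correct and reaches the same conclusion as the paper—namely that the local equation $y^{2}=F(x,t)$ satisfies $F=G^{2}$—but the route is somewhat different and arguably cleaner on the key step. The paper writes $F=\prod_i (x-a_i(t))^{m_i}$ (implicitly after Weierstrass preparation and base change) and asserts that ``by assumption, the general fiber of this family has at least two irreducible components,'' whence each $m_i$ is even. That assertion is exactly the passage from the \emph{global} hypothesis ``$q$ is outer'' to the \emph{formal-local} statement ``$y^{2}-F$ is reducible in $\CC[[x,y,t]]$,'' and the paper does not justify it. Your dimension argument—pulling the two global components $\bar A,\bar B$ into $\Spec\widehat{\cO}_{\C,p}$ and using equidimensionality of the hypersurface ring to force two distinct minimal primes—supplies precisely this missing justification, and avoids the need to factor $F$ completely or to make a base change.

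One small imprecision: the map $\bar A^{\nu}\sqcup\bar B^{\nu}\to\bar A\cup\bar B\subset\C$ is not literally the simultaneous normalization of $\C\to\Delta$ along the generic singularities approaching $p$, since it does not surject onto $\C$ when $\C$ has other components and it may normalize singularities of $\bar A$ or $\bar B$ unrelated to $p$. What you actually need (and what the paper also only establishes) is the formal-local statement: near $p$ the two smooth sheets $\{y=\pm G\}$ provide the normalization, and one then glues this with the identity on $\C\setminus\{p\}$. This is routine, so the gap is cosmetic rather than substantive.
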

\begin{proof}
Suppose $q$ is an $A_{2k_1+1}$-singularity. We may take the local equation of $\C$ around $p$ to be 
\[
y^2=(x-a_1(t))^{2k_1+2} \prod_{i=2}^{r} (x-a_i(t))^{m_i}, \ \text{where $2k_1+2+\sum_{i=2}^{r} m_i=m+1$}.
\]
By assumption, the general fiber of this family has at least two irreducible components, and it follows that each $m_i$ must be even. Thus, we can rewrite the above equation as
\begin{equation}\label{E:family-outer}
y^2=\prod_{i=1}^{r} (x-a_i(t))^{2k_i+2},
\end{equation}
where $k_1, k_2, \dots, k_r$ satisfy $\sum_{i=1}^r (2k_i+2)=m+1$. It now follows by inspection that 
$C_{\bar{\eta}}$ contains outer singularities $\{A_{2k_1+1}, A_{2k_2+1}, \dots, A_{2k_r+1}\}$ joining the same
two irreducible components of $C_{\bar{\eta}}$ and approaching $p \in C_0$.
Clearly, the normalization of the family \eqref{E:family-outer} exists and is a union of two smooth families over $\Delta$. 
\end{proof}

Using the previous proposition, we can understand how the attaching singularities of a subcurve may degenerate.

\begin{corollary}\label{C:Attaching1}
Let $(\pi\co \C \rightarrow \Delta, \sigman)$ be a family of curves in $\U_{g,n}$. 
Suppose that $\tau$ is a section of $\pi$ such that $\tau(\bar{\eta}) \in \C_{\bar{\eta}}$ is a disconnecting $A_{2k+1}$-singularity of the geometric 
generic fiber. 
Then $\tau(0) \in C_0$ is also a disconnecting $A_{2k+1}$-singularity.
\end{corollary}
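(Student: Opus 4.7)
The plan is to deduce this from Proposition \ref{P:limits-outer} combined with a combinatorial argument exploiting the disconnecting hypothesis. By assumption, $q := \tau(\bar\eta)$ is an outer $A_{2k+1}$-singularity of $C_{\bar\eta}$. First I would apply Proposition \ref{P:limits-outer} with $q$ as the chosen outer generic singularity: this tells us $p := \tau(0)$ is an outer $A_m$-singularity (so $m$ is odd), that every generic singularity specializing to $p$ is outer and lies on the same two irreducible components $Y_1,Y_2$ of $C_{\bar\eta}$ as $q$, and that the complete list of such singularities has the form $\{A_{2k_1+1}, \dots, A_{2k_r+1}\}$ (with $k_1=k$, say) satisfying $\sum_{i=1}^r(2k_i+2)=m+1$. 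Moreover there is a simultaneous normalization of $\C \to \Delta$ along these $r$ generic singularities, which is a disjoint union of two smooth families $\tilde\C_1 \sqcup \tilde\C_2 \to \Delta$.

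Next, I would use the disconnecting hypothesis to force $r=1$. Since $q$ is disconnecting, the partial normalization of $C_{\bar\eta}$ at $q$ alone splits $C_{\bar\eta}$ into two connected components $A$ and $B$; the branches of $q$ lie on $Y_1$ and $Y_2$ respectively, so (the strict transforms of) $Y_1$ and $Y_2$ end up in distinct components, one in $A$ and one in $B$. If any further $q_i$ existed for $i \ge 2$, it would still be present in this partial normalization (partial normalization at $q$ does not affect the other $q_j$'s) and would be an outer singularity joining $Y_1$ and $Y_2$, contradicting the fact that $Y_1 \subset A$ and $Y_2 \subset B$ lie in different connected components. Hence $r=1$, and then $m+1 = 2k_1+2 = 2k+2$, so $p$ is an $A_{2k+1}$-singularity.

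Finally, to see that $p$ is disconnecting, I would invoke the simultaneous normalization one more time: because $r=1$, the normalization of $\C$ along the single section $\tau$ is the disjoint union $\tilde\C_1 \sqcup \tilde\C_2 \to \Delta$. Both $\tilde\C_i$ are flat over $\Delta$ with non-empty smooth special fibers, and together they map to $C_0$ realizing the partial normalization of $C_0$ at $p$. Since $\tilde\C_{1,0}$ and $\tilde\C_{2,0}$ are disjoint, the partial normalization of $C_0$ at $p$ is disconnected, so $p$ is a disconnecting $A_{2k+1}$-singularity, as required.

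The main obstacle is the middle step: one has to be careful about the meaning of ``disconnecting'' and ensure that the other singularities $q_i$ ($i\ge 2$) cannot sneak in on components different from $Y_1,Y_2$, which is exactly what the ``same two irreducible components'' part of Proposition \ref{P:limits-outer} rules out. Everything else is a direct unpacking of that proposition.
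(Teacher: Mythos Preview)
Your proof is correct and follows essentially the same approach as the paper: invoke Proposition~\ref{P:limits-outer}, use the disconnecting hypothesis to rule out any other generic singularity colliding with $\tau(\bar\eta)$ (the paper phrases this as ``the two components joined by $\tau(\bar\eta)$ do not meet elsewhere''), and then observe that the normalization of $\C$ along $\tau$ is disconnected. One minor imprecision: the two families $\tilde\C_1,\tilde\C_2$ are smooth only \emph{locally} near the preimage of $p$ (elsewhere they may carry other singularities of $C_{\bar\eta}$), but this does not affect your argument since only disconnectedness is needed.
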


\begin{proof} A disconnecting singularity $\tau(\bar\eta)$ is outer and joins two irreducible components
which do not meet elsewhere. By Proposition \ref{P:limits-outer},
 $\tau(\bar{\eta})$ cannot collide with other singularities of $\C_{\bar{\eta}}$ in the special fiber and so must remain $A_{2k-1}$-singularity.
The normalization of $\C$ along $\tau$ separates $\C$ into two connected components, so $\tau(0)$ is disconnecting. 
\end{proof}

\begin{corollary}\label{C:Attaching2}
Let $(\pi\co \C \rightarrow \Delta, \sigman)$ be a family of curves in $\U_{g,n}$. Suppose that $\tau_1$, $\tau_2$ are sections of $\pi$ such that $\tau_1(\bar{\eta}), \tau_2(\bar{\eta}) \in \C_{\bar{\eta}}$ are $A_{2k_1+1}$ and $A_{2k_2+1}$-singularities of the 
geometric generic fiber. Suppose also that the normalization of $\C_{\bar{\eta}}$ along $\tau_1(\bar{\eta}) \cup \tau_2(\bar{\eta})$ consists of two connected components, while the normalization of $\C_{\bar{\eta}}$ along either $\tau_1(\bar{\eta})$ or $\tau_2(\bar{\eta})$ 
individually is connected. Then we have two possible cases for the limits $\tau_1(0)$ and $\tau_2(0)$:
\begin{enumerate}
\item $\tau_1(0)$ and $\tau_2(0)$ are distinct $A_{2k_1+1}$ and $A_{2k_2+1}$-singularities, respectively, or
\item $\tau_1(0)=\tau_2(0)$ is an $A_{2k_1+2k_2+3}$-singularity.
\end{enumerate}
\end{corollary}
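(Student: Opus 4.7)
\smallskip

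The plan is to reduce the statement to two applications of Proposition \ref{P:limits-outer}, one for each section $\tau_i$, and to use the hypothesis on normalizations to control which generic singularities can specialize to which points of $C_0$.

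First I would unpack the hypothesis geometrically. The condition that the normalization of $C_{\bar\eta}$ along $\tau_1(\bar\eta)\cup\tau_2(\bar\eta)$ has two connected components while the normalization along either section alone is connected is equivalent to a decomposition $C_{\bar\eta}=A\cup B$ with $A\cap B=\{\tau_1(\bar\eta),\tau_2(\bar\eta)\}$. In particular, writing the two branches of the outer singularity $\tau_i(\bar\eta)$ as lying on irreducible components $A_{s_i}\subset A$ and $A_{t_i}\subset B$, any other point of intersection between a component of $A$ and a component of $B$ (in $C_{\bar\eta}$) must be one of $\tau_1(\bar\eta),\tau_2(\bar\eta)$.

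Next I would apply Proposition \ref{P:limits-outer} with $q=\tau_1(\bar\eta)$ and $p=\tau_1(0)$. It gives that $\tau_1(0)$ is outer, and that every generic singularity approaching $\tau_1(0)$ is outer and lies on the same two irreducible components $A_{s_1},A_{t_1}$ as $\tau_1(\bar\eta)$. Since $A_{s_1}\subset A$ and $A_{t_1}\subset B$, any such singularity lies in $A\cap B=\{\tau_1(\bar\eta),\tau_2(\bar\eta)\}$. Thus the set of generic singularities specializing to $\tau_1(0)$ is either $\{\tau_1(\bar\eta)\}$ or $\{\tau_1(\bar\eta),\tau_2(\bar\eta)\}$, with the second case occurring precisely when $\tau_1(0)=\tau_2(0)$.

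Finally, I would extract the two cases from the additivity formula in Proposition \ref{P:limits-outer}. If only $\tau_1(\bar\eta)$ approaches $\tau_1(0)$, then $\tau_1(0)$ is an $A_{2k_1+1}$-singularity; in this case $\tau_1(0)\neq\tau_2(0)$, and by the symmetric argument applied to $\tau_2$ we conclude $\tau_2(0)$ is an $A_{2k_2+1}$-singularity, giving case~(1). If both sections specialize to the common point $p=\tau_1(0)=\tau_2(0)$, the formula gives $m+1=(2k_1+2)+(2k_2+2)$, so $p$ is an $A_{2k_1+2k_2+3}$-singularity, giving case~(2). The only substantive step is the geometric reduction in the second paragraph; once that is in place, the rest is a direct invocation of the preceding proposition.
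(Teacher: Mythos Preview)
Your argument is correct and follows essentially the same route as the paper's proof: both unpack the normalization hypothesis as a decomposition $C_{\bar\eta}=A\cup B$ with $A\cap B=\{\tau_1(\bar\eta),\tau_2(\bar\eta)\}$, invoke Proposition~\ref{P:limits-outer} to see that no other generic singularities can approach $\tau_i(0)$, and then split into the two cases according to whether $\tau_1$ and $\tau_2$ collide. Your write-up is slightly more explicit about the irreducible components carrying the branches, but the substance is identical.
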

\begin{proof}
Our assumptions imply that the singularities $\tau_1(\bar{\eta})$ and $\tau_2(\bar{\eta})$ are outer and are the only two singularities connecting the two connected components of the normalization of $\C_{\bar{\eta}}$ along $\tau_1(\bar{\eta}) \cup \tau_2(\bar{\eta})$. By Proposition \ref{P:limits-outer}, these two singularities cannot collide with any additional singularities of $\C_{\bar{\eta}}$ in the special fiber. If  $\tau_1(\bar{\eta})$ and $\tau_2(\bar{\eta})$ themselves do not collide, we have case (1). If they do collide, then, applying 
Proposition \ref{P:limits-outer} once more, we have case (2).
\end{proof}

\begin{lemma}[Limits of tails and bridges]\label{L:HmLimits}
\hfill
\begin{itemize}
\item[(1)]Let $(\H \rightarrow \Delta, \tau_1)$ be a family in $\U_{1,1}$ whose generic fiber is an elliptic tail. Then the special fiber $(H,p)$ is an elliptic tail.
\item[(2)]Let $(\H \rightarrow \Delta, \tau_1, \tau_2)$ be a family in $\U_{1,2}$ whose generic fiber is an elliptic bridge. Then the special fiber $(H,p_1,p_2)$ satisfies one of the following conditions:
\begin{itemize}
\item[(a)] $(H,p_1,p_2)$ is an elliptic bridge.
\item[(b)] $(H,p_1,p_2)$ contains an $A_1$-attached elliptic tail.
\end{itemize}
\item[(3)]Let $(\H \rightarrow \Delta, \tau_1)$ be a family in $\U_{2,1}$ whose generic fiber is a Weierstrass tail. Then the special fiber $(H,p)$ satisfies one of the following conditions:
\begin{itemize}
\item[(a)] $(H,p)$ is a Weierstrass tail.
\item[(b)] $(H,p)$ contains an $A_1$ or $A_3$-attached elliptic tail, or an $A_1/A_1$-attached elliptic bridge.
\end{itemize}
\end{itemize}
\end{lemma}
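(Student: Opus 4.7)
The plan is to handle all three parts uniformly. First I would extend the hyperelliptic involution on the generic fiber to a global involution on $\H$, then use arithmetic-genus and ampleness constraints to enumerate the possible topological types of the special fiber $H$, and finally use the extended involution to match each type with one of the stated degenerations.

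For the extension step, since $\H/\Delta$ lies in $\U_{g,n}$ the sheaf $\omega_{\H/\Delta}(\Sigma \sigma_i)$ is relatively ample, so the relative Isom scheme is finite over $\Delta$. After a finite base change, we may assume that the deck-transformation involution $\iota_{\bar\eta}$ of the degree-$2$ cover $\phi_{\bar\eta}$ extends uniquely to an involution $\iota$ on $\H$. The quotient $\overline{\H} := \H/\iota$ is then a flat family of connected arithmetic-genus-$0$ curves, and the limits of the sections $\tau_i$ are fixed points of $\iota$ in Parts (1) and (3), and form a hyperelliptic-conjugate pair in Part (2).

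To enumerate $(H, \underline{p})$, I would combine the formula $p_a(H) = \sum_i g_i - r + 1 + \sum_s \delta(s)$ with the condition that $\deg\bigl(\omega_H(\Sigma p_j)|_C\bigr)>0$ on every component $C$. This yields: for Part (1), only irreducible arithmetic-genus-$1$ curves (smooth elliptic, irreducible nodal, or irreducible cuspidal); for Part (2), additionally a two-noded banana or a tacnodally glued pair of $\PP^1$'s with $p_1, p_2$ on opposite components, and nodal unions $E \cup_{A_1} \PP^1$ with $E$ of arithmetic genus $1$ and $p_1, p_2 \in \PP^1$; for Part (3), a longer list in which each reducible configuration contains a distinguished arithmetic-genus-$1$ subcurve attached to the rest of $H$ through one $A_1$ or $A_3$ singularity (a candidate elliptic tail) or through two $A_1$ singularities (a candidate elliptic bridge).

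For the identification, Part (1) follows from Riemann--Roch on Gorenstein curves: $h^0(2p) = 2$ on any irreducible arithmetic-genus-$1$ curve, so $|2p|$ exhibits $(H, p)$ as an elliptic tail. For Part (2), the pencil $|p_1 + p_2|$ treats the irreducible case, direct gluing arguments (matching target points at the nodes of a banana; matching tangent directions at a tacnode) handle the other arithmetic-genus-$1$ configurations, and in the $E \cup_{A_1} \PP^1$ case $\iota|_H$ must act non-trivially on $E$ (else $\overline{H}$ would have arithmetic genus $\ge 1$), so Part (1) identifies $(E, n)$ as an $A_1$-attached elliptic tail. For Part (3), the fixed-point condition on $p$ forces $p$ to be a Weierstrass point in the irreducible case, while in each reducible case $\iota|_H$ restricts non-trivially to the distinguished arithmetic-genus-$1$ subcurve, realizing it as an $A_1$- or $A_3$-attached elliptic tail (by Part (1)), or as an $A_1/A_1$-attached elliptic bridge when that subcurve is attached to the rest of $H$ through two nodes. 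The main obstacle is Part (3): the dual-graph enumeration is noticeably longer, and for each reducible type one must carefully track whether $\iota|_H$ preserves or swaps components and match the resulting ramification and hyperelliptic-conjugacy data to the required $2{:}1$-map structure on the identified tail or bridge.
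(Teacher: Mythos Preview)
Your approach is correct in outline but takes a genuinely different route from the paper. You globalize the hyperelliptic involution and then track its action on each component of the special fiber; the paper never extends the involution. For Parts (1) and (2) the paper does not use the family structure at all: it simply observes that \emph{every} curve in $\U_{1,1}$ is an elliptic tail (Riemann--Roch gives $|2p|$), and that every curve in $\U_{1,2}$ either is an elliptic bridge or contains an $A_1$-attached elliptic tail, by direct inspection of the four possible topological types. For Part (3) the paper's key device is semicontinuity: on the generic fiber one has $h^0(\omega_{H_{\bar\eta}}(-2p)) \geq 1$ (equivalently $\omega \sim 2p$), and this inequality persists to the special fiber. After enumerating topological types in $\U_{2,1}$, those not already containing an elliptic tail or bridge are reduced to the irreducible case plus two exceptional types, and the latter are eliminated because they fail $h^0(\omega_H(-2p)) \geq 1$. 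The irreducible case is then handled by specializing the linear equivalence $\omega \sim 2p$ from the generic fiber.

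What this buys: the paper's argument in Part (3) is essentially two lines once the topological types are listed, and it avoids any analysis of how an involution acts on reducible configurations. Your approach is sound---the quotient $\H/\iota$ is flat over $\Delta$ with generic fiber $\PP^1$, so its special fiber has arithmetic genus $0$, forcing $\iota$ to act non-trivially on every positive-genus component---but the case analysis you flag as ``the main obstacle'' is real. In particular, for the exceptional types the paper rules out by semicontinuity, you would instead need to show that no involution fixing $p$ with rational quotient can exist on those configurations, and for the elliptic-bridge candidates you must check that $\iota$ \emph{swaps} the two attaching nodes rather than fixing each (otherwise the attaching points are ramification points, not a conjugate pair, and the bridge definition fails). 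These checks go through, but the semicontinuity trick bypasses them entirely.
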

\begin{proof}

(1) For every $(H,p)\in \U_{1,1}$, the curve $H$ is irreducible, and $|2p|$ defines a degree $2$ map to $\P^1$ by Riemann-Roch. Hence $\U_{1,1}=\T^{A_1}$. 

For (2), the special fiber $(H,p_1, p_2)$ is a curve of arithmetic genus $1$ with  $\omega_{H}(p_1+p_2)$ ample. Since  $\omega_{H}(p_1+p_2)$ has degree $2$, $H$ has at most $2$ irreducible components. 
The possible topological types of $H$ are listed in the top row of Figure \ref{F:TopologicalTypes}. We see immediately that any curve with one of the first three topological types is an elliptic bridge, 
while any curve with the last topological type contains an $A_1$-attached elliptic tail.

Finally, for (3), the special fiber $(H,p)$ is a curve of arithmetic genus $2$ with $\omega_{H}(p)$ ample and $h^0(\omega_{H}(-2p))\geq 1$ by semicontinuity. 
Since  $\omega_{H}(p)$ has degree three, $H$ has at most three components, and the possible topological types of $H$ are listed in the bottom three rows of Figure \ref{F:TopologicalTypes}. 
One sees immediately that if $H$ does not contain an $A_1$ or $A_3$-attached elliptic tail or an $A_1/A_1$-attached elliptic bridge, 
there are only three possibilities for the topological type of $H$: either $H$ is irreducible or $H$ has topological type $(A)$ or $(B)$. 
However, topological types (A) and (B) do not satisfy $h^0(\omega_{H}(-2p))\geq 1$. Finally, if $(H,p)$ is irreducible, then it must be a Weierstrass tail. Indeed, the linear equivalence $\omega_{H} \sim 2p$ follows immediately from the corresponding linear equivalence on the general fiber. 
\end{proof}

\begin{figure}
\scalebox{.55}{\includegraphics{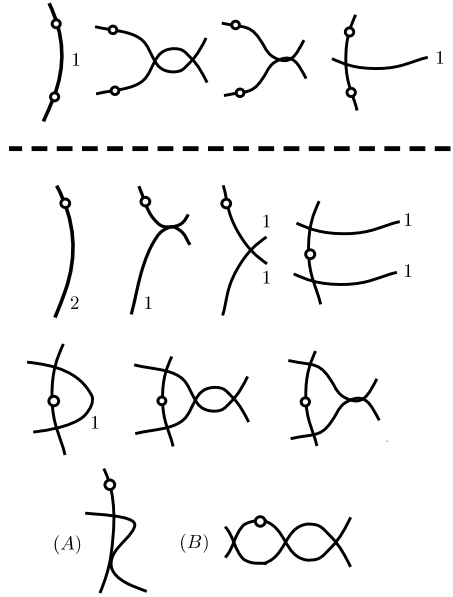}}
\caption{Topological types of curves in $\U_{1,2}(A_\infty)$ and $\U_{2,1}(A_\infty)$. For convenience, we have suppressed the data of 
inner singularities, and we record only the arithmetic genus of each component and the outer singularities
(which are either nodes or tacnodes, as indicated by the picture). Components without a label have arithmetic genus zero.}\label{F:TopologicalTypes} \label{F:TopTypes}
\end{figure}

\begin{lemma}[Limits of elliptic chains]\label{L:LimitChain}
Let $(\H \rightarrow \Delta, \tau_1, \tau_2)$ be a family in $\U_{2r-1,2}$ whose generic fiber is an elliptic chain of length $r$. Then the special fiber $(H,p_1,p_2)$ satisfies one of the following conditions:
\begin{itemize}
\item[(a)] $(H,p_1, p_2)$ contains an $A_1/A_1$-attached elliptic chain of length $\leq r$.
\item[(b)] $(H,p_1,p_2)$ contains an $A_1$-attached elliptic tail.
\end{itemize}
\end{lemma}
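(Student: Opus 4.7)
The plan is to reduce to Lemma \ref{L:HmLimits}(2) by partially normalizing the family along the limits of the inner tacnodes of the generic chain. Let $\gamma_{\bar{\eta}}\co \coprod_{i=1}^{r}(E_i, q_{2i-1}, q_{2i}) \to \H_{\bar{\eta}}$ be the gluing morphism presenting the generic fiber as an elliptic chain, and set $t_i := \gamma_{\bar{\eta}}(q_{2i}) = \gamma_{\bar{\eta}}(q_{2i+1})$ for the tacnode joining $E_i$ to $E_{i+1}$. Normalization at a single $t_i$ splits the chain into two sub-chains, so each $t_i$ is a disconnecting $A_3$-singularity of $\H_{\bar{\eta}}$. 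These tacnodes extend to sections $\tilde{t}_i$ of $\H \to \Delta$, and Corollary \ref{C:Attaching1} ensures that every limit $\tilde{t}_i(0)$ remains a disconnecting $A_3$-singularity of $H$.

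I then take the partial normalization $\widetilde{\H} \to \H$ along $\bigsqcup_{i=1}^{r-1} \tilde{t}_i$ and decompose it into its $r$ connected components. Each connected component is a flat family over $\Delta$ whose generic fiber is an elliptic bridge $(E_i, q_{2i-1}, q_{2i})$ and whose special fiber is a $2$-pointed arithmetic-genus-$1$ curve $(\widetilde{H}^{(i)}, q^{(i)}_1, q^{(i)}_2)$, the marked points being branches of the tacnodes $\tilde{t}_{i-1}(0), \tilde{t}_i(0)$ (with $p_1, p_2$ substituting for the outermost branches when $i = 1$ or $i = r$). Lemma \ref{L:HmLimits}(2) classifies each $\widetilde{H}^{(i)}$ as either an elliptic bridge or as a curve containing an $A_1$-attached elliptic tail. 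When every $\widetilde{H}^{(i)}$ is an elliptic bridge, regluing along the $\tilde{t}_i(0)$ exhibits an $A_1/A_1$-attached elliptic chain of length $r$ inside $(H, p_1, p_2)$: the outermost attachings occur at the marked points $p_1, p_2$, which count as $A_1$-attachings by convention. This yields conclusion (a).

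The only delicate point, and the main obstacle, is the remaining case in which some $\widetilde{H}^{(i)}$ contains an $A_1$-attached elliptic tail $E'$. A priori the attaching node of $E'$ in $\widetilde{H}^{(i)}$ could coincide with one of the marked points $q^{(i)}_1, q^{(i)}_2$, in which case it would become an $A_3$-attaching after gluing back into $H$ and fail to produce conclusion (b). To rule this out, I will enumerate the topological types of $(\widetilde{H}^{(i)}, q^{(i)}_1, q^{(i)}_2)$ containing an elliptic tail: the dual graph is a tree with exactly one elliptic vertex, and ampleness of $\omega_{\widetilde{H}^{(i)}}(q^{(i)}_1 + q^{(i)}_2)$ requires every $\PP^1$-vertex to have node-plus-marked-point count at least $3$. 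A short vertex-degree count (using that the tree has one edge per non-root vertex and exactly two marked points in total) then forces the tree to have exactly one $\PP^1$-vertex, carrying both marked points, with $E'$ attached at the unique node. Consequently the attaching node of $E'$ is disjoint from $\{q^{(i)}_1, q^{(i)}_2\}$ and persists under gluing back to $H$, so $E'$ remains $A_1$-attached there, giving conclusion (b).
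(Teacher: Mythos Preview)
Your overall strategy---normalize along the inner tacnodes and apply Lemma~\ref{L:HmLimits}(2) to each piece---is the same as the paper's, but there is a genuine gap in how you invoke that lemma. Lemma~\ref{L:HmLimits}(2) is stated for families in $\U_{1,2}$, which requires $\omega_{\widetilde H^{(i)}}(q_1^{(i)}+q_2^{(i)})$ to be ample. This is \emph{not} what the normalization gives you. Since the conductor of a tacnode contributes with multiplicity~$2$, pulling back the relatively ample $\omega_{\H/\Delta}(\tau_1+\tau_2)$ only yields ampleness of $\omega_{\widetilde H^{(i)}}(2q_1^{(i)}+2q_2^{(i)})$ on the interior pieces (and $\omega(q_1^{(1)}+2q_2^{(1)})$, $\omega(2q_1^{(r)}+q_2^{(r)})$ on the ends). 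This weaker condition allows a ``sprouted'' $\PP^1$ carrying a single tacnodal-branch point and meeting the rest of $\widetilde H^{(i)}$ in one node: on such a component $\omega(q_1+q_2)$ has degree~$0$ but $\omega(2q_j)$ has degree~$1$. A sprouted piece is neither an elliptic bridge nor does it necessarily contain an $A_1$-attached elliptic tail (and your final topological-type count, which also assumes $\omega(q_1+q_2)$ ample, fails for it as well), so your dichotomy is incomplete.

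Concretely, take $r=3$ and let the middle bridge $E_2$ degenerate to $(\PP^1,q_3,q') \cup (E_2',q,q_4)$ with $E_2'$ an elliptic bridge and $q'\sim q$ a node, while $E_1,E_3$ remain elliptic bridges. The resulting $H$ lies in $\U_{5,2}$, but $\widetilde H^{(2)}$ is not in $\U_{1,2}$ and falls into neither of your two cases. The paper deals with exactly this phenomenon: after noting that the assumption ``no $A_1$-attached elliptic tail'' forces each piece to be either an elliptic bridge or an elliptic bridge with a sprouted $\PP^1$ on one or both sides, it locates a maximal run of pieces bounded by sprouts (or the chain ends), and that run glues to an $A_1/A_1$-attached elliptic chain of length $\le r$ inside $H$. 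You need this sprouting analysis to close the argument.
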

\begin{proof}
We will assume $(H,p_1,p_2)$ contains no $A_1$-attached elliptic tails, and prove that (a) holds. 
By Lemma \ref{L:HmLimits}, this assumption implies that if $(E,q_1,q_2)$ is a genus one subcurve of $H$, nodally attached at $q_1$ and 
$q_2$, and $\omega_{E}(q_1+q_2)$ is ample on $E$, then $(E, q_1, q_2)$ is an $A_1/A_1$-attached elliptic bridge.

To begin, let $\gamma_1, \ldots, \gamma_{r-1}$ be sections picking out the tacnodes in the general fiber at which the sequence of elliptic 
bridges are attached to each other. By Corollary \ref{C:Attaching1}, 
the limits $\gamma_1(0), \ldots, \gamma_{r-1}(0)$ remain tacnodes, so the normalization of $\phi\co \tilde{\H} \rightarrow \H$ 
along $\gamma_1, \ldots, \gamma_{r-1}$ is well-defined 
and we obtain $r$ flat families of $2$-pointed curves of arithmetic genus $1$, i.e. we have
\[
\tilde{\H}=\coprod_{i=1}^{r}(\E_i, \sigma_{2i-1}, \sigma_{2i}),
\]
where $\sigma_1:=\tau_1$, $\sigma_{2r}:=\tau_2$, and $\phi^{-1}(\gamma_i)=\{\sigma_{2i}, \sigma_{2i+1}\}$. 
The relative ampleness of $\omega_{\H/\Delta}(\tau_1+\tau_2)$ implies
\begin{enumerate}
\item $\omega_{E_1}(p_1+2p_2), \omega_{E_r}(2p_{2r-1}+p_{2r})$ ample on $E_1$, $E_r$ respectively.
\item $\omega_{E_i}(2p_{2i-1}+2p_{2i})$ ample on $E_{i}$ for $i=2, \ldots, r-1$.
\end{enumerate}
It follows that for each $1\leq i\leq r$, 
either $(E_{i}, p_{2i-1}, p_{2i})$ is an elliptic bridge or one of the following must hold:
\begin{itemize}
\item[(a)]$(E_{i}, p_{2i-1}, p_{2i})=(\PP^1,p_{2i-1}, q_{2i-1}') \cup (E_{i}',q_{2i-1},p_{2i})/(q_{2i-1}' \sim q_{2i-1})$, 
where  $(E_{i}',q_{2i-1},p_{2i})$ is an elliptic bridge. 
\item[(b)] $(E_{i},p_{2i-1},p_{2i})=(E_{i}',p_{2i-1}, q_{2i}) \cup (\PP^1,q_{2i}',p_{2i} )/(q_{2i} \sim q_{2i}')$, 
where  $(E_{i}',q_{2i-1},p_{2i})$ is an elliptic bridge. 
\item[(c)] $(E_{i},p_{2i-1},p_{2i})=(\PP^1,p_{2i-1}, q_{2i-1}') \cup (E_{i}',q_{2i-1}, q_{2i}) \cup (\PP^1,q_{2i}',p_{2i} )/
(q_{2i-1}' \sim q_{2i-1}, q_{2i} \sim q_{2i}')$, 
where  $(E_{i}',q_{2i-1},p_{2i})$ is an elliptic bridge. 
\end{itemize}
In the cases $(a), (b), (c)$ respectively, we say that  
$E_i$ sprouts on the \emph{left}, \emph{right}, or \emph{left and right}. Note that if $E_1$ or $E_r$ sprouts at all, then $E_1$ or $E_r$ contains an $A_1/A_1$-attached elliptic bridge. Similarly, if $E_i$ sprouts on both the left and right $(2 \leq i \leq r-1)$, then $E_i$ contains an $A_1/A_1$-attached elliptic bridge. Thus, we may assume without loss of generality that $E_1$ and $E_r$ do not sprout and that $E_i$ $(2 \leq i \leq r-1)$ sprouts on the left or right, but not both. We now observe that any collection $\{E_s, \dots, E_{s+t}\}$ such that $E_s$ sprouts on the left (or $s=0$), $E_{s+t}$ sprouts on the
right (or $s+t=r$), and $E_k$ does not sprout for $s<k<s+t$, contains an $A_1/A_1$-attached elliptic chain.
\end{proof}

\begin{lemma}[Limits of Weierstrass chains]\label{L:LimitWChain}
Let $(\H \rightarrow \Delta, \tau)$ be a family in $\U_{2r,1}$ whose generic fiber is a Weierstrass chain of length $r$. Then the special fiber satisfies one of the following conditions:
\begin{itemize}
\item[(a)] $(H,p)$ contains an $A_1$-attached Weirstrass chain of length $\leq r$
\item[(b)] $(H,p)$ contains an $A_1/A_1$-attached elliptic chain of length $< r$.
\item[(c)] $(H,p)$ contains an $A_1$ or $A_3$-attached elliptic tail.
\end{itemize}
\end{lemma}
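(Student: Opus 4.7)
The plan is to closely follow Lemma \ref{L:LimitChain}, with adjustments for the fact that the final constituent of the chain is a Weierstrass tail rather than an elliptic bridge. First I would assume $(H,p)$ contains no $A_1$- or $A_3$-attached elliptic tail, for otherwise case (c) holds. By Corollary \ref{C:Attaching1}, the $r-1$ tacnodes $\gamma_1, \ldots, \gamma_{r-1}$ joining consecutive constituents of the generic Weierstrass chain persist in the special fiber, so I would normalize $\H$ along these sections to obtain
\[
\tilde{\H} = \coprod_{i=1}^{r-1}(\E_i, \sigma_{2i-1}, \sigma_{2i}) \coprod (\E_r, \sigma_{2r-1}),
\]
where $\sigma_1 = \tau$, $\phi^{-1}(\gamma_i) = \{\sigma_{2i}, \sigma_{2i+1}\}$, and $\E_i$ for $i<r$ has elliptic bridges as generic fibers, while $\E_r$ has Weierstrass tails as generic fibers. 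Relative ampleness of $\omega_{\H/\Delta}(\tau)$ implies ampleness of $\omega_{E_1}(p_1 + 2 p_2)$, $\omega_{E_i}(2 p_{2i-1} + 2 p_{2i})$ for $1 < i < r$, and $\omega_{E_r}(2 p_{2r-1})$.

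Next I would apply Lemmas \ref{L:HmLimits}(2)--(3). Under the standing hypothesis, each $(E_i, p_{2i-1}, p_{2i})$ for $i < r$ is an elliptic bridge, possibly with sprouting of types (a)--(c) from the proof of Lemma \ref{L:LimitChain}. And $(E_r, p_{2r-1})$ is either a Weierstrass tail or contains an $A_1/A_1$-attached elliptic bridge. In the latter subcase, ampleness of $\omega_{E_r}(2 p_{2r-1})$ combined with an analysis of topological types forces both attachment points of this elliptic bridge to be nodes of $E_r$ (rather than the marked point $p_{2r-1}$, which becomes a tacnode in $H$); hence the elliptic bridge is $A_1/A_1$-attached in $H$ as well, yielding an $A_1/A_1$-attached elliptic chain of length $1 < r$ in $H$: case (b).

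Henceforth I would assume $(E_r, p_{2r-1})$ is a Weierstrass tail. Rerunning the sprouting analysis of Lemma \ref{L:LimitChain}: if $E_1$ sprouts on the left, or some interior $E_i$ ($2 \leq i \leq r-1$) sprouts on both sides, the resulting $A_1/A_1$-attached elliptic bridge inside $E_1$ or $E_i$ remains $A_1/A_1$-attached in $H$ (since its attachment points do not meet the tacnode points $p_2$ or $p_{2i-1}, p_{2i}$), yielding case (b). We may therefore assume $E_1$ does not sprout on the left and each interior $E_i$ sprouts on at most one side.

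To conclude, if some $E_k$ with $1 \leq k \leq r-1$ sprouts on the right, letting $k_0$ be the smallest such index, the elliptic bridge cores $\{E_1', \ldots, E_{k_0}'\}$ form an $A_1/A_1$-attached elliptic chain of length $k_0 \leq r-1$ in $H$ --- with left endpoint the marked point $p$, right endpoint nodally attached to the $\PP^1$ sprouted from $E_{k_0}$, and interior connections along tacnodes --- giving case (b). Otherwise no $E_k$ ($k \leq r-1$) sprouts on the right; letting $i_0$ be the largest index with $E_{i_0}$ sprouting on the left (or $i_0 = 1$ if no such sprouting occurs), the sub-collection $\{E_{i_0}', E_{i_0+1}', \ldots, E_{r-1}', E_r\}$ forms an $A_1$-attached Weierstrass chain of length $r - i_0 + 1 \leq r$ in $H$, yielding case (a). The hard part will be carefully confirming, via the ampleness of $\omega_{E_r}(2 p_{2r-1})$, that when $E_r$ degenerates to a curve containing an $A_1/A_1$-attached elliptic bridge, neither attachment point is the marked point $p_{2r-1}$, so that the $A_1/A_1$ attachment persists in the unnormalized $H$ rather than degrading to an $A_1/A_3$ attachment.
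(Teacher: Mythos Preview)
Your approach is the same as the paper's---normalize along the persistent tacnodes, analyze each piece via Lemma~\ref{L:HmLimits}, and reassemble---but there is one genuine omission and one combinatorial slip.

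The omission: you apply Lemma~\ref{L:HmLimits}(3) directly to $(E_r,p_{2r-1})$, but that lemma requires the family to lie in $\U_{2,1}$, i.e.\ $\omega_{E_r}(p_{2r-1})$ ample. You only have $\omega_{E_r}(2p_{2r-1})$ ample, so $E_r$ may \emph{sprout on the left}: $p_{2r-1}$ can lie on a $\PP^1$ nodally attached to a genus-$2$ core $E_r'$. In that case your dichotomy ``Weierstrass tail or contains an $A_1/A_1$-bridge'' fails---for instance $E_r'$ can be a Weierstrass tail, so $E_r$ contains an $A_1$-attached Weierstrass tail (giving case~(a) with length~$1$) but is neither a Weierstrass tail itself nor contains an elliptic bridge. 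The paper handles this case first, before turning to the $E_s$ with $s<r$.

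The combinatorial slip: when you take $k_0$ minimal with $E_{k_0}$ sprouting on the right and declare $\{E_1',\ldots,E_{k_0}'\}$ an elliptic chain, you have not excluded that some $E_j$ with $1<j<k_0$ sprouts on the \emph{left}. If it does, the tacnodal connection between $E_{j-1}'$ and $E_j'$ is broken by the inserted $\PP^1$, and your proposed subcurve is not a chain. The conclusion is still salvageable (take instead the largest such $j$ and use $\{E_j',\ldots,E_{k_0}'\}$), but the argument as written does not go through. The paper avoids this by taking $s<r$ \emph{maximal} with $E_s$ sprouting: then $E_{s+1},\ldots,E_{r-1},E_r$ are all unsprouted, so if $E_s$ sprouts on the left one gets a Weierstrass chain $E_s'\cup E_{s+1}\cup\cdots\cup E_r$ directly, and if $E_s$ sprouts on the right one invokes the argument of Lemma~\ref{L:LimitChain} on $E_1\cup\cdots\cup E_s$.

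Your ``hard part'' at the end is actually not hard: if an $A_1/A_1$-attached elliptic bridge in $E_r$ had $p_{2r-1}$ as an attachment point, its complement in $E_r$ would be a $1$-pointed genus-$1$ curve, hence an $A_1$-attached elliptic tail by Lemma~\ref{L:HmLimits}(1), contradicting your standing hypothesis~(c).
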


\begin{proof} 
As in the proof of Lemma \ref{L:LimitChain}, let $\gamma_1, \ldots, \gamma_{r-1}$ be sections picking out the attaching tacnodes in the general fiber. By Corollary \ref{C:Attaching1}, 
the limits $\gamma_1(0), \ldots, \gamma_{r-1}(0)$ remain tacnodes, so the normalization $\phi\co \tilde{\H} \rightarrow \H$ 
along $\gamma_1, \ldots, \gamma_{r-1}$ is well-defined. 
We obtain $r-1$ families of $2$-pointed curves of arithmetic genus $1$ and a single family of $1$-pointed curves of genus 2: 
\[
\tilde{\H}=\coprod_{i=1}^{r-1}(\E_i, \sigma_{2i-1}, \sigma_{2i}) \coprod (\E_r, \sigma_{2r-1})
\]
where $\sigma_1:=\tau$ and $\phi^{-1}(\gamma_i)=\{\sigma_{2i}, \sigma_{2i+1}\}$. 

As in the proof of Lemma \ref{L:LimitChain}, we must consider the possibility that some $E_i$'s sprout in the special fiber. 
If $E_r$ sprouts on the left, then $E_r$ itself contains a Weierstrass tail, so we may assume that this does not happen. 
Now let $s<r$ be maximal such that $E_s$ sprouts. If $E_{s}$ sprouts on the left, then $E_{s} \cup E_{s+1} \cup \ldots \cup E_{r}$  
gives a Weierstrass chain in the special fiber. If $E_{s}$ sprouts on the right, then arguing as in Lemma \ref{L:LimitChain} 
produces an $A_1/A_1$-attached elliptic chain in $E_1 \cup \ldots \cup E_{s}$.
\end{proof}

\begin{proposition}\label{P:Openness} 
\hfill
\begin{enumerate}
\item $\T^{A_1}\cup \T^{A_m}$ is closed in $\U_{g,n}$ for any odd $m$.
\item $\B^{A_{1}/A_{1}}$ is closed in $\U_{g,n}-\bigcup_{i \in \{1,3\}}\T^{A_i}$.
\item $\B^{A_m/A_m}$ and $\B^{A_{1}/A_{m}}$ are closed in $\U_{g,n}(A_{m})-\T^{A_1}-\B^{A_1/A_1}$ for any even $m$.
\item $\W^{A_m}$ is closed in $\U_{g,n}(A_{m})-\bigcup_{i \in \{1,3\}}\T^{A_i}-\B^{A_1/A_1}$ for any odd $m$.
\end{enumerate}
\end{proposition}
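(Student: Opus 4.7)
The plan is to verify each closedness assertion by the valuative criterion: given a family $(\C \to \Delta, \sigman)$ in the specified ambient substack (with $\Delta$ the spectrum of a DVR) whose geometric generic fiber lies in the locus being excised, I will show that the special fiber also lies in the locus. After a finite base change on $\Delta$, I may assume that the attaching sections of the offending tail or chain in the generic fiber extend uniquely to sections of $\C \to \Delta$.

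For (1), the attaching section $\tau$ of an $A_1$- or $A_m$-attached elliptic tail (with $m$ odd) is outer disconnecting, so by Corollary \ref{C:Attaching1} the limit $\tau(0)$ remains a disconnecting singularity of the same type; partially normalizing $\C$ along $\tau$ and applying Lemma \ref{L:HmLimits}(1) shows that the extracted subfamily's special fiber is an elliptic tail, whence $C_0 \in \T^{A_1} \cup \T^{A_m}$. For (2), the two attaching nodes $\tau_1, \tau_2$ of an $A_1/A_1$-chain of length $r$ satisfy the hypotheses of Corollary \ref{C:Attaching2}, so either they remain distinct nodes in the limit or they collide to a single $A_3$-singularity. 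In the distinct-nodes case, partially normalizing along $\tau_1 \cup \tau_2$ yields a family in $\U_{2r-1,2}$ of elliptic chains, and Lemma \ref{L:LimitChain} forces $C_0 \in \B^{A_1/A_1}$ (else $C_0$ contains an $A_1$-attached elliptic tail, contradicting the ambient restriction). In the collision case, the resulting outer tacnode requires the components of the chain containing the outer attaching points together with $F_0$ to meet at a single point; branch-counting shows that this produces a non-$A$-singularity unless $r=1$ and both $E_0, F_0$ are smooth and tangent there, in which case $E_0$ is an irreducible genus-$1$ curve attached via a tacnode, hence an $A_3$-attached elliptic tail, contradicting $C_0 \notin \T^{A_3}$.

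For (3) and (4), the constraint $\C \in \U_{g,n}(A_m)$ combined with the upper-semicontinuity of $A_k$-singularity types prevents the attaching sections from becoming more singular or colliding with other singularities in the limit, since any such change would produce a singularity of type strictly exceeding $A_m$; thus the attaching types are preserved, and partial normalization yields a chain (respectively, Weierstrass-chain) family to which Lemma \ref{L:LimitChain} (respectively, Lemma \ref{L:LimitWChain}) applies. Its conclusions give either the desired $A_{k_1}/A_{k_2}$- or $A_m$-attached structure in $C_0$, or else a structure forbidden by the ambient exclusions of elliptic tails and $A_1/A_1$-chains. In (4), when an $A_1$-attached Weierstrass subchain produced by Lemma \ref{L:LimitWChain} is attached at an internal node of the normalized family rather than at its marked point, the maximality choice in the proof of that lemma forces the intervening components to be unsprouted elliptic bridges, which can be absorbed into the subchain to extend the attaching out to the marked point and yield the $A_m$-attachment in $C_0$. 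The main technical obstacles are the collision subcase of (2), which requires the branch-counting and genus analysis indicated above, and the chain-extension argument in (4).
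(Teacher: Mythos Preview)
Your overall strategy matches the paper's: reduce to the valuative criterion, control the attaching singularities via Corollaries~\ref{C:Attaching1}/\ref{C:Attaching2}, normalize, and invoke Lemmas~\ref{L:HmLimits}, \ref{L:LimitChain}, \ref{L:LimitWChain}. However, there are two gaps worth flagging.

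\textbf{Part (1): sprouting is missing.} You apply Lemma~\ref{L:HmLimits}(1) directly to the normalized elliptic-tail subfamily $(\H,\alpha)$, but that lemma requires $(\H,\alpha)\in\U_{1,1}$, i.e.\ $\omega_{H_0}(\alpha(0))$ ample. After normalizing an $A_{2k+1}$-singularity with $k\ge 1$, only $\omega_{\H}((k+1)\alpha)$ is relatively ample, so the special fiber can have $\alpha(0)$ on a rational component attached nodally to the rest of $H_0$. The paper handles this dichotomy explicitly: if $\omega_{H_0}(\alpha(0))$ is ample one gets an $A_m$-attached elliptic tail; if not, the complement of the rational tail is an $A_1$-attached elliptic tail. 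This sprouting case is precisely how $\T^{A_1}$ enters the conclusion $C_0\in\T^{A_1}\cup\T^{A_m}$, and you have not accounted for it. The same dichotomy is what the paper means by ``essentially identical to (1)'' in part~(4), so your separate chain-extension argument there is addressing a different phenomenon than the paper does.

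\textbf{Part (2): the hypothesis of Corollary~\ref{C:Attaching2} need not hold.} That corollary requires that normalizing along \emph{either} $\tau_i$ individually leaves $C_{\bar\eta}$ connected. This fails when the complement of the elliptic chain is disconnected (then each $\tau_i$ is itself disconnecting) or when one attaching point is a marked point rather than a node. The paper avoids this by appealing directly to Proposition~\ref{P:limits-outer}: since $\tau_1,\tau_2$ are outer, they can only collide with outer singularities on the same pair of components, which forces $r=1$ for a collision and otherwise keeps them nodes. Your branch-counting analysis of the collision case is also more elaborate than needed; the paper simply notes that the collision produces a genus-$1$ component $A_3$-attached to the rest, hence $C_0\in\T^{A_1}\cup\T^{A_3}$, which is excluded from the ambient stack.
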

\begin{proof}
The given loci are obviously constructible, so it suffices to show that they are closed under specialization.

For (1), let $(\C\to \Delta, \sigman)$ be a family in $\U_{g,n}$ whose generic fiber lies in $\T^{A_{2k+1}}$. 
Possibly after a finite base-change, let $\tau$ be the section picking out the attaching $A_{2k+1}$-singularity of the elliptic tail in the generic fiber. 
By Corollary \ref{C:Attaching1}, the limit $\tau(0)$ is also $A_{2k+1}$-singularity. 
Consider the normalization $\tilde{C} \rightarrow \C$ along $\tau$. 
Let $\H \subset \tilde{\C}$ be the component whose generic fiber is an elliptic tail and let $\alpha$ be the preimage of $\tau$ on $\H$. 
Then $\omega_{\H}((k+1)\alpha)$ is relatively ample. We conclude that either $\omega_{H_0}(\alpha(0))$ is ample, 
or $\alpha(0)$ lies on a rational curve attached nodally to the rest of $H_0$. In the former case, 
$(H_0, \alpha(0))$ is an elliptic tail by Lemma \ref{L:HmLimits}, so $C_0$ contains an elliptic tail with $A_{2k+1}$-attaching, as desired. 
In the latter case, $H_0$ contains an $A_1$-attached elliptic tail.
We conclude that $C_0\in  \T^{A_1}\cup \T^{A_{2k+1}}$, as desired.

For (2), let $(\C \to \Delta, \sigman)$ be a family in $\U_{g,n}$ whose generic fiber lies in $\B^{A_{1}/A_{1}}$. Possibly after a finite base change, let $\tau_1$, $\tau_2$ be the sections picking out the attaching nodes of an elliptic chain in the general fiber. 
By Proposition \ref{P:limits-outer}, $\tau_1(0)$ and $\tau_2(0)$ either remain 
nodes, or, if the elliptic chain has length $1$, can coalesce to form an outer $A_3$-singularity.
In either case there exists a normalization of $\C$ along $\tau_1$ and $\tau_2$. 
Since $C_{\bar\eta}$ becomes separated after normalizing along $\tau_1$ and $\tau_2$, we conclude that the limit of the elliptic chain is an 
connected component of $C_0$ attached either along two nodes, or, when $r=1$, 
along a separating $A_3$-singularity.
In the former case, $C_0$ has an elliptic chain by Lemma \ref{L:LimitChain}. In the latter case, $C_0$ has arithmetic genus $1$ connected 
component $A_3$-attached to the rest of the curve, so that $C_0 \in \T^{A_1}\cup \T^{A_3}$.

The proof of (3) is essentially identical to (2), making use of the observation that in $\U_{g,n}(A_k)$, the limit of an $A_k$-singularity must be an $A_k$-singularity.  The proof of (4) is essentially identical to (1), using Lemma \ref{L:LimitWChain} in place of Lemma \ref{L:HmLimits}.
\end{proof}

\begin{proof}[Proof of Theorem \ref{T:algebraicity}]
For $\alpha_c=\first, \second$, and $\third$,
Proposition \ref{P:Openness} implies that $\SM_{g,n}(\alpha_c)$ is obtained by excising closed substacks from 
$\U_{g,n}(A_2)$, $\U_{g,n}(A_3)$, and $\U_{g,n}(A_4)$, respectively.
Because
\[
\Mg{g,n}(\alpha_c+\epsilon)=\Mg{g,n}(\alpha_c)\setminus \overline{\{\text{the locus of curves with $\alpha_c$-critical singularities}\}},
\]
we conclude that $\Mg{g,n}(\alpha_c+\epsilon) \hookrightarrow \Mg{g,n}(\alpha_c)$ is an open immersion.
Finally, applying Proposition \ref{P:Openness} once more, we see that 
$\SM_{g,n}(\alpha_c-\epsilon)$ is obtained by excising closed substacks from 
$\Mg{g,n}(\alpha_c)$. 
\end{proof}

\subsection{Properties of $\alpha$-stability}
In this section, we record several elementary properties of $\alpha$-stability that will be needed in subsequent arguments. 
Recall that if $(C,\pn)$ is a Deligne-Mumford stable curve and $q \in C$ is a node, then the pointed normalization 
$(\tilde{C}, \pn, q_1, q_2)$ of $C$ at $q$ is Deligne-Mumford stable. The same statement holds for $\alpha$-stable curves.

\begin{lemma}\label{L:Norm}
Suppose $(C,\pn)$ is an $\alpha$-stable curve and $q \in C$ is a node. 
Then the pointed normalization $(\tilde{C}, \pn, q_1, q_2)$ of $C$ at $q$ is $\alpha$-stable.
\end{lemma}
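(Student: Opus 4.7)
The plan is to verify the three defining conditions of $\alpha$-stability for the pointed normalization $(\tilde{C}, \pn, q_1, q_2)$: (i) ampleness of $\omega_{\tilde C}(\sum p_i + q_1 + q_2)$, (ii) the singularities are $\alpha$-stable, and (iii) no $\alpha$-unstable subcurves appear.

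For (i), let $\nu\co \tilde{C}\to C$ be the normalization morphism. The standard adjunction formula for the normalization of a node gives $\nu^{*}\omega_{C}=\omega_{\tilde C}(q_{1}+q_{2})$, and since $\nu$ is an isomorphism away from $\{q_{1},q_{2}\}$ we have $\nu^{*}\bigl(\omega_{C}(\sum p_{i})\bigr)=\omega_{\tilde C}(\sum p_{i}+q_{1}+q_{2})$. Because $\nu$ is finite and $\omega_{C}(\sum p_{i})$ is ample on $C$ by $\alpha$-stability of $C$, the pullback is ample on $\tilde C$. For (ii), $\nu$ induces a bijection $\mathrm{Sing}(\tilde C)\cong \mathrm{Sing}(C)\setminus\{q\}$ preserving analytic types, since $q_{1},q_{2}$ lie in the smooth locus of $\tilde C$. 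Thus $\tilde C$ has only $\alpha$-stable singularities.

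For (iii), I would argue by contradiction: suppose $\gamma\co (E,\vec q)\to (\tilde C,\pn,q_{1},q_{2})$ exhibits an $\alpha$-unstable subcurve (an elliptic tail, elliptic chain, or Weierstrass chain with forbidden attaching). The goal is to show that $\nu\circ\gamma\co (E,\vec q)\to (C,\pn)$ is itself a gluing morphism exhibiting an $\alpha$-unstable subcurve of $C$, contradicting $\alpha$-stability of $C$. Finiteness is clear, and the open-immersion condition on $E\setminus\vec q$ follows from the fact that $\nu$ is an isomorphism on $\tilde C\setminus\{q_{1},q_{2}\}$ together with the convention that $\gamma^{-1}$ of a marked point of $\tilde C$ is a marked point of $E$. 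The attaching types are preserved: an attaching singularity $\gamma(q_i)\notin\{q_1,q_2\}$ has identical analytic type in $C$, while an attaching at the marked point $q_j$ (necessarily $A_1$-attached by Definition \ref{D:Attaching}) becomes an attaching at the node $q\in C$, which is likewise $A_1$. Hence $\nu\circ\gamma$ realizes precisely the same $\alpha$-unstable configuration in $C$.

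The main obstacle I anticipate is the case where both $q_1$ and $q_2$ are hit by the attaching data of $\gamma$. This can only occur for a $2$-pointed subcurve (elliptic chain), where the two attaching marked points of $E$ map to $q_1$ and $q_2$ respectively; under $\nu$ these collapse to the single node $q$, so the image subcurve in $C$ self-attaches at $q$. Here a little care is required: one shows, via the arithmetic-genus calculation for self-gluing, that the resulting configuration in $C$ still contains an $\alpha$-unstable subcurve, possibly of a different type (for instance, an elliptic chain whose two endpoints collide can force a Weierstrass-type feature or a longer chain in $C$), again contradicting $\alpha$-stability of $C$. The enumeration of cases is finite and driven directly by the list of forbidden subcurves in Definition \ref{defn-stability}.
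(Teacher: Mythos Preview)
Your argument is correct and is exactly what the paper has in mind; the paper's own proof reads in its entirety ``Follows immediately from the definition of $\alpha$-stability,'' and your steps (i)--(iii) are the obvious unpacking of that.

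The only place you go astray is the final paragraph. The ``main obstacle'' you anticipate is not an obstacle at all, and no genus calculation or case enumeration is needed. Suppose $(E,q_1',q_2')$ is an elliptic chain in $\tilde C$ with both attaching points at the marked points $q_1,q_2$. Then $\nu\circ\gamma\colon (E,q_1',q_2')\to (C,\pn)$ is still a gluing morphism: it is finite, and since $\gamma(E\setminus\{q_1',q_2'\})\subset \tilde C\setminus\{q_1,q_2\}$, the composition is an open immersion there. Now $\nu\circ\gamma(q_1')=\nu\circ\gamma(q_2')=q$ is an $A_1$-singularity of $C$, and the definition of gluing morphism \emph{explicitly permits} the images of the attaching points to coincide. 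So $C$ contains an $A_1/A_1$-attached elliptic chain in the literal sense of Definition~\ref{D:Attaching}, contradicting $\alpha$-stability of $C$. There is no need to produce a ``different type'' of unstable subcurve.
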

\begin{proof} Follows immediately from the definition of $\alpha$-stability.
\end{proof}

Unfortunately, the converse of Lemma \ref{L:Norm} is false. Nodally gluing two marked points of an $\alpha$-stable curve may fail to preserve 
$\alpha$-stability if the two marked points are both on the same component, or both on rational components -- see Figure \ref{figure-gluing}.  
The following lemma says that these are the only problems that can arise.

\begin{lemma}\label{L:Glue}
\hfill
\begin{enumerate}
\item If $(\tilde{C}_1, \pn, q_1)$ and  $(\tilde{C}_2, \pn, q_2)$ are $\alpha$-stable curves, then 
\[
(\tilde{C}_1, \pn, q_1) \cup (\tilde{C}_2, \pn, q_2)/ (q_1 \sim q_2)
\] is $\alpha$-stable.
\item If $(\tilde{C}, \pn, q_1, q_2)$ is an $\alpha$-stable curve, then 
\[
(\tilde{C}, \pn, q_1, q_2)/ (q_1 \sim q_2)
\] is $\alpha$-stable provided one of the following conditions hold:
\begin{itemize}
\item $q_1$ and $q_2$ lie on disjoint irreducible components of $\tilde{C}$,
\item $q_1$ and $q_2$ lie on distinct irreducible components of $\tilde{C}$, and at least one of these components is not a smooth rational curve.
\end{itemize}
\end{enumerate}
\end{lemma}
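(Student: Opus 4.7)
The plan is to verify the three requirements of $\alpha$-stability (Definition \ref{defn-stability}) for the glued curve $C$: ampleness of $\omega_C(\sum p_i)$, permitted singularities, and absence of $\alpha$-unstable subcurves. The polarization check is direct in both parts, since $\omega_C(\sum p_i)$ restricted to any irreducible component $\Gamma \subset C$ agrees with the corresponding sheaf on the source once the new node $q = q_1 \sim q_2$ is included in the special locus in place of the marked points $q_1, q_2$. The singularity check is immediate: the only new singularity of $C$ is a node, which is $\alpha$-stable throughout $(\thirdme, 1]$.

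The core of the proof is the claim that no $\alpha$-unstable subcurve $E \subset C$ exists. Let $\pi$ denote the partial normalization at $q$, so $\pi \colon \tilde{C}_1 \sqcup \tilde{C}_2 \to C$ in part (1) and $\pi \colon \tilde{C} \to C$ in part (2). I would proceed by cases on the position of $q$ with respect to $E$. If $q \notin E$, then $\pi^{-1}(E)$ is an $\alpha$-unstable subcurve of the source, contradicting the hypothesis. If $q$ is an attaching point of $E$, then $\pi^{-1}(E)$ lies on one side of $q$ and becomes an $\alpha$-unstable subcurve of $\tilde{C}_i$ (respectively $\tilde{C}$) attached at the marked point $q_i$; since all $A_1$-attachments in the definitions of tails and chains admit marked points as the attaching point (Definition \ref{D:Attaching}), this again contradicts $\alpha$-stability of the source.

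The substantive case is that $q$ is interior to $E$, so $\tilde{E} := \pi^{-1}(E)$ contains both $q_1$ and $q_2$. In part (1), the source is disconnected, so $\tilde{E} = \tilde{E}_1 \sqcup \tilde{E}_2$ with $q_i \in \tilde{E}_i \subset \tilde{C}_i$ and $p_a(E) = p_a(\tilde{E}_1) + p_a(\tilde{E}_2)$. Since $E$'s attaching locus in $C$ lies entirely on one side of $q$, say in $\tilde{C}_1$, one must have $\tilde{E}_2 = \tilde{C}_2$; enumerating through elliptic tails, elliptic chains, and Weierstrass chains then produces an $\alpha$-unstable subcurve of $\tilde{C}_2$ attached at the marked point $q_2$, yielding a contradiction. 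Part (2) is analogous when $\tilde{E}$ splits at $q$. The genuine new possibility is that $\tilde{E}$ is connected, whence the normalization identity gives $p_a(\tilde{E}) = p_a(E) - 1$. A component-by-component count of special points, using ampleness of $\omega_{\tilde{C}}(\sum p_i + q_1 + q_2)$, then forces $\tilde{E}$ to be either a single rational component (so $q_1, q_2$ lie on the same irreducible component of $\tilde{C}$) or a chain of two rational components joined at a single node (so $q_1, q_2$ lie on distinct smooth rational components meeting at a point). Both configurations are explicitly excluded by the hypotheses of part (2).

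The main obstacle will be the connected sub-case in part (2): the special-point count has to be sharp enough to isolate precisely the two configurations forbidden by the hypotheses, and the bookkeeping must be repeated (with the appropriate shift in arithmetic genus and number of attaching points) for elliptic bridges, elliptic chains, and Weierstrass chains in their respective $\alpha$-stability windows.
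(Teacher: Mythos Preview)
Your case split on the position of $q$ relative to $E$ matches the paper's, and the two easy cases ($q\notin E$, $q$ an attaching point of $E$) are handled correctly. The gap is in the ``substantive case'' where both branches of $q$ lie in $E$.

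Your claimed conclusion there---that a special-point count forces $\tilde E=\pi^{-1}(E)$ to be \emph{either a single rational component or a chain of two rational components}---is false in general. Take $E$ to be an $A_1/A_1$-attached elliptic chain of length $r\ge 2$ in which one bridge $E_j$ is a pair of smooth rational curves meeting at two nodes, and let $q$ be one of those two nodes. Then $\tilde E$ is all of $E$ with one node of $E_j$ separated: it still contains every other bridge and has arithmetic genus $2r-2$, not $0$. No component-by-component ampleness count will cut $\tilde E$ down to two components. Likewise, in part (1) your assertion that ``$E$'s attaching locus lies entirely on one side of $q$'' is not justified; for an elliptic chain the two $A_1$-attaching points could a priori land in $\tilde C_1$ and $\tilde C_2$.

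What actually does the work---and what the paper isolates as its key observation---is a structural fact about $\alpha$-unstable subcurves themselves, independent of any ambient $\tilde C$: if $E_1,E_2$ are distinct irreducible components of an $\alpha$-unstable curve, then $E_1\cap E_2$ never consists of a \emph{single} node, and if one of $E_1,E_2$ is irrational then $E_1\cap E_2$ contains no nodes at all. (Check this against the topological types in Figure~\ref{F:TopTypes}: elliptic tails and Weierstrass tails are irreducible; the only reducible elliptic bridge is two $\PP^1$'s meeting at two nodes; chains glue bridges at tacnodes, not nodes.) Under the hypotheses of (1) and (2), the two branches of $q$ lie on distinct irreducible components of $C$ that either meet only at $q$ or include an irrational component; the observation then forbids $E$ from containing both branches, and you are immediately back in the easy cases. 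Your sketch is reaching for the right conclusion about the components carrying $q_1,q_2$, but routes it through an incorrect global claim about $\tilde E$; replacing the special-point count by this local observation about pairs of components of $E$ closes the gap.
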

\begin{figure}[htb] 
\begin{center}
\begin{tikzpicture}[scale=.45]
\tikzstyle{blackfill}=[circle,fill=black,draw=Black]
    \begin{pgfonlayer}{nodelayer}
        \coordinate (AL0) at (-8.12-5.111, 4) {};
        \coordinate (AL1) at (-8.12-5.111, -2) {};
        \coordinate (AL2) at (-.5-8.12-5.111, 2.2) {};
        \coordinate (AL3) at (3.5-8.12-5.111, 4.25) {};

        \coordinate (AR0) at (-8.12+8-5.111-1.5, 4) {};
        \coordinate (AR1) at (-8.12+8-5.111-1.5, -2) {};
        \coordinate (AR2) at (-0.5-5.111-1.5, 2.25) {};
        \coordinate (AR3) at (4.25-5.111-1.5, 4.25) {};
   
        \coordinate (Aa0) at (2-5.022-5.111-.5, 0+1.2) {};
        \coordinate (Aa1) at (1.5-5.022-5.111-.5, 0.5+1.2) {};
        \coordinate (Aa2) at (1.5-5.022-5.111-.5, -0.5+1.2) {};
        \coordinate (Aa3) at (0-5.022-5.111-.5, 0.25+1.2) {};
        \coordinate (Aa4) at (0-5.022-5.111-.5, -0.25+1.2) {};
        \coordinate (Aa5) at (0-5.022-5.111-.5, 0+1.2) {};
        \node at (-10.5-5.111,1.2)  {(A)};

        \filldraw  (-6.4-5.111,3.4) circle (3pt);       
        \filldraw  (-5.4-5.111, 3.9) circle (3pt);
        \node [font=\small] at (-6.2-5.111,2.7) {$q_{1}$};
        \node [font=\small] at  (-5.2-5.111, 3.2)  {$q_{2}$};

        \coordinate (l0) at (-3+5.1112, 3) {};
        \coordinate (l1) at (-4+5.1112, -2) {};
        \coordinate (l2) at (3-4.44+.2+5.1112, 3) {};
        \coordinate (l3) at (4-4.44+.5+5.1112, -2) {};
        \coordinate (l4) at (-4+.3+5.1112, .25+.6) {};
        \coordinate (l5) at (-1+5.1112, 5.3) {};
        \coordinate (l6) at (1-4.44+5.1112, 5.3) {};
        \coordinate (l7) at (4-4.44+5.1112, .25+.6) {};
       
        \coordinate (a0) at (2+6, 0+0.5+1.2) {};
        \coordinate (a1) at (1.5+6, 0.5+0.5+1.2) {};
        \coordinate (a2) at (1.5+6, -0.5+0.5+1.2) {};
        \coordinate (a3) at (0+6, 0.25+0.5+1.2) {};
        \coordinate (a4) at (0+6, -0.25+0.5+1.2) {};
        \coordinate (a5) at (0+6, 0+0.5+1.2) {};
       
        \coordinate (r0) at (-1+11, 3) {};
        \coordinate (r1) at (-2+11, -2) {};
        \coordinate (r2) at (1+11, 3) {};
        \coordinate (r3) at (2+11, -2) {};
        \coordinate (r4) at (-1.75+11, 1.5) {};
        \coordinate (r5) at (-1+11, 5) {};
        \coordinate (r6) at (1+11, 5) {};
        \coordinate (r7) at (1.75+11, 1.5) {};
       
        \filldraw (-2.5+.8+5.1112,2.5+.8+.85) circle (3pt);
        \filldraw (2.5-.8-4.44+.1+5.1112,2.5+.8+.85) circle (3pt);

        \node [font=\small] at  (-2.5+.8+.5+.07+5.1112,2.5+.8-.5+.85+.3)  {$q_{2}$};
        \node [font=\small] at  (2.5-.8-.5-4.44+.03+5.1112, 2.5+.8-.5+.85+.3)  {$q_{1}$};
        \node at (-.5,1.7) {(B)};
    \end{pgfonlayer}
    \begin{pgfonlayer}{edgelayer}
        \draw [style=very thick,in=-60, out=135, looseness=1.25] (AL1) to (AL0);
        \draw [style=very thick,](AL2) to (AL3);   
        \draw (Aa4) to (Aa3);
        \draw (Aa5) to (Aa0);
        \draw (Aa0) to (Aa1);
        \draw (Aa0) to (Aa2);
       
        \draw [style=very thick,in=-60, out=135, looseness=1.25] (AR1) to (AR0);
        \draw [style=very thick,bend left=15, looseness=3.75] (AR2) to (AR3);

        \draw [style=very thick,in=289, out=105, looseness=0.75] (l1) to (l0);
        \draw [style=very thick] (l6) to (l7);
        \draw [style=very thick,in=75, out=255] (l2) to (l3);
        \draw [style=very thick] (l4) to (l5);

        \draw (a4) to (a3);
        \draw (a5) to (a0);
        \draw (a0) to (a1);
        \draw (a0) to (a2);       
       
        \draw [style=very thick,in=289, out=105, looseness=0.75] (r1) to (r0);
        \draw [style=very thick,bend right=60, looseness=1.75] (r6) to (r7);
        \draw [style=very thick,in=75, out=255] (r2) to (r3);
        \draw [style=very thick,bend right=60, looseness=1.75] (r4) to (r5);
       
    \end{pgfonlayer}
\end{tikzpicture}
\end{center}
\caption{In (A), two marked points on a genus $0$ tail (resp., two conjugate points on an elliptic tail) are glued to yield an elliptic tail (resp., a Weierstrass tail).  In (B), two marked points on distinct rational components are glued to yield an elliptic bridge.}
\label{figure-gluing}
\end{figure}
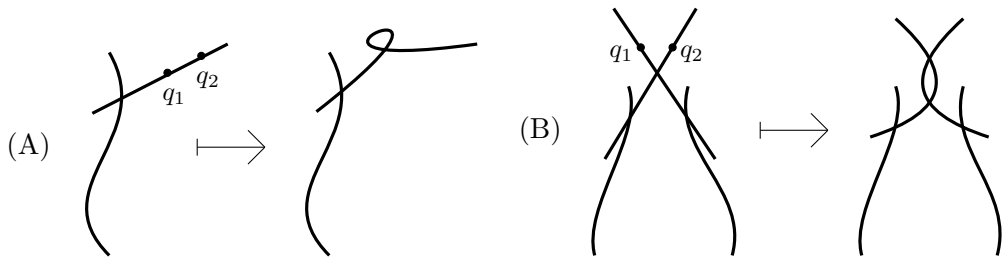

\begin{proof}
Let $C:=(\tilde{C}, q_1, q_2)/ (q_1 \sim q_2)$, and let $\phi\co \tilde{C} \rightarrow C$ be the gluing morphism which identifies $q_1, q_2$ to a
node $q \in C$. It suffices to show that if $E \subset C$ is an $\alpha$-unstable curve, then $\phi^{-1}(E)$ is an $\alpha$-unstable subcurve of 
$\tilde{C}$. The key observation is that any $\alpha$-unstable subcurve $E$ has the following property: If $E_1, E_2 \subset E$ are two distinct 
irreducible components of $E$, then the intersection $E_1 \cap E_2$ never consists of a \emph{single} node. 
Furthermore, if one of $E_1$ or $E_2$ is irrational, then the intersection $E_1 \cap E_2$ never contains any nodes. For elliptic tails, this 
statement is vacuous since elliptic tails are irreducible. For elliptic and Weierstrass chains, it follows from examining the topological types of
elliptic bridges and Weierstrass tails (see Figure \ref{F:TopTypes}).  From this observation, it follows that no $\alpha$-unstable $E \subset C$ 
can contain both branches of $q$. Indeed, the hypotheses of (1) and (2) each 
imply that either the two branches of the node $q \in C$ lie on distinct 
irreducible components whose intersection is precisely $q$, or else that that the two branches lie on distinct irreducible components, one of 
which is irrational. Thus, we may assume that $E \subset C$ is disjoint from $q$ or contains only one branch of $q$.

If $E \subset C$ is disjoint from $q$, then $\phi^{-1}$ is an isomorphism in a neighborhood of $E$ and the statement is clear. If $E \subset C$ contains only one branch of the node $q$, then $q$ must be an attaching point of $E$. We may assume without loss of generality that $E$ contains the branch labeled by $q_1$. Now $\phi^{-1}(E) \rightarrow E$ is an isomorphism away from $q_1$ and sends $q_1$ to the node $q$. Since an $\alpha$-unstable curve with nodal attaching is also $\alpha$-unstable with marked point attaching, $\phi^{-1}(E)$ is an $\alpha$-unstable subcurve of $\tilde{C}$.
\end{proof}

\begin{corollary}\label{C:Glue}
\hfill 
\begin{enumerate}
\item Suppose that $(C, \pn, q_1)$ is $\firstf$-stable and $(E,q_1')$ is a an elliptic tail. Then \\
$(C \cup E, \pn)/ (q_1 \sim q_1')$ is $\firstf$-stable.

\item Suppose $(C, \pn, q_1, q_2)$ is $\secondf$-stable and $(E, q_1',q_2')$ is an elliptic chain. Then \\
$(C \cup E, \pn)/(q_1 \sim q_1', q_2 \sim q_2')$   is $\secondf$-stable.

\item Suppose $(C_1, \{p_i\}_{i=1}^{m}, q_1)$ and $(C_2, \{p_i\}_{i=1}^{n-m}, q_2)$ are  $\secondf$-stable and $(E, q_1',q_2')$ is an elliptic chain. Then $(C_1 \cup C_2 \cup E, \pn)/(q_1 \sim q_1', q_2 \sim q_2')$ is $\secondf$-stable.

\item Suppose $(C, \pn, q_1)$ is $\secondf$-stable and $(E, q_1',q_2')$ is an elliptic chain. \\ Then $(C \cup E, \pn, q_2')/(q_1 \sim q_1')$ is $\secondf$-stable.

\item Suppose that $(C, \pn, q_1)$ is $\thirdf$-stable and $(E,q_1')$ is a Weierstrass chain. \\ Then $(C \cup E, \pn)/ (q_1 \sim q_1')$ is $\thirdf$-stable.
\end{enumerate}
\end{corollary}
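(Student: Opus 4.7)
The plan is to prove each of the five parts by iterated application of Lemma \ref{L:Glue}. Before invoking the lemma, however, we must verify that each constituent pointed curve (the various tails, chains, and the base curves $C$) is itself $\alpha$-stable when regarded as a standalone pointed curve. For an elliptic tail $(E,q)$, the curve $E$ is irreducible of arithmetic genus $1$ with $\omega_E(q)$ ample of degree $1$, so $(E,q)$ is automatically $\firstf$-stable. For an elliptic chain $(E,q_1',q_2')$, the definition forces only $A_1$-, $A_2$-, $A_3$-singularities and only $A_3$-attachments between elliptic bridges, so no $A_1$-attached elliptic tails can appear as subcurves; ampleness of $\omega_E(q_1'+q_2')$ is checked by adjunction on each irreducible component (each bridge endpoint contributes through the marked or tacnodal attachment). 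Similarly, a Weierstrass chain is $\thirdf$-stable as a $1$-pointed curve.

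For (1), (4), and (5), the assertion follows immediately from Lemma \ref{L:Glue}(1): in each instance we are gluing two disjoint $\alpha$-stable pointed curves along a single pair of marked points. In case (4), the point $q_2'$ is simply retained as a marked point of $(E,q_1',q_2')$. For (3), we apply Lemma \ref{L:Glue}(1) twice in succession: first glue $C_1$ to $E$ at $q_1\sim q_1'$, with $q_2'$ kept as a residual marked point of the resulting $\secondf$-stable curve, and then glue $C_2$ to this intermediate curve at $q_2\sim q_2'$.

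The substantive case is (2), where we must identify $E$ with $C$ at \emph{two} pairs of points. We first apply Lemma \ref{L:Glue}(1) to form the $\secondf$-stable curve $C'=(C\cup E)/(q_1\sim q_1')$, with residual marked points $q_2$ and $q_2'$. The second identification is a self-gluing on $C'$, so it requires Lemma \ref{L:Glue}(2). The hypothesis to be verified is that $q_2$ and $q_2'$ lie on distinct irreducible components of $C'$, and either these two components are disjoint or at least one of them is not a smooth rational curve. A short case analysis on the combinatorial type of $E$ handles this: if $E$ is a length-$1$ chain of irreducible type (a smooth genus-$1$ curve), then the component through $q_2'$ is irrational; in all remaining configurations (a length-$1$ chain consisting of two rational curves meeting at two nodes, or any chain of length $\geq 2$), the points $q_1'$ and $q_2'$ lie on different components of $E$, so the component of $E$ containing $q_2'$ does not meet $C$ inside $C'$ and is therefore disjoint from the $q_2$-component of $C$.

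I expect the main obstacle to be this case analysis in (2), which relies on the explicit topological description of elliptic bridges (either smooth genus $1$, or two smooth rational curves meeting at two nodes) and on tracking where the attaching point of $E$ to $C$ sits relative to the two endpoints of the chain. Everything else reduces to bookkeeping: iterated application of Lemma \ref{L:Glue}(1) plus the preliminary verification that elliptic chains and Weierstrass chains are $\alpha$-stable as multi-pointed curves.
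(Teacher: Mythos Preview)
Your approach is correct and essentially identical to the paper's: parts (1), (3), (4), (5) follow directly from Lemma~\ref{L:Glue}(1), while (2) requires Lemma~\ref{L:Glue}(1) followed by Lemma~\ref{L:Glue}(2), with a check that the second gluing hypothesis is met.

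One minor imprecision in your case analysis for (2): you write that an irreducible elliptic bridge is ``a smooth genus-$1$ curve,'' but an irreducible elliptic bridge can also be a nodal or cuspidal rational curve of arithmetic genus $1$. In those cases the component through $q_2'$ \emph{is} rational, so your claim ``irrational'' is not literally correct. However, the hypothesis of Lemma~\ref{L:Glue}(2) only requires the component not be a \emph{smooth} rational curve, and a singular rational curve of arithmetic genus $1$ is certainly not that. The paper phrases this more robustly: if $q_2$ and $q_2'$ do not lie on disjoint components, then $E$ must be an irreducible genus-one curve, hence $q_2'$ does not lie on a smooth rational curve. This covers all irreducible bridges uniformly without a smoothness assumption.
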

\begin{proof}
(1), (3), (4), and (5) follow immediately from Lemma \ref{L:Glue}. For (2), one must apply Lemma \ref{L:Glue} twice: First apply Lemma \ref{L:Glue}(1) to glue $q_1 \sim q_1'$, then apply Lemma \ref{L:Glue}(2) to glue $q_2 \sim q_2'$, noting that if $q_2$ and $q_2'$ do not lie on disjoint irreducible components of $(C \cup E, \pn, q_2, q_2')/(q_1 \sim q_1')$, then $E$ must be an irreducible genus one curve, so $q_2'$ does not lie on a smooth rational curve.
\end{proof}

Next, we consider a question which does not arise for Deligne-Mumford stable curves: Suppose $(C, \pn)$ is an $\alpha$-stable curve 
and $q \in C$ is a \emph{non-nodal singularity} with $m\in \{1,2\}$ branches. 
When is the pointed normalization $(\tilde{C}, \pn, \qm)$ of $C$ at $q$ $\alpha$-stable? 
One obvious obstacle is that $\omega_{\tilde{C}}(\Sigma_{i=1}^{n}p_i+\Sigma_{i=1}^{m}q_i)$ need not be ample.
Indeed, one or both of the marked points $q_i$ may lie on a smooth $\P^1$ meeting the rest of the curve in a single node. We thus define the  
\emph{stable pointed normalization} of $(C,\pn)$ to be the (possibly disconnected) curve obtained from $\tilde{C}$ 
by contracting these semistable $\P^{1}$'s. 
This is well-defined except in several degenerate cases:  
First, when $(g,n)=(1,1), (1,2), (2,1)$, the stable pointed normalization of a cuspidal, tacnodal, and  
ramphoid cuspidal curve is a point. In these cases, we regard the stable pointed normalization as being undefined. Second, in the tacnodal case,  
it can happen that $(\tilde{C}, \pn, \qm)$ has two connected components, one of which is a smooth $2$-pointed $\PP^1$. In this case, we define 
the stable pointed normalization to be the curve obtained by deleting this component and taking the stabilization of the remaining connected 
component.

In general, the stable pointed normalization of an $\alpha$-stable curve at a non-nodal singularity need not be $\alpha$-stable. Nevertheless, there is one important case where this statement does hold, namely when $\alpha_c$ is a critical value and $q \in C$ is an $\alpha_c$-critical singularity.

\begin{lemma} \label{L:StableNorm} 
 Let $(C, \pn)$ be an $\alpha_c$-stable curve, and suppose $q \in C$ is an $\alpha_c$-critical singularity. Then the stable pointed normalization of $(C, \pn)$ at $q \in C$ is $\alpha_c$-stable if and only if $(C, \pn)$ is $\alpha_c$-stable.
\end{lemma}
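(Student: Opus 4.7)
The proof plan is a case analysis across the three critical values $\alpha_c \in \{\first, \second, \third\}$ with respective critical singularities $A_2, A_3, A_4$. Let $(\tilde C,\pn,q_1,\ldots,q_m)$ denote the pointed normalization at $q$ (with $m=1$ for $A_2, A_4$ and $m=2$ for $A_3$), and let $(\tilde C',\pn,q_1,\ldots,q_m)$ denote its stabilization, obtained by contracting smooth rational components with exactly two special points. The statement decomposes into two sub-claims: (a) $\tilde C'$ has only $\alpha_c$-stable singularities iff $C$ does, and (b) $\tilde C'$ contains an $\alpha_c$-unstable subcurve iff $C$ does.

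First I would handle (a), which is immediate: the singularities of $\tilde C'$ away from $\{q_i\}$ coincide with those of $C$ away from $q$; the marked points $q_i$ themselves are smooth; contracting a semistable $\PP^1$ produces at most a new node, which is always $\alpha_c$-stable. Since $q$ is by hypothesis $\alpha_c$-critical, hence $\alpha_c$-stable, both sides of (a) are equivalent. For $\alpha_c=\first$ this exhausts the lemma, since no subcurves are excluded at that value.

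The substantive content is (b), which I would establish by constructing a bijection between $\alpha_c$-unstable subcurves of $C$ and of $\tilde C'$. The pivotal observation is that an $\alpha_c$-critical singularity cannot arise as an \emph{interior} singularity of any $\alpha_c$-unstable subcurve: every such subcurve is built from elliptic tails, elliptic bridges, and Weierstrass tails, all of arithmetic genus $\leq 2$, whereas an $A_3$ or $A_4$ singularity contributes $\delta=2$ to arithmetic genus, forcing a negative geometric genus for any containing component. Thus $q$ is either disjoint from a given $\alpha_c$-unstable subcurve $F\subset C$ or occurs only as one of its attaching singularities. If $q\notin F$, then $F$ maps unchanged to a subcurve of $\tilde C'$ of the same attaching type (smooth rationals within $F$ contribute at least $3$ special points within $F$ and are therefore not contracted). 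If $F$ is attached at $q$ via an $A_k$-singularity with $k\in\{2,3,4\}$, the pointed normalization of $F$ at $q$ is an elliptic or Weierstrass tail/chain in $\tilde C$ with marked point(s) $q_i$; by the marked-point convention in Definition \ref{D:Attaching} this is $A_1$-attached, and since $\tilde F$ is not a smooth rational it survives stabilization and descends to an $A_1$-attached $\alpha_c$-unstable subcurve of $\tilde C'$. The reverse direction runs the correspondence backward, lifting an $\alpha_c$-unstable subcurve of $\tilde C'$ through the stabilization to $\tilde C$, then gluing back at $q$.

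The main obstacle is verifying that the attaching types in the correspondence match those excluded in Definition \ref{defn-stability}. For $\alpha_c=\second$, an $A_3$-attached elliptic tail at $q$ in $C$ becomes $A_1$-attached in $\tilde C'$, and both are excluded. For $\alpha_c=\third$, an elliptic chain $A_1/A_4$-attached with its $A_4$ at $q$ becomes $A_1/A_1$-attached, while an $A_4/A_4$-attached chain becomes either $A_1/A_1$ or $A_1/A_4$ depending on whether one or both $A_4$-points coincide with $q$; in each case the resulting type remains excluded. A careful enumeration of these attaching possibilities, together with the parallel check that no \emph{new} $A_1$-attached tails or chains can appear in $\tilde C'$ without a corresponding structure in $C$, confirms the biconditional.
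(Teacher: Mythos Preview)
Your proposal is correct and follows essentially the same approach as the paper; indeed, the paper's printed proof is simply ``Follows from the definition of $\alpha$-stability by an elementary case-by-case analysis,'' and your plan supplies precisely that analysis with the same logical structure (singularities correspond, then unstable subcurves correspond via the natural map between $C$ and its stable pointed normalization). One minor imprecision: you list Weierstrass tails among the building blocks of $\alpha_c$-unstable subcurves, but these are only excluded at $\alpha_c-\epsilon$, not at $\alpha_c=\thirdf$ itself, so they never enter the case analysis; this does not affect the argument since the relevant unstable subcurves at each critical value are built from pieces of genus at most $1$.
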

\begin{proof} Follows from the definition of $\alpha$-stability by an elementary case-by-case analysis.
\end{proof}

\subsection{$\alpha_c$-closed curves}\label{s:closed}
We now give an explicit characterization of the closed points of $\SM_{g,n}(\alpha_c)$ when $\alpha_c\in \{\first, \second, \third\}$ is a critical value (see Theorem \ref{T:ClosedCurves}).

\begin{defn}[$\alpha_c$-atoms]  \label{defn-atom}
\hfill
\begin{enumerate} 
\item A \emph{$\firstf$-atom} is 
a $1$-pointed curve of arithmetic genus one obtained by gluing \\ 
$\Spec\CC[x,y]/(y^2-x^{3})$ and $\Spec \CC[n]$ 
via $x=n^{-2}$, $y=n^{-3}$, and marking the point $n=0$.

\item A \emph{$\secondf$-atom} is a $2$-pointed curve of arithmetic genus one obtained by gluing \\
$\Spec \CC[x,y]/(y^2-x^{4})$ and $\Spec \CC[n_1] \coprod \Spec \CC[n_2]$  
via $x=(n_1^{-1}, n_2^{-1})$, $y=(n_1^{-2}, -n_2^{-2})$, and marking the points $n_1=0$ and $n_2=0$.
\item A \emph{$\thirdf$-atom} is 
a $1$-pointed curve of arithmetic genus two obtained gluing \\ $\Spec
\CC[x,y]/(y^2-x^{5})$ and $\Spec \CC[n]$ 
via $x=n^{-2}$, $y=n^{-5}$, and marking the point $n=0$.
\end{enumerate}
We will often abuse notation by simply writing $E$ to refer to the $\alpha_c$-atom $(E, q)$ 
if $\alpha_c \in \{\third, \first\}$ (resp., $(E,q_1, q_2)$ if $\alpha_c = \second$).  
\end{defn}
Every $\alpha_c$-atom $E$ satisfies $\Aut(E) \simeq \GG_m$, where the action of $\GG_m=\Spec \CC[t, t^{-1}]$ is given by
\begin{equation}\label{gm-action-atoms}
\begin{aligned} 
\text{For $\alpha_c=\first$:} \quad \ &x \mapsto t^{-2}x,\, y \mapsto t^{-3} y,\, n \mapsto t n. \\
\text{For $\alpha_c=\second$:} \quad \ &x \mapsto t^{-1}x,\, y \mapsto t^{-2} y,\, n_1 \mapsto t n_1,\, n_2 \mapsto t n_2. \\
\text{For $\alpha_c=\third$:} \quad \ &x \mapsto t^{-2}x,\, y \mapsto t^{-5}y,\, n \mapsto t n.
\end{aligned}
\end{equation}
\begin{figure}[hbt]
\begin{center}
\begin{tikzpicture}[scale=.7]
\tikzstyle{blackfill}=[circle,fill=black,draw=Black]
	\begin{pgfonlayer}{nodelayer}
		\coordinate [style=black] (a0) at (-0.5, 2) {};
		\coordinate [style=black] (a1) at (1.5, 3) {};
		\coordinate [style=black] (a2) at (2.87, 3) {};
		\coordinate [style=black] (a3) at (4.87, 2) {};

		\node at (1.75,2.3)  {$A_3$};

		\filldraw [black] (-.42,1.5) circle (2pt);
		\node at (-.75, 1.5) {$q_1$};
		
		\filldraw [black] (4.8,1.5) circle (2pt);
		\node at (5.2, 1.5) {$q_2$};

		\coordinate [style=black] (b0) at (-7, 2) {};
		\coordinate [style=black] (b1) at (-2, 2) {};
		\coordinate [style=black] (b2) at (-3.75, 2.5) {};
		\filldraw [black] (-6.5,1.85) circle (2pt);
		\node at (-6.5, 1.5) {$q$};

		\node at (-3.75,2.85)  {$A_2$};

		\coordinate [style=black] (c0) at (6.5, 2) {};
		\coordinate [style=black] (c1) at (11.5, 2) {};
		\coordinate [style=black] (c2) at (9.75, 2.5) {};
		\filldraw [black] (7,1.85) circle (2pt);
		\node at (7, 1.5) {$q$};
		
		\node at (9.75,2.85)  {$A_4$};

	\end{pgfonlayer}
	\begin{pgfonlayer}{edgelayer}
		\draw [style=very thick,in=270, out=-15, looseness=2.75] (a1) to (a0);
		\draw [style=very thick,in=270, out=195, looseness=2.75] (a2) to (a3);
		
		\draw [style=very thick,in=270, out=-17] (b0) to (b2);
		\draw [style=very thick,in=240, out=251, looseness=1.25] (b2) to (b1);
		
		\draw [style=very thick,in=270, out=-17] (c0) to (c2);
		\draw [style=very thick,in=240, out=251, looseness=1.25] (c2) to (c1);
	\end{pgfonlayer}
\end{tikzpicture}
\end{center}
\vspace{-.8cm}
\caption{A $\firstf$-atom, $\secondf$-atom, and $\thirdf$-atom, respectively.}
\label{F:atoms}
\end{figure}

In order to describe the closed points of $\SM_{g,n}(\alpha_c)$ precisely, we need the following terminology. 
We say that $C$ admits a \emph{decomposition} $C=C_1 \cup \cdots \cup C_r$ if $C_1, \ldots, C_r$ 
are proper subcurves whose union is all of $C$, and either $C_i \cap C_j = \emptyset$ or $C_i$ meets $C_j$ nodally. 
When $(C, \pn)$ is an $n$-pointed curve, and $C=C_1 \cup \cdots \cup C_r$ is a decomposition of $C$, 
we always consider $C_i$ as a pointed curve by taking as marked points the subset of $\{p_i\}_{i=1}^{n}$ 
supported on $C_i$ and the attaching points $C_i \cap (\overline{C \backslash C_i})$.

\begin{defn}[$\alpha_c$-closed curves] \label{defn-alpha-closed} Let $\alpha_c \in \{\third, \second, \first\}$ be a critical value. 
We say that an $n$-pointed curve $(C, \pn)$ is \emph{$\alpha_c$-closed}  if there is a decomposition 
$C=K\cup E_1 \cup \cdots \cup E_r,$ where 
\begin{itemize}
	\item[(1)] $E_1, \ldots, E_r$ are $\alpha_c$-atoms.
	\item[(2)] $K$ is an $(\alpha_c \pe)$-stable curve containing no nodally-attached $\alpha_c$-tails.
	\item[(3)] $K$ is a closed curve in the stack of $(\alpha_c \pe)$-stable curves. 
\end{itemize}
We call $K$ the \emph{core of $(C, \pn)$}, and we call the decomposition $C=K \cup E_1 \cup \cdots \cup E_r$ 
the \emph{canonical decomposition of $C$}. 
Of course, we consider $K$ as a pointed curve where the set of marked points is the union of $\pn \cap K$ and 
$K \cap (\overline{C \setminus K})$.  
Note that we allow the possibility that $K$ is disconnected or empty.
\end{defn}

We can now state the main result of this section.

\begin{theorem}[Characterization of $\alpha_c$-closed curves]\label{T:ClosedCurves}
Let $\alpha_c \in \{\first, \second, \third\}$ be a critical value. An $\alpha_c$-stable curve $(C, \pn)$ is a closed point of $\SM_{g,n}(\alpha_c)$ if and only if $(C, \pn)$ is $\alpha_c$-closed.
\end{theorem}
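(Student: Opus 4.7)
My plan is to prove both directions by induction on the critical value, proceeding from $\alpha_c = \first$ down to $\alpha_c = \third$, using as a fundamental building block a family of $\GG_m$-equivariant isotrivial specializations that replace an $\alpha_c$-tail by the corresponding $\alpha_c$-atom. Concretely, using the $\GG_m$-weights of \eqref{gm-action-atoms}, the families $y^2 = x^3 + t^6$ (at $\firstf$), $y^2 = x^4 + t^4$ (at $\secondf$), and $y^2 = x^5 + t^{10}$ (at $\thirdf$), marked appropriately at infinity, have generic fiber an elliptic tail, elliptic bridge, and Weierstrass tail respectively, and special fiber the $\alpha_c$-atom. Iterating these local degenerations produces isotrivial specializations of elliptic chains into chains of $\secondf$-atoms and of Weierstrass chains into chains of bridges capped by a $\thirdf$-atom.

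For the $(\Rightarrow)$ direction, suppose $(C, \pn)$ is a closed point of $\SM_{g,n}(\alpha_c)$. If $C$ contained a nodally- or $A_1/A_1$-attached $\alpha_c$-tail or chain, I would apply the relevant local degeneration to that subcurve and, by Corollary \ref{C:Glue}, glue back to obtain a non-trivial isotrivial specialization of $(C, \pn)$ entirely within $\SM_{g,n}(\alpha_c)$, contradicting closedness. Hence no such subcurves exist, and I may peel off every maximal $\alpha_c$-atom from $C$ to obtain a decomposition $C = K \cup E_1 \cup \cdots \cup E_r$; Lemma \ref{L:StableNorm} guarantees $K$ is $(\alpha_c + \epsilon)$-stable and still free of nodally-attached $\alpha_c$-tails. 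Any non-trivial isotrivial specialization of $K$ within $\SM_{g,n}(\alpha_c + \epsilon)$ would trivially lift (leaving the atoms untouched) to one of $(C, \pn)$, so $K$ is closed in $\SM_{g,n}(\alpha_c + \epsilon)$, completing the verification of Definition \ref{defn-alpha-closed}.

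For the $(\Leftarrow)$ direction, let $C = K \cup E_1 \cup \cdots \cup E_r$ be $\alpha_c$-closed and let $(\mathcal{C}, \pn) \to \Delta$ be an isotrivial specialization in $\SM_{g,n}(\alpha_c)$ with generic fiber $(C, \pn)$ and special fiber $(C', \pn)$. The nodes joining $K$ to the atoms are disconnecting, so by Corollary \ref{C:Attaching1} they persist in $C'$; normalizing along them yields a matching decomposition $C' = K'_0 \cup E'_1 \cup \cdots \cup E'_r$. Each $\alpha_c$-atom is, by direct inspection of Definition \ref{defn-atom}, the unique $\alpha_c$-stable pointed curve of its arithmetic/topological type (a single isomorphism class in the relevant moduli), so $E'_i \simeq E_i$. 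For the core, it remains to prove that $K'_0 \simeq K$, which I reduce, via the closedness of $K$ in $\SM_{g,n}(\alpha_c + \epsilon)$, to showing that $K'_0$ is itself $(\alpha_c + \epsilon)$-stable.

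The main obstacle is this last point: a priori the core could acquire a new $\alpha_c$-critical singularity during the specialization (for instance, two nodes of $K$ colliding into a tacnode at $\alpha_c = \secondf$), landing $K'_0$ outside $\SM_{g,n}(\alpha_c+\epsilon)$. To rule this out, I observe that any such newly-created critical singularity of $K'_0$, combined with the still-attached atoms $E'_i$, would assemble into an $\alpha_c$-unstable subcurve of $C'$---for instance, an $A_1/A_4$-attached elliptic bridge at $\alpha_c = \thirdf$ arising from a tacnode that is adjacent to a $\thirdf$-atom, or an $A_3$-attached elliptic tail at $\alpha_c = \secondf$ produced by a node colliding next to a core component of arithmetic genus $1$---contradicting the $\alpha_c$-stability of $C'$. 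This combinatorial verification, carried out case-by-case using Corollaries \ref{C:Attaching1}--\ref{C:Attaching2} together with the explicit list of $\alpha_c$-unstable subcurves in Definition \ref{defn-stability}, completes the argument.
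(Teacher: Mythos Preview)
Your proposal has genuine gaps in both directions, and in each case the missing ingredient is exactly what the paper's Lemmas \ref{L:Closed2} and \ref{L:Specialize} supply.

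In the $(\Rightarrow)$ direction, you argue that any nodally-attached $\alpha_c$-tail which is not already an atom can be degenerated, hence all $\alpha_c$-tails are atoms. But after peeling off atoms, your remaining core $K$ may still contain $\alpha_c$-\emph{critical singularities} (cusps, tacnodes, ramphoid cusps) not lying on any atom, and then $K$ is not $(\alpha_c+\epsilon)$-stable. Your citation of Lemma \ref{L:StableNorm} is a misuse: that lemma concerns normalization at a critical singularity, not removal of nodally-attached atoms; the correct statement (Lemma \ref{L:Norm}) only gives $\alpha_c$-stability of $K$. The paper closes this gap via Lemma \ref{L:Specialize}(1): at any critical singularity one can \emph{sprout} an atom via an isotrivial specialization, so closedness forces every critical singularity to already sit on a nodally-attached atom.

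In the $(\Leftarrow)$ direction, the ``main obstacle'' you identify is real, but your proposed resolution is incorrect. You claim that if $K'_0$ acquires a new critical singularity then, combined with the atoms $E'_i$, it would assemble into an $\alpha_c$-unstable subcurve of $C'$. This is false in general: for instance at $\alpha_c=\third$, a cusp of $K$ can degenerate to a ramphoid cusp in $K'_0$ (since $A_4$ deforms to $A_2$), sitting entirely inside the core and away from all atoms, without creating any forbidden subcurve of $C'$. The paper's argument avoids this issue entirely: by Lemma \ref{L:Norm} the normalized core family $\K$ lands in $\SM_{g',n'}(\alpha_c)$, and Lemma \ref{L:Closed2} then says directly that $K$, having no nodally-attached $\alpha_c$-tails, remains a \emph{closed} point of $\SM_{g',n'}(\alpha_c)$, so $\K$ is trivial. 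The proof of Lemma \ref{L:Closed2} is where the real work happens: it uses a weighted blow-up (citing \cite{radu-yano}) to show that any isotrivial specialization in $\SM_{g,n}(\alpha_c)$ whose special fiber has a critical singularity forces the generic fiber to contain a nodally-attached $\alpha_c$-tail. This geometric input is not something a combinatorial case-check can replace.
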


To prove the above theorem, we need several preliminary lemmas.

\begin{lemma}\label{L:Closed1}
\hfill
\begin{enumerate}
\item Suppose $(E,q)$ is an elliptic tail. Then $(E, q)$ is a closed point of $\SM_{1,1}(\first)$ if and only if $(E, q)$ is a $\firstf$-atom.
\item Suppose $(E,q_1,q_2)$ is an elliptic bridge. Then $(E, q_1,q_2)$ is a closed point of $\SM_{1,2}(\second)$ if and only if $(C, q_1,q_2)$ is a $\secondf$-atom.
\item Suppose $(E,q)$ is a Weierstrass tail. Then $(C, q)$ is a closed point of $\SM_{2,1}(\third)$ if and only if $(C, q)$ is a $\thirdf$-atom.
\end{enumerate}
\end{lemma}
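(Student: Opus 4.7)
The plan is to prove each part of the lemma in two directions: atoms are closed points (the ``if'' direction), and conversely every non-atom isotrivially specializes to the atom and so is not closed (the ``only if'' direction).

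For the ``if'' direction, I would exploit that $\Aut(E)\cong\GG_m$, with the explicit action given by \eqref{gm-action-atoms}, together with the local structure theorem for Artin stacks with linearly reductive stabilizers (Luna's slice theorem, or equivalently the local quotient presentations to be developed in the next section). An \'etale neighborhood of $[E]$ in $\SM_{g,n}(\alpha_c)$ then has the form $[\Def(E)/\GG_m]$. Because the smooth locus of each atom is a rigid configuration of $\PP^1$'s---one $\PP^1$ with two distinguished points for parts (1) and (3), and two $\PP^1$'s each carrying one tacnode branch and one marked point for part (2)---the miniversal deformation space $\Def(E)$ is isomorphic to the miniversal deformation of the unique $\alpha_c$-critical singularity ($A_2$, $A_3$, or $A_4$ respectively). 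A direct computation using \eqref{gm-action-atoms} shows that $\GG_m$ scales the coordinates on $\Def(E)$ with weights $(4,6)$ for the $\firstf$-atom (versal family $y^2=x^3+sx+u$), with weights $(2,3,4)$ for the $\secondf$-atom (versal family $y^2=x^4+a_2x^2+a_1x+a_0$), and with weights $(4,6,8,10)$ for the $\thirdf$-atom (versal family $y^2=x^5+a_3x^3+a_2x^2+a_1x+a_0$). All weights are nonzero and of the same sign, so the origin is the unique closed point of $[\Def(E)/\GG_m]$, and hence $[E]$ is a closed point of $\SM_{g,n}(\alpha_c)$.

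For the ``only if'' direction, I would enumerate the $\alpha_c$-stable topological types of elliptic tails, bridges, and Weierstrass tails (using the topological-type lists of Figure~\ref{F:TopologicalTypes} for parts (2) and (3)), and for each non-atom type construct an explicit isotrivial specialization to the atom. The prototypical construction is: for a smooth elliptic tail in Weierstrass form $y^2=x^3+ax+b$ with $q=\infty$, the family $y^2=x^3+at^4x+bt^6$ over $\AA^1_t$ has generic fiber isomorphic to the original (via $x\mapsto t^{-2}x$, $y\mapsto t^{-3}y$) and special fiber the cuspidal curve $y^2=x^3$, i.e.\ the $\firstf$-atom. Analogous parameter-scaling handles the nodal elliptic tail in part (1) (via $y^2=x^3+ct^2x^2$) and the irreducible smooth, nodal, and cuspidal cases in parts (2) and (3). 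For reducible bridges and reducible Weierstrass tails, one designs families that coalesce the outer singularities into the single $\alpha_c$-critical singularity, so that in the $t\to 0$ limit the configuration collapses to the atom.

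The main technical obstacle will be the case analysis for reducible configurations in parts (2) and (3): elliptic bridges consisting of two $\PP^1$'s joined nodally (which must be specialized to the tacnodal $\secondf$-atom by smoothing the node into a tacnode), and the several reducible topological types of Weierstrass tails listed in Figure~\ref{F:TopologicalTypes} (which must each be specialized to the ramphoid-cuspidal $\thirdf$-atom). In each case the choice of specialization family is guided by the defining equation of the atom's critical singularity, arranged so that the outer data in the generic fiber coalesce into the unique critical singularity of the atom as $t\to 0$.
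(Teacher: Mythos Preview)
Your ``only if'' direction is close to the paper's, though the paper avoids enumerating topological types: it writes \emph{any} Weierstrass tail uniformly as $y^2=x^5z+a_3x^3z^3+a_2x^2z^4+a_1xz^5+a_0z^6$ on $\PP(1,3,1)$ and scales via $(x,y,z)\mapsto(x,\lambda y,\lambda^2 z)$, which sends $a_i\mapsto \lambda^{10-2i}a_i$ and hence specializes every Weierstrass tail---smooth, singular, or reducible---to $y^2=x^5z$ in one stroke. This sidesteps the reducible case analysis you flag as the main obstacle.

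There is a genuine gap in your ``if'' direction for part (3). You claim that $\Def(E)$ for the $\thirdf$-atom coincides with the versal deformation of the $A_4$-singularity, because the pointed normalization is a rigid $2$-pointed $\PP^1$. But rigidity of the normalization does not rule out \emph{crimping} deformations: for a ramphoid cusp (unlike a cusp or tacnode) there is a one-parameter family of ways to impose the singularity on a fixed pointed branch. Concretely, $\TT^1(E)\cong\Cr^1(E)\oplus\TT^1(\hat\cO_{E,\xi})$ carries a crimping coordinate $c$ of $\GG_m$-weight $+1$ in addition to the singularity coordinates $s_0,\dots,s_3$ of weights $-10,-8,-6,-4$ (this is computed later in the paper, in the deformation-theory section). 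Since the weights have \emph{both} signs, the origin is not the unique closed orbit of $[\Def(E)/\GG_m]$: any point with $c\neq 0$ and some $s_j\neq 0$ has a closed $\GG_m$-orbit. Your Luna-slice argument therefore does not establish that the atom is closed.

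The paper argues instead that the $\thirdf$-atom admits no nontrivial isotrivial specialization in $\SM_{2,1}(\third)$: in any family $(\E\to\Delta,\sigma)$ with atom as generic fiber, the ramphoid cusp persists as a section $\tau$ (nothing worse is allowed), and normalizing along $\tau$ yields a $2$-pointed genus-$0$ family with smooth generic fiber; relative ampleness of $\omega_{\tilde\E/\Delta}(3\tilde\tau+\tilde\sigma)$ then forces the special fiber to be irreducible, hence smooth, so the specialization is trivial. For parts (1) and (2) there is no crimping, and the paper simply records the \emph{global} identifications $\SM_{1,1}(\first)\simeq[\CC^2/\GG_m]$ and $\SM_{1,2}(\second)\simeq[\CC^3/\GG_m]$ with the weights you computed; your local Luna-slice computation recovers these and is sound there, though strictly speaking one needs the global identification (not just an \'etale chart) to conclude closedness in the ambient stack.
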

\begin{proof} 
Case (1) follows from the observation that
$\SM_{1,1}(\first) \simeq [\CC^2 / \GG_m]$, where $\GG_m$ acts with weights $4$ and $6$. 
Case (2) follows from the observation that $\SM_{1,2}(\second) \simeq [\CC^3 / \GG_m]$, where $\GG_m$
acts with weights $2$, $3$, and $4$. The proofs of these assertions parallel
our argument in case (3) below, so we leave the details to the reader.

We proceed to prove case (3). 
First, we show that if $(E, q)$ is any Weierstrass tail, then $(E, q)$ admits an isotrivial specialization to a $\thirdf$-atom. 
To do so, we can write any Weierstrass genus $2$ tail as a degree $2$ cover of $\PP^1$ with the equation on 
$\PP(1,3,1)$ given by
\[
 y^2 = x^{5}z+ a_3x^3z^3 + a_2 x^2z^4 + a_1xz^5 + a_0 z^6
\]
where $a_i \in \CC$, and the marked point $q$ corresponds to $y=z=0$. Acting 
by $\lambda \cdot (x, y, z)=(x, \lambda y, \lambda^2 z)$, we see that this cover is isomorphic to
\[
 y^2 = x^{5}z+ \lambda^4a_3x^3z^3 + \lambda^6a_2 x^2z^4 + \lambda^8a_1xz^5 + \lambda^{10}a_0 z^6
\]
for any $\lambda \in \CC^*$. Letting $\lambda \rightarrow 0$, we obtain an isotrivial specialization of $(E, q)$ to 
the double cover $y^2=x^5z$, which is a $\thirdf$-atom.

Next, we show that if $(E,q)$ is a $\thirdf$-atom, 
then $(E,q)$ does not admit any nontrivial isotrivial specializations in $\SM_{2,1}(\third)$. 
Let $(\E \rightarrow \Delta, \sigma)$  be an isotrivial specialization in $\SM_{2,1}(\third)$ with generic fiber isomorphic to $(E,q)$. 
Let $\tau$ be the section of $\E \rightarrow \Delta$ which picks out the unique ramphoid cusp of the generic fiber. 
Since the limit of a ramphoid cusp is a ramphoid cusp in $\SM_{2,1}(\third)$, $\tau(0)$ is also ramphoid cusp. 
Now let $\tau\co  \tilde{\E} \rightarrow \E$ be the simultaneous normalization of $\E$ along 
$\tau$, and let $\tilde{\tau}$ and $\tilde{\sigma}$ be the inverse images of $\tau$ and $\sigma$ respectively. 
Then $(\tilde{\E} \rightarrow \Delta, \tilde{\tau}, \tilde{\sigma})$ is an isotrivial specialization 
of $2$-pointed curves of arithmetic genus $0$ with smooth general fiber. 
To prove that the original isotrivial specialization is trivial, it suffices to prove that
$(\tilde{\E} \rightarrow \Delta, \tilde{\tau}, \tilde{\sigma})$ is trivial, i.e. we must show that the special fiber is smooth 
(equivalently, irreducible). The fact that $\omega_{\E/\Delta}(\sigma)$ is relatively ample on 
$\E$ implies that $\omega_{\tilde{\E}/\Delta}(3\tilde{\tau}+\tilde{\sigma})$ is relatively ample on $\tilde{\E}$, 
which implies that the special fiber of 
$\tilde{\E}$ is irreducible. 
\end{proof}


 
\begin{lemma}\label{L:Closed2}
Suppose $(C, \pn)$ is a closed point of $\SM_{g,n}(\alpha_c \pe)$. Then $(C, \pn)$ remains closed in $\SM_{g,n}(\alpha_c)$ if and only if $(C, \pn)$ contains no nodally attached $\alpha_c$-tail.
\end{lemma}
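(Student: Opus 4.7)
For the forward direction, assume $(C, \pn) = (K \cup E, \pn)$ where $E$ is a nodally attached $\alpha_c$-tail (an elliptic tail, elliptic bridge, or Weierstrass tail for $\alpha_c = \first, \second, \third$ respectively). Since $\SM_{g_E, n_E}(\alpha_c)$ is of finite type over $\CC$, the point $[E]$ has a closed specialization, which by Lemma \ref{L:Closed1} must be the corresponding $\alpha_c$-atom $E_0$. The resulting nontrivial isotrivial specialization $(\E \to \Delta, \{\tau_i\})$ of $E$ to $E_0$ may be glued to the constant family $K \times \Delta$ along the attaching sections to produce an isotrivial family $(\C \to \Delta, \sigma)$ with generic fiber $(C, \pn)$ and special fiber $(K \cup E_0, \pn)$. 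I would verify from Definition \ref{defn-stability} that $(K \cup E_0, \pn)$ is $\alpha_c$-stable: the only new feature compared to $(K \cup E, \pn)$ is the $\alpha_c$-critical singularity inside $E_0$ (allowed at $\alpha_c$), and the nodally attached atom manifests in $(K \cup E_0, \pn)$ as a nodally attached $\alpha_c$-tail (or, for $\alpha_c = \second$, an $A_1/A_1$-attached elliptic chain of length one), a configuration excluded only at $\alpha_c - \epsilon$ and not at $\alpha_c$ itself. Thus $(K \cup E_0, \pn) \not\cong (C, \pn)$ is a nontrivial isotrivial specialization in $\SM_{g,n}(\alpha_c)$, showing that $(C, \pn)$ is not closed there.

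For the backward direction, suppose $(C, \pn)$ is closed in $\SM_{g,n}(\alpha_c + \epsilon)$ and contains no nodally attached $\alpha_c$-tail. Let $(\C \to \Delta, \sigma)$ be any isotrivial specialization in $\SM_{g,n}(\alpha_c)$ with generic fiber $(C, \pn)$ and special fiber $(C_0, \pn)$. If $C_0$ has no $\alpha_c$-critical singularity, then direct inspection of Definition \ref{defn-stability} shows that $C_0$ is $(\alpha_c + \epsilon)$-stable; the openness of $\SM_{g,n}(\alpha_c + \epsilon) \hookrightarrow \SM_{g,n}(\alpha_c)$ from Theorem \ref{T:algebraicity} then forces the entire family into $\SM_{g,n}(\alpha_c + \epsilon)$, and the closedness of $(C, \pn)$ there gives $\C_0 \cong (C, \pn)$.

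It remains to rule out the case that $C_0$ contains an $\alpha_c$-critical singularity $q_0$. The plan is to show that such a $q_0$ forces $(C, \pn)$ to contain a nodally attached $\alpha_c$-tail, contradicting the hypothesis. Choose a section $\tau$ of $\C \to \Delta$ with $\tau(0) = q_0$. The singularity $q_0$ has a fixed analytic type $A_{k_c}$ with $k_c \in \{2, 3, 4\}$; Proposition \ref{P:limits-outer} handles the outer case (which only arises for $\alpha_c = \second$), while for inner critical singularities the rigidity of the local normal form permits normalizing $\C$ along $\tau$. Stabilizing by contracting the resulting semistable rational components (via Lemma \ref{L:StableNorm}) produces a subfamily $\F \to \Delta$ whose special fiber $\F_0 \subset C_0$ contains $q_0$; a delta-invariant and branch count, together with Lemma \ref{L:Closed1}, forces $\F_0$ to be an $\alpha_c$-atom. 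The generic fiber $\F_{\bar\eta} \subset C$ thus admits an isotrivial specialization to $\F_0$ in $\SM_{g_E, n_E}(\alpha_c)$, and arguments parallel to Lemmas \ref{L:HmLimits}, \ref{L:LimitChain}, and \ref{L:LimitWChain} classify $\F_{\bar\eta}$ as an $\alpha_c$-tail. Finally, the closedness of $(C, \pn)$ in $\SM_{g,n}(\alpha_c + \epsilon)$ rules out any non-nodal attaching of $\F_{\bar\eta}$ to the rest of $C$ (since a non-nodal attaching would itself produce an isotrivial specialization in $\SM_{g,n}(\alpha_c + \epsilon)$ via a similar gluing construction), yielding the desired contradiction.

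The main obstacle is the third paragraph: cleanly extracting the subfamily $\F \to \Delta$ and establishing that its generic fiber $\F_{\bar\eta}$ is a nodally attached $\alpha_c$-tail. This requires combining a local deformation analysis of $A_{k_c}$-singularities (in the spirit of Proposition \ref{P:limits-outer}) with the global combinatorial constraints of $(\alpha_c + \epsilon)$-stability, together with a careful bookkeeping of how the stable pointed normalization at $q_0$ separates $C_0$ into the atom portion and its complement.
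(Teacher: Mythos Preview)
Your forward direction and the first part of the backward direction (when $C_0$ has no $\alpha_c$-critical singularity) are correct and match the paper's argument.

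The third paragraph, however, has a genuine gap, and it is not merely a matter of bookkeeping. You propose to choose a section $\tau$ through the critical singularity $q_0\in C_0$, normalize $\C$ along $\tau$, and thereby extract a subfamily $\F\to\Delta$ with $\F_{\bar\eta}\subset C$ an $\alpha_c$-tail. But the generic fiber $C$ is $(\alpha_c\pe)$-stable and hence has \emph{no} $\alpha_c$-critical singularity; $\tau(\bar\eta)$ is at worst a milder singularity (typically a smooth point). Normalizing along $\tau$ therefore does nothing on the generic fiber and cannot carve out a nontrivial subcurve $\F_{\bar\eta}\subset C$. The $\alpha_c$-atom in $C_0$ is produced by the degeneration; there is no a priori subcurve of $C$ that corresponds to it, so your ``subfamily $\F$'' does not exist as described.

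The paper's argument takes the opposite tack: rather than trying to find a piece of $C$ that becomes the atom, it \emph{modifies the special fiber} so that it lies back in $\SM_{g,n}(\alpha_c\pe)$ and then invokes closedness there. Concretely (for $\alpha_c=\thirdf$), one performs a weighted blow-up $\phi\co\tilde\C\to\C$ centered at $q_0$ (as in \cite[\S7.6]{radu-yano}). This leaves the generic fiber equal to $C$, but replaces the ramphoid cusp in $C_0$ by a nodally attached Weierstrass tail $T$: the new special fiber $\tilde C_0$ is the stable pointed normalization of $C_0$ at $q_0$ glued to $T$. By Lemma~\ref{L:StableNorm} and Corollary~\ref{C:Glue}, $\tilde C_0$ is $(\alpha_c\pe)$-stable, so $(\tilde\C\to\Delta)$ is an isotrivial specialization inside $\SM_{g,n}(\alpha_c\pe)$. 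Since $(C,\pn)$ is closed there, the family is trivial, i.e.\ $C\cong\tilde C_0$, and hence $C$ itself carries a nodally attached Weierstrass tail. The missing idea in your proposal is exactly this birational modification of the family that converts the critical singularity into a nodally attached tail; without it, there is no mechanism to force the tail to appear in $C$.
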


\begin{proof} We prove the case $\alpha_c = \third$ and leave the other cases to the reader. To lighten notation, 
we often omit marked points $\pn$ in the rest of the proof. 

First, we show that if $(C, \pn)$ has $A_1$-attached Weierstrass tail, then it does not remain closed in $\SM_{g,n}(\third)$. 
Suppose we have a decomposition $C=K\cup Z$, where $(Z, q)$ is an $A_1$-attached Weierstrass tail. By Lemma \ref{L:Closed1}, 
$(Z, q)$ admits an isotrivial specialization to a $\thirdf$-atom $(E, q_1)$. 
We may glue this specialization to the trivial family $K \times \Delta$ to obtain a nontrivial isotrivial specialization 
$C \rightsquigarrow K \cup E$, where $E$ is nodally attached at $q_1$.
By Lemma \ref{L:Glue}, $K \cup E$ is $\thirdf$-stable, so this is a nontrivial isotrivial specialization in $\SM_{g,n}(\third)$.

Next, we show that if $(C, \pn)$ has no nodally-attached Weierstrass tails, then it remains closed in $\SM_{g,n}(\third)$. 
In other words, if there exists a nontrivial isotrivial specialization $C \rightsquigarrow C_0$, 
then $C$ necessarily contains a nodally-attached Weierstrass tail. 
To begin, note that the special fiber $C_0$ of the nontrivial isotrivial specialization $\C \rightarrow \Delta$ must 
contain at least one ramphoid cusp. Otherwise, $(\C \rightarrow \Delta, \sigman)$ would constitute a nontrivial, isotrivial specialization in 
$\SM_{g,n}(\third +\epsilon)$, contradicting the hypothesis that $(C, \pn)$ is closed in $\SM_{g,n}(\third \pe)$. 
For simplicity, let us assume that the special fiber $C_0$ contains a single ramphoid cusp $q$. 
Locally around this point, we may write $\C$ as 
\[
y^2=x^5+a_{3}(t)x^3+a_2(t)x^2+a_1(t)x+a_0(t), 
\] 
where $t$ is the uniformizer of $\Delta$ at $0$ and $a_i(0)=0$. By \cite[Section 7.6]{radu-yano}, after possibly a finite base change, 
there exists a (weighted) blow-up $\phi\co \tilde{\C} \rightarrow \C$ such that the special fiber $\tilde{C}_0$ is isomorphic to the normalization of 
$C$ at $q$ attached nodally to the curve $T$, where $T$ is defined by an equation $y^2=x^5+b_{3}x^3z^2+b_2x^2z^3+b_1xz^4+b_0z^{5}$ 
on $\PP(2,5,2)$ for some $[b_3:b_2:b_1:b_0]\in \PP(4,6,8,10)$ (depending on the $a_i(t)$) and such that
$T$ is attached to $C$ at $[x:y:z]=[1:0:1]$. Evidently, $T$ is a genus $2$ double cover of $\PP^1\cong \PP(2,2)$ via the projection 
$[x:y:z] \mapsto [x:y]$ and $[1:0:1]$ is a ramification point of this cover. 
It follows that $\tilde{C}_0$ has a Weierstrass tail.
The special fiber of $\tilde{\C}$ is isomorphic to the stable pointed normalization of $C_0$ at $q$, 
together with a nodally attached Weierstrass tail. 
By Lemma \ref{L:StableNorm} and Corollary \ref{C:Glue}, $(\tilde{\C}_0, \pn)$ is $\alpha$-stable. 
Since it contains no ramphoid cusps, it is also $(\alpha_c \pe)$-stable. 
By hypothesis, $(C, \pn)$ is closed in $\SM_{g,n}(\alpha \pe)$, so the family 
$(\tilde{\C} \rightarrow \Delta, \sigman)$ must be trivial. 
This implies that the generic fiber $(C, \pn)$ must have a nodally-attached Weierstrass tail.
\end{proof}

The following lemma says that one can use isotrivial specializations to replace $\alpha_c$-critical singularities 
and $\alpha_c$-tails by $\alpha_c$-atoms.

\begin{lemma}\label{L:Specialize}
Let $(C, \pn)$ be an $n$-pointed curve, and let $E$ be the $\alpha_c$-atom.

(1) Suppose $q \in C$ is an $\alpha_c$-critical singularity. Then there exists an isotrivial specialization 
$
C \rightsquigarrow C_0 = \tilde{C} \cup E 
$
to an $n$-pointed curve $C_0$ which is the nodal union of $E$ and the stable
pointed normalization $\tilde{C}$ of $C$ at $q$ along the marked point(s) of $E$ and the pre-image(s) of $q$ in $\tilde{C}$.

(2) Suppose $C$ decomposes as $C=K \cup Z$, where $Z$ is an $\alpha_c$-tail. 
Then there exists an isotrivial specialization
$
C \rightsquigarrow C_0 = K \cup E
$
to an $n$-pointed curve $C_0$ which is the nodal union of $K$ and $E$ along the marked point(s) of $E$ and $K \cap Z$.

\end{lemma}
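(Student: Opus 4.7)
The plan is to construct both isotrivial specializations explicitly, with the $\GG_m$-action on the atom $E$ recorded in \eqref{gm-action-atoms} as the central tool. Part~(2) will follow quickly from the specialization of tails furnished by Lemma~\ref{L:Closed1}, while part~(1) requires a local weighted blow-up that inserts the atom at the critical singularity.

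I would first dispatch (2). Let $q_Z$ denote the attaching point(s) of the $\alpha_c$-tail $Z$ in $C$. Lemma~\ref{L:Closed1} provides an isotrivial specialization $\cZ \to \Delta$ with generic fiber $Z$ and special fiber $E$, together with section(s) specializing $q_Z$ to the marked point(s) of $E$. Gluing $\cZ$ to the trivial family $K \times \Delta$ along the trivial section(s) $\{q_Z\} \times \Delta$ and the corresponding section(s) of $\cZ$ produces a flat family $\C \to \Delta$ whose generic fiber is $K \cup Z = C$ and whose special fiber is $K \cup E$, as required.

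For (1), I would work in a formal neighborhood of $q$, where $C$ is isomorphic to $U := \Spec \CC[x,y]/(y^2 - x^{k+1})$ with $k+1 \in \{3,4,5\}$ for $\alpha_c \in \{\firstf, \secondf, \thirdf\}$, respectively. The singularity is cut out by a weighted-homogeneous equation with weights $(a,b) \in \{(2,3),(1,2),(2,5)\}$ on $(x,y)$, matching the $\GG_m$-weights of \eqref{gm-action-atoms}. Starting from the trivial family $U \times \Delta = \Spec \CC[x,y,t]/(y^2 - x^{k+1})$, I would perform the weighted blow-up of $\Spec \CC[x,y,t]$ at the origin with weights $(a,b,1)$. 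Because this modification is supported over $t = 0$, the generic fiber is preserved, and extending trivially over $C \setminus \{q\}$ gives a family $\C \to \Delta$ whose generic fiber is $C$. In the special fiber of the blown-up family, the proper transform of $\{t = 0\}$ is the pointed normalization of $U$ at the origin, while the exceptional divisor intersected with the proper transform of $\{y^2 = x^{k+1}\}$ is the curve cut out by $y^2 = x^{k+1}$ in the weighted projective plane $\PP(a,b,1)$; these components meet transversally at the $\GG_m$-fixed points of the exceptional curve lying on $\{t = 0\}$.

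The main technical step is the chart-level verification that this exceptional divisor, together with its distinguished $\GG_m$-fixed points, is isomorphic to $(E,q)$ or $(E, q_1, q_2)$ as in Definition~\ref{defn-atom}. This should come down to reconciling two $\GG_m$-quotient presentations of $E$: the one in Definition~\ref{defn-atom} (gluing the affine singularity to affine line(s) at the $\GG_m$-fixed section(s)) and the one arising from the weighted projective plane $\PP(a,b,1)$. A secondary bookkeeping step is needed when the pointed normalization contains a semistable $\PP^1$ component---for instance, in the tacnodal case when $C$ has a rational branch at $q$ meeting the rest of the curve in a single node---where one must contract such components to pass from the pointed normalization to the stable pointed normalization $\tilde{C}$ without disturbing the generic fiber.
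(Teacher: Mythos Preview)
Your treatment of part~(2) matches the paper's proof exactly: both invoke Lemma~\ref{L:Closed1} to specialize the tail $Z$ to the atom $E$ and then glue to the trivial family $K \times \Delta$.

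For part~(1), your construction is correct but genuinely different from the paper's. You perform a weighted blow-up of the ambient $\AA^3$ at the origin with weights $(a,b,1)$, so that the exceptional curve $\{y^2=x^{k+1}\}\subset \PP(a,b,1)$ is the atom and the proper transform of the $t=0$ fiber is its normalization. The paper instead first \emph{normalizes} the trivial family $C\times\Delta$ along the singular section $q\times\Delta$, then performs an \emph{ordinary} blow-up at the point over $(q,0)$ to sprout a $\PP^1$, and finally \emph{recrimps} along the strict transform of the section (via the local calculation of Proposition~\ref{P:F1}) to reimpose the $\alpha_c$-critical singularity with trivial crimping. Your approach is more geometric and handles all three critical values uniformly once one knows the correct weights; the paper's approach has the advantage that the ``trivial crimping'' property of the resulting atom is transparent from the recrimping construction, whereas in your approach this is buried in the chart-level identification of the exceptional curve with the atom of Definition~\ref{defn-atom}. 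Both constructions require the same final bookkeeping step of contracting semistable $\PP^1$'s to pass to the stable pointed normalization.
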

\begin{proof}
We prove the case $\alpha_c=\third$, and leave the remaining two cases to the reader. 
For (1), let $C \times \Delta$ be the trivial family, let $\tilde{\C} \rightarrow C \times \Delta$ be the normalization along $q \times \Delta$, 
and let $\tilde{\C}' \rightarrow \tilde{\C}$ be the blow-up of $\tilde{C}$ at the point lying over $(q,0)$. 
Let $\tau$ denote the strict transform of $q \times \Delta$ on $\tilde{\C}'$, 
and note that $\tau$ passes through a smooth point of the exceptional divisor. A local calculation, as in the proof of 
Proposition \ref{P:F1}, shows that we may `recrimp.' Namely, there exists a finite map $\psi\co \tilde{\C}' \rightarrow \C'$ such that $\psi$ is an 
isomorphism on $ \tilde{\C}'-\tau$, so that $\C'$ has a ramphoid cusp along $\psi \circ \tau$, and
the ramphoid cuspidal rational tail in the central fiber is an $\alpha_c$-atom, i.e., has trivial crimping. 
Blowing down any semistable $\PP^1$'s in the central fiber of $\C' \rightarrow \Delta$ (these appear, for example, when
$q$ lies on an unmarked $\PP^1$ attached nodally to the rest of the curve), we arrive at the desired isotrivial specialization. 
For (2), note that there exists an isotrivial specialization $(Z, q_1) \rightsquigarrow (E, q_1)$ by Lemma \ref{L:Closed1}. Gluing this to the trivial 
family $(K \times \Delta, q_1 \times \Delta)$ gives the desired isotrivial specialization.
\end{proof}

\begin{proof}[Proof of Theorem \ref{T:ClosedCurves}] We consider the case $\alpha_c=\third$, and leave the other two cases to the reader. 
First, we show that every $\thirdf$-closed curve $(C, \pn)$ is a closed point of $\SM_{g,n}(\third)$. 
Let $(\C \rightarrow \Delta, \sigman)$ be any isotrivial specialization of $(C, \pn)$ in $\SM_{g,n}(\third)$; we will show it must be trivial. 
Let $C=K \cup E_1\cup \cdots \cup E_r$ be the canonical decomposition and let $q_i=K \cap E_i$. 
Each $q_i$ is a disconnecting node in the general fiber of $\C \rightarrow \Delta$, 
so $q_i$ specializes to a node in the special fiber by Corollary \ref{C:Attaching1}. Possibly after a finite base change, we may normalize along the corresponding nodal sections to obtain isotrivial specializations $\K$ and $\E_1, \ldots, \E_r$. By 
Lemma \ref{L:Norm}, $\K$ is a family in $\SM_{g-2r, n+r}(\third)$ and $\E_1, \ldots, \E_r$ are families in $\SM_{2,1}(\third)$. 
Since $\K$ contains no Weierstrass tails in the general fiber, it is trivial by Lemma \ref{L:Closed2}. 
The families $\E_1, \ldots, \E_r$ are trivial by Lemma \ref{L:Closed1}. 
It follows that the original family $(\C \rightarrow \Delta, \sigman)$ is trivial, as desired.

Next, we show that if $(C, \pn) \in \SM_{g,n}(\third)$ is a closed point, then $(C, \pn)$ must be $\thirdf$-closed. First, we claim that every 
ramphoid cusp of $C$ must lie on a nodally attached $\thirdf$-atom. Indeed, if $q \in C$ is a ramphoid cusp which does not lie on a nodally
attached $\thirdf$-atom, then Lemma \ref{L:Specialize} gives an isotrivial specialization $(C, \pn) \rightsquigarrow (C_0, \pn)$ in which $C_0$ 
sprouts a nodally attached $\thirdf$-atom at $q$. Note that $(C_0, \pn)$ is $\thirdf$-stable by Lemma \ref{L:StableNorm} 
and Corollary \ref{C:Glue}, so this gives a nontrivial isotrivial specialization in $\SM_{g,n}(\third)$.
Second, we claim that $C$ contains no nodally-attached Weierstrass tails which are not $\thirdf$-atoms. 
Indeed, if it does, then Lemma \ref{L:Specialize} gives an isotrivial specialization $(C, \pn) \rightsquigarrow (C_0, \pn)$ which replaces this 
Weierstrass tail by a $\thirdf$-atom. Note that $(C_0, \pn)$ is $\thirdf$-stable by Lemma \ref{L:Norm} and Corollary \ref{C:Glue}, 
so this gives a nontrivial isotrivial specialization in $\SM_{g,n}(\third)$. It is now easy to see that $C$ is $\thirdf$-closed. 
Indeed, if $E_1, \ldots, E_r$ are the nodally attached $\thirdf$-atoms of $C$, then the complement $K$ has no ramphoid cusps 
and no nodally-attached Weierstrass tails. Since $K$ is $\thirdf$-stable and has no ramphoid cusps, it is $(\thirdf \pe)$-stable. Furthermore, $K$ 
must be closed in $\SM_{g,n}(\third \pe)$, since a nontrivial isotrivial specialization of $K$ in $\SM_{g,n}(\third \pe)$ would induce a nontrivial, 
isotrivial specialization of $(C, \pn)$ in $\SM_{g,n}(\third)$. We conclude that $(C, \pn)$ is $\thirdf$-closed as desired.
\end{proof}

\subsection{Combinatorial type of an $\alpha_c$-closed curve} \label{S:canonical}

In the previous section, we saw that every $\alpha_c$-stable curve which is closed in $\bar{\cM}_{g,n}(\alpha_c)$ has  a canonical decomposition $C=K\cup E_1 \cup \cdots \cup E_r$ where $E_1, \ldots, E_r$ are the $\alpha_c$-atoms of $C$. We wish to use this decomposition to compute the local VGIT chambers associated to $C$. For the two critical values $\alpha_c \in \{\second, \first\}$, the pointed curve $K$ does not have infinitesimal automorphisms and does not affect this computation. However, if $\alpha_c = \third$, then $K$ may have infinitesimal automorphisms due to the presence of rosaries (see Definition \ref{defn-rosary}), which leads us to consider a slight enhancement of the canonical decomposition. 
Once we have taken care of this wrinkle, we define the combinatorial type of an $\alpha_c$-closed curve
in Definition \ref{defn-canonical}). The key point of this definition is that it establishes the notation that will be used in carrying out the local VGIT calculations in Section \ref{section-local-vgit}.


\begin{defn}[Rosaries] \label{defn-rosary}
We say that $(R,p_1, p_{2})$ is a \emph{rosary of length $\ell$} if there exists a surjective gluing morphism
\[
\gamma \co \coprod_{i=1}^{\ell}(R_i,q_{2i-1},q_{2i}) \hookrightarrow (R,p_1,p_2)
\]
satisfying:
\begin{enumerate}
\item $(R_i,q_{2i-1},q_{2i})$ is a $2$-pointed smooth rational curve for $i=1, \ldots, \ell$.
\item $\gamma$ is an isomorphism when restricted to $R_i \setminus \{q_{2i-1}, q_{2i}\}$  for $i=1, \ldots, \ell$. 
\item $\gamma(q_{2i})=\gamma(q_{2i+1})$ is an $A_3$-singularity for $i=1, \ldots, \ell-1$.
\item $\gamma(q_1) = p_1$ and $\gamma(q_{2\ell}) = p_2$.
\end{enumerate}
We say that $(C,\pn)$ has an {\it $A_{k_1}/A_{k_2}$-attached (open) rosary of length $\ell$} 
if there exists a gluing morphism $\gamma \co (R,p_1,p_2) \hookrightarrow (C, \pn)$ such that
\begin{enumerate}
\item[(a)] $(R,p_1,p_2)$ is a rosary of length $\ell$.
\item[(b)] $\gamma(p_i)$ is an $A_{k_i}$-singularity of $C$, or $k_i=1$ and $\gamma(q_i)$ is a marked point of $(C,\pn)$.
\end{enumerate}
We say that $C$ is a \emph{closed rosary of length $\ell$} if $C$ has $A_3/A_3$-attached rosary 
$\gamma \co (R,p_1,p_2) \hookrightarrow C$ of length $\ell$ such that $\gamma(p_1)=\gamma(p_2)$
is an $A_3$-singularity of $C$.
\end{defn}
\begin{remark}\label{R:even-rosary}
A rosary of even length is an elliptic chain and thus can never appear in a $(\second - \epsilon)$-stable curve.
\end{remark}

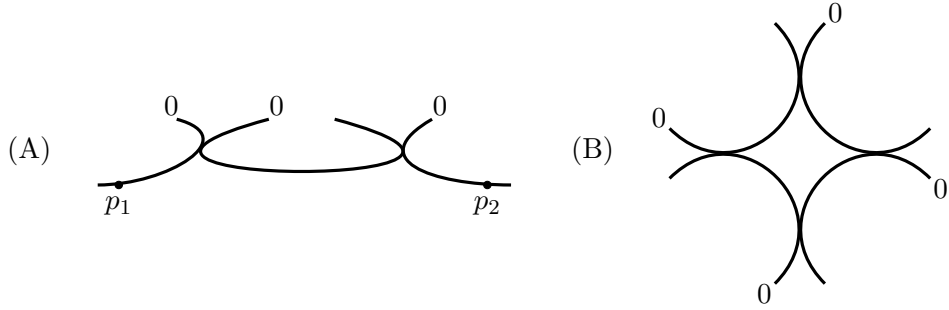
\begin{figure}[hbt]
\begin{center}
\begin{tikzpicture}[scale=.7]
\tikzstyle{blackfill}=[circle,fill=black,draw=Black]
	\begin{pgfonlayer}{nodelayer}
		\coordinate  (a0) at (-5.5-1.7, 0) {};
		\coordinate  (a1) at (-4-1.7, 1.25) {};
		\coordinate  (a2) at (-2.25-1.7, 1.25) {};
		\coordinate  (a3) at (-1-1.7, 1.25) {};
		\coordinate  (a6) at (-.25-.6, 1.25) {};
		\coordinate  (a7) at (1.25-.6, 0) {};
		
		\filldraw [black] (-6.8,-0) circle (2pt);
		\node  at (-6.8,-.4)  {$p_1$};

		\filldraw [black] (.2,-0) circle (2pt);
		\node  at (0.2,-.4)  {$p_2$};
		
		\node  at (-0.7,1.5)  {$0$};
		\node  at (-3.8,1.5)  {$0$};
		\node  at (-5.8,1.5)  {$0$};
		
		\coordinate (b0) at (-2.45+6.11, 0.5+.575) {};
		\coordinate (b1) at (-0.45+6.11, 2.5+.575) {};
		\coordinate (b2) at (0.5+6.11, 2.5+.575) {};
		\coordinate (b3) at (2.5+6.11, 0.5+.575) {};
		\coordinate (b4) at (2.5+6.11, -0.45+.575) {};
		\coordinate (b5) at (0.5+6.11, -2.45+.575) {};
		\coordinate (b6) at (-0.45+6.11, -2.45+.575) {};
		\coordinate (b7) at (-2.45+6.11, -0.45+.575) {};

		\node at (-2.45+6.11-.2, 0.5+.575+.2) {$0$};
		\node at (0.5+6.11+.2, 2.5+.575+.2) {$0$};
		\node at (2.5+6.11+.2, -0.45+.575-.2) {$0$};
		\node at (-0.45+6.11-.2, -2.45+.575-.2)  {$0$};

		\node at (-8.5,.65)  {(A)};
		\node at (2.2,.65)  {(B)};

	\end{pgfonlayer}
	\begin{pgfonlayer}{edgelayer}
		\draw [style=very thick, in=-15, out=0, looseness=1.75] (a0) to (a1);
		\draw [style=very thick, in=-15, out=195, looseness=10.5] (a2) to (a3);
		\draw [style=very thick, in=180, out=210, looseness=2.00] (a6) to (a7);
		
		\draw [style=very thick,bend right=90, looseness=1.75] (b0) to (b1);
		\draw [style=very thick,bend left=270, looseness=1.75] (b2) to (b3);
		\draw [style=very thick,bend left=90, looseness=1.75] (b5) to (b4);
		\draw [style=very thick,bend right=90, looseness=1.75] (b6) to (b7);

	\end{pgfonlayer}
\end{tikzpicture}
\end{center}
\vspace{-.4cm}
\caption{Curve (A) is a rosary of length $3$. Curve (B) is a closed rosary of length $4$.}\label{F:rational-chains}
\end{figure}

Note that if $(R, p_1, p_2)$ is a rosary, then $\Aut(R, p_1, p_{2}) \simeq \GG_m$. 
Hassett and Hyeon showed that all infinitesimal automorphisms of $(\second -\epsilon)$-stable curves 
are accounted for by rosaries \cite[Section 8]{HH2}. In Proposition \ref{P:Rosaries} and Corollary \ref{C:RosariesClosed}, 
we record a slight refinement of their result.

\begin{prop} \label{P:Rosaries}
Suppose $(C, \pn)$ is $(\second - \epsilon)$-stable with $\Aut(C, \pn)^{\circ} \simeq \GG_m^k$. 
Then one of the following holds:
\begin{enumerate}
\item There exists a decomposition $C = C_0 \cup R_1 \cup \cdots \cup R_k$, where each $R_i$ is an $A_1/A_1$-attached rosary of odd length,
and $C_0$ contains no $A_1/A_1$-attached rosaries. Note that we allow $C_0$ to be empty.
\item $k=1$ and $C$ is a closed rosary of even length.
\end{enumerate}
\end{prop}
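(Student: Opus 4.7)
The plan is to identify the sources of infinitesimal automorphisms of $(C,\pn)$, analyze the compatibility they impose at tacnodes, and then invoke the combinatorial restrictions from $(\secondf-\epsilon)$-stability. First, any $\varphi \in \Aut(C,\pn)^\circ$ preserves each irreducible component of $C$; on a component of geometric genus $\geq 1$ with at least one special (marked or singular) point, a singular rational component, or a smooth rational component with at least three special points, the pointed automorphism group is finite and $\varphi$ acts trivially. The only remaining possibility is a smooth $\PP^1$ with exactly two special points, which I will call \emph{exceptional}. On each exceptional $E$ the action is by $\lambda_E \in \GG_m = \Aut(\PP^1,0,\infty)$, and $\Aut^\circ$ embeds into $\prod_E \GG_m$.

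Next I extract the compatibility conditions at singular points. At a node the two branches are scaled independently and no constraint arises. At a tacnode $p$ with branches locally parametrized by $z_1,z_2$, the completed local ring $\widehat{\cO}_{C,p} \cong \CC[[x,y]]/(y^2-x^4)$ embeds in $\CC[[z_1]]\oplus\CC[[z_2]]$ via $x \mapsto (z_1,z_2)$, $y \mapsto (z_1^2,-z_2^2)$; the automorphism $z_i \mapsto \mu_i z_i$ preserves this subring if and only if $\mu_1 = \mu_2$. Thus the $\GG_m$-scaling factors on the two branches of a tacnode must coincide. In particular, if one branch lies on a non-exceptional component (where the scaling is trivially $1$), the factor on the exceptional branch must also be $1$.

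With these rules in hand I perform a global combinatorial analysis. Form the graph $\Gamma$ whose vertices are the exceptional components of $C$ and whose edges are the tacnodes joining two exceptional components. Since each exceptional component has exactly two special points, $\Gamma$ has maximum valence $2$ and decomposes as a disjoint union of paths and cycles; a path is precisely an open rosary and a cycle is a closed rosary in the sense of Definition \ref{defn-rosary}. Fixing $\lambda$ on one component of a connected piece of $\Gamma$ determines it on all the others, alternating between $\lambda$ and $\lambda^{-1}$ across successive internal tacnodes, because $z\mapsto \lambda z$ induces $w=1/z \mapsto \lambda^{-1}w$ at the opposite end of a $\PP^1$. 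Consequently, an open rosary contributes a $\GG_m$-factor to $\Aut^\circ$ precisely when $\lambda$ remains free at both endpoints, which by Step~2 forces both endpoint attachments to be of type $A_1$ (nodes or marked points), i.e., the rosary is $A_1/A_1$-attached. A closed rosary contributes a $\GG_m$-factor only if (i) no rational component in the cycle has a further special point -- otherwise that component is non-exceptional and the whole action collapses -- so the closed rosary is all of $C$; and (ii) the holonomy $\lambda \mapsto \lambda^{(-1)^\ell}$ around the length-$\ell$ cycle is trivial, forcing $\ell$ even.

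Finally I apply $(\secondf-\epsilon)$-stability: by Remark \ref{R:even-rosary}, a rosary of even length is an elliptic chain, and $(\secondf-\epsilon)$-stability forbids $A_1/A_1$-attached elliptic chains; hence every $A_1/A_1$-attached rosary contributing a $\GG_m$-factor has odd length. Taking $R_1,\dots,R_k$ to be the maximal $A_1/A_1$-attached odd-length rosaries in $C$ and setting $C_0 := \overline{C\setminus(R_1\cup\cdots\cup R_k)}$ yields the decomposition in~(1), with $C_0$ containing no $A_1/A_1$-attached rosaries by maximality; case~(2) is the closed-rosary situation described above. The main obstacle will be the closed-rosary case: carrying out the tacnode holonomy computation cleanly (to derive the parity constraint $\ell$ even) and ruling out any external attachment to a closed rosary while preserving a nontrivial $\Aut^\circ$, both of which rest on the explicit local analysis at tacnodes from Step~2 together with the exceptionality count of special points.
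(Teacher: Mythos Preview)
Your proof is correct and rests on the same core observation as the paper's: $\Aut(C,\pn)^\circ$ is supported on two-pointed smooth rational components, and the scalings on the two branches of a tacnode must agree (the paper phrases this as ``an automorphism of $\PP^1$ fixing two points and the tangent space at one of these points is trivial''). The only difference is organizational---you compute $\Aut^\circ$ globally via the graph $\Gamma$ of exceptional components and tacnodal edges, reading off the $\GG_m$-factors from its connected components and a holonomy argument, whereas the paper proceeds by induction on $\dim\Aut^\circ$, peeling off one $A_1/A_1$-attached rosary at a time; your derivation of the parity constraint in the closed-rosary case is in fact more explicit than the paper's.
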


\begin{proof}

Consider first the case in which $C$ is simply a chain of rational curves, say $R_1, \ldots, R_k$, where $R_i$ meets $R_{i+1}$ in a  single point,
 and $R_{k}$ meets $R_{1}$ in a single point. These attaching points may be either nodes or tacnodes. If every attaching point is a tacnode, 
then we are in case (2). If some of the attaching points are nodes, then the set of rational curves between any two consecutive nodes 
in the chain are tacnodally attached and thus constitute $A_1/A_1$-attached rosary. In other words, we are in case (1) with $C_0$ empty.

From now on, we may assume that not all components of $C$ are rational curves meeting the rest of the curve in two points. In particular, there exist components on which $\Aut(C, \pn)^{\circ}$ acts trivially. We proceed by induction on the dimension of $\Aut(C, \pn)^{\circ}$, 
noting that if dimension is $0$, there is nothing to prove.

Note that if $\Aut(C,\pn)^{\circ}$ acts nontrivially on a component $T_1$ and $T_1$ meets a component $S$ on which $\Aut(C, \pn)^{\circ}$ 
acts trivially, then their point of attachment must be a node (and not a tacnode). 
This follows immediately from the fact that an automorphism of $\P^1$ which fixes two points and the tangent space at one of these points must 
be trivial. Now let $T_1, \ldots, T_\ell$ be the maximal length chain containing $T_1$ on which $\Aut(C,\pn)^{\circ}$ acts nontrivially; we have just argued that $T_1$ and $T_\ell$ must be attached to the rest of $C$ at nodes. 
If each $T_i$ is tacnodally attached to $T_{i+1}$, then $R:=T_1 \cup \cdots \cup T_\ell$ is an $A_1/A_1$-attached rosary in $C$. 
If some $T_i$ is attached to $T_{i+1}$ at a node, then choosing minimal such $i$, we see that $R:=T_1 \cup \cdots \cup T_i$ is an 
$A_1/A_1$-attached rosary. Thus, $C$ contains an $A_1/A_1$-attached rosary $R$, necessarily of odd length by Remark \ref{R:even-rosary}. 
If it is not all of $C$, then the dimension of $\Aut(\overline{C \setminus R}, \pn)^{\circ}$ is one less than the dimension of $\Aut(C, \pn)^{\circ}$,
so we are done by induction.
\end{proof}

\begin{corollary}\label{C:RosariesClosed}
Suppose $(C, \pn)$ is a closed $(\second-\epsilon)$-stable curve with $\Aut(C, \pn)^{\circ} \simeq \GG_m^k$. Then there exists a decomposition $C = C_0 \cup R_1 \cup \cdots \cup R_k$ where each $R_i$ is $A_1/A_1$-attached rosary of length $3$.
\end{corollary}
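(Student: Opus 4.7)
The plan is to invoke Proposition~\ref{P:Rosaries} and exploit the closedness of $(C, \pn)$ in $\SM_{g,n}(\secondfme)$ to eliminate every case except the one giving rosaries of length exactly $3$. By that proposition, either (1) $C = C_0 \cup R_1 \cup \cdots \cup R_k$ with each $R_i$ an $A_1/A_1$-attached rosary of odd length $\ell_i$, or (2) $k = 1$ and $C$ is itself a closed rosary of even length.

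The case $\ell_i = 1$ is immediate: such an $R_i$ would be a smooth $\PP^1$ nodally attached at both endpoints, on which $\omega_C(\sum_{j=1}^{n} p_j)$ restricts to $\cO_{\PP^1}(-2+1+1) = \cO_{\PP^1}$, violating the ampleness required for $\alpha$-stability. So we may assume $\ell_i \geq 3$. The remaining task is to exclude case (2) and the subcase $\ell_i \geq 5$ of case (1). In each such scenario the strategy is to produce a nontrivial isotrivial specialization of $(C, \pn)$ within $\SM_{g,n}(\secondfme)$, contradicting the closedness hypothesis. The mechanism: a long rosary carries continuous moduli at its interior tacnodes (the crimping parameters), and the factor $\GG_m \subset \Aut(C, \pn)^\circ$ associated to it acts on the corresponding first-order deformations with a nonzero weight. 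A Bia\l ynicki-Birula-type rescaling along such a weight line yields a flat family over $\Delta$ with generic fiber isomorphic to $C$ and a combinatorially simpler central fiber, in which the long rosary is broken into a shorter rosary together with an additional rational tail attached nodally (or tacnodally).

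The main obstacle will be verifying that the resulting central fiber remains $(\second-\epsilon)$-stable. One must check that only $A_1$, $A_2$, $A_3$-singularities appear; that no $A_1$- or $A_3$-attached elliptic tails are introduced; and that no $A_1/A_1$-attached elliptic chains materialize. These are controlled by the local nature of the specialization (it takes place in the interior of the rosary, away from any preexisting elliptic subcurve), by Remark~\ref{R:even-rosary} (which forces any remaining $A_1/A_1$-attached rosary to still have odd length, lest it become a forbidden elliptic chain), and by a direct verification of the ampleness of $\omega(\sum p_j)$ on each new component of the central fiber. Once these facts are established, the isotrivial specialization contradicts closedness, forcing $\ell_i = 3$ for all $i$ in case (1) and ruling out case (2); the resulting decomposition $C = C_0 \cup R_1 \cup \cdots \cup R_k$ then has the claimed form.
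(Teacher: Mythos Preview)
Your high-level strategy matches the paper's: invoke Proposition~\ref{P:Rosaries} and then use closedness to rule out long rosaries and the closed-rosary case via isotrivial specializations. However, the mechanism you propose for producing those specializations does not work.

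First, a rosary carries \emph{no} continuous moduli at its interior tacnodes. Each bead $(R_j, q_{2j-1}, q_{2j})$ is a $2$-pointed $\PP^1$, and $\Aut(\PP^1, 0, \infty) \simeq \GG_m$ absorbs any tacnodal gluing parameter; compare the $\alpha_c = \second$ case of Lemma~\ref{lem-deformation-atoms}, where $\Cr^1(E) = 0$. So the ``crimping parameters'' you want to rescale do not exist. Second, even granting a nonzero-weight direction $v \in \TT^1(C)$ for the factor $\GG_m \subset \Aut(C, \pn)^\circ$ coming from a long rosary, a Bia\l ynicki-Birula flow along that line produces a family whose \emph{central} fiber is $C$ and whose generic fiber is the deformed curve: it exhibits $C$ as a limit of nearby curves, not as something that further specializes. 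This is the wrong direction for contradicting closedness of $(C,\pn)$.

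The paper's argument constructs the specialization directly from the target. For a rosary $R$ of odd length $\ell \geq 5$, let $C_0$ be obtained from $C$ by replacing $R$ with the \emph{nodal} union of a rosary of length $3$ and one of length $\ell - 2$. Smoothing that single separating node gives a flat family over $\Delta$ whose generic fiber has the two $\PP^1$'s adjacent to the node merged into a single $\PP^1$ tacnodally attached on both sides, hence is again a rosary of length $\ell$, so the generic fiber is isomorphic to $C$ while the central fiber is $C_0$. Likewise a closed rosary of even length $\ell$ isotrivially specializes to a closed cycle of $\ell/2$ rosaries of length $3$. These $C_0$ are $(\secondme)$-stable (the pieces have odd length, so Remark~\ref{R:even-rosary} rules out new elliptic chains), and since $\Aut(C_0)^\circ$ has strictly larger rank than $\Aut(C)^\circ$, the specialization is nontrivial. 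This contradicts closedness unless every $\ell_i = 3$ and case~(2) does not occur.
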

\begin{proof}
This follows immediately from Proposition \ref{P:Rosaries} and two observations: 
\begin{itemize}
\item
If $R$ is a rosary of odd length $\ell \geq 5$, 
then $R$ admits an isotrivial specialization to the nodal union of a rosary of length $3$ and length $\ell-2$.
\item A closed rosary of even length $\ell$ admits an isotrivial specialization to the 
nodal union of $\ell/2$ rosaries of length $3$ arranged in a closed chain. 
\end{itemize}
\end{proof}

In order to compute the local VGIT chambers for an $\alpha_c$-closed curve, it will be useful to have the following notation.

\begin{defn}[Links] \label{defn-link}
A \emph{$\secondf$-link of length $\ell$} is a $2$-pointed curve $(E,p_1, p_{2})$ which admits a decomposition
\[
E=E_1 \cup \cdots \cup E_\ell \qquad \text{such that:}
\]
\begin{enumerate}
\item $(E_j,q_{j-1},q_{j})$ is a $\secondf$-atom for $j=1, \ldots, \ell$.
\item $q_j:=E_j \cap E_{j+1}$ is a node for $j=1, \ldots, \ell-1$.
\item $q_0:=p_1$ is a marked point of $E_1$ and $q_{\ell}:=p_2$ is a marked point of $E_{\ell}$.
\end{enumerate}

A \emph{$\thirdf$-link of length $\ell$} is a $1$-pointed curve $(E,p)$ which admits a decomposition
\[
E=R_1 \cup \cdots \cup R_{\ell-1} \cup E_{\ell} \qquad \text{such that:}
\]
\begin{enumerate}
\item $(R_j,q_{j-1},q_{j})$ is a rosary of length $3$ for $j=1, \ldots, \ell-1$, and $(E_\ell, q_\ell)$ is a $\thirdf$-atom.
\item $q_j:=R_j \cap R_{j+1}$ is a node for $j=1, \ldots, \ell-2$, and $q_{\ell-1}:=R_{\ell-1} \cap E_{\ell}$ is a node.
\item $q_0:=p$ is a marked point of $R_1$.
\end{enumerate}

When we refer to a $\secondf$-link $(E,p_1, p_{2})$ (resp., $\thirdf$-link $(E,p)$) as a subcurve of a larger curve, 
we always take it to be $A_1/A_1$-attached at $p_1$ and $p_2$ (resp., at $p$).
\end{defn}

\begin{figure}[htb]
\begin{center}
\begin{tikzpicture}[scale=.85]
\tikzstyle{blackfill}=[circle,fill=black,draw=Black]
	\begin{pgfonlayer}{nodelayer}
		\coordinate [style=black] (a0) at (-0.5, 2) {};
		\coordinate [style=black] (a1) at (1.5, 3) {};
		\coordinate [style=black] (a2) at (2.87, 3) {};
		\coordinate [style=black] (a3) at (4.87, 2) {};
		\node at (1.75,2.3)  {$A_3$};

		\coordinate [style=black] (b0) at (-0.5+4.8, 2) {};
		\coordinate [style=black] (b1) at (1.5+4.8, 3) {};
		\coordinate [style=black] (b2) at (2.87+4.8, 3) {};
		\coordinate [style=black] (b3) at (4.87+4.8, 2) {};
		\node at (1.75+4.8,2.3)  {$A_3$};

		\coordinate [style=black] (c0) at (-0.5-4.8, 2) {};
		\coordinate [style=black] (c1) at (1.5-4.8, 3) {};
		\coordinate [style=black] (c2) at (2.87-4.8, 3) {};
		\coordinate [style=black] (c3) at (4.87-4.8, 2) {};
		\node at (1.75-4.8,2.3)  {$A_3$};

		\filldraw [black] (-.42-4.8,1.5) circle (2pt);
		\node at (-.75-4.8, 1.5) {$p_1$};	
		\filldraw [black] (4.8+4.8,1.5) circle (2pt);
		\node at (5.2+4.8, 1.5) {$p_6$};
		\node at (-7,2)  {(A)};


		\node at (-7,-1)  {(B)};

		\node [style=black] (B0) at (-2.5-.3-1.99+.3, 1-1.001) {};
		\node [style=black] (B1) at (-3.5-.3-1.99+.3, 0-1.001) {};
		\node [style=black] (B2) at (0.5-.2-1.99+.4, 1-1.001) {};
		\node [style=black] (B3) at (1.5-.2-1.99+.4, 0-1.001) {};
		\node [style=black] (B4) at (-1.5-1.99+.35, 1-1.001) {};
		\node [style=black] (B5) at (-1-1.99+.35, 1-1.001) {};

		\node [style=black] (B10) at (-2.5-.3+2.591+.4, 1-1.001) {};
		\node [style=black] (B11) at (-3.5-.3+2.591+.4, 0-1.001) {};
		\node [style=black] (B12) at (0.5-.2+2.591+.5, 1-1.001) {};
		\node [style=black] (B13) at (1.5-.2+2.591+.5, 0-1.001) {};
		\node [style=black] (B14) at (-1.5+2.591+.45, 1-1.001) {};
		\node [style=black] (B15) at (-1+2.591+.45, 1-1.001) {};

		\coordinate [style=black] (B20) at (6.5-2.81, 2-3.45) {};
		\coordinate [style=black] (B21) at (11.5-2.81, 2-3.45) {};
		\coordinate [style=black] (B22) at (9.75-2.81, 2.5-3.45) {};
		\node at (9.75-2.81,2.85-3.45)  {$A_4$};

		\filldraw [black] (-5.73+.3, -1.64) circle (2pt);
		\node  at (-5.73-.3+.3, -1.64)  {$p$};



		\node at (-4.3,-1.1)  {$A_3$};
		\node at (-2.2,-1.1)  {$A_3$};
		\node at (.4,-1.1)  {$A_3$};
		\node at (2.5,-1.1)  {$A_3$};

	\end{pgfonlayer}
	\begin{pgfonlayer}{edgelayer}
		\draw [style=very thick,in=270, out=-15, looseness=2.75] (a1) to (a0);
		\draw [style=very thick,in=270, out=195, looseness=2.75] (a2) to (a3);
		
		\draw [style=very thick,in=270, out=-15, looseness=2.75] (b1) to (b0);
		\draw [style=very thick,in=270, out=195, looseness=2.75] (b2) to (b3);
		
		\draw [style=very thick,in=270, out=-15, looseness=2.75] (c1) to (c0);
		\draw [style=very thick,in=270, out=195, looseness=2.75] (c2) to (c3);

		\draw [style=very thick, in=270, out=315, looseness=3.25] (B0) to (B1);
		\draw [style=very thick, in=-90, out=-135, looseness=3.25] (B2) to (B3);
		\draw [style=very thick, bend right=135, looseness=11.25] (B4) to (B5);

		\draw [style=very thick, in=270, out=315, looseness=3.25] (B10) to (B11);
		\draw [style=very thick, in=-90, out=-135, looseness=3.25] (B12) to (B13);
		\draw [style=very thick, bend right=135, looseness=11.25] (B14) to (B15);
		
		\draw [style=very thick,in=270, out=-17] (B20) to (B22);
		\draw [style=very thick,in=240, out=251, looseness=1.25] (B22) to (B21);
		

	\end{pgfonlayer}
\end{tikzpicture}
\end{center}
\caption{Curve (A) (resp., (B)) is a $\secondf$-link (resp., $\thirdf$-link) of length $3$.  Each component above is a rational curve.}\label{F:link}
\end{figure}

Now let $C=K \cup E_1 \cup \cdots \cup E_r$ be the canonical decomposition of an $\alpha_c$-closed curve $C$,
where $K$ is the core and $E_i$'s are $\alpha_c$-atoms.  
Observe that as long as $K \neq \emptyset$, then each $\secondf$-atom (resp., $\thirdf$-atom) $E_i$ of a $\secondf$-closed 
(resp., $\thirdf$-closed) curve is a component of a unique $\secondf$-link (resp., $\thirdf$-link) of maximal length.
When $\alpha_c=\third$, we make the following definition.

\begin{defn}[Secondary core for $\alpha_c=\third$]
Suppose $C=K \cup E_1 \cup \ldots \cup E_r$ is the canonical decomposition of an $\thirdf$-closed curve $C$.
For each $\thirdf$-atom $E_i$, let $L_i$ be the maximal length $\thirdf$-link containing $E_i$. We call
$K' := \overline{C \setminus (L_1 \cup \cdots \cup L_r)}$ \emph{the secondary core} of $C$, which we consider as a curve marked with the points $(\pn \cap K') \cup (K' \cap (\overline{C \setminus K'})$.  
The secondary core has the property that any $A_1/A_1$-attached rosary $R \subseteq K'$, 
satisfies $R \cap L_i = \emptyset$ for $i =1, \ldots, r$.
\end{defn}

We can now define combinatorial types of $\alpha_c$-closed curves. We refer the reader to
Figure \ref{F:closed-curves} for a graphical accompaniment of the following definition.

\begin{defn}[Combinatorial Type of $\alpha_c$-closed curve]\label{defn-canonical}  \quad \,

\noindent {\rm $\bullet \, $} A $\firstf$-closed curve $(C, \pn)$ has \emph{combinatorial type}
\begin{enumerate}
\item[(A)] If the core $K$ is nonempty. In this case, \[
C=K \cup E_1 \cup \cdots \cup E_r
\]
where each $E_i$ is a $\firstf$-atom meeting $K$ at a single node $q_i$.
\item[(B)] If $(g,n)=(2,0)$ and $C=E_1 \cup E_2$ where $E_1$ and $E_2$ are $\firstf$-atoms meeting each other in a single node $q \in C$.
 \item[(C)]If $(g,n)=(1,1)$ and $C=E_1$ is a $\firstf$-atom.
\end{enumerate}

\noindent {\rm $\bullet \, $} A $\secondf$-closed curve $(C, \pn)$ has  \emph{combinatorial type}
\begin{enumerate}
\item[(A)] If the core is nonempty. In this case, we have \[
C=K \cup L_1 \cup \cdots \cup L_r \cup L_{r+1} \cup \cdots \cup L_{r+s}
\] where
\begin{itemize}
\item For $i=1, \ldots, r$: \ $L_i = \bigcup_{j=1}^{\ell_i} E_{i,j}$ is a $\second$-link of length $\ell_i$ meeting $K$ at two distinct nodes.  
In particular, $E_{i,1}$ meets $K$ at a node $q_{i,0}$, $E_{i,\ell_i}$ meets $K$ at a node $q_{i,\ell_i}$, 
and $E_{i,j}$ meets $E_{i,j+1}$ at a node $q_{i,j}$.
\item For $i=r+1, \ldots, r+s$: \ $L_i = \bigcup_{j=1}^{\ell_i} E_{i,j}$ is a $\second$-link of length $\ell_i$ meeting $K$ at a single node and 
terminating in a marked point. In particular, $E_{i,1}$ meets $K$ at a node $q_{i,0}$, and $E_{i,j}$ meets $E_{i,j+1}$ at a node $q_{i,j}$.
\end{itemize}
\item[(B)] If $n=2$ and $(C,p_1,p_2)$ is a $\second$-link of length $g$, i.e.
$C=E_1 \cup \cdots \cup E_g$ 
where each $E_j$ is a $\secondf$-atom, $E_j$ meets $E_{j+1}$ at a node $q_j$, $p_1 \in E_1$ and 
$p_{2} \in E_g$.
\item[(C)] If $n=0$ and $C$ is a $\second$-link of length $g-1$, 
whose endpoints are nodally glued. In other words, $C = E_1 \cup \cdots \cup E_{g-1}$,
where each $E_j$ is a $\secondf$-atom, $E_j$ meets $E_{j+1}$ at a node $q_j$, and $E_1$ meets $E_{g-1}$ at a node $q_{0}$.
\end{enumerate}

\noindent {\rm $\bullet \, $} 
 A $\thirdf$-closed curve $(C, \pn)$ has \emph{combinatorial type}
\begin{enumerate}
\item[(A)] If the secondary core $K'$ is nonempty. In this case, we write 
\[
C=K' \cup L_1 \cup \cdots \cup L_r
\] where for $i=1, \ldots, r$, $L_i = \bigcup_{j=1}^{\ell_i-1} R_{i,j} \cup E_{i}$ is a $\thirdf$-link of length $\ell_i$. 
	In particular, $E_{i}$ is a $\thirdf$-atom and each $R_{i,j}$ a length $3$ rosary such that $R_{i,1}$ meets $K'$ at a node $q_{i,0}$, 
	$R_{i,j}$ meets $R_{i,j+1}$ at a node $q_{i,j}$, and $R_{i,\ell_i-1}$ meets $E_i$ in a node $q_{i,\ell_i-1}$.  
	We denote the tacnodes of the rosary $R_{i,j}$ by $\tau_{i,j,1}$ and $\tau_{i,j,2}$, and the unique ramphoid cusp of $E_{i}$ by $\xi_{i}$.

 \item[(B)] If $n=1$, $g=2\ell$ and $(C,p_1)$ is a $\thirdf$-link of length $\ell$, i.e. $C =  R_1 \cup \cdots \cup R_{\ell-1} \cup E_\ell$, 
 where $R_1, \ldots, R_{\ell-1}$ are rosaries of length $3$ with $p_1 \in R_1$ and $E_\ell$ is a $\thirdf$-atom.  
 For $j=1, \ldots, \ell-1$, we label the tacnodes of $R_j$ as $\tau_{j,1}$ and $\tau_{j,2}$, 
 the node where $R_j$ intersects $R_{j+1}$ as $q_j$, the node where $R_{\ell-1}$ intersects $E_\ell$ as $q_{\ell-1}$ and the unique ramphoid cusp of $E_\ell$ as $\xi$.
 
 \item[(C)] If $n=0$, $g=2\ell+2$ and $C$ is the nodal union of two $\thirdf$-links, i.e. $C = E_0 \cup R_1 \cup \cdots \cup R_{\ell-1} \cup E_\ell$, 
 where $E_0, E_\ell$ are $\thirdf$-atoms, and $R_1, \ldots, R_{\ell-1}$ are rosaries of length $3$.  
 For $j=1, \ldots, \ell-2$, $R_j$ intersects $R_{j+1}$ at a node $q_j$, $E_0$ intersects $R_1$ in a node $q_0$, 
 and $R_{\ell-1}$ intersects $E_\ell$ in a node $q_{\ell-1}$.  
 We label the ramphoid cusps of $E_0, E_\ell$ as $\xi_0, \xi_1$, and the tacnodes of $R_j$ as $\tau_{j,1}$ and $\tau_{j,2}$.
\end{enumerate}
\end{defn}

\begin{figure}[tbh]
\begin{center}
\begin{tikzpicture}[scale=.42]
\tikzstyle{blackfill}=[circle,fill=black,draw=Black]
	\begin{pgfonlayer}{nodelayer}
		\useasboundingbox (-.1,-.1) rectangle (3.1,1.1);
		\coordinate (a0) at (-0.5, 2) {};
		\coordinate (a1) at (1.5-1, 3-2) {};
		\coordinate (a2) at (2.87+1, 3-2) {};
		\coordinate (a3) at (4.87, 2) {};
		\filldraw [red] (1.45,1.5) circle (2pt);
		\filldraw [red] (2.2,1.5) circle (2pt);
		\filldraw [red] (2.95,1.5) circle (2pt);

		\coordinate (b0) at (-0.5+4.8, 2) {};
		\coordinate (b1) at (1.5+4.8, 3) {};
		\coordinate (b2) at (2.87+4.8, 3) {};
		\coordinate (b3) at (4.87+4.8, 2) {};
		\node [font=\miniscule, red] at (1.75+4.8-.1,2.3)  {$A_3$};

		\coordinate (c0) at (-0.5-4.8, 2) {};
		\coordinate (c1) at (1.5-4.8, 3) {};
		\coordinate (c2) at (2.87-4.8, 3) {};
		\coordinate (c3) at (4.87-4.8, 2) {};
		\node[font=\miniscule, red]at (1.75-4.8-.1,2.3)  {$A_3$};

		\node [font=\tiny, red] at (5.2+4.8+1, 1.5) {$E_r$};


		\coordinate (a01) at (-0.5+.304, 2+4.1) {};
		\coordinate (a11) at (1.5-1+.304, 3-2+4.1) {};
		\coordinate (a21) at (2.87+1+.304, 3-2+4.1) {};
		\coordinate (a31) at (4.87+.304, 2+4.1) {};
		\filldraw [red] (1.45+.304,1.5+4.1) circle (2pt);
		\filldraw [red] (2.2+.304,1.5+4.1) circle (2pt);
		\filldraw [red] (2.95+.304,1.5+4.1) circle (2pt);

		\coordinate (b01) at (-0.5+4.8+.304, 2+4.1) {};
		\coordinate (b11) at (1.5+4.8+.304, 3+4.1) {};
		\coordinate (b21) at (2.87+4.8+.304, 3+4.1) {};
		\coordinate (b31) at (4.87+4.8+.304, 2+4.1) {};
		\node [font=\miniscule, red] at (1.75+4.8+.304-.1,2.3+4.1)  {$A_3$};

		\coordinate (c01) at (-0.5-4.8+.304, 2+4.1) {};
		\coordinate (c11) at (1.5-4.8+.304, 3+4.1) {};
		\coordinate (c21) at (2.87-4.8+.304, 3+4.1) {};
		\coordinate (c31) at (4.87-4.8+.304, 2+4.1) {};
		\node [font=\miniscule, red] at (1.75-4.8+.304-.1,2.3+4.1)  {$A_3$};		

		\node [font=\tiny, red] at (5.2+4.8+1, 1.5+4.1) {$E_1$};
		
		\filldraw [red] (11,4.5) circle (2pt);
		\filldraw [red] (11,3.75) circle (2pt);
		\filldraw [red] (11,3) circle (2pt);

		\coordinate (d0) at (-0.5, 2-6.5023) {};
		\coordinate (d1) at (1.5-1, 3-2-6.5023) {};
		\coordinate (d2) at (2.87+1, 3-2-6.5023) {};
		\coordinate (d3) at (4.87, 2-6.5023) {};
		\filldraw [red] (1.45,1.5-6.5023) circle (2pt);
		\filldraw [red] (2.2,1.5-6.5023) circle (2pt);
		\filldraw [red] (2.95,1.5-6.5023) circle (2pt);

		\coordinate (e0) at (-0.5+4.8, 2-6.5023) {};
		\coordinate (e1) at (1.5+4.8, 3-6.5023) {};
		\coordinate (e2) at (2.87+4.8, 3-6.5023) {};
		\coordinate (e3) at (4.87+4.8, 2-6.5023) {};
		\node [font=\miniscule, red] at (1.75+4.8-.1,2.3-6.5023)  {$A_3$};

		\coordinate (f0) at (-0.5-4.8, 2-6.5023) {};
		\coordinate (f1) at (1.5-4.8, 3-6.5023) {};
		\coordinate (f2) at (2.87-4.8, 3-6.5023) {};
		\coordinate (f3) at (4.87-4.8, 2-6.5023) {};
		\node [font=\miniscule, red]at (1.75-4.8-.1,2.3-6.5023)  {$A_3$};

		\node [font=\tiny, red] at (5.2+4.8+1.3, 1.5-6.5023) {$E_{r+s}$};


		\coordinate (d01) at (-0.5+.304, 2+4.1-6.5023) {};
		\coordinate (d11) at (1.5-1+.304, 3-2+4.1-6.5023) {};
		\coordinate (d21) at (2.87+1+.304, 3-2+4.1-6.5023) {};
		\coordinate (d31) at (4.87+.304, 2+4.1-6.5023) {};
		\filldraw [red] (1.45+.304,1.5+4.1-6.5023) circle (2pt);
		\filldraw [red] (2.2+.304,1.5+4.1-6.5023) circle (2pt);
		\filldraw [red] (2.95+.304,1.5+4.1-6.5023) circle (2pt);

		\coordinate (e01) at (-0.5+4.8+.304, 2+4.1-6.5023) {};
		\coordinate (e11) at (1.5+4.8+.304, 3+4.1-6.5023) {};
		\coordinate (e21) at (2.87+4.8+.304, 3+4.1-6.5023) {};
		\coordinate (e31) at (4.87+4.8+.304, 2+4.1-6.5023) {};
		\node [font=\miniscule, red] at (1.75+4.8+.304-.1,2.3+4.1-6.5023)  {$A_3$};

		\coordinate (f01) at (-0.5-4.8+.304, 2+4.1-6.5023) {};
		\coordinate (f11) at (1.5-4.8+.304, 3+4.1-6.5023) {};
		\coordinate (f21) at (2.87-4.8+.304, 3+4.1-6.5023) {};
		\coordinate (f31) at (4.87-4.8+.304, 2+4.1-6.5023) {};
		\node [font=\miniscule, red] at (1.75-4.8+.304-.1,2.3+4.1-6.5023)  {$A_3$};		

		\node [font=\tiny, red] at (5.2+4.8+1.3, 1.5+4.1-6.5023) {$E_{r+1}$};
		
		\filldraw [red] (11.3,4.5-6.5023) circle (2pt);
		\filldraw [red] (11.3,3.75-6.5023) circle (2pt);
		\filldraw [red] (11.3,3-6.5023) circle (2pt);

		\filldraw [red] (9.7,-1.2) circle (3pt);
		\node [font=\tiny, red] at (9.8, -1.8) {$p_{j_{r+1}}$};
		\filldraw [red] (9.7-.3,-1.2-4.1) circle (3pt);
		\node [font=\tiny, red] at (9.8-.3, -1.8-4.1) {$p_{j_{r+s}}$};

		\coordinate  (k0) at (-3-1.5-.5, -2.5-3-.5-.5) {};
		\coordinate (k1) at (-2.75-1.5, 3.75+3) {};
		\coordinate (k2) at (2.5+7, 3.5+3) {};
		\coordinate (k3) at (2.7+7, 0.5) {};
		
		\node [font=\tiny, gray] at (-3-1.5-.5, -2.5-3-.5-.5-.4) {$K$};

		\filldraw [gray] (1, 9.2) circle (3pt);
		\filldraw [gray] (2, 9.35) circle (3pt);
		\filldraw [gray] (4.5, 9.37) circle (3pt);
		\node [font=\tiny, gray] at(1, 8.7) {$p_1$};
		\node [font=\tiny, gray] at(4.5, 8.87) {$p_n$};

		\coordinate  (K0) at (-5+22-2, -6.5) {};
		\coordinate (K1) at (-4.25+22-2, 6.75) {};

		\filldraw [gray] (17.6-2, -.3) circle (3pt);
		\filldraw [gray] (17.85-2, -.9) circle (3pt);
		\filldraw [gray] (18.18-2, -2.7) circle (3pt);
		\node [font=\tiny, gray] at (17.1-2, -.3) {$p_1$};
		\node [font=\tiny, gray] at (17.68-2, -2.7)  {$p_n$};

		\node [font=\tiny, gray] at (-5+22-2, -6.9) {$K$};

		\coordinate (Link00) at (15.5, 5) {};
		\coordinate (Link01) at (14.5, 4) {};
		\coordinate (Link02) at (18, 5) {};
		\coordinate (Link03) at (19, 4) {};
		\coordinate (Link04) at (16.5, 5) {};
		\coordinate (Link05) at (17, 5) {};
		
		\coordinate (Link10) at (15.5+8.7, 5) {};
		\coordinate (Link11) at (14.5+8.7, 4) {};
		\coordinate (Link12) at (18+8.7, 5) {};
		\coordinate (Link13) at (19+8.7, 4) {};
		\coordinate (Link14) at (16.5+8.7, 5) {};
		\coordinate (Link15) at (17+8.7, 5) {};
		
		\coordinate (LH00) at (-0.5+19.001, 2+2.201) {};
		\coordinate (LH01) at (1.5-1+19.001, 3-2+2.201) {};
		\coordinate (LH02) at (2.87+1+19.001, 3-2+2.201) {};
		\coordinate (LH03) at (4.87+19.001, 2+2.201) {};
		\filldraw [red] (1.45+19.001,1.5+2.201) circle (2pt);
		\filldraw [red] (2.2+19.001,1.5+2.201) circle (2pt);
		\filldraw [red] (2.95+19.001,1.5+2.201) circle (2pt);		
		
		\coordinate (LA0) at (28-.8, 4-.3) {};
		\coordinate  (LA1) at (33.5-.8, 3.8-.3) {};
		\coordinate  (LA2) at (31.75-.8, 4.5-.3) {};
		\node [font =\miniscule, red] at (19.75+12-.8, 4.9-.3) {$A_4$};
		\node [font =\tiny, red] at (34.2, 3.8-.3) {$E_1$};
		
		\coordinate (Link00b) at (15.5+.41, 5-2.82) {};
		\coordinate (Link01b) at (14.5+.41, 4-2.82) {};
		\coordinate (Link02b) at (18+.41, 5-2.82) {};
		\coordinate (Link03b) at (19+.41, 4-2.82) {};
		\coordinate (Link04b) at (16.5+.41, 5-2.82) {};
		\coordinate (Link05b) at (17+.41, 5-2.82) {};
		
		\coordinate (Link10b) at (15.5+8.7+.41, 5-2.82) {};
		\coordinate (Link11b) at (14.5+8.7+.41, 4-2.82) {};
		\coordinate (Link12b) at (18+8.7+.41, 5-2.82) {};
		\coordinate (Link13b) at (19+8.7+.41, 4-2.82) {};
		\coordinate (Link14b) at (16.5+8.7+.41, 5-2.82) {};
		\coordinate (Link15b) at (17+8.7+.41, 5-2.82) {};
		
		\coordinate (LH00b) at (-0.5+19.001+.41, 2+2.201-2.82) {};
		\coordinate (LH01b) at (1.5-1+19.001+.41, 3-2+2.201-2.82) {};
		\coordinate (LH02b) at (2.87+1+19.001+.41, 3-2+2.201-2.82) {};
		\coordinate (LH03b) at (4.87+19.001+.41, 2+2.201-2.82) {};
		\filldraw [red] (1.45+19.001+.41,1.5+2.201-2.82) circle (2pt);
		\filldraw [red] (2.2+19.001+.41,1.5+2.201-2.82) circle (2pt);
		\filldraw [red] (2.95+19.001+.41,1.5+2.201-2.82) circle (2pt);		
		
		\coordinate (LA0b) at (28-.8+.41, 4-.3-2.82) {};
		\coordinate  (LA1b) at (33.5-.8+.41, 3.8-.3-2.82) {};
		\coordinate  (LA2b) at (31.75-.8+.41, 4.5-.3-2.82) {};
		\node [font =\miniscule, red] at (19.75+12-.8+.41, 4.9-.3-2.82) {$A_4$};
		\node [font =\tiny, red] at (34.2, 3.8-.3-2.82) {$E_2$};
		
		\coordinate (Link00c) at (15.5+.71, 5-9.12) {};
		\coordinate (Link01c) at (14.5+.71, 4-9.12) {};
		\coordinate (Link02c) at (18+.71, 5-9.12) {};
		\coordinate (Link03c) at (19+.71, 4-9.12) {};
		\coordinate (Link04c) at (16.5+.71, 5-9.12) {};
		\coordinate (Link05c) at (17+.71, 5-9.12) {};
		
		\coordinate (Link10c) at (15.5+8.7+.71, 5-9.12) {};
		\coordinate (Link11c) at (14.5+8.7+.71, 4-9.12) {};
		\coordinate (Link12c) at (18+8.7+.71, 5-9.12) {};
		\coordinate (Link13c) at (19+8.7+.71, 4-9.12) {};
		\coordinate (Link14c) at (16.5+8.7+.71, 5-9.12) {};
		\coordinate (Link15c) at (17+8.7+.71, 5-9.12) {};
		
		\coordinate (LH00c) at (-0.5+19.001+.71, 2+2.201-9.12) {};
		\coordinate (LH01c) at (1.5-1+19.001+.71, 3-2+2.201-9.12) {};
		\coordinate (LH02c) at (2.87+1+19.001+.71, 3-2+2.201-9.12) {};
		\coordinate (LH03c) at (4.87+19.001+.71, 2+2.201-9.12) {};
		\filldraw [red] (1.45+19.001+.71,1.5+2.201-9.12) circle (2pt);
		\filldraw [red] (2.2+19.001+.71,1.5+2.201-9.12) circle (2pt);
		\filldraw [red] (2.95+19.001+.71,1.5+2.201-9.12) circle (2pt);		
		
		\coordinate (LA0c) at (28-.8+.71, 4-.3-9.12) {};
		\coordinate  (LA1c) at (33.5-.8+.71, 3.8-.3-9.12) {};
		\coordinate  (LA2c) at (31.75-.8+.71, 4.5-.3-9.12) {};
		\node [font =\miniscule, red] at (19.75+12-.8+.71, 4.9-.3-9.12) {$A_4$};
		\node [font =\tiny, red] at (34.2, 3.8-.3-9.12) {$E_r$};

		\filldraw [red] (34.2, -1) circle (2pt);
		\filldraw [red] (34.2,-1.75) circle (2pt);
		\filldraw [red] (34.2,-2.5) circle (2pt);

		\coordinate (BD0) at (-0.5, 2-13.111) {};
		\coordinate (BD1) at (1.5-1, 3-2-13.111) {};
		\coordinate (BD2) at (2.87+1, 3-2-13.111) {};
		\coordinate (BD3) at (4.87, 2-13.111) {};
		\filldraw [red] (1.45,1.5-13.111) circle (2pt);
		\filldraw [red] (2.2,1.5-13.111) circle (2pt);
		\filldraw [red] (2.95,1.5-13.111) circle (2pt);

		\coordinate (BE0) at (-0.5+4.8, 2-13.111) {};
		\coordinate (BE1) at (1.5+4.8, 3-13.111) {};
		\coordinate (BE2) at (2.87+4.8, 3-13.111) {};
		\coordinate (BE3) at (4.87+4.8, 2-13.111) {};
		\node [font=\miniscule, red] at (1.75+4.8-.1,2.3-13.111)  {$A_3$};

		\coordinate (BF0) at (-0.5-4.8, 2-13.111) {};
		\coordinate (BF1) at (1.5-4.8, 3-13.111) {};
		\coordinate (BF2) at (2.87-4.8, 3-13.111) {};
		\coordinate (BF3) at (4.87-4.8, 2-13.111) {};
		\node [font=\miniscule, red]at (1.75-4.8-.1,2.3-13.111)  {$A_3$};

		\filldraw [red] (-5.1,-1.2-10.65) circle (3pt);
		\node [font=\tiny, red] at (-5.1, -1.8-10.65) {$p_{1}$};
		\filldraw [red] (9.7-.3,-1.2-10.65) circle (3pt);
		\node [font=\tiny, red] at (9.8-.3, -1.8-10.65) {$p_{2}$};
		
		\coordinate (Link00d) at (15.5+.71, 5-15.432) {};
		\coordinate (Link01d) at (14.5+.71, 4-15.432) {};
		\coordinate (Link02d) at (18+.71, 5-15.432) {};
		\coordinate (Link03d) at (19+.71, 4-15.432) {};
		\coordinate (Link04d) at (16.5+.71, 5-15.432) {};
		\coordinate (Link05d) at (17+.71, 5-15.432) {};
		
		\coordinate (Link10d) at (15.5+8.7+.71, 5-15.432) {};
		\coordinate (Link11d) at (14.5+8.7+.71, 4-15.432) {};
		\coordinate (Link12d) at (18+8.7+.71, 5-15.432) {};
		\coordinate (Link13d) at (19+8.7+.71, 4-15.432) {};
		\coordinate (Link14d) at (16.5+8.7+.71, 5-15.432) {};
		\coordinate (Link15d) at (17+8.7+.71, 5-15.432) {};
		
		\coordinate (LH00d) at (-0.5+19.001+.71, 2+2.201-15.432) {};
		\coordinate (LH01d) at (1.5-1+19.001+.71, 3-2+2.201-15.432) {};
		\coordinate (LH02d) at (2.87+1+19.001+.71, 3-2+2.201-15.432) {};
		\coordinate (LH03d) at (4.87+19.001+.71, 2+2.201-15.432) {};
		\filldraw [red] (1.45+19.001+.71,1.5+2.201-15.432) circle (2pt);
		\filldraw [red] (2.2+19.001+.71,1.5+2.201-15.432) circle (2pt);
		\filldraw [red] (2.95+19.001+.71,1.5+2.201-15.432) circle (2pt);		
		
		\coordinate (LA0d) at (28-.8+.71, 4-.3-15.432) {};
		\coordinate  (LA1d) at (33.5-.8+.71, 3.8-.3-15.432) {};
		\coordinate  (LA2d) at (31.75-.8+.71, 4.5-.3-15.432) {};
		\node [font =\miniscule, red] at (19.75+12-.8+.71, 4.9-.3-15.432) {$A_4$};

		\filldraw [red] (15.8,-12.2) circle (3pt);
		\node [font=\tiny, red] at (15.8,-12.7){$p_{1}$};


		\coordinate (0) at (-2.75+2.021, 0.5-17.01) {};
		\coordinate (1) at (2.75+2.021, 0.5-17.01) {};
		\coordinate (2) at (-0.25+2.021, 1-17.01) {};
		\coordinate (3) at (0.25+2.021, 1-17.01) {};
		\coordinate (4) at (2.25+2.021, 0.5-17.01) {};
		\coordinate (5) at (2.75+2.021, -2-17.01) {};
		\coordinate (6) at (2.75+2.021, -2.5-17.01) {};
		\coordinate (7) at (2.25+2.021, -4.75-17.01) {};
		\coordinate (8) at (-2.25+2.021, 0.5-17.01) {};
		\coordinate (9) at (-2.75+2.021, -2-17.01) {};
		\coordinate (10) at (-2.75+2.021, -2.5-17.01) {};
		\coordinate (11) at (-2.5+2.021, -4.75-17.01) {};
		\coordinate (12) at (-2.5+2.021, -4.25-17.01) {};
		\coordinate (13) at (-1+2.021, -3.5-17.01) {};
		\coordinate (14) at (2.25+2.021, -4.25-17.01) {};
		\coordinate (15) at (0.75+2.021, -3.5-17.01) {};

		\node [font=\miniscule, red] at (2.021, -0.5-17.01)  {$A_3$};
		\node [font=\miniscule, red] at (1.4+2.021, -2.25-17.01)  {$A_3$};
		\node [font=\miniscule, red] at (-1.3+2.021, -2.25-17.01)  {$A_3$};
		
		\node [font=\tiny, red] at (2.021, 1.5-17.01)  {$E_0$};
		\node [font=\tiny, red] at (3.4+2.021, -2.25-17.01)  {$E_1$};
		\node [font=\tiny, red] at (-3.6+2.021, -2.25-17.01)  {$E_{l-1}$};

		\filldraw [red]  (3.5-2.021, -3.5-17.01) circle (1.8pt);
		\filldraw [red]  (3.9-2.021, -3.5-17.01) circle (1.8pt);
		\filldraw [red]  (4.3-2.021, -3.5-17.01) circle (1.8pt);

		\coordinate (Link00e) at (15.5+3.51, 5-22.432) {};
		\coordinate (Link01e) at (14.5+3.51, 4-22.432) {};
		\coordinate (Link02e) at (18+3.51, 5-22.432) {};
		\coordinate (Link03e) at (19+3.51, 4-22.432) {};
		\coordinate (Link04e) at (16.5+3.51, 5-22.432) {};
		\coordinate (Link05e) at (17+3.51, 5-22.432) {};
		
		\coordinate (Link10e) at (15.5+8.7+3.51, 5-22.432) {};
		\coordinate (Link11e) at (14.5+8.7+3.51, 4-22.432) {};
		\coordinate (Link12e) at (18+8.7+3.51, 5-22.432) {};
		\coordinate (Link13e) at (19+8.7+3.51, 4-22.432) {};
		\coordinate (Link14e) at (16.5+8.7+3.51, 5-22.432) {};
		\coordinate (Link15e) at (17+8.7+3.51, 5-22.432) {};
		
		\coordinate (LH00e) at (-0.5+19.001+3.51, 2+2.201-22.432) {};
		\coordinate (LH01e) at (1.5-1+19.001+3.51, 3-2+2.201-22.432) {};
		\coordinate (LH02e) at (2.87+1+19.001+3.51, 3-2+2.201-22.432) {};
		\coordinate (LH03e) at (4.87+19.001+3.51, 2+2.201-22.432) {};
		\filldraw [red] (1.45+19.001+3.51,1.5+2.201-22.432) circle (2pt);
		\filldraw [red] (2.2+19.001+3.51,1.5+2.201-22.432) circle (2pt);
		\filldraw [red] (2.95+19.001+3.51,1.5+2.201-22.432) circle (2pt);	
		
		\coordinate (CR0) at (-3.5+19.423-.7, 1.25-19.234) {};
		\coordinate (CR1) at (-4.5+19.423-.7, 0.5-19.234) {};
		\coordinate (CR2) at (3+19.423+11, 1.25-19.234) {};
		\coordinate (CR3) at (4+19.423+11, 0.5-19.234) {};
		\coordinate (CR4) at (19.423+11, 0.75-19.234) {};
		\coordinate (CR5) at (19.423-.7, 0.75-19.234) {};

		\node [font=\miniscule, red] at  (-3.5+19.423-.7, 1.25-19.234+.5) {$A_4$};	
		\node [font=\miniscule, red] at  (3+19.423+11, 1.25-19.234+.5) {$A_4$};	

		\node [font=\large] at (2,11.5) {$\alpha_c = \secondf$};
		\node [font=\large] at (25,11.5) {$\alpha_c = \thirdf$};

	\end{pgfonlayer}
	\begin{pgfonlayer}{edgelayer}
	
		\draw [style=very thick,color=red, bend left, looseness=1.25] (a1) to (a0);
		\draw  [style=very thick,color=red, bend right, looseness=1.25](a2) to (a3);
		\draw [style=very thick,color=red, in=270, out=-15, looseness=2.75] (b1) to (b0);
		\draw [style=very thick,color=red, in=270, out=195, looseness=2.75] (b2) to (b3);		
		\draw [style=very thick,color=red, in=270, out=-15, looseness=2.75] (c1) to (c0);
		\draw [style=very thick,color=red, in=270, out=195, looseness=2.75] (c2) to (c3);

		\draw [style=very thick,color=red, bend left, looseness=1.25] (a11) to (a01);
		\draw  [style=very thick,color=red, bend right, looseness=1.25](a21) to (a31);
		\draw [style=very thick,color=red, in=270, out=-15, looseness=2.75] (b11) to (b01);
		\draw [style=very thick,color=red, in=270, out=195, looseness=2.75] (b21) to (b31);		
		\draw [style=very thick,color=red, in=270, out=-15, looseness=2.75] (c11) to (c01);
		\draw [style=very thick,color=red, in=270, out=195, looseness=2.75] (c21) to (c31);		
		
		\draw [style=very thick,color=red, bend left, looseness=1.25] (d1) to (d0);
		\draw  [style=very thick,color=red, bend right, looseness=1.25](d2) to (d3);
		\draw [style=very thick,color=red, in=270, out=-15, looseness=2.75] (e1) to (e0);
		\draw [style=very thick,color=red, in=270, out=195, looseness=2.75] (e2) to (e3);		
		\draw [style=very thick,color=red, in=270, out=-15, looseness=2.75] (f1) to (f0);
		\draw [style=very thick,color=red, in=270, out=195, looseness=2.75] (f2) to (f3);

		\draw [style=very thick,color=red, bend left, looseness=1.25] (d11) to (d01);
		\draw  [style=very thick,color=red, bend right, looseness=1.25](d21) to (d31);
		\draw [style=very thick,color=red, in=270, out=-15, looseness=2.75] (e11) to (e01);
		\draw [style=very thick,color=red, in=270, out=195, looseness=2.75] (e21) to (e31);		
		\draw [style=very thick,color=red, in=270, out=-15, looseness=2.75] (f11) to (f01);
		\draw [style=very thick,color=red, in=270, out=195, looseness=2.75] (f21) to (f31);

		\draw [style=very thick,color=gray, in=240, out=58, looseness=1.35] (k0) to (k1);
		\draw [style=very thick, color=gray,  in=-80, out=111, looseness=1.25] (k3) to (k2);
		\draw [style=very thick,color=gray,  in=105, out=50, looseness=.8] (k1) to (k2);

		\draw [style=very thick,color=gray, in=240, out=58, looseness=1.35] (K0) to (K1);

		\draw [style=very thick, color=red, in=270, out=315, looseness=3.25] (Link00) to (Link01);
		\draw [style=very thick,color=red, in=-90, out=-135, looseness=3.25] (Link02) to (Link03);
		\draw [style=very thick,color=red, bend right=135, looseness=17.25] (Link04) to (Link05);
		\draw [style=very thick, color=red, in=270, out=315, looseness=3.25] (Link10) to (Link11);
		\draw [style=very thick,color=red, in=-90, out=-135, looseness=3.25] (Link12) to (Link13);
		\draw [style=very thick,color=red, bend right=135, looseness=17.25] (Link14) to (Link15);
		\draw [style=very thick,color=red, bend left, looseness=1.25] (LH01) to (LH00);
		\draw  [style=very thick,color=red, bend right, looseness=1.25](LH02) to (LH03);		
		\draw [style=very thick,color=red, in=270, out=-17] (LA0) to (LA2);
		\draw [style=very thick,color=red, in=240, out=251, looseness=1.25] (LA2) to (LA1);
		
		\draw [style=very thick, color=red, in=270, out=315, looseness=3.25] (Link00b) to (Link01b);
		\draw [style=very thick,color=red, in=-90, out=-135, looseness=3.25] (Link02b) to (Link03b);
		\draw [style=very thick,color=red, bend right=135, looseness=17.25] (Link04b) to (Link05b);
		\draw [style=very thick, color=red, in=270, out=315, looseness=3.25] (Link10b) to (Link11b);
		\draw [style=very thick,color=red, in=-90, out=-135, looseness=3.25] (Link12b) to (Link13b);
		\draw [style=very thick,color=red, bend right=135, looseness=17.25] (Link14b) to (Link15b);
		\draw [style=very thick,color=red, bend left, looseness=1.25] (LH01b) to (LH00b);
		\draw  [style=very thick,color=red, bend right, looseness=1.25](LH02b) to (LH03b);		
		\draw [style=very thick,color=red, in=270, out=-17] (LA0b) to (LA2b);
		\draw [style=very thick,color=red, in=240, out=251, looseness=1.25] (LA2b) to (LA1b);

		\draw [style=very thick, color=red, in=270, out=315, looseness=3.25] (Link00c) to (Link01c);
		\draw [style=very thick,color=red, in=-90, out=-135, looseness=3.25] (Link02c) to (Link03c);
		\draw [style=very thick,color=red, bend right=135, looseness=17.25] (Link04c) to (Link05c);
		\draw [style=very thick, color=red, in=270, out=315, looseness=3.25] (Link10c) to (Link11c);
		\draw [style=very thick,color=red, in=-90, out=-135, looseness=3.25] (Link12c) to (Link13c);
		\draw [style=very thick,color=red, bend right=135, looseness=17.25] (Link14c) to (Link15c);
		\draw [style=very thick,color=red, bend left, looseness=1.25] (LH01c) to (LH00c);
		\draw  [style=very thick,color=red, bend right, looseness=1.25](LH02c) to (LH03c);		
		\draw [style=very thick,color=red, in=270, out=-17] (LA0c) to (LA2c);
		\draw [style=very thick,color=red, in=240, out=251, looseness=1.25] (LA2c) to (LA1c);

		\draw [style=very thick,color=red, bend left, looseness=1.25] (BD1) to (BD0);
		\draw  [style=very thick,color=red, bend right, looseness=1.25](BD2) to (BD3);
		\draw [style=very thick,color=red, in=270, out=-15, looseness=2.75] (BE1) to (BE0);
		\draw [style=very thick,color=red, in=270, out=195, looseness=2.75] (BE2) to (BE3);		
		\draw [style=very thick,color=red, in=270, out=-15, looseness=2.75] (BF1) to (BF0);
		\draw [style=very thick,color=red, in=270, out=195, looseness=2.75] (BF2) to (BF3);
		
		\draw [style=very thick, color=red, in=270, out=315, looseness=3.25] (Link00d) to (Link01d);
		\draw [style=very thick,color=red, in=-90, out=-135, looseness=3.25] (Link02d) to (Link03d);
		\draw [style=very thick,color=red, bend right=135, looseness=17.25] (Link04d) to (Link05d);
		\draw [style=very thick, color=red, in=270, out=315, looseness=3.25] (Link10d) to (Link11d);
		\draw [style=very thick,color=red, in=-90, out=-135, looseness=3.25] (Link12d) to (Link13d);
		\draw [style=very thick,color=red, bend right=135, looseness=17.25] (Link14d) to (Link15d);
		\draw [style=very thick,color=red, bend left, looseness=1.25] (LH01d) to (LH00d);
		\draw  [style=very thick,color=red, bend right, looseness=1.25](LH02d) to (LH03d);		
		\draw [style=very thick,color=red, in=270, out=-17] (LA0d) to (LA2d);
		\draw [style=very thick,color=red, in=240, out=251, looseness=1.25] (LA2d) to (LA1d);


		\draw [style=very thick,color=red, in=270, out=-45, looseness=1.75] (2) to (0);
		\draw [style=very thick,color=red,in=-90, out=225, looseness=1.75] (3) to (1);
		\draw [style=very thick,color=red,in=225, out=180, looseness=1.75] (4) to (5);
		\draw [style=very thick,color=red,in=180, out=135, looseness=1.75] (6) to (7);
		\draw [style=very thick,color=red,in=0, out=-45, looseness=1.50] (9) to (8);
		\draw [style=very thick,color=red,in=0, out=45, looseness=2.00] (10) to (11);
		\draw [style=very thick,color=red,bend left] (12) to (13);
		\draw [style=very thick,color=red,bend left=45] (15) to (14);

		\draw [style=very thick,color=red,in=-90, out=225, looseness=0.75] (CR5) to (CR0);
		\draw [style=very thick,color=red,in=270, out=-45, looseness=0.75] (CR4) to (CR2);
		\draw [style=very thick,color=red,in=225, out=-90, looseness=1.25] (CR2) to (CR3);
		\draw [style=very thick,color=red,in=-45, out=-90, looseness=1.25] (CR0) to (CR1);
		\draw [style=very thick, color=red, in=270, out=315, looseness=3.25] (Link00e) to (Link01e);
		\draw [style=very thick,color=red, in=-90, out=-135, looseness=3.25] (Link02e) to (Link03e);
		\draw [style=very thick,color=red, bend right=135, looseness=17.25] (Link04e) to (Link05e);
		\draw [style=very thick, color=red, in=270, out=315, looseness=3.25] (Link10e) to (Link11e);
		\draw [style=very thick,color=red, in=-90, out=-135, looseness=3.25] (Link12e) to (Link13e);
		\draw [style=very thick,color=red, bend right=135, looseness=17.25] (Link14e) to (Link15e);
		\draw [style=very thick,color=red, bend left, looseness=1.25] (LH01e) to (LH00e);
		\draw  [style=very thick,color=red, bend right, looseness=1.25](LH02e) to (LH03e);	

		\draw[color=black, style=thick, dotted] (13.5,12) to (13.5,-22);
		\draw[color=black, style=thick, dotted] (-6,-8.5) to (35,-8.5);
		\draw[color=black, style=thick, dotted] (-6,-14) to (35,-14);

		\draw (-6,2) -- (-6,2) node [sloped, midway, above] {Type A};
		\draw (-6,-11.3) -- (-6,-11.3) node [sloped, midway, above] {Type B};
		\draw (-6,-18) -- (-6,-18) node [sloped, midway, above] {Type C};

	\end{pgfonlayer}
\end{tikzpicture}
\end{center}
\caption{The left (resp. right) column indicates the combinatorial types of $\secondf$-closed (resp. $\thirdf$-closed) curves.  
}
\label{F:closed-curves}
\end{figure}

\section{Local description of the flips}
\label{section-local-vgit}

In this section, we give an \'{e}tale local description of the open immersions from Theorem \ref{T:algebraicity}
\[
\SM_{g,n}(\alpha_c+\epsilon) \hookrightarrow \SM_{g,n}(\alpha_c) \hookleftarrow \SM_{g,n}(\alpha_c\me)
\]
at each critical value $\alpha_c\in \{\third, \second, \first\}$. 

Roughly speaking, our main result says that, \'{e}tale locally around any closed point of
 $\SM_{g,n}(\alpha_c)$, these inclusions are induced by a variation of GIT problem. In Section \ref{section-local-quotient}, 
we develop the necessary background material on local quotient presentations and local VGIT in order to state our main result 
(Theorem \ref{theorem-etale-VGIT}).  In Section \ref{section-VGIT-basics}, 
we collect several basic facts concerning local variation of GIT which will be used in subsequent sections. 
In Section \ref{section-first-order}, 
we describe explicit coordinates on the formal miniversal deformation space of an $\alpha_c$-closed curve.
In Section \ref{section-variation-GIT}, we use these coordinates to compute the associated VGIT chambers and thus conclude 
the proof of Theorem \ref{theorem-etale-VGIT}.

\subsection{Local quotient presentations}
\label{section-local-quotient}
\begin{defn} \label{definition-etale-presentations}  \label{defn-local-quotient}
Let $\cX$ be an algebraic stack of finite type over $\Spec \CC$, and let $x \in \cX(\CC)$ be a closed point.
We say that $f\co \cW \to \cX$ is a \emph{local quotient presentation around $x$} if 
\begin{enumerate}
\item The stabilizer $G_x$ of $x\in \cX$ is reductive.
\item $\cW = [\Spec A \ / \ G_x]$, where $A$ is a finite type $\CC$-algebra.
\item $f$ is \'etale and affine.
\item There exists a point $w \in \cW$ such that $f(w)=x$ and $f$ induces an isomorphism $G_{w} \simeq G_x$. 
\end{enumerate}
We say that $\cX$ \emph{admits local quotient presentations} if there exist local quotient presentations around all closed points 
$x \in \cX(\CC)$. 
We sometimes write $f\co (\cW, w) \to (\cX,x)$ as a local quotient presentation to indicate the chosen preimage of $x$.
\end{defn}
It is an interesting (and still unsolved) problem to determine when an algebraic stack admits local quotient presentations.
Happily, the following result suffices for our purposes:

\begin{prop}\cite[Theorem 3]{alper_quotient}
\label{P:global-quotient}
Let $\cX$ be a normal algebraic stack of finite type over $\Spec \CC$ such that $\cX=[X/G]$ where 
$G$ is a connected algebraic group acting on a normal separated scheme $X$.
Then for any closed point $x \in \cX(\CC)$ with a reductive stabilizer, 
$\cX$ admits a local quotient presentation around $x$.
\end{prop}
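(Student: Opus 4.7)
The plan is to mimic Luna's étale slice theorem in a setting where the ambient group $G$ need not be reductive, using the reductive stabilizer $G_x$ as the relevant group for the slice construction. Choose a preimage $\tilde{x} \in X(\CC)$ of the closed point $x$, so that the stabilizer of $\tilde{x}$ in $G$ coincides with $G_x$. Since $x$ is closed in $\cX = [X/G]$, the $G$-orbit of $\tilde{x}$ is closed in $X$; equivalently, $\tilde{x}$ is a $G_x$-fixed point whose $G$-orbit is closed.

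The heart of the argument is the construction of a $G_x$-stable affine open neighborhood $U \subset X$ of $\tilde{x}$. This is where the normality of $X$ enters crucially: Sumihiro's theorem, applied to the reductive (hence linear) group $G_x$ acting on the normal scheme $X$, yields a $G_x$-stable quasi-projective open neighborhood $V$ of $\tilde{x}$, together with a $G_x$-equivariant ample line bundle $L$ on $V$. After twisting the $G_x$-linearization of $L$ by a character so that $G_x$ acts trivially on the fibre $L_{\tilde{x}}$, any section $s$ of a sufficiently high power $L^{\otimes N}$ whose non-vanishing locus is an affine neighborhood of $\tilde{x}$ can be averaged, via the Reynolds operator associated to the reductive group $G_x$, to a $G_x$-invariant section $R(s)$ with $R(s)(\tilde{x}) \neq 0$. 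The non-vanishing locus of $R(s)$ is the desired $G_x$-stable affine neighborhood $U$.

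Having secured $U$, we apply Luna's étale slice theorem to the action of the reductive group $G_x$ on the affine scheme $U$ at the $G_x$-fixed point $\tilde{x}$: there exists a $G_x$-stable locally closed affine subscheme $S \subset U$ through $\tilde{x}$ such that the multiplication morphism $\mu \colon G \times^{G_x} S \to X$ is étale at $[e,\tilde{x}]$. Matsushima's theorem, applicable because $G_x$ is reductive, guarantees that $G/G_x$ is affine, so that $G \times^{G_x} S$ is a well-behaved scheme affine over $G/G_x$. Shrinking $S$ $G_x$-equivariantly if necessary, we may assume $\mu$ is an étale affine morphism onto a $G$-stable open $\tilde{U} \subset X$. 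Descending to quotient stacks, the induced morphism $[S/G_x] \to [\tilde{U}/G] \hookrightarrow \cX$ is étale and affine, the source has the form $[\Spec A / G_x]$ with $A$ of finite type, and by construction $\tilde{x} \in S$ maps to $x$ inducing an isomorphism of stabilizers $G_x \to G_x$. This is the required local quotient presentation.

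The principal obstacle is the second step: extracting a $G_x$-stable affine neighborhood from the quasi-projective one furnished by Sumihiro. The normality of $X$ is essential for Sumihiro's theorem, and the reductivity of $G_x$ is essential both for the Reynolds-operator averaging and for the subsequent invocation of Luna's étale slice theorem and Matsushima's theorem; each hypothesis of the proposition plays a non-negotiable role here.
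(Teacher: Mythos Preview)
The paper does not prove this proposition at all; it is stated as a direct citation of \cite[Theorem 3]{alper_quotient}, with no argument given. Your sketch is essentially the strategy of the cited reference, so in that sense you have reconstructed the intended proof rather than found an alternative.

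That said, a few attributions in your sketch are imprecise and worth tightening. First, Sumihiro's theorem in its standard form requires the acting group to be \emph{connected}, and $G_x$ need not be; one applies it instead to the identity component $G_x^\circ$ and then intersects translates (or averages sections) over the finite group $G_x/G_x^\circ$ to upgrade to a $G_x$-stable affine. Second, Matsushima's criterion as usually stated requires the ambient group $G$ to be reductive, which is not assumed here. What you actually need is the characteristic-zero fact that $G/H$ is affine whenever $H \subset G$ is reductive and $G$ is affine algebraic; this follows from the Levi decomposition $G = U \rtimes L$ together with Matsushima applied to $L$. Third, your appeal to ``Luna's \'etale slice theorem'' is really the slice \emph{construction} from inside Luna's proof (choose a $G_x$-equivariant complement to the tangent space of the orbit and take a $G_x$-stable transversal), adapted to the situation where only the stabilizer, not the full group, is reductive; it is not a black-box application of the theorem as usually stated. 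None of these points is a genuine gap---each is easily repaired---but the argument as written leans on theorem names whose standard hypotheses are not quite met.
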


\begin{corollary}\label{C:local-quotient}
For each $\alpha > \thirdme$, $\SM_{g,n}(\alpha)$ admits local quotient presentations.
\end{corollary}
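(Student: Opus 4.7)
The plan is to verify the three hypotheses of Proposition \ref{P:global-quotient} for $\SM_{g,n}(\alpha)$: that it can be realized as $[X/G]$ with $G$ a connected algebraic group acting on a normal separated scheme $X$; that it is normal; and that every closed point has a reductive stabilizer.

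The third condition is immediate from Proposition \ref{P:reductive-stabilizer}, which already shows $\Aut(C,\pn)$ is reductive for every $\alpha$-stable curve. For the quotient presentation, we use that $\omega_{\C/T}(\sum \sigma_i)$ is relatively ample on any family in $\U_{g,n}$. Choosing $m$ sufficiently large, the $m$-th tensor power becomes very ample with constant Hilbert polynomial, yielding a $\PGL_N$-embedding of $\U_{g,n}$ (suitably incorporating the marked sections as a fiber product with copies of the universal curve) as the quotient $[H/\PGL_N]$, where $H$ is a locally closed subscheme of an appropriate Hilbert scheme. Here $H$ is separated and $\PGL_N$ is connected. By Theorem \ref{T:algebraicity}, $\SM_{g,n}(\alpha)$ is an open substack of some $\U_{g,n}(A_\ell)$, so it arises as $[X/\PGL_N]$ for an open $\PGL_N$-invariant subscheme $X \subseteq H$, which is automatically separated.

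For normality, we in fact verify smoothness of $\U_{g,n}$. Since each $A_k$-singularity is a hypersurface (and in particular a local complete intersection) singularity of dimension one, and the geometric fibers of families in $\U_{g,n}$ are reduced curves, the obstruction space $\Ext^2_{\cO_C}(\Omega_C, \cO_C)$ for deformations of any object $(C,\pn) \in \U_{g,n}$ vanishes. Thus $\U_{g,n}$ is smooth over $\Spec\CC$, and its open substack $\SM_{g,n}(\alpha)$ inherits smoothness, hence is normal.

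Putting these ingredients together, Proposition \ref{P:global-quotient} applies to every closed point of $\SM_{g,n}(\alpha)$ and yields the desired local quotient presentations. The only non-routine ingredient in this argument is the reductivity condition, which is handled by the structural result Proposition \ref{P:reductive-stabilizer} already in hand; the remaining verifications are essentially formal consequences of the construction of $\U_{g,n}$ via Hilbert schemes and the smoothness of versal deformations of $A$-singularities.
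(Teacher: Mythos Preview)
Your proof is correct and follows essentially the same approach as the paper: realize $\SM_{g,n}(\alpha)$ as $[X/\PGL_N]$ via the Hilbert scheme, invoke Proposition \ref{P:reductive-stabilizer} for reductivity of stabilizers, and apply Proposition \ref{P:global-quotient}. The paper's proof is terser—it simply asserts that $X$ is a non-singular locally closed subvariety of the Hilbert scheme—whereas you supply the deformation-theoretic justification for smoothness, but the strategy is identical.
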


\begin{proof}[Proof of Corollary \ref{C:local-quotient}]
By definition of $\alpha$-stability, each $\SM_{g,n}(\alpha)$ can be realized as $[X/G]$, 
where $X$ is a non-singular locally closed subvariety 
of the Hilbert scheme of some $\PP^N$ and $G=\PGL(N+1)$. By Proposition \ref{P:reductive-stabilizer},
stabilizers of $\alpha$-stable curves are reductive. 
Thus we can apply Proposition \ref{P:global-quotient}.
\end{proof}

Recall that if $G$ is a reductive group acting on an affine scheme $X=\Spec A$ 
by $\sigma \co G \times X \to X$, there is a natural correspondence between 
$G$-linearizations of the structure sheaf $\oh_{X}$ and characters $\chi\co G \to \GG_m=\Spec \CC[t,t^{-1}]$.  
Precisely, a character $\chi$ defines a $G$-linearization $\cL$ of the structure sheaf $\O_{X}$ as follows. 
The element $\chi^*(t) \in \Gamma(G, \oh_G^*)$ induces a $G$-linearization
$\sigma^* \oh_X \to p_2^* \oh_X$ defined by $p_1^* (\chi^*(t))^{-1} \in \Gamma(G \times X, \oh_{G \times X}^*)$.  
Therefore, we can associate to $\chi$ the semistable loci $X_{\cL}^{\ss}$ and $X_{\cL^{-1}}^{\ss}$ (cf. \cite[Definition 1.7]{git}).
The following definition describes explicitly the change in semistable locus as we move from 
$\chi$ to $\chi^{-1}$ in the character lattice of $G$. 
See \cite{thaddeus} and \cite{dolgachev-hu} for the general setup of variation of GIT.

\begin{defn}[VGIT chambers]\label{D:VGIT}
Let $G$ be a reductive group acting on an affine scheme $X=\Spec A$. Let $\chi\co G \rightarrow \GG_m$ be a character
and set $A_{n} := \{f \in A \mid \sigma^*(f) = \chi^*(t)^{-n} f\} = \Gamma(X, \cL^{\tensor n})^G$.
We define the \emph{VGIT ideals associated to $\chi$} to be:
\begin{align*}
I^{+}_{\chi} &:= (f \in A \mid f \in A_{n} \text{ for some } n > 0),\\
I^{-}_{\chi} &:= (f \in A \mid f \in A_{n} \text{ for some } n < 0).
\end{align*}
The \emph{VGIT $(+)$-chamber and $(-)$-chamber of $X$ associated to $\chi$} are the open subschemes 
\[
X_{\chi}^+:=X\setminus \mathbb{V}(I^+_{\chi}) \hookrightarrow X, \qquad X_{\chi}^-:=X \setminus \mathbb{V}(I^-_{\chi}) \hookrightarrow X.
\]

Since the open subsets $X_{\chi}^+$, $X_{\chi}^-$ are $G$-invariant, we also have stack-theoretic open immersions
\[
[X_{\chi}^+/G] \hookrightarrow [X/G] \hookleftarrow [X_{\chi}^-/G].
\]
We will refer to these open immersions as the \emph{VGIT $(+)$/$(-)$-chambers of $[X/G]$ associated to $\chi$}.
\end{defn}

\begin{remark}\label{R:affine-GIT}
For an alternative characterization of $X_{\chi}^+$, note that
$\chi^{-1}$ defines an action of $G$ on 
$X\times \AA^1$ via $g \cdot (x,s) = (g\cdot x, \chi(g)^{-1} \cdot s)$.  
Then $x \in X^+_{\chi}$ if and only if the orbit closure $\overline{G \cdot (x,1)}$ does not intersect the zero section $X \times \{0\}$.
\end{remark}

The natural inclusions of VGIT chambers induce projective morphisms of GIT quotients.
\begin{prop} \label{P:vgit-quotients}
Let $\cL$ be the $G$-linearization of the structure sheaf on $X$ corresponding to a character $\chi$.  
Then there are natural identifications of $X^+_{\chi}$ and $X^-_{\chi}$ with the semistable loci $X^{\ss}_{\cL}$ and $X^{\ss}_{\cL^{-1}}$,
respectively.  There is a commutative diagram
\[
\xymatrix{
X^+_{\chi} \ar[d] \ar@{^(->}[r]		& X \ar[d]		& X^-_{\chi} \ar@{_(->}[l] \ar[d]\\
X^+_{\chi}\gitq G := \Proj \bigoplus_{d \ge 0} A_{d} \ar[r]	& \Spec A_0	& \Proj \bigoplus_{d \ge 0} A_{-d} =: X^-_{\chi}\gitq G \ar[l]
}
\]
where $X \to \Spec A_0$, $X^+_{\chi} \to X^+_{\chi} \gitq G $ and $X^-_{\chi} \to X^-_{\chi}\gitq G $ are GIT quotients. 
The restriction of  $\cL$ to $X^+_{\chi}$ (resp., $\cL^{{-1}}$ to $X^-_{\chi}$) descends to line bundle $\oh(1)$ on $X^+_{\chi}\gitq G$ 
(resp., $\oh(1)$ on $X^-_{\chi}\gitq G$) relatively ample over $\Spec A_0$.  
In particular, for every point $x \in X^+_{\chi} \cup X^-_{\chi}$, the character of $G_x$ corresponding to $\cL|_{BG_x}$ is trivial.
\end{prop}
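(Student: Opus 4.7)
The plan is to reduce each assertion to standard projective GIT by making explicit the dictionary between characters of $G$ and $G$-linearizations of $\oh_X$. First I would unwind this dictionary: the character $\chi$ determines the linearization $\cL$ via the cocycle $p_1^\ast(\chi^\ast t)^{-1}$, so that $\Gamma(X, \cL^{\otimes n})^G$ is canonically identified with $A_n$. Mumford's definition of semistability with respect to $\cL$ (the auxiliary affineness condition being automatic since $X$ itself is affine) then states that $x \in X^{\ss}_\cL$ iff there exist $n>0$ and $f \in A_n$ with $f(x) \ne 0$, which is precisely the defining condition of $X^+_\chi$. The identification $X^-_\chi = X^{\ss}_{\cL^{-1}}$ follows by flipping the sign of $\chi$.

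Next I would invoke the standard Proj construction of the projective GIT quotient. Since $G$ is reductive and $A$ is finitely generated, $A_0 = A^G$ is finitely generated (Hilbert--Nagata) and $\bigoplus_{d \ge 0} A_d$ is a finitely generated graded $A_0$-algebra. Consequently $X^+_\chi \gitq G = \Proj \bigoplus_{d \ge 0} A_d$ is projective over $\Spec A_0 = X \gitq G$, the map $X^+_\chi \to X^+_\chi \gitq G$ is a good quotient by general GIT, and the commutativity of the diagram is induced by the graded inclusion $A_0 \hookrightarrow \bigoplus_d A_d$. The case of $X^-_\chi$ is symmetric. The descent of $\cL|_{X^+_\chi}$ and the relative ampleness of $\oh(1)$ are built into the Proj construction: the sections of $\oh(n)$ on an affine chart $D_+(f)$ are exactly the degree $n$ part of $A_f$, which is the ring of $G$-invariant sections of $\cL^{\otimes n}$ on the open $X_f$.

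Finally, for the character triviality, given $x \in X^+_\chi$, pick $f \in A_n$ with $n>0$ and $f(x) \ne 0$. The $G$-equivariance of $f$ combined with $g \cdot x = x$ for $g \in G_x$ forces $G_x$ to act trivially on the value $f(x) \in \cL^{\otimes n}_x$, so the character $n\chi_x\co G_x \to \GG_m$ is trivial; combined with the fact from the previous step that $\cL$ itself (not merely a power) descends to $\oh(1)$, Kempf's descent lemma yields the stronger statement that $\chi_x$ itself is trivial on $G_x$. The argument for $X^-_\chi$ is symmetric. I do not anticipate a serious obstacle here; the proof is essentially bookkeeping once the linearization-character dictionary is made explicit, the only mildly subtle point being the upgrade from the vanishing of $n\chi_x$ to that of $\chi_x$ via descent.
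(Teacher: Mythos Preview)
Your approach matches the paper's, which dispatches the proposition in a single line by citing Mumford's GIT (Theorem~1.10) together with the definitions; your argument is essentially a careful unpacking of what that citation entails for the identifications $X^\pm_\chi = X^{\ss}_{\cL^{\pm 1}}$ and the Proj description of the quotients.

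One caution on your final step: the assertion that $\cL$ itself (rather than a power) descends to an invertible sheaf $\oh(1)$ is not automatic from the Proj construction when $\bigoplus_{d\ge 0} A_d$ fails to be generated in degree~$1$, and in that generality the twisting sheaf $\oh(1)$ need not be a line bundle. Concretely, take $G=\GG_m$ acting on $\AA^2$ by $t\cdot(x,y)=(t^{-2}x,y)$ with $\chi(t)=t$; then $A_d=0$ for $d$ odd, the point $(1,0)$ lies in $X^+_\chi$, and yet $\chi$ restricts nontrivially to the stabilizer $\mu_2$. So your upgrade from $n\chi_x=0$ to $\chi_x=0$ via Kempf descent is circular here: descent of $\cL$ is exactly what is in question. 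This is a mild imprecision in the proposition's statement rather than a flaw in your strategy, and Mumford's Theorem~1.10 itself only asserts descent of a power. In the paper's applications the required triviality of characters at closed points is verified independently (e.g.\ via Proposition~\ref{prop-character-comparison}), so no harm results downstream.
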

\begin{proof} 
This follows immediately from the definitions and \cite[Theorem 1.10]{git}.
\end{proof}

Next, we show how to use the data of a line bundle $\cL$ on a stack $\cX$ to define VGIT chambers associated to any local quotient presentation of $\cX$. In this situation, note that if $x \in \cX(\CC)$ is any point, then there is a natural action of the automorphism group $G_x$ on the fiber $\cL|_{BG_x}$ that induces a character $\chi_{\cL}\co G_x \to \GG_m$.

\begin{defn}[VGIT chambers of a local quotient presentation] \label{defn-plus-minus-quotient}
Let $\cX$ be an algebraic stack of finite type over $\Spec \CC$ and let $\cL$ be a line bundle on $\cX$.  
Let $x \in \cX$ be a closed point. 
If $f\co \cW \rightarrow \cX$ is a local quotient presentation around $x$, 
we define \emph{the chambers of $\cW$ associated to $\cL$}  
\[
\cW^+_{\cL} \hookrightarrow \W \hookleftarrow \cW^-_{\cL}
\]
to be the VGIT chambers associated to the character $\chi_{\cL}\co G_x \to \GG_m$. 
\end{defn}

\begin{defn}
\label{D:arise-from-VGIT}
Suppose $\cX$ is an algebraic stack of finite type over $\Spec CC$ that admits local quotient presentations
and $\cL$ is a line bundle on $\cX$. 
We say that open substacks $\cX^{+}$ and $\cX^{-}$ of $\cX$ \emph{arise from local VGIT with respect to $\cL$
at a point $x\in \cX$} if there exists a local quotient presentation $f\co \cW=[\Spec A \, / \,G_x] \ra \cX$ around $x$ such that  
$f^{*}\cL$ is the line bundle corresponding to the linearization of $\oh_{\Spec A}$ by $\chi_{\cL}$ and 
such that there is a Cartesian diagram:
\begin{equation} \label{diagram-etale-VGIT}
\xymatrix{
\cW^{+}_{\cL}  \ar[d] \ar@{^(->}[r]						&  \cW \ar[d]^f & \cW^{-}_{\cL}\ar@{_(->}[l] \ar[d]\\
 \cX^{+} \ar@{^(->}[r] 						& \cX & \cX^{-} \ar@{_(->}[l] 	.
}
\end{equation}
\end{defn}

The following key technical result allows to check that two given open substacks $\cX^{+}$ and $\cX^{-}$
arise from local VGIT with respect to a given line bundle $\cL$ on $\cX$ by working formally locally. 

\begin{prop}  \label{prop-ideals}
Let $\cX$ be a smooth algebraic stack of finite type over $\Spec \CC$ that admits local quotient presentations.  
Let  $\cL$ be a line bundle on $\cX$.  Let $\cX^+$ and $\cX^-$ be open substacks of $\cX$.  Let $x \in \cX$ be a closed point
and let $\chi\co G_x \to \GG_m$ be the character induced from the action of $G_x$ on the fiber of $\cL$ over $x$.  
Let $\TT^1(x)$ be the first-order deformation space of $x$, let $A=\CC[\TT^1(x)]$, 
and let $\hat{A} = \CC[[\TT^1(x)]]$ be the completion of $A$ at the origin.  
The affine space $T = \Spec A$ inherits an action of $G_x$.  Let $I_{\cZ^+}, I_{\cZ^-} \subseteq \hat{A}$ be the ideals defined by the reduced closed substacks $\cZ^+ = \cX \setminus \cX^+$ and $\cZ^- = \cX \setminus \cX^-$. 
Let $I^+ , I^- \subseteq A$ be the VGIT ideals associated to $\chi$.
If $I_{\cZ^+} = I^+ \hat{A}$ and $I_{\cZ^-} = I^- \hat{A}$, then $\cX^{+} \hookrightarrow \cX \hookleftarrow \cX^{-}$
arise from local VGIT with respect to $\cL$ at $x$. 
\end{prop}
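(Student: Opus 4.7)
The plan is three-fold: (i) construct a local quotient presentation $f\colon \cW = [\Spec B/G_x] \to \cX$ around $x$ together with a $G_x$-equivariant \'etale morphism $\psi\colon \Spec B \to \Spec A = \TT^1(x)$ sending the preimage $w$ of $x$ to the origin, by invoking Luna's \'etale slice theorem; (ii) check that $f^{*}\cL$ is the linearization of $\oh_{\Spec B}$ by the character $\chi$; and (iii) use the hypothesis $I_{\cZ^\pm} = I^\pm \hat A$ together with faithful flatness of completion and reductivity of $G_x$ to descend the ideal equality from the formal neighborhood $\Spf \hat B$ to a $G_x$-invariant Zariski open neighborhood of $w$, after which the Cartesian diagram \eqref{diagram-etale-VGIT} falls out.

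For (i), start with any local quotient presentation $[\Spec C / G_x] \to \cX$ around $x$ with preimage $u$ having stabilizer $G_x$; in particular $u$ is $G_x$-fixed. Smoothness of $\cX$ at $x$ gives a $G_x$-equivariant identification $T_u(\Spec C) \cong \TT^1(x)$. Luna's \'etale slice theorem applied at the fixed point $u$ furnishes a $G_x$-invariant affine open $\Spec B \subseteq \Spec C$ containing $u$ and a $G_x$-equivariant \'etale morphism $\psi\colon \Spec B \to \Spec A$ with $\psi(w)=0$, inducing the identity on tangent spaces at $w$. After further shrinking, $f \colon \cW := [\Spec B/G_x]\to \cX$ remains an \'etale affine local quotient presentation around $x$. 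For (ii), any $G_x$-equivariant line bundle on a $G_x$-invariant affine neighborhood of a fixed point becomes equivariantly trivial after a final shrinking (average an arbitrary trivialization using the Reynolds operator), and the $G_x$-action on the fiber at $w$ is $\chi$ by definition; hence $f^{*}\cL$ is the $\chi$-linearization of $\oh_{\Spec B}$, and $\cW^{\pm}_{\cL} = \psi^{-1}\bigl(\Spec A \setminus \mathbb{V}(I^{\pm})\bigr)$.

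For (iii), \'etaleness of $\psi$ at $w$ upgrades the tangent-space identification to a $G_x$-equivariant isomorphism $\hat B_w \cong \hat A$. Under this isomorphism, the assumption $I_{\cZ^+} = I^+ \hat A$ translates to the statement that the two $G_x$-invariant ideals $J^+ \subseteq B$ defining $f^{-1}(\cZ^+)$ and $\psi^{*}(I^+)B$ defining $\psi^{-1}(\mathbb{V}(I^+))$ have the same image in $\hat B_w$. The quotient $J^+/\psi^{*}(I^+)B$ is then a finitely generated $G_x$-equivariant $B$-module whose stalk at $w$ vanishes by faithful flatness of $B \to \hat B_w$; its support is therefore a $G_x$-invariant closed subscheme of $\Spec B$ not containing $w$, and reductivity of $G_x$ allows us to separate $w$ from this support by a $G_x$-invariant principal open $D(h)$. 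Replacing $B$ with $B_h$ (and running the same argument for $(-)$) yields $f^{-1}(\cZ^\pm) = \psi^{-1}(\mathbb{V}(I^\pm))$ on the nose, which is precisely the Cartesian diagram \eqref{diagram-etale-VGIT}.

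The main obstacle is step (iii): one must pass from a formal identity of ideals to an honest Zariski identity in a manner compatible with the $G_x$-action. This hinges on two ingredients, both of which are available here: faithful flatness of the completion at $w$, and the reductivity of $G_x$, which ensures that the relevant $G_x$-equivariant coherent sheaf can be killed on a $G_x$-invariant open subset. The earlier steps are essentially an application of Luna's \'etale slice theorem packaged correctly.
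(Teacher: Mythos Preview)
Your approach is essentially the same as the paper's: construct a local quotient presentation together with a $G_x$-equivariant \'etale map $\psi$ to the linear model $\Spec A$ via Luna, trivialize $f^*\cL$, and then pass from the formal equality of ideals to a Zariski-local one by shrinking $G_x$-invariantly. The paper packages your steps (i)--(ii) into a preparatory lemma (Lemma~\ref{lemma-vgit-comparison}), and your step (iii) is exactly how the paper proceeds, using Lemma~\ref{lemma-shrinking} to perform the $G_x$-invariant shrinking.

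There is, however, one non-trivial assertion you pass over without justification: the claim at the end of (ii) that $\cW^{\pm}_{\cL} = \psi^{-1}\bigl(\Spec A \setminus \mathbb{V}(I^{\pm})\bigr)$, i.e., that the intrinsic VGIT chambers of $\Spec B$ for $\chi$ coincide with the $\psi$-preimages of those of $\Spec A$. This does not follow merely from $f^*\cL$ being the $\chi$-linearization of $\oh_{\Spec B}$. It requires two further inputs: first, that $\psi$ is strongly \'etale in the sense that $B \cong A \otimes_{A^{G_x}} B^{G_x}$ (this is indeed the content of Luna's fundamental lemma, so your invocation of Luna does cover it); and second, a Hilbert--Mumford argument showing that under this Cartesian condition, destabilizing one-parameter subgroup limits lift along $\psi$. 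The paper isolates this second step as Lemma~\ref{lemma-vgit-pullback} and proves it separately. Your sketch is correct once this point is supplied, but it is the one place where something beyond ``packaging'' of Luna is required---and it is also needed again after you shrink in (iii), to ensure the VGIT chambers of $\Spec B_h$ are still the restrictions of those of $\Spec B$.
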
 

The proof Proposition \ref{prop-ideals} will be given in Section \ref{section-VGIT-basics}. 
We will now explain how this result is used in our situation.

On the stack $\SM_{g,n}(\alpha)$, there is a natural line bundle to use in conjunction with the VGIT formalism, namely $\delta-\psi$. 
Since this line bundle is defined over $\SM_{g,n}(\alpha)$ for each $\alpha$, there is an induced character 
$\chi_{\delta-\psi}\co \Aut(C, \pn) \rightarrow \GG_m$ for any $\alpha$-stable curve $(C, \pn)$. 

\begin{definition}[$I^+, I^-$] \label{defn-ideals}
If $(C, \pn)$ is an $\alpha_c$-closed curve, the affine space 
$$X=\Spec \CC[\TT^1(C, \pn)]$$
inherits an action of $\Aut(C, \pn)$, and  
we define $I^+$ and $I^-$ to be the VGIT ideals in $\CC[\TT^1(C, \pn)]$ 
associated to the character $\chi_{\delta-\psi}$.
\end{definition}

The main result of this section simply says that the VGIT chambers associated to $\delta-\psi$ locally cut out the inclusions 
$\SM_{g,n}(\alpha_c \pe) \hookrightarrow \SM_{g,n}(\alpha_c) \hookleftarrow \SM_{g,n}(\alpha_c \me)$.
\begin{theorem} 
\label{theorem-etale-VGIT}
Let $\alpha_c \in \{\third, \second, \first\}$.  
Then the open substacks 
\[
\SM_{g,n}(\alpha_c+\epsilon) \hookrightarrow \SM_{g,n}(\alpha_c) \hookleftarrow \SM_{g,n}(\alpha_c-\epsilon)
\]
arise from local VGIT with respect to $\delta-\psi$ 
at every closed point $(C, \pn)\in \SM_{g,n}(\alpha_c)$.
\end{theorem}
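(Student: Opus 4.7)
The plan is to reduce the statement to a purely formal local computation via Proposition \ref{prop-ideals}, and then carry out that computation case by case according to the combinatorial types of $\alpha_c$-closed curves described in Definition \ref{defn-canonical}. By Corollary \ref{C:local-quotient}, a local quotient presentation around any closed point $(C,\pn)\in \SM_{g,n}(\alpha_c)$ exists, so Proposition \ref{prop-ideals} tells us that it suffices to exhibit an identification of the ideals $I_{\cZ^\pm}\subseteq \hat{A}$ cutting out the complements $\SM_{g,n}(\alpha_c)\setminus \SM_{g,n}(\alpha_c\pm\epsilon)$ in $\Spf \hat{A}$ with the extensions $I^{\pm}\hat{A}$ of the VGIT ideals associated to the character $\chi_{\delta-\psi}\co \Aut(C,\pn)\to \GG_m$ on $A=\CC[\TT^1(C,\pn)]$.

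First I would handle the basic building blocks. For an $\alpha_c$-atom $E$, the miniversal deformation space decomposes as a product of a ``crimping'' direction (smoothing the $\alpha_c$-critical singularity $\xi$) and ``node-smoothing'' directions (one for each attaching node $q$ of $E$ in $C$), and the $\GG_m$-action on these coordinates is determined by \eqref{gm-action-atoms}. A local calculation using the explicit equations of the atom shows that the smoothing coordinate of $\xi$ has strictly negative $\chi_{\delta-\psi}$-weight, while the node-smoothing coordinate at $q$ has strictly positive weight; moreover, the divisor $\delta$ is cut out in these coordinates by the product of the node-smoothing coordinate and the crimping coordinate, and $\psi$ is pulled back from the appropriate factor. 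Similarly, for a length-$3$ rosary (appearing inside a $\thirdf$-link), the two tacnode-smoothing directions carry opposite $\GG_m$-weights (so their product is invariant), reflecting the fact that the $\GG_m$-action is internal to the rosary.

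Next I would assemble these local pieces. By the combinatorial description of an $\alpha_c$-closed curve as $K\cup L_1\cup\cdots \cup L_r$ (or $K'\cup L_1\cup\cdots\cup L_r$ in the $\thirdf$ case), the deformation space $\TT^1(C,\pn)$ splits equivariantly into the deformations of the core, the internal deformations of each link, and the node-smoothing deformations at each attachment $K\cap L_i$. The $\Aut(C,\pn)^\circ$-action factors through the $\GG_m$-factors associated to individual $\alpha_c$-atoms and rosaries, so the full computation reduces to analyzing a single link at a time. A straightforward book-keeping of weights then determines the ideals $I^+$ and $I^-$: roughly, $I^+$ is generated by the crimping coordinates and by monomials of total negative weight, while $I^-$ is generated by certain products of node-smoothing coordinates along a link (the monomials realizing positive weight). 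In each of the finitely many combinatorial types (A), (B), (C) of Definition \ref{defn-canonical}, the ideals $I^\pm$ can be written down explicitly.

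Finally, I would identify $I_{\cZ^\pm}$ with $I^\pm \hat{A}$. The locus $\cZ^+\subseteq \SM_{g,n}(\alpha_c)$ is the closed locus of curves that retain an $\alpha_c$-critical singularity (a cusp, tacnode, or ramphoid cusp on the relevant atom); using the local equations of these singularities, $I_{\cZ^+}$ is generated precisely by the crimping coordinates of the atoms, which I will have shown to coincide with the generators of $I^+$. Dually, $\cZ^-$ is the locus of curves still containing the relevant $\alpha_c$-critical subcurve (an elliptic tail, elliptic chain, or Weierstrass chain), and by Lemma \ref{L:HmLimits}, Lemma \ref{L:LimitChain}, and Lemma \ref{L:LimitWChain}, $\cZ^-$ is cut out by the vanishing of all node-smoothing coordinates along a link, matching the generators of $I^-$. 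Combining with the verification that $f^*(\delta-\psi)$ is the linearization corresponding to $\chi_{\delta-\psi}$ (immediate from the definition of $\chi_{\delta-\psi}$), Proposition \ref{prop-ideals} then yields the desired conclusion. The main obstacle will be the third step: although the weight bookkeeping is conceptually uniform, verifying that the VGIT ideals and the ideals cut out by the geometric conditions agree on the nose (and not merely up to radical) requires a careful analysis of the smoothing-of-a-link computation, particularly in combinatorial type (A) where links of arbitrary length $\ell_i$ can appear and the generators of $I^-$ involve long monomials in the node-smoothing parameters.
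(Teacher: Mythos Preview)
Your overall architecture matches the paper's: reduce via Proposition~\ref{prop-ideals} to the equality $I_{\cZ^\pm}=I^\pm\hat A$, compute $I^\pm$ for atoms and links, and compare with the geometric description of $\cZ^\pm$. However, your description of the $(-)$-side is too coarse in two essential ways, and these are genuine gaps rather than mere imprecision.

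First, for $\alpha_c=\second$ your claim that $I^-$ is ``generated by certain products of node-smoothing coordinates along a link'' is incorrect. For a single $\secondf$-link of length $\ell$ the Hilbert--Mumford analysis (Lemma~\ref{lemma-second-link} in the paper) gives
\[
V(I^-)=\bigcup_{\mu,\nu} V(n_\nu,\bs_{\nu+2},\bs_{\nu+4},\ldots,\bs_{\nu+2\mu-2},n_{\nu+2\mu-1}),
\]
so the $(-)$-unstable locus is a union of linear subspaces cut out by an \emph{alternating} pattern of node and tacnode-smoothing coordinates, not by node coordinates alone. Correspondingly, the geometric locus $\cZ^-$ of curves retaining an $A_1/A_1$-attached elliptic chain is not the locus where ``all node-smoothing coordinates along a link vanish''; a length-$\mu$ sub-chain survives precisely when the two boundary nodes and the \emph{intermediate} tacnodes are preserved, giving exactly the ideals $J_{i,\mu,\nu}$ of Proposition~\ref{prop-formal-ideals}. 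Lemmas~\ref{L:HmLimits}--\ref{L:LimitWChain} concern specialization, not the ideal-theoretic structure of $\cZ^-$ in $\hat A$, and do not yield this.

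Second, for $\alpha_c=\third$ you omit the crimping coordinate entirely. The locus $\cZ^-$ is the locus of curves with an $A_1$-attached Weierstrass chain, and whether a ramphoid-cuspidal tail is Weierstrass is a first-order condition captured by the crimping coordinate $c$ on $\Cr^1(E)$; this is the content of Lemma~\ref{splitting}, which identifies $\TT^1_{\cZ^-,0}\simeq \TT^1(\hat\cO_{E,\xi})$ and $\TT^1_{\cZ^+,0}\simeq \Cr^1(E)$ and is indispensable for Proposition~\ref{prop-formal-ideals}. On the VGIT side, $c$ is the unique positive-weight coordinate on an atom (Lemma~\ref{lemma-chambers-atoms}), so $I^-$ is generated by $c$ together with certain rosary and node coordinates along the link (Lemma~\ref{lemma-third-link}), not by node-smoothing coordinates alone. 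Without this ingredient the equality $I_{\cZ^-}=I^-\hat A$ cannot be established at $\alpha_c=\third$. (A minor terminological point: what you call the ``crimping direction'' is what the paper calls the singularity-smoothing direction $\bs$; the paper reserves ``crimping'' for the coordinate $c$ on $\Cr^1(E)$.)
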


The remainder of Section \ref{section-local-vgit} is devoted to the proof of Theorem \ref{theorem-etale-VGIT}. 
In Section \ref{section-VGIT-basics}, we 
prove basic facts concerning the VGIT chambers defined above and, in particular, we prove Proposition \ref{prop-ideals}.  
In Section \ref{section-first-order}, we construct, for any $\alpha_c$-closed curve $(C,\pn)$, coordinates for $\hat{\Def}(C,\pn)$ and describe the ideals 
$I_{\cZ^+}$ and $I_{\cZ^-}$. In Section \ref{section-variation-GIT}, we use this coordinate description to compute the VGIT ideals $I^+$ and $I^-$.  
In Proposition \ref{prop-vgit-ideals} we prove that $I_{\cZ^+} = I^+ \hat{A}$ and $I_{\cZ^-} = I^- \hat{A}$, 
so that Theorem \ref{theorem-etale-VGIT} follows from Proposition \ref{prop-ideals}.

\subsection{Preliminary facts about local VGIT} \label{section-VGIT-basics}

In this section, we collect several basic facts concerning variation of GIT for the action of a reductive group on an affine scheme which will be needed in subsequent sections. In particular, we formulate a version of the Hilbert-Mumford criterion which will be useful for computing the  VGIT chambers associated to an $\alpha_c$-closed curve.

\begin{defn}
Recall that given a character $\chi\co G \to \GG_m$ and a one-parameter subgroup $\lambda\co \GG_m \to G$, 
the composition $\chi \circ \lambda\co \GG_m \to \GG_m$ is naturally identified with the integer $n$ such that $(\chi \circ \lambda)^*t = t^n$.  
We define the {\it pairing of $\chi$ and $\lambda$} as $\langle \chi, \lambda \rangle = n.$
\end{defn}
\begin{prop}[Affine Hilbert-Mumford criterion]
\label{prop-hilbert-mumford}
Suppose $G$ is a reductive group over $\Spec \CC$ acting on an affine scheme $X = \Spec A$ of finite type over $\Spec \CC$.  
Let $\chi\co G \to \GG_m$ be a character.  Let $x \in X(\CC)$.  Then $x \notin X^{+}_{\chi}$ (resp., $x \notin X^{-}_{\chi}$) 
if and only if there exists a one-parameter subgroup $\lambda\co \GG_m \to G$ with $\langle \chi, \lambda \rangle > 0$ 
(resp., $\langle \chi, \lambda \rangle < 0$) such that $\lim_{t \to 0} \lambda(t) \cdot x$ exists.
\end{prop}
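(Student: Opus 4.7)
The plan is to reduce the statement to the classical affine Hilbert--Mumford criterion (due to Kempf/Birkes), applied to a twisted action of $G$ on $X \times \AA^1$.

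First, I would recast membership in $X^+_\chi$ as an orbit-closure condition via Remark \ref{R:affine-GIT}: a point $x \in X$ lies in $X^+_\chi$ if and only if the orbit closure $\overline{G \cdot (x,1)}$ in $X \times \AA^1$, with the $\chi$-twisted $G$-action on $\AA^1$, does not meet the zero section $X \times \{0\}$. The verification amounts to unwinding definitions: a semi-invariant $f \in A_n$ with $n>0$ gives rise to a $G$-invariant regular function $F(x,s) = f(x) s^n$ on $X \times \AA^1$ that is nonzero at $(x,1)$ but vanishes identically on $X \times \{0\}$; conversely, every $G$-invariant function on $X \times \AA^1$ decomposes into homogeneous pieces in $s$, and such a piece that separates $(x,1)$ from $X\times\{0\}$ must come from an element of $I^+_\chi$ which is nonzero at $x$.

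Next, I would invoke the classical affine Hilbert--Mumford criterion: for a reductive group $G$ acting on an affine $\CC$-scheme $Z$ of finite type and a closed $G$-invariant subscheme $Y \subset Z$, the orbit closure $\overline{G \cdot z}$ meets $Y$ if and only if there exists a one-parameter subgroup $\lambda\co \GG_m \to G$ such that $\lim_{t \to 0} \lambda(t) \cdot z$ exists and lies in $Y$. Applied with $Z = X \times \AA^1$, $Y = X \times \{0\}$, and $z = (x,1)$, this produces $\lambda$ such that
\[
\lim_{t \to 0}\, \lambda(t) \cdot (x,1) \;=\; \bigl(\lim_{t \to 0} \lambda(t)\cdot x,\; \lim_{t \to 0} t^{\langle \chi, \lambda \rangle}\bigr) \;\in\; X \times \{0\}.
\]
The existence of the limit on the $X$-factor gives the stated condition, while the vanishing of the $\AA^1$-factor in the limit forces $\langle \chi, \lambda \rangle > 0$. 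This proves the $(+)$-statement; conversely, any $\lambda$ with $\langle \chi, \lambda \rangle > 0$ and $\lim_{t\to 0}\lambda(t)\cdot x$ existing clearly drives $(x,1)$ to the zero section.

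The statement for $X^-_\chi$ is then immediate by symmetry: from Definition \ref{D:VGIT} we have $X^-_\chi = X^+_{\chi^{-1}}$, and $\langle \chi^{-1}, \lambda\rangle = -\langle \chi, \lambda\rangle$, so the $(+)$-case for $\chi^{-1}$ is exactly the $(-)$-case for $\chi$.

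The main (small) obstacle is careful bookkeeping of the sign conventions in Definition \ref{D:VGIT} and Remark \ref{R:affine-GIT}; the essential mathematical input, the affine Hilbert--Mumford criterion, is used as a black box.
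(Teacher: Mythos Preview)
Your proposal is correct and follows essentially the same route as the paper: translate $x \notin X^+_\chi$ into the orbit-closure condition of Remark~\ref{R:affine-GIT}, apply the classical Hilbert--Mumford criterion to $X \times \AA^1$, and then read off the two conditions from the limit on each factor. The paper is terser (it does not spell out the verification of Remark~\ref{R:affine-GIT} or the symmetry argument for $X^-_\chi$), but the argument is the same.
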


\begin{proof}
Consider the action of $G$ on $X\times \AA^1$ induced by $\chi^{-1}$ as in Remark \ref{R:affine-GIT}.
Then $x\notin X^{+}_{\chi}$ if
and only if $\overline{G\cdot (x,1)}\cap( X\times \{0\} )\neq \emptyset$. 
By the Hilbert-Mumford criterion \cite[Theorem 2.1]{git},
this is equivalent to the existence of a one-parameter subgroup $\lambda\co \GG_m \to G$  such
$\lim_{t \to 0} \lambda(t) \cdot (x,1)  \in X\times \{0\}$. We are done by observing that 
$\lim_{t \to 0} \lambda(t) \cdot (x,1) = \lim_{t \to 0} (\lambda(t) \cdot x, t^{\langle \chi, \lambda \rangle}) \in X\times \{0\}$ if and only if $\lim_{t \to 0} \lambda(t) \cdot x$ exists
and $\langle \chi, \lambda \rangle > 0$.
\end{proof}
The following are three immediate corollaries of Proposition \ref{prop-hilbert-mumford}:
\begin{corollary} 
\label{lemma-vgit-product}
 Let $G_i$ be reductive groups acting on affine schemes $X_i$ of finite type over $\Spec \CC$ and $\chi_i \co G_i \to \GG_m$ be characters for $i=1, \ldots, n$.  Consider the diagonal action of $G = \prod_i G_i$ on $X = \prod_i X_i$ and the character $\prod_i \chi_i \co G \to \GG_m$.  Then 
$$\begin{aligned}
X \setminus X^{+}_{\chi} = & \bigcup_{i=1}^{n} X_1 \times \cdots \times (X_i \setminus (X_i)^{+}_{\chi_i}) \times \cdots \times X_n, \\
X \setminus X^{-}_{\chi} = & \bigcup_{i=1}^{n} X_1 \times \cdots \times (X_i \setminus (X_i)^{-}_{\chi_i}) \times \cdots \times X_n.
\end{aligned}$$ \epf
\end{corollary}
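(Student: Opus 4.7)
My plan is to deduce both set equalities directly from the affine Hilbert-Mumford criterion (Proposition \ref{prop-hilbert-mumford}), exploiting the fact that one-parameter subgroups and character pairings decompose additively over the factors of the product group. I will give the argument for $X \setminus X^{+}_{\chi}$; the statement for $X \setminus X^{-}_{\chi}$ follows by replacing $\chi$ with $\chi^{-1}$, since $X^{-}_{\chi} = X^{+}_{\chi^{-1}}$ and the same substitution works factorwise.

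The first step is to fix notation: any one-parameter subgroup $\lambda\co \GG_m \to G = \prod_i G_i$ decomposes uniquely as $\lambda = (\lambda_1, \ldots, \lambda_n)$ with $\lambda_i \co \GG_m \to G_i$, and under this decomposition one has the additivity
\[
\langle \chi, \lambda \rangle = \sum_{i=1}^n \langle \chi_i, \lambda_i \rangle.
\]
Moreover, for $x = (x_1, \ldots, x_n) \in X$, the limit $\lim_{t \to 0} \lambda(t) \cdot x$ exists in $X$ if and only if each factorwise limit $\lim_{t \to 0} \lambda_i(t) \cdot x_i$ exists in $X_i$.

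For the inclusion $\supseteq$, suppose $x_i \notin (X_i)^{+}_{\chi_i}$ for some index $i$. By Proposition \ref{prop-hilbert-mumford} applied to the $G_i$-action on $X_i$, there is a one-parameter subgroup $\lambda_i \co \GG_m \to G_i$ with $\langle \chi_i, \lambda_i \rangle > 0$ for which $\lim_{t \to 0} \lambda_i(t) \cdot x_i$ exists. Extending by the trivial one-parameter subgroup in the remaining factors gives $\lambda = (e, \ldots, \lambda_i, \ldots, e)$ with $\langle \chi, \lambda \rangle = \langle \chi_i, \lambda_i \rangle > 0$, and the factorwise criterion for the existence of the limit is trivially satisfied in the other factors. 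Hence $x \notin X^{+}_{\chi}$.

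For the reverse inclusion $\subseteq$, suppose $x \notin X^{+}_{\chi}$. Proposition \ref{prop-hilbert-mumford} produces a one-parameter subgroup $\lambda \co \GG_m \to G$ with $\langle \chi, \lambda \rangle > 0$ and $\lim_{t \to 0} \lambda(t) \cdot x$ existing. Writing $\lambda = (\lambda_1, \ldots, \lambda_n)$, the additivity of the pairing forces $\langle \chi_i, \lambda_i \rangle > 0$ for at least one index $i$, while the factorwise existence of limits supplies $\lim_{t \to 0} \lambda_i(t) \cdot x_i$. Applying Proposition \ref{prop-hilbert-mumford} in the reverse direction to the triple $(G_i, X_i, \chi_i)$ and the one-parameter subgroup $\lambda_i$, we conclude $x_i \notin (X_i)^{+}_{\chi_i}$, which places $x$ in the right-hand union. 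There is no genuine obstacle here; the only point requiring minimal care is the pigeonhole observation that a sum of integers is positive only if at least one summand is positive, which is precisely what converts the global Hilbert-Mumford witness into a factorwise one.
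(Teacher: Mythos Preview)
Your argument is correct and is exactly the intended one: the paper states this result as an immediate corollary of Proposition~\ref{prop-hilbert-mumford} and gives no further proof, and your write-up is precisely the natural elaboration via the additive decomposition of one-parameter subgroups and character pairings over the product.
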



\begin{corollary}
\label{lemma-vgit-closed}
Let $G$ be a reductive group over $\Spec \CC$ acting on an affine $X = \Spec A$ of finite type over $\Spec \CC$.  Let $\chi \co G \to \GG_m$ be a character.  Let $Z \subseteq X$ be a $G$-invariant closed subscheme.  Then $Z^{+}_{\chi} = X^{+}_{\chi} \cap Z$ and $Z^{-}_{\chi} = X^{-}_{\chi} \cap Z$. \epf
\end{corollary}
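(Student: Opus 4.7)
The plan is to deduce the corollary directly from the affine Hilbert--Mumford criterion (Proposition \ref{prop-hilbert-mumford}). Fix a point $x \in Z(\CC)$. By Proposition \ref{prop-hilbert-mumford} applied to the ambient scheme $X$, we have $x \notin X^{+}_{\chi}$ if and only if there is a one-parameter subgroup $\lambda\co \GG_m \to G$ with $\langle \chi, \lambda \rangle > 0$ such that $\lim_{t \to 0} \lambda(t)\cdot x$ exists in $X$. Applying the same criterion to the action of $G$ on the closed affine subscheme $Z$, we have $x \notin Z^{+}_{\chi}$ if and only if there is such a $\lambda$ for which $\lim_{t \to 0} \lambda(t)\cdot x$ exists in $Z$.

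The key observation — and the only step that really requires argument — is that these two limit conditions coincide. Because $Z$ is $G$-invariant, the whole one-parameter orbit $\{\lambda(t)\cdot x : t \in \GG_m\}$ lies in $Z$; and because $Z$ is closed in $X$, any limit in $X$ of points of $Z$ must lie in $Z$. Hence the limit $\lim_{t \to 0} \lambda(t)\cdot x$ exists in $X$ if and only if it exists in $Z$. Combining this with the two applications of Hilbert--Mumford gives $Z \setminus Z^{+}_{\chi} = Z \setminus X^{+}_{\chi}$, i.e.\ $Z^{+}_{\chi} = X^{+}_{\chi} \cap Z$. The statement for $Z^{-}_{\chi}$ follows by applying the same argument with $\chi$ replaced by $\chi^{-1}$ (equivalently, reversing the sign of $\langle \chi, \lambda\rangle$).

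No step here is a serious obstacle: once one has the Hilbert--Mumford characterization of the VGIT chambers and the elementary fact that closed $G$-invariant subschemes are stable under taking one-parameter-subgroup limits, the corollary is immediate.
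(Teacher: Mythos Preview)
Your proof is correct and follows exactly the approach the paper intends: the corollary is stated there as one of ``three immediate corollaries'' of the affine Hilbert--Mumford criterion (Proposition~\ref{prop-hilbert-mumford}), with no further argument given. Your write-up simply spells out the one-line observation that $G$-invariance plus closedness of $Z$ forces one-parameter limits in $X$ and in $Z$ to coincide, which is precisely what makes the deduction immediate.
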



\begin{corollary} \label{lemma-vgit-identity}
Let $G$ be a reductive group with character $\chi \co G \to \GG_m$.  Suppose $G$ acts on an affine scheme $X=\Spec A$ of finite type over $\Spec \CC$.  Let $G^{\circ}$ be the connected component of the identity and $\chi^{\circ} = \chi|_{G^{\circ}}$.  Then the VGIT chambers $X^+_{\chi}, X^-_{\chi}$ for the action of $G$ on $X$ are equal to the VGIT chambers $X^+_{\chi^{\circ}}, X^-_{\chi^{\circ}}$ for action of $G^{\circ}$ on $X$. \epf
\end{corollary}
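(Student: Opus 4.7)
The plan is to deduce this directly from the affine Hilbert--Mumford criterion (Proposition \ref{prop-hilbert-mumford}), using the fact that every one-parameter subgroup of $G$ automatically factors through the identity component $G^{\circ}$.

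First I would recall that a one-parameter subgroup of $G$ is by definition a homomorphism $\lambda \co \GG_m \to G$. Since $\GG_m$ is connected, the image of $\lambda$ is contained in $G^{\circ}$, so $\lambda$ factors uniquely through an honest one-parameter subgroup $\lambda^{\circ} \co \GG_m \to G^{\circ}$. Thus the set of one-parameter subgroups of $G$ and the set of one-parameter subgroups of $G^{\circ}$ are canonically identified. Moreover, under this identification the pairing $\langle \chi, \lambda \rangle$ equals $\langle \chi^{\circ}, \lambda^{\circ} \rangle$, because the composition $\chi \circ \lambda \co \GG_m \to \GG_m$ factors as $\chi^{\circ} \circ \lambda^{\circ}$.

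Next I would apply Proposition \ref{prop-hilbert-mumford} to both actions. For a point $x \in X(\CC)$, the condition $x \notin X^{+}_{\chi}$ is equivalent to the existence of a one-parameter subgroup $\lambda \co \GG_m \to G$ with $\langle \chi, \lambda \rangle > 0$ such that $\lim_{t \to 0} \lambda(t)\cdot x$ exists; by the previous paragraph this is the same as the existence of a one-parameter subgroup $\lambda^{\circ} \co \GG_m \to G^{\circ}$ with $\langle \chi^{\circ}, \lambda^{\circ} \rangle > 0$ such that $\lim_{t \to 0} \lambda^{\circ}(t)\cdot x$ exists, which (again by Proposition \ref{prop-hilbert-mumford}) is the statement that $x \notin X^{+}_{\chi^{\circ}}$. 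The same argument with the inequality reversed gives $X^{-}_{\chi} = X^{-}_{\chi^{\circ}}$, completing the proof.

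There is essentially no obstacle here: the only content is that $\GG_m$ is connected, so one-parameter subgroups do not see the component group $G/G^{\circ}$, and the Hilbert--Mumford criterion depends only on one-parameter subgroups and their pairing with the character.
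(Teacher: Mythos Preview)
Your proof is correct and matches the paper's approach exactly: the paper lists this as one of three ``immediate corollaries'' of the affine Hilbert--Mumford criterion (Proposition~\ref{prop-hilbert-mumford}) and gives no further argument. Your observation that one-parameter subgroups of $G$ automatically land in $G^{\circ}$ because $\GG_m$ is connected is precisely the (unstated) reason the paper regards this as immediate.
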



\begin{prop} \label{P:specializations}
Let $G$ be a reductive group acting on an affine variety $X$ of finite type over $\Spec \CC$.  Let $\chi\co G \to \GG_m$ be a non-trivial character.  
Let $\lambda\co \GG_m \to G$ be a one-parameter subgroup and $x \in X^-_{\chi}(\CC)$ such that 
$x_0 = \lim_{t \to 0} \lambda(t) \cdot x \in X^G$ is fixed by $G$.  Then $\langle \chi, \lambda \rangle > 0$.
\end{prop}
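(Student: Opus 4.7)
My plan is to deduce $\langle \chi, \lambda\rangle \geq 0$ from the affine Hilbert-Mumford criterion and then exclude equality by extracting a $\chi$-semi-invariant function witnessing $x \in X^-_{\chi}$ that survives the limit $\lambda(t) \cdot x \to x_0$. Since $x_0 = \lim_{t\to 0}\lambda(t)\cdot x$ exists, the contrapositive of Proposition \ref{prop-hilbert-mumford} applied to $X^-_{\chi}$ immediately yields $\langle \chi, \lambda\rangle \geq 0$: otherwise $\lambda$ itself would certify $x \notin X^-_{\chi}$. The whole content of the proposition is therefore to rule out $\langle \chi, \lambda\rangle = 0$.

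Assume for contradiction that $\langle \chi, \lambda\rangle = 0$. The hypothesis $x \in X^-_{\chi} = X\setminus \mathbb{V}(I^-_{\chi})$ yields an $f \in A_n$ with $n < 0$ and $f(x) \neq 0$; unwinding the definition of $A_n$ gives the semi-invariance
\[
f(g\cdot y) = \chi(g)^{-n} f(y) \qquad \text{for all } g\in G,\; y\in X.
\]
Specializing to $g = \lambda(t)$, $y = x$, and using $\chi(\lambda(t)) = t^{\langle \chi,\lambda\rangle} = 1$, I get $f(\lambda(t)\cdot x) = f(x)$ for all $t\in\GG_m$, and letting $t\to 0$ forces $f(x_0) = f(x) \neq 0$. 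Now $x_0 \in X^G$ combined with semi-invariance gives $\chi(g)^{-n} f(x_0) = f(g\cdot x_0) = f(x_0)$ for every $g \in G$, so $\chi(g)^{-n} = 1$ for all $g \in G$; i.e., $\chi^{-n}$ is the trivial character of $G$.

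The only delicate step I anticipate is passing from $\chi^{-n} = 1$ (with $-n > 0$) to a contradiction with the non-triviality of $\chi$. When $G$ is connected reductive the character lattice is torsion-free, so the contradiction is automatic. In general I will reduce to $G^{\circ}$ via Corollary \ref{lemma-vgit-identity}: the pairing $\langle \chi, \lambda\rangle$ is unchanged because $\lambda$ automatically factors through $G^{\circ}$, and the hypothesis of non-triviality should be read as non-triviality of $\chi|_{G^{\circ}}$---the only part of $\chi$ seen by VGIT. Apart from this essentially bookkeeping point, the argument is a direct unwinding of the $\chi$-weight formalism.
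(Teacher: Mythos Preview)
Your proof is correct. Both you and the paper obtain $\langle\chi,\lambda\rangle\geq 0$ from Proposition~\ref{prop-hilbert-mumford} and then argue by contradiction assuming equality, but the packaging of the contradiction differs. The paper works geometrically via Remark~\ref{R:affine-GIT}: from $\langle\chi,\lambda\rangle=0$ one gets $\lim_{t\to 0}\lambda(t)\cdot(x,1)=(x_0,1)\in\overline{G\cdot(x,1)}$ inside $X\times\AA^1$, and then invokes the assertion that $X^G\subset X\setminus X^-_\chi$ (for $\chi$ nontrivial) to force the orbit closure to meet $X\times\{0\}$, contradicting $x\in X^-_\chi$. You instead work algebraically with a single witness semi-invariant $f\in A_n$, show it is constant along the $\lambda$-flow, deduce $f(x_0)\neq 0$, and then use $G$-invariance of $x_0$ to get $\chi^{-n}=1$. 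Your route is more elementary and in effect \emph{proves} the claim $X^G\cap X^-_\chi=\emptyset$ that the paper simply asserts. Both arguments face the same torsion subtlety you flag at the end, and your reduction to $G^\circ$ via Corollary~\ref{lemma-vgit-identity} is the correct patch; the paper's proof implicitly needs it as well.
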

\begin{proof}
As $x \in X_{\chi}^-$,  $\langle \chi, \lambda \rangle \ge 0$ by Proposition \ref{prop-hilbert-mumford}.  
Suppose $\langle \chi, \lambda \rangle = 0$. Considering the action of $G$ on $X\times \AA^1$ induced by $\chi$ as in Remark \ref{R:affine-GIT},
then $\lim_{t\to 0} \lambda(t)\cdot (x,1)=(x_0, 1)\in X^G\times \AA^1$.
But $X^{G}$ is contained in the unstable locus $X \setminus X^-_{\chi}$ since
$\chi$ is a nontrivial linearization.  It follows that $\overline{G\cdot (x,1)} \cap (X^G \times \{ 0 \}) \neq \emptyset$
which contradicts $x \in X^-_{\chi}$.
\end{proof}

\begin{lemma}  \label{lemma-vgit-pullback}
Let $G$ be a reductive group with character $\chi\co G \to \GG_m$
and $h\co \Spec A=X \to Y=\Spec B$ be a $G$-invariant morphism of affine schemes finite type over $\Spec \CC$.  Assume that $A = B \otimes_{B^G} A^G$.
Then $h^{-1}(Y^+_{\chi}) = X^+_{\chi}$ and $h^{-1}(Y^-_{\chi}) = X^-_{\chi}$.
\end{lemma}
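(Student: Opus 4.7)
The plan is to reduce the claim to a direct computation with $\chi$-weight decompositions of $A$ and $B$ as $G$-modules, via the base change hypothesis $A = B \otimes_{B^G} A^G$. Since $G$ is reductive and $\characteristic \CC = 0$, both $A$ and $B$ decompose as $G$-modules into $\chi$-eigenspaces:
\[
A = \bigoplus_{n \in \ZZ} A_n, \qquad B = \bigoplus_{n \in \ZZ} B_n,
\]
where $A_n = \{f \in A : \sigma^*f = (\chi^*t)^{-n} f\}$ and similarly for $B_n$. In particular $A_0 = A^G$ and $B_0 = B^G$, and by Definition \ref{D:VGIT} the VGIT ideals are $I^+_{\chi}(A) = \sum_{n > 0} A_n \cdot A$ and $I^+_{\chi}(B) = \sum_{n > 0} B_n \cdot B$.

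Next I would unpack the hypothesis. The isomorphism $A \cong B \otimes_{B^G} A^G$ is $G$-equivariant, with $G$ acting on the right hand side through its action on $B$ (and trivially on $A^G$). Since tensor products commute with direct sums of $B^G$-modules, and $A^G$ sits in weight $0$, the weight decomposition on the right becomes
\[
B \otimes_{B^G} A^G \;=\; \bigoplus_{n \in \ZZ} \bigl(B_n \otimes_{B^G} A^G\bigr),
\]
and comparing with the intrinsic weight decomposition of $A$ gives $A_n = B_n \otimes_{B^G} A^G$ for every $n$. Summing over $n > 0$ and multiplying by $A = B \otimes_{B^G} A^G$, we conclude that
\[
I^+_{\chi}(A) \;=\; \sum_{n > 0} A_n \cdot A \;=\; \Bigl(\sum_{n > 0} B_n \cdot B \Bigr) \cdot A \;=\; h^*\bigl(I^+_{\chi}(B)\bigr) \cdot A,
\]
i.e.\ the VGIT ideal in $A$ is the extension of the VGIT ideal in $B$. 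Passing to zero loci, $V(I^+_{\chi}(A)) = h^{-1}(V(I^+_{\chi}(B)))$, which is precisely the statement $X^+_{\chi} = h^{-1}(Y^+_{\chi})$. The argument for $X^-_{\chi}$ is verbatim the same with the sign of $n$ reversed.

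The only genuinely non-formal step is the identification $A_n = B_n \otimes_{B^G} A^G$; this is where linear reductivity of $G$ enters, since it lets the weight decomposition be pulled outside the tensor product. I do not anticipate any serious difficulty, but I would take care to present this step cleanly, as it is the one place where the hypothesis $A = B \otimes_{B^G} A^G$ is actually used. An alternative, more geometric, proof via the Hilbert--Mumford criterion (Proposition \ref{prop-hilbert-mumford}) is also possible: one inclusion is automatic from $G$-equivariance of $h$, and the reverse inclusion follows because $X = Y \times_{Y \gitq G} (X \gitq G)$ allows one to lift a destabilizing one-parameter subgroup for $h(x)$ to one for $x$ with the same pairing against $\chi$. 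I would stick with the algebraic argument above, since it is shorter and does not require separate treatment of the two inclusions.
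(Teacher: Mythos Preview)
Your algebraic approach has a genuine error: the claimed decomposition $A = \bigoplus_{n \in \ZZ} A_n$ is false for a general reductive group $G$. By definition $A_n$ is the space of $\chi^{-n}$-semi-invariants, i.e.\ the isotypic component for the one-dimensional representation $\chi^{-n}$; but $A$ will typically contain irreducible $G$-summands not of this form (any higher-dimensional irreducible, or any one-dimensional representation not a power of $\chi$). Your decomposition holds only when every irreducible appearing in $A$ is a power of $\chi$ --- essentially the torus case --- which is not assumed.

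The fix is easy and your key identity $A_n = B_n \otimes_{B^G} A^G$ survives. Use the full isotypic decomposition $B = \bigoplus_\lambda B^{(\lambda)}$ over \emph{all} irreducibles $\lambda$; this is a decomposition of $B^G$-modules. Tensoring with the trivial $G$-module $A^G$ over $B^G$, each summand $B^{(\lambda)} \otimes_{B^G} A^G$ is a quotient of a sum of copies of $V_\lambda$, hence lands in the $\lambda$-isotypic piece of $A$. Specializing to $\lambda = \chi^{-n}$ gives $A_n = B_n \otimes_{B^G} A^G$, and your computation of $I^+_\chi(A) = h^*(I^+_\chi(B)) \cdot A$ then goes through verbatim.

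Once repaired, your route is genuinely different from the paper's, which takes exactly the Hilbert--Mumford alternative you sketch at the end: one inclusion from equivariance of $h$, and for the reverse, the Cartesian square $\Spec A = \Spec B \times_{\Spec B^G} \Spec A^G$ lets one fill in the map $\Spec \CC[t] \to \Spec A$ extending a limiting one-parameter subgroup for $h(x)$. Your algebraic argument is more direct and yields the stronger conclusion that the VGIT ideals themselves satisfy $I^+_\chi(A) = I^+_\chi(B) \cdot A$, not merely equality of their vanishing loci; the paper's geometric argument trades this for not having to track any decomposition at all.
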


\begin{proof} We use Proposition \ref{prop-hilbert-mumford}.  If $x \notin X^+_{\chi}$, then there exists 
$\lambda\co \GG_m \to G$ with $\langle \chi, \lambda \rangle > 0$ such that $x_0=\lim_{t \to 0} \lambda(t) \cdot x$ exists.  
It follows that $h(x_0) = \lim_{t \to 0} \lambda(t) \cdot h(x)$ exists, and so $h(x) \notin Y^+_{\chi}$. We conclude that 
$h^{-1}(Y^+_{\chi}) \subseteq X^+_{\chi}$. Conversely, suppose $h(x) \notin Y^+_{\chi}$. 
Then there exists $\lambda\co \GG_m \to G$ with $\langle \chi, \lambda \rangle > 0$ such that $ \lim_{t \to 0} \lambda(t) \cdot h(x)$ exists.   
Since $\lim_{t \to 0} \lambda(t) \cdot h(x)$ exists and since  both 
$\Spec A \to \Spec A^G$ and $\Spec B \to \Spec B^G$ are GIT quotients, there is a commutative diagram
$$\xymatrix{
\Spec \CC[t] \ar@/^/[drr] \ar@/_/[ddr] \ar@{-->}[dr]		\\
	& \Spec A \ar[r]^h \ar[d] 	& \Spec B \ar[d] \\
	&\Spec A^G \ar[r]			& \Spec B^G 
}$$
Since the square is Cartesian, the map $\GG_m=\Spec \CC[t,t^{-1}] \to \Spec A$ given by $t \mapsto \lambda(t) \cdot x$ extends to $\Spec \CC[t] \to \Spec A$. 
It follows that $x \notin X^+_{\chi}$. We conclude that $ X^+_{\chi} \subseteq h^{-1}(Y^+_{\chi})$.  
\end{proof}

In order to prove Proposition \ref{prop-ideals}, we need the following Lemma.
\begin{lemma} \label{lemma-vgit-comparison}
Let $G$ be a reductive group acting on a smooth affine variety  $W=\Spec A$ over $\Spec \CC$.  
Let $w \in W$ be a fixed point of $G$.  Let $\chi\co G \to \GG_m$ be a character.  
There is a Zariski-open affine neighborhood $W' \subseteq W$ containing $w$ and a $G$-invariant \'etale morphism 
$h\co W' \to T=\Spec \CC[T_{W,w}]$, where $T_{W,w}$ is the tangent space at $w$,  such that 
\[
h^{-1}(T^+_{\chi}) = W'^+_{\chi} \qquad h^{-1}(T^+_{\chi}) = W'^+_{\chi} \, .
\]
\end{lemma}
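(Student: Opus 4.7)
The plan is to reduce the lemma to Luna's étale slice theorem followed by an application of the already-established Lemma \ref{lemma-vgit-pullback}. Since $w$ is fixed by the full reductive group $G$ and $W$ is smooth at $w$, the cotangent space $\fm_w/\fm_w^2 \cong T_{W,w}^\vee$ is a $G$-module. Reductivity of $G$ ensures a $G$-equivariant splitting of the surjection $\fm_w \twoheadrightarrow \fm_w/\fm_w^2$; extending by the universal property of the symmetric algebra yields a $G$-equivariant $\CC$-algebra homomorphism $\CC[T_{W,w}] \to A$, i.e.\ a $G$-equivariant morphism $h\co W \to T$ sending $w$ to $0$. By construction, $dh_w$ is an isomorphism on tangent spaces, so $h$ is étale in a Zariski neighborhood of $w$.

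The next step is to upgrade this étale morphism to a \emph{strongly} étale one in the sense of Luna. Luna's fundamental lemma (specialized to the fixed point case) guarantees that, possibly after shrinking $W$ to a $G$-invariant affine open neighborhood $W' = \Spec A' \subseteq W$ of $w$, the induced morphism on GIT quotients $W'\gitq G \to T \gitq G$ is étale and the square
\[
\xymatrix{
W' \ar[r]^-{h} \ar[d] & T \ar[d] \\
W'\gitq G \ar[r] & T\gitq G
}
\]
is Cartesian. In ring-theoretic language, writing $T = \Spec B$, we have $A' = B \otimes_{B^G} (A')^G$, which is precisely the hypothesis needed to invoke Lemma \ref{lemma-vgit-pullback}.

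With this setup in place, the conclusion is immediate: Lemma \ref{lemma-vgit-pullback}, applied to the $G$-equivariant morphism $h\co W' \to T$ together with the character $\chi$, gives
\[
h^{-1}(T^{+}_{\chi}) = (W')^{+}_{\chi}, \qquad h^{-1}(T^{-}_{\chi}) = (W')^{-}_{\chi},
\]
as desired. The main technical content is the strongly-étale upgrade, but this is standard Luna theory at a fixed point and will not require any genuinely new argument; the bulk of the work is simply unpacking definitions and citing the appropriate form of the slice theorem.
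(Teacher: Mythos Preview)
Your proposal is correct and follows essentially the same approach as the paper: construct the $G$-equivariant morphism $h$ via a $G$-equivariant splitting of $\fm_w \twoheadrightarrow \fm_w/\fm_w^2$, apply Luna's fundamental lemma to obtain a saturated affine open $W'$ with the Cartesian square $A' = B \otimes_{B^G} (A')^G$, and then invoke Lemma~\ref{lemma-vgit-pullback}. The paper's proof is slightly more terse but uses the identical sequence of ideas.
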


\begin{proof}
The maximal ideal $\fm \subseteq A$ of $w \in W$ is $G$-invariant.  Since $G$ is reductive, 
there exists a splitting $\fm/\fm^2 \hookarr \fm$ of the surjection $\fm \to \fm/\fm^2$ of $G$-representations. 
The inclusion $\fm/\fm^2 \hookarr \fm \subseteq A$ induces a morphism on algebras 
 $\Sym^* \fm / \fm^2 \to A$ which is $G$-equivariant which in turns gives a $G$-equivariant morphism 
 $h\co \Spec A \to  T$
\'etale at $w \in W$.  
By applying Luna's Fundamental Lemma (see \cite{luna}), there exists a $G$-invariant open affine 
$W'=\Spec A' \subseteq \Spec A$ containing $w$ such that the diagram
$$\xymatrix{
\Spec A' \ar[r] \ar[d]		& \Spec \CC[T_{W,w}] \ar[d] \\
\Spec A'^G \ar[r]			& \Spec  \CC[T_{W,w}]^G
}$$
is Cartesian with $\Spec A'^G \to \Spec \CC[T_{W,w}]^G$ \'etale.  
From Lemma \ref{lemma-vgit-pullback}, 
the induced map $h|_{W'}\co  W' \to T$ satisfies $h|_{W'}^{-1}(T^+_{\chi}) = W'^+_{\chi}$ and $h|_{W'}^{-1}(T^+_{\chi}) = W'^+_{\chi}$.  
\end{proof}

\begin{proof}[Proof of Proposition \ref{prop-ideals}]
Let $f\co \cW = [W / G_x] \to \cX$ be an \'etale local quotient presentation around $x$ where $W = \Spec A$.  By Lemma \ref{lemma-vgit-comparison}, after shrinking $\cW$, we may assume that there is an induced $G_x$-invariant morphism $h\co W \to T = \Spec \CC[\TT^1(x)]$ such that $h^{-1}(T^+_{\chi}) = W^+_{\chi}$ and $h^{-1}(T^+_{\chi}) = W^+_{\chi}$.  This provides a diagram
\[
\xymatrix{
\Spf \hat{A} \ar[r] 	& \cW = [\Spec A / G_x] \ar[d] \ar[rd]^{h} \\
& \cX 	& \cY = [\Spec \CC[\TT^1(x)] / G_x] 
}
\]
In particular, $I^+ A$ and $I^- A$ are the VGIT ideals in $A$ corresponding to $(+)/(-)$ VGIT chambers.   Since $I^+ \hat{A} = I_{\cZ^+}$ and $I^- \hat{A} = I_{\cZ^-}$, it follows that the ideals defining $\cZ^+, \cZ^-$ and $\cW \setminus \cW^+_{\chi}, \cW \setminus \cW^-_{\chi}$ must agree in a Zariski-open neighborhood $U \subseteq \Spec A$ of $w$.  By shrinking further, we may also assume that the pullback of $\cL$ to $U$ is trivial.  By Lemma \ref{lemma-shrinking}, 
we may assume that $U$ is affine scheme such that $\pi^{-1}(\pi(U)) = U$ where $\pi\co \Spec A \to \Spec A^G$.  If we set $\cU = [U/G]$, then the composition $\cU \hookrightarrow \cW \to \cX$ is a local quotient presentation.
By applying Lemma \ref{lemma-vgit-comparison}, $\cU^+ = \cW^+ \cap \cU$ and $\cU^- = \cW^- \cap \cU$ so that in $\cU$ the ideals defining $\cZ^+, \cZ^-$ and $\cU \setminus \cU^+, \cU \setminus \cU^-$ agree.  Moreover, the pullback of $\cL$ to $\cU$ is clearly identified with the linearization of $\oh_U$ by $\chi$. Therefore, $\cU \to \cX$ has the desired properties.
\end{proof}

\subsection{Deformation theory of $\alpha_c$-closed curves} \label{section-first-order}

Our goal in this section is to describe coordinates on the formal deformation space of an $\alpha_c$-closed curve $(C, \pn)$
in which the ideals $I_{\cZ^+}$ and $I_{\cZ^-}$ can be described explicitly, 
and which simultaneously diagonalize the natural action of $\Aut(C, \pn)$. We begin by 
describing the action of $\Aut(E)$ on the space of first-order deformations $\TT^1(E)$ of a single $\alpha_c$-atom $E$ 
(Lemma \ref{lem-deformation-atoms}) and a single rosary of length $3$ (Lemma \ref{lem-deformation-rosary}).  
Then we describe the action of $\Aut(C, \pn)$ on the first-order deformation space $\TT^1(C, \pn)$ 
for each combinatorial type of an $\alpha_c$-closed curve $(C, \pn)$
from Definition \ref{defn-canonical} (Proposition \ref{prop-first-order-action}). 
Finally, we pass from coordinates on the first-order deformation space to coordinates 
on the formal deformation space $\hat{\Def}(C, \pn)$ (Proposition \ref{prop-formal-ideals}).

Throughout this section, we let $\TT^1(C, \pn)$ denote the first-order deformation space of $(C, \pn)$ and $\TT^1(\hat{\oh}_{C, \xi})$ the first-order deformation space of a singularity $\xi \in C$. Finally, we let  $\Aut(C, \pn)^{\circ}$ denote the connected component of the identity of the automorphism group of $(C, \pn)$.  We sometimes write $\TT^1(C)$ (resp., $\Aut(C)$) for $\TT^1(C, \pn)$ (resp., $\Aut(C, \pn)$) if no confusion is likely.

\subsubsection{Action on the first-order deformation space for an $\alpha_c$-atom and rosary}
\label{S:Gm-action}

Suppose $(E, q)$ (resp., $(E, q_1, q_2)$) is an $\alpha_c$-atom (see Definition \ref{defn-atom}) with singular point $\xi \in E$.
By \eqref{gm-action-atoms}, 
we may fix an isomorphism $\Aut(E) \simeq  \GG_m = \Spec \CC[t,t^{-1}]$ and coordinates on $\hat{\oh}_{E,\xi}$ and $\hat{\oh}_{E,q}$ 
(resp.,  $\hat{\oh}_{E,q_1}$ and $\hat{\oh}_{E,q_2}$)  so that the action of $\Aut(E)$ is specified by: 

\noindent {\rm $\bullet \, \alpha_c=\first$: }
$\hat{\oh}_{E,\xi} \simeq \CC[[x,y]]/(y^2-x^3)$, $\hat{\oh}_{E,q} \simeq \CC[[n]]$, and $\GG_m$ acts by 
\begin{align*}
x \mapsto t^{-2}x,\, y \mapsto t^{-3} y,\, n \mapsto t n.
\end{align*}

\noindent {\rm $\bullet \, \alpha_c=\second$: } 
$\hat{\oh}_{E,\xi} \simeq \CC[[x,y]]/(y^2-x^4)$, $\hat{\oh}_{E,q_1} \simeq \CC[[n_1]]$, $\hat{\oh}_{E,q_2} \simeq \CC[[n_2]]$, and $\GG_m$ acts by 
\begin{align*}
x \mapsto t^{-1}x,\, y \mapsto t^{-2} y,\, n_1 \mapsto t n_1,\, n_2 \mapsto t n_2.
\end{align*}

\noindent {\rm $\bullet \, \alpha_c=\third$: }
$\hat{\oh}_{E,\xi} \simeq \CC[[x,y]]/(y^2-x^5)$, $\hat{\oh}_{E,q} \simeq \CC[[n]]$ and $\GG_m$, acts by 
\begin{align*}
x \mapsto t^{-2}x,\, y \mapsto t^{-5}y,\, n \mapsto t n.
 \end{align*}

We have an exact sequence of $\Aut(E)$-representations
\begin{equation*}
\begin{aligned}
0 \arr \Cr^1(E)   \xarr{\alpha}
        \TT^1(E)   \xarr{\beta}
       \TT^1(\hat{\cO}_{E,\xi}) \arr  0 \\
\end{aligned}
\end{equation*}
where   
$\Cr^1(E)$  denotes the space of first-order deformations which induces trivial deformations of $\xi$. 
In fact, since the pointed normalization of $E$ has no non-trivial deformations, 
we may identity $\Cr^1(E)$ with the space of crimping deformations, 
i.e., deformations which fix the pointed normalization and the analytic isomorphism type of the singularity. 
Note that in the cases $\alpha_c=\first$ and $\alpha_c=\second$, $\Cr^1(E)=0$, 
i.e., there is a unique way to impose a cusp on a $2$-pointed rational curve (resp., a tacnode on a pair of $2$-pointed rational curves).
 
\begin{lem} \label{lem-deformation-atoms}  Let $E$ be an $\alpha_c$-atom.  Fix  $\Aut(E) \simeq \GG_m$ as above.

\noindent {\rm $\bullet \, \alpha_c=\first$:}
$\TT^1(E) \simeq \TT^1(\hat{\cO}_{E,\xi})$ and there are coordinates $s_0, s_1$ on $\TT^1(\hat{\cO}_{E,\xi})$  with weights $-6, -4$.

\noindent {\rm $\bullet \, \alpha_c=\second$:}
$\TT^1(E) \simeq \TT^1(\hat{\cO}_{E,\xi})$ and there are coordinates $s_0, s_1, s_2$ on $\TT^1(\hat{\cO}_{E,\xi})$ with weights $-4,-3,-2$.

\noindent {\rm $\bullet \, \alpha_c=\third$:}
$\TT^1(E) \simeq \Cr^1(E) \oplus  \TT^1(\hat{\cO}_{E,\xi})$ and there are coordinates $c$ on $\Cr^1(E)$ and $s_0, s_1, s_2, s_3$ on $\TT^1(\hat{\cO}_{E,\xi})$  with with weights $1$ and $-10, -8, -6, -4$, respectively.
\end{lem}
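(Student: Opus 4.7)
The plan is to separate the local (singularity) and global (crimping) contributions via the local-to-global Ext spectral sequence, and then compute each piece explicitly.

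First, from $\TT^1(E) \cong \Ext^1(\Omega^1_E, \cO_E(-D))$ with $D$ the marked-point divisor, the spectral sequence $H^p(\sExt^q) \Rightarrow \Ext^{p+q}$, together with the vanishing of $H^1(\sExt^1)$ (since $\sExt^1$ is supported at the isolated singularity $\xi$), yields
\[
0 \to H^1(E, \sHom(\Omega^1_E, \cO_E(-D))) \to \TT^1(E) \to H^0(E, \sExt^1(\Omega^1_E, \cO_E)) \to 0.
\]
The right-hand term is $\TT^1(\hat\cO_{E,\xi})$, while the left-hand term classifies deformations inducing trivial deformations at $\xi$, which I identify with $\Cr^1(E)$. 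Linear reductivity of $\GG_m = \Aut(E)^{\circ}$ splits this sequence $\GG_m$-equivariantly.

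For the Tjurina term, the versal deformation $y^2 = x^{k+1} + \sum_{i=0}^{k-1} s_i x^i$ of $A_k$ exhibits $s_0,\ldots,s_{k-1}$ as coordinates on $\TT^1(\hat\cO_{E,\xi})$. Requiring this equation to be $\GG_m$-equivariant under \eqref{gm-action-atoms} forces $\mathrm{wt}(s_i) = 2\,\mathrm{wt}(y) - i\,\mathrm{wt}(x)$; substituting the prescribed weights of $x$ and $y$ in each case gives the asserted weight sequences $(-6,-4)$, $(-4,-3,-2)$, and $(-10,-8,-6,-4)$.

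For $\Cr^1(E)$ I would \v{C}ech-compute on the cover $U_1 = \Spec R$ (the affine singular curve) and $U_2 = \coprod_i \Spec \CC[n_i]$ (disks at the marked points), with $n_i = 1/t$ on each overlap so that $\partial_t = -n_i^2\partial_{n_i}$. Derivations of $R = \CC[t^2, t^{k+1}]$ (viewed through the normalization) are $f(t)\partial_t$ with $tf \in R$, equivalent to the vanishing of $f_j$ for each $j$ such that $j+1$ lies in the semigroup gap of $R$. On the overlap the restriction expands as $-\sum_{j \ge 0} f_j\, n^{2-j}\partial_n$, while the twist by $(-D)$ forces $\theta_2 \in n\CC[n]\partial_n$, covering $n^k\partial_n$ only for $k \ge 1$. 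Thus $H^1$ is spanned by those monomials $n^k\partial_n$ ($k \le 0$) whose $\theta_1$-coefficient is forced to vanish. For the $\firstf$- and $\secondf$-atoms the only semigroup constraint $f_0 = 0$ affects $k = 2$, which is covered by $\theta_2$, so $\Cr^1(E) = 0$; for the $\thirdf$-atom the ramphoid semigroup additionally forces $f_2 = 0$, leaving $n^0\partial_n$ uncovered and giving $\Cr^1(E) = \CC\langle\partial_n\rangle$. Since $n \mapsto tn$ gives $\sigma_t^*\partial_n = t^{-1}\partial_n$, the dual coordinate $c$ has weight $+1$.

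The hard part will be the \v{C}ech calculation for the $\thirdf$-atom: one must verify from the semigroup of $\CC[t^2, t^5]$ that exactly the single extra constraint $f_2 = 0$ appears, and confirm that the resulting gap at $n^0\partial_n$ is not filled by $\theta_2$—which hinges essentially on the marked-point twist preventing $\theta_2$ from contributing a constant term. The tacnodal case additionally requires a branch-by-branch bookkeeping, where the only nontrivial coupling $\phi_1'(0) = \phi_2'(0)$ does not obstruct surjectivity because the two free constants of $\theta_2$ on each branch remain independent.
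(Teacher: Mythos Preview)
Your proof is correct and follows the same overall architecture as the paper: both use the exact sequence
\[
0 \to \Cr^1(E) \to \TT^1(E) \to \TT^1(\hat\cO_{E,\xi}) \to 0
\]
and compute the two pieces separately, and both read off the Tjurina weights from the miniversal family $y^2 = x^{k+1} + \sum s_i x^i$ under the prescribed $\GG_m$-action. The genuine difference is in the crimping term: the paper simply cites an external reference (van der Wyck's thesis) for $\Cr^1(E) \simeq \CC$ in the ramphoid case, together with its explicit first-order family, whereas you carry out an honest \v{C}ech computation of $H^1(\sHom(\Omega^1_E,\cO_E(-D)))$ on the two-chart cover. Your semigroup bookkeeping is right: for $R=\CC[t^2,t^5]$ the condition $tf\in R$ forces $f_0=f_2=0$ (and $t^4f\in R$ is automatic), so $\theta_1$ misses exactly $n^0\partial_n$, which the $(-D)$-twist prevents $\theta_2$ from covering; the tacnodal case also checks out since the lone coupling $f_{1,1}=f_{2,1}$ is absorbed by the two independent $n_i\partial_{n_i}$ terms from $\theta_2$. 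The payoff of your route is self-containment; the paper's route is shorter but relies on outside input. You might also note explicitly that the $\GG_m$-splitting you invoke is what the paper is tacitly using when it writes $\TT^1(E)\simeq \Cr^1(E)\oplus \TT^1(\hat\cO_{E,\xi})$ in the $\alpha_c=\third$ case.
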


\begin{proof}  We prove the case $\alpha_c = \third$ and leave the other cases to the reader. 
By deformation theory of hypersurface singularities, we have
\[
\TT^1(\hat{\cO}_{E,\xi}) \iso \CC^4, \quad \Spec \CC[[x,y,\eps]]/
(y^2-x^{5}-s_{3}^*\eps x^{3}-s_{2}^*\eps x^{2} - s_1^* \eps x - s_0^* \eps, \eps^2) 
\mapsto (s_0^*,s_1^*, s_2^*, s_3^*),
\]
and $\GG_m$ acts by  $s_k^* \mapsto t^{10-2k} s_k^*$. Thus, $\GG_m$ acts on $\TT^1(\hat{\cO}_{E,\xi})^{\vee}$ by $s_k \mapsto t^{2k-10} s_k$.  

From \cite[Example~1.111]{vdW}, we have
\[
\Cr^1(E) \iso \CC, \quad \Spec \CC[(s+c^* \eps s^2)^2, (s+c^* \eps s^2)^5,\eps]/(\eps)^2 \mapsto c^*,
\]
and $\GG_m$ acts by $c^* \rightarrow t^{-1}c^*$.  Thus, $\GG_m$ acts on $\Cr^1(E)^{\dual}$ by
$c \mapsto t c $.
\end{proof}

Now let $(R, p_1, p_2) = \coprod_{i=1}^3 (R_i, q_{2i-1}, q_{2i})$ be a rosary of length $3$ (see Definition \ref{defn-rosary}).  
Denote the tacnodes of $R$ as $\tau_1 :=q_2 = q_3$ and $\tau_2 :=q_4=q_5$.  We fix an isomorphism 
$\Aut(R, p_1, p_2) \simeq \GG_m = \Spec \CC[t,t^{-1}]$ such that $\GG_m$ acts on $\hat{\oh}_{R,\tau_i} = \CC[[x_i,y_i]]/(y_i^2-x_i^4)$ 
via $x_1 \mapsto t^{-1} x_1, y_1 \mapsto t^{-2} y_1$ and  $x_2 \mapsto t x_2, y_2 \mapsto t^{2} y_2$, 
and acts on $\hat{\oh}_{R,p_i} = \CC[[n_i]]$ via $n_1 \mapsto t n_1$ and $n_2 \mapsto t^{-1} n_2$.

\begin{lemma} \label{lem-deformation-rosary}  Let $(R,p_1,p_2)$ be a rosary of length $3$.  Fix  $\Aut(R, p_1, p_2) \simeq \GG_m$ as above.  
Then $\TT^1(R, p_1, p_2) = \TT^1(\hat{\cO}_{R, \tau_1}) \oplus  \TT^1(\hat{\cO}_{R, \tau_2})$ and
there are coordinates on $\TT^1(\hat{\cO}_{R,\tau_1})$ (resp., $\TT^1(\hat{\cO}_{R,\tau_2})$)  with weights $-2, -3, -4$ (resp., $2,3,4$).
\end{lemma}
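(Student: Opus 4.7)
The plan is to parallel the strategy of Lemma~\ref{lem-deformation-atoms}, using the local-to-global sequence
\[
0 \to H^1(R, \cT_R(-p_1-p_2)) \to \TT^1(R, p_1, p_2) \to H^0(R, \sExt^1(\Omega_R, \oh_R)) \to 0
\]
of $\GG_m$-representations. Since $\sExt^1(\Omega_R, \oh_R)$ is supported at the tacnodes $\tau_1, \tau_2$ with stalks $\TT^1(\hat{\oh}_{R, \tau_i})$, the rightmost term is naturally identified with $\TT^1(\hat{\oh}_{R, \tau_1}) \oplus \TT^1(\hat{\oh}_{R, \tau_2})$, each summand of dimension three by the standard computation of hypersurface deformations.

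The crux of the argument is the vanishing $H^1(R, \cT_R(-p_1 - p_2)) = 0$. I would analyze $\cT_R$ via the normalization $\nu\co \tilde R = R_1 \sqcup R_2 \sqcup R_3 \to R$. A direct local computation at each tacnode $\tau_i$ (with $\hat{\oh}_{R,\tau_i} = \CC[[x_i, y_i]]/(y_i^2 - x_i^4)$, and with $t_i, u_i$ local parameters on the two branches of $\tilde R$ over $\tau_i$) shows that any derivation of $\hat{\oh}_{R, \tau_i}$ extends uniquely to a derivation of $\CC[[t_i]] \times \CC[[u_i]]$ that must vanish at both $t_i = 0$ and $u_i = 0$. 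This identifies $\cT_R$ with a subsheaf of $\nu_* \cT_{\tilde R}$ whose restriction to each rational component $R_j \cong \PP^1$ is a line bundle of non-positive degree. Twisting by $-p_1 - p_2$ only decreases degrees further, and a standard cohomology computation on $\tilde R$ combined with a Mayer--Vietoris argument relating $R$ and $\tilde R$ then yields the vanishing.

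Granted the vanishing, the short exact sequence gives a $\GG_m$-equivariant isomorphism $\TT^1(R, p_1, p_2) \iso \TT^1(\hat{\oh}_{R, \tau_1}) \oplus \TT^1(\hat{\oh}_{R, \tau_2})$, and the weights on each summand are computed exactly as in Lemma~\ref{lem-deformation-atoms}. Writing the versal deformation of each tacnode as
\[
y_i^2 = x_i^4 + s_{i,2}^* \eps x_i^2 + s_{i,1}^* \eps x_i + s_{i,0}^* \eps,
\]
the $\GG_m$-equivariance of this relation forces $s_{i,k}^*$ to have the weight that makes each monomial match the weight of $y_i^2$. For $\tau_1$, where $x_1 \mapsto t^{-1}x_1$ and $y_1 \mapsto t^{-2} y_1$, this gives weights $-4, -3, -2$ on $s_{1,0}^*, s_{1,1}^*, s_{1,2}^*$; for $\tau_2$, where $x_2 \mapsto t x_2$ and $y_2 \mapsto t^2 y_2$, it gives weights $4, 3, 2$. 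Reordering recovers the weights $-2, -3, -4$ and $2, 3, 4$ claimed in the statement.

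The main obstacle will be the cohomology vanishing, which requires a careful setup of $\cT_R$ as a subsheaf of $\nu_* \cT_{\tilde R}$ via the above local analysis. A useful sanity check: computing $\chi(\cT_R(-p_1 - p_2))$ via Riemann--Roch on the singular curve and comparing with $h^0 = \dim \Lie\,\Aut(R, p_1, p_2) = 1$ forces $h^1 = 0$, but this approach relies on essentially the same local input.
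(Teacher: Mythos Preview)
Your proposal is correct and follows essentially the same strategy as the paper, which proves the lemma with the single sentence ``This is established similarly to Lemma~\ref{lem-deformation-atoms}.'' You have simply fleshed out that reference: the local-to-global sequence, the vanishing of the locally trivial piece, and the explicit weight computation from the versal tacnodal deformation are exactly what the paper intends.

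The one place where you do slightly more work than necessary is the vanishing of $H^1(R,\cT_R(-p_1-p_2))$. The argument implicit in the paper's reference to Lemma~\ref{lem-deformation-atoms} is not a sheaf-theoretic analysis of $\cT_R$ on components, but rather the observation already used there: the pointed normalization (three $2$-pointed $\PP^1$'s) is rigid, and tacnodes have no crimping (as noted just before Lemma~\ref{lem-deformation-atoms}), so $\Cr^1(R,p_1,p_2)=0$. Your cohomological route via $\cT_R \subset \nu_*\cT_{\tilde R}$ is a valid alternative and your local computation at the tacnode is correct, but it is a longer path to the same endpoint.
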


\begin{proof}  This is established similarly to Lemma \ref{lem-deformation-atoms}.
\end{proof}

The above lemmas immediately imply a description for the action of $\Aut(C, \pn)^{\circ}$ on $\TT^1(C, \pn)$ for any $\alpha_c$-closed curve. 

\begin{prop}[Diagonalized Coordinates on $\TT^1(C, \pn)$]\label{prop-first-order-action}
Let $(C, \pn)$ be an $\alpha_c$-closed curve.  Depending on the combinatorial
type of $(C, \pn)$ from Definition \ref{defn-canonical}, the following statements hold:

\noindent
{\rm $\bullet \, \alpha_c= \first$ of Type A:}
There are decompositions
\begin{align*}
\Aut(C)^{\circ} &= \prod_{i=1}^r \Aut(E_i) 
\TT^1(C) & = \TT^1(K)
         \oplus \left[\bigoplus_{i=1}^r 
         \TT^1(E_i)\right]
         \oplus \left[\bigoplus_{i=1}^r \TT^1(\hat{\cO}_{C,q_i})\right]
\end{align*}
For $1 \leq i \leq r$, let $t_i$ be the coordinate on
$\Aut(E_i) \simeq \GG_m$. There
are coordinates 
\[
\begin{array}{lllll}
\text{``{\bf s}ingularity''}	&\bs_i=(s_{i,0},s_{i,1}) & \text{on} & \TT^1(\hat{\cO}_{E_i,\xi_i}) & \text{for $1 \leq i \leq r$} \\
\text{``{\bf n}ode''}	&
n_i & \text{on} & \TT^1(\hat{\cO}_{C,q_i}) &
 \text{for $1 \leq i \leq r$}
\end{array}
\]
such that $\Aut(C)^{\circ}$ acts trivially on $\TT^1(K)$ and on the coordinates $\bs_i, n_i$ by
\[
s_{i,0}  \mapsto  t_i^{-6} s_{i,0} \qquad
s_{i,1}  \mapsto  t_i^{-4} s_{i,1}\qquad
n_i   \mapsto  t_i n_i.
\]

\noindent  {\rm $\bullet \, \alpha_c =\first$ of Type B: }
There are decompositions
\begin{align*}
\Aut(C)^{\circ} = \Aut(E_1)\times \Aut(E_2) \qquad \quad 
\TT^1(C)  =  \TT^1(E_1)\oplus \TT^1(E_2) \oplus \TT^1(\hat{\cO}_{C,q})
\end{align*}
For $1 \leq i \leq 2$, let $t_i$ be the coordinate on
$\Aut(E_i) \simeq \GG_m$. There are coordinates
$\bs_i=(s_{i,0},s_{i,1})$ 
on $\TT^1(E_i)$  and a coordinate $n$ on $\TT^1(\hat{\oh}_{C,q})$ such that the
action of $\Aut(C)^{\circ}$ on $\TT^1(C)$ is
given by 
\[
s_{i,0}  \mapsto  t_i^{-6} s_{i,0} \qquad
s_{i,1}  \mapsto  t_i^{-4} s_{i,1} \qquad
n \mapsto t_1 t_2 n.
\]

\noindent 
{\rm $\bullet \, \alpha_c = \first$ of Type C: }
This case is described in Lemma \ref{lem-deformation-atoms}.

\noindent {\rm $\bullet \, \alpha_c = \second$ of Type A: } There are decompositions
\begin{align*}
\Aut(C)^{\circ} &=  \prod_{i=1}^{r+s} \prod_{j=1}^{\ell_i} \Aut(E_{i,j})\\
\TT^1(C) = & \TT^1(K)
         \oplus \bigoplus_{i=1}^{r+s}\left[  \bigoplus_{j=1}^{\ell_{i}}
         \TT^1(E_{i,j})  \oplus  \bigoplus_{j=0}^{\ell_i-1} \TT^1(\hat{\cO}_{C,q_{i,j}})    \right] \oplus
       \bigoplus_{i=1}^{r} \TT^1(\hat{\cO}_{C,q_{i,\ell_i}})
\end{align*}
Let $t_{i,j}$ be the coordinate on
$\Aut(E_{i,j}) \simeq \GG_m$.
  There are coordinates
\[
\begin{tabular}{lllll}

``{\bf s}ingularity''	&$\bs_{i,j}=\bigl(s_{i,j,k}\bigr)_{k=0}^{2}$
& on & $\TT^1(E_{i,j})$  \quad \, &
 $1 \leq i \leq r+s$, $1 \leq j \leq \ell_{i}$ \\
``{\bf n}ode''	&$n_{i,j}$ & \text{on} & $\TT^1(\hat{\cO}_{C,q_{i,j}})$ &
$1 \leq i \leq r+s$, $0 \leq j \leq \ell_{i}-1$\\
``{\bf n}ode''	&$n_{i,\ell_i}$ & \text{on} & $\TT^1(\hat{\cO}_{C,q_{i,\ell_i}})$ &
$1 \leq i \leq r$\\
\end{tabular}
\]
such that $\Aut(C)^{\circ}$ acts trivially on $\TT^1(K)$ and on $\bs_{i,j}, n_{i,j}$ by
\[
\begin{array}{lllllllllll}
s_{i,j,k} &  \mapsto & t_{i,j}^{k-4} s_{i,j,k} &
\\
n_{i,0} & \mapsto & t_{i,1} n_{i,0} & \quad
n_{i,\ell_i} & \mapsto &  t_{i,\ell_i} n_{i,\ell_i} & \quad
n_{i,j}  &\mapsto  &t_{i,j} t_{i,j+1} n_{i,j} \,\,(j \neq 0, \ell_i).
\end{array}
\]

\noindent {\rm $\bullet \, \alpha_c = \second$ of Type B: } There are decompositions
$$\begin{aligned}
\Aut(C, p_1, p_2)^{\circ} & = \prod_{i=1}^g \Aut(E_i) \\
\TT^1(C,p_1,p_2) & = \bigoplus_{i=1}^g
    \TT^1(E_i)
  \oplus \bigoplus_{i=1}^{g-1} \TT^1(\hat{\cO}_{C,q_{i}})
\end{aligned}$$
Let $t_{i}$ be the coordinate on
$\Aut(E_{i}) \simeq \GG_m$. There are coordinates
$\bs_i=(s_{i,0},s_{i,1},s_{i,2})$ 
on $\TT^1(E_i)$ and coordinates $n_i$ on $\TT^1(\hat{\oh}_{C,q_{i}})$ such that the action of $\Aut(C,\pn)^{\circ}$ on $\TT^1(C,\pn)$ is given by
\[
s_{i,k} \mapsto t_i^{k-4} s_{i,k} \qquad
n_i \mapsto t_i t_{i+1} n_i.
\]

\noindent {\rm $\bullet \, \alpha_c = \second$ of Type C: } There are decompositions
$$\begin{aligned}
\Aut(C)^{\circ} &= \prod_{i=1}^{g-1} \Aut(E_i)\\
\TT^1(C) &= \bigoplus_{i=1}^{g-1}
    \TT^1(E_i)
  \oplus \bigoplus_{i=0}^{g-2} \TT^1(\hat{\cO}_{C,q_{i}})\end{aligned}$$
Let $t_{i}$ be the coordinate on
$\Aut(E_{i}) \simeq \GG_m$. There are coordinates
$\bs_i=(s_{i,0},s_{i,1}, s_{i,2})$ 
on $\TT^1(E_i)$ and coordinates $n_i$ on $\TT^1(\hat{\oh}_{C,q_{i}})$ such that the action of $\Aut(C,\pn)^{\circ}$ on $\TT^1(C,\pn)$ is given by
\[
s_{i,k} \mapsto t_i^{k-4} s_{i,k} \qquad
n_i \mapsto t_i t_{i+1} n_i,
\]
and where $t_{0}:=t_{g-1}$.\\

\noindent
{\rm $\bullet \, \alpha_c = \third$ of Type A: }
There exist decompositions
\begin{align*}
\Aut(C,\pn)^{\circ} &= \Aut(K')^{\circ} \times \prod_{i=1}^r \Aut(L_{i})  \\
&= \Aut(K')^{\circ} \times \prod_{i=1}^r \left[  \prod_{j=1}^{\ell_i-1} \Aut(R_{i,j}) \times \Aut(E_{i}) \right]\\
\TT^1(C, \pn) & = \TT^1(K')
         \oplus \bigoplus_{i=1}^r  \TT^1(L_{i}) \bigoplus_{i=1}^{r}  \TT^1(\hat{\cO}_{C,q_{i,0}})  \\
         & = \TT^1(K')
         \oplus \bigoplus_{i=1}^r \left[ \bigoplus_{j=1}^{\ell_i-1}   \TT^1(R_{i,j})
         \oplus  \bigoplus_{j=0}^{\ell_i-1}  \TT^1(\hat{\cO}_{C,q_{i,j}}) 
         \oplus \TT^1(E_i) \right]
\end{align*}
where $\Aut(K')^{\circ}$ acts trivially on $\bigoplus_{i=1}^r  \TT^1(L_{i}) \bigoplus_{i=1}^{r}  \TT^1(\hat{\cO}_{C,q_{i,0}})$ and $\prod_{i=1}^r \Aut(L_{i})$ acts trivially on $\TT^1(K')$. For $1 \leq i \leq r$, $1 \leq j \leq \ell_i-1$, let $t_{i,j}$ denote the coordinate on
$\Aut(R_{i,j}) \simeq \GG_m$, and let $t_{i,\ell_i}$ denote the coordinate on $\Aut(E_i) \simeq \GG_m$. Then there exist coordinates
\[
\begin{array}{lllll}
\text{``{\bf r}osary''}	&\br_{i,j}=(r_{i,j,k})_{k=0}^2, \, \br'_{i,j}=(r'_{i,j,k})_{k=0}^2& \text{on} & \TT^1(R_{i,j}) & \text{for $1 \leq i \leq r , 1 \le j < \ell_i$} \\
\text{``{\bf s}ingularity''}		&\bs_{i}=(s_{i,k})_{k=0}^3 & \text{on} & \TT^1(\hat{\oh}_{C, \xi_{i}}) \subset \TT^1(E_i) & \text{for $1 \leq i \leq r $} \\

\text{``{\bf c}rimping''}	&c_i	 & \text{on} & \Cr^1(E_{i}) \subset \TT^1(E_i) & \text{for $1 \leq i \leq r$} \\
\text{``{\bf n}ode''}	&
n_{i,j} & \text{on} & \TT^1(\hat{\cO}_{C,q_{i,j}}) &
 \text{for $1 \leq i \leq r, 0 \leq j < \ell_i$}
\end{array}
\]
such that the action of $\prod_{i=1}^r \Aut(L_{i})$ on $\bigoplus_{i=1}^r  \TT^1(L_{i})$ is given by
$$\begin{array}{llllllllllllll}
r_{i,j,k} &  \mapsto & t_{i,j}^{k-4} r_{i,j,k} & \qquad r'_{i,j,k} &  \mapsto & t_{i,j}^{4-k} r'_{i,j,k} & \qquad 
s_{i,k}  &\mapsto&  t_{i,\ell_i}^{2k-10} s_{i,k}\\
c_i  &\mapsto & t_{i,\ell_i} c_i &
\qquad n_{i,0} & \mapsto & t_{i,1} n_{i,0} &
\qquad n_{i,j}  &\mapsto  &t_{i,j}^{-1} t_{i,j+1} n_{i,j} \quad (0 < j < \ell_i).
\end{array}
$$
Note that we need not specify the action of $\Aut(K')^{\circ}$ on $\TT^{1}(K')$ as this will be irrelevant for the calculation of the VGIT chambers associated to $(C, \pn)$.

\noindent
{\rm $\bullet \, \alpha_c = \third$ of Type B: }
There exist decompositions
\begin{align*}
\Aut(C,\pn)^{\circ} &= \prod_{i=1}^{\ell-1} \Aut(R_{i}) \times \Aut(E_\ell)\\
\TT^1(C, \pn) & = \bigoplus_{i=1}^{\ell-1} \left[  \TT^1(R_{i})
         \oplus   \TT^1(\hat{\cO}_{C,q_{i}}) \right] \oplus \TT^1(E_\ell)
\end{align*}
For $1 \leq i \leq \ell-1$, let $t_{i}$ be the coordinate on
$\Aut(R_{i}) \simeq \GG_m$, and let $t_\ell$ be the coordinate on $\Aut(E_\ell) \simeq \GG_m$. Then there
are coordinates 
\[
\begin{array}{lllll}
\text{``{\bf r}osary''}	&\br_{i}=(r_{i,k})_{k=0}^2 , \, \br'_{i}=(r'_{i,k})_{k=0}^2& \text{on} & \TT^1(R_{i}) & \text{for $1 \leq i \leq \ell-1$} \\
\text{``{\bf s}ingularity''}		&\bs=(s_{k})_{k=0}^3 & \text{on} & \TT^1(\hat{\oh}_{C, \xi}) \subset \TT^1(E_\ell) \\

\text{``{\bf c}rimping''}	&c	 & \text{on} & \Cr^1(E_{\ell}) \subset \TT^1(E_\ell) \\
\text{``{\bf n}ode''}	&
n_{i} & \text{on} & \TT^1(\hat{\cO}_{C,q_{i}}) &
 \text{for $1 \leq i \leq \ell-1$}
\end{array}
\]
such that the
action of $\Aut(C)^{\circ}$ on $\TT^1(C)$ is
given by
$$\begin{array}{llllllllllllll}
r_{i,k} &  \mapsto & t_{i}^{k-4} r_{i,k} & \qquad r'_{i,k} &  \mapsto & t_{i}^{4-k} r'_{i,k} & 
s_{k}  &\mapsto&  t_{\ell}^{2k-10} s_{k}\\
c  &\mapsto & t_{\ell} c &
\qquad n_{i}  &\mapsto  &t_{i}^{-1} t_{i+1} n_{i} \,\, (0 \le i < \ell).
\end{array}
$$

\noindent
{\rm $\bullet \, \alpha_c = \third$ of Type C: }
There exist decompositions
\begin{align*}
\Aut(C)^{\circ} &=\Aut(E_0) \times \Aut(E_\ell) \times \prod_{i=1}^{\ell-1} \Aut(R_{i})\\
\TT^1(C) & = \TT^1(E_0) \oplus \TT^1(E_\ell) \oplus \bigoplus_{i=1}^{\ell-1}   \TT^1(R_{i})
         \oplus \bigoplus_{i=0}^{\ell-1}   \TT^1(\hat{\cO}_{C,q_{i}})
\end{align*}
Let $t_0, t_\ell$ be coordinates on $\Aut(E_0)\simeq \GG_m$ and $\Aut(E_\ell)\simeq \GG_m$, and for $0 \leq i \leq \ell$, 
let $t_{i}$ be the coordinate on
$\Aut(R_{i}) \simeq \GG_m$. Then there
are coordinates 
\[
\begin{array}{lllll}
\text{``{\bf r}osary''}	&\br_{i}=(r_{i,k})_{k=0}^2 , \, \br'_{i}=(r'_{i,k})_{k=0}^2& \text{on} & \TT^1(R_{i}) & \text{for $1 \leq i \leq \ell-1$} \\
\text{``{\bf s}ingularity''}		&\bs_{i}=(s_{i,k})_{k=0}^3 & \text{on} & \TT^1(\hat{\oh}_{C, \xi_{i}}) \subset \TT^1(E_i) & \text{for $i=0, \ell$} \\

\text{``{\bf c}rimping''}	&c_i	 & \text{on} & \Cr^1(E_{i}) \subset \TT^1(E_i) & \text{for $i=0, \ell$} \\
\text{``{\bf n}ode''}	&
n_{i} & \text{on} & \TT^1(\hat{\cO}_{C,q_{i}}) &
 \text{for $0 \leq i \leq \ell-1$}
\end{array}
\]
such that the
action of $\Aut(C)^{\circ}$ on $\TT^1(C)$ is
given by
$$\begin{array}{llllllllllllll}
r_{i,k} &  \mapsto & t_{i}^{k-4} r_{i,k} & \ r'_{i,k} &  \mapsto & t_{i}^{4-k} r'_{i,k} & \
s_{i,k}  &\mapsto&  t_{i}^{2k-10} s_{i,k}\\
c_i  &\mapsto & t_{i} c_i & \
n_{0} & \mapsto & t_{0}t_{1} n_{0} & \
n_{i}  &\mapsto  &t_{i}^{-1} t_{i+1} n_{i} \, (0 < i < \ell) & \ n_{\ell} & \mapsto & t_{\ell-1}t_{\ell} n_{0} &
\end{array}
$$

\end{prop}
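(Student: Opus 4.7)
The plan is to reduce the computation to the local statements of Lemmas \ref{lem-deformation-atoms} and \ref{lem-deformation-rosary} by applying the standard normalization exact sequence for deformations at the nodes that glue the pieces of the canonical decomposition of Definition \ref{defn-canonical}. For each combinatorial type, let $\nu\co \tilde{C}\to C$ denote the partial normalization of $C$ at these nodes; then $\tilde{C}$ is the disjoint union of the atoms, the rosaries, and the (secondary) core, each viewed as a pointed curve where the preimages of the nodes are added as markings. The $\Aut(C,\pn)$-equivariant exact sequence
\begin{equation*}
0 \to \TT^1(\tilde{C},\pn,\{q^{\pm}\}) \to \TT^1(C,\pn) \to \bigoplus_{q} \TT^1(\hat{\oh}_{C,q}) \to 0,
\end{equation*}
together with the further decomposition $\TT^1(\tilde{C},\pn,\{q^{\pm}\}) = \bigoplus_{\text{components}} \TT^1(\text{component})$, will give the stated direct-sum decomposition of $\TT^1(C,\pn)$. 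Because $\Aut(C,\pn)^{\circ}$ is a torus, every representation is semisimple, so the sequence splits equivariantly and each node contributes a one-dimensional character.

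Next I would establish the product decomposition of $\Aut(C,\pn)^{\circ}$. Any element of the connected identity component preserves each irreducible component of $C$ and fixes each node, so it restricts to an automorphism of every piece of the canonical decomposition; this yields the factorizations stated in each case. For $\alpha_c \in \{\firstf,\secondf\}$ the core $K$ is $(\alpha_c+\epsilon)$-stable and hence contains no tacnodes, so it contains no rosaries; Proposition \ref{P:Rosaries} then gives $\Aut(K)^{\circ}=1$, so only the atom factors survive. For $\alpha_c = \thirdf$, Type A, the secondary core $K'$ may still carry $\GG_m$-actions coming from rosaries inside $K'$, but by construction of $K'$ every such rosary is disjoint from each $\thirdf$-link $L_i$; an automorphism supported on one of these rosaries is therefore the identity on $L_i$ and on the node $q_{i,0}$, proving that $\Aut(K')^{\circ}$ acts trivially on $\TT^1(L_i)\oplus \TT^1(\hat{\oh}_{C,q_{i,0}})$. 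The reverse statement, that $\prod_i \Aut(L_i)$ acts trivially on $\TT^1(K')$, is immediate since each $\Aut(L_i)$ factor restricts to the identity on $K'$.

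The weights on the $\TT^1$ of each component are then read off directly from Lemmas \ref{lem-deformation-atoms} and \ref{lem-deformation-rosary}, producing the coordinates $\bs_{\bullet}$ and $c_{\bullet}$ on atoms and $\br_{\bullet}, \br'_{\bullet}$ on length-$3$ rosaries with the stated characters. The weight on a node-smoothing coordinate $n_q$ is computed from the elementary fact that if a node has local equation $xy=0$ and the two branches carry $\GG_m$-weights $\chi_x$ and $\chi_y$, then the first-order smoothing $xy=n_q\varepsilon$ forces $n_q$ to transform with character $\chi_x+\chi_y$. Combining this with the explicit $\GG_m$-actions of Section \ref{S:Gm-action}---in particular with the observation that the two marked points of an $\alpha_c$-atom carry weights $(t,t)$ for $\alpha_c=\secondf$ while the two marked points of a length-$3$ rosary carry \emph{opposite} weights $(t,t^{-1})$---reproduces each node weight in the statement. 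The minus sign in the $\thirdf$ Type A formula $n_{i,j}\mapsto t_{i,j}^{-1}t_{i,j+1}n_{i,j}$ (contrasted with the $\secondf$ formulas, which are all $t_{i,j}t_{i,j+1}$) is precisely the consequence of this opposite-weight feature of rosaries.

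The only real obstacle is bookkeeping: the seven combinatorial types proliferate into many indexing conventions, and one must keep careful track of which end of each rosary carries $t$ versus $t^{-1}$, as well as of which nodes are interior to a link and which are adjacent to the core. No deeper difficulty arises, since once the components of the canonical decomposition have been isolated, the entire computation reduces to the two local lemmas plus the elementary node-smoothing weight formula.
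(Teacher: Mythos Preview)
Your proposal is correct and takes essentially the same approach as the paper, which simply records that the proposition ``follows easily from Lemmas \ref{lem-deformation-atoms} and \ref{lem-deformation-rosary}.'' You have spelled out precisely what that one-line proof means: the equivariant normalization sequence at the gluing nodes, the component-wise factorization of $\Aut(C,\pn)^\circ$, and the node-smoothing character computation. One small remark: for $\alpha_c\in\{\firstf,\secondf\}$ you do not need Proposition \ref{P:Rosaries} to see $\Aut(K)^\circ=1$, since the core is $(\alpha_c+\epsilon)$-stable and hence has only $A_1,A_2$-singularities, making it a Deligne--Mumford stack point with finite automorphism group; but the conclusion is the same.
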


\begin{proof}
This follows easily from Lemmas \ref{lem-deformation-atoms} and \ref{lem-deformation-rosary}. 
\end{proof}

It is evident that the coordinates of Proposition \ref{prop-first-order-action} on $\TT^1(C, \pn)$ diagonalize the natural action of $\Aut(C, \pn)^{\circ}$. However, we need slightly more. We need coordinates which diagonalize the natural action of $\Aut(C, \pn)^{\circ}$ \emph{and} which cut out the natural geometrically-defined loci on 
$\hat{\Def}(C, \pn) =\Spf \CC[[ \TT^1(C, \pn)]]$. For example, when $\alpha_c=\third$,  the $\{s_i\}$ coordinates should cut out the locus of formal deformations preserving the singularities and the $\{c_i, n_i\}$ coordinates should cut out the locus of formal deformations preserving a Weierstrass tail. This is almost a purely formal statement (see Lemma \ref{L:CoordinateExtension} below); however there is one non-trivial geometric input. We must show that the crimping coordinate which defines the locus of ramphoid cuspidal deformations with trivial crimping can be extended to a global coordinate which vanishes on the locus of Weierstrass tails. This is essentially a first-order statement which we prove below in Lemma \ref{splitting}.

The $\thirdf$-atom $E$ defines a point in $\cZ^{+} \cap \cZ^{-} \subseteq \bar{\cM}_{2,1}(\third)$ using the notation of $\cZ^+, \cZ^{-}$ from Proposition \ref{prop-ideals}. 
If we denote this point by $0$, we have natural inclusions of $\Aut(E)$-representations
\begin{equation*}
i\co \TT^1_{\cZ^{+},0} \hookarr \TT^1_{\SM_{2,1}(2/3),0}=\TT^1(E)  \quad \text{ and } \quad j\co \TT^1_{\cZ^{-},0} \hookarr \TT^1_{\SM_{2,1}(2/3),0} =\TT^1(E).
\end{equation*}
On the other hand, recall that we have the exact sequence of $\Aut(E, q)$-representations. 
\begin{equation}\label{sequence3}
\begin{aligned}
0 \arr \Cr^1(E)   \xarr{\alpha}
        \TT^1(E)   \xarr{\beta}
       \TT^1(\hat{\cO}_{E,\xi}) \arr  0 \\
\end{aligned}
\end{equation}
 where   $\TT^1(\hat{\cO}_{E,\xi})$ denotes the space of first-order deformations of the singularity $\xi \in E$, and $\Cr^1(E)$  denotes the space of first-order crimping deformations. The key point is that the tangent spaces of these global stacks are naturally identified as deformations of the singularity and the crimping respectively.
\begin{lem}\label{splitting}
With notation as above, there exist  isomorphisms of $\Aut(E)$-representations
\begin{align*}
 \TT^1_{\cZ^{-},0} &\simeq \TT^1(\hat{\oh}_{E, \xi})  \\
\TT^1_{\cZ^{+},0}  &\simeq \Cr^1(E)  
\end{align*}
inducing a splitting of (\ref{sequence3}) with $i=\alpha$ and $j=\beta^{-1}$.
\end{lem}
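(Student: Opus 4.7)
The plan is to identify $\cZ^+$ and $\cZ^-$ with the geometrically defined loci in $\SM_{2,1}(\thirdf)$ of curves containing a ramphoid cusp and of Weierstrass tails, respectively, and then to compute their tangent spaces at the point corresponding to $E$. Note that the $\thirdf$-atom $E$ lies in both: its $A_4$-singularity places $E \in \cZ^+$, while the projection $(x,y) \mapsto x$ exhibits $E$ as a double cover of $\PP^1$ ramified at the marked point $q$, so $E$ is a Weierstrass tail and hence an $A_1$-attached Weierstrass chain in the sense of Definition~\ref{D:Attaching}, giving $E \in \cZ^-$.

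For the identification $i=\alpha$: by construction, $\cZ^+$ with its reduced structure is the locus of curves bearing an $A_4$-singularity. We will argue that its tangent space at $E$ equals the classical equisingular tangent space, namely the subspace of $\TT^1(E)$ consisting of deformations preserving the analytic isomorphism type of the singularity at $\xi$. Since the pointed normalization of $E$ is a smooth $\PP^1$ with two marked points and hence rigid, every such equisingular deformation is necessarily a crimping deformation. By exactness of \eqref{sequence3}, this yields $\TT^1_{\cZ^+,0}=\ker(\beta)=\alpha(\Cr^1(E))$, so $i=\alpha$.

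For the identification $j=\beta^{-1}$, that is, to produce a splitting of $\beta$ whose image is $\TT^1_{\cZ^-,0}$, we will construct an explicit four-parameter family of Weierstrass tails globalizing the miniversal deformation of the $A_4$-singularity. A natural candidate, modeled on the argument in the proof of Lemma~\ref{L:Closed1}(3), is the family in $\PP(1,3,1) \times \AA^4$ cut out by
\[
y^2 = x^5 z + a_3 x^3 z^3 + a_2 x^2 z^4 + a_1 x z^5 + a_0 z^6,
\]
marked at $[x:y:z] = [1:1:0]$. Each fiber is a double cover of $\PP^1$ ramified at the marked point and hence a Weierstrass tail, so the family factors through $\cZ^-$, and the central fiber is $E$. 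The induced tangent map $\varphi\co \TT^1(\hat{\oh}_{E,\xi}) \to \TT^1(E)$ takes values in $\TT^1_{\cZ^-,0}$ and satisfies $\beta \circ \varphi = \id$ by construction, giving $\dim \TT^1_{\cZ^-,0} \geq 4$.

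To conclude, we will verify the matching inequality $\dim \TT^1_{\cZ^-,0} \leq 4$. Since $\SM_{2,1}(\thirdf)$ is smooth (and hence formally irreducible) at $E$ while $\cZ^-$ is a proper closed substack (a generic smooth genus $2$ curve with a non-Weierstrass marked point does not lie in $\cZ^-$), we have $\dim \TT^1_{\cZ^-,0} < \dim \TT^1(E) = 5$. Thus $\TT^1_{\cZ^-,0} = \im(\varphi)$ and $\beta$ restricts to an $\Aut(E)$-equivariant isomorphism $\TT^1_{\cZ^-,0} \iso \TT^1(\hat{\oh}_{E,\xi})$, yielding the desired splitting. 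The $\GG_m$-equivariance is automatic from the weighted action $(x,y,z,a_k) \mapsto (t^{-2}x, t^{-5}y, z, t^{10-2k}a_k)$ on the family, whose weights on $a_0,\ldots,a_3$ match those on the singularity deformation coordinates in Lemma~\ref{lem-deformation-atoms}. The main technical point will be carrying out the geometric construction of the Weierstrass tail family realizing the miniversal deformation of $A_4$ equivariantly; once that is in hand, both identifications then follow from standard equisingular deformation theory together with the dimension argument above.
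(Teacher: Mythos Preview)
Your overall strategy matches the paper's: both arguments construct the explicit four-parameter family of Weierstrass tails $y^2=x^5z+a_3x^3z^3+\cdots+a_0z^6$ and observe that, in these coordinates, the induced map to $\TT^1(\hat\oh_{E,\xi})$ is the identity. Your direct treatment of $\cZ^+$ via equisingular deformation theory is fine and is in fact a bit more transparent than the paper's route, which first establishes the $\cZ^-$ statement and then deduces the $\cZ^+$ statement from transversality of $\TT^1_{\cZ^+,0}$ and $\TT^1_{\cZ^-,0}$.

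There is, however, a genuine gap in your dimension argument for $\TT^1_{\cZ^-,0}$. You claim that because $\SM_{2,1}(\thirdf)$ is smooth at $E$ and $\cZ^-$ is a proper reduced closed substack, one must have $\dim \TT^1_{\cZ^-,0}<5$. This implication is false in general: a proper reduced closed subscheme of a smooth variety can have full tangent space at a point if it is singular there (for instance, $V(xy)\subset\AA^2$ at the origin). Your argument does not rule out the possibility that $\cZ^-$ is singular at $E$ with Zariski tangent space all of $\TT^1(E)$, so the inequality $\dim\TT^1_{\cZ^-,0}\le 4$ is not established. The paper avoids this issue by invoking the global identification $\cZ^-\simeq[\AA^4/\GG_m]$ (this is the content of Lemma~\ref{L:H-packets}), from which smoothness of $\cZ^-$ at $E$, and hence $\dim\TT^1_{\cZ^-,0}=4$, is immediate. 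To repair your argument you should either appeal to that identification, or give a direct reason why the crimping direction $c$ does not lie in $\TT^1_{\cZ^-,0}$ (equivalently, why the rational ramphoid cuspidal curve with nontrivial crimping fails to be a Weierstrass tail).
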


\begin{proof}
It suffices to
show that the composition
\begin{align*}
\alpha \circ i\co \TT_{\cZ^-,0} \to
\TT_{\overline{\cM}_{2,1}(2/3),0} &= 
\TT^1(E) \to
\TT^1(\hat{\cO}_{E,\xi})
\end{align*}
is an isomorphism, and that the composition
\begin{align*}
\alpha \circ j \co \TT_{\cZ^+,0} \to
\TT_{\overline{\cM}_{2,1}(2/3),0} &= 
\TT^1(E) \to
\TT^1(\hat{\cO}_{E,\xi}) 
\end{align*}
is zero. The latter follows from the former by transversality of $\TT_{\cZ^-,0}$ and $\TT_{\cZ^+,0}$. To see that $\alpha \circ i$ is an isomorphism, observe that
$\cZ^{-} \simeq [\AA^{4}/\GG_m]$ with weights $-4$,$-6$,
$-8$,$-10$, where the universal family is given by
 $$
(y^2-x^{5}-a_{3} \eps x^{3} - a_2 \eps x^2 - a_1 \eps x - a_0 \eps, \eps^2) \, : a_{3},\ldots,a_{0}\in\CC \}.
 $$
On the other hand, there is a natural isomorphism
$$ \begin{aligned}
\TT^1(\widehat{\cO}_{E,\xi}) = &
\{
\Spec \CC[[x,y,\eps]]/
(y^2-x^{5}-a_{3} \eps x^{3} - a_2 \eps x^2 - a_1 \eps x - a_0 \eps, \eps^2) \, : a_{3},\ldots,a_{0}\in\CC \}.
\end{aligned}$$
Evidently, $\alpha \circ i$ is the identity map in the
given coordinates.
\end{proof}

\begin{lemma}\label{L:CoordinateExtension}
Let $V$ be a finite-dimensional representation of a torus $G$, let $X=\Spf \CC[[V]]$, and let $\fm \subseteq \CC[[V]]$ be the maximal ideal.  Suppose we are a given a collection of $G$-invariant formal smooth closed subschemes $Z_i:=\Spf \CC[[V]]/I_i, (i=1, \ldots, r)$ which intersect transversely at $0$, and a basis $x_{1}, \ldots, x_n$ for $V$ such that:
\begin{enumerate}
\item $x_{1}, \ldots, x_n$ diagonalize the action of $G$.
\item $I_i/\fm I_i$ is spanned by a subset of $x_1, \ldots, x_n$.
\end{enumerate}
Then there exist coordinates $X \simeq \Spf \CC[[x_1', \ldots, x_k']]$ such that
\begin{enumerate}
\item $x_1', \ldots, x_n'$ diagonalize the action of $G$.
\item $x_1', \ldots, x_n'$ reduce modulo $\fm$ to $x_1, \ldots, x_n$.
\item $I_i$ is generated by a subset of $x_1', \ldots, x_n'$.
\end{enumerate}
\end{lemma}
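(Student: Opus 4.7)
The plan is as follows. Define $J_i \subseteq \{1,\dots,n\}$ to be the subset of indices such that $I_i/\fm I_i = \operatorname{span}(x_k : k \in J_i) \subseteq V = \fm/\fm^2$. The first step will be to observe that transversality of the $Z_i$ at the origin means that the subspaces $V_i := I_i/\fm I_i$ form a direct sum inside $V$; since each $V_i$ is a coordinate subspace indexed by $J_i$, the sets $J_i$ must be pairwise disjoint. Consequently every index $k$ lies in at most one $J_i$, and this disjointness is the only structural input we need from the hypotheses beyond condition (2).

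Next, for each $k \in J_i$ I would produce a lift $f_{i,k} \in I_i$ of $x_k$ that is itself of weight $\chi_k$ for the $G$-action. Since $G$ is a torus and $I_i \subseteq \CC[[V]]$ is a $G$-invariant ideal, both $I_i$ and $\fm I_i$ admit weight-space decompositions, and the surjection $I_i \twoheadrightarrow I_i/\fm I_i$ restricts to a surjection on each weight component. Starting from any lift of $x_k$ to $I_i$ and projecting to the weight-$\chi_k$ component yields the desired $f_{i,k}$, which by construction satisfies $f_{i,k} - x_k \in \fm I_i \subseteq \fm^2$.

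Now set
\[
x_k' := \begin{cases} f_{i,k} & \text{if } k \in J_i \text{ for some (necessarily unique) } i, \\ x_k & \text{otherwise.} \end{cases}
\]
By construction each $x_k'$ is a weight vector of weight $\chi_k$ and has the same image in $\fm/\fm^2$ as $x_k$, so the assignment $x_k \mapsto x_k'$ extends to a continuous $\CC$-algebra automorphism of $\CC[[V]] = \CC[[x_1,\dots,x_n]]$; this gives conclusions (1) and (2). For conclusion (3), fix $i$; by the disjointness of the $J_i$ we have $\{x_k' : k \in J_i\} = \{f_{i,k} : k \in J_i\} \subseteq I_i$, and these elements descend to a basis of $I_i/\fm I_i$, so by Nakayama's lemma (applied in the complete Noetherian local ring $\CC[[V]]$) they generate $I_i$.

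The only non-routine step is the disjointness of the $J_i$: this is exactly where transversality of the $Z_i$ enters, and without it the argument would need to balance competing membership requirements $x_k' \in \bigcap_{i : k\in J_i} I_i$ against preserving linear parts. Once disjointness is in hand, everything else is a standard combination of the semisimplicity of torus representations and Nakayama's lemma.
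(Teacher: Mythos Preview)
Your proof is correct and follows essentially the same route as the paper's: lift a diagonal basis of each $I_i/\fm I_i$ to weight vectors in $I_i$ via an equivariant section, use Nakayama to conclude these generate $I_i$, invoke transversality to see the lifts are linearly independent in $\fm/\fm^2$, and complete to a full diagonal basis with the remaining original $x_k$'s. Your version is simply more explicit about the disjointness of the index sets $J_i$ and the construction of the weight-$\chi_k$ lifts, but the argument is the same.
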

\begin{proof}
Let $x_{i,1}, \ldots, x_{i,d_i}$ be a diagonal basis for $ I_i/ \fm I_i$ as a $G$-representation. Consider the surjection
$$
I_i \rightarrow I_i/ \fm I_i
$$
and choose an equivariant section, i.e., choose $x_{i,1}', \ldots, x_{i,d_i}'$ such that each spans a one-dimensional sub-representation of $G$. By Nakayama's Lemma, these elements generate $I_i$. Repeating this procedure for each $Z_i$, we obtain 
$x'_{i,j}$ for $i =1 , \ldots, r$ and $j = 1, \ldots, d_i$.
Since the $Z_i$'s intersect transversely, these coordinates induce linearly independent elements of $V$. 
Thus they may be completed to a diagonal basis, and this gives the necessary coordinate change.
\end{proof}

\begin{prop}[Explicit Description of $I_{\cZ^+}$, $I_{\cZ^-}$]\label{prop-formal-ideals}
Let $(C,\pn)$ be an $\alpha_c$-closed curve.  There exist coordinates $n_i, \bs_i, c_i$ (resp., $n_{i,j}, \bs_{i,j}$) on $\hat{\Def}(C,\pn)$ such that the action of $\Aut(C, \pn)^{\circ}$ on  $\hat{\Def}(C,\pn) = \Spf \hat{A}$ is given as in Proposition \ref{prop-first-order-action}, and such that the ideals $I_{\cZ^+}$, $I_{\cZ^-}$ are given as follows:

\noindent {\rm $\bullet \,\alpha_c=\first$, Type A: } $I_{\cZ^+} = \bigcap_{i=1}^r (\bs_i)$, $I_{\cZ^-} = \bigcap_{i=1}^r (n_i)$.

\noindent {\rm $\bullet \,\alpha_c=\first$, Type B: } $I_{\cZ^+} = (\bs_1) \cap (\bs_2)$, $I_{\cZ^-} = (n)$.

\noindent {\rm $\bullet \, \alpha_c=\first$, Type C: }  $I_{\cZ^+} = (\bs)$, $I_{\cZ^-} = (0)$.

\noindent {\rm $\bullet \, \alpha_c = \second$, Type A: } $I_{\cZ^+} = \bigcap_{i,j} (\bs_{i,j})$ , $I_{\cZ^-} = \bigcap_{i,\mu,\nu \in S} J_{i,\mu,\nu}$ where 
$$\begin{aligned}
	S & :=\{i, \mu,\nu: 1 \leq i \leq r+s, 1 \leq \mu \leq \left\lceil\frac{\ell_i}{2}\right\rceil,\,0 \leq \nu \leq \ell_i-2\mu+1\}\\
	J_{i,\mu,\nu}&:= (n_{i,\nu},  \bs_{i,\nu+2},   \ldots, \bs_{i,\nu+2\mu-2},  n_{i,\nu+2\mu-1}),  \quad \text{for $i=1, \ldots, r$} \\
	J_{i,\mu,\nu}&:= (n_{i,\nu},  \bs_{i,\nu+2},  \ldots, \bs_{i,\nu+2\mu-2}),   \quad \text{for $i=r+1, \ldots, r+s$}.
\end{aligned}$$

\noindent {\rm $\bullet \,\alpha_c = \second$, Type B: } $I_{\cZ^+} = \bigcap_{i} (\bs_{i})$ , $I_{\cZ^-} = \bigcap_{\mu,\nu \in S} J_{\mu,\nu}$ where 
$$\begin{aligned}
	S & :=\{\mu,\nu: 1 \leq \mu \leq \left\lceil\frac{g}{2}\right\rceil,\,0 \leq \nu \leq g-2\mu+1\}\\
	J_{\mu,\nu}&:= (n_{\nu}, \bs_{\nu+2},   \ldots, \bs_{\nu+2\mu-2},  n_{\nu+2\mu-1}),
\end{aligned}$$
and $n_0:=0$ and $n_g:=0$.

\noindent {\rm $\bullet \, \alpha_c = \second$, Type C: } $I_{\cZ^+} = \bigcap_{i} (\bs_{i})$ , $I_{\cZ^-} = \bigcap_{\mu,\nu \in S} J_{\mu,\nu}$ where 
$$\begin{aligned}
	S & :=\{\mu,\nu: 1 \leq \mu \leq \left\lceil\frac{g-1}{2}\right\rceil,\,0 \leq \nu \leq g-2\}\\
	J_{\mu,\nu}&:= (n_{\nu},  \bs_{\nu+2},   \ldots, \bs_{\nu+2\mu-2},  n_{\nu+2\mu-1}),
\end{aligned}$$
and the subscripts are taken modulo $g-1$.

\noindent {\rm $\bullet \,\alpha_c=\third$, Type A: } $I_{\cZ^+} = \bigcap_{i=1}^r (\bs_i)$, 
$$I_{\cZ^-} = \bigcap_{i=1}^r \bigcap_{j=0}^{\ell_i-1} (n_{i,j}, \br'_{i,j+1}, \br'_{i,j+2}, \ldots, \br'_{i,\ell_i-1}, c_i).$$

\noindent {\rm $\bullet \,\alpha_c=\third$, Type B: } $I_{\cZ^+} = (\bs) $,
$$I_{\cZ^-} = \bigcap_{i=1}^{\ell-1}(n_i, \br'_{i+1}, \br'_{i+2}, \ldots, \br'_{\ell-1}, c) \cap (\br'_1, \br'_2, \ldots, \br'_{\ell-1}, c).$$

\noindent {\rm $\bullet \,\alpha_c=\third$, Type C: } $I_{\cZ^+} = (\bs_1) \cap (\bs_2)$,
$$I_{\cZ^-} = \bigcap_{i=0}^{\ell-1}(n_i, \br_i, \br_{i-1}, \ldots, \br_1, c_0) \cap \bigcap_{i=0}^{\ell-1}(n_i, \br'_{i+1}, \br'_{i+2}, \ldots, \br'_{\ell-1}, c_\ell).$$
\end{prop}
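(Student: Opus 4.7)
The overall strategy is to identify the irreducible components of $\cZ^+$ and $\cZ^-$ inside $\hat{\Def}(C,\pn)$ geometrically, verify that each is a smooth $\Aut(C,\pn)^{\circ}$-invariant formal subscheme whose first-order tangent space is cut out by a subset of the diagonalizing coordinates of Proposition \ref{prop-first-order-action}, and then invoke Lemma \ref{L:CoordinateExtension} to produce formal coordinates on $\hat{\Def}(C,\pn)$ with the stated properties. The geometric identification is dictated by the definition of $\alpha$-stability: points of $\cZ^+$ are precisely deformations still carrying an $\alpha_c$-critical singularity (cusp, tacnode, or ramphoid cusp, respectively), while points of $\cZ^-$ are precisely deformations still carrying an $\alpha_c$-critical subcurve (an $A_1$-attached elliptic tail, an $A_1/A_1$-attached elliptic chain, or an $A_1$-attached Weierstrass chain, respectively).

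For $\cZ^+$, each $\alpha_c$-atom $E_i$ (or $E_{i,j}$) in the canonical decomposition contributes one component, namely the locus on which the critical singularity of that atom is preserved. The formal input here is the versal deformation theory of an $A_2$, $A_3$, or $A_4$ singularity, which shows that the equisingular locus at $E_i$ is precisely $V(\bs_i)$ at first order. The components of $\cZ^-$ are more subtle: they correspond to the various ways in which an $\alpha_c$-critical subcurve can appear as a sub-configuration inside the canonical decomposition. For $\alpha_c=\firstf$ each $E_i$ gives one component $V(n_i)$ (the attaching node preserved). For $\alpha_c=\secondf$, the triple $(i,\mu,\nu) \in S$ picks out a contiguous elliptic chain of length $\mu$ inside the $i$-th $\secondf$-link starting at position $\nu$: the attaching nodes at the two boundary positions together with the alternating internal tacnodes must be preserved, and a case analysis of how a $\secondf$-atom may or may not be smoothed produces exactly the generators of $J_{i,\mu,\nu}$. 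For $\alpha_c=\thirdf$, the pair $(i,j)$ picks out a Weierstrass sub-chain of the $i$-th $\thirdf$-link starting after the node $q_{i,j}$; preservation of this Weierstrass chain forces the attaching node $n_{i,j}$, the trailing rosary coordinates $\br'_{i,j+1},\ldots,\br'_{i,\ell_i-1}$, and the crimping $c_i$ of the ramphoid cusp to vanish. The identification of $c_i$ as the normal direction to $\cZ^-$ in the ramphoid cusp locus is exactly what Lemma \ref{splitting} provides, and analogous (easier) splittings for the $\firstf$- and $\secondf$-atom cases follow from an identical deformation-theoretic computation.

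Having enumerated all components, one checks that each is smooth (the conditions are unobstructed local equisingularity or node-preservation conditions), $\Aut(C,\pn)^{\circ}$-invariant, and meets the others transversely, because the coordinates appearing in different components remain linearly independent in $\TT^1(C,\pn)$. Lemma \ref{L:CoordinateExtension} then yields formal coordinates on $\hat{\Def}(C,\pn)$ reducing modulo $\fm^2$ to the diagonal basis of Proposition \ref{prop-first-order-action} and cutting out each component as the vanishing of the appropriate subset of coordinates; taking intersections of the resulting ideals gives the formulas for $I_{\cZ^+}$ and $I_{\cZ^-}$. The main technical obstacle lies in the $\alpha_c=\thirdf$ cases, where one must correctly match the two tacnodes of each length-$3$ rosary with the coordinates $\br$ versus $\br'$ so that the preserved tacnode in a Weierstrass sub-chain is the one whose coordinate appears in $J_{i,\mu,\nu}$; this is a matter of tracking the $\GG_m$-weights from Lemma \ref{lem-deformation-rosary} and verifying them against the geometry of how a rosary sits inside a $\thirdf$-link.
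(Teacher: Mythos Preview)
Your overall strategy is the same as the paper's, and your geometric identification of the components of $\cZ^+$ and $\cZ^-$ is correct. However, there is a genuine gap in how you apply Lemma \ref{L:CoordinateExtension}.

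The lemma requires the input subschemes to be transverse in the strong sense that the totality of their conormal directions in $\TT^1(C,\pn)^\vee$ is linearly independent (this is what its proof uses). The irreducible components of $\cZ^-$ do \emph{not} satisfy this. Already for $\alpha_c=\secondf$, Type A with $\ell_i\geq 2$, the components $V(n_{i,0},n_{i,1})$ and $V(n_{i,1},n_{i,2})$ share the conormal direction $n_{i,1}$. For $\alpha_c=\thirdf$, Type A with $\ell_i\geq 2$, the components indexed by $(i,j)$ and $(i,j')$ with $j<j'$ share all of $\br'_{i,j'+1},\ldots,\br'_{i,\ell_i-1},c_i$. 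Consequently, if you feed these components directly into the lemma, the lifts of the shared first-order generators produced for distinct components have no reason to coincide, and you do not obtain a single coordinate system in which every component is cut out by the indicated subset.

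The paper avoids this by refining the collection fed into Lemma \ref{L:CoordinateExtension}. In the $\thirdf$, Type A case it introduces, for each $i$ with $\ell_i>1$, the ``building block'' loci
\[
N_{i,j}=\{\text{$q_{i,j}$ preserved}\},\quad T_{i,j}=\{\text{$\tau_{i,j,2}$ preserved}\},\quad
W_i=\{\text{$\tau_{i,\ell_i-1,2}$ preserved with Weierstrass attaching}\},
\]
together with $Z_i^+$ (and $Z_i^-$ when $\ell_i=1$). These have pairwise disjoint conormal spaces (respectively $\langle n_{i,j}\rangle$, $\langle \br'_{i,j}\rangle$, $\langle c_i,\br'_{i,\ell_i-1}\rangle$, $\langle \bs_i\rangle$), so the transversality hypothesis holds and the lemma produces a single coordinate system cutting out each building block. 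Every component of $\cZ^-$ is then an intersection of building blocks,
\[
W_i\cap\bigcap_{k=j+1}^{\ell_i-2}T_{i,k}\cap N_{i,j}\quad(0\le j\le \ell_i-2),\qquad Z_i^-\quad(j=\ell_i-1),
\]
and is therefore cut out by the union of their coordinates, giving exactly the stated ideals. Your argument can be repaired along the same lines; the point you flag about matching $\br$ versus $\br'$ is real but secondary to this transversality issue.
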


\begin{proof}
We prove the statement when $(C, \pn)$ is a $\thirdf$-closed curve of combinatorial type A; the other cases are similar and left to the reader.  Let $\hat{\Def}(C,\pn) = \Spf \hat{A} \to \bar{\cM}_{g,n}(\third)$ be a miniversal deformation space of $(C,\pn)$.  For $i = 1, \ldots, r$, we define
\begin{itemize}
\item $Z^+_i = \Spf \hat{A}/I_{Z_i^+}$ is the locus of deformations preserving the $i^{th}$ ramphoid cusp $\xi_{i}$.
\item $Z^{-}_{i} = \Spf \hat{A}/I_{Z_{i}^-}$ is the locus of deformations preserving the $i^{th}$ Weierstrass tail.
\end{itemize}
Since $Z_i^+$ (resp., $Z_i^-$) are smooth, $G$-invariant, formal closed subschemes of $\Spf \hat{A}$, the conormal space of $Z_i^+$ (resp., $Z_i^-$) is canonically identified with  $I_{Z_i^+} /\fm_{\hat{A}} I_{Z_i^+}$ (resp.,  $I_{Z_i^-} /\fm_{\hat{A}} I_{Z_i^-}$). Thus, in the notation of Proposition \ref{prop-first-order-action}, we have $
I_{Z_i^+} /\fm_{\hat{A}} I_{Z_i^+} \simeq \TT^1(\hat{\cO}_{E_i,\xi_i})^\vee $.  Moreover, if $\ell_i = 1$, we have
$$
I_{Z_i^-} /\fm_{\hat{A}} I_{Z_i^-} \simeq \Cr^1(E_i)^{\vee} \oplus \TT^1(\hat{\cO}_{E_i,q_i})^\vee
$$
using Lemma \ref{splitting} to identify $\Cr^1(E_i)^{\vee}$ as the conormal space of the locus of deformations of $E_i$ for which the attaching point remains Weierstrass. 

If $\ell_i > 1$ (i.e., $E_i$ is not a nodally-attached Weierstrass tail), we define
\begin{itemize}
\item $T_{i,j} = \Spf \hat{A}/I_{T_{i,j}}$ as the locus of deformations preserving the tacnode $\tau_{i,j,2}$, for $j=1, \ldots, \ell_i - 2$.
\item $W_{i} = \Spf \hat{A}/I_{W_i}$ as the closure of the locus of deformations preserving the tacnode $\tau_{i,\ell_i-1,2}$ such that the tacnodally attached genus $2$ curve is attached at a Weierstrass point.
\item $N_{i,j} = \Spf \hat{A}/I_{N_{i,j}}$ as the locus of deformations preserving the node $q_{i,j}$, for $j=0, \ldots, \ell_i - 1$.
\end{itemize}
We observe that for each $i$ with $\ell_i > 1$, $W_i$ is a smooth, $G$-invariant formal subscheme, and
there is an identification
$$I_{W_i} /\fm_{\hat{A}} I_{W_i} \simeq \Cr^1(E_{i})^{\vee} \oplus \TT^1(\hat{\cO}_{C,\tau_{i,\ell_i-1,2}})^\vee.$$
If we choose coordinates $c_i \in\Cr^1(E_{i})^{\vee}$ and $s_{i,0}, s_{i,1}, s_{i,2}, s_{i,3} \in  \TT^1(\hat{\cO}_{C,\tau_{i,\ell_i-1,2}})^\vee$
cutting out $W_i$ and a coordinate $n_{i, \ell_i-1}$ cutting out $N_{i, \ell_i-1}$, 
then it is easy to check that $Z_i^-$ is necessarily cut out by $c_i$ and $n_{i, \ell_i-1}$.

Formally locally around $(C, \pn)$, $\cZ^+$ and $\cZ^-$ decompose as
$$ \begin{aligned}
\cZ^+ \times_{\bar{\cM}_{g,n}(2/3)} \Spf \hat{A} &=Z^{+}_1 \cup \cdots \cup Z^{+}_r, \\
 \cZ^- \times_{\bar{\cM}_{g,n}(2/3)} \Spf \hat{A} &= \bigcup_{i=1}^r \bigg(Z^{-}_i \cup \bigcup_{j=0}^{\ell_i-2} \big( W_i \cap \bigcap_{k=j+1}^{\ell_i-2} T_{i,k} \cap N_{i,j} \big) \bigg)
\end{aligned}$$

For each $i=1, \ldots, r$, we consider the cotangent space of $Z_i^+$ and either the cotangent space of $Z_i^-$ if $\ell_i=1$ or the set of cotangents spaces of $T_{i,j}, W_i, N_{i,j}$ if $\ell_i > 1$.  
Since this collection of subspaces of $\TT^1(C, \pn)$ as $i$ ranges from $1$ to $r$  is linearly independent, 
we may apply Lemma \ref{L:CoordinateExtension} to this collection of formal closed subschemes to obtain coordinates with the required properties.
\end{proof}

\subsection{Local VGIT chambers for an $\alpha_c$-closed curve}
\label{section-variation-GIT}
In this section, we explicitly compute the VGIT ideals $I^+,I^- \subseteq 
\CC[T^1(C, \pn)]$ (Definition \ref{defn-ideals}) for any $\alpha_c$-closed curve. The main result (Proposition \ref{prop-vgit-ideals}) states that the VGIT ideals agree formally locally with the ideals $I_{\cZ^+}$, $I_{\cZ^-}$. By Proposition \ref{prop-ideals}, this suffices to establish Theorem \ref{theorem-etale-VGIT}. In order to carry out the computation of $I^+$ and $I^-$, we must do two things: First, we must explicitly identify the character 
$\chi_{\delta-\psi} \co \Aut(C, \pn) \rightarrow \GG_m$ for any $\alpha_c$-closed curve. Second, we must compute the ideals of positive and negative semi-invariants with respect to this character.

\begin{definition}\label{defn-char}
Let $E_1, \ldots, E_r$ be the $\alpha_c$-atoms of $(C, \pn)$, and let $t_i \in \Aut(E_i)$ 
be the coordinate specified in Proposition \ref{prop-first-order-action}.  Let
$$\chi_{\star}\co \Aut(C,\pn)^{\circ} \rightarrow \GG_m = \Spec \CC[t,t^{-1}]$$
be the character defined by $t \mapsto t_1 t_2 \cdots t_r.$ Note that $\chi_{\star}$ is trivial on automorphisms fixing the $\alpha_c$-atoms.
\end{definition}

The following proposition shows that $\chi_{\delta-\psi}$ is simply a positive multiple of $\chi_{\star}$. Since it will be important in Proposition \ref{P:Proj1}, we also prove now that the character of $K_{\bar{\cM}_{g,n}(\alpha_c)} + \alpha_c \delta + (1-\alpha_c)\psi$ is trivial for $\alpha_c$-closed curves.

\begin{prop} \label{prop-character-comparison} Let $\alpha_c \in \{\first, \second, \third\}$ be a critical value and let $(C, \pn)$ be an $\alpha_c$-closed curve. Then there exists a positive integer $N$ such that $\chi_{\delta-\psi}|_{\Aut(C,\pn)^{\circ}} = \chi_{\star}^N$ for every $\alpha_c$-closed curve $(C, \pn)$.  Specifically, 
$$
N = \left\{
	\begin{array}{rl}
	11					& \text{if } \alpha_c = \first \\
	10					& \text{if } \alpha_c = \second \\
	39					& \text{if } \alpha_c = \third \\
	\end{array} \right.
$$
In particular, $I^{\pm}_{\chi_{\delta-\psi}}=I^{\pm}_{\chi_{\star}}$.
\end{prop}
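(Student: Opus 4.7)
The plan hinges on Mumford's formula $K_{\bar{\cM}_{g,n}(\alpha_c)} = 13\lambda - 2\delta + \psi$ (which holds on the stack of $\alpha_c$-stable curves, as it is defined formally from the universal family), giving
\begin{equation*}
K + \alpha\delta + (1-\alpha)\psi = 13\lambda - (2-\alpha)(\delta-\psi).
\end{equation*}
First I would establish the \emph{key triviality}: $\chi_{K + \alpha_c\delta + (1-\alpha_c)\psi}$ is trivial on $\Aut(C,\pn)^{\circ}$ for every $\alpha_c$-closed curve $(C,\pn)$. This is essentially the design principle for $\alpha_c$-atoms described in the remarks following Proposition \ref{P:reductive-stabilizer}: for $K+\alpha\delta+(1-\alpha)\psi$ to eventually descend to an ample line bundle on the good moduli space, its character on every positive-dimensional stabilizer must vanish. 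Granting this, the formula reduces the entire identity to computing $\chi_\lambda$, since it yields $(2-\alpha_c)\chi_{\delta-\psi} = 13\chi_\lambda$ on all of $\Aut(C,\pn)^{\circ}$.

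Using Proposition \ref{prop-first-order-action}, the factors of $\Aut(C,\pn)^{\circ}$ are indexed by atoms, rosaries, and (for $\alpha_c=\third$) the core $\Aut(K')^{\circ}$. On a core factor, I would argue by induction on critical values: the core or secondary core is $(\alpha_c+\epsilon)$-closed, and the proposition at the previous critical value (or triviality for $\alpha_c=\first$) implies $\chi_{\delta-\psi}$ is trivial there. For a length-$3$ rosary $R_{i,j}$ with $\Aut(R_{i,j})=\GG_m$, a direct calculation using the weight data of Lemma \ref{lem-deformation-rosary} shows $\chi_\lambda$ is trivial: the normalization sequence expresses sections of $\omega_C$ in terms of meromorphic $1$-forms on the three $\PP^1$ components with controlled behavior at the tacnodes, and the symmetry $t^{k-4} \leftrightarrow t^{4-k}$ between the weights at the two tacnodes forces the rosary contributions to $\det H^0(\omega_C)$ to cancel.

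The heart of the argument is the direct calculation of $\chi_\lambda|_{\Aut(E)}$ for each $\alpha_c$-atom $E$, which reduces to examining the $\GG_m$-action on $H^0(\omega_E)$ using the local models from Section \ref{S:Gm-action}. For a $\firstf$-atom ($y^2=x^3$), $H^0(\omega_E)$ is spanned by the local generator $dx/y$ at the cusp, which scales with weight $t^{-2}/t^{-3} = t$, giving $\chi_\lambda = t$. For a $\secondf$-atom ($y^2=x^4$), the same computation at the tacnode yields $\chi_\lambda = t$. For a $\thirdf$-atom ($y^2=x^5$), the two-dimensional $H^0(\omega_E)$ has basis $\{dx/y,\, xdx/y\}$ (the sections satisfying the adjoint condition for the $A_4$-singularity) with weights $t^3$ and $t$ respectively, so $\chi_\lambda = t^4$. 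Substituting into $(2-\alpha_c)\chi_{\delta-\psi} = 13\chi_\lambda$ with $\alpha_c = \first,\,\second,\,\third$ yields $\chi_{\delta-\psi}|_{\Aut(E)} = t^{11},\, t^{10},\, t^{39}$ respectively, matching $\chi_\star^N|_{\Aut(E)} = t^N$ for the claimed values of $N$. The equality of VGIT ideals $I^{\pm}_{\chi_{\delta-\psi}} = I^{\pm}_{\chi_\star}$ is then immediate, since a function is a positive (resp., negative) semi-invariant for $\chi_\star$ if and only if it is one for $\chi_\star^N$ when $N > 0$.

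I expect the main obstacle to be the uniform verification of the key triviality of $\chi_{K+\alpha_c\delta+(1-\alpha_c)\psi}$ across all combinatorial types of $\alpha_c$-closed curves, which requires careful bookkeeping of the contributions of each attaching node and marked point to $\chi_\delta$ and $\chi_\psi$ and their cancellation with $\chi_K$ via the normalization sequence. The rosary cancellation is also delicate but follows cleanly from the diagonalized weights in Lemma \ref{lem-deformation-rosary}.
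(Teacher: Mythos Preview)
Your approach is correct but takes a genuinely different route from the paper. The paper's proof simply quotes the pairings $\langle \chi_{\delta-\psi},\,\cdot\,\rangle$ on the one-parameter subgroups coming from atoms and rosaries directly from \cite[Sections 3.1.2--3.1.3]{afs} (for $\alpha_c=\third$ these are $39$ on each atom and $0$ on each rosary) and compares them to the evident values of $\chi_\star$. Your route instead computes $\chi_\lambda$ from first principles on $H^0(\omega_E)$ --- your weight computations are correct, yielding $\langle\chi_\lambda,\varphi\rangle = 1,1,4$ for the three atoms and $0$ on a rosary --- and then solves $(2-\alpha_c)\chi_{\delta-\psi}=13\chi_\lambda$ using the triviality of $\chi_{K+\alpha_c\delta+(1-\alpha_c)\psi}$.

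What each buys: your approach makes the origin of $N=11,10,39$ transparent (it is $\frac{13}{2-\alpha_c}\cdot\langle\chi_\lambda,\text{atom}\rangle$) and gives a self-contained Hodge-bundle computation in place of a citation. But it relocates rather than eliminates the work: the ``key triviality'' you invoke is exactly the paper's Proposition~\ref{P:trivial-characters}, which is itself proved by citing the same computations in \cite{afs}, so you have not avoided computing a second character on each factor. Your proposed independent verification (``bookkeeping of $\chi_\delta$, $\chi_\psi$, $\chi_K$ via the normalization sequence'') would, once carried out, amount to computing $\chi_{\delta-\psi}$ directly, which is what the paper does in one step. One minor correction: your ``induction on critical values'' for core factors is mis-framed. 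For $\alpha_c=\first,\second$ the core is Deligne--Mumford so $\Aut(K)^\circ$ is trivial and there is nothing to check; for $\alpha_c=\third$, Corollary~\ref{C:RosariesClosed} shows $\Aut(K')^\circ$ is generated by length-$3$ rosaries, so what you need there is precisely your rosary cancellation, not an appeal to the $\second$ case.
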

\begin{proof}
We prove the case when $\alpha_c = \third$ for an $\alpha_c$-closed curve $(C, \pn)$ of Type A. Let $C = K' \cup L_1 \cup \cdots \cup L_r$ be the decomposition of $C$ as in Definition \ref{defn-canonical}, and suppose that the rank of $\Aut(K')$ is $k$.
Corollary \ref{C:RosariesClosed} implies that there exist length three rosaries 
$R'_1, \ldots, R'_k$ such that $\Aut(K')^{\circ} \simeq \prod_{i=1}^k \Aut(R'_i)$. Thus, we have
\begin{align*}
\Aut(C)^{\circ}&=\Aut(K')^{\circ} \times \prod_{i=1}^{r} \Aut(L_i) \\
&= \prod_{i=1}^k \Aut(R'_i) \times \prod_{i=1}^r \left[ \prod_{j=1}^{\ell_i-1} \Aut(R_{i,j}) \times  \Aut(E_{i}) \right].
\end{align*}
Let $\rho'_i \co \GG_m \to  \Aut(C)$ (resp. $\rho_{i,j}$, $\varphi_{i}$) be the one-parameter subgroup corresponding 
to $\Aut(R'_i) \subset \Aut(C)$ (resp. $\Aut(R'_{i,j}), \Aut(E_i) \subset \Aut(C)$). By \cite[Sections 3.1.2--3.1.3]{afs}, we have
\[
\langle \chi_{\delta-\psi}, \rho'_{i} \rangle =0, \qquad
 \langle \chi_{\delta-\psi}, \rho_{i,j} \rangle =0, \qquad
\langle \chi_{\delta - \psi}, \varphi_{i} \rangle = 39.
\]
On the other hand, the definition of $\chi_{\star}$ obviously implies
\[
\langle \chi_{\star}, \rho'_{i} \rangle =0, \qquad
\langle \chi_{\star}, \rho_{i,j} \rangle =0, \qquad
\langle \chi_{\star}, \varphi_{i} \rangle = 1.
\]
It follows that $\chi_{\delta-\psi}=\chi_{\star}^{39}$ as desired.
\end{proof}

\begin{prop}\label{P:trivial-characters}
For any $\alpha_c$-closed curve $(C, \pn)$, the action of $\Aut(C, \pn)^{\circ}$ on the fiber of $K_{\bar{\cM}_{g,n}(\alpha_c)} + \alpha_c \delta + (1-\alpha_c)\psi$ is trivial.
\end{prop}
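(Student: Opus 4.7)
The plan is to mimic the argument for Proposition \ref{prop-character-comparison}: since $\Aut(C,\pn)^{\circ}$ is a torus by Proposition \ref{P:reductive-stabilizer}, it suffices to check that the pairing $\langle K + \alpha_c \delta + (1-\alpha_c)\psi, \varphi \rangle$ vanishes for every one-parameter subgroup $\varphi$ in a generating set. Using Proposition \ref{prop-first-order-action} together with Corollary \ref{C:RosariesClosed}, such a generating set consists of two flavors of one-parameter subgroups: those coming from $\Aut(E) \simeq \GG_m$ for each $\alpha_c$-atom $E$, and those coming from $\Aut(R) \simeq \GG_m$ for each length-three rosary $R$ (appearing either inside a $\thirdf$-link when $\alpha_c = \third$, or in a rosary decomposition of the secondary core).

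To carry out each pairing, I would rewrite the class using the identity $K + \alpha \delta + (1-\alpha)\psi = 13\lambda - (2-\alpha)(\delta - \psi)$ noted in the introduction, so that the computation reduces to known pairings of $\lambda$ and $\delta - \psi$ against these standard $\GG_m$-actions. The local weight calculations in \cite[Sections 3.1.2--3.1.3]{afs} (the same reference invoked in the previous proposition) give $\langle \lambda, \rho \rangle = \langle \delta - \psi, \rho \rangle = 0$ for every rosary subgroup $\rho$, settling the rosary generators at once, and also recovering the vanishings used in Proposition \ref{prop-character-comparison}. For an atom generator $\varphi$, the previous proposition already gives $\langle \delta - \psi, \varphi \rangle = N$ with $N \in \{11, 10, 39\}$ according as $\alpha_c \in \{\first, \second, \third\}$, while $\langle \lambda, \varphi \rangle$ is the total weight of the $\GG_m$-action on $H^0(E, \omega_E)$, which can be read off directly from the explicit coordinates on the atom coordinate rings in \eqref{gm-action-atoms} (for instance, on the $\thirdf$-atom with weights $x \mapsto t^{-2}x$, $y \mapsto t^{-5}y$, the basis $dx/y,\, x\,dx/y$ of $H^0(\omega_E)$ has weights $3$ and $1$, giving $\langle \lambda, \varphi \rangle = 4$).

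Assembling these, the pairing becomes
\[
\langle K + \alpha_c \delta + (1-\alpha_c)\psi, \varphi \rangle \;=\; 13 \langle \lambda, \varphi \rangle - (2-\alpha_c) N,
\]
and a direct arithmetic check at each critical value gives $13 \cdot 1 - (13/11)\cdot 11 = 0$, $13 \cdot 1 - (13/10)\cdot 10 = 0$, and $13 \cdot 4 - (4/3)\cdot 39 = 0$ respectively. Triviality on a generating set of a torus forces triviality on all of $\Aut(C,\pn)^{\circ}$.

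The main obstacle is purely bookkeeping: correctly enumerating the generators of $\Aut(C,\pn)^{\circ}$ for each combinatorial type from Definition \ref{defn-canonical} and correctly reading off the Hodge weight of each atom. There is no new geometric input beyond the character computations already referenced; in effect the result records the numerical balance that defines the critical values $\first, \second, \third$.
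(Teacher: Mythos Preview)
Your proposal is correct and follows essentially the same route as the paper's proof: reduce to pairings against a torus-generating set of one-parameter subgroups (rosary and atom types), rewrite $K + \alpha_c\delta + (1-\alpha_c)\psi$ as $13\lambda - (2-\alpha_c)(\delta-\psi)$, and invoke \cite[Sections 3.1.2--3.1.3]{afs} for the values of $\langle \lambda, - \rangle$ and $\langle \delta-\psi, - \rangle$. The only cosmetic difference is that you compute $\langle \lambda, \varphi \rangle$ explicitly from the $\GG_m$-weights on $H^0(E,\omega_E)$, whereas the paper simply cites the same reference for this value; and you write out all three critical values while the paper treats only $\alpha_c = \third$ Type A and leaves the rest implicit.
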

\begin{proof}
We prove the case when $\alpha_c = \third$ for an $\alpha_c$-closed curve $(C, \pn)$ of Type A.  
Let $\rho'_i$, $\rho_{i,j}$, $\varphi_{i}$ be the one-parameter subgroups of $\Aut(C, \pn)$ 
as in the proof of Proposition \ref{prop-character-comparison}. By \cite[Sections 3.1.2--3.1.3]{afs}, we have

\[
\begin{array}{lllllll}
\langle \chi_{\lambda}, \rho'_{i} \rangle=0 \qquad &  \langle \chi_{\lambda}, \rho_{i,j} \rangle=0 \qquad & \langle \chi_\lambda, \varphi_{i} \rangle = 4 \\
 \langle \chi_{\delta-\psi}, \rho'_{i} \rangle =0 \qquad & \langle \chi_{\delta-\psi}, \rho_{i,j} \rangle =0 \qquad &\langle \chi_{\delta - \psi}, \varphi_{i} \rangle = 39.
\end{array}
\]
Using the identity
\begin{equation} \label{eqn-K}
K_{\bar{\cM}_{g,n}(\alpha_c)}  + \alpha_c \delta +(1-\alpha_c)\psi= 13 \lambda + (\alpha_c-2) (\delta-\psi) \, 
\end{equation}
one easily computes $$\langle\chi_{K_{\bar{\cM}_{g,n}(\alpha_c)}  + \alpha_c \delta +(1-\alpha_c)\psi}, \rho_i \rangle= \langle\chi_{K_{\bar{\cM}_{g,n}(\alpha_c)}  + \alpha_c \delta +(1-\alpha_c)\psi}, \rho_{i,j} \rangle= \langle\chi_{K_{\bar{\cM}_{g,n}(\alpha_c)}  + \alpha_c \delta +(1-\alpha_c)\psi}, \varphi_{i} \rangle=0,$$ and the claim follows.
\end{proof}

Proposition \ref{prop-character-comparison} and Corollary \ref{lemma-vgit-identity} imply that we can compute the VGIT ideals $I^-$ and $I^+$ as the ideals of semi-invariants associated to $\chi_{\star}$. In the following proposition, we compute these explicitly, and show that they are identical to the ideals $I_{\cZ^+}$ and $I_{\cZ^-}$, as described in Proposition \ref{prop-formal-ideals}.

\begin{proposition}[Description of VGIT ideals]\label{prop-vgit-ideals}
Let $(C, \pn)$ be an $\alpha_c$-closed curve for a critical value $\alpha_c \in \{\third, \second, \first\}$. Then $I^+ \hat{A} = I_{\cZ^+}$ and $I^- \hat{A} = I_{\cZ^-}$.

\end{proposition}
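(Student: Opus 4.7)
The plan is to reduce the computation of the VGIT ideals to an explicit Hilbert--Mumford calculation on a torus action that decomposes naturally over the combinatorial data of the $\alpha_c$-closed curve. First, by Proposition \ref{prop-character-comparison} we have $I^{\pm}_{\chi_{\delta-\psi}} = I^{\pm}_{\chi_\star}$, so it suffices to work with the character $\chi_\star = t_1 \cdots t_r$ of Definition \ref{defn-char}. By Corollary \ref{lemma-vgit-identity}, the VGIT chambers for the action of $\Aut(C,\pn)$ on $\TT^1(C,\pn)$ coincide with those for the identity component $\Aut(C,\pn)^{\circ}$, which by Proposition \ref{prop-first-order-action} is a torus acting diagonally in the specified coordinates. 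Now invoke the product decomposition in Proposition \ref{prop-first-order-action} together with Corollary \ref{lemma-vgit-product}: the group factors as $\Aut(K)^{\circ} \times \prod \Aut(L)$ (or the appropriate analogue for each combinatorial type) and the character $\chi_\star$ is trivial on the core factor, so $I^{\pm}$ is the intersection of the VGIT ideals computed separately for each atom (at $\alpha_c \in \{\first\}$), each link (at $\alpha_c \in \{\second, \third\}$), or the analogous objects in the degenerate types B and C.

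The calculation for each factor proceeds via the Hilbert--Mumford criterion (Proposition \ref{prop-hilbert-mumford}). I will illustrate the idea on a $\thirdf$-link $L_i$ of length $\ell_i$, the other cases being analogous. The restriction of $\chi_\star$ to $\Aut(L_i) = \prod_{j<\ell_i} \Aut(R_{i,j}) \times \Aut(E_i)$ is the projection onto the last factor, so a one-parameter subgroup $\lambda = (a_{i,1}, \ldots, a_{i,\ell_i})$ pairs with $\chi_\star$ to $a_{i,\ell_i}$. Reading off weights from Proposition \ref{prop-first-order-action}, the coordinate $s_{i,k}$ has $\lambda$-weight $(2k-10)a_{i,\ell_i}$, the coordinate $c_i$ has weight $a_{i,\ell_i}$, the coordinate $r'_{i,j,k}$ has weight $(4-k)a_{i,j}$, and $n_{i,j}$ has weight $a_{i,j+1}-a_{i,j}$ with $a_{i,0} := 0$. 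For the $(+)$-chamber, choosing $\lambda = (0,\ldots,0,1)$ shows that $v$ is destabilized precisely when all $s_{i,k}$ vanish, giving $I^+|_{L_i} = (\bs_i)$. For the $(-)$-chamber, the minimal destabilizing $\lambda$ with $a_{i,\ell_i} < 0$ are those of the form $(0,\ldots,0,-1,\ldots,-1)$ with the break at index $j$, and the locus they cut out is $(n_{i,j}, \br'_{i,j+1},\ldots,\br'_{i,\ell_i-1}, c_i)$; intersecting over $j = 0, \ldots, \ell_i-1$ yields the $I_{\cZ^-}$ formula for $\thirdf$-links in Proposition \ref{prop-formal-ideals}. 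An entirely analogous enumeration of minimal destabilizing one-parameter subgroups handles the $\secondf$-link case (where the break-point analysis produces the nested ideals $J_{i,\mu,\nu}$) and the simpler $\firstf$ and atom cases; the Type B and C curves, which have cyclic topology or two atoms, work the same way with minor notational changes.

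Finally, to pass from polynomial to formal coordinates, note that $I^+$ and $I^-$ are generated by a subset of the diagonalized linear coordinates on $\TT^1(C,\pn)$, and by Proposition \ref{prop-formal-ideals} the ideals $I_{\cZ^\pm}$ are generated by the formal coordinates $\bs_i, c_i, \br'_{i,j}, n_{i,j}$ that reduce modulo $\fm^2$ to the same linear coordinates. Since both sides of the desired identities $I^+\hat A = I_{\cZ^+}$ and $I^-\hat A = I_{\cZ^-}$ are generated by homogeneous weight-vectors with the same weights, and since Lemma \ref{L:CoordinateExtension} guarantees we can align the formal and linear coordinates equivariantly, the equality holds in $\hat A$. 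The main obstacle is really combinatorial bookkeeping across all the combinatorial types of Definition \ref{defn-canonical}; once the $\thirdf$-link case above is established, the remaining types amount to reading off the weights and enumerating the minimal one-parameter subgroups, with the $\secondf$-link case (Type A,B,C) being the most intricate because the weights on $n_{i,j}$ and $s_{i,j,k}$ interact more subtly.
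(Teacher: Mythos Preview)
Your proposal is correct and follows essentially the same approach as the paper: reduce to $\chi_\star$ via Proposition~\ref{prop-character-comparison} and to the identity component via Corollary~\ref{lemma-vgit-identity}, decompose over links using Corollary~\ref{lemma-vgit-product}, and then run a Hilbert--Mumford analysis on each link factor. Your sketch of the $\thirdf$-link case, identifying the destabilizing one-parameter subgroups $(0,\ldots,0,-1,\ldots,-1)$, is exactly the content of the paper's Lemma~\ref{lemma-third-link}.

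A few minor remarks on presentation. First, the paper handles Types B and C not by redoing the Hilbert--Mumford analysis but by observing that the relevant action is the Type A action restricted to a coordinate subspace (e.g.\ $V(n_0)$ for $\thirdf$ Type B) and then invoking Corollary~\ref{lemma-vgit-closed}; this is cleaner than redoing the enumeration, and you could mention it. Second, your claim that the $\secondf$-link case is ``entirely analogous'' undersells it somewhat: the weight pattern there ($n_{i,j} \mapsto t_{i,j} t_{i,j+1} n_{i,j}$ rather than $t_{i,j}^{-1} t_{i,j+1}$) forces an alternating-sign one-parameter subgroup $(0,\ldots,0,-1,1,-1,\ldots,1,-1,0,\ldots,0)$, and the paper establishes the converse direction (that every unstable point lies in some $V_{\mu,\nu}$) by an induction on $\ell$; you should at least indicate that this step requires an argument. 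Third, your final paragraph on passing to formal coordinates is slightly overcomplicated: the point is simply that Proposition~\ref{prop-formal-ideals} has already chosen the formal coordinates so that the action of $\Aut(C,\pn)^\circ$ is given by the same weight data as in Proposition~\ref{prop-first-order-action}, so once you compute $I^\pm$ as the explicit intersections of coordinate ideals on $\TT^1$, the extension $I^\pm \hat A$ is literally the same intersection in the formal coordinates, which is $I_{\cZ^\pm}$ by Proposition~\ref{prop-formal-ideals}.
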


We  establish the proposition first in the case of an $\alpha_c$-atom, then in the case of an $\alpha_c$-link, 
and finally for each of the distinct combinatorial types of $\alpha_c$-closed curves.

\subsubsection{The case of an $\alpha_c$-atom}  
\begin{lem} \label{lemma-chambers-atoms}
Let $E$ be an $\alpha_c$-atom. Using the notation of Lemma \ref{lem-deformation-atoms} 
for the action of $\Aut(E)$ on $\TT^1(E)$, we have

\begin{tabular}{l l l}
{\rm $\bullet \,\alpha_c=\first$:}  		&$I^+ = (s_0, s_1)$,  		&$I^- = (0)$. \\
{\rm $\bullet \, \alpha_c = \second$:} 	& $I^+ = (s_0, s_1, s_2)$, 	&$I^- = (0)$.\\
{\rm $\bullet \, \alpha_c = \third$:} 	&$I^+ = (s_0,s_1, s_2, s_3)$,	& $I^- = (c)$.
\end{tabular}
\end{lem}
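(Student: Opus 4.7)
The plan is to reduce everything to an explicit monomial calculation, after replacing the character $\chi_{\delta - \psi}$ with the much simpler character $\chi_{\star}$ from Definition \ref{defn-char}. Since $\Aut(E) \simeq \GG_m$ is connected, Corollary \ref{lemma-vgit-identity} applies, and by Proposition \ref{prop-character-comparison} we have $\chi_{\delta - \psi} = \chi_{\star}^{N}$ for a positive integer $N$; raising a character to a positive power manifestly does not change the VGIT ideals, so it suffices to compute $I^{\pm}_{\chi_{\star}}$. For a single $\alpha_c$-atom, $\chi_{\star}$ is simply the identity character $t \mapsto t$ on $\GG_m$.

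Next I would observe that $A = \CC[\TT^1(E)]$ is a polynomial ring in the weight-homogeneous coordinates supplied by Lemma \ref{lem-deformation-atoms}: weights $(-6,-4)$ in $s_0,s_1$ for $\alpha_c = \first$; weights $(-4,-3,-2)$ in $s_0,s_1,s_2$ for $\alpha_c = \second$; and weights $(1,-10,-8,-6,-4)$ in $(c, s_0, s_1, s_2, s_3)$ for $\alpha_c = \third$. Consequently $A$ decomposes as a direct sum of weight spaces $A_n$, and each $A_n$ is spanned by monomials of $\chi_{\star}$-weight equal to $-n$. So both ideals $I^{+}$, $I^{-}$ are monomial ideals, and it suffices to identify which monomials carry negative, respectively positive, weight.

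Finally, inspecting weights of monomials finishes the lemma. For $\alpha_c \in \{\first, \second\}$ every coordinate has strictly negative weight, so every non-constant monomial has negative weight and is divisible by some $s_i$; this gives $I^{-} = (0)$ and $I^{+} = (s_0, s_1)$, respectively $(s_0, s_1, s_2)$. For $\alpha_c = \third$, a monomial $c^{a} s_0^{b_0} s_1^{b_1} s_2^{b_2} s_3^{b_3}$ has weight $a - 10 b_0 - 8 b_1 - 6 b_2 - 4 b_3$; positive weight forces $a \geq 1$, so $I^{-} = (c)$, while negative weight forces at least one $b_i \geq 1$, so $I^{+} = (s_0, s_1, s_2, s_3)$.

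There is no real obstacle here: once the characters have been normalized via Proposition \ref{prop-character-comparison}, the result is a direct monomial inspection. The only bookkeeping point worth flagging is the sign convention in Definition \ref{D:VGIT} ($A_n$ consists of semi-invariants of $\chi$-weight $-n$), which is the reason the positive-weight coordinate $c$ lands in $I^{-}$ and the negative-weight coordinates $s_i$ land in $I^{+}$.
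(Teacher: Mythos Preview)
Your proof is correct and follows essentially the same approach as the paper: the paper's proof is the single sentence ``This is a direct computation from the definitions. The $I^+$ (resp., $I^-$) ideal is generated by all semi-invariants of negative (resp., positive) weight,'' and your argument is precisely the spelled-out version of that computation, including the reduction from $\chi_{\delta-\psi}$ to $\chi_\star$ (which the paper has already established in Proposition~\ref{prop-character-comparison} just before this lemma) and the monomial weight inspection.
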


\begin{proof}  This is a direct computation from the definitions.  The $I^+$ (resp., $I^-$) ideal is 
generated by all semi-invariants of negative (resp., positive) weight.
\end{proof}

\subsubsection{The case of a $\secondf$-link.}
We handle the special case when $C$ has one nodally-attached $\secondf$-link, i.e., $C$ is a $\secondf$-closed curve of type $A$ with $r=1$ and $s=0$. Using Proposition \ref{prop-first-order-action}, we have 
$$
\Aut(C)^{\circ} = \Aut(L_1) \qquad
\TT^1(C) =  \TT^1(K) \oplus \TT^1(L_1)
$$
with coordinates $t_1, \ldots, t_\ell$ on $\Aut(L_1)$ and coordinates
$\bs_j = (s_{j,0}, s_{j,1}, s_{j,2})$ ($j =1, \ldots, \ell$), 
$n_j$ ($j=0, \ldots, \ell$) on $\TT^1(L_1)$ so that the action of $\Aut(C, \pn)^{\circ}$ on $\TT^1(L_1)$ is given by 
$$s_{j,k} \mapsto t_j^{k-4} s_{j,k}, \quad 
	n_0 \mapsto t_1 n_0, \quad n_\ell \mapsto t_\ell n_\ell,  \quad 
	n_{j} \mapsto t_j t_{j+1}n_{j}  \text{ for $j \neq 0, \ell$} \, .$$

 \begin{lem} 
 \label{lemma-second-link}
  With the above notation, the vanishing loci of $I^+$ and $I^-$ are
 $$
 	V(I^+) =  \bigcup_{j=1}^\ell V( \bs_j) \qquad \qquad
 	V(I^-) =  \bigcup_{\mu \ge 1} \bigcup_{\nu = 0}^{\ell-2\mu+1} V_{\mu, \nu}
$$ 
where $V_{\mu, \nu} = V(n_{\nu} , 
\bs_{\nu+2},  
\ldots , \bs_{\nu+2\mu-2} ,
n_{\nu+2\mu-1})$.
\end{lem}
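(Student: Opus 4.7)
The plan is to reduce the computation to a combinatorial linear inequality problem via the Hilbert--Mumford criterion, and then to verify the description of $V(I^+)$ and the inclusion $V_{\mu,\nu} \subseteq V(I^-)$ directly, while the reverse inclusion for $V(I^-)$ will require Farkas' lemma.

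\emph{Reduction.} By Proposition~\ref{prop-character-comparison} the character $\chi_{\delta-\psi}$ is a positive power of $\chi_\star$, and by Corollary~\ref{lemma-vgit-identity} the VGIT chambers may be computed on the identity component $\Aut(L_1)^\circ \simeq \GG_m^\ell$. Writing a one-parameter subgroup as $\lambda(t) = (t^{a_1}, \ldots, t^{a_\ell})$, the weights from Proposition~\ref{prop-first-order-action} give that $s_{j,k}$ has weight $(k-4)\,a_j$ and $n_j$ has weight $a_j + a_{j+1}$ (with the convention $a_0 = a_{\ell+1} := 0$), and $\langle \chi_\star, \lambda \rangle = \sum_j a_j$. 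Proposition~\ref{prop-hilbert-mumford} then identifies $V(I^+)$ (resp.\ $V(I^-)$) as the locus of $x$ for which there exists $(a_j) \in \QQ^\ell$ with $\sum a_j > 0$ (resp.\ $< 0$) satisfying $a_j \le 0$ whenever $\bs_j(x) \ne 0$ and $a_j + a_{j+1} \ge 0$ whenever $n_j(x) \ne 0$.

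\emph{$V(I^+)$ and $V_{\mu,\nu} \subseteq V(I^-)$.} For $V(I^+) = \bigcup_j V(\bs_j)$: if $\bs_{j_0}(x) = 0$, the one-parameter subgroup with $a_{j_0} = 1$ and other $a_j = 0$ is a destabiliser; conversely, $\sum a_j > 0$ forces some $a_{j_0} > 0$, and then the first family of inequalities forces $\bs_{j_0}(x) = 0$. For $V_{\mu,\nu} \subseteq V(I^-)$ I exhibit the explicit alternating destabiliser $a_{\nu + 2i - 1} = -1$ for $1 \le i \le \mu$, $a_{\nu + 2i} = +1$ for $1 \le i \le \mu - 1$, and other $a_j = 0$: the sum is $-1$, the positive-weight positions $\nu + 2, \nu + 4, \ldots, \nu + 2\mu - 2$ all lie in $S := \{j : \bs_j(x) = 0\}$ by the definition of $V_{\mu,\nu}$, and the two-step sums $a_j + a_{j+1}$ telescope to $0$ throughout $[\nu + 1, \nu + 2\mu - 2]$ and equal $-1$ only at the endpoint indices $j = \nu$ and $j = \nu + 2\mu - 1$, both of which lie in $N := \{j : n_j(x) = 0\}$ by hypothesis.

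\emph{The reverse inclusion $V(I^-) \subseteq \bigcup V_{\mu,\nu}$, the main step.} I apply Farkas' lemma to translate the existence of a destabiliser into the infeasibility of the dual system requiring nonnegative $(\tilde\beta_j)_{j=0}^\ell$ with $\tilde\beta_j = 0$ for $j \in N$, $\tilde\beta_{i-1} + \tilde\beta_i \ge 1$ for $i = 1, \ldots, \ell$, and equality forced when $i \in S$. The dual decouples across the intervals separated by the elements of $N$. On an internal interval between $j_1 < j_2$ in $N$ with both endpoints pinned to zero, the boundary condition $\tilde\beta_{j_1 + 1} \ge 1$ combined with the equality constraints coming from $S$-positions of even offset from $j_1$ force the alternating pattern $\tilde\beta_{j_1 + 2k - 1} = 1, \tilde\beta_{j_1 + 2k} = 0$; this closes up compatibly with the terminal constraint $\tilde\beta_{j_2 - 1} \ge 1$ precisely when $j_2 - j_1$ is even, or when $j_2 - j_1$ is odd and some even-offset position $j_1 + 2k$ lies outside $S$, permitting a parity-flipping break where one sets $\tilde\beta_{j_1 + 2k} = 1$ rather than $0$. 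Simultaneous failure of both alternatives is exactly the defining vanishing pattern of $V_{\mu,\nu}$ with $\nu = j_1$ and $2\mu - 1 = j_2 - j_1$. The boundary intervals before the first and after the last element of $N$ impose no obstruction, since the unpinned endpoint $\tilde\beta_0$ or $\tilde\beta_\ell$ can be chosen to match either parity imposed by $S$-equalities at the other end. The main obstacle is the combinatorial bookkeeping in the dual feasibility analysis: verifying that the alternating propagation stays nonnegative through every $S$-equality and that a break is available precisely when no $V_{\mu,\nu}$ pattern applies; once these are established, Farkas turns the absence of such patterns into the nonexistence of a destabiliser.
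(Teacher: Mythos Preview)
Your proof is correct and takes a genuinely different route from the paper. Both approaches share the reduction via the Hilbert--Mumford criterion and the same explicit alternating destabiliser for the inclusion $V_{\mu,\nu} \subseteq V(I^-)$. For the reverse inclusion $V(I^-) \subseteq \bigcup V_{\mu,\nu}$, however, the paper argues by direct induction on $\ell$: it splits on the sign of $\lambda_\ell$ and, when $\lambda_\ell < 0$, peels off the last one or two factors to reduce to a shorter link. Your Farkas dualisation is more structural: pinning the dual variables $\tilde\beta_j$ to zero at the elements of $N$ decouples the problem into independent intervals, and the forced alternating propagation inside each interval makes the $V_{\mu,\nu}$ pattern emerge naturally as the unique obstruction to feasibility (an odd gap between consecutive $N$-elements with every even-offset position in $S$). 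This buys you a transparent combinatorial picture at the cost of invoking LP duality; the paper's induction is more elementary but involves somewhat delicate case-tracking --- in particular the case $\lambda_\ell \ge 0$, where the restriction of $\lambda$ to the first $\ell-1$ coordinates need not itself have existing limit in the $(\ell-1)$-system at $n_{\ell-1}$, is treated quite tersely. Either approach is valid; yours is arguably the cleaner of the two.
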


\begin{remark*}  For instance, $V_{1,\nu} = V(n_{\nu}, 
n_{\nu+1})$ and $V_{2, \nu} = V(n_{\nu},
\bs_{\nu+2} , 
n_{\nu+3})$.
\end{remark*}

\begin{proof}
We will use the Hilbert-Mumford criterion of Proposition \ref{prop-hilbert-mumford}.  
For the $V(I^+)$ case, suppose  $x \in V(\bs_j)$ for some $j$.  
Set $\lambda = (\lambda_i)\co \GG_m \to \GG_m^\ell \simeq \prod_{i=1}^\ell \Aut(E_i)$ 
where $\lambda_i = 1$ for $i \neq j$ and $\lambda_j =\id$.  Then 
$\langle \chi_{\star}, \lambda \rangle = 1$ and $\lim_{t \to 0} \lambda(t) \cdot x$ exists 
so $x \in V(I^+)$.  Conversely, let $\lambda = (\lambda_i)$ be a one-parameter 
subgroup with $\langle \chi_{\star}, \lambda \rangle =\sum_i \lambda_i > 0$ such that $\lim_{t \to 0} \lambda(t) \cdot x$ exists.  
Then for some $j$, we have $\lambda_j > 0$ which implies that $\bs_j(x)  = 0$.

For the $V(I^-)$ case, the inclusion $\supseteq$ is easy: suppose that $x \in V_{\mu, \nu}$ 
for $\mu \ge 1$ and $\nu = 0, \ldots, \ell-2\mu+1$.  Set
$$
\lambda = \big( \underbrace{0, \ldots, 0}_{\nu} , \underbrace{-1, 1, -1, \ldots, 1, -1}_{2\mu-1} , 
\underbrace{0, \ldots, 0}_{\ell-2\mu-\nu+1} \big)
$$
Then $\langle \chi_{\star}, \lambda \rangle =\sum_i \lambda_i = -1$ and $\lim_{t \to 0} \lambda(t) \cdot x$ exists 
so $x \in V(I^-)$.  For the $\subseteq$ inclusion, we will use induction on $\ell$.  
If $\ell=1$, then $V(I^-) = V(n_0,  n_1)$.  For $\ell > 1$, 
suppose $x \in V(I^-)$ and $\lambda = (\lambda_i)\co \GG_m \to \GG_m^\ell$ is a 
one-parameter subgroup with $\sum_{i=1}^\ell \lambda_i < 0$ such that 
$\lim_{t \to 0} \lambda(t) \cdot x$ exists.  If $\lambda_\ell \ge 0$, then 
$\sum_{i=1}^{\ell-1} \lambda_\ell < 0$ so by the induction hypothesis 
$x \in V_{\mu, \nu}$ for some $\mu \ge 1$ and $\nu = 0, \ldots, \ell-2\mu$. 
 If $\lambda_\ell < 0$, then we immediately conclude that $n_\ell(x) =  
 0$.  If $\lambda_{\ell-1} + \lambda_\ell < 0$, then $n_{\ell-1}(x) = 0$ so $x \in V_{1, {\ell-1}}$.  
 If $\lambda_{\ell-1} + \lambda_\ell \ge 0$, then $\lambda_{\ell-1} \ge 0$ so $\bs_{\ell-1}(x) = 0$.  
 Furthermore, $\sum_{i=1}^{\ell-2} \lambda_i < 0$ 
 so by applying the induction hypothesis and restricting to the locus $V(n_{\ell-2}, \bs_{\ell-1},  n_{\ell-1}, \bs_{\ell},   n_{\ell})$,  
we can conclude either:  (1) $x \in V_{\mu, \nu}$ for $\mu \ge 1$ and 
$\nu = 0, \ldots, \ell-2\mu-1$, or (2) $x \in V(n_{\ell-\mu-4} ,  
\bs_{\ell-\mu-2} ,  
\ldots , \bs_{\ell-3})$ 
for some $\mu \ge 1$.  In case (2), since  $\bs_{\ell-1}(x) =   
n_\ell(x) = 0 $, we have $x \in V_{\mu+1,\ell-\mu-4}$.
 \end{proof}
 
 \begin{remark*} The chamber $V(I^+)$ is the closed locus in the deformation 
 space consisting of curves with a tacnode while $V(I^-)$ consists of curves 
 containing an elliptic chain.
 \end{remark*}
 
 \subsubsection{The case of a $\thirdf$-link.}
We now handle the special case when $C$ has one nodally-attached $\thirdf$-link of length $\ell$, i.e., $C$ is a $\thirdf$-closed curve of combinatorial type $A$ with $r=1$.
Using Proposition \ref{prop-first-order-action}, we have 
$$
\Aut(C)^{\circ} = \Aut(K') \times \Aut(L_1) \qquad
\TT^1(C) =  \TT^1(K') \oplus \TT^1(L_1)
$$
with coordinates $t_1, \ldots, t_\ell$ on $\Aut(L_1)$ and  coordinates
$\br_j = (r_{j,0}, r_{j,1}, r_{j,2}), \br'_j = (r'_{j,0}, r'_{j,1}, r'_{j,2})$, $n_j$ ($j=0, \ldots, \ell-1$),
$\bs = (s_0, s_1, s_2, s_3)$, $c$  on $\TT^1(L_1)$, 
so that the action of $\Aut(L_1)$ on $\TT^1(L_1)$ is given by
 $$\begin{array}{llllllllllllll}
r_{j,k} &  \mapsto & t_{j}^{k-4} r_{j,k}, &r'_{j,k} &  \mapsto & t_{j}^{4-k} r'_{j,k}, & 
s_{k}  &\mapsto&  t_{\ell}^{2k-10} s_{k}\\
c &\mapsto & t_{\ell} c &
n_{0} & \mapsto & t_{1} n_{0}, &
n_{j}  &\mapsto  &t_{j}^{-1} t_{j+1} n_{j} \, (0 < j < \ell).
\end{array}
$$
The character $\chi_{\star}$ is given by
$$
\Aut(C)^{\circ} \simeq \GG_m^\ell  \to \GG_m, \quad (t_1, \ldots, t_\ell)  \mapsto t_\ell
$$

 \begin{lem} 
 \label{lemma-third-link}
  With the above notation, the vanishing loci of $I^+$ and $I^-$ are
 $$
 	V(I^+) =  V( \bs) \qquad \qquad
 	V(I^-) = \bigcup_{j=0}^{\ell-1} V(n_j, \br'_{j+1}, \br'_{j+2}, \ldots, \br'_{\ell-1}, c)
$$ 
\end{lem}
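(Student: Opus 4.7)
The plan is to mirror the proof of Lemma \ref{lemma-second-link}, applying the affine Hilbert-Mumford criterion (Proposition \ref{prop-hilbert-mumford}) to the character $\chi_\star$, which in this setting satisfies $\langle \chi_\star, \lambda \rangle = \lambda_\ell$ for a one-parameter subgroup $\lambda = (\lambda_1, \ldots, \lambda_\ell)$ of $\Aut(L_1) \simeq \GG_m^\ell$. This simplifies the analysis considerably compared to the $\secondf$-link case, since the sign of the pairing depends only on the last coordinate of $\lambda$.

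For the computation of $V(I^+)$, I would first verify the inclusion $V(\bs) \subseteq V(I^+)$ by testing the one-parameter subgroup $\lambda = (0, \ldots, 0, 1)$: the only coordinates with strictly negative weight under this action are the components of $\bs$ (with weights $2k-10<0$), so vanishing of $\bs$ is exactly what is needed for the limit to exist. The reverse inclusion is immediate: for any $\lambda$ with $\lambda_\ell > 0$, each $s_k$ has weight $(2k-10)\lambda_\ell < 0$, forcing $\bs(x)=0$.

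The computation of $V(I^-)$ is the main content. The easy direction ($\supseteq$) proceeds by exhibiting, for each $j \in \{0, 1, \ldots, \ell-1\}$, the one-parameter subgroup
\[
\lambda \;=\; \bigl(\underbrace{0, \ldots, 0}_{j}, \underbrace{-1, -1, \ldots, -1}_{\ell-j}\bigr),
\]
and checking by a direct weight computation that the coordinates of strictly negative weight under this $\lambda$ are precisely $c$, $n_j$, and $\br'_{j+1}, \ldots, \br'_{\ell-1}$ (note that $\br_i$ and $\bs$ all acquire nonnegative weight, and the nodes $n_i$ for $i>j$ have weight $0$ since two consecutive $\lambda_i$'s are equal). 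For the reverse inclusion $\subseteq$, given $x \in V(I^-)$ witnessed by $\lambda$ with $\lambda_\ell < 0$, I would set $\lambda_0 := 0$ and choose $j$ to be the largest index in $\{0, 1, \ldots, \ell-1\}$ at which the sequence $\lambda_0, \lambda_1, \ldots, \lambda_\ell$ strictly decreases, i.e., $\lambda_{j+1} < \lambda_j$. Such a $j$ exists because a non-decreasing sequence starting at $0$ cannot end at a negative value. By maximality, the tail $\lambda_{j+1} \leq \lambda_{j+2} \leq \cdots \leq \lambda_\ell$ is non-decreasing, hence every $\lambda_i$ with $i > j$ is at most $\lambda_\ell < 0$. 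This negativity of $\lambda_i$ for $i>j$ then forces $\br'_i(x) = 0$ (weight $(4-k)\lambda_i < 0$), while $c(x) = 0$ follows from $\lambda_\ell < 0$, and $n_j(x) = 0$ follows from the strict descent at $j$ (giving negative weight $-\lambda_j + \lambda_{j+1}$ when $j \geq 1$, or $\lambda_1 < 0$ when $j=0$).

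The only genuine subtlety is verifying that the index $j$ chosen from the descent argument hits exactly the correct stratum of the claimed union and no additional vanishing is spuriously imposed; this amounts to checking that the coordinates $\br_i$ (for $i$ where $\lambda_i > 0$) and $n_i$ (for $i \neq j$ where no descent occurs) do not inadvertently need to vanish in a way that would make the argument produce a smaller ideal. This is purely combinatorial bookkeeping, made clean by the fact that $\br_i$ and $\br'_i$ carry opposite weights so at most one can be constrained by a given sign of $\lambda_i$, and the "last descent" choice guarantees no incompatible constraints arise.
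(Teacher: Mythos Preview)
Your proposal is correct and follows the same Hilbert--Mumford strategy as the paper. The only organizational difference is in the $\subseteq$ direction for $V(I^-)$: the paper argues by downward induction from $i=\ell-1$, checking at each step whether $n_i(x)=0$ and otherwise deducing $\lambda_i\le\lambda_{i+1}$ (hence $\br'_i(x)=0$), while you identify the correct $j$ in one shot as the last strict descent in the augmented sequence $(\lambda_0,\lambda_1,\ldots,\lambda_\ell)$ with $\lambda_0:=0$. Both arguments are equivalent and equally short; your formulation is perhaps marginally cleaner since it avoids the implicit base case at $j=0$.
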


\begin{remark*} For instance, if $\ell=2$, $V(I^-) = V(n_1, c) \cup V(n_0, \br'_1, c)$.
\end{remark*}

\begin{proof} The first equality is obvious.  We use the Hilbert-Mumford criterion to verify the second.
Suppose $x \in V(n_j, \br'_{j+1}, \ldots, \br'_{\ell-1}, c)$ for some $j=0, \ldots, \ell-1$.  If we set
$$
\lambda = \big( \underbrace{0, \ldots, 0}_{j} , \underbrace{-1, -1, \ldots,  -1}_{\ell-j} \big)
$$
then $\langle \chi_{\star}, \lambda \rangle = -1 < 0$ and $\lim_{t \to 0} \lambda(t) \cdot x$ exists.  Therefore, $x \in V(I^-)$.  Conversely, suppose $x \in V(I^-)$ and $\lambda = (\lambda_i) \co \GG_m \to \GG_m^\ell$ is a one-parameter subgroup with $\langle \chi_{\star}, \lambda \rangle = \lambda_\ell < 0$ such that $\lim_{t \to 0} \lambda(t) \cdot x$ exists.  Clearly, we may assume that $\lambda_\ell = -1$.  First, it is clear that $c(x) = 0$.  If $n_{\ell-1}(x) = 0$, then $x \in V(n_{\ell-1}, c)$.  Otherwise, as the limit exists, $\lambda_{\ell-1} \le -1$ so that $\br'_{\ell-1}(x) =0$.  If $n_{\ell-2}(x) = 0$, then $x \in V(n_{\ell-2}, \br'_{\ell-1}, c)$.  Continuing by induction, we see that there must be some $j=0, \ldots, \ell-1$ with $x \in V(n_j, \br'_{j+1}, \br'_{j+2}, \ldots, \br'_{\ell-1}, c)$ which establishes the lemma.
\end{proof}

\subsubsection{The general case} We are now ready thanks to Lemmas \ref{lemma-second-link} and \ref{lemma-third-link} as well as 
Corollaries \ref{lemma-vgit-product} and \ref{lemma-vgit-closed} to establish Proposition \ref{prop-vgit-ideals} in full generality.

\noindent {\it Proof of Proposition \ref{prop-vgit-ideals}.}
Let $(C, \pn)$ be an $\alpha_c$-closed curve and consider the action of 
$\Aut(C, \pn)$ on $\TT^1(C, \pn)$ described in Proposition \ref{prop-first-order-action}.   
We split the proof into the types of $\alpha_c$-closed curves according to 
Definition \ref{defn-canonical}.

\noindent
{\it $\bullet \, \alpha_c = \first$ of Type A.}  By using 
Corollary \ref{lemma-vgit-product}, one may assume that $r=1$ in which case the statement is clear.

\noindent
{\it $\bullet \,  \alpha_c = \first$ of Type B.}  A simple application of 
Proposition \ref{prop-hilbert-mumford} shows that $V(I^+) = (\bs_1, \bs_2)$, and 
$V(I^-) = (n)$.

\noindent
{\it $\bullet \,  \alpha_c = \first$ of Type C.}  This is 
Lemma \ref{lemma-chambers-atoms}.

\noindent
{\it $\bullet \,  \alpha_c = \second$  of Type A.}  By Corollary \ref{lemma-vgit-product}, 
it is enough to consider the case when either $r = 1, s=0$ or $r=0, s=1$.   
The case of $r=1$ and $s=0$ is the example worked out in Lemma \ref{lemma-second-link}. 
If $r=1, s=0$, 
the action of $\Aut(C, \pn)^{\circ}$ on $\Def(C, \pn)$ is same as the action 
given in Lemma \ref{lemma-second-link} restricted to the closed subscheme $V(n_{\ell}) = 0$.  
This case therefore follows from Corollary \ref{lemma-vgit-closed} and Lemma \ref{lemma-second-link}.

\noindent {\it $\bullet \, \alpha_c = \second$ of Type B.} 
The action of $\Aut(C,\pn)^{\circ}$ on $\TT^1(C, \pn)$ is the same action as in 
Lemma \ref{lemma-second-link} restricted to the closed subscheme $V(n_0,n_{r+1}) = 0$ 
so this case follows from 
Corollary \ref{lemma-vgit-closed} and Lemma \ref{lemma-second-link}.

\noindent {\it $\bullet \,  \alpha_c = \second$  of Type C.} 
This follows from an argument similar to the proof of Lemma \ref{lemma-second-link}.

\noindent
{\it $\bullet \,  \alpha_c = \third$  of Type A.}  By Corollary \ref{lemma-vgit-product}, 
it is enough to consider the case when  $r = 1$ which
 is the example worked out in Lemma \ref{lemma-third-link}. 

\noindent {\it $\bullet \, \alpha_c = \third$ of Type B.} 
The action here is the same action as in Lemma \ref{lemma-third-link} restricted to the closed subscheme $V(n_0)$ so 
 this case follows from Corollary \ref{lemma-vgit-closed} and Lemma \ref{lemma-third-link}.

\noindent {\it $\bullet \,  \alpha_c = \third$  of Type C.} This case can be handled by an argument similar to the proof of Lemma \ref{lemma-third-link}.

\epf

\begin{proof}[Proof of Theorem \ref{theorem-etale-VGIT}]
Proposition \ref{prop-vgit-ideals} implies that $I_{\cZ^+} = I^+ \hat{A}$ and $I_{\cZ^-} = I^- \hat{A}$ so we 
may apply Proposition \ref{prop-ideals} to conclude the statement of the theorem.
\end{proof}


\section{Existence of good moduli spaces}
\label{S:existence}
In this section, we prove that the algebraic stacks $\SM_{g,n}(\alpha)$ possess good moduli spaces 
(Theorem \ref{T:Existence}). In Section \ref{S:general-existence}, we prove three general 
existence results for good moduli spaces. The first of these,  Theorem \ref{T:keel-mori}, gives 
conditions under which one may use a local quotient presentation to construct a 
good moduli space. As we explain below, this may be considered as an analog of the Keel-Mori 
theorem \cite{keel-mori} for algebraic stacks, but in practice the hypotheses of the theorem are much harder to verify 
than those of the Keel-Mori theorem. Our second existence result, Theorem \ref{T:vgit-existence}, 
gives one situation in which the hypotheses of Theorem \ref{T:keel-mori} are satisfied. It says that 
if $\cX$ is an algebraic stack and $\cX^+ \hookrightarrow \cX \hookleftarrow \cX^-$ is a pair of open immersions 
locally cut out by VGIT, then $\cX$ admits a good moduli space if $\cX^+$, 
$\cX \smallsetminus \cX^+$, and $\cX \smallsetminus \cX^-$ do. The third existence result, 
Proposition \ref{P:finite-existence}, proves that one can check existence of a good moduli space 
after passing to a finite cover.  These results pave the way for the argument in 
Section \ref{S:Existence}  which proves the existence of good moduli spaces for
$\SM_{g,n}(\alpha)$ inductively.

\subsection{General existence results}  \label{S:general-existence}
In this section, we prove the following three results.  
Recall the definition of a local quotient presentation from Definition \ref{definition-etale-presentations}.  
Note that if an algebraic stack $\cX$ of finite type over $\Spec \CC$ admits local quotient presentations around every closed point, 
then $\cX$ necessarily has affine diagonal.

\begin{theorem} \label{T:keel-mori} \label{T:general-existence}
 Let $\cX$ be an algebraic stack of finite type over $\Spec \CC$.  Suppose that:
\begin{enumerate}
\item For every closed point $x \in \cX$, there exists a local quotient presentation 
$f \co \cW \to \cX$ around $x$ such that:
\begin{enumerate}
\item $f$ is stabilizer preserving at closed points of $\cW$.
\item  $f$ sends closed points to closed points.
\end{enumerate}
\item For any $\CC$-point $x \in \cX$, the closed substack $\overline{ \{x\}}$ admits a good moduli space.
\end{enumerate}
 Then $\cX$ admits a good moduli space. 
 \end{theorem}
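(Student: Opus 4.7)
The plan is to construct the good moduli space $M$ as an algebraic space by patching together the good moduli spaces of local quotient presentations. For each closed point $x \in \cX(\CC)$, I would fix a local quotient presentation $f_x \co \cW_x = [\Spec A_x / G_x] \to \cX$ satisfying hypotheses (a) and (b), and consider the canonical good quotient map $\pi_x \co \cW_x \to U_x := \Spec A_x^{G_x}$. The image $f_x(\cW_x) \subseteq \cX$ is an open substack containing $x$, and conditions (a), (b) are precisely what ensures that $U_x$ behaves like an étale chart on the putative $M$: closed points and stabilizers are matched under $f_x$, so distinct closed points of $\cW_x$ parametrize distinct closed points of $\cX$, just as closed points of $U_x$ correspond bijectively to closed points of $\cW_x$ under $\pi_x$.

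The heart of the argument is the gluing step. For any two charts $f_{x_i} \co \cW_{x_i} \to \cX$ ($i=1,2$), I would show that the fiber product $\cV := \cW_{x_1} \times_{\cX} \cW_{x_2}$ admits a good moduli space $V$ equipped with étale projections $V \to U_{x_i}$. To do this, one first checks that each projection $\cV \to \cW_{x_i}$ is itself a representable étale morphism inheriting properties (a), (b) from $f_{x_i}$; the existence of a good moduli space for $\cV$ should then follow from a Luna-slice-type descent argument using the good moduli space of $\cW_{x_i}$. The étaleness of $V \to U_{x_i}$ is then a consequence of Luna's fundamental lemma applied to the diagram $\cV \to \cW_{x_i} \to U_{x_i}$, once one knows stabilizers and orbit closures are compatible.

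With the gluing data in place, the $\{U_x\}$ together with the spaces $V_{x_1,x_2}$ form an étale equivalence relation in algebraic spaces (the cocycle condition on triple overlaps will be verified by repeating the same fiber-product analysis for $\cW_{x_1}\times_{\cX}\cW_{x_2}\times_{\cX}\cW_{x_3}$). Taking the quotient yields an algebraic space $M$ together with a morphism $\cX \to M$, and this is checked to be a good moduli space by pulling back to each $\cW_x$, where exactness of pushforward and the isomorphism $\oh_{U_x} \iso (\pi_x)_*\oh_{\cW_x}$ hold by construction. Hypothesis (2) is invoked at this stage to guarantee that every (not necessarily closed) point of $\cX$ is sent to a well-defined point of $M$: a non-closed point $y$ admits a closed specialization in $\overline{\{y\}}$, and the assumption that $\overline{\{y\}}$ has a good moduli space (necessarily with a unique image point corresponding to the unique closed orbit in the closure) supplies the required compatibility.

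The main obstacle I anticipate is the existence of a good moduli space for the fiber product $\cV = \cW_{x_1}\times_{\cX}\cW_{x_2}$ together with verification that its projections are étale. One must rule out pathologies such as a closed point of $\cV$ projecting to a non-closed point in some $\cW_{x_i}$, or the stabilizer at a closed point of $\cV$ failing to match the stabilizer of its image—both of which would destroy the descent. Conditions (a) and (b) in the hypothesis are exactly calibrated to prevent these pathologies, but propagating them through the fiber product requires a delicate orbit-closure analysis which is the technical core of the proof.
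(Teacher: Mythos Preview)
Your overall architecture matches the paper's: build an \'etale groupoid in algebraic spaces out of the good moduli spaces of the local charts and their fiber products, then take the quotient. The paper packages this as Proposition~\ref{prop-existence2}, reducing Theorem~\ref{T:general-existence} to the statement that a single affine, surjective, strongly \'etale morphism $f\co\cW\to\cX$ (obtained as a finite disjoint union of your $f_x$) suffices.

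There is, however, a genuine misidentification of where hypothesis~(2) enters. You invoke it at the end, to ``guarantee that every (not necessarily closed) point of $\cX$ is sent to a well-defined point of $M$.'' That step is actually free: once the algebraic space $M$ and the morphism $\cX\to M$ are constructed by descent, the map is defined on all of $\cX$ automatically. Hypothesis~(2) is needed earlier, precisely at the place you flag as the ``main obstacle'': showing that the projections $\cV=\cW_{x_1}\times_\cX\cW_{x_2}\to\cW_{x_i}$ send closed points to closed points and are stabilizer preserving at closed points. This is \emph{not} inherited formally from properties (a),(b) of the $f_{x_i}$; see Example~\ref{example-nodal-cubic}, where an $f$ satisfying (a) and (b) exists but the fiber-product projections fail to send closed points to closed points. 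The paper's Lemma~\ref{lem-strongly-etale-bc} shows that strongly \'etale morphisms are stable under arbitrary base change \emph{provided} the target satisfies hypothesis~(2): one reduces to the closure $\overline{\{y\}}$ of the image point, where the existence of a good moduli space forces $f$ to be an isomorphism over that closure, whence the base-changed map is trivially strongly \'etale there.

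Two smaller points. First, the existence of a good moduli space for $\cV$ is easier than you suggest: since each $f_{x_i}$ is affine, the projection $\cV\to\cW_{x_i}$ is affine, and an affine morphism to a stack with a good moduli space immediately yields one for the source. The content lies entirely in the \'etaleness of $V\to U_{x_i}$, which is Luna's fundamental lemma (Proposition~\ref{etale-preserving}) once strong \'etaleness is in hand. Second, you should not forget to check that $V\to U_{x_1}\times U_{x_2}$ is a monomorphism; the paper does this by a direct closed-point analysis using that $f$ is stabilizer preserving.
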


\begin{theorem}\label{T:vgit-existence}
Let $\cX$ be an algebraic stack of finite type over $\Spec \CC$, and 
let $\cL$ be a line bundle on $\cX$. 
Let $\cX^+, \cX^- \subset \cX$ be open substacks, and 
let $\cZ^+=\cX \smallsetminus \cX^+$ and $\cZ^-=\cX \smallsetminus \cX^-$ 
be their reduced complements.  Suppose that
\begin{enumerate}
\item $\cX^+$, $\cZ^+$, $\cZ^-$ admit good moduli spaces.
\item For all closed points $x \in \cZ^+ \cap \cZ^-$, there exists 
a local quotient presentation $\cW \rightarrow \cX$ around $x$ 
and a Cartesian diagram
\begin{equation} 
\begin{split}
\label{diagram-etale-pres}
\xymatrix{
\cW_{\cL}^{+}  \ar[d] \ar@{^(->}[r]			&  \cW \ar[d] & \cW_{\cL}^{-}\ar@{_(->}[l] \ar[d]\\
\cX^+ \ar@{^(->}[r] 				& \cX & \cX^-  \ar@{_(->}[l] 	
}
\end{split}
\end{equation}
where $\cW_{\cL}^+, \cW_{\cL}^-$ are the VGIT chambers of $\cW$ with respect to $\cL$.
\end{enumerate}
Then there exist good moduli spaces $\cX \to X$ and 
$\cX^- \to X^-$ such that $X^+ \to X$ and $X^- \to X$ are 
proper and surjective. 
In particular, if $X^+$ is proper over $\Spec \CC$, 
then $X$ and $X^-$ are also proper over $\Spec \CC$.
\end{theorem}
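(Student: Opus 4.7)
The plan is to apply Theorem~\ref{T:general-existence} to $\cX$ in order to construct the good moduli space $\cX \to X$, and then to deduce the existence of $\cX^- \to X^-$ together with the properness of $X^\pm \to X$ from the local VGIT picture supplied by hypothesis~(2). Hypothesis~(2) of Theorem~\ref{T:general-existence} is easy to verify: for any $\CC$-point $x \in \cX$, the closure $\overline{\{x\}}$ is contained in one of $\cX^+$, $\cZ^+$, or $\cZ^-$ (according as $x$ lies in $\cX^+$, $\cZ^+$, or $\cZ^-$), each of which admits a good moduli space by our hypothesis~(1); a closed substack of a stack admitting a good moduli space inherits one.

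For hypothesis~(1) of Theorem~\ref{T:general-existence}, I would produce a local quotient presentation around each closed point $x \in \cX$ by cases. If $x \in \cX \setminus (\cZ^+ \cap \cZ^-)$, then $x$ is a closed point of one of the stacks $\cX^+$, $\cZ^+$, or $\cZ^-$, each of which admits a good moduli space. The local structure of good moduli spaces then yields a local quotient presentation of that ambient stack which is stabilizer-preserving at closed points and sends closed points to closed points; composing with the open or closed immersion into $\cX$ gives a local quotient presentation of $\cX$ with the desired properties. For the delicate case $x \in \cZ^+ \cap \cZ^-$, hypothesis~(2) directly supplies a local quotient presentation $f\co \cW = [\Spec A / G_x] \to \cX$ together with the Cartesian diagram~\eqref{diagram-etale-pres}.

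The crux of the argument is verifying that this $f$ satisfies conditions~(1a) and (1b) of Theorem~\ref{T:general-existence}. For a closed point $w \in \cW$, corresponding to a closed $G_x$-orbit in $\Spec A$, I would split into three subcases: $w \in \cW^+_\cL$, $w \in \cW^-_\cL$, or $w$ lies in neither chamber. In the first subcase, the Cartesian diagram identifies $f(w)$ with a closed point of $\cX^+$; to show that $f(w)$ remains closed in $\cX$, I would use the Hilbert--Mumford criterion (Proposition~\ref{prop-hilbert-mumford}), since any specialization of $f(w)$ into $\cZ^+$ would pull back locally to a specialization in $\cW \setminus \cW^+_\cL$ realized by a one-parameter subgroup $\lambda$ with $\langle \chi_\cL, \lambda \rangle > 0$, contradicting $w \in \cW^+_\cL$. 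The second subcase is symmetric, and in the third subcase $f(w) \in \cZ^+ \cap \cZ^-$ is closed in $\cX$ because $\cZ^+$ admits a good moduli space. Stabilizer-preservation propagates from the given preimage $w_0$ of $x$ to all nearby closed points because $f$ is \'etale and affine.

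Having established $\cX \to X$, I would obtain $\cX^- \to X^-$ by running the same argument for $\cX^-$: its closed points lie either in $\cX^+ \cap \cX^-$ (inheriting a local quotient presentation from $\cX^+$) or in $\cZ^+ \setminus \cZ^-$, and in the latter case the restriction of the presentation from hypothesis~(2) to the open subchamber $\cW^-_\cL$ supplies the required data. The morphisms $X^\pm \to X$ are proper and surjective by Proposition~\ref{P:vgit-quotients}: \'etale-locally on $X$ they coincide with the projective GIT morphisms $\Proj \bigoplus_{d \geq 0} A_{\pm d} \to \Spec A^{G_x}$, which are proper and surjective since every closed $G_x$-orbit in $\Spec A$ is a limit of orbits in $X^\pm_\chi$. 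If $X^+$ is proper over $\Spec \CC$, then properness of $X$ and hence of $X^-$ follows from these proper surjections. The main obstacle I anticipate is the third-paragraph analysis of closed points of $\cW$ lying in the VGIT chambers: all the geometric content of the theorem is concentrated in the Hilbert--Mumford argument that forces $f(w)$ to remain closed in $\cX$, and care is required to glue the resulting local data into an honest good moduli space.
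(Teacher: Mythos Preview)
Your proposal has the right overall architecture---apply Theorem~\ref{T:general-existence} to $\cX$, then handle $\cX^-$ and properness---but the core step contains a genuine gap.

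In your third paragraph, for a closed point $w \in \cW$ lying in $\cW^+_\cL$, you assert that ``the Cartesian diagram identifies $f(w)$ with a closed point of $\cX^+$.'' The Cartesian diagram tells you only that $f(w) \in \cX^+$; it says nothing about $f(w)$ being \emph{closed} in $\cX^+$. An \'etale morphism between non-separated stacks need not send closed points to closed points, and this is exactly the condition~(1b) you are trying to verify. The paper spends the bulk of its proof on precisely this point: it first arranges (via Proposition~\ref{etale-preserving}(3), using that $\cZ^\pm$ have good moduli spaces) that $f$ restricted to $f^{-1}(\cZ^+)$ and $f^{-1}(\cZ^-)$ is strongly \'etale, and then---the hard part---shows that after shrinking, $f|_{\cW^+}\co \cW^+ \to \cX^+$ is strongly \'etale. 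This last step requires a delicate argument: one uses Proposition~\ref{etale-preserving}(3) to produce a saturated open $\cU \subset \cW^+$ on which $f$ is strongly \'etale, and then uses the \emph{properness} of $W^+ \to W$ from Proposition~\ref{P:vgit-quotients} to prove that the bad locus $\cW^+ \setminus \cU$ does not accumulate at $w_0$. Your Hilbert--Mumford sketch only addresses the passage from ``closed in $\cX^+$'' to ``closed in $\cX$'' (which is essentially Lemma~\ref{L:closure}(1)), not the prior step of getting ``closed in $\cX^+$.''

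There are two smaller issues. First, in your case $x \in \cX \setminus (\cZ^+ \cap \cZ^-)$, you propose composing a local quotient presentation of $\cZ^+$ (or $\cZ^-$) with the closed immersion into $\cX$; but a closed immersion composed with an \'etale map is not \'etale onto $\cX$, so this does not give a local quotient presentation of $\cX$. In fact Lemma~\ref{L:closure}(2) shows this case is vacuous: a closed point of $\cX$ lies either in $\cX^+ \cap \cX^-$ or in $\cZ^+ \cap \cZ^-$, so you should simply work inside $\cX^+$. Second, your claim that stabilizer-preservation ``propagates'' from $w_0$ to nearby closed points because $f$ is \'etale and affine is not automatic; the paper obtains this (together with the closed-points condition) by shrinking via Proposition~\ref{etale-preserving}(3), which simultaneously requires knowing that good moduli spaces exist on both sides.
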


Recall that an algebraic stack $\cX$ is called a {\it global quotient stack} 
if $\cX \simeq [Y/\GL_n]$, where $Y$ is an algebraic space with an action of $\GL_n$.

\begin{prop} \label{P:finite-existence}
Let $f \co \cX \to \cY$ be a morphism of algebraic stacks of finite type over  $\CC$.  Suppose that:
\begin{enumerate}
\item $f \co \cX \to \cY$ is finite and surjective.
\item There exists a good moduli space $\cX \to X$ with $X$ separated.
\item  $\cY$ is a global quotient stack and admits local quotient presentations.
\end{enumerate}
Then there exists a good moduli space $\cY \to Y$ with $Y$ separated.  
Moreover, if $X$ is proper, so is $Y$.
\end{prop}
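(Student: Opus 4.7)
The plan is to apply Theorem \ref{T:general-existence} to $\cY$, which admits local quotient presentations by hypothesis (3). The theorem requires two conditions: first, around every closed $y \in \cY$, a local quotient presentation that is stabilizer preserving at closed points and sends closed points to closed points; second, that every closure $\overline{\{y\}}$ admits a good moduli space. I will verify both using the finite surjective $f$ to transfer information from $\cX \to X$, where the desired properties hold automatically (since $X$ is a separated good moduli space).

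For the closure condition, given $y \in \cY$ I would lift to a $\CC$-point $x$ of $\cX$ via surjectivity of $f$, so that $f$ restricts to a finite surjective morphism $\overline{\{x\}} \to \overline{\{y\}}$. The closed substack $\overline{\{x\}} \subset \cX$ inherits a good moduli space from $\pi_\cX$, namely its scheme-theoretic image in $X$. The target $\overline{\{y\}}$ has a unique closed point $y_0$, and restricting the hypothesis-(3) local quotient presentation $[\Spec A/G_{y_0}] \to \cY$ around $y_0$ presents $\overline{\{y\}}$ as a $G_{y_0}$-invariant closed subscheme of $\Spec A$, which carries the explicit GIT good moduli space $\Spec(A/I)^{G_{y_0}}$. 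Uniqueness of the closed point then allows this local construction to extend to a global good moduli space for $\overline{\{y\}}$.

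The main obstacle is verifying condition (1) of Theorem \ref{T:general-existence}. Given closed $y \in \cY$ with hypothesis-(3) local quotient presentation $p\colon \cW = [\Spec A/G_y] \to \cY$, I would shrink $\cW$ to arrange both the stabilizer-preserving condition and the closed-to-closed condition. My approach is to form the base change $\cW \times_\cY \cX \to \cX$, which is finite and affine, and hence admits a good moduli space $V$ with $V \to X$ finite. A closed point $w \in \cW$ maps to a closed point of $\cY$ precisely when the preimages of $w$ in $\cW \times_\cY \cX$ all have closed image in $\cX$, a condition controlled by the finite morphism $V \to X$. Separatedness of $X$ then makes the locus of such good $w$'s open, permitting a shrinking of $\cW$. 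Stabilizer-preservation can be arranged on a further open subset by a similar finite-descent argument, invoking that $f$ is stabilizer-preserving on the locus of closed orbits (again thanks to separatedness of $X$).

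Finally, for the ``moreover'' claim, the universal property of good moduli spaces applied to the composition $\pi_Y \circ f\colon \cX \to Y$ induces a morphism $X \to Y$. This morphism inherits finiteness from $f$ (through the coherent pushforward $f_*\cO_{\cX}$ descending to $X$) and surjectivity from $f$ being surjective. A finite surjective image of a proper algebraic space is proper, so if $X$ is proper then $Y$ is proper as well.
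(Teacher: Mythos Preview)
There is a genuine gap in your verification of condition (1), specifically the stabilizer-preservation claim. You assert that ``$f$ is stabilizer-preserving on the locus of closed orbits (again thanks to separatedness of $X$)'', but this is false in general. Take $f\colon BG \to BH$ induced by an inclusion $G \subsetneq H$ of finite groups: this is finite and surjective, $X=\Spec\CC$ is separated and proper, and $BH$ is a global quotient stack with local quotient presentations, yet on stabilizers $f$ induces the strict inclusion $G\hookrightarrow H$. Separatedness of $X$ gives you nothing here.

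The paper repairs this with a reduction step you are missing. Before checking the hypotheses of Theorem~\ref{T:general-existence}, let $\cZ$ be the scheme-theoretic image of $\cX \to X\times\cY$. Then $\cX\to\cZ$ is finite (using $\cX\to\cY$ finite and $X$ separated), and Lemma~\ref{lemma-finite-affine-existence} shows $\cZ\to X$ is cohomologically affine, so $\cZ$ inherits a separated good moduli space. The projection $\cZ\hookrightarrow X\times\cY\to\cY$ is finite, surjective, and \emph{automatically} stabilizer-preserving at closed points, because $\cZ\hookrightarrow X\times\cY$ is a closed immersion and $X$ is an algebraic space. Replacing $\cX$ by $\cZ$, one may now assume $f$ is stabilizer-preserving at closed points, after which the base-change argument you sketch (forming $\cX'=\cW\times_\cY\cX$ and invoking Proposition~\ref{etale-preserving}(3) on $g'\colon\cX'\to\cX$, then pushing the resulting open down via the finite $f'$) goes through cleanly.

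Your argument for condition (2) is also not quite right: a local quotient presentation $[\Spec A/G_{y_0}]\to\cY$ is \'etale, not an immersion, so $\overline{\{y\}}$ is not literally exhibited as a $G_{y_0}$-invariant closed subscheme of $\Spec A$. The paper instead applies Lemma~\ref{lemma-finite-affine-existence} directly to the finite surjection $\overline{\{x\}}\to\overline{\{y\}}$ with target $\Spec\CC$ (noting $\overline{\{x\}}\to\Spec\CC$ is a good moduli space since $\overline{\{x\}}$ has a unique closed point), which handles this in one line.
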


Both Theorem \ref{T:vgit-existence} and Proposition \ref{P:finite-existence} are proved using 
Theorem \ref{T:keel-mori}.
In order to motivate the statement of Theorem \ref{T:keel-mori}, 
let us give an informal sketch of the proof. If $\cX$ admits local quotient presentations, 
then every closed point $x \in \cX$ admits an \'{e}tale neighborhood of the form 
\[
[\Spec A_x/G_x] \rightarrow \cX,
\]
where $A_x$ is a finite-type $\CC$-algebra and $G_x$ is the stabilizer of $x$. 
The union $\coprod_{x \in \cX} [\Spec A_x/G_x] $ defines an \'etale cover of $\cX$; 
reducing to a finite subcover, we obtain an atlas $f\co \cW \rightarrow \cX$ with the following properties:
\begin{enumerate}
\item $f$ is affine and \'etale.
\item $\cW$ admits a good moduli space $W$.
\end{enumerate}
Indeed, (2) follows simply by taking invariants 
$[\Spec A_x/G_x] \rightarrow \Spec A_x^{G_x}$ and since $f$ is affine, 
the fiber product $\cR := \cW \times_{\cX} \cW$ admits a good moduli space $R$.  
We may thus consider the following diagram:

\begin{equation} \label{diagram-groupoid}
\begin{split}
\xymatrix{
\cR  \ar@<.5ex>[r]^{p_1} \ar@<-.5ex>[r]_{p_2} \ar[d]^{\varphi}  
& \cW \ar[r]^f \ar[d]^{\phi} \ar[r]  & \cX \\
R  \ar@<.5ex>[r]^{q_1} \ar@<-.5ex>[r]_{q_2}  & W &
}
\end{split}
\end{equation}

The crucial question is: can we choose $f \co \cW \to \cX$ to guarantee 
that the projections $q_1, q_2 \co R \rightrightarrows W$ define an \'etale equivalence relation.
If so, then the algebraic space quotient $X = W/R$ 
gives a good moduli space for $\cX$. 

If $\cX$ is separated, we can always do this.  Indeed, if $\cX$ 
is separated, the atlas $f$ may be chosen to be 
stabilizer preserving.\footnote{The set of points $\omega \in \cW$ where $f$ is not stabilizer preserving is 
simply the image of the complement of the open substack 
$I_{\cW} \subset I_{\cX} \times_{\cX} \cW$ in $\cW$ 
and therefore is closed since $I_{\cX} \to \cX$ is proper.  
By removing this locus from $\cW$, $f \co \cW \to \cX$ may be chosen to be stabilizer preserving.} 
Thus, we may take the projections $\cR \rightrightarrows \cW$ to be stabilizer preserving and \'etale, 
and this implies that the projections $R \rightrightarrows W$ are \'etale.\footnote{To see this, note 
that if $r \in R$ is any closed point and $\rho \in \cR$ is its preimage, then 
$\hat{\oh}_{R,r} \simeq D_{\rho}^{G_{\rho}}$, where $D_{\rho}$ denotes the 
miniversal formal deformation space of $\rho$ and $G_{\rho}$ is the stabilizer 
of $\rho$; similarly $\hat{\oh}_{W, q_i(r)} \simeq D_{p_i(\rho)}^{G_{p_i(\rho)}}$.  
Now $p_i$ \'etale implies $D_{\rho} \simeq D_{p_i(\rho)}$ and $p_i$ stabilizer preserving 
implies $G_{\rho} \simeq G_{p_i(\rho)}$, so $\hat{\oh}_{R,r} \simeq \hat{\oh}_{W, q_i(r)}$, 
i.e. $q_i$ is \'etale.} This leads to a direct proof of the Keel-Mori theorem for separated 
Deligne-Mumford stacks of finite type over $\Spec \CC$ (one can show directly that such stacks always admit local quotient presentations).
In general, of course, algebraic stacks need not be separated so we must 
find weaker conditions which ensure that the projections $q_1, q_2$ are \'etale. In particular, we must identify a set of sufficient conditions which can be directly verified for 
geometrically-defined stacks such as $\Mg{g,n}(\alpha)$.

Our result gives at least one plausible answer to this problem. To begin, note that if $\omega \in \cW$ is a {\it closed} $\CC$-point with image $w \in W$, 
then the formal neighborhood $\hat{\oh}_{W,w}$ can be identified with the $G_\omega$-invariants $D_{\omega}^{G_{\omega}}$ of the miniversal deformation space $D_{\omega}$ of $\omega$. Thus, we may ensure that $q_i$ is \'etale 
at a $\CC$-point $r \in R$, or equivalently that the induced map 
$\hat{\oh}_{W,q_i(r)} \to \hat{\oh}_{R,r}$ is an isomorphism, by manually imposing the following conditions:  $p_i(\rho)$ should be a closed point, where  $\rho \in \cR$ is the unique closed point in the 
preimage of $r \in R$, and $p_i$ should induce an isomorphism of stabilizer groups $G_{\rho} \simeq G_{p_i(\rho)}$.  Indeed, we then have $\hat{\oh}_{W, q_i(r)}=D_{p_i(\rho)}^{G_{p_i(\rho)}}\simeq D_{\rho}^{G_{\rho}}=\hat{\oh}_{R, r}$, where the middle isomorphism follows from the hypothesis that $p_i$ is \'etale and stabilizer preserving. In sum, we have identified two key conditions that will imply that 
$R \rightrightarrows W$ is an \'etale equivalence relation:
\begin{enumerate}
\item[($\star$)] The morphism $f \co \cW \to \cX$ is stabilizer preserving at closed points.
\item[($\star \star$)] The projections $p_1, p_2\co  \cW \times_{\cX} \cW \rightrightarrows \cW$ send closed 
points to closed points.
\end{enumerate}

Condition ($\star$) is precisely hypothesis (1a) of Theorem \ref{T:keel-mori}. 
In practice, it is difficult to directly verify condition ($\star \star$), 
but it turns out that it is implied by conditions (1b) and (2), which are often 
easier to verify. 


Section \ref{S:local-quotient-existence} is devoted to making the above argument precise.  
Then in Sections \ref{S:vgit-existence} and \ref{S:finite-existence}, we prove 
Theorem \ref{T:vgit-existence} and Proposition \ref{P:finite-existence} by 
showing that after suitable reductions, their hypotheses imply that conditions 
(1a), (1b) and (2) of Theorem \ref{T:keel-mori} are satisfied.

\subsubsection{Definitions and preparatory material}

\begin{definition}
Let $f \co \cX \to \cY$ be a morphism of algebraic stacks of finite type over $\Spec \CC$. We say that
\begin{itemize}
\item $f$ \emph{sends closed points to closed points} if for every closed point 
$x \in \cX$, $f(x) \in \cY$ is closed.
\item $f$ is \emph{stabilizer preserving at $x \in \cX(\CC)$} if
$\Aut_{\cX(\CC)}(x) \to \Aut_{\cY(\CC)}(f(x))$ is an isomorphism.
\item For a closed point $x \in \cX$, $f$ is \emph{strongly \'etale at $x$} if $f$ is \'etale at $x$, $f$ is stabilizer preserving at $x$ and $f(x) \in \cY$ is closed.
\item $f$ is \emph{strongly \'etale} if $f$ is strongly \'etale at all closed points of $\cX$.
\end{itemize}
\end{definition}
\begin{definition}
Let $\phi \co \cX \to X$ be a good moduli space. 
We say that an open substack $\cU \subset \cX$ is \emph{saturated} if $\phi^{-1}(\phi(\cU)) = \cU$.  
\end{definition}

The following proposition is simply a stack-theoretic formulation of Luna's well-known
results in invariant theory \cite[Chapitre II]{luna} often referred to as Luna's fundamental lemma. It justifies the terminology \emph{strongly \'etale} by showing that strongly \'etale morphisms induce \'etale morphisms of good moduli spaces.  It is also shows that for a morphism of algebraic stacks admitting good moduli spaces, strongly \'etale is an open condition.

\begin{prop} \label{etale-preserving}
Consider a commutative diagram 
\begin{equation} \label{diagram-luna}
\begin{split}
\xymatrix{ 
\cW \ar[r]^f \ar[d]^{\varphi}		& \cX \ar[d]^{\phi} \\
W \ar[r]^g					& X
}
\end{split}
\end{equation}
where $f$ is a representable, separated morphism between algebraic stacks of finite type over $\Spec \CC$.  
Suppose $\varphi \co \cW \to W$ and $\phi \co \cX \to X$ are good moduli spaces.  Then
\begin{enumerate}
	\item If $f$ is strongly \'etale at $w \in \cW$, then $g$ is \'etale at $\varphi(w)$.
	\item If $f$ is strongly \'etale, then $g$ is \'etale and Diagram \eqref{diagram-luna} is Cartesian.
	\item There exists a saturated open substack $\cU \subset \cW$ such that:
		\begin{enumerate}
			\item $f|_{\cU} \co \cU \to \cX$ is strongly \'etale and $f(\cU) \subset \cX$ is saturated.
			\item If $w \in \cW$ is a closed point such that $f$ is strongly \'etale at $w$, then $w \in \cU$.
		\end{enumerate}
\end{enumerate}
\end{prop}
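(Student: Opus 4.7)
The plan is to view Proposition~\ref{etale-preserving} as a stack-theoretic incarnation of Luna's fundamental lemma, in the framework of good moduli spaces developed in \cite{alper_good}. The essential structural input is the following: for a good moduli space $\phi \co \cX \to X$, closed points of $\cX$ biject with points of $X$, and for any closed point $x \in \cX$ with (necessarily reductive) stabilizer $G_x$, the completion $\hat{\oh}_{X,\phi(x)}$ is identified with the $G_x$-invariants of the completed local ring $\hat{\oh}_{\cX,x}$. All three assertions are formal consequences of this identification together with the hypotheses.

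For (1), fix a closed $w \in \cW$ at which $f$ is strongly \'etale, set $x = f(w)$ (closed in $\cX$ by hypothesis), and identify $H_w$ with $G_x$ via $f$. \'Etaleness of $f$ at $w$ gives a $G_x$-equivariant isomorphism $\hat{\oh}_{\cX,x} \iso \hat{\oh}_{\cW,w}$; passing to $G_x$-invariants and invoking the structural identification on both sides yields an isomorphism $\hat{\oh}_{X,\phi(x)} \iso \hat{\oh}_{W,\varphi(w)}$ underlying $g$, so $g$ is \'etale at $\varphi(w)$.

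For (2), \'etaleness of $g$ follows from (1) together with the fact that closed points of $\cW$ map surjectively onto $W$. Cartesianness amounts to showing that the comparison map $h \co \cW \to W \times_X \cX$ is an isomorphism. Now $h$ is representable and separated (inherited from $f$); because good moduli spaces are preserved under flat base change and $g$ is \'etale, $W \times_X \cX \to W$ is a good moduli space, so $h$ sits over the identity of $W$. At each closed point of $\cW$, $h$ is \'etale and stabilizer preserving (both sides have the same stabilizer $G_{f(w)}$), and the bijection of closed points of $\cW$ onto $W$ upgrades to a bijection of closed points under $h$; the structural identification then forces $h$ to induce isomorphisms on all formal neighborhoods of closed points, which combined with representable \'etaleness of $h$ gives that $h$ is an isomorphism.

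For (3), the locus in $\cW$ where $f$ is \'etale is open, and the locus where $f$ is stabilizer preserving is also open: representability and separatedness of $f$ make the relative inertia map $I_\cW \to I_\cX \times_\cX \cW$ a closed immersion, so its failure to be an isomorphism cuts out a closed substack. Let $\cV \subset \cW$ be the intersection of these two open loci. Setting
\[
\cU := \varphi^{-1}\bigl(W \setminus \varphi(\cW \setminus \cV)\bigr)
\]
yields a saturated open substack of $\cW$ contained in $\cV$; saturation forces any closed point of $\cU$ to be closed in $\cW$, so $f$ is strongly \'etale there, and (3b) holds by construction. Finally, applying (2) to the strongly \'etale morphism $f|_{\cU} \co \cU \to \cX$ provides a Cartesian diagram, whence $f(\cU) = \phi^{-1}(g(\varphi(\cU)))$ is saturated in $\cX$. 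The main technical obstacle throughout is the structural identification $\hat{\oh}_{X,\phi(x)} \simeq \hat{\oh}_{\cX,x}^{G_x}$ for non-separated $\cX$, which is the principal input borrowed from \cite{alper_good}; modulo this identification, the saturation step in (3) is the most delicate piece of bookkeeping and, as noted, depends logically on (2) and so must be deferred to the end.
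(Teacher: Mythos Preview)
Your arguments for (1) and (2) are in the right spirit---this is indeed Luna's fundamental lemma repackaged for good moduli spaces---and the paper itself simply cites \cite[Theorem~5.1]{alper_good} for (1) and the \'etaleness of $g$ in (2). For the Cartesianness in (2), however, your final step is underspecified: you assert that a representable, separated, \'etale morphism $h\co \cW \to W\times_X\cX$ which is bijective on closed points and induces formal isomorphisms there must be an isomorphism. This is plausible but not immediate for stacks; the paper instead invokes \cite[Proposition~6.4]{alper_good} to conclude that $h$ is \emph{finite} (from representable, separated, quasi-finite, and closed-point-preserving), after which finite~$+$~\'etale~$+$~degree~one gives the isomorphism directly.

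The real gap is in (3). You claim that the locus where $f$ is stabilizer preserving is open, arguing that the relative inertia map $I_\cW \hookrightarrow I_\cX\times_\cX \cW$ is a closed immersion and hence its failure to be an isomorphism is closed in $\cW$. But the complement of a closed immersion $Z\hookrightarrow Y$ has closed image in the base only when $Y$ is \emph{proper} over the base; here that would require $I_\cX\to\cX$ to be proper, i.e.\ $\cX$ to be separated, which is not assumed. Indeed the paper's Appendix exhibits exactly this failure: for $\cX=[X/\ZZ_2]$ with $X$ the affine line with doubled origin and $\ZZ_2$ swapping the origins, the map $\AA^1\to\cX$ is stabilizer preserving only at the origin, a closed (not open) locus. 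So your construction of $\cV$ collapses, and with it the saturation $\cU$. The paper does not attempt a direct construction here; it defers entirely to \cite[Theorem~6.10]{alper_quotient}, which builds the saturated open $\cU$ by a different mechanism that does not rely on openness of the stabilizer-preserving locus. Your deduction of (3a) from (2), once $\cU$ is in hand, is fine---but the existence of $\cU$ itself needs a different argument.
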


\begin{proof}
\cite[Theorem 5.1]{alper_good} gives part (1) and that $g$ is \'etale in (2).  
The hypotheses in (2) imply that the induced morphism $\Psi\co \cW \to W \times_X \cX$ 
is representable, separated, quasi-finite and sends closed points to closed points.   
\cite[Proposition 6.4]{alper_good} implies that $\Psi$ is finite.  
Moreover, since $f$ and $g$ are \'etale, so is $\Psi$.  But since $\cW$ and $W \times_X \cX$ 
both have $W$ as a good moduli space, it follows that a closed point in $W \times_X \cX$ 
has a unique preimage under $\Psi$.  Therefore, $\Psi$ is an isomorphism and the 
diagram is Cartesian.  Statement (3) follows from \cite[Theorem 6.10]{alper_quotient}.
\end{proof}	

\begin{lem} \label{lemma-shrinking}
Let $\cX$ be an algebraic stack of finite type over $\Spec \CC$ and $\phi \co \cX \to X$ be a good moduli space.  
Let $x \in \cX$ be a closed point 
and $\cU \subset \cX$ be an open substack containing $x$.  Then there exists a 
saturated open substack $\cU_1 \subset \cU$ containing $x$.  Moreover, if $\cX \simeq [\Spec A / G]$ with $G$ reductive, then $\cU_1$ can be chosen to be of the form $[\Spec B / G]$ for a $G$-invariant open affine subscheme $\Spec B \subset \Spec A$.
\end{lem}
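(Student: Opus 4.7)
The plan is to construct $\cU_1$ as the preimage under $\phi$ of a suitable open neighborhood of $\phi(x)$ in $X$; this will automatically make $\cU_1$ saturated. The key input I will use is that good moduli space morphisms are universally closed and that two $\CC$-points $y_1,y_2 \in \cX$ satisfy $\phi(y_1)=\phi(y_2)$ if and only if $\overline{\{y_1\}} \cap \overline{\{y_2\}} \neq \emptyset$ in $\cX$ (both are standard from \cite{alper_good}).

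First I would set $\cZ := \cX \setminus \cU$, viewed as a reduced closed substack, and let $Z := \phi(\cZ) \subseteq X$. By universal closedness of $\phi$, the subset $Z$ is closed in $X$. I then claim $\phi(x) \notin Z$: otherwise there would exist a $\CC$-point $z \in \cZ$ with $\phi(z) = \phi(x)$, giving $\overline{\{x\}} \cap \overline{\{z\}} \neq \emptyset$; but $x$ is closed, so this forces $x \in \overline{\{z\}} \subseteq \cZ$, contradicting $x \in \cU$. Setting $V := X \setminus Z$ and $\cU_1 := \phi^{-1}(V)$ then produces a saturated open substack containing $x$ and disjoint from $\cZ$, and in particular contained in $\cU$.

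For the refined statement when $\cX = [\Spec A / G]$ with $G$ reductive, the good moduli space is the affine scheme $X = \Spec A^G$. Since $V \subseteq X$ is a quasi-compact open containing $\phi(x)$, I can choose a principal open $D(f) \subseteq V$ through $\phi(x)$ with $f \in A^G$. Replacing $\cU_1$ by $\phi^{-1}(D(f))$ gives $\cU_1 = [\Spec A_f / G]$, where $\Spec A_f \subseteq \Spec A$ is the $G$-invariant open affine carved out by the non-vanishing of the $G$-invariant function $f$.

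I do not anticipate any real obstacle: the entire argument is the stack-theoretic version of the classical observation that on an affine $G$-scheme with $G$ reductive one can separate disjoint $G$-invariant closed subsets by a $G$-invariant regular function, and the only slightly delicate step—identifying when two $\CC$-points of $\cX$ are glued by $\phi$—is packaged into the foundational properties of good moduli spaces.
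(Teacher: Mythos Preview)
Your proposal is correct and takes essentially the same approach as the paper: both set $\cU_1 = \phi^{-1}(X \setminus \phi(\cX \setminus \cU))$ and then, in the quotient case, shrink further to the preimage of an affine open in $\Spec A^G$. The only cosmetic difference is that the paper invokes \cite[Theorem 4.16]{alper_good} directly to conclude that $\phi(\{x\})$ and $\phi(\cX \setminus \cU)$ are closed and disjoint, whereas you unpack this into universal closedness plus the closure-intersection criterion for fiber identification; these are the same foundational facts.
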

\begin{proof}  The substacks $\{x\}$ and $\cX \setminus \cU$ are closed and disjoint. 
 By \cite[Theorem 4.16]{alper_good}, $\phi(\{x\})$ and $Z:=\phi(\cX \setminus \cU)$ 
 are closed and disjoint.   Therefore, we take $\cU_1 = \phi^{-1}(X \setminus Z)$.  For the second statement, take $\cU_1 = \phi^{-1}(U_1)$ for an affine open subscheme $U_1 \subset X \setminus Z$.
\end{proof}

\begin{lem} \label{lem-strongly-etale-bc}
Let $f \co \cX \to \cY$ be a strongly \'etale morphism of algebraic stacks of finite type over $\Spec \CC$.   Suppose that $\cX$ admits a good moduli space and for any point $y \in \cY(\CC)$, $\overline{ \{y \} }$ admits a good moduli space.  Then for any finite type morphism $g \co \cY' \to \cY$, the base change $f' \co \cX \times_{\cY} \cY' \to \cY'$ is strongly \'etale.
\end{lem}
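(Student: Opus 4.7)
The strategy is to verify the three defining conditions of strongly \'etale for $f'$: \'etaleness, preservation of stabilizers at closed points, and sending closed points to closed points. \'Etaleness is immediate since \'etale morphisms are stable under base change.

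For the remaining two conditions, my plan is to reduce to the case where the target admits a good moduli space, so that Proposition~\ref{etale-preserving}(2) becomes available. Given a closed point $x' \in \cX'$ with images $y' = f'(x') \in \cY'$ and $y = g(y') \in \cY$, I form $\cV := \overline{\{y\}}^{\cY}$, which has a good moduli space $V$ by hypothesis. Setting $\cW := \cX \times_{\cY} \cV$, which is a closed substack of $\cX$, it inherits a good moduli space $W \hookrightarrow X$. The restriction $\tilde f \co \cW \to \cV$ is strongly \'etale: \'etaleness is preserved under base change, and since a point of a closed substack is closed there if and only if it is closed in the ambient stack, the stabilizer-preserving and closed-to-closed properties for $\tilde f$ follow from those of $f$. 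Proposition~\ref{etale-preserving}(2) then yields the Cartesian identification $\cW \cong \cV \times_V W$ with $\tilde g \co W \to V$ \'etale and representable.

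Setting $\cV' := g^{-1}(\cV) \subseteq \cY'$, one computes
\[
\cX' \times_{\cY'} \cV' = \cX \times_{\cY} \cV' = \cW \times_{\cV} \cV' = W \times_V \cV',
\]
which is a closed substack of $\cX'$ containing $x'$; since $x'$ is closed in $\cX'$, it is closed in this closed substack. The restricted map $\tilde{f'} \co W \times_V \cV' \to \cV'$ is the base change of the representable \'etale morphism $\tilde g$, hence itself representable \'etale. Writing $x' = (w, y')$ with common image $v \in V$, the fiber-product description of automorphisms gives
\[
\Aut(x') = \Aut(w) \times_{\Aut(v)} \Aut(y') = \Aut(y'),
\]
since $W$ and $V$ are algebraic spaces with trivial stabilizers; thus $\tilde{f'}$, and therefore $f'$, is stabilizer preserving at $x'$.

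The main obstacle is to show $y' \in \cV'$ is closed. The plan is to argue by contradiction: assuming a nontrivial specialization $y' \rightsquigarrow y'_0$ in $\cV'$ with $y'_0$ closed, I would lift it to a nontrivial specialization of $x'$ in $W \times_V \cV'$ via the \'etale lifting property of $\tilde g$, contradicting closedness of $x'$. The delicate point is producing the lift. Here the good moduli space structure $\phi \co \cV \to V$ and the strong \'etaleness of $\tilde f$ combine to show that the image of $\tilde g \co W \to V$ is a saturated open of $V$ (any $v \in V$ in the image admits a lift $w \in W$ whose closure meets all of $\phi^{-1}(v)$, since $\tilde f$ maps closed points of $\cW$ onto the closed points of $\phi^{-1}(v) \cap \mathrm{image}(\tilde f)$); this saturation, together with the finite-type hypothesis on $g$ which rules out pathological non-finite-type specializations, is what guarantees that the required lift of $y' \rightsquigarrow y'_0$ to $W \times_V \cV'$ exists and produces the desired contradiction.
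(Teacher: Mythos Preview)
Your strategy closely parallels the paper's: restrict to $\cV = \overline{\{y\}}$ and apply Proposition~\ref{etale-preserving}(2). Your stabilizer-preserving argument via the fiber-product description of automorphisms is correct. But the closed-to-closed step has a genuine gap. You attempt to lift the specialization $y' \rightsquigarrow y'_0$ along the \'etale map $\tilde g \co W \to V$, invoking an ``\'etale lifting property'' and arguing that the image of $\tilde g$ is ``saturated.'' \'Etale morphisms of algebraic spaces satisfy no such valuative lifting criterion---the open immersion $\AA^1 \setminus \{0\} \hookrightarrow \AA^1$ is \'etale, yet specializations to $0$ do not lift---and neither saturation of the image nor the finite-type hypothesis on $g$ repairs this.

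The missing observation, which both fixes your argument and is the heart of the paper's proof, is that $V = \Spec \CC$. Since $y$ is a $\CC$-point, its image $\phi(y) \in V$ is a closed point of the algebraic space $V$; as $y$ is dense in $\cV$ and $\phi^{-1}(\phi(y))$ is closed containing $y$, this fiber is all of $\cV$, so $V$ is a single reduced point. With $V = \Spec \CC$, your $W$ is \'etale over a point, hence a finite disjoint union of copies of $\Spec \CC$; then $W \times_V \cV'$ is a disjoint union of copies of $\cV'$ and $\tilde{f'}$ is the trivial cover, for which closed-to-closed is obvious. The paper packages this even more cleanly: it replaces $\cX$ with $\overline{\{g'(x')\}}$ rather than the full preimage $f^{-1}(\cV)$, so that \emph{both} good moduli spaces are $\Spec \CC$, and Proposition~\ref{etale-preserving}(2) forces the restricted $f$ to be an isomorphism---after which every claim about $f'$ is immediate.
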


\begin{proof}
Clearly, $f' $ is \'etale.  Let $x' \in \cX \times_{\cY} \cY'$ be a closed point.  To check that $f'$ is stabilizer preserving at $x'$ and $f'(x') \in \cY'$ is closed, we may replace $\cY$ with $\overline{ \{g(f'(x'))\}}$ and $\cX$ with $\overline{ \{ g'(x') \}}$ where $g' \co \cX \times_{\cY} \cY' \to \cX$.  Since $f$ is strongly \'etale, Proposition \ref{etale-preserving}(2) implies that $f$ is in fact an isomorphism in which case the desired statements regarding $f'$ are clear.
\end{proof}

\subsubsection{Existence via local quotient presentations} \label{S:local-quotient-existence}

In this section, we prove Theorem \ref{T:general-existence}.

\begin{prop}  \label{prop-existence2}
Let $\cX$ be an algebraic stack of finite type over $\Spec \CC$.   Suppose that:
\begin{enumerate}
 \item There exists an affine, strongly \'etale, surjective morphism $f \co \cX_1 \to \cX$ from an 
algebraic stack $\cX_1$ admitting a good moduli space $\phi_1 \co \cX_1 \to X_1$.
\item For any $\CC$-point $x \in \cX$, the closed substack 
	$\overline{ \{x\}}$ admits a good moduli space.
\end{enumerate}

Then $\cX$ admits a good moduli space $\phi \co \cX \to X$. 
 \end{prop}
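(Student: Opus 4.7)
The plan is to use the strongly \'etale atlas $f\co \cX_1 \to \cX$ to realize $X$ as the quotient of $X_1$ by an \'etale equivalence relation descending the groupoid $\cR \rightrightarrows \cX_1$, where $\cR := \cX_1 \times_\cX \cX_1$. First I would form this $\cR$ with its two projections $p_1, p_2$ and observe that, since $f$ is affine, so are the $p_i$; consequently, by the standard stability of good moduli spaces under affine morphisms \cite{alper_good}, $\cR$ admits a good moduli space $\varphi \co \cR \to R$ with $R \to X_1$ affine.

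Next, I would apply Lemma \ref{lem-strongly-etale-bc} to the base change of $f$ along itself, using hypothesis (2) to supply the condition on closures of points. This shows that $p_1$ and $p_2$ are strongly \'etale. Proposition \ref{etale-preserving}(2) then produces \'etale morphisms $q_1, q_2 \co R \rightrightarrows X_1$ and Cartesian squares
\[
\xymatrix{\cR \ar[r]^{\varphi} \ar[d]_{p_i} & R \ar[d]^{q_i} \\ \cX_1 \ar[r]^{\phi_1} & X_1\,.}
\]
Using these Cartesian squares together with stability of good moduli spaces under affine base change, the identity, inverse, and composition morphisms of the groupoid $\cR \rightrightarrows \cX_1$ descend to make $R \rightrightarrows X_1$ into a groupoid in algebraic spaces.

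The main obstacle is to promote this groupoid to an \'etale equivalence relation by showing that $(q_1, q_2)\co R \to X_1 \times X_1$ is a monomorphism. Since closed points of a good moduli space biject with closed points of the stack, it suffices to check that distinct isomorphism classes of closed points of $\cR$ map to distinct pairs in $X_1(\CC) \times X_1(\CC)$. A closed point of $\cR$ is an isomorphism class of triples $(x, y, \alpha)$ with $x, y \in \cX_1(\CC)$ closed and $\alpha\co f(x) \iso f(y)$; since $f$ is stabilizer preserving at closed points, the natural $\Aut(x) \times \Aut(y)$-action on $\Isom_\cX(f(x), f(y))$ is transitive, so the isomorphism class of the triple depends only on the pair $(x, y)$ up to $\cX_1$-isomorphism. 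This yields injectivity on closed points, and combined with \'etaleness of $(q_1, q_2)$, gives the desired open-immersion (hence monomorphism) property.

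Finally, I would let $X := X_1/R$ be the resulting algebraic space quotient; since $\cR \rightrightarrows \cX_1$ presents $\cX$, the composite $\cX_1 \to X_1 \to X$ is invariant and descends to a morphism $\phi \co \cX \to X$. To verify that $\phi$ is a good moduli space, both cohomological affineness and the identification $\phi_* \cO_\cX \iso \cO_X$ may be checked \'etale-locally on $X$; flat base change along the \'etale cover $X_1 \to X$ then reduces both assertions to the corresponding properties of $\phi_1 \co \cX_1 \to X_1$, which hold by hypothesis.
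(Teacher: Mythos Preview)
Your proposal is correct and follows essentially the same approach as the paper: form the groupoid $\cR = \cX_1 \times_{\cX} \cX_1 \rightrightarrows \cX_1$, use Lemma \ref{lem-strongly-etale-bc} (with hypothesis (2)) and Proposition \ref{etale-preserving}(2) to descend it to an \'etale groupoid $R \rightrightarrows X_1$, verify this is an equivalence relation via a closed-point/stabilizer-preserving argument, and then take $X = X_1/R$ and conclude by descent. The only cosmetic difference is that the paper checks the monomorphism condition by showing the fiber over a diagonal point $(x_1,x_1)$ is a singleton, whereas you argue directly that the isomorphism class of a closed point of $\cR$ is determined by its image in $X_1 \times X_1$; these are equivalent. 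One small phrasing issue: $(q_1,q_2)$ is unramified rather than \'etale, so the conclusion is ``monomorphism'' rather than ``open immersion,'' but your logic is sound.
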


\begin{proof} 
Set $\cX_2 = \cX_1 \times_{\cX} \cX_1$.  By Lemma \ref{lem-strongly-etale-bc}, the 
projections $p_1, p_2\co   \cX_2 \to \cX_1$ are strongly \'etale.  As $f$ is affine, there exists a 
good moduli space $\phi_2\co  \cX_2 \to X_2$ with projections $q_1, q_2 \co X_2 \to X_1$.  Similarly,  $\cX_3 := \cX_1 \times_{\cX} \cX_1 \times_{\cX} \cX_1$ admits
a good moduli space $\phi_3 \co \cX_3 \to X_3$.  By Proposition \ref{etale-preserving}(2), the induced diagram
\[
\xymatrix{
\cX_3  \ar@<1ex>[r] \ar@<-1ex>[r] \ar[r] \ar[d]^{\phi_3}
& \cX_2 \ar@<.5ex>[r] \ar@<-.5ex>[r] \ar[d]^{\phi_2}  
& \cX_1 \ar[r]^f \ar[d]^{\phi_1}    & \cX \\
X_3  \ar@<1ex>[r] \ar@<-1ex>[r] \ar[r]  & X_2 \ar@<.5ex>[r] \ar@<-.5ex>[r]  & X_1
}
\]
is Cartesian.
Moreover, by the universality of good moduli spaces, 
there is an induced identity map $X_1 \to X_2$, 
an inverse $X_2 \to X_2$ and a composition 
$X_2 \times_{q_1, X_1, q_2} X_2 \to X_2$  giving 
$X_2 \rrarrows X_1$ an \'etale groupoid structure.

To check that $\Delta\co  X_2 \to X_1 \times X_1$ is a monomorphism, 
it suffices to check that there is a unique pre-image of 
$(x_1, x_1) \in X_1 \times X_1$ where $x_1 \in X_1(\CC)$.  
Let $\xi_1 \in \cX_1$ be the unique closed point in $\phi_1^{-1}(x_1)$.  
Since $\cX_1 \to \cX$ is stabilizer preserving at $\xi_1$, we can set 
$G := \Aut_{\cX_1(\CC)}(\xi_1) \simeq \Aut_{\cX(\CC)}(f(\xi_1))$.  There are diagrams
\[
\xymatrix{
BG \ar[r] \ar[d]                & BG \times BG \ar[d] \\
\cX_2 \ar[r] \ar[d]         & \cX_1 \times \cX_1 \ar[d]  \\
\cX \ar[r]                  & \cX \times \cX
}
\qquad
\qquad
\xymatrix{
\cX_2 \ar[r]^{(p_1,p_2) \ \ } \ar[d]^{\phi_2}	
& \cX_1 \times \cX_1 \ar[d]^{\phi_1 \times \phi_1}\\
X_2 \ar[r]^{\Delta \ \ }			& X_1 \times X_1
}
\]
where the squares in the left diagram are Cartesian.  
Suppose $x_2 \in X_2(\CC)$ is a preimage of $(x_1, x_1)$ 
under $\Delta\co  X_2 \to X_1 \times X_1$.  
Let $\xi_2 \in \cX_2$ be the unique closed point in $\phi_{2}^{-1}(x_2)$.  
Then $(p_1(\xi_2), p_2(\xi_2)) \in \cX_1 \times \cX_1$ is closed and 
is therefore the unique closed point $(\xi_1, \xi_1)$ in the $(\phi_1 \times \phi_1)^{-1}(x_1,x_1)$.
But by Cartesianness of the left diagram, $\xi_2$ is the unique point in $\cX_2$ which 
maps to $(\xi_1, \xi_1)$ under  $\cX_2 \to \cX_1 \times \cX_1$.  
Therefore, $x_2$ is the unique preimage of $(x_1,x_1)$.

Since $X_2 \times_{q_1, X_1, q_2} X_2 \to X_2$ is an \'etale equivalence relation, 
there exists an algebraic space quotient $X$ and induced maps $\phi \co \cX \to X$ and $X_1 \to X$.  
Consider
\[\xymatrix{
\cX_2 \ar[r] \ar[d]  & \cX_1 \ar[r] \ar[d]     & X_1 \ar[d] \\
\cX_1 \ar[r] &\cX \ar[r]              & X
}\]
Since $\cX_2 \simeq \cX_1 \times_{X_1} X_2$ and $X_2 \simeq X_1 \times_X X_1$, 
the left and outer squares above are Cartesian.  Since $\cX_1 \to \cX$ is \'etale and surjective, 
it follows that the right square is Cartesian.  By descent (\cite[Prop. 4.7]{alper_good}), 
$\phi \co \cX \to X$ is a good moduli space.   \end{proof}

\noindent
\emph{Proof of Theorem \ref{T:keel-mori}.}
After taking a disjoint union of finitely many local quotient presentations, there 
exists a strongly \'etale, affine and surjective morphism $f \co \cW \to \cX$ where $\cW$ 
admits a good moduli space.  The theorem now follows from 
Proposition \ref{prop-existence2}. \epf

\subsubsection{Existence via local VGIT} \label{S:vgit-existence}

In this section, we prove Theorem \ref{T:vgit-existence}.  We will need the following lemma on isotrivial specializations.

\begin{lemma}\label{L:closure}
Let $\cX$ be an algebraic stack of finite type over $\Spec \CC$, and let $\cL$ be a line 
bundle on $\cX$. Let $\cX^+, \cX^- \subset \cX$ be open substacks, and let
$\cZ^+=\cX \smallsetminus \cX^+$, $\cZ^-=\cX \smallsetminus \cX^-$ be their reduced 
complements. Suppose that for all closed points $x \in \cX$, there exists a local quotient 
presentation $f \co \cW \rightarrow \cX$ around $x$ and a Cartesian diagram
\begin{equation} \begin{split} \label{D:specializations}
\xymatrix{
\cW^{+}  \ar[d] \ar@{^(->}[r]						
	&  \cW \ar[d]^f & \cW^{-}\ar@{_(->}[l] \ar[d]\\
\cX^+ \ar@{^(->}[r] 						& \cX & \cX^-  \ar@{_(->}[l] 	
} \end{split}
\end{equation}
where $\cW^+ = \cW_{\cL}^{+}$ and $\cW^- = \cW_{\cL}^{-}$ are the VGIT chambers of $\cW$ with respect to $\cL$.  Then
\begin{enumerate}
\item If $z \in \cX^+(\CC) \cap \cX^-(\CC)$, then the closure of $z$ in $\cX$ is 
contained in $\cX^+ \cap \cX^-$.
\item If $z \in \cX(\CC)$ is a closed point, then either 
$z \in \cX^+ \cap \cX^-$ or $z \in \cZ^+ \cap \cZ^-$.
\end{enumerate}
\end{lemma}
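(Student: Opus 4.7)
The plan is to establish (2) first by a direct local analysis at a closed point, and then to deduce (1) from (2) together with Proposition~\ref{P:specializations}.

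For (2), I would choose a local quotient presentation $f\co \cW=[\Spec A/G_z]\to \cX$ around $z$ as provided by hypothesis~(2) of the lemma, write $X=\Spec A$, and pick a preimage $w\in X$ with $G_w\simeq G_z$. The stabilizer isomorphism forces $w$ to be a $G_z$-fixed point, so $\{w\}$ is a closed $G_z$-orbit in $X$. Let $\chi=\chi_\cL|_{G_z}$ denote the character of $G_z$ acting on $\cL|_z$. If $\chi$ is trivial, then the VGIT ideals $I^\pm_\chi$ vanish and $X^\pm_\chi=X$, so by the Cartesianness of~\eqref{D:specializations} we get $z\in \cX^+\cap \cX^-$. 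If $\chi$ is nontrivial, then Proposition~\ref{P:vgit-quotients} forces the character on the stabilizer of any closed orbit in $X^+_\chi\cup X^-_\chi$ to be trivial; since $w$ has closed orbit but $\chi|_{G_w}=\chi$ is nontrivial, $w\notin X^+_\chi\cup X^-_\chi$, and Cartesianness now gives $z\in \cZ^+\cap \cZ^-$.

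For (1), assume for contradiction that $z\in \cX^+(\CC)\cap \cX^-(\CC)$ but $\overline{\{z\}}\not\subseteq \cX^+\cap \cX^-$. Then the closed substack $\overline{\{z\}}\cap(\cZ^+\cup \cZ^-)$ is nonempty and therefore contains a point $z_0$ that is closed in $\cX$; by (2) we have $z_0\in \cZ^+\cap \cZ^-$, and the proof above identifies $\chi=\chi_\cL|_{G_{z_0}}$ as nontrivial. Pick a local quotient presentation $f\co \cW=[\Spec A/G_{z_0}]\to \cX$ around $z_0$ with $G_{z_0}$-fixed preimage $w_0$, and write $X=\Spec A$. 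Since $f(\cW)$ is open in $\cX$ and contains $z_0\in \overline{\{z\}}$, it also contains $z$, so $z$ has some preimage $w\in X$. Realizing the specialization $z\rightsquigarrow z_0$ by a morphism $\Spec R\to \cX$ from a DVR, lifting this morphism \'etale-locally through $w_0$ to $\cW$, and then passing to a finite extension of $R$ to trivialize the resulting $G_{z_0}$-torsor, I obtain a morphism $\Spec R\to X$ whose special fiber is $w_0$ and whose generic fiber lies in the $G_{z_0}$-orbit of $w$. In particular $w_0\in \overline{G_{z_0}\cdot w}$, so the Hilbert--Mumford theorem produces a one-parameter subgroup $\lambda\co \GG_m\to G_{z_0}$ with $\lim_{t\to 0}\lambda(t)\cdot w=w_0\in X^{G_{z_0}}$. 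Now Proposition~\ref{P:specializations} applied to $w\in X^-_\chi$ forces $\langle \chi,\lambda\rangle>0$, while the same proposition applied to $w\in X^+_\chi$ (with $\chi$ replaced by $\chi^{-1}$) forces $\langle \chi,\lambda\rangle<0$, a contradiction.

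The one technical subtlety worth flagging is the \'etale lift of the DVR specialization $z\rightsquigarrow z_0$ to an honest map $\Spec R\to X$ passing through $w_0$: the naive \'etale lift only produces a $G_{z_0}$-torsor over $\Spec R$ together with a $G_{z_0}$-equivariant map to $X$, and one must replace $R$ by a finite \'etale extension to trivialize the torsor. Once this point is taken care of, the remainder of the proof is essentially formal: Hilbert--Mumford converts the orbit-closure specialization $w_0\in \overline{G_{z_0}\cdot w}$ into a one-parameter subgroup specialization, and Proposition~\ref{P:specializations} then says that no point of $X^+_\chi\cap X^-_\chi$ can specialize to a $G$-fixed point on which the linearizing character is nontrivial, giving the required contradiction.
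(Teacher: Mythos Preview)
Your argument is correct and follows essentially the same route as the paper: both proofs hinge on the observation that a $G$-fixed point lies in $X^+_\chi\cap X^-_\chi$ if and only if $\chi$ is trivial, and both derive the contradiction in (1) by applying Proposition~\ref{P:specializations} twice to a one-parameter subgroup produced by Hilbert--Mumford. The paper proves (1) and (2) in the opposite order and leaves the \'etale lift of the specialization $z\rightsquigarrow z_0$ implicit, whereas you spell out the torsor-trivialization step; this is a welcome clarification, not a different strategy. One small phrasing slip: when $\chi$ is trivial the VGIT ideals $I^\pm_\chi$ are the \emph{unit} ideal (since $1\in A^G=A_n$ for every $n$), not the zero ideal---your conclusion $X^\pm_\chi=X$ is correct, but ``the VGIT ideals vanish'' says the opposite.
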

\begin{proof}
For (1),  if the closure of $z$ in $\cX$ is not contained in $\cX^+ \cap \cX^-$, 
there exists an isotrivial specialization $z\rightsquigarrow x$ to a closed point in 
$\cX \setminus (\cX^+ \cap \cX^-)$.  
Choose a local quotient presentation $f\co \cW = [W/G_x] \to \cX$ around $x$ 
such that \eqref{D:specializations} is Cartesian. 
Since $f^{-1}(x) \not\subset \cW^{+} \cap \cW^{-}$,
the character $\chi = \cL|_{BG_{x}}$ is non-trivial.  
By the Hilbert-Mumford criterion (\cite[Theorem 2.1]{git}), there exists 
a one-parameter subgroup $\lambda\co \GG_m \to G_{x}$ such that 
$\lim_{t \to 0} \lambda(t) \cdot w = w_0$ where $w\in W$ and $w_0\in W^{G_x}$  are points over $z$
and $x$, respectively.  As $w \in W^+_{\chi} \cap W^-_{\chi}$  and $w_0 \in W^{G_x}$, 
by applying Proposition \ref{P:specializations} twice with the characters $\chi$ and $\chi^{-1}$, 
we see that both $\langle \chi, \lambda \rangle < 0$ and $\langle \chi, \lambda \rangle > 0$, a contradiction.

For (2), choose a local quotient presentation $f\co (\cW,w) \to \cX$ around $z$ with $\cW = [W/G_x]$.
Let $\chi = \cL|_{BG_{x}}$ be the character of $\cL$. 
Since $w\in W^{G_x}$,  $w$ can be semistable with respect to $\chi$
if and only if $\chi$ is trivial. It follows that either $w\in \W^{+}\cap \W^{-}$ in the case $\chi$ is trivial, 
or $w\notin \W^{+} \cup\W^{-}$ in the case $\chi$ is non-trivial. 
\end{proof}

\noindent
{\it Proof of Theorem \ref{T:vgit-existence}. } 
We show that $\cX$ has a good moduli space by verifying the 
hypotheses of Theorem \ref{T:general-existence}.  Let $x_0 \in \cX$ be a closed point.
By Lemma \ref{L:closure}(2), we have either $x_0 \in \cX^+ \cap \cX^-$ or $x_0 \in \cZ^+ \cap \cZ^-$. 
Suppose first that $x_0 \in \cX^+ \cap \cX^-$. Since $\cX^+$ admits a good moduli space, 
Proposition \ref{etale-preserving}(3) implies we may choose a local quotient presentation $f \co \cW \rightarrow \cX^+$ 
which is strongly \'etale. By applying Lemma \ref{lemma-shrinking}, we may shrink further to assume that $f(\W) \subset \cX^+ \cap \cX^-$. Then Lemma \ref{L:closure}(1) implies that 
the composition $f\co \cW \rightarrow \cX^+ \hookrightarrow \cX$ is also strongly \'etale.

On the other hand,  suppose $x_0 \in \cZ^+ \cap \cZ^-$.  Choose a local quotient presentation 
$f\co (\cW,w_0) \rightarrow \cX$ around
 $x_0$ inducing a Cartesian diagram
\begin{equation}
\begin{split}
 \label{D:vgit}
\xymatrix{
\cW^{+}  \ar[d] \ar@{^(->}[r]						
	&  \cW \ar[d]^f & \cW^{-}\ar@{_(->}[l] \ar[d]\\
\cX^+ \ar@{^(->}[r] 						& \cX &  \cX^- \, \ar@{_(->}[l] 	
}
\end{split}
\end{equation}
with $\cW^+ = \cW_{\cL}^{+}$ and $\cW^- = \cW_{\cL}^{-}$.  
We claim that, after shrinking suitably, we may assume that 
$f$ is strongly \'etale. In proving this claim, we make implicit repeated use of 
Lemma \ref{lemma-shrinking} in conjunction with Lemma \ref{lemma-vgit-pullback} 
to argue that if $\cW' \subset \cW$ is an open substack containing $w_0$, there exists 
open substack $\cW'' \subset \cW'$ containing $w_0$ such that $\cW'' \to \cX$ is a local 
quotient presentation inducing a Cartesian diagram as in \eqref{D:vgit}.

Using the hypothesis that $\cZ^+,\cZ^-,$ and $\cX^+$ 
admit good moduli spaces, we will first show that $f$ may be chosen to satisfy:
\begin{enumerate}
\item[(A)] $f|_{f^{-1}(\cZ^+)}$, $f|_{f^{-1}(\cZ^-)}$ is strongly \'etale
\item[(B)] $f|_{\cW^{+}}$ is strongly \'etale.
\end{enumerate}
If $f$ satisfies (A) and (B), then $f$ is also strongly \'etale. Indeed, if $w \in \cW$ is a closed point, 
then either $w \in f^{-1}(\cZ^+) \cup f^{-1}(\cZ^-)$ or $w \in f^{-1}(\cX^+) \cap f^{-1}(\cX^-)$.
In the former case, (A) immediately implies that $f$ is stabilizer preserving at $w$ and $f(w)$ is 
closed in $\cX$. In the latter case, (B) implies that $f$ is stabilizer preserving at $w$ and that $f(w)$ is 
closed in $\cX^+$. Since $f(w) \in \cX^+ \cap \cX^-$ however, Lemma \ref{L:closure}(1) implies 
that $f(w)$ remains closed in $\cX$. 

It remains to show that $f$ can be chosen to satisfy (A) and (B).
For (A), Proposition \ref{etale-preserving}(3) implies the existence of an open substack 
$\cQ \subset f^{-1}(\cZ^+)$ containing $w_0$ such that $f|_{\cQ}$ is strongly \'etale.  
After shrinking $\cW$ suitably, we may assume 
$\cW \cap f^{-1}(\cZ^+) \subset \cQ$. One argues similarly for $f|_{f^{-1}(\cZ^-)}$.

For (B), Proposition \ref{etale-preserving}(3) implies there exists an open substack 
$\cU \subset \cW^{+}$ such that $f|_{\cU}\co  \cU \to \cX^+$ is strongly \'etale; 
moreover, $\cU$ contains all closed points 
$w \in \cW^{+}$ such that $f|_{\cW^+} \co \cW^+ \to \cX^+$ is strongly \'etale at $w$.
Let $\cV = \cW^{+} \setminus \cU$ and let $\bar{\cV}$ be the closure of $\cV$ in $\cW$.  
We claim that $w_0 \notin \bar{\cV}$. Once this is established, we may replace $\cW$ 
by an appropriate open substack of $\cW \setminus \overline{\cV}$ to obtain a local 
quotient presentation satisfying (B).  Suppose, by way of contradiction, that $w_0 \in \bar{\cV}$. 
Then there exists a specialization diagram
\[\xymatrix{
\Spec K = \Delta^* \ar[r] \ar[d]       & \cV \ar[d] \\
\Spec R = \Delta \ar[r]^{\qquad h}            & \cW
}\]
such that $h(0) = w_0$. By Proposition \ref{P:vgit-quotients}, there exist good moduli 
spaces $\cW \rightarrow W$ and $\cW^{+} \rightarrow W^+$, and the induced morphism 
$W^+ \rightarrow W$ is proper. Since the composition $\cW^{+} \to W^+ \to W$ is 
universally closed, there exists, after an extension of the fraction field $K$, a diagram
\[
\xymatrix{
 \Delta^* \ar[r] \ar[d]          & \cW^{+} \ar@{^(->}[d] \ar[r]      & W^+ \ar[d]  \\
 \Delta \ar[r]^h \ar@{-->}[ur]^{\tilde h}  & \cW \ar[r]    & W
}\]
and a lift $\tilde h\co  \Delta \to \cW^{+}$ that extends $\Delta^* \to \cW^{+}$ with 
$\tilde{w} = \tilde h(0) \in \cW^{+}$ closed.  There is an isotrivial specialization 
$\tilde{w} \rightsquigarrow w_0$.  It follows from Lemma \ref{L:closure}(1) that 
$\tilde{w} \in f^{-1}(\cZ^-)$.  By assumption (A), $f|_{\cU}\co  \cU \to \cX^+$ is strongly \'etale
at $\tilde{w}$ so that $\tilde{w} \in \cU$.  
On the other hand, the generic point of the specialization $\tilde{h}\co  \Delta \to \cW^{+}$ 
lands in $\cV$ so that $\tilde{w} \in \cV$, a contradiction.  
Thus, $w_0 \notin \bar{\cV}$ as desired. 

We have now shown that $\cX$ satisfies condition (1) in 
Theorem \ref{T:general-existence}, and it remains to verify condition (2).
Let $x \in \cX(\CC)$.  If $x \in \cZ^+$ (resp., $x \in \cZ^-$), then $\bar{\{ x \}} \subset \cZ^+$ 
(resp., $\bar{\{ x \}} \subset \cZ^-$).  Therefore, since $\cZ^+$ (resp., $\cZ^-$) admits a 
good moduli space, so does $\bar{\{x\}}$.  On the other hand, if $x \in \cX^- \cap \cX^+$, 
then Lemma \ref{L:closure}(1) implies the closure of $x$ in $\cX$ is contained in $\cX^+$.  
Since $\cX^+$ admits a good moduli space, so does $\bar{\{x\}}$. Now 
Theorem \ref{T:general-existence} implies that $\cX$ admits a good moduli space $\phi\co\cX \to X$.

Next, we use Theorem \ref{T:keel-mori} to show that $\cX^-$ admits a good moduli space.  Let $x \in \cX^-$
be a closed point and $x \rightsquigarrow x_0$ be the isotrivial specialization to the unique closed point $x_0 \in \cX$ in its closure.
By Proposition \ref{etale-preserving}, there exists a strongly \'etale local quotient presentation $f\co \cW \rightarrow \cX$
inducing a Cartesian diagram as in \eqref{diagram-etale-pres}.  By
Lemma \ref{lem-strongly-etale-bc}, the base change $f^-\co \cW^{-} \rightarrow \cX^-$ is strongly \'etale.  As $\cW^-$ 
admits a good moduli space, we may shrink $\cW^-$ further so that $f^- \co \cW^- \rightarrow \cX^-$ is a 
strongly \'etale local quotient presentation about $x$.

 It remains to check that if $x \in \cX^-(\CC)$ is any point, then its closure $\overline{\{ x \}}$ in 
 $\cX^-$ admits a good moduli space.  Let $x\rightsquigarrow x_0$ be the isotrivial specialization to the unique closed point $x_0 \in \cX$ in 
 the closure of $x$.  We claim in fact that $\phi^{-1}(\phi(x_0)) \cap \cX^-$ admits a good moduli space. 
Clearly this claim implies that $\overline{ \{x\}} \subset \cX^-$ does as well.  
We can choose a local quotient presentation $f \co (\cW, w_0) \to \cX$ 
about $x_0$ inducing a Cartesian diagram as in \eqref{diagram-etale-pres}.  
After shrinking, we may assume by Proposition \ref{etale-preserving}(3) that $f$ is strongly \'etale 
and we may also assume that $w_0$ is the unique preimage of $x_0$.  If we set $\cZ = \phi^{-1}(\phi(x_0))$, 
then $f|_{f^{-1}(\cZ)} \co f^{-1}(\cZ) \to \cZ$ is in fact an isomorphism as both $f^{-1}(\cZ)$ and $\cZ$ 
have $\Spec(\CC)$ as a good moduli space.  As $\cW_{\cL}^-$ admits a good moduli space, 
so does $\cW_{\cL}^- \cap f^{-1}(\cZ) = \cX^- \cap \cZ$.  This establishes that $\cX^-$ admits a good moduli space.

Finally, we argue that $X^+ \to X$ and $X^- \to X$ are proper and surjective. By taking a disjoint union 
of local quotient presentations and applying Proposition \ref{etale-preserving}(3), there exists a
strongly \'etale, affine, stabilizer preserving and surjective morphism 
$f \co \cW \to \cX$ from an algebraic stack admitting a good moduli space $\cW \to W$ 
such that $\cW = \cX \times_X W$.
Moreover, if we set $\cW^+:= f^{-1}(\cX^+)$ and $\cW^-:= f^{-1}(\cX^-)$, then (see 
Proposition \ref{P:vgit-quotients}) $\cW^{+}$ and  $\cW^{-}$ admit 
good moduli spaces $W^+$ and $W^-$ such that $W^- \to W$ and $W^+ \to W$ are proper and surjective.
This gives commutative cubes
\begin{equation} \label{diagram-big-cube}
\begin{split}
{\def\objectstyle{\scriptstyle}
\def\labelstyle{\scriptstyle}
\xymatrix@=30pt{
                &\cW^{+} \ar@{^(->}[rr] \ar[dd] \ar[dl]          &              
                	 & \cW \ar[dd] \ar[dl]&  &\cW^{-} \ar@{_(->}[ll] \ar[dd] \ar[dl]  \\
\cX^+\ar@{^(->}[rr] \ar[dd]&                       & \cX \ar[dd]	  & &\cX^-\ar[dd] \ar@{_(->}[ll]& \\
                &W^+ \ar[rr] \ar[dl]                &               & W\ar[dl] &&W^-\ar[ll] \ar[dl]\\
X^+\ar[rr]  &                           &X      &&X^-\ar[ll]&
}}
\end{split}
\end{equation}
The same argument as in the proof that $\cX^-$ admits a good moduli space 
shows that $f|_{\cW^{+}}\co  \cW^{+} \rightarrow \cX^+$ and $f|_{\cW^{-}}\co  \cW^{-} \rightarrow \cX^-$  
send closed points to closed points. By Proposition \ref{etale-preserving}(2), the left and right faces are Cartesian squares.  Since the top faces are also Cartesian, we have  
$\cW^{+} = \cX^+ \times_X W$ and $\cW^{-} = \cX^- \times_X W$. In particular, 
$\cW^{+} \to X^+ \times_X W$ and $\cW^{-} \to X^- \times_X W$ are good moduli spaces.  
By uniqueness of good moduli spaces, we have $X^+ \times_X W = W^+$ and 
$X^- \times_X W = W^-$.  Since $W^+ \to W$ and $W^- \to W$ are proper and 
surjective, $X^+ \to X$ and $X^- \to X$ are proper and surjective by \'etale descent.  
 \epf

\subsubsection{Existence via finite covers} \label{S:finite-existence}

In proving Proposition \ref{P:finite-existence}, we will appeal to following lemma:

 \begin{lem} \label{lemma-finite-affine-existence}
 Consider a commutative diagram
\[
\xymatrix{
 \cX \ar[r] \ar@/_1pc/[rr] & \cY \ar[r]  & X
}\]

of algebraic stacks of finite type over $\Spec \CC$ where $X$ is an algebraic space.  Suppose that:
\begin{enumerate}
\item $\cX \to \cY$ is finite and surjective.
\item $\cX \to X$ is cohomologically affine.
\item $\cY$ is a global quotient stack. 
\end{enumerate}
Then $\cY \to X$ is cohomologically affine.
 \end{lem}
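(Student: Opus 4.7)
My plan is to reduce to the absolute (non-relative) setting, exploit the global quotient presentation of $\cY$ to reformulate the problem in terms of an algebraic space, and then invoke Chevalley's theorem to transfer affineness along a finite surjective cover.

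First, since cohomological affineness is fpqc-local on the target and a composition with an affine morphism of algebraic spaces is again cohomologically affine, I can reduce to the case $X = \Spec A$ is affine; in that case $\cX \to X$ cohomologically affine implies $\cX$ is cohomologically affine as an absolute stack, and to conclude it suffices to show that $\cY$ is cohomologically affine. Using hypothesis (3), write $\cY = [Y / \GL_n]$ with $Y$ an algebraic space. Because we are in characteristic zero, $\GL_n$ is linearly reductive, so pushforward of quasi-coherent sheaves along $\cY \to X$ factors as ``pushforward along the representable smooth cover $Y \to \cY$'' followed by the exact functor of taking $\GL_n$-invariants. Consequently $\cY$ is cohomologically affine if and only if the algebraic space $Y$ is affine, and the whole problem reduces to showing that $Y$ is affine.

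Next, I would form the fiber product $\cX' := \cX \times_\cY Y$. The projection $\cX' \to Y$ is finite and surjective (base change of $\cX \to \cY$), while $\cX' \to \cX$ is the pullback of the $\GL_n$-torsor $Y \to \cY$, hence is itself a $\GL_n$-torsor and in particular affine and faithfully flat. Since $\cX \to \cY$ is representable, so is $\cX' \to Y$, and therefore $\cX'$ is an algebraic space. Composing the affine morphism $\cX' \to \cX$ with the cohomologically affine morphism $\cX \to X$ shows that $\cX' \to X$ is cohomologically affine; because $\cX'$ is an algebraic space and $X$ is affine, cohomological affineness is equivalent to affineness, so $\cX'$ is an affine scheme.

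Finally, I would invoke Chevalley's theorem for algebraic spaces (Rydh's generalization of the classical result): if $f \co \cX' \to Y$ is an integral surjective morphism of qcqs algebraic spaces and $\cX'$ is affine, then $Y$ is affine. The hypotheses are in place since $\cX' \to Y$ is finite surjective (hence integral) and $\cX'$ is affine, and $Y$ is qcqs by virtue of being of finite type. This gives affineness of $Y$, hence cohomological affineness of $\cY$, and after unwinding the reduction to affine $X$, of $\cY \to X$ as required. I expect the only mildly subtle point will be verifying the precise hypotheses of Chevalley's theorem in the algebraic space setting (and being slightly careful that ``cohomologically affine $=$ affine'' for morphisms between algebraic spaces over an affine base); the rest is a bookkeeping exercise in base change and linear reductivity.
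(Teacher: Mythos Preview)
Your proposal is correct and follows essentially the same route as the paper: both arguments pull back the $\GL_n$-torsor $Y \to \cY$ to $\cX$, observe that the resulting fiber product is an algebraic space which is affine (over $X$) by Serre's criterion, and then apply Chevalley's theorem to the finite surjection onto $Y$. The only cosmetic difference is that you first localize to make $X$ affine and then argue absolutely, whereas the paper works relatively throughout and applies Serre's criterion and Chevalley's theorem directly to the morphisms over $X$; neither version requires anything the other does not.
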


 \begin{proof}  We may write $\cY = [V/\GL_n]$, where $V$ is an 
 algebraic space with an action of $\GL_n$.  Since $\cX \to \cY$ is affine, 
 $\cX$ is the quotient stack $\cX = [U/G]$ where  $U = \cX \times_{\cY} V$.  
 Since $U \to \cX$ is affine and $\cX \to X$ is cohomologically affine, $U \to X$ is affine by 
 Serre's criterion.  The morphism $U \to V$ is finite and surjective so by Chevalley's theorem, 
 we can conclude that $V \to X$ is affine.  Therefore $\cY \to X$ is cohomologically affine.
 \end{proof}

 \noindent
{\it Proof of Proposition \ref{P:finite-existence}. } Let $\cZ$ be the scheme-theoretic image of 
$\cX \to X \times \cY$.  Since $\cX \to \cY$ is finite and $X$ is separated,  $\cX \to \cZ$ is finite.  
As $\cZ$ is a global quotient stack since $\cY$ is, we may apply
Lemma \ref{lemma-finite-affine-existence} to conclude that the projection $\cZ \to X$ is cohomologically 
affine which implies that $\cZ$ admits a separated good moduli space.  The composition 
$\cZ \hookarr X \times \cY \to \cY$ is finite, surjective and stabilizer preserving at closed points.  Therefore, by replacing $\cX$ with $\cZ$, 
to prove the proposition, we may assume that $f \co \cX \to \cY$ is stabilizer preserving at closed points.  

We will now show that the hypotheses
of Theorem \ref{T:keel-mori} are satisfied.
Let $y_0 \in \cY$ be a closed point and 
 $g \co (\cY', y'_0) \to \cY$ be a local quotient presentation about $y_0$.   
 Consider the Cartesian diagram
\[
\xymatrix{
 \cX' \ar[r]^{f'} \ar[d]^{g'}       & \cY' \ar[d]^g   \\
 \cX \ar[r]^f                   & \cY 
 }\]
We claim that $g'$ is strongly \'etale at each point $x' \in f'^{-1}(y'_0)$.
Indeed, $g'$ is stabilizer preserving at 
 $x'$ by hypothesis (1) together with the fact that $g$ is stabilizer preserving at $y'_0$, and $g'(x')$ is a closed point of $\cX$ because $f(g'(x'))$ is closed.  By Proposition \ref{etale-preserving},
 there exists an open substack $\cU' \subset \cX'$ containing the fiber of $y'_0$ such
 that $g'|_{\cU'}$ is strongly \'etale.  Therefore, $y'_0 \notin \cZ = \cY' \setminus f'(\cX' \setminus \cU')$ and 
 $g|_{\cY' \setminus \cZ}$ is strongly \'etale.  By shrinking further using Lemma \ref{lemma-shrinking}, we
 obtain a local quotient presentation $g \co \cY' \to \cY$ about $y_0$ which is strongly \'etale.
 
 Finally, let $y \in \cY(\CC)$ and $x \in \cX(\CC)$ be any preimage.  Set $\cX_0 = \overline{ \{x\} } \subset \cX$ and $\cY_0 = \overline{ \{y \}} \subset \cY$.
 As $\cX_0 \to \cY_0$ is finite and surjective, $\cX_0 \to \Spec(\CC)$ is a good moduli space and $\cY_0$ is a global quotient stack,
 we may conclude using Lemma \ref{lemma-finite-affine-existence} that $\cY_0$ admits a good moduli space.  Therefore, we may apply Theorem \ref{T:keel-mori} to establish the proposition. \epf

\begin{remark*}
The hypothesis that $X$ is separated in Proposition \ref{P:finite-existence} is necessary.  
For example, let $X$ be the affine line with $0$ doubled and let $\ZZ_2$ act on $X$ 
by swapping the points at $0$ and fixing all other points.  Then $X \to [X/\ZZ_2]$ 
satisfies the hypotheses but $[X/\ZZ_2]$ does not admit a good moduli space.
\end{remark*}

\subsection{Application to $\SM_{g,n}(\alpha)$} \label{S:Existence}
In this section, we apply Theorem \ref{T:vgit-existence} to prove that the algebraic stacks 
$\SM_{g,n}(\alpha)$ admit good moduli spaces (Theorem \ref{T:Existence}). 
We have already proved that the inclusions $\SM_{g,n}(\alpha \pe) \hookrightarrow \SM_{g,n}(\alpha)\hookleftarrow \SM_{g,n}(\alpha \me)$ 
arise from local VGIT with respect to $\delta-\psi$ (Theorem \ref{theorem-etale-VGIT}). Thus, it only remains to show that for each critical 
value $\alpha_c \in \{\first, \second, \third\}$, the closed substacks
$$\begin{aligned}
\bar{\cS}_{g,n}(\alpha_c) :=  \bar{\cM}_{g,n}(\alpha_c) \setminus \bar{\cM}_{g,n}(\alpha_c\pe) \\
\bar{\cH}_{g,n}(\alpha_c) := \bar{\cM}_{g,n}(\alpha_c) \setminus \bar{\cM}_{g,n}(\alpha_c\me) 
\end{aligned}$$
admit good moduli spaces. We will prove this statement by induction on $g$. Like the boundary strata of $\SM_{g,n}$,  
$\bar{\cH}_{g,n}(\alpha_c)$ can be described (up to a finite cover) as a product of moduli spaces of $\alpha_c$-stable curves of lower genus. 
Likewise, $\bar{\cS}_{g,n}(\alpha_c)$ can be described (up to a finite cover) 
as stacky projective bundles over moduli spaces of $\alpha_c$-stable curves 
of lower genus. We use induction to deduce that these products and projective bundles admit good moduli spaces,
and then apply Proposition \ref{P:finite-existence} to conclude that $\bar{\cS}_{g,n}(\alpha_c)$ 
and $\bar{\cH}_{g,n}(\alpha_c)$ admit good moduli spaces.

\subsubsection{Existence for $\bar{\cS}_{g,n}(\alpha_c)$}\label{S:Crimp}

\begin{lem}\label{L:S-packets}
We have:
\begin{align*}
 \bar{\cS}_{1,1}(\first)&\simeq B\GG_m\\
\bar{\cS}_{1,2}(\second)& \simeq B\GG_m\\
\bar{\cS}_{2,1}(\third)& \simeq [\AA^1/\GG_m], \text{ where $\GG_m$ acts with weight $1$}.
\end{align*}
In particular, the algebraic stacks $\bar{\cS}_{1,1}(\first)$, $\bar{\cS}_{1,2}(\second)$, $\bar{\cS}_{2,1}(\third)$ admit good moduli spaces.
\end{lem}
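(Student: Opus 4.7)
The plan is to classify the objects of each $\bar{\cS}_{g,n}(\alpha_c)$ and then read off the stack structure using the formal-local description from Proposition \ref{prop-formal-ideals} together with Lemma \ref{lem-deformation-atoms}.

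First I would establish the following classification. An object of $\bar{\cS}_{g,n}(\alpha_c)$ is an $\alpha_c$-stable curve carrying an $\alpha_c$-critical singularity. In case $(g,n)=(1,1)$, $\alpha_c=\first$, a genus-$1$ curve containing a cusp ($\delta$-invariant $1$) with $\omega_C(p)$ ample must be irreducible rational with a single cusp; a short case-check on attaching rational components via adjunction rules out any reducible option. Hence $C$ is the $\firstf$-atom. In case $(g,n)=(1,2)$, $\alpha_c=\second$, the normalization sequence shows that a tacnode (necessarily outer in this genus) forces $C$ to be the tacnodal union of two rational components, and ampleness of $\omega_C(p_1+p_2)$ forces one marked point on each component; this is the $\secondf$-atom. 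In case $(g,n)=(2,1)$, $\alpha_c=\third$, the $\delta$-invariant of a ramphoid cusp equals the arithmetic genus, which forces $C$ to be an irreducible rational curve with one ramphoid cusp and one smooth marked point.

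In the first two cases, the unique closed point is the $\alpha_c$-atom $E$, with $\Aut(E)\simeq\GG_m$ by \S\ref{S:Gm-action}, and Proposition \ref{prop-formal-ideals} gives $I_{\cZ^+}=(\bs)$, which is the full maximal ideal of the coordinate ring of $\TT^1(E)$. Consequently the formal neighborhood of $E$ inside $\cZ^+=\bar{\cS}_{g,n}(\alpha_c)$ is a reduced point, so the classifying morphism $\Spec\CC\to\bar{\cS}_{g,n}(\alpha_c)$ for $E$ is smooth and surjective, and its self-fiber-product is $\Isom(E,E)\simeq\GG_m$; descent yields $\bar{\cS}_{g,n}(\alpha_c)\simeq B\GG_m$.

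For $\bar{\cS}_{2,1}(\third)$, Proposition \ref{prop-formal-ideals} gives $I_{\cZ^+}=(\bs)=(s_0,\ldots,s_3)$, which cuts out the one-dimensional crimping subspace $\Cr^1(E)\subset\TT^1(E)$ with coordinate $c$; Lemma \ref{lem-deformation-atoms} shows that $\Aut(E)\simeq\GG_m$ acts on $c$ with weight $1$. To promote this formal picture to a global identification I would construct the universal algebraic crimping family $\cC\to\AA^1_c$ of rational ramphoid-cuspidal $1$-pointed curves, taking the normalization to be $(\PP^1,\tilde\xi=0,\tilde p=\infty)$ and imposing at $\tilde\xi$ the explicit one-parameter family of $A_4$-subrings of $\CC[[t]]$ from \cite{vdW}. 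By the classification, the induced morphism $\AA^1\to\bar{\cS}_{2,1}(\third)$ is surjective on $\CC$-points; by the formal calculation it is smooth; and its relative isomorphism scheme is the graph of the weight-$1$ $\GG_m$-action. Descent therefore yields $\bar{\cS}_{2,1}(\third)\simeq[\AA^1/\GG_m]$. Finally, each of the three stacks is a global quotient by the reductive group $\GG_m$, hence admits a good moduli space, which in each case is $\Spec\CC$. The main technical step is the explicit construction of the crimping family in the third case; the first two cases reduce immediately to the classification since the corresponding atoms admit no nontrivial equisingular deformations.
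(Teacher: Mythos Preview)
Your proposal is correct and follows essentially the same outline as the paper: classify the $\CC$-points of each $\bar{\cS}_{g,n}(\alpha_c)$, and in the third case build the explicit one-parameter crimping family over $\AA^1$ with its weight-$1$ $\GG_m$-action. The differences are purely in how the stack identifications are finalized. You invoke Proposition~\ref{prop-formal-ideals} to see that $I_{\cZ^+}$ exhausts the maximal ideal (first two cases) or cuts down to the crimping line (third case), and then run a smoothness-plus-descent argument; the paper instead asserts the $B\GG_m$ identifications directly from ``unique $\CC$-point with $\GG_m$-automorphism'' and, for $\bar{\cS}_{2,1}(\third)$, writes the family explicitly via the subalgebra $\CC[t^2+ct^3,\,t^5]\subset\CC[c,t]$ and cites \cite[Theorem~1.109]{vdW} to conclude that the resulting map $[\AA^1/\GG_m]\to\SM_{2,1}(\third)$ is a locally closed immersion onto $\bar{\cS}_{2,1}(\third)$. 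Your route through the formal-local ideals is a clean way to justify that there are no hidden infinitesimal directions in the first two cases, while the paper's immersion argument is a slightly quicker way to avoid computing the isomorphism scheme in the third.
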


\begin{proof} 
The algebraic stacks $\bar{\cS}_{1,1}(\first)$ and $\bar{\cS}_{1,2}(\second)$ each contain a unique $\CC$-point, namely the $\firstf$-atom and the $\secondf$-atom, and each of these curves have a $\GG_m$-automorphism group. The stack $\bar{\cS}_{2,1}(\third)$ contains two isomorphism classes of curves, namely the $\thirdf$-atom, and the rational ramphoid cuspidal curve with non-trivial crimping. We construct this stack explicitly as follows: start with the constant family $(\P^1 \times \AA^1, \infty \times \AA^1)$, let $c$ be a coordinate on $\AA^1$, and $t$ a coordinate on $\PP^1-\infty$. Now let $\P^{1} \times \AA^1 \rightarrow \C$ be the map defined by the inclusion of algebras $\CC[t^2+ct^3, t^5] \subset \CC[c,t]$ on the complement of the infinity section, and defined as an isomorphism on the complement of the zero section. Then $(\C \rightarrow \AA^1, \infty \times \AA^1)$ is a family of rational ramphoid cuspidal curves whose fiber over zero is a $\thirdf$-atom. Furthermore, $\GG_m$ acts on the base and total space of this family by $t \rightarrow \lambda^{-1} t, c \rightarrow \lambda c$, since the subalgebra $\CC[t^2+ct^3, t^5] \subset \CC[c,t]$ is invariant under this action. Thus, the family descends to $[\AA^1/\GG_m]$ and there is an induced map $[\AA^1/\GG_m] \rightarrow \SM_{2,1}(\third)$. This map is a locally closed immersion by \cite[Theorem 1.109]{vdW}, and the image is precisely $\bar{\cS}_{2,1}(\third)$. Thus, $\bar{\cS}_{2,1}(\third) \simeq [\AA^1/\GG_m]$ as desired.
\end{proof}

For higher values of $(g,n)$, the key observation is that every curve in $\bar{\cS}_{g,n}(\alpha_c)$ can be obtained from an $\alpha_c$-stable curve by `sprouting' an appropriate singularity. We make this precise in the following definition.

\begin{defn}\label{D:Sprout}
If $(C,p_1)$ is a 1-pointed curve, we say that $C'$ is a \emph{(ramphoid) cuspidal sprouting} of $(C,p_1)$ if $C'$ contains a (ramphoid) cusp $q \in C'$, and the pointed normalization of $C'$ at $q$ is isomorphic to one of:
\begin{itemize}
\item[(a)] $(C,p_1)$.
\item[(b)] $(C \cup \P^1, \infty)$ where $C$ and $\P^1$ are glued nodally by identifying $p_1 \sim 0$.
\end{itemize}

\noindent If $(C,p_1,p_2)$ is a 2-pointed curve, we say that $C'$ is a \emph{tacnodal sprouting} of $(C,p_1,p_2)$ if $C'$ contains a tacnode $q \in C'$, and the pointed normalization of $C'$ at $q$ is isomorphic to one of:
\begin{enumerate}
\item[(a)] $(C,p_1, p_2)$.
\item[(b)] $(C \cup \P^1, p_1, \infty)$ where $C$ and $\P^1$ are glued nodally by identifying $p_2 \sim 0$.
\item[(c)] $(C \cup \P^1, p_2, \infty)$ where $C$ and $\P^1$ are glued nodally by identifying $p_1 \sim 0$.
\item[(d)] $(C \cup \P^1 \cup \P^1, \infty_1, \infty_2)$ where $C$ is glued nodally to two copies of $\P^1$ along $p_1 \sim 0, p_2 \sim 0$.
\end{enumerate}
In this definition, we allow the possibility that $(C,p_1,p_2)=(C_1,p_1) \coprod (C_2, p_2)$ is disconnected, with one marked point on each connected component.

\noindent If $(C,p_1)$ is a 1-pointed curve, we say that $C'$ is a \emph{one-sided tacnodal sprouting} of $(C,p_1)$ if $C'$ contains a tacnode $q \in C'$, and the pointed normalization of $C'$ at $q$ is isomorphic to one of:
\begin{itemize}
\item[(a)] $(C,p_1) \coprod (\P^1,0)$.
\item[(b)] $(C \cup \P^1, \infty) \coprod (\P^1,0)$ where $C$ and $\P^1$ are glued nodally by identifying $p_1 \sim 0$.\\
\end{itemize}
\end{defn}

\begin{remark*} Suppose $C'$ is a cuspidal sprouting, one-sided tacnodal sprouting or ramphoid cuspidal sprouting of $(C, p_1)$ (resp., tacnodal sprouting of $(C, p_1, p_2)$) with $\alpha_c$-critical singularity $q \in C'$.    Then $(C, p_1)$ (resp., $(C, p_1, p_2)$) is the stable pointed normalization of $C'$ along $q$.  By Lemma \ref{L:StableNorm}, $C'$ is $\alpha_c$-stable 
if and only if $(C, p_1)$ (resp., $(C, p_1, p_2)$) is $\alpha_c$-stable.
\end{remark*}

\begin{lemma}\label{L:Crimp}
Fix $\alpha_c \in \{\first, \second, \third\}$, and suppose $(C, \pn) \in \bar{\cS}_{g,n}(\alpha_c)$.
\begin{enumerate}
\item If $(g,n) \neq (1,1)$, then $(C,\pn)$ is a cuspidal sprouting of a $\first$-stable curve in $\SM_{g-1,n+1}(\first)$.
\item If $(g,n) \neq (1,2)$, then one of the following holds:
\begin{enumerate}
\item $(C,\pn)$ is a tacnodal sprouting of a $\second$-stable curve in $\SM_{g-2,n+2}(\second)$.
\item $(C,\pn)$ is a tacnodal sprouting of a $\second$-stable curve in $\SM_{g-i-1,n-m+1}(\second) \times \SM_{i,m+1}(\second)$.
\item $(C, \pn)$ is a one-sided tacnodal sprouting of a $\second$-stable curve in $\SM_{g-1,n}(\second)$. 
\end{enumerate}
\item If $(g,n) \neq (2,1)$, then $(C,\pn)$ is a ramphoid cuspidal sprouting of a $\third$-stable curve in $\SM_{g-2,n+2}(\third)$.
\end{enumerate}
\end{lemma}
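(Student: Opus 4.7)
The strategy is uniform across all three cases: given $(C, \pn) \in \bar{\cS}_{g,n}(\alpha_c)$, I would select any $\alpha_c$-critical singularity $q \in C$ and form the stable pointed normalization $\tilde C$ of $(C, \pn)$ at $q$, as defined in Section~\ref{S:alpha-stability}. Lemma \ref{L:StableNorm} then guarantees that $\tilde C$ (with its inherited pointed structure) is again $\alpha_c$-stable, and by construction $C$ is a sprouting of $\tilde C$ at $q$ in the sense of Definition \ref{D:Sprout}. The excluded cases $(g,n) \in \{(1,1),(1,2),(2,1)\}$ correspond exactly to those situations in which the stable pointed normalization is undefined, as explained immediately after its introduction.

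For case (1), the cusp $q$ is unibranch with $\delta$-invariant $1$, so the pointed normalization is connected of arithmetic genus $g-1$ with $n+1$ marked points (the original marked points together with the preimage $q'$ of $q$). Stabilization either leaves this curve unchanged, yielding a cuspidal sprouting of type (a), or contracts a single semistable rational component through $q'$, yielding a cuspidal sprouting of type (b); in either situation the resulting pointed curve lies in $\SM_{g-1,n+1}(\first)$. Case (3) is entirely analogous: one replaces the cusp by a ramphoid cusp, still unibranch but with $\delta=2$, and the genus drops by two instead of one.

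Case (2) requires a more involved analysis because the tacnode $q$ has two branches. One first determines from Definition \ref{D:inner-outer} whether $q$ is inner or outer. If $q$ is inner, the pointed normalization is connected of genus $g-2$ with $n+2$ marked points, and after stabilization at each of the two preimages of $q$ we obtain a tacnodal sprouting of type (a) from $\SM_{g-2,n+2}(\second)$. If $q$ is outer, the pointed normalization decomposes into two connected components with total genus $g-1$; depending on whether stabilization preserves both components (yielding type (b)) or deletes one of them via the special two-pointed $\P^1$ provision after the definition of stable pointed normalization (yielding type (c)), we match one of the two remaining subcases. The numerical parameters $g-i-1$, $n-m+1$, etc. then follow directly from tracking genera and the distribution of marked points across the two components.

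The main obstacle is verifying that the enumeration in case (2) is exhaustive, including delicate situations such as when both tacnode preimages lie on the same semistable $\P^1$ or when stabilization must be performed on both sides simultaneously; this is a direct but somewhat tedious case analysis. The $\alpha_c$-stability of the stable pointed normalization at each step is guaranteed by Lemma \ref{L:StableNorm} applied to the single $\alpha_c$-critical singularity $q$, so no further verification is needed beyond the combinatorial bookkeeping.
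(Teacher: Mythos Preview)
Your approach is correct and is essentially identical to the paper's: pick an $\alpha_c$-critical singularity $q$, form the stable pointed normalization (well-defined precisely because of the excluded $(g,n)$), and invoke Lemma~\ref{L:StableNorm} for $\alpha_c$-stability. The paper's proof is in fact just these two sentences; your additional case-by-case discussion of how the sprouting types (a)--(d) arise and the inner/outer tacnode dichotomy in case~(2) is correct elaboration that the paper leaves implicit.
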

\begin{proof}
If $(C, \pn) \in \bar{\cS}_{g,n}(\alpha_c)$, then $(C, \pn)$ contains an $\alpha_c$-critical singularity $q \in C$. The stable pointed normalization of $(C, \pn)$ along $q$ is well-defined by our hypothesis on $(g,n)$, and is  $\alpha_c$-stable by Lemma \ref{L:StableNorm}.
\end{proof}

Lemma \ref{L:Crimp} gives a set-theoretic description of $\bar{\cS}_{g,n}(\alpha_c)$, and we must now augment this to a stack-theoretic description. This means constructing universal families of cuspidal, tacnodal, and ramphoid cuspidal sproutings. A nearly identical construction was carried out in \cite{smyth_elliptic2} for elliptic $m$-fold points (in particular, cusps and tacnodes), and for all curve singularities in \cite{vdW}. The only key difference is that here we allow \emph{all} branches to sprout $\PP^1$'s rather than a restricted subset.  Therefore, we obtain non-separated, stacky compactifications (rather than Deligne-Mumford compactifications) of the associated crimping stack of the singularity. In what follows, if $\C \rightarrow T$ is any family of curves with a section $\tau$, we say that \emph{$\C$ has an $A_k$-singularity along $\tau$} if, \'{e}tale locally on the base, the Henselization of $\C$ along $\tau$ is isomorphic to the Henselization of $T \times \CC[x,y]/(y^2-x^{k+1})$ along the zero section (cf. \cite[Definition 1.64]{vdW}). 

\begin{definition}
Let $\Sprout_{g,n}(A_k)$ denote the stack of flat families of curves $(\C \rightarrow T, \{\sigma_i\}_{i=1}^{n+1})$ satisfying
\begin{enumerate}
\item $(\C \rightarrow T, \sigman)$ is a $T$-point of $\U_{g,n}(A_k)$.
\item $\C$ has an $A_k$-singularity along $\sigma_{n+1}$.
\end{enumerate}
\end{definition}

The fact that $\Sprout_{g,n}(A_k)$ is an algebraic stack over $(\text{Schemes}/\CC)$ is verified in \cite{vdW}. There are obvious forgetful functors
$$
F_k\co  \Sprout_{g,n}(A_k) \rightarrow \U_{g,n}(A_k),
$$
given by forgetting the section $\sigma_{n+1}$.
\begin{proposition}\label{P:F2}
$F_k$ is representable and finite.
\end{proposition}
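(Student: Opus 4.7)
The plan is to exhibit, for any $T$-point $(\C \to T, \sigman)$ of $\U_{g,n}(A_k)$, an explicit finite $T$-scheme representing the fiber product $\Sprout_{g,n}(A_k) \times_{\U_{g,n}(A_k)} T$. First I would unpack the definitions: a $T'$-point of this fiber product is precisely a section $\sigma_{n+1} \co T' \to \C_{T'}$ along which $\C_{T'}$ has an $A_k$-singularity. Since the existing sections $\sigma_1, \ldots, \sigma_n$ lie in the smooth locus of $\C \to T$ while $\sigma_{n+1}$ is forced to pass through the singular locus, the distinctness requirement for sections is automatic and needs no additional bookkeeping.

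Next I would construct an $A_k$-singular locus $Z_k \subseteq \C$ as a closed subscheme whose $T'$-points are exactly sections $T' \to \C_{T'}$ of the required type. \'Etale-locally around any singular point, a family of curves with only $A$-singularities in characteristic zero admits a standard hypersurface form $y^2 = f(x, \mathbf{t})$, and the locus where the fiber acquires precisely an $A_k$-singularity is cut out by the simultaneous vanishing of $f, \partial_x f, \ldots, \partial_x^k f$ together with the non-vanishing of $\partial_x^{k+1} f$. These local closed subschemes glue to a canonical closed subscheme $Z_k \subseteq \C$ whose formation commutes with arbitrary base change $T' \to T$; by construction, $T$-morphisms $T' \to Z_k$ are in natural bijection with the desired sections, so the fiber product is represented by $Z_k$.

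Finally I would show that $Z_k \to T$ is finite. Properness is immediate since $Z_k \hookrightarrow \C$ is a closed immersion and $\C \to T$ is proper. For quasi-finiteness, observe that $\U_{g,n}(A_k)$ allows only singularities of type $A_j$ with $j \leq k$, so the Milnor number on every geometric fiber is bounded by $k$; by upper semi-continuity, $Z_k$ coincides with the locus where the Milnor number attains its maximum $k$, which consists of finitely many isolated points in each fiber. Hence $Z_k \to T$ is proper and quasi-finite, therefore finite, and $F_k$ is representable and finite as claimed.

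The main obstacle is the \'etale-local normalization to the standard form $y^2 = f(x, \mathbf{t})$ and, more importantly, the verification that the resulting local ideals glue to a closed subscheme of $\C$ whose formation commutes with arbitrary base change (so that $Z_k$ genuinely represents the functor rather than merely its underlying set). This is a well-established but somewhat technical piece of deformation theory of planar $A$-singularities; precisely this type of analysis is carried out in \cite{vdW}, on which I would rely for the scheme-theoretic details.
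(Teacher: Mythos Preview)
Your proof is correct but takes a genuinely different route from the paper's. The paper argues the three properties separately: representability is declared ``clear'' (since forgetting a section introduces no new automorphisms), quasi-finiteness is observed by noting that each fiber has only finitely many $A_k$-singularities and that the map on automorphism groups has finite cokernel, and properness is checked via the valuative criterion (the section $\sigma_{n+1}$ extends because $\C \to \Delta$ is proper, and its limit is still an $A_k$-singularity because in $\U_{g,n}(A_k)$ an $A_k$-singularity cannot degenerate further).

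Your approach instead packages all three properties into a single explicit construction: you build the fiber as a closed subscheme $Z_k \subseteq \C$, and finiteness follows at once from $Z_k \hookrightarrow \C \to T$ being a closed immersion into a proper family with finite fibers. This is more hands-on and arguably cleaner once the scheme structure on $Z_k$ is nailed down. The cost is exactly the obstacle you flag: one must check that the \'etale-local ideals $(f, \partial_x f, \ldots, \partial_x^k f)$ glue and that the resulting $Z_k$ represents the correct functor (not merely its points); you correctly defer this to \cite{vdW}. The paper's valuative-criterion argument sidesteps that technicality entirely, at the price of being less explicit about what the fiber actually is. Both proofs ultimately rest on the same geometric fact---that inside $\U_{g,n}(A_k)$ the $A_k$-locus is already maximal and hence closed---but they invoke it at different moments.
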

\begin{proof}
It is clear that $F_k$ is representable.  The fact that $F_2$ is quasi-finite follows from the observations that a curve $(C, \pn)$ in $\cU_{g,n}(A_k)$ has only a finite number of $A_k$-singularities and that for a $\CC$-point $x \in \Sprout_{g,n}(A_k)$, the induced map $\Aut_{\Sprout_{g,n}(A_k)}(x) \to \Aut_{\cU_{g,n}(A_k)}(F_k(x))$ on automorphism groups has finite cokernel.  To show that $F_2$ is finite, it now suffices to verify the valuative criterion for properness: let $\Delta$ be the spectrum of a discrete valuation ring, let $\Delta^*$ denote the spectrum of its fraction field, and suppose we are given a diagram
\[
\xymatrix{
 \Delta^*\ar[r] \ar[d]&\Sprout_{g,n}(A_k) \ar[d]^{F_k}\\
 \Delta \ar[r]&\U_{g,n}(A_k)\\
 }
\]
\noindent This corresponds to a diagram of families,
\[
\xymatrix{
\C_{\Delta^*}\ar[d]^{\pi_{\Delta^*}} \ar[r]&\C\ar[d]_{\pi}\\
\Delta^*\ar[r] \ar@/^1pc/[u]^{\sigma_{n+1}}&\Delta\\
}
\]
such that $\C_{\Delta^*}$ has $A_k$-singularity along $\sigma_{n+1}$. Since $\C \rightarrow \Delta$ is proper, $\sigma_{n+1}$ extends uniquely to a section of $\pi$, and since the limit of an $A_k$-singularity in $\U_{g,n}(A_k)$ is necessarily an $A_k$-singularity, $\C$ has an $A_k$-singularity along $\sigma_{n+1}$. This induces a unique lift $\Delta \rightarrow \Sprout_{g,n}(A_k)$, cf. \cite[Theorem 1.109]{vdW}.
\end{proof}

The algebraic stacks $\Sprout_{g,n}(A_k)$ also admit stable pointed normalization functors, given by forgetting the crimping data of the singularity along $\sigma_{n+1}$. To be precise, if 
$(\C \rightarrow T, \{\sigma\}_{i=1}^{n+1})$ is a $T$-point of  $\Sprout_{g,n}(A_k)$, there exists a commutative diagram
\[
\xymatrix{
&&\tilde{\C} \ar[dll]_{\phi} \ar[dd] \ar[dr]^{\psi}&\\
\C^{s} \ar[drr]&&&\C\ar[dl]\\
&&T \ar@/^1pc/[uu]^{\{\tilde{\sigma}_i\}_{i=1}^{n+\bar{k}}} \ar@/^1pc/[ull]^{\{\sigma^s_i\}_{i=1}^{n+\bar{k}}}  
\ar@/_1pc/[ur]_{\{\sigma_i\}_{i=1}^{n+\bar{k}}}&\\
}
\]
satisfying:
\begin{enumerate}
\item $(\tilde{\C} \rightarrow T, \{\tilde{\sigma}_i\}_{i=1}^{n+\bar{k}})$ is a family of $(n+\bar{k})$-pointed curves, where $\bar{k} \in \{1,2\}$.
\item $\psi$ is the pointed normalization of $\C$ along $\sigma_{n+1}$, i.e. $\psi$ is finite and restricts to an isomorphism on the open set $\tilde{\C}-\cup_{i=1}^{\bar{k}}\tilde{\sigma}_{n+i}$.
\item $\phi$ is the stabilization of $(\tilde{\C}, \{\tilde{\sigma}_i\}_{i=1}^{n+\bar{k}})$, i.e. $\phi$ is the morphism associated to a high multiple of the line bundle $\omega_{\tilde{\C}/T}(\Sigma_{i=1}^{n+\bar{k}}\tilde{\sigma}_i)$.
\end{enumerate}

\begin{remark}
Issues arise when defining the stable pointed normalization for $(g,n)$ small relative to $k$. From now on, we assume $k \in \{2,3,4\}$, and that $(g,n) \neq (1,1), (1,2), (2,1)$ when $k=2,3,4$, respectively. This ensures that the stabilization morphism $\phi$ is well-defined. Indeed, under these hypotheses, $\omega_{\tilde{\C}}(\Sigma_i\tilde{\sigma}_i)$ will be relatively big and nef, and the only components of fibers of $(\tilde{\C}, \{\tilde{\sigma}_i\}_{i=1}^{n+\bar{k}})$ on which $\omega_{\tilde{\C}}(\Sigma_i\tilde{\sigma}_i)$ has degree zero will be $\P^1$'s which meet the rest of the curve in a single node and are marked by one of the sections $\tilde{\sigma}_{n+i}$. The effect of $\phi$ is simply to blow-down these $\P^1$'s. 
\end{remark}

Since normalization and stabilization are canonically defined, the association
$$
(\C \rightarrow T, \sigman) \mapsto (\C^{s} \rightarrow T, \{\sigma_i^{s}\}_{i=1}^{n+\bar{k}})
$$
is functorial, and we obtain normalization functors:
\begin{align*}
N_2\co &\Sprout_{g,n}(A_2)\rightarrow \U_{g-1, n+1}(A_2)\\
N_3\co & \Sprout_{g,n}(A_3)\rightarrow \!\!\!\! \coprod_{\substack{g_1+g_2=g\\ n_1+n_2=n}}\!\!\!\! \big(\U_{g_1, n_1+1}(A_3) \times \U_{g_2,n_2+1}(A_3) \big)\, \coprod \,  \U_{g-2,n+2}(A_3) \, \coprod \, \U_{g-1,n+1}(A_3)\\
 N_4\co & \Sprout_{g,n}(A_4)\rightarrow \U_{g-2, n+1}(A_4)
\end{align*}
The connected components of the range of $N_{3}$ correspond to the different possibilities for the stable pointed normalization of $\C$ along $\sigma_{n+1}$. Note that the last case  $\U_{g-1,n+1}(A_3)$ corresponds to a one-sided tacnodal sprouting, i.e. one connected component of the pointed normalization of $\C$ along $\sigma_{n+1}$ is a family of 2-pointed $\P^{1}$'s. It is convenient to distinguish these possibilities by defining:
\begin{align*}
 \Sprout_{g,n}^{ns}(A_3)&=N_{3}^{-1}(\U_{g-2,n+2}(A_3))\\
\Sprout_{g,n}^{g_1,n_1}(A_3)&=N_3^{-1}\left( \U_{g_1, n_1+1}(A_3) \times \U_{g_2,n_2+1}(A_3)\right)\\
 \Sprout_{g,n}^{0,2}(A_3)&=N_3^{-1}(\U_{g-1,n+1}(A_3))
\end{align*}

The following key proposition shows that $N_k$ makes  $\Sprout_{g,n}(A_k)$ a stacky projective bundle over the moduli stack of pointed normalizations.  

We will use the following notation:  if $\E$ is a locally free sheaf on an algebraic stack $\cX$, we let $V(\E)$ denote the total space of the associated vector bundle, $[V(\E)/\GG_m]$ the quotient stack for the natural action of $\GG_m$ on the fibers of $V(\E)$, and $p\co  [V(\E)/\GG_m] \rightarrow T$ the natural projection.

\begin{proposition}\label{P:F1}
In  the following statements, we let $(\pi\co \C \rightarrow \U_{g,n}(A_k), \{\sigma_i\}_{i=1}^{n})$ denote the universal family over $\U_{g,n}(A_k)$, and $(\pi\co \C \rightarrow \U_{g_1,n_1}(A_k) \times \U_{g_2,n_2}(A_k), \{\sigma_i\}_{i=1}^{n_1}, \{\tau_i\}_{i=1}^{n_2})$ the universal family over $\U_{g_1,n_1}(A_k) \times \U_{g_2,n_2}(A_k)$.
\begin{enumerate}
\item[]
\item Let $\E$ be the invertible sheaf on $\U_{g-1,n+1}(A_2)$ defined by 
\[
\E:= \pi_*\left(\O_{\cC}(-2\sigma_{n+1})/\O_{\cC}(-3\sigma_{n+1})\right)
\]
Then there exists an isomorphism $$\gamma \co  [V(\E)/\GG_m] \simeq  \Sprout_{g,n}(A_2)$$ such that $N_2 \circ \gamma =p$.\\

\item Let $\E$ be the locally free sheaf on $\U_{g-2,n+2}(A_3)$ defined by 
\[
\E:= \pi_*\left(\O_{\cC}(-\sigma_{n+1})/\O_{\cC}(-2\sigma_{n+1})\oplus \O_{\cC}(-\sigma_{n+2})/\O_{\cC}(-2\sigma_{n+2})\right)
\]
Then there exists an isomorphism $$\gamma \co  [V(\E)/\GG_m] \simeq  \Sprout_{g,n}^{ns}(A_3)$$ such that $N_3 \circ \gamma =p$.\\

\item Let $\E$ be the locally free sheaf on $\U_{g_1,n_1+1}(A_3) \times \U_{g_2,n_1+1}(A_3)$ defined by 
\[
\E:= \pi_*\left(\O_{\cC}(-\sigma_{n_1+1})/\O_{\cC}(-2\sigma_{n_1+1})\oplus \O_{\cC}(-\tau_{n_2+1})/\O_{\cC}(-2\tau_{n_2+1})\right)
\]
Then there exists an isomorphism $$\gamma \co  [V(\E)/\GG_m] \simeq  \Sprout_{g,n}^{g_1,n_1}(A_3) $$ such that $N_3 \circ \gamma =p$.\\

\item Let $\E$ be the locally free sheaf on $\U_{g-1,n+1}(A_3)$ defined by 
\[
\E:= \pi_*\left(\O_{\cC}(-\sigma_{n+1})/\O_{\cC}(-2\sigma_{n+1})\right)
\]
Then there exists an isomorphism $$\gamma \co  [V(\E)/\GG_m] \simeq  \Sprout_{g,n}^{0,2}(A_3) $$ such that $N_3 \circ \gamma =p$.\\

\item Let $\E$ be the locally free sheaf on $\U_{g-2,n+1}(A_4)$ defined by 
$$
\E:= \pi_*\left(\O_{\cC}(-2\sigma_{n+1})/\O_{\cC}(-4\sigma_{n+1})\right)
$$
Then there exists an isomorphism $$\gamma \co  [V(\E)/\GG_m] \simeq  \Sprout_{g,n}(A_4)$$ such that $N_4 \circ \gamma =p$.
\end{enumerate}
\end{proposition}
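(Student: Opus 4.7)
The plan is to treat the five cases uniformly; the construction closely parallels the universal crimping families built in \cite{smyth_elliptic2} and \cite{vdW}, the sole novelty being that we permit every branch through $\sigma_{n+1}$ to independently sprout a semistable $\PP^1$. This is precisely what replaces the Deligne--Mumford crimping stacks of those references with the non-separated stacky projective bundles $[V(\E)/\GG_m]$. I will outline the argument for case~(1); cases~(2)--(5) follow by parallel constructions, differing only in the local algebra of the singularity and the rank of $\E$ (rank~$1$ in cases~(1) and (4); rank~$2$ in cases~(2), (3), (5), reflecting the independence of the two branches).

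To construct $\gamma$, observe that a $T$-point of $[V(\E)/\GG_m]$ is, \'etale-locally on $T$, a section $\phi\in\Gamma(T,\E)$ taken up to scaling by $\GG_m(T)$. After choosing a lift $\widetilde\phi$ of $\phi$ to $\O_{\widetilde\C}(-2\sigma_{n+1})$ in a formal neighborhood of $\sigma_{n+1}$, I would form the sheaf of $\O_T$-subalgebras
\[
\O_{\C'} \;:=\; \O_T\cdot 1 \;+\; \O_T\cdot \widetilde\phi \;+\; \pi_*\O_{\widetilde\C}(-3\sigma_{n+1}) \;\subset\; \pi_*\O_{\widetilde\C},
\]
and set $\C' := \underline{\mathrm{Spec}}\,\O_{\C'}$. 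On the open locus where $\phi$ is nonvanishing, a direct computation identifies the completion of $\O_{\C'}$ along $\sigma_{n+1}$ with the standard cusp $\CC[[t^2,t^3]]\subset\CC[[t]]$, so $\C'$ is flat over $T$ with an $A_2$-singularity along $\sigma_{n+1}$. Along the zero locus of $\phi$ the subalgebra description degenerates, and one replaces it by a weighted blow-up of $\widetilde\C\times_T V(\E)$ centered at $\sigma_{n+1}\times\{0\}$ followed by the imposition of a cusp on the sprouted $\PP^1$, modeled on the local analysis in the proof of Lemma~\ref{L:Specialize}. These two constructions glue $\GG_m$-equivariantly over the natural open cover of $V(\E)$ and descend to define $\gamma$.

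For the inverse, the stable pointed normalization functor supplies the underlying $T$-point of $\U_{g-1,n+1}(A_2)$, and the inclusion $\O_\C\hookrightarrow\O_{\widetilde\C}$ determines, modulo $\O_{\widetilde\C}(-3\sigma_{n+1})$, a rank-one subsheaf of $\pi_*(\O_{\widetilde\C}/\O_{\widetilde\C}(-3\sigma_{n+1}))$ transverse to the image of $\O_T$, which one checks is equivalent data to a $T$-point of $[V(\E)/\GG_m]$. That this genuinely inverts $\gamma$ is verified on completed local rings, using the fact that the only subring of $\CC[[t]]$ of colength~$1$ with conductor $(t^2)$ is $\CC[[t^2,t^3]]$, together with the matching of automorphism groups on both sides (trivial on the direct-cusp locus, $\GG_m$ on the sprouted-$\PP^1$ locus). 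Cases~(2)--(5) follow from the analogous normal forms $\CC[[t_1,t_2]]/(t_1 t_2)^2$ for the tacnode and $\CC[[t^2,t^5]]$ for the ramphoid cusp, with the two summands of $\E$ in cases~(2), (3), (5) corresponding to the two branches through the singularity.

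The principal technical obstacle is the analysis at the degenerate locus $\{\phi = 0\}$: one must verify that the blow-up/contract construction yields a flat family in $\Sprout_{g,n}(A_2)$, that it glues $\GG_m$-equivariantly with the subalgebra construction over the overlap, and that the resulting family is correctly recovered by the inverse (producing an invertible subsheaf whose vanishing locus matches the sprouted-$\PP^1$ stratum). Once this is established, compatibility $N_k\circ\gamma = p$ is immediate from the construction, and $\gamma$ is an isomorphism of algebraic stacks because it induces a bijection on $T$-valued points together with matching automorphism groups, for every $\CC$-algebra $T$.
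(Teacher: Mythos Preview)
Your approach for case~(1) is essentially the paper's, though the paper (which carries out case~(5) in detail and leaves the others to the reader) organizes it more efficiently: rather than constructing the family separately over the nonvanishing locus of $\phi$ and over $\{\phi=0\}$ and then gluing---note $\{\phi=0\}$ is closed, so there is no open cover of $V(\E)$ in the literal sense---one pulls the universal family back to all of $V(\E)$, performs a single blow-up of the total space along $\sigma_V(Z)$ for a suitable $Z \subset V(\E)$, observes that on the blow-up the tautological section acquires nowhere-vanishing leading term, and then applies the subalgebra construction uniformly. No patching is required.

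There is, however, a genuine error in your extrapolation to case~(5). You write that the rank-$2$ bundle $\E$ in cases~(2), (3), (5) has ``two summands \ldots\ corresponding to the two branches through the singularity,'' but the ramphoid cusp $y^2 = x^5$ is \emph{unibranch}. In case~(5), $\E = \pi_*\bigl(\O_\C(-2\sigma_{n+1})/\O_\C(-4\sigma_{n+1})\bigr)$ is rank $2$ because the ramphoid cusp has a one-parameter family of crimpings (the coordinate $c$ of Lemma~\ref{lem-deformation-atoms}) in addition to the $\GG_m$-scaling; there is a natural filtration on $\E$ but no decomposition into branch contributions. The paper's construction uses this filtration essentially: the blow-up locus $Z \subset V(\E)$ is not the zero section but the \emph{divisor} where the tautological section projects to zero in the rank-one quotient $\pi_*\bigl(\O_\C(-2\sigma_{n+1})/\O_\C(-3\sigma_{n+1})\bigr)$. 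In local coordinates $(a,b)$ on a fiber of $V(\E)$ with tautological section $at^2 + bt^3$, one blows up along $a = t = 0$; on the blow-up the section becomes $t'^2 + bt'^3$ up to a unit, nowhere-vanishing in leading order, and the subalgebra generated by $t'^2+bt'^3$ and $t'^5$ uniformly produces all ramphoid-cusp crimpings. A two-branch template imported from cases~(2)--(4) would not yield this. (Incidentally, your normal form $\CC[[t_1,t_2]]/(t_1 t_2)^2$ for the tacnode is also incorrect: that ring is not reduced.)
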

\begin{proof}
We prove the hardest case (5), and leave the others as an exercise to the reader. To construct a map $\gamma \co [V(\E)/\GG_m] \rightarrow \Sprout_{g,n}(A_4)$, we start with a family $(\pi\co \C \rightarrow X, \{\sigma_i\}_{i=1}^{n+1})$ in $\U_{g-2,n+1}(A_4)$, and construct a family of ramphoid cuspidal sproutings over $[V(\E_X)/\GG_m]$, where
$$
\E_{X}:= \pi_*\left(\O_{\cC}(-2\sigma_{n+1})/\O_{\cC}(-4\sigma_{n+1})\right).
$$
Let $V:=V(\E_X)$, $p\co  V \rightarrow X$ the natural projection, and $( \C_V \rightarrow V, \sigma_V)$ the family obtained  from $(\C \rightarrow X, \sigma_{n+1})$ by base change along $p$. As the construction is local around $\sigma_{n+1}$, we will not keep track of $\sigman$ for the remainder of the argument. If we set  $\E_{V}=p^*\E_X$,  there exists a tautological section $e\co \O_{V} \rightarrow \E_{V}$.
Let $Z \subset V$ denote the divisor along which the composition $$\O_{V} \rightarrow \E_{V} \rightarrow  (\pi_V)_*\left(\O_{\C_V}(-2\sigma_V)/\O_{\C_V}(-3\sigma_V)\right)$$ vanishes, and let $\phi \co \tilde{\C} \rightarrow \C_V$ be the blow-up of $\C_V$ along $\sigma_{V}(Z)$. Since $\sigma_V(Z) \subset \C_{V}$ is a regular subscheme of codimension 2, the exceptional divisor $E$ of the blow-up is a $\P^{1}$-bundle over $\sigma_{V}(Z)$. In other words, for all $z \in Z$, we have
\[
\tilde{\C}_{z}=\C_{z} \cup E_{z}=\C_{z} \cup \P^1.
\]
Let $\tilde{\sigma}$ be the strict transform of $\sigma_V$ on $\tilde{\C}$, and observe that $\tilde{\sigma}$ passes through a smooth point of the $\P^1$ component in every fiber over $Z$. We will construct a map $\tilde{\C} \rightarrow \C'$ which crimps $\tilde{\sigma}$ to a ramphoid cusp, and $\C' \rightarrow X$ will be the desired family of ramphoid cuspidal sproutings.

Setting $\tilde{\pi}\co  \tilde{\C} \to \cC_V \to V$ and
$$\tilde{\E}=(\tilde{\pi})_*\left(\O_{\tilde{\C}}(-2\tilde{\sigma})/\O_{\tilde{\C}}(-4\tilde{\sigma})\right)$$
\noindent we claim that $e$ induces a section $\tilde{e}\co  \O_{V} \rightarrow \tilde{\E}$ with the property that the composition
$$
\O_{V} \rightarrow  \tilde{\E} \rightarrow \tilde{\pi}_*\left(\O_{\tilde{\C}}(-2\tilde{\sigma})/\O_{\tilde{\C}}(-3\tilde{\sigma})\right)
$$
is never zero. To see this, let
$U=\Spec R \subset X$ be an open affine along which $\E$ is trivial, and choose local coordinates on $a, b$ on $p^{-1}(U)=\Spec R[a,b]$ such that the tautological section $e$ is given by $at^2+bt^3$, where $t$ is a local equation for $\sigma_V$ on $\C_V$. In these coordinates, $\phi$ is the blow-up along $a=t=0$. Let $\tilde{a}, \tilde{t}$ be homogeneous coordinates for the blow-up and note that on the chart $\tilde{a} \neq 0$, $t':=\tilde{t}/\tilde{a}$ gives a local equation for $\tilde{\sigma}_V$. In these coordinates, $\phi$ is given by
$$
(a,b,t') \rightarrow (a,b,at')
$$
The section $at^2+bt^3$ pulls back to $a^3(t'^2+bt'^3)$, and $t'^2+bt'^3$ is a section of $\tilde{\E}$ over $p^{-1}(U)$ with the stated property.

We will use $\tilde{e}$ to construct a map $\psi\co  \tilde{\C} \rightarrow \C'$ such that $\C'$ has a ramphoid cusp along $\psi \circ \tilde{\sigma}$. It is sufficient to define $\psi$ locally around $\tilde{\sigma}$, so we may assume $\tilde{\pi}$ is affine, i.e.
$\tilde{\C}:=\Spec_{V} \tilde{\pi}_*\O_{\tilde{\cC}} \, .$ 
We specify a sheaf of $\O_{V}$-subalgebras of $\tilde{\pi}_*\O_{\tilde{\cC}}$ as follows: Consider the exact sequence
$$
0 \rightarrow \tilde{\pi}_*\cO_{\tilde{\C}_V}(-4\tilde{\sigma}) \rightarrow \tilde{\pi}_*\cO_{\tilde{\C}_V}(-2\tilde{\sigma}) \rightarrow \tilde{\E} \rightarrow 0
$$
and let $\F \subset \tilde{\pi}_*\O_{\tilde{\cC}}$ be the sheaf of $\O_{V}$-subalgebras generated by any inverse image of $\tilde{e}$ and all functions in  $\tilde{\pi}_*\cO_{\tilde{\C}}(-4\tilde{\sigma})$. We let $\psi\co  \Spec_{V} \tilde{\pi}_*\O_{\tilde{\cC}} \rightarrow \C':=\Spec_{V} \F$ be the map corresponding to the inclusion $\F \subset \tilde{\pi}_*\O_{\tilde{\cC}}$. By construction, the complete local ring $\hat{\O}_{C'_v, (\psi \circ \tilde{\sigma})(v)} \subset \hat{\O}_{\tilde{C}_v, \tilde{\sigma}(v)} \simeq \CC[[t]]$ is of the form $\CC[[t^2+bt^3, t^5]] \subset \CC[[t]]$, and this subalgebra is isomorphic to $\CC[[x,y]]/(y^2-x^5)$. 

Finally, we claim that $\C' \rightarrow V$ descends to a family of ramphoid cuspidal sproutings over the quotient stack $[V/\GG_m]$. It suffices to show that the subsheaf $\F \subset \tilde{\pi}_*\O_{\tilde{\C}}$ is invariant under the natural action of $\GG_m$ on $V$. Using the same local coordinates introduced above, the sheaf $\F$ is given over the open set $\Spec R[a,b]$ by the $R[a,b]$-algebra generated by $t'^2+bt'^3$ and $t'^5$, where $t'$ is a local equation for $\tilde{\sigma}$ on $\tilde{\C}$. To see that this algebra is $\GG_m$-invariant, 
note that the $\GG_m$-action on $V=\Spec R[a,b]$ (acting with weight 1 on $a$ and $b$) extends canonically 
to a $\GG_m$-action on the blow-up, 
where $\GG_m$ acts on $\tilde{a}$, $\tilde{t}$ with weight $1$ and $0$, respectively. 
Thus, $\GG_m$ acts on $t'=\tilde{t}/\tilde{u}$ with weight $-1$, 
so the section $t'^2+bt'^3$ is a semi-invariant. It follows that the algebra generated by $t'^2+bt'^3$ and $t'^5$ is $\GG_m$-invariant as desired. Thus, we obtain a family $(\C' \rightarrow [V/\GG_m],\psi \circ \tilde{\sigma})$ in $\Sprout_{g,n}(A_4)$ as desired.

To define an inverse map $i^{-1}\co  \Sprout_{g,n}(A_4) \rightarrow [V/\GG_m]$, we start with a family $(\C \rightarrow X, \sigma)$ in $\U_{g,n}(A_4)$ such that $\C$ has an $A_4$-singularity along $\sigma$.  We must construct a map $X \rightarrow [V(\E)/\GG_m]$. By taking the stable pointed normalization of $\C$ along $\sigma$, we obtain a diagram 
\[
\xymatrix{
&&\tilde{\C} \ar[dll]_{\phi} \ar[dd] \ar[dr]^{\psi}&\\
\C^{s} \ar[drr]&&&\C\ar[dl]\\
&&X \ar@/^1pc/[uu]^{\tilde{\sigma}} \ar@/^1pc/[ull]^{\sigma^s }  \ar@/_1pc/[ur]_{ \sigma }&\\
}
\]
satisfying
\begin{enumerate}
\item $(\tilde{\C} \rightarrow X, \tilde{\sigma})$ is a family of $(n+1)$-pointed curves.
\item $\psi$ is the pointed normalization of $\C$ along $\sigma$, i.e. $\psi$ is finite and restricts to an isomorphism on the open set $\tilde{\C}-\tilde{\sigma}$.
\item $\phi$ is the stabilization of $(\tilde{\C}, \tilde{\sigma})$, i.e. $\phi$ is the morphism associated 
to a high multiple of the relatively nef line bundle $\omega_{\tilde{\C}/X}(\tilde{\sigma})$.
\end{enumerate}
By Lemma \ref{L:StableNorm}, $(\C^{s}\rightarrow X, \sigma_i^s)$ induces a map $X \rightarrow \U_{g-2,n+1}(A_4)$, and we must show that this lifts to define a map $X \rightarrow [V(\E)/\GG_m]$. To see this, let $\F$ be the coherent sheaf defined by the following exact sequence
\[
0 \rightarrow \pi_*\O_{\C} \cap \tilde{\pi}_*\O_{\tilde{\C}}(-4\tilde{\sigma}) \subset \pi_*\O_{\C} \cap \tilde{\pi}_*\O_{\tilde{\C}}(-2\tilde{\sigma}) \rightarrow \F \rightarrow 0.
\]
The condition that $\C$ has a ramphoid cusp along $\psi \circ \tilde{\sigma}$ implies that $\F \subset \tilde{\pi}_*\O_{\tilde{\C}}(-2\sigma)/\O_{\tilde{\C}}(-4\sigma)$ is a rank one subbundle. In particular, $\F$ induces a subbundle of $\pi^s_*\O_{\C^s}(-2\sigma^s)/\O_{\C^s}(-4\sigma^s)$ over the locus of fibers on which $\phi$ is an isomorphism. A local computation, similar to the one performed in the definition of $\gamma$, shows that $\F$ extends to a subsheaf of  $\pi^s_*\O_{\C^s}(-2\sigma^s)/\O_{\C^s}(-4\sigma^s)$ over all of $X$ (though not a subbundle; the morphism on fibers is zero precisely where $\phi$ fails to be an isomorphism). The subsheaf $\F \subset \E$ induces the desired morphism $X \rightarrow [V/\GG_m]$.
\end{proof}

\begin{prop} \label{P:S-existence}
Let $\alpha_c \in \{\first, \second, \third\}$ and suppose that $\bar{\cM}_{g',n'}(\alpha_c)$ admits a proper good moduli space for all $(g',n')$ with $g' < g$.  Then $\bar{\cS}_{g,n}(\alpha_c)$ admits a proper good moduli space.
\end{prop}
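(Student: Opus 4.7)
The plan is to proceed by induction on $g$, with the inductive hypothesis being that $\bar{\cM}_{g',n'}(\alpha_c)$ admits a proper good moduli space for every $(g', n')$ with $g' < g$. The three degenerate base cases $(g,n) = (1,1), (1,2), (2,1)$ (for $\alpha_c = \first, \second, \third$ respectively) are disposed of immediately by Lemma \ref{L:S-packets}, which identifies $\bar{\cS}_{g,n}(\alpha_c)$ as an explicit quotient stack whose good moduli space is evident. In the remaining cases I would construct a finite surjective cover of $\bar{\cS}_{g,n}(\alpha_c)$ by an auxiliary stack that manifestly inherits a proper good moduli space from the inductive hypothesis, and then invoke Proposition \ref{P:finite-existence} to transfer existence down to $\bar{\cS}_{g,n}(\alpha_c)$.

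To construct this cover, recall from Lemma \ref{L:Crimp} (combined with Lemma \ref{L:StableNorm}) that every closed point of $\bar{\cS}_{g,n}(\alpha_c)$ is obtained as an appropriate sprouting of an $\alpha_c$-stable curve of strictly smaller genus. Let $\tilde{\cS}$ denote the disjoint union, over the combinatorial types listed in Lemma \ref{L:Crimp}, of the preimages $N_k^{-1}(\bar{\cM}_{g',n'}(\alpha_c))$ (and, in the disconnected tacnodal case for $\alpha_c=\second$, of products of such stacks) inside the corresponding $\Sprout_{g,n}(A_k)$. By Lemma \ref{L:StableNorm}, the restriction of $F_k$ to $\tilde{\cS}$ factors through $\bar{\cS}_{g,n}(\alpha_c)$, and Proposition \ref{P:F2} together with Lemma \ref{L:Crimp} then shows that $\tilde{\cS} \to \bar{\cS}_{g,n}(\alpha_c)$ is finite and surjective. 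Meanwhile, Proposition \ref{P:F1} realizes each component of $\tilde{\cS}$ as a stack of the form $[V(\E)/\GG_m]$ over a moduli stack (or product of moduli stacks) of lower-genus $\alpha_c$-stable curves, with $\GG_m$ acting by scaling the fibers of $V(\E)$. Since the only $\GG_m$-invariant functions on $V(\E)$ are pulled back from the base, the composition $[V(\E)/\GG_m] \to \bar{\cM}_{g',n'}(\alpha_c) \to M'$ (where $M'$ is the proper good moduli space supplied by induction) is itself a good moduli space morphism; hence $\tilde{\cS}$ admits a proper good moduli space.

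Finally, $\bar{\cS}_{g,n}(\alpha_c)$ is a closed substack of $\bar{\cM}_{g,n}(\alpha_c)$, so by Corollary \ref{C:local-quotient} it is a global quotient stack admitting local quotient presentations. Proposition \ref{P:finite-existence} therefore applies to the finite surjective map $\tilde{\cS} \to \bar{\cS}_{g,n}(\alpha_c)$ and yields a separated good moduli space for $\bar{\cS}_{g,n}(\alpha_c)$, which is moreover proper by the final clause of that proposition. The main technical point where care is needed is in correctly enumerating the combinatorial types for $\alpha_c = \second$ — in particular the one-sided tacnodal sprouting and the disconnected tacnodal sprouting, whose base is a product of two lower-genus moduli stacks — and in verifying that in each case both genus parameters that enter are strictly smaller than $g$ so that the inductive hypothesis is available.
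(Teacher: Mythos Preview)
Your proposal is correct and follows essentially the same approach as the paper: handle the degenerate base cases by Lemma \ref{L:S-packets}, then for each remaining combinatorial type use Proposition \ref{P:F1} to realize the universal sprouting family as $[V(\E)/\GG_m]$ over a lower-genus moduli stack, observe that this inherits a proper good moduli space from the inductive hypothesis, and apply Proposition \ref{P:finite-existence} to the finite surjective map onto $\bar{\cS}_{g,n}(\alpha_c)$ furnished by Proposition \ref{P:F2} and Lemma \ref{L:Crimp}. The paper simply carries this out case-by-case for each $\alpha_c$ rather than packaging everything into a single auxiliary stack $\tilde{\cS}$, but the content is the same.
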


\begin{proof}
Let $\alpha_c=\first$. By Lemma \ref{L:S-packets}, we may assume $(g,n) \neq (1,1)$. By Proposition \ref{P:F1}(1), there is a locally free sheaf $\E$ on $\SM_{g-1, n+1}(\first)$ such that $[V(\E)/\GG_m]$ is the base of the universal family of cuspidal sproutings of curves in $\SM_{g-1, n+1}(\first)$. By Lemma \ref{L:StableNorm}, the fibers of this family are $\first$-stable so there is an induced map
$$
\Psi\co  [V(\E)/\GG_m] \rightarrow \SM_{g,n}(\first).
$$
By Lemma \ref{L:Crimp}, $\Psi$ maps surjectively onto $\bar{\cS}_{g,n}(\first)$. Furthermore, $\Psi$ is finite by Proposition \ref{P:F2}. By hypothesis, $\SM_{g-1,n+1}(\first)$ and therefore $[V(\E)/\GG_m]$  admits a proper good moduli space.  Thus, $\bar{\cS}_{g,n}(\first)$ admits a proper good moduli space by Proposition \ref{P:finite-existence}.

Let $\alpha_c=\second$. By Lemma \ref{L:S-packets}, we may assume $(g,n) \neq (1,2)$.   If $g \geq 2$, Proposition \ref{P:F1}(2) provides a locally free sheaf $\E$ on $\SM_{g-2, n+2}(\second)$ such that $[V(\E)/\GG_m]$ is the base of the universal family of tacnodal sproutings of curves in $\SM_{g-2, n+2}(\second)$, and there is an induced map
$
[V(\E)/\GG_m] \rightarrow \SM_{g,n}(\second).
$
Similarly, for every pair of integers $(i,m)$ such that $\SM_{g-i-1, n-m+1}(\second) \times \SM_{i, m+1}(\second)$ is defined, by Proposition \ref{P:F1}(3), there is a locally free sheaf $\cE$ on $\SM_{g-i-1, n-m+1}(\second) \times \SM_{i, m+1}(\second)$ such that 
$[V(\E)/\GG_m]$
is the universal family of tacnodal sproutings.   By Lemma \ref{L:StableNorm}, there are induced maps
$
[V(\E)/\GG_m] \rightarrow \SM_{g,n}(\second).
$
Finally, Proposition \ref{P:F1}(4) provides a locally free sheaf on $\SM_{g-1, n}(\second)$ such that $[V(\E)/\GG_m]$ is the base of 
the universal family of one-sided tacnodal sproutings of curves in $\SM_{g-1, n}(\second)$. By Lemma \ref{L:StableNorm}, there is an induced map
$
[V(\E)/\GG_m] \rightarrow \SM_{g,n}(\second).
$
The union of the maps $[V(\E)/\GG_m] \rightarrow \SM_{g,n}(\second)$
cover $\bar{\cS}_{g,n}(\second)$ by Lemma \ref{L:Crimp}. Furthermore, each map is finite by Proposition \ref{P:F2}. By hypothesis, each of the stacky projective bundles $[V(\E)/\GG_m]$ admits a proper good moduli space, and therefore so does  $\bar{\cS}_{g,n}(\second)$ by Proposition \ref{P:finite-existence}.

Let $\alpha_c=\third$. By Lemma \ref{L:S-packets}, we may assume $(g,n) \neq (2,1)$.  By Proposition  \ref{P:F1}(5), there is a locally free sheaf $\E$ on $\SM_{g-2, n+1}(\third)$ such that $[V(\E)/\GG_m]$ is the base of the universal family of ramphoid cuspidal sproutings of curves in $\SM_{g-2, n+1}(\third)$. By Lemma \ref{L:StableNorm}, there is an induced map $\Psi\co  [V(\E)/\GG_m] \rightarrow \SM_{g,n}(\third)$
which maps surjectively onto $\bar{\cS}_{g,n}(\third)$ by Lemma \ref{L:Crimp}. Furthermore, $\Psi$ is finite by Proposition \ref{P:F2}. Thus, $\bar{\cS}_{g,n}(\third)$ admits a proper good moduli space by Proposition \ref{P:finite-existence}.
\end{proof}

\subsubsection{Existence for $\bar{\H}_{g,n}(\alpha_c)$}\label{S:HExistence}
In this section, we use induction on $g$ to prove that $\bar{\cH}_{g,n}(\alpha_c)$ 
admits a good moduli space. The base case is handled by the following easy lemma.
\begin{lem}\label{L:H-packets}
We have:
\begin{align*}
\bar{\cH}_{1,1}(\first)&=[\AA^2/\GG_m], \text{ with weights $4,6$.}\\
\bar{\cH}_{1,2}(\second)&=[\AA^3/\GG_m], \text{ with weights $2, 3, 4$.}\\
\bar{\cH}_{2,1}(\third)&=[\AA^4/\GG_m], \text{ with weights $4, 6, 8,10$.}
\end{align*}
In particular, $\bar{\cH}_{1,1}(\first)$, $\bar{\cH}_{1,2}(\second)$, $\bar{\cH}_{2,1}(\third)$ 
each admit a good moduli space.
\end{lem}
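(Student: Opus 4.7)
The plan is to exhibit each stack on the left as a moduli space of hyperelliptic double covers of $\PP^1$ in Weierstrass normal form. First I would identify the $\CC$-points of $\bar{\cH}_{g,n}(\alpha_c)$ explicitly. In each of the three degenerate cases, the small values of $g$ and $n$ force the entire curve to be the ``$\alpha_c$-unstable subcurve'' that gets excluded at $\alpha_c-\epsilon$: for $(g,n)=(1,1)$ every curve in $\bar{\cM}_{1,1}(\first)$ is an elliptic tail (vacuously attached), for $(g,n)=(1,2)$ every curve in $\bar{\cH}_{1,2}(\second)$ is an elliptic bridge with its two marked points as $A_1/A_1$-attaching points, and for $(g,n)=(2,1)$ every curve in $\bar{\cH}_{2,1}(\third)$ is a Weierstrass tail. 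So in each case we are really classifying these three types of curves, allowing the $\alpha_c$-critical singularity.

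Next I would put such a curve into a Weierstrass normal form. By Riemann--Roch the divisors $|2p_1|$, $|p_1+p_2|$, and $|2p_1|$, respectively, each give a base-point-free pencil of degree two, and realize $C$ as a double cover $C\to\PP^1$ sending the distinguished divisor to $\infty$. Killing the sub-leading coefficient of the defining polynomial by a translation of $x$, these covers can be written universally as
\[
y^2=x^3+a_4x+a_6, \qquad y^2=x^4+a_2x^2+a_3x+a_4, \qquad y^2=x^5+a_4x^3+a_6x^2+a_8x+a_{10}.
\]
Each equation defines a flat family of $\alpha_c$-stable curves over $\AA^2$, $\AA^3$, or $\AA^4$ respectively, whose fiber over the origin is the corresponding $\alpha_c$-atom. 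The residual subgroup of automorphisms of $\PP^1$ preserving each form is a single $\GG_m$ acting by $x\mapsto\lambda^2x,\ y\mapsto\lambda^3y$ (resp.\ $x\mapsto\lambda x,\ y\mapsto\lambda^2y$; resp.\ $x\mapsto\lambda^2x,\ y\mapsto\lambda^5y$); a direct substitution shows that a parameter $a_i$ has weight $i$ under this action, reproducing the advertised weights $(4,6)$, $(2,3,4)$, and $(4,6,8,10)$.

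This construction yields a $\GG_m$-equivariant classifying family and hence a morphism $[\AA^n/\GG_m]\to\bar{\cH}_{g,n}(\alpha_c)$. Surjectivity and injectivity on $\CC$-points are the existence and uniqueness of the normal form. To upgrade this to an isomorphism of stacks, I would use that each stack has a unique closed point, namely the corresponding $\alpha_c$-atom, with automorphism group $\GG_m$ matching the stabilizer at the origin of $\AA^n$, and verify that the induced map on first-order deformation spaces is an isomorphism of $\GG_m$-representations; this is exactly the content of Lemma \ref{lem-deformation-atoms}, which identifies $\TT^1$ of the atom with the affine space $\AA^n$ and records precisely these weights. The main subtlety I anticipate is ensuring that the Weierstrass family really exhausts $\bar{\cH}_{g,n}(\alpha_c)$ also at singular members: one must check that every degeneration allowed by $\alpha_c$-stability arises for some choice of $(a_i)$, and conversely that no $(a_i)$ produces a curve outside $\bar{\cH}_{g,n}(\alpha_c)$ (in particular, never a worse singularity than those allowed). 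Once the isomorphism is established, existence of a good moduli space is immediate: $\GG_m$ acts on $\AA^n$ with strictly positive weights, so $\CC[a_i]^{\GG_m}=\CC$ and the good moduli space is $\Spec\CC$.
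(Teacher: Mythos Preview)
Your proposal is correct and follows essentially the same approach as the paper: construct the universal Weierstrass family over $\AA^n$, compute the $\GG_m$-weights, and verify that the induced map $[\AA^n/\GG_m]\to\bar{\cH}_{g,n}(\alpha_c)$ is an isomorphism. In fact you supply more justification than the paper does---the paper simply writes down the family for $\bar{\cH}_{2,1}(\third)$, records the $\GG_m$-action, and says ``one checks'' the induced map is an isomorphism---whereas you explain why the curves are exactly the $\alpha_c$-tails, invoke Riemann--Roch to produce the double cover, and point to Lemma \ref{lem-deformation-atoms} for the matching of deformation spaces.
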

\begin{proof} 
We describe the case of $\bar{\cH}_{2,1}(\third)$, as the other two are essentially identical. 
Consider the family of Weierstrass tails over $\AA^4$ given by:
\[
y^2=x^5z+a_{3}x^{3}z^3+a_2x^2z^4+a_1xz^5+a_0z^6,
\]
where the Weierstrass section is given by $[1,0,0]$. Since $\GG_m$ acts on the base and 
total space of this family by
\[
x \rightarrow \lambda^2x,\,\, y \rightarrow \lambda^5y,\,\, a_i \rightarrow \lambda^{10-2i}a_i,
\]
the family descends to $[\AA^4/\GG_m]$. One checks that the induced map 
$[\AA^4/\GG_m] \rightarrow \bar{\cH}_{2,1}(\third)$ is an isomorphism.
\end{proof}

Lemma \ref{L:H-packets} gives an explicit description of the stack of elliptic tails, 
elliptic bridges, and Weierstrass tails. In the case $\alpha_c=\second$ (resp., $\alpha_c = \third$), we will also need an 
explicit description of the stack of elliptic chains (resp., Weierstrass chains) of length $r$.

\begin{lemma}\label{lem-existence-chains}
Let $r \geq 1$ be an integer, and let 
$$
\cEC_{r} \subset \SM_{2r-1, 2}(\second) \qquad (\text{resp., } \cWC_{r} \subset \SM_{2r, 1}(\third) \,)
$$
denote the closure of the locally closed substack of elliptic chains (resp., Weierstrass chains) of length $r$. Then $\cEC_{r}$ (resp., $\cWC_{r}$) 
admits a good moduli space.
\end{lemma}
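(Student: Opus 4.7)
The plan is to proceed by simultaneous induction on $r$, treating $\cEC_r$ and $\cWC_r$ in parallel. The key input will be the tacnodal sprout construction of Proposition \ref{P:F1}(3), which realizes each chain moduli stack, up to a finite surjective cover, as a stacky projective bundle over a product of a lower-length chain moduli and a base case component.

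For the base case $r=1$, I would verify the identifications $\cEC_1 = \bar{\cH}_{1,2}(\second)$ and $\cWC_1 = \bar{\cH}_{2,1}(\third)$; both of these then admit good moduli spaces by Lemma \ref{L:H-packets} (via the presentations $[\AA^3/\GG_m]$ and $[\AA^4/\GG_m]$, respectively). For the first identification: a $2$-pointed genus-$1$ curve in $\SM_{1,2}(\second)$ fails to be $(\second-\epsilon)$-stable precisely when it contains a tacnode or is an $A_1/A_1$-attached elliptic bridge; since genus-$1$ tacnodal curves are decomposable unions of two rational curves meeting at a tacnode (hence themselves elliptic bridges), and since every elliptic bridge in $\SM_{1,2}(\second)$ is $A_1/A_1$-attached with its endpoints at the two marked points, this describes exactly the closure of the locus of elliptic bridges. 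The Weierstrass case is analogous. For the inductive step with $r \geq 2$, I would apply Proposition \ref{P:F1}(3) with $(g_1, n_1) = (2r-3, 1)$ and $(g_2, n_2) = (1, 1)$ for $\cEC_r$ (resp.\ $(g_2, n_2) = (2, 0)$ for $\cWC_r$) to obtain a rank-$2$ locally free sheaf $\cE$ on the appropriate product of $\U$-stacks, together with an identification $[V(\cE)/\GG_m] \simeq \Sprout_{2r-1,2}^{2r-3,1}(A_3)$ (resp.\ $\Sprout_{2r,1}^{2r-3,1}(A_3)$). Letting $\cY_r$ denote the restriction of this sprout stack to $\cEC_{r-1} \times \cEC_1$ (resp.\ $\cEC_{r-1} \times \cWC_1$), the natural sprouting map $\cY_r \to \SM_{2r-1,2}(\second)$ (resp.\ $\cY_r \to \SM_{2r,1}(\third)$) factors through $\cEC_r$ (resp.\ $\cWC_r$) and is finite by Proposition \ref{P:F2}. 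The stack $\cY_r$ admits a good moduli space because its base $\cEC_{r-1} \times \cEC_1$ does by induction and the base case, and $\cY_r \to \cEC_{r-1} \times \cEC_1$ is representable and cohomologically affine (it is the $\GG_m$-quotient of a vector bundle).

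The main technical point — and where I expect the chief obstacle — is verifying that $\cY_r$ surjects onto $\cEC_r$ (resp.\ $\cWC_r$). My approach is to invoke Corollary \ref{C:Attaching1}: the disconnecting tacnodes of a length-$r$ chain persist in all limits within $\SM_{2r-1,2}(\second)$ (resp.\ $\SM_{2r,1}(\third)$), so every curve in $\cEC_r$ (resp.\ $\cWC_r$) carries a disconnecting tacnode adjacent to a marked point, and normalizing there yields a disjoint union of an element of $\cEC_{r-1}$ and an element of $\cEC_1$ (resp.\ $\cWC_1$) — precisely the data of a preimage in $\cY_r$. Since closed substacks of stacks admitting local quotient presentations also admit them, both $\cEC_r$ and $\cWC_r$ admit local quotient presentations by Corollary \ref{C:local-quotient}, and they are global quotient stacks as closed substacks of $\SM_{2r-1,2}(\second)$ and $\SM_{2r,1}(\third)$, respectively. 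Proposition \ref{P:finite-existence} applied to the finite surjective morphism $\cY_r \to \cEC_r$ (resp.\ $\cY_r \to \cWC_r$) then delivers the sought good moduli space.
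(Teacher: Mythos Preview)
Your approach for $\cEC_r$ is essentially identical to the paper's: same base case, same induction via tacnodal sprouting over $\cEC_{r-1} \times \cEC_1$, same appeal to Propositions \ref{P:F1}(3), \ref{P:F2}, and \ref{P:finite-existence}.

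For $\cWC_r$, however, there is a genuine gap. You claim that the full sprout stack $\cY_r = [V(\cE)/\GG_m]$ over $\cEC_{r-1} \times \cWC_1$ maps into $\SM_{2r,1}(\third)$, but this is false. The tacnodal sprouting construction (Definition \ref{D:Sprout}) allows the attaching point $p_2$ on the elliptic-chain side to sprout a $\PP^1$; when this happens, the elliptic chain $(E,p_1,p_2) \in \cEC_{r-1}$ sits inside the resulting curve with $A_1/A_1$-attaching (at the marked point $p_1$ and at the new node $p_2$), which violates $\third$-stability. So Lemma \ref{L:StableNorm} does not apply on all of $\cY_r$, and the map to $\SM_{2r,1}(\third)$ is only defined on the open complement of the rank-one subbundle corresponding to vanishing of the tautological section in the elliptic-chain direction. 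The paper encounters exactly the same obstruction (with the roles reversed: it inducts via $\cEC_1 \times \cWC_{r-1}$ rather than $\cEC_{r-1} \times \cWC_1$, and the bad locus is where the \emph{elliptic-bridge} side sprouts). The paper's fix is to pass to this open complement $\cV$ and observe that $V(\cE) \setminus V(\text{subbundle})$ is affine over the base, so $\cV$ still inherits a good moduli space from the base; only then does Proposition \ref{P:finite-existence} apply. Your argument needs the same excision and the same affineness observation to go through.
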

\begin{proof}
For elliptic chains, Lemma \ref{L:H-packets} handles the case $r=1$ as $\cEC_{1} = \bar{\cH}_{1,2}(\second)$. By induction on $r$, we may 
assume that $\cEC_{r-1}$ admits a good moduli space.  By Proposition \ref{P:F1}(3), 
there is a locally free sheaf $\E$ on $\cEC_{r-1} \times \bar{\cH}_{1,2}(\second)$ such that $[V(\E)/\GG_m ]$ 
is the base of the universal family of tacnodal sproutings over  $\cEC_{r-1} \times \bar{\cH}_{1,2}(\second)$.  
By Lemma \ref{L:StableNorm}, there is an induced morphism
$
\Psi\co  [\V(\E)/\GG_m] \rightarrow \SM_{2r-1, 2}(\second).
$
The image of $\Psi$ is $\cEC_{r}$, and $\Psi$ is finite by Proposition \ref{P:F2}.   Since $\cEC_{r-1} \times \bar{\cH}_{1,2}(\second)$ admits a good moduli space, Proposition \ref{P:finite-existence} implies that $\cEC_{r}$ admits a good moduli space.

For Weierstrass chains, Lemma \ref{L:H-packets} again handles the case $r=1$ as $\cWC_{1} = \bar{\cH}_{2,1}(\third)$.  By induction, we may assume that $\cWC_{r-1}$ admits a good moduli space.  By Proposition \ref{P:F1}(3), 
there is a locally free sheaf $\E$ on $\bar{\cH}_{1,2}(\second) \times \cWC_{r-1}$ such that $[V(\E)/\GG_m ]$ 
is the base of the universal family of tacnodal sproutings over  $\bar{\cH}_{1,2}(\second) \times \cWC_{r-1}$. Indeed, we may take $
\E$ to be \[\pi_*\left(\O_{\cC}(-\sigma)/\O_{\cC}(-2\sigma)\oplus \O_{\cC}(-\tau)/\O_{\cC}(-2\tau)\right),\] where $\pi\co \cC \rightarrow \bar{\cH}_{1,2}(\second) \times \cWC_{r-1}$ is the universal family, $\sigma$ corresponds to one of the universal sections over $\bar{\cH}_{1,2}(\second)$, and $\tau$ corresponds to the universal section over $\cWC_{r-1}$.  If $\cV \subset [V(\E)/\GG_m]$ is the open locus parameterizing sproutings which do not introduce an elliptic bridge, then $\V$ is the complement of the subbundle $[V(\pi_*\O_{\C}(-\tau))/\GG_m] \subset [V(\E)/\GG_m]$. Since  $\bar{\cH}_{1,2}(\second) \times \cWC_{r-1}$ admits a good moduli space, and $V(\E) \backslash V(\pi_*\O_{\C}(-\tau))$ is affine over $\bar{\cH}_{1,2}(\second) \times \cWC_{r-1}$, $\V$ admits a good moduli space.
By Lemma \ref{L:StableNorm}, there is an induced morphism
$
\Psi\co  \cV \rightarrow \SM_{2r, 1}(\third).
$
The image of $\Psi$ is $\cWC_{r}$ and $\Psi$ is finite by Proposition \ref{P:F2} so Proposition \ref{P:finite-existence} implies that $\cWC_{r}$ admits a good moduli space.
\end{proof}

For higher $(g,n)$, we can use gluing maps to decompose $\bar{\cH}_{g,n}(\alpha_c)$ 
into products of lower-dimensional moduli spaces.
\begin{lem}\label{L:Gluing1}
Let $\alpha_c \in \{\first, \second, \third\}$. There exist finite gluing morphisms
$$
\Psi\co  \SM_{g_1, n_1+1}(\alpha_c) \times \SM_{g_2, n_2+1}(\alpha_c) 
\rightarrow \SM_{g_1+g_2, n_1+n_2}(\alpha_c)
$$
obtained by identifying $(C, \{p_i\}_{i=1}^{n_1+1})$ and $(C', \{p'_i\}_{i=1}^{n_2+1})$ 
nodally at $p_{n_1+1} \sim p'_{n_2+1}$
\end{lem}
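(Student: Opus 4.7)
My plan is to construct $\Psi$ functorially, verify that the output is a family of $\alpha_c$-stable curves, and then show finiteness via representability together with the valuative criterion and a quasi-finiteness count.

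First, given a scheme $T$ and two families $(\cC \to T, \{\sigma_i\}_{i=1}^{n_1+1})$ and $(\cC' \to T, \{\sigma'_i\}_{i=1}^{n_2+1})$, one forms the pushout $\cC \cup_{\sigma_{n_1+1} \sim \sigma'_{n_2+1}} \cC' \to T$ in the category of algebraic spaces. Since $\sigma_{n_1+1}$ and $\sigma'_{n_2+1}$ land in the smooth loci and are disjoint from the other sections, this pushout is a flat proper family whose geometric fibers are nodal unions of the original fibers. Pointwise $\alpha_c$-stability of the glued curve follows from Lemma \ref{L:Glue}(1), which guarantees that nodally gluing two $\alpha_c$-stable pointed curves at marked points preserves $\alpha_c$-stability. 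Thus the construction defines a morphism of algebraic stacks $\Psi$.

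Representability of $\Psi$ is clear: if $(C,\pn)$ is a geometric point of the target together with a choice of a disconnecting node $q$ and a labeling of the two branches, then the pointed normalization is uniquely determined. In particular, the automorphism group of a point in a fiber of $\Psi$ injects into $\Aut(C,\pn)$. For quasi-finiteness, a geometric fiber of $\Psi$ over $(C,\pn)$ is in bijection with the (finite) set of disconnecting nodes $q \in C$ such that the normalization of $C$ at $q$ splits into two components of arithmetic genera $g_1, g_2$ carrying the correct distribution of marked points; up to the possible swap of the two factors when $(g_1,n_1) = (g_2,n_2)$, there are only finitely many such choices.

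The main work is properness, which I would prove via the valuative criterion. Let $\Delta = \Spec R$ be a DVR with generic point $\eta$, and suppose we are given a family $(\cC \to \Delta, \{\rho_i\}_{i=1}^{n_1+n_2})$ of $\alpha_c$-stable curves together with a lift of its generic fiber to a pair of families glued at a section $\tau_\eta$ over $\eta$. Then $\tau_\eta$ corresponds to a section of $\cC_\eta$ that is a disconnecting node of the geometric generic fiber. By Corollary \ref{C:Attaching1}, $\tau_\eta$ extends uniquely to a section $\tau$ of $\cC \to \Delta$ whose central value is again a disconnecting node of $C_0$. Normalizing $\cC$ along $\tau$ produces two connected flat proper families over $\Delta$ extending the given lift, with marked sections the preimages of the $\rho_i$ together with $\tau$; each is $\alpha_c$-stable by Lemma \ref{L:Norm}. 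Uniqueness of this extension is immediate, so $\Psi$ is proper, hence finite.

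The step I expect to be the most delicate is verifying that the pushout of two flat families along sections in the smooth locus is itself flat with the expected geometric fibers; this is standard but needs to be invoked carefully to ensure that $\omega$ remains relatively ample (so that the result lies in $\U_{g,n}$) before applying Lemma \ref{L:Glue}. Everything else is a formal consequence of results established earlier in the excerpt.
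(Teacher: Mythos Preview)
Your argument is correct and follows essentially the same route as the paper: well-definedness via Lemma \ref{L:Glue}, representability and quasi-finiteness by inspection, and properness via the valuative criterion using Corollary \ref{C:Attaching1} to ensure the disconnecting node persists in the limit. The paper's proof is terser but invokes exactly the same ingredients; your additional appeal to Lemma \ref{L:Norm} for $\alpha_c$-stability of the normalized pieces is implicit there.
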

\begin{proof}
$\Psi$ is well-defined by Lemma \ref{L:Glue}. To see that $\Psi$ is finite, 
first observe that $\Psi$ is clearly representable and quasi-finite. Furthermore, 
since the limit of a disconnecting node is a disconnecting node in 
$\bar{\cM}_{g,n}(\alpha_c)$ (Corollary \ref{C:Attaching1}), $\Psi$ satisfies 
the valuative criterion for properness.  
\end{proof}

In the case $\alpha_c=\second$, we will need two additional gluing morphisms.
\begin{lem}\label{L:Gluing2} There exist finite gluing morphisms
$$
\SM_{g,n+2}(\second) \times \cEC_{r} \rightarrow \SM_{g+2r,n}(\second) \, , \qquad
\cEC_{r} \rightarrow \SM_{2r}(\second),
$$
where the first map is obtained by nodally gluing $(C, \{p_i\}_{i=1}^{n+2})$ 
and an elliptic chain $(Z, q_1,q_2)$ at $p_{n+1} \sim q_1$ and $p_{n+2} \sim q_2$, 
and the second map is obtained by nodally self-gluing an elliptic chain $(Z, q_1, q_2)$ at $q_1 \sim q_2$.  
\end{lem}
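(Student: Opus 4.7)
The proof is analogous in spirit to that of Lemma \ref{L:Gluing1}, but requires more care since the attaching nodes of an elliptic chain form a non-disconnecting pair.

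Well-definedness of $\Psi_1\co \SM_{g,n+2}(\second) \times \cEC_r \to \SM_{g+2r,n}(\second)$ follows from Corollary \ref{C:Glue}. The only potential obstruction is that the glued curve might contain an $A_1$- or $A_3$-attached elliptic tail; but such a tail is irreducible of arithmetic genus one, so it would have to lie entirely in $C$ or entirely in $Z$, contradicting the $\secondf$-stability of one of these. Well-definedness of the self-gluing $\Psi_2$ is similar, with the case of an irreducible length-one elliptic bridge self-glued into an irreducible genus-$2$ curve handled by direct inspection. Representability and quasi-finiteness of both morphisms are routine.

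To verify the valuative criterion of properness for $\Psi_1$, let $\C \to \Delta$ be a family in $\SM_{g+2r,n}(\second)$ whose generic fiber contains an $A_1/A_1$-attached elliptic chain of length $r$. The two attaching nodes extend to sections $\tau_1, \tau_2$ of $\pi$, and Corollary \ref{C:Attaching2} provides two possibilities for their limits: (i) they remain distinct nodes in $C_0$, or (ii) they coincide as an outer $A_3$-singularity $\xi_0$. In case (i), simultaneous normalization along $\tau_1 \cup \tau_2$ produces a disjoint union of a family in $\U_{g,n+2}$ and a family in $\U_{2r-1,2}$, both of whose special fibers are $\secondf$-stable, since any $\secondf$-unstable subcurve of either would descend to one in $C_0$. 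Applying Lemma \ref{L:LimitChain} to the chain-side family, together with the constancy of arithmetic genus, then forces its special fiber to lie in $\cEC_r$, providing the required lift.

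The heart of the proof is ruling out case (ii), which I expect to be the main obstacle. The key observation is that the internal tacnodes of a length-$r$ elliptic chain (for $r \geq 2$), as well as the tacnode of a reducible elliptic bridge (for $r=1$), are disconnecting $A_3$-singularities, and therefore persist in any limit by Corollary \ref{C:Attaching1}. In particular, the distinct chain components $E_1$ and $E_r$, on which $q_1$ and $q_2$ respectively lie, remain disjoint in the chain-side normalization of $C_0$, so the limit sections $q_1(0)$ and $q_2(0)$ cannot coincide, contradicting the configuration forced by case (ii). The only remaining possibility is $r = 1$ with an irreducible elliptic bridge, in which case Lemma \ref{L:HmLimits}(1) identifies $(Z_0, \xi_0^{\mathrm{in}})$ as an elliptic tail; but then $C_0$ contains an $A_3$-attached elliptic tail, violating $\secondf$-stability. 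The self-gluing map $\Psi_2$ is handled by an entirely analogous argument, with a single section corresponding to the self-node and the same rigidity of the internal chain structure under limits.
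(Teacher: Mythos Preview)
Your overall strategy is sound and considerably more detailed than the paper's two-line proof (``well-defined by Lemma~\ref{L:Glue}, finiteness as in Lemma~\ref{L:Gluing1}''), but there is a genuine gap in your treatment of case (ii).

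The claim that the internal tacnodes of the elliptic chain are \emph{disconnecting} $A_3$-singularities is false in the glued curve $C\cup Z$. Once $C$ is attached at both ends of the chain, removing the tacnode $E_i\cap E_{i+1}$ does not disconnect: both halves $E_1\cup\cdots\cup E_i$ and $E_{i+1}\cup\cdots\cup E_r$ remain joined through $C$ via the nodes $q_1$ and $q_2$. Hence Corollary~\ref{C:Attaching1} does not apply as you invoke it. Your subsequent sentence about ``the chain-side normalization of $C_0$'' is also problematic in case (ii): if $\tau_1(0)=\tau_2(0)$, there is no normalization along two nodal sections to perform, so it is unclear what object you are referring to.

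The fix is simpler than what you attempt. Go back to Proposition~\ref{P:limits-outer} (which already underlies Corollary~\ref{C:Attaching2}): two outer nodes can coalesce only if they join the \emph{same} pair of irreducible components in the generic fiber. For $r\geq 2$, the node $q_1$ lies on $E_1$ while $q_2$ lies on $E_r\neq E_1$, so case (ii) is immediately excluded. This is exactly the observation made in the proof of Proposition~\ref{P:Openness}(2). For $r=1$ with a reducible bridge the same argument applies (the two marked points lie on distinct rational components), and your handling of the irreducible $r=1$ case via the forbidden $A_3$-attached elliptic tail is correct. Incidentally, the internal tacnodes \emph{do} persist in the limit, but for the trivial reason that $A_3$ is the maximal singularity allowed in $\U_{g,n}(A_3)$, not because of any disconnectedness.
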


\begin{proof}
These gluing maps are well-defined by Lemma \ref{L:Glue}, and finiteness 
follows as in Lemma \ref{L:Gluing1}.
\end{proof}

\begin{prop} \label{P:H-existence} Let $\alpha_c \in \{\first, \second, \third\}$ 
and suppose that $\bar{\cM}_{g',n'}(\alpha_c)$ admits a proper good moduli space 
for all $(g',n')$ satisfying $g' < g$.  Then $\bar{\cH}_{g,n}(\alpha_c)$ admits a 
proper good moduli space.
\end{prop}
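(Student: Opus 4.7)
The plan mirrors the proof of Proposition \ref{P:S-existence}. For each critical value $\alpha_c$, I will construct a finite surjective morphism onto $\bar{\cH}_{g,n}(\alpha_c)$ from a disjoint union of products of lower-genus moduli spaces (which admit proper good moduli spaces by the inductive hypothesis) and the chain moduli stacks $\cEC_r$, $\cWC_r$ (which admit proper good moduli spaces by Lemma \ref{lem-existence-chains}), and then invoke Proposition \ref{P:finite-existence}. The hypotheses of that proposition are easy to verify since $\bar{\cM}_{g,n}(\alpha_c)$ is a global quotient stack admitting local quotient presentations by Corollary \ref{C:local-quotient}, and these properties descend to the closed substack $\bar{\cH}_{g,n}(\alpha_c)$.

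For $\alpha_c = \first$ and $(g,n) \neq (1,1)$ (the base case $(1,1)$ being Lemma \ref{L:H-packets}), every curve in $\bar{\cH}_{g,n}(\first)$ arises by nodally attaching an elliptic tail to a $\first$-stable curve of genus $g-1$ with one additional marked point. Lemma \ref{L:Gluing1} therefore furnishes a finite surjective morphism
\[
\bar{\cM}_{g-1,\,n+1}(\first) \times \bar{\cH}_{1,1}(\first) \twoheadrightarrow \bar{\cH}_{g,n}(\first),
\]
whose source has a proper good moduli space by induction and Lemma \ref{L:H-packets}.

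For $\alpha_c = \third$, the same strategy produces, for each $r \geq 1$, a finite surjective morphism
\[
\bar{\cM}_{g-2r,\,n+1}(\third) \times \cWC_r \to \bar{\cM}_{g,n}(\third),
\]
whose images together exhaust $\bar{\cH}_{g,n}(\third)$; the source admits a proper good moduli space by induction and Lemma \ref{lem-existence-chains}.

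The main obstacle is the case $\alpha_c = \second$, where an $A_1/A_1$-attached elliptic chain $E$ in $(C,\pn)$ may be non-separating, separating, or equal to all of $C$ (with its two endpoints self-glued, possible only when $n=0$). Accordingly, $\bar{\cH}_{g,n}(\second)$ will be covered by the images of three families of finite morphisms built from Lemmas \ref{L:Gluing1} and \ref{L:Gluing2}:
\[
\bar{\cM}_{g-2r,\,n+2}(\second)\times\cEC_r,\quad
\bar{\cM}_{g_1,n_1+1}(\second)\times\bar{\cM}_{g_2,n_2+1}(\second)\times\cEC_r,\quad
\cEC_{g-1},
\]
indexed by $r \geq 1$ and (in the separating case) by decompositions $g_1+g_2 = g-2r+1$, $n_1+n_2 = n$. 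The non-routine step will be checking that these gluing morphisms cover $\bar{\cH}_{g,n}(\second)$ set-theoretically, which amounts to showing that any $(C,\pn) \in \bar{\cH}_{g,n}(\second)$ is obtained by gluing some $\second$-stable configuration to an elliptic chain along its two attaching nodes. Once this surjectivity is verified, Proposition \ref{P:finite-existence} delivers a proper good moduli space for $\bar{\cH}_{g,n}(\second)$, completing the induction.
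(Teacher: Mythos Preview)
Your overall strategy matches the paper's, but your list of gluing morphisms for $\alpha_c=\second$ is incomplete. Recall from Definition~\ref{D:Attaching} that an $A_1$-attachment point of an elliptic chain may be a \emph{marked point} of $(C,\pn)$ rather than a node. Your trichotomy ``non-separating / separating / self-glued'' tacitly assumes both endpoints $q_1,q_2$ are nodes of $C$, and so omits the case where exactly one endpoint coincides with some $p_i$. Concretely: take $C = K\cup E$ with $K$ a smooth genus~$2$ curve, $E$ a smooth elliptic bridge, $q_1 = K\cap E$ the unique node, and $q_2 = p_1$ the marked point. Then $(C,p_1)\in\bar{\cH}_{3,1}(\second)$, but it lies in the image of none of your three morphisms (it has only one node, so it cannot arise from a two-node gluing; the complement of $E$ is connected, so it is not separating; and $n\neq 0$). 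The paper handles this by including an additional family of finite gluing morphisms from Lemma~\ref{L:Gluing1},
\[
\bar{\cM}_{g-2r+1,\,n}(\second)\times\cEC_r \longrightarrow \bar{\cH}_{g,n}(\second),
\]
which attaches the chain at a single node and lets the remaining endpoint $q_2$ become one of the $n$ marked points of the resulting curve.

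A minor indexing slip: your self-glued morphism should be $\cEC_{g/2}\to\bar{\cH}_{g,0}(\second)$ (for $g$ even), not $\cEC_{g-1}$; an elliptic chain of length $r$ has arithmetic genus $2r-1$, so self-gluing produces a curve of genus $2r$.
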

\begin{proof}
Let $\alpha_c = \first$. By Lemma \ref{L:H-packets}, we may assume $(g,n) \neq (1,1)$. 
By Lemma \ref{L:Gluing1}, there exists a finite gluing morphism
\[
 \Psi\co  \bar{\cM}_{g-1,n+1}(\first) \times  \bar{\cH}_{1,1}(\first) \to \SM_{g,n}(\first),
\]
whose image is precisely $\bar{\cH}_{g,n}(\first)$. Now $\bar{\cH}_{g,n}(\first)$ admits a 
proper good moduli space by Proposition \ref{P:finite-existence}.
 
Let $\alpha_c = \second$. For every $r$ such that $\bar{\cM}_{g-2r,n+2}(\second)$  
(resp.,  $\bar{\cM}_{g-2r-1,n}(\second)$) exists, Lemma \ref{L:Gluing2} 
(resp., Lemma \ref{L:Gluing1}) gives a finite gluing morphism
\begin{align*}
 & \bar{\cM}_{g-2r,n+2}(\second) \times \cEC_{r} \to \bar{\cH}_{g,n}(\second) \\
\bigl(\text{resp.,} \ \ & \bar{\cM}_{g-2r-1,n}(\second) \times \cEC_{r} \to \bar{\cH}_{g,n}(\second) \bigr),
\end{align*}
that identifies $(C, \{p_i\}_{i=1}^{n+2})$ (resp., $(C, \{p_i\}_{i=1}^{n})$) to $(Z,q_1,q_2)$ 
at $p_{n+1} \sim q_1$, $p_{n+2} \sim q_2$ (resp., $p_n \sim q_1$). In addition, for every 
triple of integers $(i,m,r)$ such that 
$\bar{\cM}_{i,m+1}(\second) \times \bar{\cM}_{g-i-2r+1,n-m+1}(\second)$ exists, 
Lemma \ref{L:Gluing1} gives a finite gluing morphism
\begin{align*}
\bar{\cM}_{i,m+1}(\second) \times \bar{\cM}_{g-i-2r+1,n-m+1}(\second) \times \cEC_{r} \to \bar{\cH}_{g,n}(\second), 
 \end{align*}
 which identifies $(C, \{p_i\}_{i=1}^{m+1})$, $(C',\{p'_i\}_{i=1}^{n-m+1})$, 
 $(Z,q_1,q_2)$ nodally at $p_{m+1} \sim q_1$, $p'_{n-m+1} \sim q_2$. 
 Finally, if $(g,n)=(2r,0)$, Lemma \ref{L:Gluing2} gives a finite gluing morphism
\begin{align*}
\cEC_{r} \to \bar{\cH}_{2r}(\second),
\end{align*}
which nodally self-glues $(Z,q_1,q_2)$ at $q_1 \sim q_2$.
The union of these gluing morphisms covers $\bar{\cH}_{g,n}(\second)$. Thus,  
$\bar{\cH}_{g,n}(\second)$ admits a proper good moduli space by Proposition \ref{P:finite-existence} and Lemma \ref{lem-existence-chains}.

Let $\alpha_c = \third$. By Lemma \ref{L:H-packets}, we may assume $(g,n) \neq (2,1)$. 
For each $r=1, \ldots, \lfloor \frac{g}{2} \rfloor$, Lemma \ref{L:Gluing1} provides a finite gluing morphism
\[
 \bar{\cM}_{g-2r,n+1}(\third) \times  \cWC_{r}(\third) \to \SM_{g,n}(\third)
\]
(if $r=g/2$ and $n=1$, we consider $ \bar{\cM}_{g-2r,n+1}(\third)$ as the emptyset).
The union of these gluing morphisms cover $\bar{\cH}_{g,n}(\third)$. Now $\bar{\cH}_{g,n}(\third)$ 
admits a proper good moduli space by Proposition \ref{P:finite-existence} and Lemma \ref{lem-existence-chains}.
\end{proof}

\subsubsection{Existence for $\SM_{g,n}(\alpha)$}
\label{S:ExistenceFinal}

\begin{theorem}\label{T:Existence}
For every $\alpha \in (\thirdme, 1]$, $\SM_{g,n}(\alpha)$ admits a good moduli space $\gM_{g,n}(\alpha)$ which is a proper algebraic space. 
Furthermore, for each critical value $\alpha_c \in \{\third, \second, \first\}$, 
there exists a diagram
\[\xymatrix{
\SM_{g,n}(\alpha_c \pe)\ar[d] \ar@{^(->}[r]		& \SM_{g,n}(\alpha_c)  \ar[d]	
	&\SM_{g,n}(\alpha_c \me) \ar@{_(->}[l] \ar[d]\\
\gM_{g,n}(\alpha_c \pe)  \ar[r]	& \gM_{g,n}(\alpha_c)	& \gM_{g,n}(\alpha_c \me) \ar[l]
}
\]
where $\SM_{g,n}(\alpha_c) \to  \gM_{g,n}(\alpha_c)$, $\SM_{g,n}(\alpha_c \pe) \to \gM_{g,n}(\alpha_c \pe)$ and 
$\SM_{g,n}(\alpha_c \me) \to \gM_{g,n}(\alpha_c \me)$ are good moduli spaces, and where 
$\gM_{g,n}(\alpha_c \pe) \to  \gM_{g,n}(\alpha_c)$ and $\gM_{g,n}(\alpha_c \me) \to  \gM_{g,n}(\alpha_c)$ 
are proper morphisms of algebraic spaces.
\end{theorem}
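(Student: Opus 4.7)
My plan is to prove the theorem by a double induction: an outer induction on the arithmetic genus $g$, and for each fixed $g$, a descending inner induction through the three critical values $\alpha_c = \firstf, \secondf, \thirdf$. Each inner step is a single application of the local-VGIT existence criterion Theorem \ref{T:vgit-existence}, whose hypotheses have been arranged by the results already in the paper.

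For $\alpha \in (\firstf, 1]$ one has $\SM_{g,n}(\alpha) = \SM_{g,n}$, which is a proper Deligne--Mumford stack and hence admits a proper coarse moduli space $\M_{g,n}$ by Keel--Mori; this supplies the starting input. The degenerate cases listed after the Main Theorem -- e.g. $(g,n) \in \{(1,1),(1,2),(2,0),(2,1)\}$, where the diagram terminates early, or $g=1,\, n\ge 3$ and $(g,n) \in \{(3,0),(3,1)\}$, where $\alpha$-stability does not change at $\thirdf$ -- I would dispatch directly from the explicit quotient descriptions in Lemmas \ref{L:S-packets} and \ref{L:H-packets}; these serve simultaneously as the base cases of the outer induction on $g$.

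Now fix $g$ and assume inductively that $\SM_{g',n'}(\alpha')$ admits a proper good moduli space for every $g' < g$ and every $\alpha' \in (\thirdfme, 1]$. For the first inner step I apply Theorem \ref{T:vgit-existence} at $\alpha_c = \firstf$ with $\cX = \SM_{g,n}(\firstf)$, $\cX^+ = \SM_{g,n}(\firstfpe) = \SM_{g,n}$, $\cX^- = \SM_{g,n}(\firstfme)$, and line bundle $\cL = \delta - \psi$, so that $\cZ^+ = \bar{\cS}_{g,n}(\firstf)$ and $\cZ^- = \bar{\cH}_{g,n}(\firstf)$ by definition. The local-VGIT hypothesis (2) of Theorem \ref{T:vgit-existence} is precisely the content of Theorem \ref{theorem-etale-VGIT}, and the three required good moduli spaces are furnished respectively by Keel--Mori and by Propositions \ref{P:S-existence} and \ref{P:H-existence} -- whose hypotheses are exactly the outer induction hypothesis. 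Theorem \ref{T:vgit-existence} then produces proper good moduli spaces $\SM_{g,n}(\firstf) \to \gM_{g,n}(\firstf)$ and $\SM_{g,n}(\firstfme) \to \gM_{g,n}(\firstfme)$ together with proper surjective morphisms on spaces, with properness of the target inherited from $\M_{g,n}$ via the ``in particular'' clause. Iterating verbatim at $\alpha_c = \secondf$ and then $\alpha_c = \thirdf$ -- at each stage, the proper good moduli space of $\SM_{g,n}(\alpha_c + \epsilon)$ produced by the previous iteration plays the role of $\cX^+$ -- completes the outer inductive step. For $\alpha$ in the open intervals between critical values, $\SM_{g,n}(\alpha)$ coincides with one of the stacks already handled. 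Assembling the three squares produced by Theorem \ref{T:vgit-existence} with the open immersions $i_c^\pm$ from Theorem \ref{T:algebraicity} yields the diagram in the theorem.

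\textbf{Main obstacle.} The substantive difficulties have already been isolated in earlier sections: the local VGIT description of the flips (Theorem \ref{theorem-etale-VGIT}) and the abstract existence machinery (Theorem \ref{T:vgit-existence}). Given these, the only genuine care needed in carrying out the induction is bookkeeping the degenerate low-genus and low-$n$ cases in which the main diagram collapses; since Propositions \ref{P:S-existence} and \ref{P:H-existence} implicitly assume non-degenerate $(g,n)$, these exceptional cases cannot be skipped and must be verified by hand from Lemmas \ref{L:S-packets}--\ref{L:H-packets}.
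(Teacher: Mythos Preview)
Your proposal is correct and follows essentially the same double induction as the paper's proof. Two minor points of bookkeeping: the paper takes $g=0$ (where $\SM_{0,n}(\alpha)=\SM_{0,n}$ for all $\alpha$) as the base of the outer induction rather than the degenerate low-$(g,n)$ cases, and the degenerate cases you flag as a potential obstacle are in fact already absorbed into the proofs of Propositions \ref{P:S-existence} and \ref{P:H-existence} via Lemmas \ref{L:S-packets}--\ref{L:H-packets}, so no separate hand-verification is required.
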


\begin{remark*} The reader should not confuse $\gM_{g,n}(\alpha)$ with the projective variety $\bar{M}_{g,n}(\alpha)$
defined in \eqref{E:log-canonical-models-2}.  
The goal of Section \ref{S:projectivity} is to establish the isomorphism $\gM_{g,n}(\alpha)  \simeq \M_{g,n}(\alpha)$.
\end{remark*}

\begin{proof}
Fix $\alpha_c \in \{\first, \second, \third\}$. Note that $\SM_{0,n}(\alpha_c)=\SM_{0,n}$, 
so $\SM_{0,n}(\alpha_c)$ admits a proper good moduli space for all $n$. By induction on $g$, 
we may assume that $\SM_{g',n'}(\alpha_c)$ admits a proper good moduli space for all $(g',n')$ with $g'<g$.  
Note that $\SM_{g,n}(\alpha)=\Mg{g,n}$ for $\alpha > \first$. 
By descending induction on $\alpha$, 
we may now assume that $\SM_{g,n}(\alpha)$ admits a good moduli space for all $\alpha \geq \alpha_c \pe$. 
By Theorem \ref{theorem-etale-VGIT}, the inclusions 
$\SM_{g,n}(\alpha \pe) \hookrightarrow \SM_{g,n}(\alpha)\hookleftarrow \SM_{g,n}(\alpha \me)$ 
arise from local VGIT with respect to $\delta-\psi$, and Propositions \ref{P:H-existence} and \ref{P:S-existence} 
imply that
 $\bar{\cH}_{g,n}(\alpha_c)=\bar{\cM}_{g,n}(\alpha_c) \setminus \bar{\cM}_{g,n}(\alpha_c \me)$ and $\bar{\cS}_{g,n}(\alpha_c)=\bar{\cM}_{g,n}(\alpha_c) \setminus \bar{\cM}_{g,n}(\alpha_c \pe)$ admit proper good moduli spaces. 
Now Theorem \ref{T:vgit-existence} implies that  $\SM_{g,n}(\alpha_c)$ and $\SM_{g,n}(\alpha_c \me)$ admit proper good moduli spaces fitting into the stated diagram.
\end{proof}

\section{Projectivity of the good moduli spaces} 
\label{S:projectivity}


Theorem \ref{T:Existence} establishes the existence of the good moduli space 
$\phi_{\alpha} \co \SM_{g,n}(\alpha) \rightarrow \gM_{g,n}(\alpha)$ for $\alpha>\thirdme$.
Since $\Mg{g,n}(\alpha)$ parameterizes unobstructed curves, it is a smooth algebraic stack and so has a
canonical divisor $K_{\Mg{g,n}(\alpha)}$. 
Because non-nodal curves in $\Mg{g,n}(\alpha)$ form a closed substack of codimension $2$,
the standard formula gives $K_{\Mg{g,n}(\alpha)}=13\lambda-2\delta+\psi$,
cf. \cite[Theorem 2.6]{logan-kodaira}.  
The main result of this section says that 
$\gM_{g,n}(\alpha)$ is projective and isomorphic to the log canonical model $\M_{g,n}(\alpha)$ defined by \eqref{E:log-canonical-models-2}:
\begin{theorem}\label{T:Projectivity}
For $\alpha>\thirdme$, the following statements hold:
\begin{enumerate}
\item The line bundle $K_{\Mg{g,n}(\alpha)}+\alpha\delta+(1-\alpha)\psi$ descends to an ample line bundle
on $\gM_{g,n}(\alpha)$. 
\item $\displaystyle{\gM_{g,n}(\alpha) \simeq \M_{g,n}(\alpha)}$.
\end{enumerate}
\end{theorem}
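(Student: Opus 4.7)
The plan is to address the two assertions in sequence; the core difficulty lies in proving ampleness in (1), after which (2) will follow by a formal argument comparing Proj rings.

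For descent in (1), I would first show that $L_\alpha := K_{\SM_{g,n}(\alpha)} + \alpha\delta + (1-\alpha)\psi$ descends from the stack $\SM_{g,n}(\alpha)$ to a $\QQ$-line bundle $M_\alpha$ on $\gM_{g,n}(\alpha)$. At the critical values $\alpha_c$, Proposition~\ref{P:trivial-characters} asserts precisely that the stabilizer of every $\alpha_c$-closed curve acts with trivial character on the fiber of $L_{\alpha_c}$, i.e., triviality of the character at every closed point of $\SM_{g,n}(\alpha_c)$. For non-critical $\alpha$, closed points of $\SM_{g,n}(\alpha)$ are closed points of some $\SM_{g,n}(\alpha_c \pm \epsilon)$ contained in the relevant open substack and the same computation (using that $L_\alpha$ is a $\QQ$-linear combination of the line bundles appearing in Proposition \ref{P:trivial-characters} plus a multiple of $\delta-\psi$, whose character at closed points of the open substacks on the \emph{non-critical} side is trivial on the identity component of the stabilizer) yields triviality of the character. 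By the descent criterion of Alper generalizing Kempf's lemma, $L_\alpha$ then descends to $M_\alpha$.

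For ampleness of $M_\alpha$, I would argue by descending induction on $\alpha$, starting from $\alpha=1$ and crossing each critical value in turn. The base case $\alpha \in (\first,1]$ is the statement that $L_\alpha = 13\lambda - (2-\alpha)(\delta-\psi)$ is ample on $\SM_{g,n}$, which follows from Cornalba--Harris--Moriwaki ampleness of $13\lambda - \delta + \psi$ combined with the fact that $L_\alpha$ stays in the ample cone over the interval $(\first, 1]$. For the inductive step at a critical value $\alpha_c$, assume $M_\alpha$ is ample on $\gM_{g,n}(\alpha)$ for all $\alpha > \alpha_c$. The two ingredients are:
\begin{itemize}
\item[(i)] a \emph{direct} positivity calculation showing $L_{\alpha_c}$ is nef on both $\SM_{g,n}(\alpha_c \pm \epsilon)$, which descends to nefness of $(j_c^{\pm})^* M_{\alpha_c}$ on $\gM_{g,n}(\alpha_c \pm \epsilon)$;
\item[(ii)] Proposition~\ref{P:vgit-quotients} applied locally to the VGIT presentations of Theorem~\ref{theorem-etale-VGIT}, which implies $\delta-\psi$ (resp.\ $\psi-\delta$) descends to a line bundle $N^+$ on $\gM_{g,n}(\alpha_c + \epsilon)$ (resp.\ $N^-$ on $\gM_{g,n}(\alpha_c - \epsilon)$) that is $j_c^+$-relatively ample (resp.\ $j_c^-$-relatively ample) over $\gM_{g,n}(\alpha_c)$.
\end{itemize}
Using the identity $M_{\alpha_c + \epsilon} = (j_c^+)^* M_{\alpha_c} + \epsilon N^+$, ampleness of $M_{\alpha_c + \epsilon}$ together with (i), (ii), and Nakai--Moishezon (applied to test subvarieties that either map finitely or are contracted by $j_c^+$) yields ampleness of $M_{\alpha_c}$. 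Then for $\alpha \in (\alpha_{c+1}, \alpha_c)$, the formula $M_\alpha = (j_c^-)^* M_{\alpha_c} + (\alpha_c - \alpha) N^-$ expresses $M_\alpha$ as a nef class plus a small $j_c^-$-relatively ample class, hence is ample by Kleiman's criterion.

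For assertion (2), once $M_\alpha$ is known ample on $\gM_{g,n}(\alpha)$, the space is recovered as $\mathrm{Proj} \bigoplus_m \HH^0(\gM_{g,n}(\alpha), M_\alpha^{\otimes m})$. Since $\phi_\alpha \co \SM_{g,n}(\alpha) \to \gM_{g,n}(\alpha)$ is a good moduli space, $(\phi_\alpha)_* \O = \O$, giving $\HH^0(\gM_{g,n}(\alpha), M_\alpha^{\otimes m}) = \HH^0(\SM_{g,n}(\alpha), L_\alpha^{\otimes m})$. To match this with the section ring \eqref{E:log-canonical-models-2} defining $\M_{g,n}(\alpha)$, one observes that $\SM_{g,n}(\alpha)$ and $\SM_{g,n}$ differ only in the locus of $\alpha$-unstable curves, which the combinatorial analysis of Section~\ref{S:alpha-stability} shows has codimension at least two inside the smooth auxiliary stack $\U_{g,n}$ containing both. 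Normality therefore implies the two section rings agree, giving the claimed isomorphism $\gM_{g,n}(\alpha) \simeq \M_{g,n}(\alpha)$.

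The principal obstacle is establishing ingredient (i) — the nefness of $L_{\alpha_c}$ on $\SM_{g,n}(\alpha_c \pm \epsilon)$. This requires running through the classification of one-parameter families (test curves) and verifying non-negative intersection with $L_{\alpha_c}$ in each case; the delicate families are precisely those whose general fiber contains an $\alpha_c$-tail or $\alpha_c$-chain (where the intersection hits zero exactly at the threshold). A secondary technical point is verifying that the codimension-two comparison between $\SM_{g,n}(\alpha)$ and $\SM_{g,n}$ is uniform as we pass through each critical value, which is what allows the section ring identification in the last paragraph.
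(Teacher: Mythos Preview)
Your overall architecture mirrors the paper's: descending induction on $\alpha$, with a direct nefness calculation at each threshold (the paper's Theorem~\ref{T:main-positivity}) and local VGIT supplying relative ampleness across it (the paper's Proposition~\ref{P:Proj1}). There are, however, two genuine gaps.

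\textbf{Passing from nefness to ampleness at $\alpha_c$.} Your Nakai--Moishezon step is incomplete. Knowing that $(j_c^+)^*M_{\alpha_c}$ is nef on $\gM_{g,n}(\alpha_c+\epsilon)$ and has degree zero exactly on $j_c^+$-contracted curves does \emph{not} yield $M_{\alpha_c}^d\cdot V>0$ for an arbitrary $d$-dimensional $V\subset\gM_{g,n}(\alpha_c)$: pulling back to the strict transform $V'$ gives only $((j_c^+)^*M_{\alpha_c})^d\cdot V'\ge 0$, and nef line bundles that are positive on every curve yet fail Nakai--Moishezon already exist on surfaces. The paper instead proves $(j_c^+)^*M_{\alpha_c}$ is \emph{semiample}: via the discrepancy calculation of Proposition~\ref{P:Proj-discrepancy} its section ring is identified with that of $K_{\Mg{g}}+\alpha_c\delta$ on $\M_g$, which is big by standard effective-cone bounds and finitely generated by \cite{BCHM}; big, nef, and finitely generated implies semiample. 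The degree-zero characterization then forces the associated morphism to be $j_c^+$, whence $M_{\alpha_c}$ is ample (Proposition~\ref{P:Proj2}). Note also that the paper only needs and proves nefness of $L_{\alpha_c}$ on the $+$ side; your ingredient~(i) on the $-$ side is an additional claim not established anywhere.

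\textbf{The codimension-two claim in (2) is false.} For $\alpha\le 9/11$, the Deligne--Mumford stable curves excluded from $\Mg{g,n}(\alpha)$ include all curves with an $A_1$-attached elliptic tail --- this is the \emph{divisor} $\delta_{1,0}\subset\Mg{g,n}$ (and for $\alpha\le 7/10$ also the elliptic-bridge divisor $\delta_{1,1}$). So the two stacks do not agree in codimension one, and normality alone cannot identify their section rings. The paper's fix is again Proposition~\ref{P:Proj-discrepancy}: intersecting with families of varying elliptic tails and bridges shows $f_\alpha^*(L_\beta)=L_\beta+c_0\,\delta_{1,0}+c_1\,\delta_{1,1}$ with $c_0,c_1\le 0$, and non-positive discrepancies along the $f_\alpha$-exceptional divisors are precisely what makes the section rings on $\M_{g,n}$ and on $\gM_{g,n}(\alpha)$ coincide. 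This discrepancy computation is thus doing double duty --- it feeds into both the ampleness argument and the Proj identification --- and cannot be replaced by a codimension estimate.
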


We proceed to prove this result \emph{assuming} Propositions \ref{P:Proj-discrepancy},
\ref{P:Proj2}, \ref{P:Proj1} and Theorem \ref{T:main-positivity},
which will be proved subsequently. Of these, Theorem \ref{T:main-positivity} is the most involved and its proof will 
occupy \S\S \ref{S:positivity-a}--\ref{S:positivity-b}.  Note that throughout this section, we make use of the
following standard abuse of notation: Whenever $\cL$ is a line bundle on $\SM_{g,n}(\alpha)$ that 
descends to the good moduli space, we denote the corresponding line bundle
on $\gM_{g,n}(\alpha)$ also by $\cL$. In this situation, 
pullback defines a natural isomorphism $\HH^0\bigl(\gM_{g,n}(\alpha), \cL\bigr)  \simeq \HH^0 \bigl(\SM_{g,n}(\alpha), \cL \bigr)$.

\begin{proof}[Proof of Theorem \ref{T:Projectivity}] 
First, we show that Part (2) follows from Part (1).
Indeed, suppose $K_{\Mg{g,n}(\alpha)}+\alpha\delta+(1-\alpha)\psi$ descends to an ample line bundle on $\gM_{g,n}(\alpha)$.
Then 
\begin{align*}
\gM_{g,n}(\alpha) \simeq \Proj \RS\bigl(\gM_{g,n}(\alpha), K_{\Mg{g,n}(\alpha)}+\alpha\delta+(1-\alpha)\psi\bigr)
\simeq \overline{M}_{g,n}(\alpha),
\end{align*}
where 
the second isomorphism is given by Proposition \ref{P:Proj-discrepancy}.

The proof of Part (1) proceeds by descending induction on $\alpha$ beginning with the known case $\alpha > 9/11$, 
when $\Mg{g,n}(\alpha)=\Mg{g,n}$. Let $\alpha_c \in \{\alpha_1=\first, \alpha_2=\second, \alpha_3=\third\}$ 
and take $\alpha_0=1$. Suppose we know Part (1) for all
$\alpha\geq \alpha_{c-1} -\epsilon$. By Theorem \ref{T:main-positivity}, 
the line bundle $K_{\Mg{g,n}(\alpha_{c-1}-\epsilon)}+\alpha_c \delta+(1-\alpha_c)\psi$ is nef on $\Mg{g,n}(\alpha_{c-1}-\epsilon)$
and all curves on which it has degree $0$ are contracted by $\gM_{g,n}(\alpha_{c-1}-\epsilon) \rightarrow \gM_{g,n}(\alpha_c)$.
It follows by Proposition \ref{P:Proj2} that the statement of Part (1) holds for all $\alpha\geq \alpha_c$.
Finally, Proposition \ref{P:Proj1} gives the statement of Part (1) for $\alpha \geq \alpha_c - \epsilon$.
\end{proof}

\begin{proposition}\label{P:Proj-discrepancy} Let $\alpha > 2/3-\epsilon$.
Suppose that $K_{\Mg{g,n}(\alpha)}+\beta\delta+(1-\beta)\psi$ descends to $\gM_{g,n}(\alpha)$ for some
$\beta \leq \alpha$. Then we have
\[
\Proj \RS \bigl(\gM_{g,n}(\alpha), K_{\Mg{g,n}(\alpha)}+\beta\delta+(1-\beta)\psi \bigr)\simeq \M_{g,n}(\beta).
\]
\end{proposition}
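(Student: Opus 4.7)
The plan is to reduce the statement to an equality of stack-level section rings. Set $D_\beta := K + \beta\delta + (1-\beta)\psi$. By the good moduli space property for $\phi_\alpha \co \SM_{g,n}(\alpha) \to \gM_{g,n}(\alpha)$, I identify
\[
H^0\bigl(\gM_{g,n}(\alpha), mL\bigr) \;=\; H^0\bigl(\SM_{g,n}(\alpha), m D_\beta\bigr)
\]
for every sufficiently divisible $m$, where $L$ is the descent of $D_\beta$. Since by definition $\M_{g,n}(\beta) = \Proj \bigoplus_{m \geq 0} H^0(\SM_{g,n}, \lfloor m D_\beta \rfloor)$, the proposition reduces to proving the equality of section rings $R(\SM_{g,n}(\alpha), D_\beta) = R(\SM_{g,n}, D_\beta)$, after which taking $\Proj$ concludes.

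To establish this equality I would traverse the sequence of open immersions connecting $\SM_{g,n}$ to $\SM_{g,n}(\alpha)$ through the intermediate critical values $\alpha_c \in \{9/11,\,7/10,\,2/3\}$ provided by the Main Theorem, and show at each step that restriction of global sections of $m D_\beta$ is a bijection. Injectivity is immediate from density of the open immersion in a smooth irreducible stack and the fact that $D_\beta$ is pulled back consistently since $K$, $\delta$, $\psi$ are defined uniformly from the universal family. For surjectivity across a critical value $\alpha_c$ I would use the diagram from Theorem \ref{T:Existence}
\[
\xymatrix{
\SM_{g,n}(\alpha_c+\epsilon) \ar@{^{(}->}[r]^{i_c^+} \ar[d] & \SM_{g,n}(\alpha_c) \ar[d] & \SM_{g,n}(\alpha_c-\epsilon) \ar@{_{(}->}[l]_{i_c^-} \ar[d] \\
\gM_{g,n}(\alpha_c+\epsilon) \ar[r]^{j_c^+} & \gM_{g,n}(\alpha_c) & \gM_{g,n}(\alpha_c-\epsilon) \ar[l]_{j_c^-}
}
\]
in which $j_c^\pm$ are proper birational morphisms of normal algebraic spaces, so $(j_c^\pm)_* \oh = \oh$. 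Assuming $m D_\beta$ descends to line bundles $L_c$ on $\gM_{g,n}(\alpha_c)$ and $L_c^\pm$ on $\gM_{g,n}(\alpha_c \pm \epsilon)$ with $L_c^\pm = (j_c^\pm)^* L_c$, the projection formula yields $H^0(\gM_{g,n}(\alpha_c \pm \epsilon), L_c^\pm) = H^0(\gM_{g,n}(\alpha_c), L_c)$; this equality transports to the stack side by identification with $\Aut$-invariant sections, providing the required surjectivity. Chaining this across all critical values between $1$ and $\alpha$ produces the desired equality of section rings.

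The main obstacle will be that $D_\beta$ need not descend to $\gM_{g,n}(\alpha_c)$ for arbitrary $\beta$. Proposition \ref{P:trivial-characters} shows that $K + \alpha_c\delta + (1-\alpha_c)\psi$ descends at each critical value, but $D_\beta$ differs from it by the correction $(\alpha_c - \beta)(\delta - \psi)$, and by Proposition \ref{prop-character-comparison} the character $\chi_{\delta-\psi} = \chi_\star^N$ acts nontrivially on automorphism groups of $\alpha_c$-closed curves, obstructing direct descent. To circumvent this I would pass to a Veronese $m D_\beta$ with $m$ sufficiently divisible so that the correction character becomes trivial on every $\alpha_c$-closed stabilizer, forcing $m D_\beta$ to descend; since $\Proj$ of a finite-index Veronese subring coincides with $\Proj$ of the full graded ring, this Veronese restriction is harmless for the final identification with $\M_{g,n}(\beta)$. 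Executing this divisibility bookkeeping uniformly across all three critical values between $1$ and $\alpha$, while simultaneously matching descended line bundles through the $j_c^\pm$, is the technical core of the proof.
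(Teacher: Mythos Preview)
Your proposed Veronese fix does not work. The obstruction to descending $D_\beta$ at a critical value $\alpha_c$ comes from the $\GG_m$-factors in the stabilizers of $\alpha_c$-closed curves (the $\alpha_c$-atoms). By Proposition~\ref{prop-character-comparison} the character of $\delta-\psi$ on such a $\GG_m$ is $t\mapsto t^{N}$ with $N\in\{11,10,39\}$, so the character of $mD_\beta = m\bigl(K+\alpha_c\delta+(1-\alpha_c)\psi\bigr) + m(\beta-\alpha_c)(\delta-\psi)$ is $t\mapsto t^{mN(\beta-\alpha_c)}$. For $\beta\neq\alpha_c$ this is nontrivial for every nonzero $m$, because a torus has no torsion in its character lattice. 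Hence no multiple of $D_\beta$ descends to $\gM_{g,n}(\alpha_c)$, and the projection-formula step across $j_c^{\pm}$ cannot be executed as written. Your chain therefore breaks at every critical value strictly between $\alpha$ and $1$.

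The paper avoids this entirely by comparing section rings directly through a single birational map $f_\alpha\co \M_{g,n}\dashrightarrow \gM_{g,n}(\alpha)$, rather than through the intermediate good moduli spaces. The point is that $f_\alpha$ is an isomorphism from $\M_{g,n}\setminus(\delta_{1,0}\cup\delta_{1,1})$ onto the complement of a codimension~$\ge 2$ locus in $\gM_{g,n}(\alpha)$; normality then identifies $H^0(\gM_{g,n}(\alpha),D_\beta)$ with sections on that open set, and the only question is whether those sections extend across $\delta_{1,0}$ and $\delta_{1,1}$ with the right pole order. This is exactly what the discrepancy equation $f_\alpha^{*}D_\beta = D_\beta + c_0\delta_{1,0}+c_1\delta_{1,1}$ controls. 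The coefficients are computed by intersecting with test families $T_1$, $T_2$ of elliptic tails (which $f_\alpha$ contracts), giving $c_0=11\beta-9$ and $c_1=10\beta-7$; both are $\le 0$ because $\beta\le\alpha$, and this sign is what forces the section rings to coincide. The hypothesis $\beta\le\alpha$ is used precisely here, whereas in your outline it plays no visible role.
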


\begin{proof}
Consider the rational map $f_{\alpha} \co \M_{g,n} \dra \gM_{g,n}(\alpha)$. If $\alpha>9/11$,
then $f_\alpha$ is an isomorphism.
If $7/10 < \alpha \leq 9/11$, then 
$f_{\alpha}\vert_{\M_{g,n} \setminus \delta_{1,0}}$  
is an isomorphism onto the complement of the codimension $2$ locus of cuspidal curves in $\gM_{g,n}(\alpha)$. 
If $\alpha \leq 7/10$, then 
$f_{\alpha}\vert_{\M_{g,n} \setminus (\delta_{1,0}\, \cup \, \delta_{1,1})}$ 
is an isomorphism onto the complement of the codimension $2$ locus of cuspidal and tacnodal curves in $\gM_{g,n}(\alpha)$. 
(If $n=0$, then $\delta_{1,1}=\emptyset$). 
It follows that we have a discrepancy equation
\begin{equation}\label{E:disc}
f_{\alpha}^*\bigl(K_{\Mg{g,n}(\alpha)}+\beta\delta+(1-\beta)\psi\bigr) \simeq K_{\Mg{g,n}}+\beta\delta+(1-\beta)\psi +c_0 \delta_{1,0}+c_1\delta_{1,1},
\end{equation}
where $c_0=0$ if $\alpha >9/11$ and $c_1=0$ if $\alpha>7/10$. 

Let $T_1 \subset \Mg{g,n}$ be a non-trivial family of elliptic tails and $T_2 \subset \Mg{g,n} \setminus \delta_{1,0}$
be a non-trivial family of $1$-pointed elliptic tails.
Then $f_{\alpha}$ is regular along $T_1$, and for $\alpha \leq 9/11$ contracts $T_1$ to a point.
Similarly, $f_{\alpha}$ is regular along $T_2$, and for $\alpha\leq 7/10$ contracts $T_2$ to a point.
By intersecting both sides of \eqref{E:disc} with $T_1$ and $T_2$, we obtain
$c_0=11\beta-9\leq 0$ if $\alpha \leq 9/11$, and $c_1=10\beta-7\leq 0$ if $\alpha\leq 7/10$.
It follows that 
\begin{align*}
\Proj \RS\bigl(\gM_{g,n}(\alpha), K_{\Mg{g,n}(\alpha)}+\beta\delta+(1-\beta)\psi\bigr) 
\simeq \Proj \RS\bigl(\M_{g,n}, K_{\Mg{g,n}}+\beta\delta+(1-\beta)\psi\bigr) &=\M_{g,n}(\beta).
\end{align*}
\end{proof}

\begin{proposition}\label{P:Proj2}
Fix $\alpha_c \in \{\alpha_1=\first, \alpha_2=\second, \alpha_3=\third\}$ and take $\alpha_0=1$. 
Suppose that for all $0 < \epsilon \ll 1$, 
\[
K_{\SM_{g,n}(\alpha_{c-1}-\epsilon)}+(\alpha_{c-1} - \epsilon)\delta+(1-\alpha_{c-1} +\epsilon)\psi
\] 
descends to an ample line bundle on $\gM_{g,n}(\alpha_{c-1}-\epsilon)$.
In addition, suppose that 
\[
K_{\Mg{g,n}(\alpha_{c-1}-\epsilon)}+\alpha_c\delta+(1-\alpha_c)\psi
\]
is nef on $\Mg{g,n}(\alpha_{c-1} - \epsilon)$ and all curves on which it has degree $0$ are 
contracted by $\gM_{g,n}(\alpha_{c-1} - \epsilon) \rightarrow \gM_{g,n}(\alpha_c)$. Then
$K_{\SM_{g,n}(\alpha)}+\alpha \delta+(1-\alpha)\psi$ descends to an ample line
bundle on $\gM_{g,n}(\alpha)$ for all $\alpha \in [\alpha_c, \alpha_{c-1})$.
\end{proposition}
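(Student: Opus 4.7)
The plan handles the ranges $\alpha \in (\alpha_c, \alpha_{c-1})$ and $\alpha = \alpha_c$ separately. On the open interval a direct convexity argument yields ampleness; at the critical value, Proposition~\ref{P:trivial-characters} supplies descent and a Kleiman/Nakai--Moishezon argument applied to the contraction $\phi\co \gM_{g,n}(\alpha_{c-1}-\epsilon) \to \gM_{g,n}(\alpha_c)$ supplies ampleness.

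For $\alpha \in (\alpha_c,\alpha_{c-1})$, I would first observe that $\alpha$-stability is constant on this open interval (Definition~\ref{defn-stability}), so $\Mg{g,n}(\alpha) = \Mg{g,n}(\alpha_{c-1}-\epsilon)$ and $\gM_{g,n}(\alpha) = \gM_{g,n}(\alpha_{c-1}-\epsilon)$ for all sufficiently small $\epsilon > 0$. Using $K + \beta\delta + (1-\beta)\psi = 13\lambda + (\beta-2)(\delta-\psi)$, I write
\[
K+\alpha\delta+(1-\alpha)\psi \;=\; (1-t)\bigl[K+(\alpha_{c-1}-\epsilon)\delta+(1-\alpha_{c-1}+\epsilon)\psi\bigr] \,+\, t\bigl[K+\alpha_c\delta+(1-\alpha_c)\psi\bigr]
\]
with $t = (\alpha_{c-1}-\epsilon-\alpha)/(\alpha_{c-1}-\epsilon-\alpha_c) \in (0,1)$. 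Hypothesis~(1) gives the first summand ample on $\gM_{g,n}(\alpha_{c-1}-\epsilon)$ and hypothesis~(2) gives the second summand nef there, so the positive $\QQ$-linear combination is ample.

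For $\alpha = \alpha_c$, I would first establish descent: closed points of $\Mg{g,n}(\alpha_c)$ are $\alpha_c$-closed curves by Theorem~\ref{T:ClosedCurves}, and Proposition~\ref{P:trivial-characters} shows the identity component of each such stabilizer acts trivially on the fiber of $K+\alpha_c\delta+(1-\alpha_c)\psi$. After passing to a sufficiently divisible multiple to kill the finite character of the component group, standard good-moduli-space descent yields a line bundle $\bar L_{\alpha_c}$ on $\gM_{g,n}(\alpha_c)$. For ampleness, I would use the proper surjective morphism $\phi\co \gM_{g,n}(\alpha_{c-1}-\epsilon) \to \gM_{g,n}(\alpha_c)$ from Theorem~\ref{T:Existence}: its pullback $\phi^*\bar L_{\alpha_c}$ equals $K+\alpha_c\delta+(1-\alpha_c)\psi$ on $\gM_{g,n}(\alpha_{c-1}-\epsilon)$, which by hypothesis~(2) is nef with null locus precisely the $\phi$-contracted curves. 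Invoking the Nakai--Moishezon criterion for proper algebraic spaces, it suffices to verify $\bar L_{\alpha_c}^{\dim V}\cdot V > 0$ for every integral closed subvariety $V \subset \gM_{g,n}(\alpha_c)$. Lifting $V$ to $\tilde V \subset \gM_{g,n}(\alpha_{c-1}-\epsilon)$ of the same dimension with $\phi|_{\tilde V}$ generically finite surjective, the projection formula reduces this to $(\phi^*\bar L_{\alpha_c})^{\dim V}\cdot \tilde V > 0$. Positivity follows from bigness of $\phi^*\bar L_{\alpha_c}|_{\tilde V}$, which I would obtain by approximating with the ample classes $L_{\alpha_c+\eta}|_{\tilde V}$ (ample by the previous step) for small $\eta > 0$ and analyzing the $\eta \to 0$ limit of $(L_{\alpha_c+\eta})^{\dim V}\cdot \tilde V$, using that $\tilde V$ is not contained in any $\phi$-contracted fiber.

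The main obstacle will be this higher-dimensional Nakai--Moishezon verification. Strict nefness of $\bar L_{\alpha_c}$ (the one-dimensional condition) is immediate from a curve-lifting argument via the projection formula, but on a proper algebraic space strict nefness alone is insufficient for ampleness. The subtle point is ruling out simultaneous vanishing of all leading terms in the binomial expansion of $(L_{\alpha_c+\eta})^{\dim V}\cdot \tilde V$ as $\eta \to 0$, equivalently establishing bigness of $\phi^*\bar L_{\alpha_c}|_{\tilde V}$. Setting this up rigorously in the algebraic-space setting, rather than by invoking projective-scheme tools, is the technical core of the proof.
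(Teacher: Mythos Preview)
Your treatment of the open interval $(\alpha_c,\alpha_{c-1})$ by convex interpolation is exactly what the paper does, and your descent argument at $\alpha_c$ via Proposition~\ref{P:trivial-characters} matches the paper as well.

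The gap is precisely where you locate it: the Nakai--Moishezon verification. You need $(\phi^*\bar L_{\alpha_c})^{\dim V}\cdot \tilde V>0$, equivalently that the nef class $\phi^*\bar L_{\alpha_c}|_{\tilde V}$ is big. Your approximation by the ample $L_{\alpha_c+\eta}$ gives only that the polynomial $\eta\mapsto (L_{\alpha_c+\eta})^{\dim V}\cdot\tilde V$ is positive for small $\eta>0$; since the constant term is merely $\geq 0$ by nefness, nothing prevents it from vanishing. The observation that $\tilde V$ is not contained in a $\phi$-fiber does not help: a nef line bundle can have top self-intersection zero on a subvariety that dominates the base. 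In short, strict nefness of $\bar L_{\alpha_c}$ on $\gM_{g,n}(\alpha_c)$ is easy, but there is no mechanism in your outline to upgrade it to ampleness.

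The paper sidesteps this entirely by proving \emph{semiampleness} of $j^*\bar L_{\alpha_c}=K+\alpha_c\delta+(1-\alpha_c)\psi$ on the projective variety $\gM_{g,n}(\alpha_{c-1}-\epsilon)$. The argument is: by Proposition~\ref{P:Proj-discrepancy} the section ring of this line bundle is identified with the section ring of $K_{\Mg{g}}+\alpha_c\delta$ on $\M_g$ (for $n=0$; the pointed case reduces to this by attaching fixed tails). The latter is big by standard effective-cone bounds and finitely generated by BCHM, hence the former is big, nef, and has finitely generated section ring, so is semiample by \cite[Theorem~2.3.15]{Laz1}. Once $j^*\bar L_{\alpha_c}$ is semiample, the associated morphism contracts exactly the curves on which it has degree zero, i.e.\ exactly the $j$-contracted curves by hypothesis; it follows immediately that $\bar L_{\alpha_c}$ is ample on $\gM_{g,n}(\alpha_c)$. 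This is the missing idea: finite generation via BCHM replaces the intractable higher-dimensional positivity check.
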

\begin{proof} 
By Proposition \ref{P:trivial-characters}, for any $\alpha_c$-closed curve $(C, \pn)$,
the action of $\Aut(C, \pn)^{\circ}$ on the fiber of $K_{\SM_{g,n}(\alpha_c)} + \alpha_c \delta + (1-\alpha_c)\psi$ is trivial.
It follows that $K_{\SM_{g,n}(\alpha_c)} + \alpha_c \delta + (1-\alpha_c)\psi$ descends to $\gM_{g,n}(\alpha_c)$. 
Consider the open immersion of stacks 
$\Mg{g,n}(\alpha_{c-1}-\epsilon) \ra \Mg{g,n}(\alpha_c)$ and the induced map on the good moduli 
spaces $j\col \gM_{g,n}(\alpha_{c-1}-\epsilon) \ra \gM_{g,n}(\alpha_c)$. We have that
\[
j^*\bigl(K_{\SM_{g,n}(\alpha_c)} + \alpha_c \delta + (1-\alpha_c)\psi\bigr)=K_{\SM_{g,n}(\alpha_{c-1}-\epsilon)}+\alpha_c\delta+(1-\alpha_c)\psi.
\]

It follows by assumption that $K_{\SM_{g,n}(\alpha_{c-1}-\epsilon)}+\alpha_c\delta+(1-\alpha_c)\psi$ descends to 
a nef line bundle on the projective variety $\gM_{g,n}(\alpha_{c-1}-\epsilon)$. 
First, we show that $K_{\SM_{g,n}(\alpha_{c-1}-\epsilon)}+\alpha_c\delta+(1-\alpha_c)\psi$ is semiample on 
$\gM_{g,n}(\alpha_{c-1}-\epsilon)$. To bootstrap from nefness to semiampleness, we first consider the case $n=0$ and $g \geq 3$.
By Proposition \ref{P:Proj-discrepancy}, the section ring of 
$K_{\Mg{g}(\alpha_{c-1}-\epsilon)}+\alpha_c\delta$ on $\gM_{g}(\alpha_{c-1}-\epsilon)$ 
is identified with the section ring of $K_{\Mg{g}}+\alpha_c\delta$ on $\M_g$. 
The latter line bundle is big, by standard bounds on the effective cone of $\M_{g}$, and finitely generated 
by \cite[Corollary 1.2.1]{BCHM}. 
We conclude that $K_{\Mg{g}(\alpha_{c-1}-\epsilon)}+\alpha_c\delta$ 
is big, nef, and finitely generated, and so is semiample
by \cite[Theorem 2.3.15]{Laz1}. When $n\geq 1$, simply note that $K_{\SM_{g+hn}(\alpha_{c-1}-\epsilon)}+\alpha_c\delta$
pulls back to $K_{\SM_{g,n}(\alpha_{c-1}-\epsilon)}+\alpha_c\delta+(1-\alpha_c)\psi$
under the morphism $\Mg{g,n}(\alpha_{c-1}-\epsilon) \ra \Mg{g+nh}(\alpha_{c-1}-\epsilon)$ defined by attaching a
fixed general curve of genus $h\geq 3$ to every marked point. 

We have established that
\[
j^*\bigl(K_{\SM_{g,n}(\alpha_c)} + \alpha_c \delta + (1-\alpha_c)\psi\bigr)=K_{\SM_{g,n}(\alpha_{c-1}-\epsilon)}+\alpha_c\delta+(1-\alpha_c)\psi
\]
is semiample on $\gM_{g,n}(\alpha_{c-1}-\epsilon)$. By assumption, it has degree $0$ only on curves 
contracted by $\gM_{g,n}(\alpha_{c-1}-\epsilon) \rightarrow \gM_{g,n}(\alpha_c)$.
We conclude that $K_{\SM_{g,n}(\alpha_c)} + \alpha_c \delta + (1-\alpha_c)\psi$ is semiample and is positive on all curves in 
$\gM_{g,n}(\alpha_c)$. Therefore, $K_{\SM_{g,n}(\alpha_c)} + \alpha_c \delta + (1-\alpha_c)\psi$ is ample on $\gM_{g,n}(\alpha_c)$.

The statement for $\alpha \in (\alpha_c, \alpha_{c-1})$ follows by interpolation.
\end{proof}

\begin{proposition}\label{P:Proj1}
Fix $\alpha_c \in \{\first, \second, \third\}$. 
Suppose that $K_{\SM_{g,n}(\alpha_c)}+\alpha_c \delta+(1-\alpha_c)\psi$ descends to an ample line bundle on 
$\gM_{g,n}(\alpha_c)$.
Then for all $0<\epsilon \ll 1$, \[
K_{\SM_{g,n}(\alpha_c-\epsilon)}+(\alpha_c-\epsilon)\delta_c+(1-\alpha_c+\epsilon)\psi
\] descends to an ample line bundle on $\gM_{g,n}(\alpha_c -\epsilon)$.
\end{proposition}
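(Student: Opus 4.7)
The plan is to exploit the local VGIT structure provided by Theorem \ref{theorem-etale-VGIT} and combine it with the standard fact that the sum of the pullback of an ample line bundle and a small positive multiple of a relatively ample line bundle is ample. Throughout, let $j_c^- \co \gM_{g,n}(\alpha_c - \epsilon) \to \gM_{g,n}(\alpha_c)$ denote the projective morphism of good moduli spaces provided by Theorems \ref{T:vgit-existence} and \ref{T:Existence}, and observe the decomposition
\[
K_{\SM_{g,n}(\alpha_c - \epsilon)} + (\alpha_c - \epsilon)\delta + (1 - \alpha_c + \epsilon)\psi = \bigl(K_{\SM_{g,n}(\alpha_c - \epsilon)} + \alpha_c \delta + (1-\alpha_c)\psi\bigr) + \epsilon(\psi - \delta).
\]

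First, I would check that both summands descend to $\gM_{g,n}(\alpha_c - \epsilon)$. The first descends because it is the pullback under the open immersion $\SM_{g,n}(\alpha_c - \epsilon) \hookrightarrow \SM_{g,n}(\alpha_c)$ of a line bundle that descends to $\gM_{g,n}(\alpha_c)$ by hypothesis. For the second summand, Theorem \ref{theorem-etale-VGIT} says that around every closed point of $\SM_{g,n}(\alpha_c)$ there is a local quotient presentation $\cW \to \SM_{g,n}(\alpha_c)$ in which the inclusion $\SM_{g,n}(\alpha_c - \epsilon) \hookrightarrow \SM_{g,n}(\alpha_c)$ is realized by the VGIT $(-)$-chamber associated to $\cL := \delta - \psi$; the last assertion of Proposition \ref{P:vgit-quotients} then guarantees that the character of the stabilizer on the fiber of $\cL$ at any closed point of the $(-)$-chamber is trivial, so $\delta - \psi$ descends to $\gM_{g,n}(\alpha_c - \epsilon)$.

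Second, the same VGIT description shows that $\psi - \delta$ is $j_c^-$-relatively ample on $\gM_{g,n}(\alpha_c - \epsilon)$. Indeed, locally $j_c^-$ is modeled by the GIT quotient map $\cW^-_{\cL}\gitq G \to \Spec A^G$, and Proposition \ref{P:vgit-quotients} asserts precisely that $\cL^{-1}$ descends on $\cW^-_{\cL}$ to the tautological $\oh(1)$ on $\cW^-_{\cL}\gitq G$, relatively ample over $\Spec A^G$. By Proposition \ref{etale-preserving}(2) these local quotient presentations induce \'etale covers of $\gM_{g,n}(\alpha_c)$ whose base changes are exactly the corresponding GIT quotients, so the local relative ampleness patches to global $j_c^-$-relative ampleness of $\psi - \delta$.

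Finally, invoking the standard relative ampleness lemma -- for a projective morphism $f \co X \to Y$ between projective schemes with $M$ ample on $Y$ and $L$ an $f$-ample line bundle on $X$, the $\QQ$-line bundle $f^*M + \epsilon L$ is ample on $X$ for all sufficiently small rational $\epsilon > 0$ -- with $f = j_c^-$, $M = K + \alpha_c\delta + (1-\alpha_c)\psi$, and $L = \psi - \delta$ yields that
\[
(j_c^-)^* M + \epsilon L = K_{\SM_{g,n}(\alpha_c - \epsilon)} + (\alpha_c - \epsilon)\delta + (1-\alpha_c + \epsilon)\psi
\]
descends to an ample line bundle on $\gM_{g,n}(\alpha_c - \epsilon)$ for all sufficiently small $\epsilon > 0$. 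No step is genuinely difficult; the entire substantive content is already packaged in the local VGIT analysis of Theorem \ref{theorem-etale-VGIT} and Proposition \ref{P:vgit-quotients}, and the hardest bookkeeping is simply observing that the character $\chi_{\delta - \psi}$ agrees (up to a positive multiple) with the VGIT character $\chi_{\star}$ governing the local chambers, which is precisely Proposition \ref{prop-character-comparison}.
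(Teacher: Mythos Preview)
Your proposal is correct and follows essentially the same approach as the paper: decompose the divisor as the pullback of the ample class at $\alpha_c$ plus $\epsilon(\psi-\delta)$, use the local VGIT description (Theorem~\ref{theorem-etale-VGIT} and Proposition~\ref{P:vgit-quotients}) to show that $\psi-\delta$ descends and is $j_c^-$-relatively ample, and conclude via the standard ``pullback of ample plus small relatively ample is ample'' lemma. The paper's proof differs only in emphasis: it invokes Proposition~\ref{prop-character-comparison} together with \cite[Theorem 10.3]{alper_good} for the descent of $\psi-\delta$ (rather than citing the last clause of Proposition~\ref{P:vgit-quotients} directly), and it makes the \'etale descent of relative ampleness explicit by writing down the commutative cube \eqref{eqn-cube} and verifying that its bottom face is Cartesian (reusing the argument from the proof of Theorem~\ref{T:vgit-existence}). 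Your sketch compresses this step, but the content is the same.
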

\begin{proof}
Consider the proper morphism $\pi\co  \gM_{g,n}(\alpha_c \me) \to \gM_{g,n}(\alpha_c)$
given by Theorem \ref{T:Existence}. 
Our assumption implies that $K_{\SM_{g,n}(\alpha_c -\epsilon)}+\alpha_c\delta+(1-\alpha_c)\psi$
descends to a line bundle on $\gM_{g,n}(\alpha_c \me)$ which is a pullback of an ample line bundle
on $\gM_{g,n}(\alpha_c)$ via $\pi$. To establish the proposition, it suffices to show that a positive multiple of 
$\psi-\delta$ on $\bar{\cM}_{g,n}(\alpha_c \me)$ descends to a $\pi$-ample line bundle on $\gM_{g,n}(\alpha_c \me)$.

For every $(\alpha_c \me)$-stable curve $(C, \{p_i\}_{i=1}^n)$, the induced character of $\Aut(C, \{p_i\}_{i=1}^n)^{\circ}$ 
on $\delta-\psi$ is trivial by Proposition \ref{prop-character-comparison}. It follows by \cite[Theorem 10.3]{alper_good} 
that a positive multiple of $\delta-\psi$ descends to a line bundle $\cN$ on $\gM_{g,n}(\alpha_c \me)$.

To show that $\cN^{\dual}$ is relatively ample over $\gM_{g,n}(\alpha_c)$, consider the commutative cube
{\large
\begin{equation}\label{eqn-cube}
{\def\objectstyle{\scriptstyle}
\def\labelstyle{\scriptstyle}
\xymatrix@=20pt{
				&\cW \ar[dd] \ar[dl]_f			&				& \cW^-_{\chi} \ar@{_(->}[ll] \ar[dd] \ar[dl] \\
\bar{\cM}_{g,n}(\alpha_c)  \ar[dd]^(0.4){\phi_{\alpha_c}}&				& \bar{\cM}_{g,n}(\alpha_c - \epsilon)  \ar[dd]^(0.4){\phi_{\alpha_c-\epsilon}} \ar@{_(->}[ll]	& \\
				&W\gitq G \ar[dl]				& 				& W^-_{\chi} \gitq G \ar[dl]  \ar[ll]\\
\gM_{g,n}(\alpha_c)  	&							&\gM_{g,n}(\alpha_c - \epsilon) \ar[ll]_{\pi}	&
}}\end{equation}
}
where $\cW = [\Spec A / G] \to W\gitq G = \Spec A^G $ and $\cW^-_{\chi_{\delta-\psi}} \to W_{\chi_{\delta-\psi}}^-\gitq G 
= \Proj \bigoplus_{d \ge 0} A_{d}$ 
are the good moduli spaces as in Proposition \ref{P:vgit-quotients}.  Since the vertical arrows are good moduli spaces, 
by Proposition \ref{etale-preserving} and Lemmas \ref{lemma-vgit-pullback} and \ref{lemma-shrinking}, 
after shrinking $\cW$ by a saturated open substack  such that $f$ sends closed points to closed points and is stabilizer preserving at 
closed points, we may assume that the left and right faces are Cartesian. The argument in the proof of Theorem \ref{T:vgit-existence} 
concerning Diagram \eqref{eqn-cube} 
shows that the bottom face is Cartesian.

The restriction of $\cN^{\dual}$ to $\cW^-_{\chi_{\delta-\psi}}$ descends to the relative $\oh(1)$ on $W_{\chi_{\delta-\psi}}^-\gitq G$.  
Therefore, the pullback of $\cN^{\dual}$ on $\gM_{g,n}(\alpha_c \me)$ to $W_{\chi}^-\gitq G$ is $\cO(1)$ and, in particular, is 
relatively ample over $W\gitq G$.  Since the bottom face is Cartesian, it follows by descent that $\cN^{\dual}$ is relatively ample over 
$\gM_{g,n}(\alpha_c)$. The proposition follows.
\end{proof}

\subsection{Main positivity result}
\label{S:main-result-positivity}

A well-known result of Cornalba and Harris says that 
\begin{theorem*}[\cite{CH}] \label{T:CH-positivity} 
$K_{\Mg{g,n}}+\frac{9}{11}\delta+\frac{2}{11}\psi\sim 11\lambda-\delta+\psi$ is nef on $\Mg{g,n}$ for all $(g,n)$, 
and has degree $0$ precisely on families whose only non-isotrivial components are $A_1$-attached elliptic tails.
\end{theorem*}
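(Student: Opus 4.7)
The plan is to follow the classical Cornalba--Harris strategy, adapting it to the pointed setting via Gieseker--Mumford stability of the $m$-canonically embedded $n$-pointed stable curve. First I would reduce to checking nefness on one-parameter families: let $f\co X \to B$ be a flat family of stable $n$-pointed curves over a smooth projective curve $B$, with sections $s_1,\ldots,s_n$, and let $\mu\co B \to \SM_{g,n}$ be the induced moduli map. It suffices to show that $\mu^{*}(11\lambda-\delta+\psi)$ has nonnegative degree on $B$ and to characterize when equality holds.

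Next, fixing $m \gg 0$, I would consider the relative embedding $X \hookrightarrow \PP(f_{*}\omega_{X/B}(\sum s_i)^{\otimes m})$ and the induced classifying morphism $\bar\mu\co B \to \Chow_{g,m}$ to the Chow variety of stable curves with marked points embedded via $m$-fold $\omega(\sum p_i)$. The GIT-stability result of Gieseker--Mumford (and its pointed extension) asserts that $\bar\mu(B)$ lies in the semistable locus for the natural $\SL$-linearization. Pulling back the canonical GIT-ample class gives an effective class on $B$. The Cornalba--Harris computation, using Grothendieck--Riemann--Roch applied to $f$ and the sections $s_i$, expresses this pullback as a positive multiple of
\[
(8g+4)\lambda \;-\; g\,\delta \;+\; (\text{contribution from marked points in terms of } \psi),
\]
and after an elementary manipulation one verifies that $11\lambda - \delta + \psi$ is a positive combination of this class with the always-nef classes $\lambda$ and $\psi$. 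For $g=0,1$ with sufficiently many marked points, I would handle the base cases directly: every irreducible $A_1$-attached elliptic tail contributes exactly $11\lambda-\delta=0$, while every other variation contributes strictly positively.

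For the equality case, I would argue as follows. The Cornalba--Harris bound is sharp: a family achieves $\deg \mu^{*}(11\lambda-\delta+\psi)=0$ only if, on every non-isotrivial component, the Chow-stability inequality is an equality in the strict sense, which by \cite{CH} happens precisely when that component parametrizes a one-parameter degeneration of elliptic tails attached nodally to the rest. Thus if $C_0$ is a fiber of $f$ and $E \subset C_0$ is a non-isotrivially varying subcurve of positive modulus, $E$ must be an elliptic tail $A_1$-attached to the complement; conversely, a non-isotrivial family of $A_1$-attached elliptic tails glued along a trivial family of pointed curves has $11\lambda-\delta+\psi=0$ by a direct computation (equivalently, by the isomorphism $\bar{\cH}_{1,1}(\firstf) \simeq [\AA^2/\GG_m]$ and the fact that the line bundle descends trivially to the point $M_{1,1}^{ps}$).

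The main obstacle will be the careful bookkeeping of the GRR computation in the presence of marked sections: the $\psi$-term arises from the self-intersection $s_i^{2}$ of the sections entering the Chern character of $f_{*}\omega_{X/B}(\sum s_i)^{\otimes m}$, and one must verify that the coefficients of $\lambda$, $\delta$, and $\psi$ in the resulting inequality assemble with the correct ratio to yield precisely $11\lambda-\delta+\psi$ (equivalently, $K_{\SM_{g,n}}+\firstf\,\delta+\tfrac{2}{11}\psi$ up to a positive rational multiple, using the identity $K_{\SM_{g,n}}=13\lambda-2\delta+\psi$). Once this ratio is pinned down, both the nefness and the sharp characterization of the zero locus follow from the Hilbert--Mumford-style analysis of which orbits in the Chow variety are strictly semistable.
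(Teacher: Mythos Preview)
The paper does not actually prove this theorem; it is stated as a known result of Cornalba and Harris \cite{CH} and invoked as the base case for the paper's own positivity results (Theorem~\ref{T:main-positivity}). So there is no in-paper proof to compare against directly. That said, your outline follows the Cornalba--Harris philosophy, but it has a real gap and a questionable shortcut.

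The gap is the treatment of generically reducible fibers. Your plan jumps straight to the $m$-log-canonical embedding and Chow/Hilbert stability of the whole family, but the Cornalba--Harris inequality $(8+4/g)\lambda\ge\delta$ applies to families with \emph{smooth} general fiber. A one-parameter family in $\Mg{g,n}$ may have generically reducible or generically nodal fibers, and for such families the Chow-stability argument does not apply as stated. The standard fix---and this is exactly how the paper proceeds for the harder cases $10\lambda-\delta+\psi$ and $\tfrac{39}{4}\lambda-\delta+\psi$ in \S\ref{S:positivity-a}--\ref{S:positivity-b}---is to first normalize along generic outer nodes (decomposing into generically irreducible pieces), then along generic inner nodes (reaching generically smooth pieces), tracking how $\lambda$, $\delta$, and $\psi$ transform under each step (cf.\ Propositions~\ref{P:generic-inner-node}--\ref{P:generic-outer-node}). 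Only then does one apply the slope inequality component by component and sum. Your proposal omits this reduction entirely.

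The questionable shortcut is invoking ``the pointed extension'' of Gieseker--Mumford stability for the embedding by $\omega(\sum s_i)^{\otimes m}$ as if it were a black box. This is not a standard citable result, and the GRR bookkeeping you allude to would need to produce the precise coefficient of $\psi$; your parenthetical ``contribution from marked points in terms of $\psi$'' is where the argument actually lives. The usual route in the literature bypasses this by attaching fixed high-genus tails at the marked points to reduce to the unpointed case, or by the normalization-and-recombination bookkeeping just described. Either of these would close the gap; your direct GIT approach, as written, does not.
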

In a similar vein, Cornalba proved that $12\lambda-\delta+\psi$ is ample on $\Mg{g,n}$ 
and thus obtained a direct intersection-theoretic proof of the projectivity of $\M_{g,n}$ \cite{cornalba}.
We refer the reader to \cite[Chapter 14]{GAC} for the comprehensive treatment of intersection-theoretic 
approaches to projectivity of $\M_{g,n}$, many of which make appearance in the sequel. 
In the introduction to \cite{cornalba}, the author says that ``... it is hard to see how [these techniques] could be extended to other situations.'' 
In what follows, 
we do precisely that by giving intersection-theoretic proofs of projectivity for $\gM_{g,n}(\secondme)$ and $\gM_{g,n}(\thirdme)$. 

\begin{theorem}[Positivity of log canonical divisors] \label{T:main-positivity}
\hfill
\begin{enumerate}
\item[(a)] 
$K_{\Mg{g,n}(9/11 - \epsilon)}+\frac{7}{10}\delta+\frac{3}{10}\psi \sim 10\lambda-\delta+\psi$ 
is nef on $\Mg{g,n}(9/11 - \epsilon)$, and, if $(g,n)\neq (2,0)$, has degree $0$ precisely on families whose only non-isotrivial components 
are $A_1/A_1$-attached elliptic bridges. It is trivial if $(g,n)=(2,0)$. 
\item[(b)]
$K_{\Mg{g,n}(7/10 - \epsilon)}+\frac{2}{3}\delta+\frac{1}{3}\psi \sim \frac{39}{4}\lambda-\delta+\psi$ 
is nef on $\Mg{g,n}(7/10 - \epsilon)$, and has 
degree $0$ precisely on families whose only non-isotrivial components are $A_1$-attached Weierstrass chains.
\end{enumerate}
\end{theorem}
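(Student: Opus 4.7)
The plan is to establish nefness by computing intersection numbers against one-parameter test families and comparing with the Cornalba-Harris inequality $11\lambda-\delta+\psi\ge 0$ on $\bar{\cM}_{g,n}$. Given a non-isotrivial family $\pi\co \cC\to B$ in $\bar{\cM}_{g,n}(\alpha_c-\epsilon)$ over a smooth projective curve $B$, pass to its Deligne-Mumford stable model $\cC^s\to \tilde B$ obtained from a finite base change $f\co \tilde B\to B$ followed by a blowup-blowdown sequence resolving the $\alpha_c$-critical singularities in fibers. Each $\alpha_c$-critical singularity of $\cC$ is replaced in $\cC^s$ by the corresponding $\alpha_c$-critical subcurve (an $A_1$-attached $j=0$ elliptic tail for part (a), an $A_1/A_1$-attached $j=0$ elliptic bridge for part (b)), which is automatically isotrivial by the rigidity of the deformations of $A_2$ and $A_3$ singularities. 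The comparison of divisor classes between $\cC^s$ and $\cC$ will give an explicit formula expressing $(11\lambda-\delta+\psi)(\cC^s)$ as a positive multiple of the relevant log canonical class evaluated on $\cC$ plus an extra positive term accounting for the new nodes introduced by resolution.

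For part (a), the resolution of each $A_2$ cusp yields one new node and contributes explicitly to $\lambda$ through the local structure of the Weierstrass elliptic tail. The resulting inequality
\[
\deg(f)\cdot (10\lambda-\delta+\psi)(\cC) \;\le\; (11\lambda-\delta+\psi)(\cC^s)
\]
is non-negative by Cornalba-Harris, giving nefness of $10\lambda-\delta+\psi$ on $\bar{\cM}_{g,n}(9/11-\epsilon)$. Equality forces $(11\lambda-\delta+\psi)(\cC^s)=0$, so by the Cornalba-Harris characterization the only non-isotrivial components of $\cC^s$ are $A_1$-attached elliptic tails; since such tails are disallowed in $(9/11-\epsilon)$-stable curves, the non-isotrivial components of $\cC$ must instead arise from $A_1/A_1$-attached elliptic bridges, whose stable model acquires additional elliptic tails after resolution but matches the required structure. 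The degenerate case $(g,n)=(2,0)$ is handled separately by noting that $10\lambda-\delta+\psi$ is a multiple of $0$ in that specific Picard group.

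For part (b), the argument is analogous: the resolution of each $A_3$ tacnode introduces a $j=0$ elliptic bridge attached by two new nodes. The comparison formula now relates $(\tfrac{39}{4}\lambda-\delta+\psi)(\cC)$ (on a family of $(7/10-\epsilon)$-stable curves) to $(10\lambda-\delta+\psi)$ applied to a modified family in $\bar{\cM}_{g,n}(9/11-\epsilon)$, and one invokes the just-proved part (a) instead of classical Cornalba-Harris. Equality, together with the exclusion of $A_1$/$A_3$-attached elliptic tails and $A_1/A_1$-attached elliptic bridges in $(7/10-\epsilon)$-stable curves, forces the non-isotrivial components to be $A_1$-attached Weierstrass chains, as these are the only remaining $\alpha_c$-critical subcurves compatible with the constraints. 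The main obstacle is writing down the precise comparison formulas: one must track the discrepancies of $\lambda$, $\delta$, and $\psi$ under each base change and stabilization at an $A_2$ or $A_3$ singularity, and verify via intersection theory and Grothendieck-Riemann-Roch that the constants match to produce exactly the coefficients $10$ and $\tfrac{39}{4}$. This boils down to an explicit local calculation on the exceptional elliptic tail or bridge, combined with a careful accounting of new boundary classes introduced in the stable reduction, together with the characterization of $\alpha_c$-unstable subcurves from Section 2 to rule out extraneous contributions in the equality case.
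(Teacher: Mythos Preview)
Your strategy has a genuine gap: it does not handle families whose \emph{generic} fiber already carries an $\alpha_c$-critical singularity. If $\cC\to B$ is $(9/11-\epsilon)$-stable with a cusp in every fiber, there is no canonical Deligne--Mumford stable model: stable reduction modifies special fibers, not the generic one, and replacing a generic cuspidal section by a family of elliptic tails requires a non-canonical choice of $j$-invariant. Your claim that the inserted tail is ``automatically isotrivial by rigidity of $A_2$'' confuses rigidity of the singularity with rigidity of its stable replacement; the versal deformation $y^2=x^3+ax+b$ produces elliptic tails of every $j$-invariant as $(a,b)\to 0$ along different directions. The same problem arises for generic tacnodes in part (b). Even setting this aside, the inequality $\deg(f)\cdot(10\lambda-\delta+\psi)(\cC)\le(11\lambda-\delta+\psi)(\cC^s)$ is asserted without justification, and your equality analysis does not work: if the only non-isotrivial pieces of $\cC^s$ are elliptic tails, there is no mechanism by which these become elliptic bridges in $\cC$---they would simply correspond to cusps, which are points, not components.

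The paper proceeds quite differently. It \emph{normalizes} along the generic singularities (outer nodes, cusps, inner nodes for (a); additionally tacnodes, rosaries, and tacnodally-attached elliptic bridges for (b)), deriving explicit formulas for how $\lambda$, $\delta$, $\psi$ transform under each normalization (Propositions~\ref{P:generic-inner-node}--\ref{P:generic-tacnode}). This reduces to a generically smooth family with extra marked sections (the transforms), to which one applies a sharpened Cornalba--Harris inequality (Proposition~\ref{P:CH-inequality}) when the geometric genus is $\geq 2$ or $3$. The low-genus cases require substantial additional input: Hodge Index Theorem estimates (Lemmas~\ref{L:cuspidal-section-high}--\ref{L:cuspidal-section-1}) bounding $\psi$-classes of the transform sections, and ad hoc effectivity on $\Mg{0,N}$ (Lemma~\ref{L:inner-sections}). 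The characterization of the degree-zero locus (elliptic bridges in (a), Weierstrass tails in (b)) emerges from the equality cases of these estimates---for instance, Lemma~\ref{L:genus-2-equality} shows that $8\psi_\tau=\kappa$ on a genus $2$ family forces $\tau$ to be Weierstrass. None of this structure is visible from the stable-reduction viewpoint you propose.
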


Our proof of this theorem is organized as follows. In Section \ref{S:simultaneous-norm}, 
we develop a theory of simultaneous normalization of families of at-worst tacnodal curves. 
By tracking how the relevant divisor classes change under normalization, we can reduce to proving a (more complicated) positivity result for 
families of generically smooth curves. In Section \ref{S:positivity-prelim}, we collect several preliminary positivity results, stemming from three 
sources: the Cornalba-Harris inequality, the Hodge Index Theorem, and some ad hoc divisor calculations on $\Mg{0,n}$. 
Finally, in Sections \ref{S:positivity-a} and \ref{S:positivity-b}, we combine these ingredients to prove 
parts (a) and (b) of Theorem \ref{T:main-positivity}, respectively. 

The following terminology will be in force throughout the rest of this section: 
We let $\tilde{\cU}_g$ denote the stack of connected curves of arithmetic genus $g$ with only $A$-singularities, 
and let $\tilde\cU_{g}(A_\ell)\subset \tilde\cU_{g}$ be the open substack parameterizing curves with at worst
$A_1,\dots, A_{\ell}$ singularities.
Since $\tilde\cU_g$ is smooth, we may freely alternate between line bundles and divisor classes. 
In addition, any relation between divisor classes on $\tilde\cU_g$ that holds on the open
substack of at-worst nodal curves extends to $\tilde\cU_g$. 

Let $\pi \co \cC \ra \tilde\cU_g$ be the universal family. 
We define the \emph{Hodge class} as $\lambda:=c_1(\pi_*\omega_{\pi})$ and the \emph{kappa class}
as $\kappa:=\pi_*(c_1(\omega_{\pi})^2)$. The divisor parameterizing singular curves in $\tilde\cU_g$ 
is denoted $\delta$; it can be further decomposed as $\delta=\delta_{\irr}+\delta_{\red}$, 
where $\delta_{\red}$ is the closed (by Corollary \ref{C:Attaching1}) locus of curves with disconnecting nodes. 
By the preceding remarks, Mumford's relation
$\kappa=12\lambda-\delta$ holds on $\tilde\cU_g$.
Note that the higher Hodge bundles $\pi_*(\omega_{\pi}^m)$ for $m\geq 2$ are well-defined on the open
locus in $\tilde{\cU}_g$ of curves with nef dualizing sheaf (it is the complement of the closed locus of curves with 
rational tails). 
On this locus, the Grothendieck-Riemann-Roch formula gives
\begin{equation}\label{GRR-higher-hodge}
c_1(\pi_*(\omega_{\pi}^m))=\lambda+\frac{m^2-m}{2}\kappa.
\end{equation}

Now let $\C \ra B$ be a family of curves in $\tilde\cU_g$. 
If $\sigma\co B \ra \C$ is any section of the family, we define $\psi_{\sigma}:=\sigma^*{\omega_{\C/B}}$. 
We say that $\sigma$ is \emph{smooth} if it avoids the relative singular locus of $\C/B$. 

From now on, we work only with one-parameter families $\C\to B$ over a smooth and proper curve $B$. 
If $\sigma\co B \to \C$ is generically smooth and the only singularities of fibers that $\sigma(B)$ passes through are nodes, then
$\sigma(B)$ is a $\QQ$-Cartier divisor on $\C$, and we define the \emph{index of $\sigma$} to be
\begin{equation}\label{index-section}
\iota(\sigma):=(\omega_{\C/B}+\sigma)\cdot \sigma.
\end{equation}
Notice that the index $\iota(\sigma)$ is non-negative, and if $\sigma$ is smooth, then $\iota(\sigma)=0$. We also have the following standard result:
\begin{lemma}\label{L:psi-class}
Suppose $\C \ra B$ is a generically smooth non-isotrivial family of curves in $\tilde\cU_g$.
\begin{enumerate}
\item If $g\geq 1$ and $\sigma\col B \ra \C$ is
a smooth section, then $\sigma^2 < 0$.
\item If $g=0$ and $\sigma, \sigma', \sigma'' \col B \ra \C$ are $3$ smooth sections such that $\sigma$
is disjoint from $\sigma'$ and $\sigma''$, then 
$\sigma^2 < 0$.
\end{enumerate}
\end{lemma}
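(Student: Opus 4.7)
The starting point in both parts is adjunction applied to the smooth section $\sigma \cong B$ embedded in the surface $\C$:
\[
2g(B)-2 \,=\, (\omega_\C + \sigma)\cdot\sigma \,=\, \omega_{\C/B}\cdot\sigma + f^*\omega_B\cdot\sigma + \sigma^2.
\]
Since $f^*\omega_B\cdot\sigma = \deg\omega_B = 2g(B)-2$, this rearranges to $\sigma^2 = -\omega_{\C/B}\cdot\sigma = -\deg\psi_\sigma$. Thus in both (1) and (2) it suffices to prove $\deg\psi_\sigma > 0$.

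For Part (1) with $g=1$, the stack $\overline{\cM}_{1,1}$ has rational Picard group generated by the ample class $\lambda = \psi$; the non-isotriviality of $\C\to B$ forces the induced moduli map $B\to\overline{\cM}_{1,1}$ coming from the pointed family $(\C,\sigma)$ to be non-constant, so $\deg\psi_\sigma > 0$. For $g\geq 2$, I would pass to a semistable reduction $\tilde\C\to\tilde B$ over a finite cover $\tilde B\to B$; since $\tilde\sigma^2 = [\tilde B:B]\cdot\sigma^2$, the sign is preserved, so one may replace $\C\to B$ by its semistable model. There $\omega_{\tilde\C/\tilde B}$ is nef, and Arakelov's theorem gives $\omega_{\tilde\C/\tilde B}^2 > 0$ from non-isotriviality. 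If $\tilde\sigma^2 \geq 0$, adjunction forces $\omega_{\tilde\C/\tilde B}\cdot\tilde\sigma \leq 0$, and combined with nefness this yields $\omega_{\tilde\C/\tilde B}\cdot\tilde\sigma = \tilde\sigma^2 = 0$. Decomposing $\tilde\sigma = aF + b\,\omega_{\tilde\C/\tilde B} + v$ with $v$ orthogonal to both $F$ and $\omega_{\tilde\C/\tilde B}$ in $\mathrm{NS}(\tilde\C)_\QQ$, the equations $\tilde\sigma\cdot F = 1$ and $\tilde\sigma\cdot\omega_{\tilde\C/\tilde B} = 0$ determine $a,b$ uniquely, and setting $\tilde\sigma^2 = 0$ then yields $v^2 = \omega_{\tilde\C/\tilde B}^2/(2g-2)^2 > 0$. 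Since the plane $\langle F,\omega_{\tilde\C/\tilde B}\rangle$ has signature $(1,1)$ in $\mathrm{NS}(\tilde\C)_\QQ$ (the intersection matrix has determinant $-(2g-2)^2<0$), the Hodge index theorem forces $v^2 \leq 0$, a contradiction.

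For Part (2) with $g=0$, the pointed family $(\C,\sigma,\sigma',\sigma'')\to B$ carries the implicit stability constraint that $\omega_{\C/B}(\sigma+\sigma'+\sigma'')$ is relatively ample, so every component of every reducible fiber has at least three special points counted between nodes and section incidences. I would contract every component of every reducible fiber that is disjoint from $\sigma$ to produce a birational model $\bar\C\to B$ on which $\sigma^2$ is unchanged. Non-isotriviality of $\C\to B$, together with the hypothesis that $\sigma$ is disjoint from both $\sigma'$ and $\sigma''$, forces $\bar\C$ to retain at least one reducible fiber in which $\sigma$ lies on a proper subcurve joined to the rest by at least one node, with at least one of $\sigma',\sigma''$ lying on a separate component. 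A direct case analysis of the possible combinatorial types of such stable rational fibers, combined with adjunction on the component carrying $\sigma$, then yields $\omega_{\C/B}\cdot\sigma>0$.

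The main obstacle I anticipate is the semistable reduction step in Part (1) for $g\geq 2$: because the total space $\C$ may itself be singular due to moving $A_k$-singularities in fibers for $k\geq 2$, the reduction must be carried out using the simultaneous normalization machinery developed in \S\ref{S:simultaneous-norm}, and one must verify that a smooth section of the original family lifts to a smooth section on the semistable model without sign change in self-intersection. For Part (2), the delicate point is confirming that stability of the pointed family propagates through the contraction, so that the $\sigma$-containing component of the surviving reducible fiber in $\bar\C$ genuinely admits the combinatorial configuration needed for the adjunction-based argument.
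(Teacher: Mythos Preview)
The paper does not prove this lemma; it is introduced as a ``standard result'' and left without proof.

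Your argument for Part~(1) is correct and follows the standard route. The adjunction identity $\sigma^2=-\psi_\sigma$ reduces both parts to $\psi_\sigma>0$. For $g=1$, the identification $\psi_\sigma=\lambda$ holds because $\omega_{\C/B}$ is fiberwise trivial on any genus~$1$ family, hence pulled back from $B$; non-isotriviality then gives $\lambda>0$. For $g\geq2$, your Hodge-index argument after semistable reduction is right, and your worry about the reduction is unfounded: the smooth section $\sigma$ avoids the singular locus of every fiber, so all modifications happen away from $\sigma$ and leave $\sigma^2$ unchanged up to the positive base-change degree.

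Your argument for Part~(2) does not work. First, the stability hypothesis you invoke is not among the assumptions, and in any case it is incompatible with reducible fibers: the only Deligne--Mumford stable $3$-pointed rational curve is the smooth $\PP^1$. Second, and independently, once you contract every fiber component disjoint from $\sigma$ (after resolving the total space if necessary, so that each such contraction is a smooth blow-down of a $(-1)$-curve), the resulting $\bar\C$ is a $\PP^1$-bundle over $B$ --- there are no reducible fibers left --- so your claimed case analysis never gets off the ground. The clean argument runs on this $\PP^1$-bundle: the images $\bar\sigma',\bar\sigma''$ remain sections disjoint from $\bar\sigma$ (because $\sigma(b)$, being a smooth point of $C_b$, is never the image of a contracted component), and since $\operatorname{NS}(\bar\C)_\QQ=\QQ F\oplus\QQ\bar\sigma$ one computes $\bar\sigma'\equiv\bar\sigma''\equiv\bar\sigma-\bar\sigma^2\,F$, whence $\bar\sigma'\cdot\bar\sigma''=-\bar\sigma^2=-\sigma^2$. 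As $\bar\sigma'\neq\bar\sigma''$ are distinct curves, the left side is $\geq 0$, giving $\sigma^2\leq 0$; equality forces $\bar\sigma,\bar\sigma',\bar\sigma''$ pairwise disjoint, hence $\bar\C\cong\PP^1\times B$ with constant sections, so the pointed family is isotrivial.
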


Let $\C\ra B$ be a one-parameter family of curves in $\tilde\U_{g}$.
If $p\in \C$ is a node of its fiber, then 
the local equation of $\C$ at $p$ is $xy=t^{e}$, for some $e\in \ZZ$ called \emph{the index of $p$} and denoted $\ind(p)$. 
A \emph{rational tail} (resp., a \emph{rational bridge}) of a fiber 
is a $\PP^1$ meeting the rest of the fiber in exactly one (resp., two) nodes.
If $E\subset C_{b}$ is a rational tail and $p=E\cap \overline{(C_{b}\setminus E)}$, then \emph{the index of $E$} is defined to be $\ind(p)$.
Similarly, if $E\subset C_{b}$ is a rational bridge and $\{p,q\}=E\cap \overline{(C_{b}\setminus E)}$, then the index of $E$ is defined to be
$\min\{\ind(p), \ind(q)\}$. We also denote the index of $E$ by $\ind(E)$. 
We say that a rational bridge $E\subset C_{b}$ is \emph{balanced} 
if $\ind(p)=\ind(q)$. 

\subsection{Degenerations and simultaneous normalization} 
\label{S:simultaneous-norm}
Our first goal is to develop a theory of simultaneous normalization along generic 
singularities in families of at-worst tacnodal curves. In contrast to the situation for 
nodal curves, where normalization along a nodal section can always be performed because
a node is not allowed to degenerate to a worse singularity,
we must now deal with families where a node degenerates
to a cusp or a tacnode, where two nodes degenerate to a tacnode, or where a cusp degenerates to a tacnode. 

The following result, stated in the notation of \S\ref{S:deformation-openness}, describes all possible degenerations
of singularities in one-parameter families of tacnodal curves.
\begin{prop}\label{P:limits-singularities}
Suppose $\C \ra \Delta$ is a family of at-worst tacnodal curves over $\Delta$, the spectrum of a DVR.
Denote by $C_{\bar\eta}$ the geometric generic fiber and by $C_0$ the central fiber. 
Then the only possible limits in $C_0$ of the singularities of $C_{\bar\eta}$ are the following:
\begin{enumerate}
\item[(1)] A limit of a tacnode of $C_{\bar\eta}$ is necessarily a tacnode of $C_0$. 
Moreover, a limit of an outer tacnode is necessarily an outer tacnode. 
\item[(2)] A limit of a cusp of $C_{\bar\eta}$ is either a cusp or a tacnode of $C_0$.
\item[(3)] A limit of an inner node of $C_{\bar\eta}$ is either a node, a cusp, or a tacnode of $C_0$.
\item[(4)] A limit of an outer node of $C_{\bar\eta}$ is either an outer node of $C_0$ or an outer tacnode of $C_0$.
Moreover, if an outer tacnode of $C_0$ is a limit of an outer node, it must be a limit 
of two outer nodes, necessarily joining the same components.
\end{enumerate}
\end{prop}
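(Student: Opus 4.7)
The plan is to reduce each case to two ingredients: (i) the classical numerical deformation constraint for $A_k$-singularities, namely that an $A_k$-singularity can specialize to a collection $\{A_{k_1},\dots,A_{k_r}\}$ only if $\sum_{i=1}^r (k_i+1)\leq k+1$; and (ii) the already-established Proposition \ref{P:limits-outer}, which controls degenerations of \emph{outer} singularities. The hypothesis that $\C\to\Delta$ has only $A_1,A_2,A_3$-singularities then forces each possible limit into a very short list.

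First I would dispose of the statements about inner singularities. Since the limit of any $A_k$-singularity of $C_{\bar\eta}$ is again an $A_m$-singularity of $C_0$ with $m+1\geq\sum(k_i+1)\geq k+1$, and since in our family $m\leq 3$, the numerical constraint immediately yields:
\begin{itemize}
\item the limit of an $A_3$ is an $A_3$, giving (1);
\item the limit of an $A_2$ is an $A_2$ or $A_3$, giving (2);
\item the limit of an (inner) $A_1$ is an $A_1$, $A_2$, or $A_3$, giving (3).
\end{itemize}
No further geometric input is needed here; these are pure statements about the versal deformation of plane-curve singularities.

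Next I would handle the outer assertions in (1) and (4). Suppose $q\in C_{\bar\eta}$ is an outer singularity with limit $p\in C_0$. Proposition \ref{P:limits-outer} directly tells us that $p$ is outer (hence $p$ has odd $A$-type), that every singularity of $C_{\bar\eta}$ approaching $p$ is outer and lies on the same two irreducible components as $q$, and that the collection of such singularities has the form $\{A_{2k_1+1},\dots,A_{2k_r+1}\}$ with $\sum_{i=1}^r (2k_i+2)=m+1$, where $p$ is an $A_m$. Combined with the restriction $m\leq 3$, this gives (4): an outer node ($m=1$ has $r=1$, $k_1=0$) limits to either an outer node or an outer tacnode, and it also gives the second sentence of (1): an outer tacnode can only limit to an outer tacnode.

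Finally, for the ``moreover'' clause of (4), suppose an outer tacnode $p\in C_0$ is the limit of at least one outer node. Then in the formula $\sum_i(2k_i+2)=m+1=4$ from Proposition \ref{P:limits-outer}, the only partitions are $4=4$ (a single $A_3$) or $4=2+2$ (two $A_1$'s). The first option is excluded by assumption (the approaching singularity is a node, not a tacnode), so we must be in the second case: exactly two outer nodes approach $p$, necessarily lying on the same two components by the same proposition. No step here is a serious obstacle; the only subtlety is keeping track of which geometric conclusion follows from the numerical bound versus from Proposition \ref{P:limits-outer}, since the latter is what upgrades ``the limit has the right arithmetic type'' to ``the limit really is outer and meets the right components.''
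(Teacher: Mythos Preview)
Your proof is correct and follows essentially the same approach as the paper: invoke the numerical deformation constraint for $A_k$-singularities together with Proposition~\ref{P:limits-outer}, and read off the short list of possibilities given the bound $m\leq 3$. One minor wording slip: in your statement of (i) you say an $A_k$ ``can specialize to'' a collection $\{A_{k_i}\}$ with $\sum(k_i+1)\leq k+1$, but the direction is that $A_k$ \emph{deforms to} (i.e., is the special fiber of a family whose generic fiber carries) such a collection---your subsequent application uses the correct direction, so this is purely terminological.
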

\begin{proof}
By deformation theory of $A$-singularities, a cusp deforms only to a node, 
a tacnode deforms only either to a cusp, or to a node, or to two nodes. 
Given this, the result follows directly from 
Proposition \ref{P:limits-outer}.
\end{proof}

We describe the operation of normalization along the generic singularities for each of the following degenerations:
\begin{enumerate}
\item[(A)] Inner nodes degenerate to cusps and tacnodes (see Proposition \ref{P:generic-inner-node}).
\item[(B)] Outer nodes degenerate to tacnodes (see Proposition \ref{P:generic-outer-node}). 
\item[(C)] Cusps degenerate to tacnodes (see Proposition \ref{P:generic-cusp}).
\end{enumerate}

We begin with a preliminary result concerning normalization along a collection of generic nodes.
Suppose $\pi\co \X\to B$ is a family in $\tilde\cU_g$ with sections $\{\sigma_i\}_{i=1}^k$ such that $\sigma_i(t)$ 
are distinct nodes of $\X_b$ for a generic $b\in B$ and such that $\{\sigma_i(B)\}_{i=1}^{k}$ do not meet
any other generic singularities. (The last condition will be automatically satisfied 
when $\{\sigma_i\}_{i=1}^k$ is the collection of all inner or all outer nodes.)
Let $\nu \co \Y \ra \X$ be the normalization of $\X$ along $\cup_{i=1}^k \sigma_i(B)$. Denote by 
$\{\eta_i^{+}, \eta_i^{-}\}$ the two preimages of $\sigma_i$ (which exist after a base change). 
Let $R^{+}_i\co \nu_*\O_{\Y} \ra \O_{\sigma_i(B)}$ (resp., $R^{-}_i\co \nu_*\O_{\Y} \ra \O_{\sigma_i(B)}$) 
be the morphisms of sheaves on $\X$ induced by pushing forward
the restriction maps $\O_{\Y} \ra \O_{\eta_i^{\pm}(B)}$ and composing with the natural isomorphisms 
$\nu_*(\O_{\eta_i^{\pm}(B)})\simeq \O_{\sigma_i(B)}$.
We let $R_i := R_{i}^{+} - R_{i}^{-}$ be the difference map, and
$R:=\oplus_{i=1}^{k} R_i\co \nu_*\O_\Y \longrightarrow \oplus_{i=1}^{k} \O_{\sigma_i(B)}$. In this notation,
we have the following result. 

\begin{lemma}\label{L:nodal-normalization} 
There is an exact sequence 
\begin{equation}\label{E:pushforward}
0 \ra \O_\X \stackrel{\nu^\#}{\longrightarrow} \nu_*\O_\Y \stackrel{R}{\longrightarrow} \oplus_{i=1}^{k} \O_{\sigma_i(B)} \ra \cK \ra 0,
\end{equation}
where $\cK$ is supported on the finitely many points of $\X$ at which the generic nodes $\{\sigma_i(B)\}_{i=1}^k$ degenerate
to worse singularities. Consequently, 
\[
\lambda_{\X/B}=\lambda_{\Y/B}+\operatorname{length}(\pi_* \cK).
\]
\end{lemma}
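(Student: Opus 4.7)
The plan is to establish the four-term exact sequence \eqref{E:pushforward} by a local analysis on $\X$, and then to extract the identity $\lambda_{\X/B} = \lambda_{\Y/B} + \operatorname{length}(\pi_*\cK)$ by applying $R\pi_*$ and invoking multiplicativity of the determinant of cohomology in short exact sequences.

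For the exactness of \eqref{E:pushforward}, I would work stalk by stalk on $\X$. Away from $\bigcup_i \sigma_i(B)$ the morphism $\nu$ is an isomorphism, so $\nu^\#$ is an isomorphism, $R = 0$, and the sequence is trivially exact. At a generic point of $\sigma_i(B)$, where a node persists in nearby fibers, the local picture reduces to the classical normalization of a node and gives the standard short exact sequence $0 \to \O_\X \to \nu_*\O_\Y \to \O_{\sigma_i(B)} \to 0$, with $\cK$ locally trivial. The remaining task is at the finitely many points $p \in \bigcup_i \sigma_i(B)$ where, by Proposition \ref{P:limits-singularities}, the generic node degenerates to a cusp, to an inner tacnode, or (when two outer nodes collide) to an outer tacnode. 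At each such point I would write down an explicit local equation for $\X$ (for example $y^2 = x^2(x - t^e)$ in the cuspidal case, or $xy = t^e$ in the outer tacnodal case), compute the integral closure $\nu_*\O_\Y$, and verify directly that the kernel of $R = \bigoplus(R_i^+ - R_i^-)$ is exactly $\O_\X$ while its cokernel $\cK$ is a skyscraper of finite length at $p$. Zero-dimensionality of $\cK$ is automatic because the branches $\eta_i^+(B)$ and $\eta_i^-(B)$ are distinct generically along $\sigma_i(B)$, so $R$ is surjective onto $\bigoplus_i \O_{\sigma_i(B)}$ at the generic point of every $\sigma_i(B)$.

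For the $\lambda$ identity I would split \eqref{E:pushforward} into two short exact sequences
\begin{align*}
&0 \to \O_\X \to \nu_*\O_\Y \to \cM \to 0, \\
&0 \to \cM \to \bigoplus_{i=1}^k \O_{\sigma_i(B)} \to \cK \to 0,
\end{align*}
with $\cM := \operatorname{image}(R)$, apply $R\pi_*$, and invoke multiplicativity of $\det R\pi_*$ in distinguished triangles. Since $\nu$ is finite one has $R\nu_* = \nu_*$, so $R\pi_*\nu_*\O_\Y = R(\pi\circ\nu)_*\O_\Y$; since each $\sigma_i \co B \to \X$ is an isomorphism onto its image, $R\pi_*\O_{\sigma_i(B)} \simeq \O_B$, so $\det R\pi_*(\bigoplus_i \O_{\sigma_i(B)})$ is trivial; and since $\cK$ is zero-dimensional, $R\pi_*\cK = \pi_*\cK$ is a torsion sheaf on $B$ whose first Chern class equals $\operatorname{length}(\pi_*\cK)$. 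Combining the two determinant identities yields
\[
\det R\pi_*\O_\X \;=\; \det R(\pi\circ\nu)_*\O_\Y \,\otimes\, \det \pi_*\cK,
\]
and taking first Chern classes together with the identifications $\lambda_{\X/B} = c_1(\det R\pi_*\O_\X)$ and $\lambda_{\Y/B} = c_1(\det R(\pi\circ\nu)_*\O_\Y)$ coming from Serre duality produces the stated formula.

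The main obstacle I anticipate is the local computation at an outer tacnodal degeneration where two generic outer nodes collide, since there one must carry out an explicit normalization of the surface $xy = t^e$ whose weighted blowup combinatorics depend on the parity of $e$; fortunately the determinant formalism absorbs all the local contributions into the single invariant $\operatorname{length}(\pi_*\cK)$, so no case-by-case length computation is needed to deduce the formula claimed in the lemma.
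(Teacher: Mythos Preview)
Your approach is correct and essentially the same as the paper's: local analysis for exactness of \eqref{E:pushforward}, then push forward to $B$ and use additivity of $c_1$ (the paper works directly with $R^0\pi_*$ and $R^1\pi_*$ rather than packaging everything as $\det R\pi_*$, but this is cosmetic). Two points are worth noting.

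First, the paper avoids your proposed case split on the type of degeneration. Instead of treating cusps, inner tacnodes, and outer tacnodes separately, it observes that at any point $p$ where $a$ of the generic nodes coalesce, the local equation of $\X$ can be written uniformly as
\[
y^2 = (x - s_1(t))^2 \cdots (x - s_a(t))^2\, f(x,t)
\]
with $f$ square-free (this uses the hypothesis that the $\sigma_i$ do not meet other generic singularities). Then $\Y$ is locally $u^2 = f(x,t)$ with $\nu^\#(y) = u\prod_i(x - s_i(t))$, and one checks $\ker R = \im \nu^\#$ by decomposing $\nu_*\O_\Y = \CC[[x,t]] \oplus u\,\CC[[x,t]]$ and observing that $u g(x,t) \in \ker R$ iff $\prod_i(x - s_i(t)) \mid g$. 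This handles every $A$-singularity degeneration at once.

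Second, your illustrative local model for the outer tacnodal case is wrong: $xy = t^e$ is the equation of a \emph{persisting} node on a surface with an $A_{e-1}$ singularity, not of two nodes colliding to a tacnode. The correct model for two outer nodes coalescing is $y^2 = (x - s_1(t))^2(x - s_2(t))^2$, which is exactly the paper's uniform equation with $a = 2$ and $f \equiv 1$; the normalization $\Y$ is then two smooth sheets $u = \pm 1$, and the computation is straightforward. This confusion does not affect the validity of your overall plan, but it does illustrate why the uniform model is preferable.
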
 
\begin{proof}
Away from finitely many points on $\X$ where the generic nodes degenerate, $\im(\nu^\#)=\ker(R)$ and 
$R$ is surjective.
Consider now a point $p\in \X$ where $a$ generic nodes coalesce. A local chart of $\X$ around $p$ can be taken 
to be
\[
\Spec \CC[[x,y,t]]/\bigl(y^2-(x-s_1(t))^2\cdots (x-s_a(t))^2 f(x,t)\bigr),
\]
where $x=s_i(t)$ are the equations of generic nodes. By assumption on the generic nodes, $f(x,t)$ is a square-free polynomial.
Hence $\Y=\Spec \CC[[x,u,t]]/\bigl(u^2- f(x,t)\bigr)$ and the normalization map is $y\mapsto u\prod_{i=1}^{a}(x-s_i(t))$. 

Without loss of generality, the equation 
of $\eta_i^{\pm}$ is $u=\pm v_i(t)$, where $v_i(t)^2=f(s_i(t),t)$.
It follows that $R_i \co \CC[[x,u,t]]/(u^2-f(x,t)) \to \CC[[t]]$
is given by 
\[
R_i\bigl(g(x,u,t)\bigr)=g(s_i(t), v_i(t), t)-g(s_i(t), -v_i(t), t).
\]
Write $\CC[[x,u,t]]/(u^2-f(x,t))=\CC[[x,t]]+u\CC[[x,t]]$. Clearly, $\CC[[x,t]] \subset \ker(R) \cap \im(\nu^\#)$.
Note that $ug(x,t) \in \ker(R)$ if and only if
$R_i\bigl(ug(x,t)\bigr)=2v_i(t)g(s_i(t),t)=0$ for every $i$ if and only if $g(x,t)\in (x-s_i(t))$ for every $i$.
Since the generic nodes are distinct, we conclude that $ug(x,t) \in \ker(R)$ if and only if $\prod_{i=1}^{a} (x-s_i(t)) \mid g(x,t)$
if and only if $ug(x,t) \in y\CC[[x,t]] \subset \im(\nu^\#)$. The exactness of \eqref{E:pushforward} follows.

Pushing forward \eqref{E:pushforward} to $B$ and noting that
$c_1((\pi\circ \nu)_{*} \O_\Y)=c_1(\pi_*\O_X)=c_1(\pi_*\O_{s_i(B)})=0$,
we obtain 
\[
c_1(R^1(\pi\circ \nu)_{*} \O_\Y)=c_1(R^1\pi_*\O_\X)+c_1(\pi_* \cK).
\]
The formula relating Hodge classes now follows by relative Serre duality.
\end{proof}

\begin{prop}[Type A degeneration]
\label{P:generic-inner-node}
Suppose $\X/B$ is a family in $\tilde\cU_g(A_3)$ with sections $\{\sigma_i\}_{i=1}^k$ such that $\sigma_i(b)$ 
are distinct inner nodes of $\X_b$ for a generic $b\in B$, degenerating to cusps and tacnodes over a finite set of points of $B$.
Denote by $\Y$ the normalization of $\X$ along $\cup_{i=1}^k \sigma_i(B)$ and by 
$\{\eta_i^{+}, \eta_i^{-}\}$ the two preimages of $\sigma_i$. Then $\{\eta_i^{\pm}\}$ are 
sections of $\Y/B$ satisfying: 
\begin{enumerate}
\item If $\sigma_i(b)$ is a cusp of $\X_b$, then $\eta_i^{+}(b)=\eta_i^{-}(b)$ is a smooth point of $\Y_b$. 
\item If $\sigma_i(b)$ is a tacnode of $\X_b$ and $\sigma_j(b)\neq \sigma_i(b)$ for all $j\neq i$, then 
$\eta_i^{+}(b)=\eta_i^{-}(b)$ is a node of $\Y_b$ and $\eta_i^{+}+\eta_i^{-}$ is Cartier at $b$. 
\item If $\sigma_i(b)=\sigma_j(b)$ is a tacnode of $\X_b$ for some $i\neq j$, then (up to $\pm$)
$\eta_i^{+}(b)=\eta_j^{+}(b)$ and $\eta_i^{-}(b)=\eta_j^{-}(b)$ are smooth and distinct points of $\Y_b$. 
\end{enumerate}
Set $\eta_i:=\eta_i^{+}+\eta_i^{-}$ and $\psi_{\eta_i}:=\omega_{\Y/B}\cdot \eta_i=\psi_{\eta_i^+}+\psi_{\eta_i^-}$. 
Define
\[
\psi_{inner}:=\sum_{i=1}^k\psi_{\eta_i}, \quad \delta_{tacn}:= \sum_{i\neq j} \bigl(\eta_i \cdot \eta_j \bigr),  \quad \text{and} \quad \delta_{inner}=\sum_{i=1}^k (\eta_i^{+}\cdot \eta_i^{-}).
\]
Then we have the following formulae:
\begin{align*}
\lambda_{\X/B} &=\lambda_{\Y/B}+\frac{1}{2}\delta_{tacn}+\delta_{inner}+\sum_{i=1}^{k} \iota(\eta^{+}_i), \\
\delta_{\X/B} &=\delta_{\Y/B}-\psi_{inner}+4\delta_{tacn}+10\delta_{inner}+10\sum_{i=1}^{k}\iota(\eta^{+}_i).
\end{align*}
A pair of sections $\{\eta_i^{+},\eta_i^{-}\}$ arising from the normalization 
of a generic inner node will be called \emph{inner nodal pair} and $\eta^{\pm}_i$ will be called 
\emph{inner nodal transforms}. 
\end{prop}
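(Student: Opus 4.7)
My plan is to prove the proposition in three stages. First, I would establish the structural claims (1)--(3) by an explicit local computation. By Proposition~\ref{P:limits-singularities}, a generic inner node of $\X_{b}$ can degenerate only into a cusp, into an inner tacnode born from a single node, or into an inner tacnode born from two coalescing nodes, and these three possibilities admit standard local models
\[
y^{2}=x^{2}(x+t),\qquad y^{2}=x^{2}(x^{2}+t),\qquad y^{2}=\bigl((x-s_{1}(t))(x-s_{2}(t))\bigr)^{2},
\]
in each of which the normalization $\nu\col \Y\to\X$ can be written down by hand. These models immediately exhibit the fiberwise behavior of the preimages $\eta_{i}^{\pm}$ required by (1)--(3); for instance, in the lone-tacnode case one finds $\eta_{i}^{+}+\eta_{i}^{-}=V(x)\subset\Y$, which is manifestly Cartier. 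Using that $\Y$ is Gorenstein even where its fibers are nodal, the same local computation verifies the global adjunction
\[
\nu^{*}\omega_{\X/B}\;\cong\;\omega_{\Y/B}\!\left(\textstyle\sum_{i}\eta_{i}\right).
\]

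Next, the $\lambda$-formula would follow from the identity $\lambda_{\X/B}=\lambda_{\Y/B}+\operatorname{length}(\pi_{*}\cK)$ with $\cK$ the cokernel of Lemma~\ref{L:nodal-normalization}; since that lemma's proof covers only the coalescing-nodes degeneration (requiring $v_{i}(0)\neq 0$ in its notation), I would also have to extend the construction of the relevant exact sequence across the cusp and lone-tacnode degenerations via a direct local calculation in the above models. I would then compute $\operatorname{length}(\cK)$ at each degeneration point by writing $\nu_{*}\O_{\Y}/\O_{\X}$ as a free $\CC[[t]]$-module and taking the determinant of the matrix of $R=\bigoplus_{i}(R_{i}^{+}-R_{i}^{-})$ on a basis; this should yield $\eta_{i}^{+}\!\cdot\eta_{i}^{-}$ at a cusp, $\iota(\eta_{i}^{+})$ at a lone tacnode, and $\tfrac{1}{2}(\eta_{i}\cdot\eta_{j})$ at a tacnodal collision, which sum to the claimed formula.

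For the $\delta$-formula, I would square the adjunction. Since $\nu$ is finite and birational, $(\nu^{*}\omega_{\X/B})^{2}=\omega_{\X/B}^{2}$, and so
\[
\omega_{\X/B}^{2}-\omega_{\Y/B}^{2}\;=\;2\,\omega_{\Y/B}\cdot\textstyle\sum_{i}\eta_{i}+\bigl(\sum_{i}\eta_{i}\bigr)^{2}.
\]
Expanding via $\omega_{\Y/B}\cdot\eta_{i}=\psi_{\eta_{i}}$, $(\eta_{i}^{\pm})^{2}=\iota(\eta_{i}^{\pm})-\psi_{\eta_{i}^{\pm}}$, the symmetry $\iota(\eta_{i}^{+})=\iota(\eta_{i}^{-})$ (clear from the local models), and $\bigl(\sum_{i}\eta_{i}\bigr)^{2}=\sum_{i}\eta_{i}^{2}+2\delta_{tacn}$ should produce
\[
\omega_{\X/B}^{2}-\omega_{\Y/B}^{2}\;=\;\psi_{inner}+2\textstyle\sum_{i}\iota(\eta_{i}^{+})+2\delta_{inner}+2\delta_{tacn},
\]
and substituting this together with the $\lambda$-formula above into Noether's relation $\omega_{\X/B}^{2}=12\lambda_{\X/B}-\delta_{\X/B}$ on $\tilde\cU_{g}$ gives the stated $\delta$-formula.

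The main technical obstacle will be to correctly justify the global adjunction at the lone-tacnode degeneration, where $\Y/B$ acquires a node at $b$ and $\Y$ itself picks up an $A_{1}$-singularity after the degree-$2$ base change needed to split $\eta_{i}^{\pm}$ into honest sections. One must verify that the conductor of $\nu$ is the divisor $\eta_{i}^{+}+\eta_{i}^{-}$ itself and not twice it, as the fiberwise tacnodal conductor might suggest: morally, the fiberwise conductor is split between $\nu$ and the subsequent normalization of the still-nodal $\Y_{b}$. Getting this coefficient right is precisely what produces the final $4\delta_{tacn}$ and $10\delta_{inner}$ in the $\delta$-formula.
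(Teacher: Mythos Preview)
Your plan is correct and follows essentially the same route as the paper: local models for the three degeneration types, Lemma~\ref{L:nodal-normalization} for the $\lambda$-formula, and the adjunction $\nu^{*}\omega_{\X/B}=\omega_{\Y/B}(\sum_i\eta_i)$ squared and combined with Mumford's relation for the $\delta$-formula. Two small corrections to your outline: first, Lemma~\ref{L:nodal-normalization} is already stated and proved in enough generality to cover all three degenerations (the square-free factor $f(x,t)$ in its proof absorbs the cusp and lone-tacnode cases), so no extension is needed; second, your local models should carry an index parameter (the paper uses, e.g., $y^{2}=(x-t^{m})^{2}(x^{2}+t^{2c})$ at a lone tacnode), and at such a point the length of $\cK$ matches the \emph{combined} local contribution to $\delta_{inner}+\sum_i\iota(\eta_i^{+})$ rather than to $\iota(\eta_i^{+})$ alone---the paper computes this via the minimal resolution of the $A_{2c-1}$-singularity on $\Y$, which is exactly the step that handles the conductor subtlety you flag at the end.
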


\begin{proof}
The formula for the Hodge class follows from Lemma \ref{L:nodal-normalization}, whose notation we keep, 
once we analyze the torsion sheaf $\cK$ on $\X$. Consider the following loci in $\X$:
\begin{enumerate}
\item[(a)] $\Cu$ is the locus of cusps in $\X/B$ which are limits of generic inner nodes.
\item[(b)] $\Tn$ is the locus of tacnodes in $\X/B$ which are limits of a single generic inner node.
\item[(c)] $\Tnt$ is the locus of tacnodes in $\X/B$ which are limits of two generic inner nodes.
\end{enumerate}

(a) A local chart of $\X$ around a point $p\in \Cu$ can be taken to be 
\[
\Spec \CC[[x,y,t]]/\bigl(y^2-(x-t^{2m})^2(x+2t^{2m})\bigr),
\]
where $x=t^{2m}$ is the equation of the generic node $\sigma$ degenerating to the cusp $p$.
Then $\Y=\Spec \CC[[x,u,t]]/\bigl(u^2-x-2t^{2m}\bigr)$ and the normalization map is $y \mapsto u(x-t^{2m})$. 
The preimages $\eta^{+}$ and $\eta^{-}$ of the generic node $\sigma$
have equations $u=\sqrt{3} t^{m}$ and $u=-\sqrt{3} t^m$. Note that $\Y$ is smooth and the intersection multiplicity of 
$\eta^{+}$ and $\eta^{-}$ at the preimage of $p$ is $m$. It follows that the contribution of $p$ to $\delta_{inner}$ is $m$.

The elements of $\CC[[x,u,t]]/\bigl(u^2-x-2t^{2m}\bigr)$ that do not lie in $\ker(R)$ are of the form $ug(x,t)$ and we have
$R\bigl(ug(x,t)\bigr)=2\sqrt{3}t^{m} g(t^{2m},t)$. It follows that 
$\im(R)=(t^m) \subset \CC[[t]]$. Hence $\cK_p=\CC[[t]]/\im(R)$ has length $m$. 

(b) A local chart of $\X$ around a point $p\in \Tn$ can be taken to be 
\[
\Spec \CC[[x,y,t]]/\bigl(y^2-(x-t^{m})^2(x^2+t^{2c})\bigr),
\]
where $x=t^{m}$ is the equation of the generic node $\sigma$ degenerating to the tacnode $p$.
Then $\Y=\Spec \CC[[x,u,t]]/(u^2-x^2-t^{2c})$ is a normal surface with $A_{2c-1}$-singularity at the preimage of $p$,
and the normalization map is given by $y \mapsto u(x-t^{m})$.
The preimage of $\sigma$ is the bi-section given by the equation $u^2=t^{2m}+t^{2c}$, which splits into two 
sections given by the equations $u=\pm v(t)$, where the valuation of $v(t)$ is equal to $\min\{m, c\}$. 
The map $R\co \CC[[x,u,t]]/(u^2-x^2-t^{2c}) \to \CC[[t]]$ sends an element of the form $ug(x,t)$ to $2v(t)g(t^m,t)$ and everything else to $0$.
We conclude that $\cK_p=\CC[[t]]/\im(R)$ has length $\min\{m, c\}$.

It remains to show that the contribution of $p$ to $\bigl(\eta^{+}\cdot \eta^{-}+\iota(\eta^{+})\bigr)$ is $\min\{m,c\}$.
There are two cases to consider. First, suppose $c\leq m$. Then the equations of $\eta^{+}$ and $\eta^{-}$ 
are $u=\alpha t^c$ and $u=-\alpha t^c$ where $\alpha \neq 0$ is a unit in $\CC[[t]]$. The minimal
resolution $h\co \wt{\Y} \ra \Y$ has the exceptional divisor
\[
E_1 \cup \cdots \cup E_{2c-1},
\] 
which is a chain of $(-2)$-curves. 
The strict transforms
$\wt{\eta}^{\, +}$ and $\wt{\eta}^{\, -}$ meet the central $(-2)$-curve $E_c$ at two distinct points. 
Clearly,  $h^*\omega_{\Y/B}=\omega_{\wt{\Y}/B}$ and
a straightforward computation shows that 
\[
h^*(\eta^{+}+\eta^{-})=\wt{\eta}^{\, +}+\wt{\eta}^{\, -}+\sum_{i=1}^{c-1} i(E_i+E_{2c-i})+ cE_c.
\]
It follows that the contribution of $p$ to 
$\bigl(\eta^{+}\cdot \eta^{-}+\iota(\eta^{+})\bigr)=(\omega_{\X/B}+\eta^{+}+\eta^{-})\cdot \eta^{+}$ 
is $c$.
 
Suppose now that $c>m$. Then the equations of $\eta^{+}$ and $\eta^{-}$ 
are $u=\alpha t^m$ and $u=-\alpha t^m$, respectively, where $\alpha \neq 0$ is a unit in $\CC[[t]]$. The exceptional divisor of the minimal
resolution $h\co \wt{\Y} \ra \Y$ is still a chain of $(-2)$-curves of length $2c-1$. However, 
$\wt{\eta}^{\, +}$ and $\wt{\eta}^{\, -}$ now meet $E_{m}$ and $E_{2c-m}$, respectively. 
It follows that the contribution of $p$ to $\bigl(\eta^{+}\cdot \eta^{-}+\iota(\eta^{+})\bigr)$ is $m$.

(c) A local chart of $\X$ around a point $p\in \Tnt$ can be taken to be 
\[
\Spec \CC[[x,y,t]]/\bigl(y^2-(x-t^{m})^2(x+t^{m})^2\bigr),
\]
where $x=t^{m}$ and $x=-t^m$ are the equations of the generic nodes $\{\sigma_1, \sigma_2\}$ coalescing to the tacnode $p$.
Then $\Y=\Spec \CC[[x,u,t]]/(u^2-1)$ is a union of two smooth sheets, and the normalization map is given by $y \mapsto u(x-t^{m})(x+t^{m})$. 
The preimages $\eta_1^{+}$ and $\eta_1^{-}$ of the generic node $\sigma_1$
have equations $\{u=1, x=t^m\}$ and $\{u=-1, x=t^m\}$. The preimages $\eta_2^{+}$ and $\eta_2^{-}$ of the generic node $\sigma_2$
have equations $\{u=1, x=-t^m\}$ and $\{u=-1, x=-t^m\}$. In particular, $\eta^{\pm}_j$ are smooth sections,
with $\eta^{+}_1$ meeting $\eta^{+}_2$, and $\eta^{-}_1$ meeting $\eta^{-}_2$, each with intersection multiplicity $m$.
It follows that the contribution of $p$ to $\delta_{tacn}$ is $2m$.

The elements of $\CC[[x,u,t]]/(u^2-1)$ that do not lie in $\ker(R)$ are of the form $ug(x,t)$ and we have
$R(ug(x,t))=(2g(t^m,t), 2g(-t^m,t)) \in \CC[[t]]\oplus \CC[[t]]$. It follows that 
\[
\im(R)= \langle (1,1), (t,t), \dots, (t^{m-1}, t^{m-1})\rangle + (t^m)\times (t^m) \subset \CC[[t]]\times \CC[[t]].
\]
Hence  $\cK_p = (\CC[[t]]\oplus \CC[[t]]) /\im(R)$ has length $m$.

It remains to prove the formula for the boundary classes. 
To do this, note that
$\nu^*\omega_{\X/B}=\omega_{\Y/B}\bigl(\sum_{i=1}^{k} (\eta_i^{+}+\eta_i^{-})\bigr)$.
Therefore, 
\begin{align*}
\kappa_{\X/B}&=\kappa_{\Y/B}
+2\sum_{1\leq i< j\leq k} \bigl((\eta_i^{+}+\eta_i^{-}) \cdot (\eta_j^{+}+\eta_j^{-})\bigr)+2\omega_{\Y/B}\cdot \sum_{i=1}^k (\eta_i^{+}+\eta_i^{-})
+\sum_{i=1}^k (\eta_i^{+}+\eta_i^{-})^2
\\ =\kappa_{\Y/B} &+2\delta_{tacn}+\omega_{\Y/B}\cdot \sum_{i=1}^k (\eta_i^{+}+\eta_i^{-})+\sum_{i=1}^k\left(\omega_{\Y/B}\cdot \eta_i^{+}+(\eta_{i}^{+})^2+\omega_{\Y/B}\cdot \eta_i^{-}+(\eta_{i}^{-})^2\right)
+2\sum_{i=1}^k \bigl(\eta_i^{+}\cdot \eta_i^{-}\bigr)\\ 
&= \kappa_{\Y/B}+2\delta_{tacn}+\psi_{inner}+2\sum_{i=1}^k\iota(\eta^{+}_i)+2\delta_{inner}.
\end{align*}
Using Mumford's relation $\kappa=12\lambda-\delta$ and the already established relation between $\lambda_{\X/B}$ and $\lambda_{\Y/B}$, 
we obtain the desired relation between $\delta_{\X/B}$ and $\delta_{\Y/B}$.
\end{proof}


\begin{prop}[Type B degeneration]
\label{P:generic-outer-node}
Suppose $\X/B$ is a family in $\tilde\cU_g(A_3)$ with sections $\{\sigma_i\}_{i=1}^k$ such that $\sigma_i(b)$ 
are outer nodes of $\X_b$ for a generic $b\in B$, degenerating to outer tacnodes over a finite set of points of $B$.
Denote by $\Y$ the normalization of $\X$ along $\cup_{i=1}^k \sigma_i(B)$ and by 
$\{\zeta_i^{+}, \zeta_i^{-}\}$ the two preimages of $\sigma_i$. Then $\{\zeta_i^{\pm}\}_{i=1}^{k}$ are smooth 
sections of $\Y$ such that $\zeta_i^{+}$ and $\zeta_i^{-}$ lie on different irreducible components of $\Y$.
Setting
\[
\delta_{tacn}:= \sum_{i\neq j} (\zeta_i^{+}+\zeta_i^{-})\cdot (\zeta_j^{+}+\zeta_j^{-}),
\]
we have the following formulae:
\begin{align*}
\lambda_{\X/B}&=\lambda_{\Y/B}+\frac{1}{2}\delta_{tacn}, \\
\delta_{\X/B}&=\delta_{\Y/B}-\sum_{i=1}^k(\psi_{\zeta_i^+}+\psi_{\zeta_i^-})+4\delta_{tacn}.
\end{align*}
The sections $\{\zeta_i^{+},\zeta_i^{-}\}_{i=1}^{k}$ will be called \emph{outer nodal transforms}. 
\end{prop}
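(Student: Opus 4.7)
The proof will proceed in close parallel to Proposition \ref{P:generic-inner-node}, but with considerable simplifications arising from the fact that the only possible degeneration here is two outer nodes coalescing to an outer tacnode. First, I will establish the two geometric claims about $\zeta_i^\pm$. Since each $\sigma_i$ is a generic outer node, the local picture at any point of $\sigma_i(B)$ away from the degeneration locus is that of a standard node separating two distinct components, and normalization produces two sections on two distinct sheets. By Proposition \ref{P:limits-singularities}(4), the only degenerate behavior is that two outer nodes $\sigma_i$ and $\sigma_j$ (necessarily joining the same pair of components of the generic fiber) coalesce to an outer tacnode; by Proposition \ref{P:limits-outer} there is a simultaneous normalization of $\X$ along these two sections locally modeled exactly as in case (c) of the proof of Proposition \ref{P:generic-inner-node}, which shows that $\Y$ remains smooth over such points and splits into two disjoint sheets carrying $\zeta_i^+,\zeta_j^+$ and $\zeta_i^-,\zeta_j^-$ respectively. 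Thus the $\zeta_i^\pm$ are smooth sections, $\zeta_i^+$ and $\zeta_i^-$ lie on distinct components of $\Y$ globally, and in particular $\zeta_i^+\cdot\zeta_i^-=0$ and $\iota(\zeta_i^\pm)=0$.

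For the Hodge class formula, I will apply Lemma \ref{L:nodal-normalization} with $\{\sigma_i\}_{i=1}^k$. The torsion sheaf $\cK$ is supported precisely at the points where outer nodes coalesce to outer tacnodes, and the local computation for the length of $\cK_p$ is identical to case (c) of Proposition \ref{P:generic-inner-node}: at a coalescence point with local equation $y^2=(x-t^m)^2(x+t^m)^2$, one has $\ell(\cK_p)=m$ while the contribution of $p$ to $\delta_{tacn}$ is $2m$. Summing gives $\ell(\pi_*\cK)=\tfrac{1}{2}\delta_{tacn}$, which yields the desired formula for $\lambda_{\X/B}$.

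For the boundary formula, I will expand $\kappa_{\X/B}=(\nu^*\omega_{\X/B})^2=(\omega_{\Y/B}+\sum_i(\zeta_i^++\zeta_i^-))^2$ on $\Y$, as in the previous proposition. Using the simplifications $\zeta_i^+\cdot\zeta_i^-=0$ and $(\zeta_i^\pm)^2=-\psi_{\zeta_i^\pm}$ (the latter from $\iota(\zeta_i^\pm)=0$), this expansion collapses to
\[
\kappa_{\X/B}=\kappa_{\Y/B}+2\delta_{tacn}+\sum_{i=1}^k(\psi_{\zeta_i^+}+\psi_{\zeta_i^-}).
\]
Plugging this together with $\lambda_{\X/B}=\lambda_{\Y/B}+\tfrac12\delta_{tacn}$ into Mumford's relation $\kappa=12\lambda-\delta$ on both $\X$ and $\Y$ and solving gives the stated formula $\delta_{\X/B}=\delta_{\Y/B}-\sum(\psi_{\zeta_i^+}+\psi_{\zeta_i^-})+4\delta_{tacn}$.

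The only genuinely substantive step is the first paragraph, namely verifying via Propositions \ref{P:limits-singularities}(4) and \ref{P:limits-outer} that outer nodes can only coalesce in pairs joining the same two components, with a simultaneous nodal normalization existing globally; everything else is formal once the local model at coalescence points is identified with case (c) of Proposition \ref{P:generic-inner-node}. Indeed, the main technical obstacle compared to the inner case has already been eliminated: there is no cuspidal or single-node-to-tacnode degeneration to analyze, and no nonzero self-intersection $\zeta_i^+\cdot\zeta_i^-$ or index $\iota(\zeta_i^\pm)$ to track.
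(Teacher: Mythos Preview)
Your proof is correct and follows exactly the approach the paper takes: the paper's own proof simply invokes Proposition \ref{P:limits-singularities} to rule out all degenerations except pairs of outer nodes coalescing to an outer tacnode, and then says the statement follows by repeating verbatim the proof of Proposition \ref{P:generic-inner-node} beginning with part (c), together with Lemma \ref{L:nodal-normalization}. You have simply written out those steps explicitly, including the $\kappa$-expansion and the application of Mumford's relation, so there is nothing to add.
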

\begin{proof}
By Proposition \ref{P:limits-singularities}, outer nodes can degenerate only to outer tacnodes. Moreover, 
an outer tacnode which is a limit of one outer node is a limit of two outer nodes. 
The statement now follows by repeating verbatim the proof of Proposition \ref{P:generic-inner-node} beginning with part (c),
and using Lemma \ref{L:nodal-normalization}.
\end{proof}

\begin{prop}[Type C degeneration]
\label{P:generic-cusp}
Suppose $\X/B$ is a family in $\tilde\cU_g$ with sections $\{\sigma_i\}_{i=1}^{k}$ such that $\sigma_i(b)$ 
is a cusp of $\X_b$ for a generic $b\in B$, degenerating to a tacnode over a finite set of points in $B$.
Denote by $\Y$ the normalization of $\X$ along $\cup_{i=1}^k \sigma_i(B)$ 
and by $\xi_i$ the preimage of $\sigma_i$. Then $\xi_i$ is a section of $\Y/B$ such that $\xi_i(t)$ is a node of $\Y_b$ whenever
$\sigma_i(b)$ is a tacnode of $\X_b$ and $\xi_i(b)$ is a smooth point of $\Y_b$ otherwise. Moreover, $2\xi_i$ is Cartier
and we have the following formulae:
\begin{align*}
\lambda_{\X/B}&=\lambda_{\Y/B}-\sum_{i=1}^{k} \psi_{\xi_i}+2\sum_{i=1}^{k} \iota(\xi_i), \\
\delta_{\X/B}&=\delta_{\Y/B}-12\sum_{i=1}^{k}\psi_{\xi_i}+20\sum_{i=1}^{k} \iota(\xi_i).
\end{align*}
The sections $\xi_i$ will be called \emph{cuspidal transforms}. 
\end{prop}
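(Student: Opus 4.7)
My plan is to mirror the proofs of Propositions \ref{P:generic-inner-node} and \ref{P:generic-outer-node}: combine a local analysis at each tacnodal degeneration point with a global intersection-theoretic calculation based on the adjunction isomorphism and the short exact sequence of structure sheaves associated to $\nu$. The new subtlety compared to the nodal cases is that the cuspidal transform $\xi_i$ is a Weil but non-Cartier divisor through an $A_{2m-1}$-singularity that $\Y$ acquires above a degeneration of index $m$, so intersection numbers have to be extracted either via the Cartier multiple $2\xi_i$ or by passage to the minimal resolution.

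I start with the local picture at a point $p\in\X$ where $\sigma_i(0)$ is a tacnode. After a finite base change, coordinates may be chosen so that
$$\X=\Spec \CC[[x,y,t]]/\bigl(y^2-x^3(x-t^m)\bigr),\qquad \sigma_i=V(x,y),$$
and the partial normalization along the generic cusps becomes
$$\Y=\Spec \CC[[x,u,t]]/\bigl(u^2-x(x-t^m)\bigr),\qquad \nu^{\#}(y)=xu.$$
A Morse-lemma substitution exhibits an $A_{2m-1}$-singularity of $\Y$ at the unique preimage of $p$, and one checks directly that $\xi_i=V(x,u)$ is a reduced section through this singularity with $2\xi_i=V(x)$ Cartier, and that $\xi_i(0)$ is a node of $\Y_0$ precisely when $\sigma_i(0)$ is a tacnode. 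These observations supply the structural statements in the proposition.

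For the global computation, the key input is the cuspidal adjunction identity
$$\nu^{*}\omega_{\X/B}=\omega_{\Y/B}\Bigl(\textstyle 2\sum_{i=1}^k\xi_i\Bigr),$$
which is readily verified in local coordinates (on the model cusp $y^2=x^3$, $\omega=dx/2y$ pulls back to $ds/s^2$ under $x=s^2,\,y=s^3$). Squaring, pushing forward to $B$, and using $\xi_i^2=\iota(\xi_i)-\psi_{\xi_i}$ (where $\xi_i^2$ is defined via the Cartier divisor $2\xi_i$) gives the $\kappa$-formula $\kappa_{\X/B}=\kappa_{\Y/B}+4\sum_i\iota(\xi_i)$. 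For the Hodge class I analyze the sequence $0\to\O_\X\to\nu_{*}\O_\Y\to\cQ\to 0$: the quotient $\cQ$ is a line bundle on $\sigma_i(B)\cong B$ generated locally by $u=y/x$, and it is canonically identified with the conductor line bundle. Away from degeneration points this is $\omega_{\X/B}|_{\sigma_i}$, contributing $\psi_{\xi_i}$ to $c_1(\pi_{*}\cQ)$; at each degeneration point of index $m$, the local coordinates above give an explicit correction, yielding
$$c_1(\pi_{*}\cQ)=\textstyle\sum_i\psi_{\xi_i}-2\sum_i\iota(\xi_i).$$
Applying Serre duality (and noting $R^1\pi_{*}\cQ=0$ since $\cQ$ is supported on a section) produces the $\lambda$-formula, and the $\delta$-formula follows at once from Mumford's relation $\kappa=12\lambda-\delta$ applied to both $\X/B$ and $\Y/B$.

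The main obstacle will be the precise local bookkeeping at a degeneration point: computing the correction term to $c_1(\pi_{*}\cQ)$ and, equivalently, $(2\xi_i)^2$, when $\xi_i$ is non-Cartier. The cleanest route appears to be to pass to the minimal resolution $h\co\tilde\Y\to\Y$, where the $A_{2m-1}$-singularity is replaced by a chain $E_1,\ldots,E_{2m-1}$ of $(-2)$-curves. The strict transform $\tilde\xi_i$ meets only $E_m$ (transversally, once), and the unique solution of the associated discrete Laplacian determines $h^{*}(2\xi_i)=2\tilde\xi_i+\sum_{j=1}^{2m-1}\min(j,2m-j)\,E_j$, whence $(2\xi_i)^2=4\tilde\xi_i^{\,2}+2m$. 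Since $A_n$-singularities are crepant, $h^{*}\omega_{\Y/B}=\omega_{\tilde\Y/B}$ and hence $\psi_{\xi_i}=\psi_{\tilde\xi_i}$, so all the necessary numerical data can be assembled in the Cartier setting on $\tilde\Y$, paralleling the endgame of Proposition \ref{P:generic-inner-node}.
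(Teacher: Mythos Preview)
Your approach is correct and reaches the same conclusions as the paper, but the route for $\lambda$ differs. You compute $\lambda$ by pushing forward the structure-sheaf sequence $0\to\O_\X\to\nu_*\O_\Y\to\cQ\to0$, mirroring Lemma~\ref{L:nodal-normalization}; the paper instead invokes the duality identity $\omega_{\X/B}=\nu_*\bigl(\omega_{\Y/B}(2\xi)\bigr)$ for the normalization of a cusp, whence $\lambda_{\X/B}=c_1\bigl((\pi\circ\nu)_*(\omega_{\Y/B}(2\xi))\bigr)$, and reads off the answer from Grothendieck--Riemann--Roch in one line. The paper's route is shorter and sidesteps both the identification of $\cQ$ and the local correction bookkeeping at degeneration points; your route has the virtue of uniformity with the nodal cases. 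Your resolution-based computation of $(2\xi_i)^2$ is also more explicit than the paper, which simply writes $\kappa_{\X/B}=(\omega_{\Y/B}+2\xi)^2=\kappa_{\Y/B}+4\iota(\xi)$ directly from the definition $\iota(\xi)=(\omega_{\Y/B}+\xi)\cdot\xi$.

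One slip to fix: the identification ``$\cQ\cong\omega_{\X/B}|_{\sigma_i}$'' is off by a dual. A $\GG_m$-weight check at a model cusp (acting by $x\mapsto\alpha^2x$, $y\mapsto\alpha^3y$) shows the generator $u=y/x$ of $\cQ$ has weight $+1$, whereas $\sigma_i^*\omega_{\X/B}$ (generated by $dx/y$) has weight $-1$; the correct identification is $\cQ\cong(\sigma_i^*\omega_{\X/B})^\vee\cong\xi_i^*\omega_{\Y/B}$. Your degree conclusion $c_1(\pi_*\cQ)=\psi_{\xi_i}$ away from degenerations is therefore right, but for this reason rather than the one stated. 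This does not affect the rest of the argument.
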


\begin{proof} 
The proof of this proposition is easier than the previous two results because 
a generic cusp cannot collide with another generic singularity. In particular, we can consider the case of a single generic cusp $\sigma$. 
Let $\nu \co \Y \ra \X$ be the normalization along $\sigma$. Suppose $\sigma(b)$ is a tacnode. Then the local equation of $\X$ around 
$\sigma(b)$ is 
\[
y^2=(x-a(t))^3(x+3a(t)),
\]
where $x=a(t)$ is the equation of the generic cusp.
It follows that $\Y$ has local equation $u^2=(x-a(t))(x+3a(t))$ and $\nu$ is given by $y \mapsto u(x-a(t))$. 
The preimage of $\sigma$ is a section $\xi\co B \ra \Y$ given by $x-a(t)=u=0$. Note that $\xi(b)=\{x=u=t=0\}$ is a node of 
$\Y_b$, and consequently $\xi$ is not Cartier at $\xi(b)$. 

Clearly, $\nu^*\omega_{\X/B}=\omega_{\Y/B}(2\xi)$ and by duality theory for singular curves 
\[
\pi_* \omega_{\X/B}=(\pi\circ \nu)_*(\omega_{\Y/B}(2\xi)).
\]
Therefore, 
\[
\kappa_{\X/B}=(\omega_{\Y/B}+2\xi)^2=(\omega_{\Y/B})^2+4(\xi^2+\xi\cdot \omega_{\Y/B})
=\kappa_{\Y/B}+4\iota(\xi),
\]
and by the Grothendieck-Riemann-Roch formula $\lambda_{\X/B}=c_1\bigl((\pi\circ \nu)_*(\omega_{\Y/B}(2\xi))\bigr)
=\lambda_{\Y/B}-\psi_{\xi}+2\iota(\xi)$. The claim follows.
\end{proof}

\subsection{Preliminary positivity results}
\label{S:positivity-prelim}

\begin{prop}[Cornalba-Harris inequality]
\label{P:CH-inequality} Let $g \geq 2$.
Suppose $f\co \C\ra B$ is a generically smooth  
family in $\tilde{\cU}_g(A_3)$, over a smooth and proper curve $B$, 
with $\omega_{\C/B}$ relatively nef. Then
\begin{equation*}
\left(8+\frac{4}{g}\right)\lambda_{\C/B}-\delta_{\C/B} \geq 0.
\end{equation*}
Moreover, if the general fiber of $\C/B$ is non-hyperelliptic and $\C/B$ is non-isotrivial, then the inequality is strict. 
\end{prop}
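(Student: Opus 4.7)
The plan is to reduce the inequality to the classical Cornalba-Harris inequality for families of Deligne-Mumford semistable nodal curves. Since $f\col \C \to B$ is generically smooth, all $A_2$- and $A_3$-singularities occur on finitely many special fibers. After a suitable finite base change $\phi\col B' \to B$ of degree $d$, stable reduction produces a family $f^s\col \C^s \to B'$ of DM-semistable nodal curves equipped with a $B'$-birational contraction $\nu\col \C^s \to \C \times_B B'$ that collapses an elliptic tail inserted over each cuspidal fiber and an elliptic bridge (or chain) inserted over each tacnodal fiber.

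I would then apply the classical Cornalba-Harris inequality to $f^s$ (see [CH] or [GAC, Ch.~14]) to obtain
\[
(8+4/g)\lambda_{\C^s/B'} - \delta_{\C^s/B'} \geq 0,
\]
with strict inequality when the general fiber is non-hyperelliptic and the family is non-isotrivial. A local class computation at each cusp and tacnode, dual to the normalization formulas in Propositions \ref{P:generic-inner-node}, \ref{P:generic-outer-node} and \ref{P:generic-cusp}, gives
\[
d\cdot \lambda_{\C/B} = \lambda_{\C^s/B'} - \Lambda, \qquad d\cdot \delta_{\C/B} = \delta_{\C^s/B'} - \Delta,
\]
where $\Lambda, \Delta \geq 0$ are supported on the contracted components. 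Substituting into the Cornalba-Harris inequality and dividing by $d$ reduces the desired statement to the numerical inequality $\Delta \geq (8+4/g)\Lambda$, which I would verify locally at each singularity. For the strictness claim, the Cornalba-Harris inequality for $f^s$ is strict under the non-hyperelliptic, non-isotrivial hypotheses (reflecting GIT-stability, rather than just semistability, of the canonical Hilbert point), and this strictness is inherited by $\C/B$ since the comparison terms enter with the correct sign.

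The main obstacle will be the case-by-case verification of $\Delta \geq (8+4/g)\Lambda$. For a cusp, stable reduction inserts an elliptic tail attached nodally at a smooth point, and the standard local computation gives $\Delta = 12\Lambda$; since $12 \geq 8 + 4/g$ for all $g \geq 2$, this case works. For tacnodes, one must separately analyze inner and outer configurations (cf.\ Proposition \ref{P:limits-singularities}), determine the correct exponents in the base change required to achieve semistable reduction, and tally the contributions of the inserted elliptic bridges and chains. The explicit local formulas in Propositions \ref{P:generic-inner-node} and \ref{P:generic-outer-node}, though stated for normalization of generic singularities rather than stable reduction of isolated ones, are governed by the same local equations and so provide the necessary numerical inputs, rendering the verification routine if somewhat tedious.
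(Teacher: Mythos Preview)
Your approach via stable reduction is fundamentally different from the paper's, and it contains a fatal sign error that cannot be repaired.

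Take a single cusp with smooth total space, local equation $y^2=x^3+t$. After the degree-$6$ base change $t=s^6$, the pulled-back surface acquires the simple elliptic singularity $\tilde E_8$; its minimal resolution inserts an elliptic tail $E$ with $E^2=-1$. Because simple elliptic singularities are \emph{not} rational, the Leray spectral sequence for the resolution $\phi\co \C^s\to \C\times_B B'$ gives $R^1\phi_*\O_{\C^s}=\CC_p$, hence
\[
\lambda_{\C^s/B'}=6\,\lambda_{\C/B}-1,\qquad \text{i.e.}\ \Lambda=-1,
\]
and Mumford's relation $12\lambda=\kappa+\delta$ together with $\kappa_{\C^s}=\kappa_{\C'}+E^2=\kappa_{\C'}-1$ forces $\Delta=-11$. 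Both corrections are \emph{negative}, contrary to your claim $\Lambda,\Delta\ge 0$. The inequality you need, $\Delta\ge (8+4/g)\Lambda$, becomes $-11\ge -(8+4/g)$, which fails for every $g\ge 2$. (Incidentally the ratio is $\Delta/\Lambda=11$, the slope of a varying elliptic tail in $\Mg{g}$, not the $12$ coming from $\delta=12\lambda$ on $\Mg{1,1}$; but even with the correct ratio the sign of $\Lambda$ dooms the argument.) In short, passing to the semistable model \emph{increases} the Cornalba--Harris quantity, so the classical inequality there tells you nothing about $\C/B$.

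The paper avoids stable reduction entirely. For non-hyperelliptic general fiber it runs the original Cornalba--Harris argument \emph{directly} on $\C/B$: after a harmless base change making $\lambda$ divisible by $g$, set $\cL=\omega_{\C/B}\otimes f^*(-\lambda/g)$, so $\det f_*\cL\simeq\O_B$; GIT stability of the $m^{\text{th}}$ Hilbert point of the canonical embedding of the general fiber then gives $c_1(f_*\cL^m)\ge 0$, and combining with \eqref{GRR-higher-hodge} yields the inequality (strictness via $\lambda>0$ from Torelli). Nothing in this argument requires nodal fibers or a smooth total space. For hyperelliptic general fiber the paper takes a completely different route: it realizes $\C/B$ explicitly as a double cover of a family of $(2g+2)$-pointed rational curves and reduces to a direct divisor-class inequality on $\Mg{0,2g+2}$.
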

\begin{remark*}
When the total space $\C$ is \emph{smooth}, this result was proved in \cite{xiao} and  
\cite[Theorem 2.1]{stoppino}, with no restrictions on fiber singularities.
\end{remark*}

\begin{proof} 
As in \cite[Theorem 2.1]{stoppino}, if the general fiber of $\C/B$ is non-hyperelliptic, the result is obtained 
by the original argument of Cornalba and Harris \cite{CH}, which we now recall.

Suppose $C_{b}$ for some $b\in B$ is a non-hyperelliptic
curve of genus $g\geq 3$. After a finite base change, we can assume that $\lambda\in \Pic(B)$ is $g$-divisible. 
Then the line bundle $\cL:=\omega_{\C/B}\otimes f^*(-\lambda/g)$ on $\C$ satisfies the following conditions:
\begin{enumerate}
\item  $\det(f_*(\cL))\simeq \O_B$. 
\item $f_*(\cL^m)$ is a vector bundle of rank $(2m-1)(g-1)$ for all $m\geq 2$.
\item $\Sym^m f_*(\cL) \ra f_*(\cL^m)$ is generically surjective for all $m\geq 1$.
\end{enumerate}
For $m\geq 2$ and general $b\in B$, the map $\Sym^m \HH^0(C_b,\omega_{C_b}) \ra \HH^0(C_b,\omega^m_{C_b})$ 
defines the $m^{th}$ Hilbert point of $C_b$. Since
the canonical embedding of $C_{b}$ has a stable $m^{th}$ Hilbert point 
for some $m\gg 0$ by \cite[Lemma 14]{morrison-git}, the proof of \cite[Theorem 1.1]{CH} gives
$c_1(f_*(\cL^m))\geq 0$. 
Using \eqref{GRR-higher-hodge}, we obtain
\begin{equation}\label{E:polarization}
\left(8+\frac{4}{g}-\frac{2(g-1)}{gm}+\frac{2}{gm(m-1)}\right)\lambda -\delta=c_1(f_*(\cL^m)) \geq 0.
\end{equation}
To conclude we note that $\delta\geq 0$, and if $\delta=0$, then $\lambda>0$ 
for any non-isotrivial family by the existence of the Torelli morphism
$\M_g \ra \overline{\cA}_g$. 
We conclude that $\left(8+4/g\right)\lambda-\delta > 0$.

Suppose now that $\C\ra B$ is a family of at-worst tacnodal curves with a relatively nef $\omega_{\C/B}$ and 
a smooth hyperelliptic generic fiber. 
To prove the requisite inequality, we construct $\C/B$ explicitly as a double cover of a family of $(2g+2)$-pointed curves, 
and prove a corresponding inequality on families of rational pointed curves.

Suppose that $(\Y/B, \{\sigma_{i}\}_{i=1}^{2g+2})$ is a family of $(2g+2)$-pointed at-worst nodal rational curves where 
$\sigma_i$ are smooth sections and no more than $4$ sections meet at a point. 
We say that an irreducible 
component $E$ in the fiber $Y_{b}$ of $\Y/B$ is an odd bridge if the following conditions hold:
\begin{enumerate}
\item $E$ meets the rest of the fiber $\overline{Y_{b}\setminus E}$ in two nodes of equal index, 
\item $E\cdot \sum_{i=1}^{2g+2} \sigma_i=2$, 
\item the degree of $\sum_{i=1}^{2g+2} \sigma_i$ on each of the connected 
components of $\overline{Y_{b}\setminus E}$ is odd. 
\end{enumerate}
Suppose $h\co \Y\ra \Z$ is a blow-down of some collection of odd bridges.
The image of $\sum_{i=1}^{2g+2} \sigma_i$ in $\Z$ will be denoted by $\Sigma$.
Note that while the individual images of $\sigma_i$'s are not Cartier on $\Z$
along the image of blown-down odd bridges, the total class of $\Sigma$ is Cartier on $\Z$.
We say that a node $p\in \Z_b$ (resp., $p\in \Y_b$) is an odd node if the degree of $\Sigma$ (resp., $\sum_{i=1}^{2g+2} \sigma_i$)
on each of the connected component of the normalization of $\Z_{b}$ (resp., $\Y_b$) at $p$ is odd. We denote by $\delta_{\odd}$
the Cartier divisor on $B$ associated to all odd nodes of $\Z/B$ (resp., $\Y/B$).

The hyperelliptic involution on the generic fiber of $f\co \C \to B$ extends to all of $\C$ and  
realizes $\C/B$ as a double cover of a family $(\Z/B, \Sigma)$ described above
in such a way that $\C \ra \Z$ ramifies over $\Sigma$. Let $\delta_{\odd}$ be the divisor of odd nodes of $\Z/B$.
We have the following 
standard formulae:
\begin{align*}
\lambda_{\C/B}&=\frac{1}{8}\left(\Sigma^2+2\omega_{\Z/B} \cdot \Sigma-\delta_{odd}\right)_{\Z/B}, \\
\delta_{\C/B}&=\left(\Sigma^2+\omega_{\Z/B} \cdot \Sigma+2\omega_{\Z/B}^2-\frac{3}{2}\delta_{odd}\right)_{\Z/B}.
\end{align*}
Consider $h\col \Y\ra \Z$. 
Then $h^*(\Sigma)=\sum_{i=1}^{2g+2}\sigma_i+E$, where $E$ is a collection of odd bridges, and $h^* \omega_{\Z/B}=\omega_{\Y/B}$. Set
$\psi_{\Y/B}:=\omega_{\Y/B}\cdot \sum_{i=1}^{2g+2}\sigma_i$, $\delta_{inner}:=\sum_{i\neq j} (\sigma_i\cdot \sigma_j)$, 
and $e:=-\frac{1}{2}E^2$.
Then
\begin{align*}
\lambda_{\C/B}&=\left(\frac{1}{8}(\psi_{\Y/B}+2\delta_{inner}-\delta_{odd})+\frac{1}{2}e\right)_{\Y/B}, \\
\delta_{\C/B}&=\left(2\delta_{inner}+2\delta_{even}+\frac{1}{2}\delta_{odd}+5e\right)_{\Y/B}.
\end{align*}
We obtain
\begin{equation*}
\left(8+\frac{4}{g}\right)\lambda_{\C/B}-\delta_{\C/B} 
=\left(\frac{2g+1}{2g}\psi+\frac{1}{g}\delta_{inner}+\left(\frac{2}{g}-1\right)e-2\delta_{even}-\left(\frac{3}{2}+\frac{1}{2g}\right)\delta_{odd}\right)_{\Y/B}.
\end{equation*}
Multiplying by $2g$, we need to show that on $\Y/B$ we have
\[
(2g+1)\psi+2\delta_{inner} -4g \delta_{even}-(3g+1)\delta_{odd} -(2g-4)e \geq 0.
\]
Noting that
\[
(2g+1)\psi+2\delta_{inner}=\sum_{i=2}^{g+1}i(2g+2-i)\delta_{i},
\]
and using the inequality $2e\leq \delta_{odd}$, we obtain the desired claim.
\end{proof}

\subsubsection*{Hodge Index Theorem Inequalities}
We apply a method of Harris \cite{harris-smooth} to obtain inequalities between the $\psi$-classes,
indices of cuspidal and inner nodal transforms, and the $\kappa$ class.  
In the following lemmas, we use the  following variant of Hodge Index Theorem for singular surfaces.
\begin{lemma}
Let $S$ be a proper reduced algebraic space of dimension $2$. Suppose there exists a Cartier divisor 
$H$ on $S$ such that $H^2>0$. Then the intersection pairing on $\operatorname{NS}(S)$
has signature $(1, \ell)$. 
\end{lemma}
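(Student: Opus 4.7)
The plan is to reduce to the classical Hodge Index Theorem for smooth projective surfaces via a resolution, then deduce the signature statement from Sylvester's law of inertia.

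First, I would construct a proper birational morphism $\pi \co \tilde{S} \to S$ with $\tilde{S}$ a smooth projective surface. Since $S$ is a proper reduced $2$-dimensional algebraic space over $\CC$, Chow's lemma for algebraic spaces (Knutson) yields a projective scheme $S'$ with a proper birational morphism $S' \to S$, and resolution of singularities in characteristic zero then provides a smooth projective surface $\tilde{S}$ dominating $S'$ (hence $S$). Let me note, however, that we need $\tilde{S}$ to actually be projective, which follows from Chow's lemma applied before resolving; alternatively, one could work with any smooth proper resolution and invoke the Hodge Index Theorem for Moishezon surfaces.

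Next, I would verify that the pullback map $\pi^* \co \operatorname{NS}(S)_{\QQ} \to \operatorname{NS}(\tilde{S})_{\QQ}$ (defined on Cartier divisor classes) is injective and preserves the intersection pairing. The pairing is preserved by the projection formula, since $\pi$ is birational of degree one: for Cartier divisors $D_1, D_2$ on $S$, we have $(\pi^* D_1 \cdot \pi^* D_2)_{\tilde{S}} = (D_1 \cdot \pi_* \pi^* D_2)_{S} = (D_1 \cdot D_2)_{S}$. For injectivity, given any irreducible curve $C \subset S$, its strict transform $\tilde{C} \subset \tilde{S}$ satisfies $\pi_* \tilde{C} = C$, so if $\pi^* D \equiv 0$ on $\tilde{S}$, then $D \cdot C = (\pi^* D \cdot \tilde{C})_{\tilde{S}} = 0$ for every curve $C \subset S$, forcing $D \equiv 0$ on $S$.

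The image of $\pi^*$ is therefore a subspace $V \subset \operatorname{NS}(\tilde{S})_{\QQ}$ on which the restricted intersection form is isometric to the pairing on $\operatorname{NS}(S)_{\QQ}$. By the classical Hodge Index Theorem applied to the smooth projective surface $\tilde{S}$, the intersection pairing on $\operatorname{NS}(\tilde{S})_{\QQ}$ has signature $(1, \rho(\tilde{S}) - 1)$. The class $\pi^* H \in V$ satisfies $(\pi^* H)^2 = H^2 > 0$, so $V$ contains a vector of positive self-intersection.

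Finally, I would invoke the following piece of linear algebra (Sylvester's law of inertia): if a non-degenerate symmetric bilinear form on a finite-dimensional $\QQ$-vector space has signature $(1,N)$, then its restriction to any subspace containing a vector of positive self-intersection has signature $(1, \ell)$ for some $\ell \geq 0$. Applying this with $V = \pi^*(\operatorname{NS}(S)_{\QQ})$ yields the desired signature $(1, \ell)$ on $\operatorname{NS}(S)_{\QQ}$. The one subtlety is that we should check non-degeneracy of the intersection pairing on $\operatorname{NS}(S)$, which follows from the definition of numerical equivalence used to form $\operatorname{NS}$. The main potential technical issue is the existence of a smooth \emph{projective} resolution for an algebraic space, but as indicated above this follows by combining Chow's lemma with Hironaka; no serious obstacle arises.
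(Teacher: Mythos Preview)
Your proposal is correct and follows essentially the same approach as the paper: reduce to a smooth projective surface via a resolution, observe that $\pi^*$ is an isometric injection on N\'eron--Severi groups, and invoke the classical Hodge Index Theorem. The only cosmetic difference is that the paper obtains the resolution as the minimal desingularization of the normalization of $S$ (using that a smooth proper surface over $\CC$ is automatically projective), whereas you go through Chow's lemma; your version is more explicit about the projection-formula and Sylvester steps, which the paper leaves implicit.
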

\begin{proof}
Let $\pi\co \wt{S} \ra S$ be the minimal desingularization of the normalization of $S$. Then $\wt{S}$ is a smooth 
projective surface.  
Note that $\pi^*\co \operatorname{NS}(S) \ra \operatorname{NS}(\wt{S})$ is an injection 
preserving the intersection pairing. The statement now follows
from the Hodge Index Theorem for smooth projective surfaces. 
\end{proof}

\begin{lemma}\label{L:cuspidal-section-high} 
Suppose $\X/B$ is a family of Gorenstein curves of arithmetic genus
$g\geq 2$ with a section $\xi$. Let $\iota(\xi)=(\xi+\omega_{\X/B})\cdot \xi$ be the index of $\xi$. Then
\begin{equation}
\psi_{\xi} \geq \frac{(g-1)}{g}\iota(\xi)+\frac{\kappa}{4g(g-1)}.
\end{equation}

\end{lemma}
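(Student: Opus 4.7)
The plan is to apply the Hodge Index Theorem on the total space $\X$ to a carefully chosen test $\QQ$-divisor, in the spirit of Harris's argument for smooth families. First, I note that since $\xi$ passes only through Gorenstein points of fibers and the index $\iota(\xi)$ is assumed to make sense, $2\xi$ is Cartier (in the cases of interest from the earlier normalization results), so $\xi$ is $\QQ$-Cartier and all intersection numbers involving $\xi$ are well-defined rational numbers.

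Let $F$ denote the class of a fiber of $\pi \co \X \to B$, so that $F^2 = 0$, $F \cdot \omega_{\X/B} = 2g - 2$, and $F \cdot \xi = 1$. The key observation is that the $\QQ$-divisor
\[
D := (2g-2)\xi - \omega_{\X/B}
\]
satisfies $D \cdot F = 0$, i.e., $D$ has degree zero on every fiber. I would then invoke the preceding variant of the Hodge Index Theorem to conclude that the intersection pairing on $\operatorname{NS}(\X)$ has signature $(1, \ell)$: the required divisor with positive self-intersection is $H := \omega_{\X/B} + nF$ for $n \gg 0$, which has $H^2 = \kappa + 2n(2g-2) > 0$ since $g \geq 2$. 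A standard perturbation argument (apply the Hodge Index inequality $(E \cdot H')^2 \geq E^2 \cdot H'^2$ to $H' := F + \epsilon H$ for small $\epsilon > 0$ and let $\epsilon \to 0$) then forces $D^2 \leq 0$.

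The remainder is a short bookkeeping calculation. Expanding
\[
D^2 = (2g-2)^2 \xi^2 - 2(2g-2)\,\omega_{\X/B} \cdot \xi + \omega_{\X/B}^2,
\]
and substituting $\xi^2 = \iota(\xi) - \psi_\xi$, $\omega_{\X/B} \cdot \xi = \psi_\xi$, and $\omega_{\X/B}^2 = \kappa$, gives
\[
D^2 \;=\; (2g-2)^2 \iota(\xi) - 2g(2g-2)\psi_\xi + \kappa.
\]
The inequality $D^2 \leq 0$ rearranges immediately to the claimed bound
\[
\psi_\xi \;\geq\; \frac{(2g-2)^2 \iota(\xi) + \kappa}{2g(2g-2)} \;=\; \frac{g-1}{g}\iota(\xi) + \frac{\kappa}{4g(g-1)}.
\]

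The main obstacle is really only to verify that the Hodge Index inequality applies in the potentially singular setting of our total space $\X$; but this is precisely the content of the variant stated in the preceding lemma, which reduces via pullback to a desingularization of the normalization to the classical Hodge Index Theorem for smooth projective surfaces. Everything else is an honest linear algebra calculation, and the choice $D = (2g-2)\xi - \omega_{\X/B}$ is essentially forced by the requirement that $D$ be vertical numerically (degree zero on fibers).
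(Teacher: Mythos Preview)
Your proof is correct and is essentially the same as the paper's: both apply the Hodge Index Theorem (in the singular-surface variant from the preceding lemma) to the span of $F$, $\xi$, and $\omega_{\X/B}$. The paper packages the linear algebra as nonnegativity of the $3\times 3$ Gram determinant, while you pick out the specific fiberwise-degree-zero combination $D=(2g-2)\xi-\omega_{\X/B}$ and compute $D^2\le 0$ directly; expanding the paper's determinant yields exactly your expression $2g(2g-2)\psi_\xi-(2g-2)^2\iota(\xi)-\kappa\ge 0$.
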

\begin{proof}
Apply the Hodge Index Theorem to the three classes
$\langle F, \xi, \omega_{\X/B}\rangle$, where $F$ is the fiber class. 
Since $\xi+k F$ has positive self-intersection for $k\gg 0$, the determinant of the following intersection pairing matrix is non-negative:
\[
\left(\begin{array}{ccc} 0 & 1 & 2g-2 \\ 
1 & -\psi_{\xi}+\iota(\xi) & \psi_{\xi} \\
2g-2  & \psi_{\xi} & \kappa \end{array}\right).
\]
The claim follows by expanding the determinant.
\end{proof}

\begin{lemma}\label{L:inner-sections-high} Suppose $\X/B$ is a family of Gorenstein curves of arithmetic genus
$g\geq 2$ with a pair of sections $\eta^{+}, \eta^{-}$.
Then
\begin{equation}
\psi_{\eta^+}+\psi_{\eta^-} \geq \frac{2(g-1)}{g+1}\bigl((\eta^+\cdot \eta^-)+\iota(\eta^+)\bigr)+\frac{\kappa}{g^2-1}.
\end{equation}
\end{lemma}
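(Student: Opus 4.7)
The plan is to mimic the Hodge Index argument used in Lemma \ref{L:cuspidal-section-high}, now applied to the three classes $F$ (the fiber class), $D := \eta^{+} + \eta^{-}$, and $\omega_{\X/B}$ on the (possibly singular) surface $\X$. The underlying variant of the Hodge Index Theorem stated earlier in this section applies, since for $k \gg 0$ the class $\omega_{\X/B} + kF$ has positive self-intersection (the general fiber has genus $\geq 2$), so the Gram matrix of any three divisor classes on $\X$ has non-negative determinant.

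First I would record the intersection numbers:
\begin{equation*}
F^{2}=0,\quad F\cdot D=2,\quad F\cdot \omega_{\X/B}=2g-2,\quad D\cdot \omega_{\X/B}=\psi_{\eta^{+}}+\psi_{\eta^{-}}=:\psi,\quad \omega_{\X/B}^{2}=\kappa.
\end{equation*}
The self-intersection $D^{2}$ is the non-trivial ingredient. From the definition of the index, $(\eta^{\pm})^{2}=\iota(\eta^{\pm})-\psi_{\eta^{\pm}}$, so
\begin{equation*}
D^{2}=\iota(\eta^{+})+\iota(\eta^{-})+2(\eta^{+}\cdot\eta^{-})-\psi.
\end{equation*}
The key observation is that $\iota(\eta^{+})=\iota(\eta^{-})$. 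This is how $\iota(\eta^{+})$ appears (without an explicit symmetrization) in the statement: for an inner nodal pair produced by normalizing a generic inner node, the local analysis of cases (a), (b), (c) in the proof of Proposition \ref{P:generic-inner-node} is manifestly symmetric under swapping the two preimage sections, so the contributions of each singular fiber to $\iota(\eta^{+})$ and $\iota(\eta^{-})$ coincide. With this symmetry we obtain $D^{2}=2I-\psi$, where $I:=\iota(\eta^{+})+(\eta^{+}\cdot\eta^{-})$.

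Then I would expand the determinant of the Gram matrix
\begin{equation*}
\det\!\begin{pmatrix} 0 & 2 & 2g-2 \\ 2 & 2I-\psi & \psi \\ 2g-2 & \psi & \kappa \end{pmatrix}
= -4\kappa+(2g-2)(2g+2)\psi-2(2g-2)^{2}I
= 4\bigl[(g^{2}-1)\psi-2(g-1)^{2}I-\kappa\bigr].
\end{equation*}
Non-negativity of this determinant rearranges precisely to
\begin{equation*}
\psi\;\geq\;\frac{2(g-1)}{g+1}\bigl(\iota(\eta^{+})+(\eta^{+}\cdot\eta^{-})\bigr)+\frac{\kappa}{g^{2}-1},
\end{equation*}
which is the asserted inequality.

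The computation is essentially routine once the three-class Hodge Index setup and the correct expression for $D^{2}$ are in place; the one subtle point — and the only step that genuinely uses the structure of an inner nodal pair rather than the formal definition of a section — is the symmetry $\iota(\eta^{+})=\iota(\eta^{-})$, which I would justify by citing the explicit local models in the proof of Proposition \ref{P:generic-inner-node}.
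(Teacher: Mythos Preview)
Your proposal is correct and follows exactly the paper's approach: apply the Hodge Index Theorem to the triple $\langle F,\ \eta^{+}+\eta^{-},\ \omega_{\X/B}\rangle$ and expand the resulting $3\times 3$ Gram determinant. You even make explicit the point the paper leaves tacit, namely that the stated inequality (with $\iota(\eta^{+})$ alone rather than $\tfrac{1}{2}(\iota(\eta^{+})+\iota(\eta^{-}))$) relies on the symmetry $\iota(\eta^{+})=\iota(\eta^{-})$ coming from the inner nodal pair structure of Proposition~\ref{P:generic-inner-node}; the paper simply writes the $(2,2)$-entry as $-\psi_{\eta^{+}}-\psi_{\eta^{-}}+2(\eta^{+}\cdot\eta^{-})+\iota(\eta^{+})+\iota(\eta^{-})$ and asserts the claim follows.
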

\begin{proof}
Consider the three divisor classes $\langle F, \eta=\eta^{+}+\eta^{-}, \omega_{\X/B}\rangle$,
where $F$ is the fiber class. 
Since $\eta+kF$ has positive self-intersection for $k\gg 0$,
the Hodge Index Theorem implies that the determinant of the following intersection pairing matrix
is non-negative:
\[
\left(\begin{array}{ccc} 0 & 2 & 2g-2 \\ 
2 & -\psi_{\eta^+}-\psi_{\eta^-} +2(\eta^+\cdot\eta^-)+\iota(\eta^{+})+\iota(\eta^-) & \psi_{\eta^+}+\psi_{\eta^-} \\
2g-2  & \psi_{\eta^+}+\psi_{\eta^-} & \kappa \end{array}\right).
\]
The claim follows by expanding the determinant.
\end{proof}

\begin{lemma}\label{L:genus-2-equality}
Suppose $\X/B$ is a family in $\tilde{\cU}_2(A_3)$ with a smooth section $\tau$. Then 
\begin{equation}
8\psi_{\tau} \geq \kappa.
\end{equation}
Moreover, if $\delta_{\red}=0$, then the equality is satisfied if and only if $(\X/B, \tau)$ is a family of Weierstrass tails
in $\Mg{2,1}(7/10-\epsilon)$.
\end{lemma}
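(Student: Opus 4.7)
The inequality $8\psi_\tau\geq\kappa$ is the specialization of Lemma~\ref{L:cuspidal-section-high} to $g=2$ and $\iota(\tau)=0$. Smoothness of $\tau$ gives $\tau^2=-\psi_\tau$, hence $\iota(\tau)=(\tau+\omega_{\X/B})\cdot\tau=0$, and the general bound $\psi_\xi\geq\tfrac{g-1}{g}\iota(\xi)+\tfrac{\kappa}{4g(g-1)}$ collapses to $\psi_\tau\geq\kappa/8$.

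For the forward direction of the equality characterization, if $(\X/B,\tau)$ is a family of Weierstrass tails then the Weierstrass condition yields $\O_{C_b}(2\tau|_{C_b})\simeq\omega_{C_b}$ fiberwise. Cohomology and base change (with the constant fiber values $h^0(\cM|_{C_b})=1$ and $h^1(\cM|_{C_b})=2$) upgrades this to an isomorphism $\cM:=\omega_{\X/B}\otimes\O_\X(-2\tau)\simeq\pi^*L$ with $L=\pi_*\cM\in\Pic(B)$. Intersecting with $\tau$ and using $\tau^2=-\psi_\tau$ forces $\deg L = 3\psi_\tau$, whence $\kappa=(2\tau+\pi^*L)^2=-4\psi_\tau+12\psi_\tau=8\psi_\tau$.

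For the reverse direction under $\delta_{\red}=0$, I would trace the Hodge Index argument of Lemma~\ref{L:cuspidal-section-high}: equality forces the Gram matrix of $\{F,\tau,\omega_{\X/B}\}$ to be degenerate, and solving for the null direction produces the numerical equivalence $\omega_{\X/B}\equiv 2\tau+3\psi_\tau F$ in $N^1(\X)_\QQ$. Intersecting with any irreducible component $C_i$ of a fiber $C_b$ forces $\deg(\omega_{C_b}|_{C_i})=2\,\tau\cdot C_i\in\{0,2\}$. Enumerating the reducible topological types of connected at-worst-tacnodal arithmetic-genus-$2$ curves without disconnecting nodes (such as two rational components joined by three nodes, by a node and a tacnode, or by a tacnode with an elliptic component attached, or a nodal cubic joined to $\PP^1$ at two nodes) shows that in each case some component has $\deg(\omega|_{C_i})=1$; this contradicts the dichotomy above and forces every fiber to be irreducible. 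To pass from numerical to fiberwise linear equivalence $\omega_{C_b}\sim 2\tau|_{C_b}$, I would, after a finite base change absorbing the denominator $3\psi_\tau$ and a twist by the pullback of a line bundle from $B$, arrange that $\cM=\omega_{\X/B}(-2\tau)$ is numerically trivial; a rigidity argument for the relative Picard scheme of the now-irreducible family, combined with analysis of $\pi_*\cM$, then yields $\cM|_{C_b}\simeq\O_{C_b}$ on every fiber. This produces the degree-$2$ map $C_b\to\PP^1$ ramified at $\tau(b)$, exhibiting $(C_b,\tau(b))$ as a Weierstrass tail; stability in $\Mg{2,1}(7/10-\epsilon)$ is then automatic since irreducible genus-$2$ pointed curves contain no $(7/10-\epsilon)$-unstable subcurve.

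The main obstacle will be the upgrade from the numerical equivalence $\cM\equiv 0$ on the total space to the fiberwise triviality $\cM|_{C_b}\simeq\O_{C_b}$: ruling out that $\omega_{C_b}(-2\tau|_{C_b})$ represents a non-trivial element of $\Pic^0(C_b)$ requires the rigidity afforded by $\delta_{\red}=0$ together with the irreducibility of all fibers established earlier, and is what ultimately pins the family into the closed Weierstrass-tails locus rather than merely a family with a fiberwise degree-$0$ difference $\omega_{C_b}-2\tau|_{C_b}$.
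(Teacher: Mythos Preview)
Your treatment of the inequality and of the forward implication (Weierstrass $\Rightarrow$ equality) is fine and matches the paper. The reverse implication, however, contains a genuine gap.

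From the degeneracy of the Gram matrix on $\langle F,\tau,\omega_{\X/B}\rangle$ you conclude the global numerical equivalence $\omega_{\X/B}\equiv 2\tau+3\psi_\tau F$ in $N^1(\X)_\QQ$. This does not follow. Degeneracy only tells you that the class $D:=\omega_{\X/B}-2\tau-3\psi_\tau F$ is orthogonal to $F$, $\tau$, and $\omega_{\X/B}$; it says nothing about $D\cdot C_i$ for an irreducible component $C_i$ of a reducible fiber, which is precisely the intersection you then try to exploit. In a family with reducible special fibers the Picard rank of the total space exceeds $3$, so $D$ can perfectly well pair nontrivially with such $C_i$. Your subsequent enumeration of topological types and the rigidity argument for $\Pic^0$ therefore rest on an unproven premise.

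The paper sidesteps this entirely by bringing in the one piece of genus-$2$ geometry you have not used: the hypothesis $\delta_{\red}=0$ guarantees a \emph{global} hyperelliptic involution $h\co\X\to\X$, and hence a genuine relation $\omega_{\X/B}\equiv \tau+h(\tau)+xF$. Now $\tau-h(\tau)=2\tau-\omega_{\X/B}+xF$ lies in the span $\langle F,\tau,\omega_{\X/B}\rangle$ and is orthogonal to $F$ and $\omega_{\X/B}$ (by $h$-invariance of these classes); since the null space of the degenerate form is one-dimensional and not contained in $\langle F,\omega_{\X/B}\rangle$, it follows that $\tau-h(\tau)$ is also orthogonal to $\tau$, i.e.\ $\tau^2=\tau\cdot h(\tau)$. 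But $\tau^2<0$ while $\tau\cdot h(\tau)\geq 0$ whenever $\tau\neq h(\tau)$, forcing $h(\tau)=\tau$. This is exactly the Weierstrass condition, obtained without any case analysis or rigidity argument.
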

\begin{proof}
The inequality follows directly from Lemma \ref{L:cuspidal-section-high} by taking $g=2$. 
Moreover, the proof of Lemma \ref{L:cuspidal-section-high} shows that equality holds if and only if the intersection pairing on
$\langle F, \tau, \omega_{\X/B}\rangle$ is degenerate.  
Assuming $\delta_{\red}=0$, there is a global hyperelliptic involution $h\co \X\ra \X$.
Hence $\omega_{\X/B}\equiv \tau+h(\tau)+xF$, for some $x\in \ZZ$. 
Observe that $\omega_{\X/B}\cdot \tau =\omega_{\X/B}\cdot h(\tau)$
and $F\cdot \tau=F\cdot h(\tau)$. Since no combination of $\omega$ and $F$ is in the kernel of the intersection pairing, 
we conclude that \[
\tau^2=\tau\cdot h(\tau).\]
However, the intersection number on the left is negative by Lemma \ref{L:psi-class} 
and the intersection number on the right is non-negative 
whenever $\tau\neq h(\tau)$. We conclude that equality holds if only if $h(\tau)=\tau$, that is 
$\tau$ is a Weierstrass section.
\end{proof}

We will need special variants of Lemmas \ref{L:cuspidal-section-high} and \ref{L:inner-sections-high} 
for the case of relative genus $1$ and $0$.

\begin{lemma}\label{L:inner-sections-1} 
Let $\X/B$ be a family of Gorenstein curves of arithmetic genus
$1$ with a pair of sections $\eta^{+},\eta^{-}$, and suppose that $\eta^{+}$ and $\eta^{-}$ 
are disjoint from $N$ smooth pairwise disjoint section of $\X/B$.
Then 
\[
(\eta^+ \cdot \eta^-)+\iota(\eta^+) \leq \frac{N+2}{2N}(\psi_{\eta^+}+\psi_{\eta^-})
+\frac{1}{2N^2}\delta_{\red}.
\]
\end{lemma}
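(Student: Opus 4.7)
The plan is to apply the fibered Hodge Index Theorem to a cleverly chosen collection of auxiliary divisor classes of zero fiber degree on the surface $\X$, and then extract the desired inequality by a weighted linear combination. Recall that on any fibered surface $\X \to B$, a divisor $D$ with $D \cdot F = 0$ satisfies $D^2 \leq 0$. Since $g = 1$, not only do $\eta^+ - \eta^-$ and each $\sigma_i - \sigma_1$ have zero fiber degree, but so does $\omega_{\X/B}$ itself (as $\omega_{\X/B} \cdot F = 2g - 2 = 0$); this abundance of degree-zero classes is what distinguishes the genus $1$ case from the higher-genus Lemmas \ref{L:cuspidal-section-high} and \ref{L:inner-sections-high} above.

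The first main step is to set $\sigma := \sum_{i=1}^{N} \sigma_i$ and apply $D^2 \leq 0$ to the degree-zero class
\[
D := (\eta^+ + \eta^-) - \tfrac{2}{N}\sigma.
\]
Using the disjointness hypotheses $\eta^{\pm} \cdot \sigma_i = 0$ and $\sigma_i \cdot \sigma_j = 0$ for $i \neq j$, together with the identity $\sigma_i^2 = -\psi_{\sigma_i}$ (from $\iota(\sigma_i) = 0$ for a smooth section), expanding $D^2$ yields
\[
(\eta^+ + \eta^-)^2 \;\leq\; \tfrac{4}{N^2} \sum_{i=1}^{N} \psi_{\sigma_i}.
\]
Rewriting the left-hand side via $(\eta^+)^2 = \iota(\eta^+) - \psi_{\eta^+}$ (and similarly for $\eta^-$) converts this into an upper bound on $2(\eta^+ \cdot \eta^-) + \iota(\eta^+) + \iota(\eta^-)$ in terms of $\psi_{\eta^\pm}$ and $\sum \psi_{\sigma_i}$. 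A second application of fibered Hodge Index to $\eta^+ - \eta^-$ gives $(\eta^+)^2 + (\eta^-)^2 \leq 2\eta^+ \cdot \eta^-$, which allows us to symmetrize and isolate the asymmetric combination $(\eta^+ \cdot \eta^-) + \iota(\eta^+)$ on the left.

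The second main step is to bound $\sum_{i=1}^{N} \psi_{\sigma_i}$ above by the Hodge and reducible-boundary contributions $\lambda$ and $\delta_{\red}$. Since $g = 1$, the class $\omega_{\X/B}$ has zero fiber degree, so modulo vertical divisors it is a pullback $f^*\lambda$; for each smooth section $\sigma_i$ the discrepancy $\psi_{\sigma_i} - \lambda$ is a non-negative local contribution from components of reducible fibers met by $\sigma_i$, controlled by the Shioda--Tate/height-pairing formalism. Combined with Mumford's identity $12\lambda = \delta_{\irr} + \delta_{\red}$ on $\tilde\cU_1$ and the fibered Hodge inequality $\kappa = \omega_{\X/B}^2 \leq 0$, these give a bound of the form $\tfrac{4}{N^2}\sum_i \psi_{\sigma_i} \leq \tfrac{1}{N}(\psi_{\eta^+} + \psi_{\eta^-}) + \tfrac{1}{2N^2} \delta_{\red}$, which is exactly what is needed to produce the coefficients $\tfrac{N+2}{2N}$ and $\tfrac{1}{2N^2}$ once combined with the $\tfrac12(\psi_{\eta^+} + \psi_{\eta^-})$ already obtained above.

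The main obstacle is precisely this second step: tracking the contribution of reducible fibers to $\psi_{\sigma_i} - \lambda$ carefully enough to produce the exact constant $\tfrac{1}{2N^2}$ in front of $\delta_{\red}$. The bookkeeping is delicate because smooth sections of a genus $1$ fibration need not meet the same component of each reducible fiber, and because the $\sigma_i$'s are only assumed smooth rather than to form a ``Mordell--Weil origin.'' I expect the cleanest route is to blow up along each intersection of $\sigma_i$ with a node of a fiber, reducing to the case of a genus $1$ fibration with all sections meeting only the identity components, where the relation $\psi_{\sigma_i} = \lambda$ holds exactly; the blowup contributes a controlled amount to $\delta_{\red}$ that supplies the $\tfrac{1}{2N^2}\delta_{\red}$ term.
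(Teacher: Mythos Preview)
Your Step~1 is fine, but Step~2 is simply false, and the paper's proof avoids it by a small change in Step~1 that makes Step~2 unnecessary.

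To see that Step~2 fails, take any complete one-parameter family of $(N+2)$-pointed stable genus~$1$ curves that meets the boundary only in $\delta_{\irr}$ and in the divisor $\delta_{0,\{1,\dots,N\}}$ (the $\sigma_i$'s on a rational tail, the $\eta^\pm$ on the elliptic component). Using the relation $\psi_\tau = \lambda + \sum_{\tau\in S}\delta_{0,S}$ one gets $\psi_{\sigma_i}=\lambda+\delta_{\red}$ for each $i$ and $\psi_{\eta^\pm}=\lambda$. Your proposed bound $\tfrac{4}{N^2}\sum_i\psi_{\sigma_i}\le \tfrac{1}{N}(\psi_{\eta^+}+\psi_{\eta^-})+\tfrac{1}{2N^2}\delta_{\red}$ then reads $4N\,\delta_{\red}\le \tfrac12\,\delta_{\red}$, which is absurd. (Your heuristic for Step~2 is also off: the $\sigma_i$ are \emph{smooth} sections, so there are no intersections with fiber nodes to blow up; and Hodge Index gives \emph{lower} bounds on $\psi_{\sigma_i}$, not upper bounds.)

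The paper's fix is almost trivial once seen: instead of the zero-fiber-degree class $D=(\eta^++\eta^-)-\tfrac{2}{N}\Sigma$, use
\[
D=(\eta^++\eta^-)-\tfrac{1}{N}\bigl(\omega_{\X/B}+2\Sigma\bigr),
\]
which still has $D\cdot F=0$ since $g=1$. The point is that $(\omega_{\X/B}+2\Sigma)^2=\omega_{\X/B}^2=\kappa$, because the cross terms cancel via $\iota(\sigma_i)=\omega\cdot\sigma_i+\sigma_i^2=0$ for smooth disjoint sections. So the unwanted quantity $\sum_i\psi_{\sigma_i}$ never appears; expanding $D^2\le 0$ (equivalently, the $3\times 3$ Hodge Index determinant for $\langle F,\ \eta^++\eta^-,\ \omega+2\Sigma\rangle$) and substituting $\kappa=-\delta_{\red}$ yields the lemma in one line.
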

\begin{proof}
Let 
$\Sigma$ be the sum of $N$ pairwise disjoint smooth sections of $\X/B$ disjoint from $\{\eta^{+}, \eta^{-}\}$.
Then $(\omega_{\X/B}+2\Sigma)^2=\omega_{\X/B}^2=\kappa$. 
Apply the Hodge Index Theorem  to 
$\langle F, \eta^{+}+\eta^{-}, \omega_{\X/B}+2\Sigma\rangle$, where $F$ is the fiber class. 
The determinant of the matrix
\[
\left(\begin{array}{ccc}
0 & 2 & 2N \\ 
-\psi_{\eta^+}-\psi_{\eta^-} +2(\eta^+ \cdot \eta^-)+\iota(\eta^+)+\iota(\eta^-)  & \psi_{\eta^+}+\psi_{\eta^-} \\
2N  & \psi_{\eta^+}+\psi_{\eta^-} & \kappa \end{array}\right)
\]
is non-negative. Therefore
\begin{align*}
-4\kappa+
8N(\psi_{\eta^+}+\psi_{\eta^-})+4N^2(\psi_{\eta^+}+\psi_{\eta^-})\geq 8N^2\bigl((\eta^+ \cdot \eta^-)+\iota(\eta^+)\bigr),
\end{align*}
which gives the desired inequality using $\kappa=-\delta_{\red}$.
\end{proof}

\begin{lemma}\label{L:cuspidal-section-1}
Let $\X/B$ be a family of Gorenstein curves of arithmetic genus
$1$ with a section $\xi$, and suppose that $\xi$ is disjoint from $N$ smooth pairwise disjoint
sections of $\X$.  
Then 
\begin{align*}
\iota(\xi)\leq \frac{N+1}{N}\psi_{\xi}+\frac{1}{4N^2}\delta_{\red}.
\end{align*}
Furthermore, suppose $N=1$, with $\tau$ being a smooth section disjoint from $\xi$, 
and $\delta_{\red}=0$. Then equality holds if and only if $2\xi \sim 2\tau$.
\end{lemma}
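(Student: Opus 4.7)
The plan is to apply the Hodge Index Theorem to the three divisor classes $F$, $\xi$, and $\omega_{\X/B}+2\Sigma$, where $\Sigma=\sum_{i=1}^{N}\sigma_i$ is the sum of the hypothesized smooth disjoint sections, following the template of Lemmas \ref{L:cuspidal-section-high} and \ref{L:inner-sections-1}. First I would tabulate the intersection numbers on $\X$. The key computation is $(\omega_{\X/B}+2\Sigma)^2=\kappa$: using that each $\sigma_i$ is smooth gives $\sigma_i^2=-\psi_{\sigma_i}$ (via $\iota(\sigma_i)=0$) and $\omega_{\X/B}\cdot\sigma_i=\psi_{\sigma_i}$, so the cross terms cancel. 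Since the arithmetic genus is $1$, this equals $-\delta_{\red}$. The remaining entries are $F\cdot\xi=1$, $F\cdot(\omega_{\X/B}+2\Sigma)=2N$, $\xi\cdot(\omega_{\X/B}+2\Sigma)=\psi_\xi$ (by disjointness of $\xi$ from $\Sigma$), and $\xi^2=\iota(\xi)-\psi_\xi$.

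Next, exactly as in the proof of Lemma \ref{L:cuspidal-section-high}, the class $\xi+kF$ has positive self-intersection for $k\gg 0$, so Hodge Index implies that the determinant of the $3\times 3$ Gram matrix is non-negative. Expanding along the first row and substituting $\kappa=-\delta_{\red}$ yields
$$\delta_{\red}+4N(N+1)\psi_\xi-4N^2\iota(\xi)\;\geq\;0,$$
which rearranges to the stated inequality.

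For the equality assertion in the case $N=1$, $\delta_{\red}=0$, I would analyze the kernel of the (now singular) Gram matrix. A short check using the first column shows that any kernel vector must have nonzero coefficient on $\omega_{\X/B}+2\tau$; normalizing it to $(\alpha,-2,1)$ and solving the remaining equations yields the numerical relation $\omega_{\X/B}+2\tau-2\xi\equiv -\alpha F$ with $\alpha=\psi_\xi$. Restricting to any smooth fiber $\X_b$, where $\omega_{\X_b}\simeq\O$, gives $2\xi(b)\sim 2\tau(b)$ in $\Pic(\X_b)$; since this holds generically, $2\xi-2\tau$ is the pullback of a divisor from $B$, which is the intended interpretation of ``$2\xi\sim 2\tau$.'' Conversely, such a fiberwise relation forces the Gram matrix to be degenerate, producing equality.

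The main subtlety is the equality statement: one must be careful about what the relation $2\xi\sim 2\tau$ means for divisors on the total space $\X$ (i.e., up to pullback from $B$), and must invoke the triviality of $\omega_{\X_b}$ on smooth genus $1$ fibers to convert the numerical equivalence supplied by Hodge Index into the stated divisorial relation.
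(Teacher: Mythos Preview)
Your derivation of the inequality is correct and coincides with the paper's: apply Hodge Index to $F,\xi,\omega_{\X/B}+2\Sigma$, compute $(\omega_{\X/B}+2\Sigma)^2=\kappa=-\delta_{\red}$, expand the $3\times 3$ determinant, and rearrange.

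The forward direction of your equality analysis, however, has a genuine gap. From the degeneracy of the Gram matrix (together with Hodge Index) you correctly extract a \emph{numerical} relation $\omega_{\X/B}+2\tau-2\xi\equiv -\alpha F$ in $\mathrm{NS}(\X)_\QQ$. But restricting a numerically trivial class on the surface to a fiber $\X_b$ only yields a degree-zero class on $\X_b$, not a linearly trivial one; on a genus~$1$ curve every divisor of degree $0$ is numerically trivial, so the restriction step gives no information beyond $\deg(2\xi(b))=\deg(2\tau(b))$. In other words, you cannot pass from ``numerically a multiple of $F$'' to ``linearly a pullback from $B$'' this way. (For a model of the failure, think of a product $E\times\PP^1$: the divisors $\{p\}\times\PP^1$ and $\{q\}\times\PP^1$ are numerically equivalent, but restrict to $p\not\sim q$ on each fiber $E$.)

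The paper circumvents this by introducing an \emph{auxiliary section}. Since $\delta_{\red}=0$, every fiber is irreducible of genus $1$, so $\omega_{\X/B}\equiv\lambda F$ and the group law on sections produces $\tau'$ with $\tau+\tau'\sim 2\xi$ fiberwise; one checks $\tau'\cap\xi=\emptyset$. Now the kernel vector $x\xi+y\tau+zF$ (with $y\neq 0$) is numerically trivial, so one may intersect it not only with $F,\xi,\tau$ but also with $\tau'$. Comparing the two equations $y\tau^2+z=0$ and $y(\tau\cdot\tau')+z=0$ forces $\tau^2=\tau\cdot\tau'$; since $\tau^2<0$ (Lemma~\ref{L:psi-class}) while $\tau\cdot\tau'\geq 0$ for distinct sections, this yields $\tau=\tau'$, i.e.\ $2\xi\sim 2\tau$. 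Your converse direction is fine, and you can repair the forward direction by inserting exactly this group-law step.
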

\begin{proof}
Let $\Sigma$ be the collection of smooth sections of $\X/B$ disjoint from $\xi$.
By the Hodge Index Theorem applied to 
$\langle F, \xi, \omega_{\X/B}+2\Sigma \rangle$,
the determinant of the matrix
\[
\left(\begin{array}{ccc}
0 & 1 & 2N \\ 
1 & -\psi_{\xi}+\iota(\xi) & \psi_{\xi} \\
2N  & \psi_{\xi} & \kappa \end{array}\right)
\]
is non-negative. Therefore
\begin{align*}
\iota(\xi)\leq \psi_{\xi}+\frac{1}{N}\psi_{\xi}-\frac{1}{4N^2}\kappa.
\end{align*}
This gives the desired inequality using $\kappa=-\delta_{\red}$.

To prove the last assertion observe that because
$\delta_{\red}=0$ all fibers of $\X/B$ are irreducible curves of genus $1$. In particular, $\omega_{\X/B}=\lambda F$ 
and it follows from the existence
of the group law on the set of sections of $\X/B$ that there exists a section $\tau'$ such 
that $2\xi-\tau=\tau'$. Since $\tau \cap \xi=\emptyset$, we have $\tau'\cap \xi=\emptyset$.
If equality holds, then the intersection pairing matrix on the classes $F, \xi, \tau$ is
degenerate. Hence some linear combination $(x\xi+y\tau+zF)$
intersects $F, \xi, \tau$ trivially. Clearly, $y\neq 0$. Intersecting with $\tau$, we obtain $y(\tau\cdot \tau)+z=0$; and intersecting with $\tau'$, 
we obtain $y(\tau\cdot \tau')+z=0$. Hence $\tau^2=\tau\cdot\tau'$. This leads to a contradiction if $\tau\neq \tau'$.
\end{proof}

\subsubsection{An inequality between divisor classes on $\Mg{0,N}$.}

The proof of Theorem \ref{T:main-positivity} will require the following ad hoc effectivity result on $\Mg{0,N}$.

\begin{lemma}\label{L:inner-sections} Suppose $\{\eta_i^{+}, \eta_i^{-}\}_{i=1}^{a}$ 
are sections of a family of $N$-pointed Deligne-Mumford stable rational curves. 
Let $\psi_{inner}:=\sum_{i=1}^{a} \left(\psi_{\eta^+_i}+\psi_{\eta^-_i}\right)$ and 
$\delta_{inner}:=\sum_{i=1}^{a}\delta_{\{\eta_i^+, \eta_i^-\}}$.
If $a \geq 2$,
then for any generically smooth one-parameter family in $\Mg{0,N}$, we have
\[
\psi_{inner} \geq 4 \delta_{inner} + 4\sum_{i=1}^{a} \sum_{\beta\notin \{\eta_i^{+}, \eta_i^{-}\}_{i=1}^{a}} 
\delta_{\{\eta_i^{+}, \eta_i^{-},\beta\}}
+ 2\frac{a-2}{a-1} \sum_{i\neq j} \delta_{\{\eta^{\pm}_i, \, \eta^{\pm}_j\}}
+\frac{5a-9}{a-1}\sum_{i=1}^{a} \sum_{j\neq i}  \delta_{\{\eta_i^{+}, \eta_i^{-}, \eta_j^{\pm}\}}.
\]
\end{lemma}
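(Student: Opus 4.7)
The statement asserts an inequality of intersection numbers with an arbitrary generically smooth one-parameter family $B \to \bar{M}_{0,N}$. Since a generically smooth family is not contained in any boundary divisor $\delta_S$, it intersects every boundary divisor non-negatively. Consequently, it suffices to exhibit the divisor class
\[
D := \psi_{inner} - 4\delta_{inner} - 4\sum_{i,\beta} \delta_{\{\eta_i^+,\eta_i^-,\beta\}} - 2\tfrac{a-2}{a-1}\sum_{i\neq j}\delta_{\{\eta_i^\pm,\eta_j^\pm\}} - \tfrac{5a-9}{a-1}\sum_{i,j\neq i}\delta_{\{\eta_i^+,\eta_i^-,\eta_j^\pm\}}
\]
on $\bar{M}_{0,N}$ as an effective $\QQ$-linear combination of boundary divisors $\delta_S$.

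My approach would be to expand each $\psi_{\eta_i^\pm}$ using the standard formula
\[
\psi_p = \sum_{\substack{S : p \in S \\ q,r \notin S}} \delta_S,
\]
valid on $\bar{M}_{0,N}$ for any two auxiliary marked points $q \neq r$ distinct from $p$, and then to \emph{average} over judiciously chosen auxiliary points. The presence of the factor $\frac{1}{a-1}$ in the claimed coefficients strongly suggests that for $\psi_{\eta_i^+}$ one should average over the $(a-1)$ choices of the pair $(q,r)=(\eta_j^+,\eta_j^-)$ with $j\neq i$; this guarantees that each boundary divisor of the form $\delta_{\{\eta_i^+,\eta_i^-\}}$ or $\delta_{\{\eta_i^+,\eta_i^-,\beta\}}$ (where the companion pair $(\eta_j^+,\eta_j^-)$ is separated from $\eta_i^+$) is counted with multiplicity equal to the number of $j \ne i$ for which $\{\eta_j^+,\eta_j^-\} \subset S^c$. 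Summing the resulting expressions over $i$ and the two signs yields an explicit representation
\[
\psi_{inner} \;=\; \sum_{S}\; c_S\, \delta_S, \qquad c_S = \tfrac{1}{a-1}\, (a_S+2b_S)\, c_S',
\]
where $(a_S,b_S,c_S')$ counts, respectively, the number of pairs $\{\eta_i^+,\eta_i^-\}$ split by $S$, fully inside $S$, and fully outside $S$.

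The core of the proof is then the purely combinatorial verification in Step~3 that, for each of the four types of boundary divisors appearing in $D$, the coefficient $c_S$ of the above representation is at least the coefficient required. For instance, for $S=\{\eta_i^+,\eta_i^-,\eta_j^+\}$ one has $(a_S,b_S,c_S')=(1,1,a-2)$, giving $c_S = 3(a-2)/(a-1)$ which matches the coefficient $(5a-9)/(a-1)$ only after combining with contributions coming from Keel's relations. This brings us to the main technical obstacle: the expression above vanishes identically on boundary divisors with $a_S=0$ (such as $S=\{\eta_i^+,\eta_i^-\}$ itself, where the claimed coefficient is $4$), because the averaging forces one element of every active pair $\{\eta_j^+,\eta_j^-\}$ to lie outside $S$. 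The fix is to combine this averaged expression with a second one in which the auxiliary points are chosen as $(q,r) = (\eta_i^\mp, \eta_j^\pm)$ for $j\neq i$, and then apply Keel's four-point relations
\[
\sum_{S: p,q\in S,\, r,s\notin S}\!\!\delta_S \;=\; \sum_{S: p,r\in S,\, q,s\notin S}\!\!\delta_S \;=\; \sum_{S: p,s\in S,\, q,r\notin S}\!\!\delta_S
\]
to transfer part of the weight on divisors with $a_S\ge 1$ onto divisors with $b_S \ge 1$. A careful bookkeeping---taking a suitable convex combination of the various averaged expressions---should produce a single effective representation of $D$ and conclude the proof. I expect the verification of the fractional coefficient $(5a-9)/(a-1)$ to be the most delicate case, since it is the unique coefficient that is non-trivial at every $a \ge 2$ and therefore fixes the precise convex combination one must take.
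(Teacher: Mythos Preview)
Your strategy is right and is in fact the paper's strategy: write $(a-1)\psi_{inner}$ as an explicit nonnegative combination $\sum_S c_S\delta_S$ using the standard $\psi$-relations on $\Mg{0,N}$, then read off the four coefficients in question. The paper does this with the \emph{two-point} relation $\psi_\sigma+\psi_\tau=\sum_{\sigma\in S,\,\tau\notin S}\delta_S$, applied once for every cross-pair $(\eta_i^{\epsilon},\eta_j^{\epsilon'})$ with $i\neq j$ and then subtracting the same relation applied to each within-pair $(\eta_i^+,\eta_i^-)$; this directly gives $c_S=x(y+z)+4yz$ in your $(x,y,z)=(a_S,b_S,c'_S)$ notation, and the four required coefficients drop out immediately ($4(a-1)$, $4(a-1)$, $2(a-2)$, $5a-9$).

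Your averaging of the three-point relation is an equivalent route, but your computation is off in a way that leads you to invent obstacles that are not there. The formula $c_S=\tfrac{1}{a-1}(a_S+2b_S)c'_S$ counts only half of what it should: when you expand $\psi_{\eta_i^\epsilon}=\sum_{\eta_i^\epsilon\in S,\;\eta_j^\pm\notin S}\delta_S$ and sum, a given boundary class $\delta_S=\delta_{S^c}$ receives a contribution both when $\eta_i^\epsilon\in S$ with pair $j\subset S^c$ \emph{and} when $\eta_i^\epsilon\in S^c$ with pair $j\subset S$. Adding the missing symmetric term gives
\[
c_S=\frac{(x+2y)z+(x+2z)y}{a-1}=\frac{x(y+z)+4yz}{a-1},
\]
exactly the paper's coefficient. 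In particular the expression does \emph{not} vanish for $S=\{\eta_i^+,\eta_i^-\}$ (there $(x,y,z)=(0,1,a-1)$ and $c_S=4$), and for $S=\{\eta_i^+,\eta_i^-,\eta_j^\pm\}$ one gets $(5a-9)/(a-1)$ on the nose. So the detour through Keel's four-point relations and convex combinations is unnecessary: once you fix the bookkeeping, your averaged expression already equals the paper's and the lemma follows by inspection.
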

\begin{proof}
For any two distinct $\psi$-classes on $\Mg{0,N}$,
we have the following standard relation:
\begin{equation}\label{keel-relation}
\psi_\sigma+\psi_\tau =\sum_{S: \ \sigma\in S, \ \tau\notin S} \delta_{S}.
\end{equation}
We apply \eqref{keel-relation} to the right-hand side of
\[
(a-1)\psi_{inner}= \sum_{1\leq i <  j \leq a} (\psi_{\eta^{\pm}_{i}}+\psi_{\eta^{\pm}_{j}})-(a-1)\sum_{i=1}^{a} (\psi_{\eta^{+}_i}+\psi_{\eta^{-}_i}).
\]
This gives us a formula of the following form:
\[
(a-1)\psi_{inner}=  \sum c_{S} \delta_{S}.
\]
We now estimate the coefficients of the boundary divisors appearing on the right-hand side. Suppose
there are $x$ pairs $\{\eta_{i}^{+}, \eta_{i}^{-}\}$ such that $\eta_{i}^{+}\in S$ and $\eta_{i}^{-} \notin S$, or vice versa,
and that $S$ contains $y$ pairs $\{\eta_{i}^{+}, \eta_{i}^{-}\}$. Set $z=a-x-y$. 
Then 
\[
c_{S}=((x+2y)(x+2z)-x)-(a-1)x=x(y+z)+4yz.
\]
We have that
\begin{enumerate}
\item $c_{S} \geq 0$ for every $S$.
\item If $S=\{\eta_i^+, \eta_i^-\}$ or $S=\{\eta_i^+, \eta_i^-,\beta\}$, where $\beta\notin \{\eta_i^{+}, \eta_i^{-}\}_{i=1}^{a}$,
then $x=0$ and $y=1$, and so $c_{S}=4(a-1)$.
\item If $S=\{\eta^{\pm}_i, \, \eta^{\pm}_j\}$ for $i\neq j$, then $x=2$ and $y=0$, and so $c_S=2(a-2)$.
\item If $S=\{\eta_i^{+}, \eta_i^{-}, \eta_j^{\pm}\}$ for $j\neq i$, then $x=1$ and $y=1$,
and so $c_{S}=5a-9$.
\end{enumerate}
The claim follows.
\end{proof}

\subsection{Proof of Theorem {\ref{T:main-positivity}(a)}}
\label{S:positivity-a} Notice that $10\lambda-\delta+\psi=0$ on $\Mg{2,0}(\first-\epsilon)$
by the standard relation $10\lambda=\delta_{\irr}+2\delta_{\red}$ that holds for all families in $\cU_2$. 

We now prove that $10\lambda-\delta+\psi$ is nef on $\Mg{g,n}(\first-\epsilon)$ and has degree $0$ precisely 
on families whose only non-isotrivial components are $A_1/A_1$-attached elliptic bridges, for all $(g,n)\neq (2,0)$. 
Let $(\C/B, \{\sigma_i\}_{i=1}^{n})$ be a $(\first-\epsilon)$-stable family. 
The proof proceeds by normalizing 
$\C$ along generic singularities to arrive at a family of generically smooth curves,
where the Cornalba-Harris inequality holds, or at a family of low genus curves, where the requisite 
inequality is established by ad-hoc methods. 
Keeping in mind that generic outer nodes and generic cusps of $\C/B$ do not degenerate, but 
generic inner nodes of $\C/B$ can degenerate to cusps, we begin by normalizing generic 
outer nodes, then normalize generic cusps, and finally normalize generic inner nodes.

\subsubsection{Reduction 1: Normalization along generic outer nodes}
\label{first-reduction-1}
Let $\X$ be the normalization of $\C$ along generic outer nodes, marked by nodal transforms.  
By Lemma \ref{L:Norm} every connected component of $\X/B$ is a family of generically irreducible 
$(9/11-\epsilon)$-stable curves. By Proposition \ref{P:generic-outer-node}, we have
\[
(10\lambda-\delta+\psi)_{\C/B}=(10\lambda-\delta+\psi)_{\X/B}.
\]
We have reduced to proving $10\lambda-\delta+\psi\geq 0$ for a family with generically irreducible fibers.

\subsubsection{Reduction 2: Normalization along generic cusps}
\label{first-reduction-2}
Suppose $(\X/B, \sigman)$ is a family of $(9/11-\epsilon)$-stable curves with generically irreducible fibers.
Let $\Y$ be the normalization of $\X$ along generic cusps. Denote
by $\{\xi_i\}_{i=1}^{c}$ the cuspidal transforms on $\Y$.
Set $\psi_{cusp}:=\sum_{i=1}^{c} \psi_{\xi_i}$ and 
$\psi_{\Y/B}:=\psi_{\X/B}+ \psi_{cusp}$.
Then by Proposition \ref{P:generic-cusp}, we have 
\[
(10\lambda-\delta+\psi)_{\X/B}=(10\lambda-\delta+\psi)_{\Y/B}+\psi_{cusp}.
\]
We have reduced to proving $10\lambda-\delta+\psi+\psi_{cusp}\geq 0$ for a family 
$(\Y/B, \sigman, \{\xi_i\}_{i=1}^{c})$, where
\begin{enumerate}
\item The fibers are at-worst cuspidal and the generic fiber is irreducible and at-worst nodal.
\item $\sigman, \{\xi_i\}_{i=1}^{c}$ are smooth sections and 
$\omega_{\Y/B}(\sum_{i=1}^n \sigma_i +\sum_{i=1}^c \xi_i)$ is relatively ample.\footnote{
A priori, only $\omega_{\Y/B}(\sum_{i=1}^n \sigma_i +2\sum_{i=1}^c \xi_i)$ is relatively ample. 
However, a rational tail cannot meet just a single cuspidal transform because the original family $\X/B$
cannot have cuspidal elliptic tails.}
\end{enumerate}

\subsubsection{Reduction 3: Normalization along generic inner nodes}
\label{first-reduction-3}

Consider $(\Y/B, \sigman, \{\xi_i\}_{i=1}^{c})$ as in \ref{first-reduction-2}.
Let $a$ be the number of generic inner nodes of $\Y/B$.
We let $\Z\ra \Y$ be the normalization and denote
by $\eta^{+}_i$ and $\eta^{-}_i$ the inner nodal transforms of the $i^{th}$ generic node.
We obtain a family
$\bigl(\Z/B, \{\sigma_i\}_{i=1}^{n}, \{\eta_i^{\pm}\}_{i=1}^{a}, \{\xi_i\}_{i=1}^{c}\bigr)$, where
\begin{enumerate}
\item The fibers are at-worst cuspidal curves and the generic fiber is smooth.
\item The sections $\{\sigma_i\}_{i=1}^{n}, \{\eta_i^{\pm}\}_{i=1}^{a}, \{\xi_i\}_{i=1}^{c}$
are all smooth and pairwise disjoint, except that $\eta_i^{+}$ can intersect $\eta_i^{-}$ for each $i$.
\item $\omega_{\Z/B}\left(\sum_{i=1}^n \sigma_i+\sum_{i=1}^{a} (\eta_i^{+}+\eta_i^{-})+
\sum_{i=1}^c \xi_i\right)$ is relatively ample.  
\end{enumerate}

By Proposition \ref{P:generic-inner-node}, we have that 
\[
(10\lambda-\delta+\psi+\psi_{cusp})_{\Y/B}=(10\lambda-\delta+\psi+\psi_{cusp})_{\Z/B},
\]
where $\psi_{cusp}=\sum_{i=1}^{c} \psi_{\xi_i}$ and 
$\psi_{\Z/B}=\psi_{\Y/B}+\sum_{i=1}^{a} (\psi_{\eta_i^+}+\psi_{\eta_i^-})$.

We let $N=n+2a+c$ be the total number of sections of $\Z/B$, including cuspidal and inner nodal transforms.
Our proof that $(10\lambda-\delta+\psi+\psi_{cusp})_{\Z/B}\geq 0$ will depend on the relative genus $h$ of $\Z/B$.

\paragraph{Suppose $h\geq 2$.} 
Passing to the relative minimal model of $\Z/B$ 
only decreases the degree of $(10\lambda-\delta+\psi+\psi_{cusp})$.
Hence we will assume that $\omega_{\Z/B}$ is relatively nef. We still have $N$ smooth and distinct sections
(which can now intersect pairwise). 
With $\omega_{\Z/B}$ relatively nef, we can apply the Cornalba-Harris inequality. 
If $h\geq 3$, then $10>8+4/h$ and so $10\lambda-\delta>0$ by Proposition \ref{P:CH-inequality}. 
If $h=2$, then Proposition \ref{P:CH-inequality} gives $10\lambda-\delta \geq 0$. 
Lemma \ref{L:psi-class} gives $\psi+\psi_{cusp}> 0$ since we must have $N\geq 1$
(if $N=0$, then $\C/B$ was a family in $\Mg{2,0}(\first-\epsilon)$).

\paragraph{Suppose $h=1$.} Using relations on the stack on $N$-pointed Gorenstein genus $1$ curves 
inherited from standard relations in $\Pic(\Mg{1,N})$ given by \cite[Theorem 2.2]{AC}, we have $\lambda=\delta_{\irr}/12$,
and $\psi=N\delta_{irr}/12+\sum_{S}|S| \delta_{0,S}\geq N\delta_{\irr}/12+2\delta_{\red}$.
If $N\geq 3$, we obtain 
\begin{align*}
10\lambda+\psi -\delta\geq 10\delta_{\irr}/12+N\delta_{\irr}/12+2\delta_{\red}-(\delta_{\irr}+\delta_{\red})>0.
\end{align*}
If $N=2$, we obtain $10\lambda-\delta+\psi\geq \delta_{\red} \geq 0$ and $\psi_{cusp}\geq 0$. We conclude that 
$10\lambda-\delta+\psi+\psi_{cusp}\geq 0$ with the equality holding if only if $\psi_{cusp}=\delta_{\red}=0$.
This is possible if and only if all fibers are irreducible and there are no cuspidal transforms (by Lemma \ref{L:psi-class}),
which implies that $\X/B=\Y/B$ is a family of $A_1/A_1$-attached elliptic bridges.

\paragraph{Suppose $h=0$.} Then all fibers of $\Z/B$ are in fact at-worst nodal. Because $\lambda=0$, we can write
$(10\lambda-\delta+\psi+\psi_{cusp})_{\Z/B}=\psi-\delta+\psi_{cusp}$. 
Blow-up the points of intersection of $\eta^{+}_i$ and $\eta^{-}_i$ for each $i$. 
We obtain a family $\bigl(\W/B, \{\sigma_i\}_{i=1}^{n}, \{\eta_i^{\pm}\}_{i=1}^{a}, \{\xi_i\}_{i=1}^{c}\bigr)$
in $\Mg{0,N}$. Setting $\delta_{inner}:=\sum_{i=1}^{a}\delta_{\{\eta_i^+, \eta_i^-\}}$, 
we have
\[
\left(\psi-\delta+\psi_{cusp}\right)_{\Z/B}=\left(\psi-\delta-\delta_{inner}+\psi_{cusp}\right)_{\W/B}.
\] 
If $a=0$, then $\delta_{inner}=0$ and we are done because $\psi-\delta>0$ for any family of Deligne-Mumford stable rational curves,
for example by \cite[Lemma 3.6]{KMc}. 
If $a\geq 2$, then by Lemma \ref{L:inner-sections}, 
$\sum_{i=1}^a (\psi_{\eta_i^+}+\psi_{\eta_i^-})\geq 4\delta_{inner}$. 
In addition, $3\psi \geq 4\delta$ by a similar argument. 
It follows that $\psi > \delta+\delta_{inner}$ and so we are done.

Finally, if $a=1$, then 
$\bigl(\Y/B, \{\sigma_i\}_{i=1}^{n}, \{\xi_i\}_{i=1}^{b}\bigr)$ obtained in \ref{first-reduction-2} is a  
family of arithmetic genus $1$ (generically nodal) curves and
the proof in the case of $h=1$ above goes through without any modifications to show that
$(10\lambda-\delta+\psi+\psi_{cusp})_{\Y/B} \geq 0$ with the equality if and only if $\X/B=\Y/B$
is a (generically nodal) elliptic bridge.

\subsection{Proof of Theorem \ref{T:main-positivity}(b)}
\label{S:positivity-b}
In the remaining part of the paper, we prove Theorem \ref{T:main-positivity}(b).  
Let $(\C/B, \sigman)$ be a $(\secondme)$-stable generically non-isotrivial family of curves.
We begin by dealing with the case when $\C/B$ has a generic rosary, or a generic $A_1/A_3$ or $A_3/A_3$-attached
elliptic bridge. In both cases, generic tacnodes come into play and we will repeatedly use the following
result that explains what happens under normalization of a generic tacnode: 
\begin{prop}\label{P:generic-tacnode}
Suppose $\X/B$ is a family in $\tilde\cU_g$ with a section $\tau$ such that $\tau(b)$ 
is a tacnode of $\X_b$ for all $b\in B$. Denote by $\Y$ the normalization of $\X$ along $\tau$ 
and by $\tau^{+}$ and $\tau^{-}$ the preimages of $\tau$. Then $\tau^{\pm}$ are smooth sections 
satisfying $\psi_{\tau^{+}}=\psi_{\tau^{-}}$
and we have the following formulae:
\begin{align*}
\lambda_{\X/B}&=\lambda_{\Y/B}-\frac{1}{2}(\psi_{\tau^+}+\psi_{\tau^-}), \\
\delta_{\X/B}&=\delta_{\Y/B}-6(\psi_{\tau^+}+\psi_{\tau^-}).
\end{align*}
\end{prop}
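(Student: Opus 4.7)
Following the pattern of Propositions \ref{P:generic-inner-node}, \ref{P:generic-outer-node}, and \ref{P:generic-cusp}, the plan is to work \'etale-locally around the tacnodal section $\tau$ and perform a direct computation. After an \'etale base change we may present $\X$ locally near $\tau$ as the hypersurface $\{y^2 = x^4\} \subset \AA^2 \times B$ with $\tau = \{x = y = 0\}$, since the tacnode has no local moduli. The normalization $\nu\co \Y \ra \X$ then separates the two tangent branches into smooth surfaces $\Sigma_1 \sqcup \Sigma_2$ with parametrizations $x = t_1,\ y = t_1^2$ on $\Sigma_1$ and $x = t_2,\ y = -t_2^2$ on $\Sigma_2$; the preimages $\tau^{\pm}$ are smooth sections through smooth points of $\Y$.

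The first key identity is
\[
\nu^*\omega_{\X/B} \simeq \omega_{\Y/B}\bigl(2\tau^+ + 2\tau^-\bigr),
\]
obtained by noting that the local generator $dx/(2y)$ of $\omega_{\X/B}$ pulls back to $dt_1/(2t_1^2)$ on $\Sigma_1$ and to $-dt_2/(2t_2^2)$ on $\Sigma_2$. Since $\tau^{\pm}$ are smooth sections of $\Y/B$ we have $\iota(\tau^{\pm}) = 0$, i.e.\ $(\tau^{\pm})^2 = -\psi_{\tau^{\pm}}$, and since $\tau^+$ and $\tau^-$ lie on different local sheets of $\Y$, they are disjoint in $\Y$ and $\tau^+\cdot\tau^- = 0$. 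Pulling back the identity above along $\tau^+$ and $\tau^-$ and comparing with $\tau^*\omega_{\X/B}$ gives $\psi_{\tau^+} = \psi_{\tau^-} = -\tau^*\omega_{\X/B}$. The same identity also yields the $\kappa$-formula: a direct expansion shows
\[
\kappa_{\X/B} = (\nu^*\omega_{\X/B})^2 = (\omega_{\Y/B} + 2\tau^+ + 2\tau^-)^2 = \kappa_{\Y/B},
\]
where all cross terms cancel thanks to $(\tau^\pm)^2 = -\psi_{\tau^\pm}$ and $\tau^+\cdot\tau^- = 0$.

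For the Hodge class, I would use the conductor short exact sequence
\[
0 \ra \O_\X \stackrel{\nu^{\#}}{\ra} \nu_*\O_\Y \ra Q \ra 0,
\]
with $Q$ supported along $\tau$ of fiberwise length $2$ (the $\delta$-invariant of a tacnode). Applying Grothendieck--Riemann--Roch exactly as in the proof of Lemma \ref{L:nodal-normalization}, one obtains
$\lambda_{\X/B} = \lambda_{\Y/B} - c_1(\pi_* Q)$. A local computation in the coordinates above characterizes $\O_\X \subset \nu_*\O_\Y$ as those pairs $(F(t_1,t), G(t_2,t))$ satisfying $F(0,t) = G(0,t)$ and $\partial_{t_1} F(0,t) = \partial_{t_2} G(0,t)$; this yields a two-step filtration $0 \to Q_1 \to Q \to Q_0 \to 0$ in which $Q_0 \simeq \O_{\tau}$ (the zero-jet discrepancy) and $Q_1$ is naturally identified with the conormal bundle $(\tau^+)^*\omega_{\Y/B}$ (the first-jet discrepancy). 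Hence $c_1(\pi_*Q) = \psi_{\tau^+} = \tfrac{1}{2}(\psi_{\tau^+} + \psi_{\tau^-})$, giving the stated formula for $\lambda$. The formula for $\delta_{\X/B}$ then follows by combining the $\kappa$ and $\lambda$ identities with Mumford's relation $\kappa = 12\lambda - \delta$.

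The main obstacle in this plan is the precise identification of the first-jet piece $Q_1$ of the conductor quotient with $(\tau^+)^*\omega_{\Y/B}$; all other steps reduce to transparent intersection-theoretic calculations on $\Y$. Once this identification is established, everything goes through in direct parallel with Propositions \ref{P:generic-inner-node}--\ref{P:generic-cusp}.
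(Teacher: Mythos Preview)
Your argument is correct and complete. The identification of the first-jet piece $Q_1$ with the conormal bundle $N^\vee_{\tau^+}\simeq(\tau^+)^*\omega_{\Y/B}$ that you flag as the main obstacle does go through: the tacnode structure on $\X$ furnishes a canonical identification of the tangent directions of the two branches (this is precisely what distinguishes a tacnode from a node), hence a canonical isomorphism $N^\vee_{\tau^+}\simeq N^\vee_{\tau^-}$, and $\O_\X\subset\O_\X^{(1)}$ is cut out by matching first jets under this identification. The quotient $Q_1$ is then $(N^\vee_{\tau^+}\oplus N^\vee_{\tau^-})/\text{diag}\simeq N^\vee_{\tau^+}$, giving $c_1(\pi_*Q)=\psi_{\tau^+}$ as you claim. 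The rest of your computation (the $\kappa$-identity, the Mumford relation, and the equality $\psi_{\tau^+}=\psi_{\tau^-}$ via restriction of $\nu^*\omega_{\X/B}$) is routine and correct.

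As for comparison: the paper does not actually prove this proposition. It simply cites \cite[Proposition~3.4]{smyth_elliptic2}, remarking that the statement there is given only for $g=1$. Your write-up therefore supplies a self-contained proof in place of an external citation, and does so exactly in the style of the neighboring Propositions~\ref{P:generic-inner-node}--\ref{P:generic-cusp}, which is the natural approach in this context.
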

\begin{proof} This is \cite[Proposition 3.4]{smyth_elliptic2} (although it is stated there only 
in the case of $g=1$).
\end{proof}

\subsubsection{Reduction 1: The case of generic rosaries}
\label{S:generic-rosary} Let $C$ be the geometric generic fiber of $\C/B$ and 
consider a maximal length rosary $R=R_1\cup \cdots \cup R_\ell$
of $C$ (see Definition \ref{defn-rosary}). Since $\C/B$ is 
non-isotrivial, the rosary cannot be closed. 
Let $T:=\overline{C \setminus R}$.
The point $T\cap R_1$ (resp., $T\cap R_\ell$) is either an outer node or an outer tacnode,
so its limit in every fiber is the same singularity by Proposition \ref{P:limits-outer}. 
Similarly, the limits of the tacnodes $R_{i}\cap R_{i+1}$, for $i=1,\dots, \ell-1$, remain tacnodes
in every fiber. We then have that $\C=\mathcal{T}\cup \mathcal{R}_1 \cup \cdots  \cup\mathcal{R}_\ell$, 
where the geometric generic fiber of $\mathcal{R}_i$ and $\mathcal{T}$ is $R_i$ and $T$ respectively. 
Let $\chi_{1}$ (resp., $\chi_{2}$) be the nodal or tacnodal section along which $\mathcal{T}$ and $\mathcal{R}_1$
(resp., $\mathcal{R}_\ell$) meet.
Let $\tau_i$, for $i=1,\dots, \ell-1$, be the tacnodal section along which $\mathcal{R}_i$ and $\mathcal{R}_{i+1}$
meet. In the rest of the proof we use the fact that self-intersections of $2$ disjoint smooth sections on 
a $\PP^1$-bundle over $B$ are equal of opposite signs. Together with Proposition \ref{P:generic-tacnode}, this gives
\begin{equation*}
(\psi_{\chi_1})_{\mathcal{R}_1/B} = -(\psi_{\tau_1})_{\mathcal{R}_1/B} = -(\psi_{\tau_1})_{\mathcal{R}_2/B}
=(\psi_{\tau_2})_{\mathcal{R}_2/B}=\cdots =(-1)^{\ell-1}(\psi_{\tau_{\ell-1}})_{\mathcal{R}_\ell/B}=(-1)^{\ell} (\psi_{\chi_2})_{\mathcal{R}_\ell/B}.
\end{equation*}
In what follows, we set $\psi_{\T/B}=\sum_{i=1}^n\psi_{\sigma_i}+\psi_{\chi_1}+\psi_{\chi_2}=\psi_{\C/B}+\psi_{\chi_1}+\psi_{\chi_2}$.

\emph{Case I: $R$ is $A_1/A_1$-attached rosary.} By Remark \ref{R:even-rosary}, $\ell$ must be odd.
By Proposition \ref{P:generic-tacnode}, we obtain
\[
\left(\frac{39}{4}\lambda-\delta+\psi\right)_{\C/B}
=\left(\frac{39}{4}\lambda-\delta+\psi\right)_{\mathcal{T}/B}.
\]
Since $(\mathcal{T}, \sigman, \chi_1,\chi_2)$ is $(\secondme)$-stable and 
$\mathcal{R}/B$ is isotrivial, we reduce
to proving Theorem  \ref{T:main-positivity}(b) for $(\mathcal{T}, \sigman, \chi_1,\chi_2)$, which has one less generic rosary than
$(\C/B, \sigman)$.

\emph{Case 2: $R$ is $A_1/A_3$-attached rosary.} Suppose $\chi_1$ is a nodal section and $\chi_2$ is a tacnodal section.
By the maximality assumption on $R$, the irreducible component of $T$ meeting $R_\ell$ is not a $2$-pointed
smooth rational curve. It follows
by Lemma \ref{L:psi-class} that $(\psi_{\chi_2})_{\mathcal{T}}\geq 0$. 
By Proposition \ref{P:generic-tacnode}, we have
\[
\left(\frac{39}{4}\lambda-\delta+\psi\right)_{\C/B}=
\left(\frac{39}{4}\lambda-\delta+\psi\right)_{\mathcal{T}/B}+
(\psi_{\chi_1})_{\mathcal{R}_1/B}+\frac{5}{4}(\psi_{\chi_2})_{\mathcal{R}_\ell/B}
+\frac{9}{4}\sum_{i=1}^{\ell-1}(\psi_{\tau_i})_{\mathcal{R}_i}.
\]
If $\ell$ is odd, then $\sum_{i=1}^{\ell-1}\left(\psi_{\tau_i}\right)_{\mathcal{R}_i}=0$ and $\psi_{\chi_1}=-\psi_{\chi_2}$. 
We thus obtain:
\[
\left(\frac{39}{4}\lambda-\delta+\psi\right)_{\C/B}=\left(\frac{39}{4}\lambda-\delta+\psi\right)_{\mathcal{T}/B}
+\frac{1}{4}(\psi_{\chi_2})_{\mathcal{T}/B} 
\geq \left(\frac{39}{4}\lambda-\delta+\psi\right)_{\mathcal{T}/B}.
\]
Noting that $\psi_{\chi_2}=0$ only if $\mathcal{R}/B$ is isotrivial, we reduce to proving Theorem \ref{T:main-positivity}(b)
for $(\mathcal{T}, \sigman, \chi_1,\chi_2)$.

If $\ell$ is even, then $\psi_{\chi_1}=\psi_{\chi_2}$ and $\sum_{i=1}^{\ell-1}(\psi_{\tau_i})_{\mathcal{R}_i}+\psi_{\chi_2}=0$, so that
\[
\left(\frac{39}{4}\lambda-\delta+\psi\right)_{\C/B}
=\left(\frac{39}{4}\lambda-\delta+\psi\right)_{\mathcal{T}/B}.
\]
Furthermore, we observe that $\mathcal{R}/B$ is isotrivial 
and we reduce to proving Theorem  \ref{T:main-positivity}(b) for $(\mathcal{T}, \sigman, \chi_1,\chi_2)$.

\emph{Case 3: $R$ is $A_3/A_3$-attached rosary.}
By the maximality assumption on $R$, neither $T\cap R_1$ nor $T\cap R_2$ lies on a $2$-pointed rational component of $T$. 
It follows
by Lemma \ref{L:psi-class} that $(\psi_{\chi_1})_{\mathcal{T}}, (\psi_{\chi_2})_{\mathcal{T}}\geq 0$. However,
$\psi_{\chi_1}=(-1)^\ell\psi_{\chi_2}$.
Therefore, either $\psi_{\chi_1}=\psi_{\chi_2}=0$, in which case $\mathcal{R}/B$ is an isotrivial family, or 
$\ell$ is even and $\psi_{\chi_1}=\psi_{\chi_2}>0$. In either case, Proposition \ref{P:generic-tacnode} gives
\[
\left(\frac{39}{4}\lambda-\delta+\psi\right)_{\C/B}
=\left(\frac{39}{4}\lambda-\delta+\psi\right)_{\mathcal{T}/B}+\frac{1}{4}(\psi_{\chi_2})_{\mathcal{R}/B} 
\geq \left(\frac{39}{4}\lambda-\delta+\psi\right)_{\mathcal{T}/B},
\]
and the inequality is strict if $\mathcal{R}$ is not isotrivial. 
Thus we reduce to proving Theorem  \ref{T:main-positivity}(b) for $(\mathcal{T}, \sigman, \chi_1,\chi_2)$.

\subsubsection{Reduction 2: The case of generic $A_1/A_3$ or $A_3/A_3$-attached elliptic bridges}
\label{reduction-2}
Suppose the geometric generic fiber of $\C/B$ can be written as 
$C=T_1 \cup E \cup T_2$, where $E$ is an $A_1/A_3$-attached elliptic bridge.
Let  $q_1=T_1\cap E$ be a node and $q_2=T_2\cap E$ be a tacnode. 
By Proposition \ref{P:limits-outer}, the limit of $q_1$ (resp., $q_2$) remains a node (resp., a tacnode)
in every fiber. Thus we can write
 $\C=(\cT_1, \tau_0) \cup (\E, \tau_1, \tau_2)  \cup (\cT_2, \tau_3)$,
where $\tau_0 \sim \tau_1$ are glued nodally and $\tau_2 \sim \tau_3$ are glued tacnodally. 
Since $A_1/A_1$-attached elliptic bridges are disallowed, fibers of $\E$ have no separating nodes and so 
$(\E, \tau_1, \tau_2)$ is a family of elliptic bridges.
By Lemma \ref{L:Norm}, $(\cT_1, \tau_0)$ is $(\second-\epsilon)$-stable. Also, $(\cT_2,\tau_3)$ is $(\second-\epsilon)$-stable 
because $\tau_3$ cannot lie on an $A_1$-attached elliptic tail in $\cT_2$.

Set $\C'=(\cT_1, \tau_0) \cup (\cT_2, \tau_3)$, where we glue by $\tau_0 \sim \tau_3$ nodally. 
Then $(\C'/B, \sigman)$ is a $(\second-\epsilon)$-stable family by Lemma \ref{L:Glue}.
By Proposition \ref{P:generic-tacnode}, we have
\begin{multline*}
\left(\frac{39}{4}\lambda-\delta+\psi\right)_{\C/B}=
\left(\frac{39}{4}\lambda-\delta+\psi\right)_{\cT_1/B}+
\left(\frac{39}{4}\lambda-\delta+\psi_{\tau_1}+\frac{5}{4}\psi_{\tau_2}\right)_{\E/B}
+\left(\frac{39}{4}\lambda-\delta+\psi\right)_{\cT_2/B} \\ = \left(\frac{39}{4}\lambda-\delta+\psi\right)_{\cT_1/B}
+\left(\frac{39}{4}\lambda-\delta+\psi\right)_{\cT_2/B}=\left(\frac{39}{4}\lambda-\delta+\psi\right)_{\C'/B},
\end{multline*}
where we have used relations
$(\psi_{\tau_1})_{\E/B}=(\psi_{\tau_2})_{\E/B}=\lambda_{\E/B}$
and $\delta_{\E/B}=12\lambda_{\E/B}$, both of which hold because $(\delta_{\red})_{\E/B}=0$.

Note that $(\E/B, \tau_1, \tau_2)$ is trivial if and only if $\psi_{\tau_2}=\psi_{\tau_3}=0$.
Thus we have reduced to proving the requisite inequalities for the family $\C'/B$ with one less generic $A_1/A_3$-attached
elliptic bridge. Moreover, the equality for $\C'/B$ holds if and only if the equality for $\C/B$ holds and $\C'/B$ 
is obtained by replacing a generic node of $\C'$ by a family of elliptic bridges $A_1/A_3$-attached along
the nodal transforms. 

Similarly, if the generic fiber of $\C/B$ has an $A_3/A_3$-attached elliptic bridge,
then we can remove the bridge and recrimp the two remaining components of $\C$ 
along a generic tacnode. The calculation similar to the above shows that the degree of $\left(\frac{39}{4}\lambda-\delta+\psi\right)$ does not 
change under this operation. 

Replacing an attaching node of a Weierstrass chain of length $\ell$ by 
an $A_1/A_3$-attached elliptic bridge in a way that preserves $(\second-\epsilon)$-stability gives 
a Weierstrass chain of length $\ell+1$. Similarly, 
replacing a tacnode in a Weierstrass chain of length $\ell$ by 
an $A_3/A_3$-attached elliptic bridge gives a Weierstrass chain of length $\ell+1$.
In what follows, we will prove that for a non-isotrivial $(\secondme)$-stable family $(\C/B, \sigman)$ 
\emph{with no generic $A_1/A_3$ or $A_3/A_3$-attached elliptic bridges}, 
we have $\left(\frac{39}{4}\lambda-\delta+\psi\right)_{\C/B}\geq 0$
and equality holds if and only if $\C/B$ is a family of \emph{Weierstrass tails}. This implies that for every non-isotrivial $(\secondme)$-stable family $(\C/B, \sigman)$, 
we have $\left(\frac{39}{4}\lambda-\delta+\psi\right)_{\C/B}\geq 0$
and equality holds if and only if $\C/B$ is a family of \emph{Weierstrass chains}.

\subsubsection{Reduction 3: Normalization along generic tacnodes}
\label{reduction-3}

Suppose now $(\C/B, \sigman)$ is a family of $(\second-\epsilon)$-stable curves with no generic rosaries
and no generic $A_1/A_3$ or $A_3/A_3$-attached elliptic bridges.
Let $\X$ be the normalization of $\C$ along generic tacnodes. Denote
by $\{\tau^{\pm}_i\}_{i=1}^{d}$ the preimages of the generic tacnodes, and call them 
tacnodal transforms. Set $\psi_{tacn}:=\sum_{i=1}^{d} (\psi_{\tau^+_i}+\psi_{\tau^-_i})$ and 
$\psi_{\X/B}:=\psi_{\C/B}+ \psi_{tacn}$. 
Applying Proposition \ref{P:generic-tacnode} we have 
\[
\left(\frac{39}{4}\lambda-\delta+\psi\right)_{\C/B}=\left(\frac{39}{4}\lambda-\delta+\psi+\frac{1}{8}\psi_{tacn}\right)_{\X/B}.
\]
If we now treat each tacnodal transform $\tau_{i}^{\pm}$ as a marked section, then every connected component 
of $\X$ is a generically $(\second-\epsilon)$-stable family (there are no generic $A_1/A_3$ or $A_3/A_3$-attached elliptic bridges).
Blowing-down all rational tails meeting a single tacnodal transform and no other marked sections does not 
change $\left(\frac{39}{4}\lambda-\delta+\psi\right)_{\X/B}$ but makes 
$(\X/B, \sigman, \{\tau^{\pm}_i\}_{i=1}^{d})$ into a $(\second-\epsilon)$-stable family.
We still have $\psi_{tacn}\geq 0$ by Lemma \ref{L:psi-class}, with strict inequality if $d\geq 1$.
Thus, we have reduced to proving Theorem \ref{T:main-positivity}(b) for a $(\second-\epsilon)$-stable family 
with no generic tacnodes.

\subsubsection{Reduction 4: Normalization along generic outer nodes} 
\label{reduction-4}
Suppose $(\X/B, \sigman)$ is a $(\secondme)$-stable family with no generic tacnodes. 
Let $\Y$ be the normalization of $\X$ along the generic outer nodes and 
let $\{\zeta^+_i, \zeta_i^-\}_{i=1}^{b}$ be the transforms of the generic outer nodes. 
Set 
$\delta_{tacn}:=\sum_{i\neq j} (\zeta^\pm_i \cdot \zeta_j^\pm)$ and
$\psi_{\Y/B}:=\psi_{\X/B}+\sum_{i=1}^{b} (\psi_{\zeta^+_i}+\psi_{\zeta^-_i})$.
Then by Proposition \ref{P:generic-outer-node}, we have 
\begin{align*}
\left(\frac{39}{4}\lambda-\delta+\psi\right)_{\X/B}=\left(\frac{39}{4}\lambda-\delta+\psi +\frac{7}{8}\delta_{tacn}\right)_{\Y/B}.
\end{align*}

\subsubsection{Reduction 5: Normalization along generic cusps}
\label{cuspidal-reduction}


Let $\Y$ be as in \ref{reduction-4} and 
let $\Z$ be the normalization of (a connected component of) $\Y$ along generic cusps and let $\{\xi_i\}_{i=1}^{c}$ be the cuspidal 
transforms on $\Z$. 
Then the family $(\Z/B, \sigman, \{\zeta_i\}_{i=1}^{b}, \{\xi_i\}_{i=1}^{c})$ satisfies the following properties:
\begin{enumerate}
\item The generic fiber is irreducible and at-worst nodal.
\item The sections $\sigman$ are smooth, pairwise non-intersecting and disjoint from $\{\zeta_i\}_{i=1}^{b}$.
\item The sections $\{\zeta_i\}_{i=1}^{b}$ are smooth and at most two of them can meet at any given point of $\Z$.
\item The sections $\{\xi_i\}_{i=1}^{c}$ are pairwise non-intersecting and disjoint from $\{\zeta_i\}_{i=1}^{b}$ and $\sigman$.
\end{enumerate}
Set $c(B):=2\sum_{i=1}^{c} \iota(\xi_i)$, where $\iota(\xi_i)$ is the index of the cuspidal transform $\xi_i$,
and $\psi_{cusp}:=\sum_{i=1}^c \psi_{\xi_i}$. 
Then we have by Proposition \ref{P:generic-cusp}
\begin{align}\label{E:goal}
\left(\frac{39}{4}\lambda-\delta+\psi+\frac{7}{8}\delta_{tacn}\right)_{\Y/B}
=\left(\frac{39}{4}\lambda-\delta+\psi+\frac{5}{4}\psi_{cusp}-\frac{1}{4}c(B)+\frac{7}{8}\delta_{tacn}\right)_{\Z/B}.
\end{align}

Our goal for the rest of the section is to prove that the expression on the right-hand side of \eqref{E:goal} is non-negative
and equals $0$ if and only if the only non-isotrivial components of the family $\X/B$ from \ref{reduction-4} are 
$A_1$-attached Weierstrass tails. 

Let $h$ be the geometric genus of the generic fiber of $\Z$ and let $a$ be the number of generic inner nodes of $\Z$.
Our further analysis breaks down according to the following possibilities:
\begin{enumerate}
\item[(A)] $h \geq 3$; see \S\ref{genus-3}.
\item[(B)] $h=2$, or $(h,a)=(1,1)$, or $(h,a)=(0,2)$; see \S\ref{geometric-genus-2}.
\item[(C)] $h=1$ and $a\neq 1$, or $(h,a)=(0,1)$; see \S\ref{geometric-genus-1}.
\item[(D)] $h=0$ and $a\geq 3$, or $(h,a)=(0,0)$; see \S\ref{geometric-genus-0}. 
\end{enumerate}

\subsubsection{Case A: Relative geometric genus $h\geq 3$}
\label{genus-3}
Suppose $\Z/B$ is a family as in \ref{cuspidal-reduction}. 
Let $\W$ be the normalization of $\Z$ along the generic inner nodes.
Let $\{\eta_i^+, \eta_i^{-}\}_{i=1}^{a}$ be the inner nodal 
transforms on $\W$. 
Then $(\W/B, \sigman, \{\eta_i^{\pm}\}_{i=1}^{a}, \{\zeta_i\}_{i=1}^{b}, \{\xi_i\}_{i=1}^{c})$ satisfies the following properties:
\begin{enumerate}
\item The generic fiber is a smooth curve of genus $h\geq 3$.
\item Sections $\sigman$ are smooth, non-intersecting, and disjoint from $\{\eta_i^{\pm}\}_{i=1}^{a}$, $\{\zeta_i\}_{i=1}^{b}$, and 
$\{\xi_i\}_{i=1}^{c}$.
\item Inner nodal transforms $\{\eta_i^{\pm}\}_{i=1}^{a}$ are disjoint from $\{\zeta_i\}_{i=1}^{b}$ and 
$\{\xi_i\}_{i=1}^{c}$. Their properties are described by Proposition \ref{P:generic-inner-node}.
\item Outer nodal transforms $\{\zeta_i\}_{i=1}^{b}$ are disjoint from $\{\xi_i\}_{i=1}^{c}$. Their properties are described by Proposition \ref{P:generic-outer-node}.
\item Cuspidal transforms $\{\xi_i\}_{i=1}^{c}$ have properties described by  Proposition \ref{P:generic-cusp}.
\end{enumerate}
We let $\psi_{\W/B}:=\psi_{\Z/B}+\sum_{i=1}^a (\psi_{\eta_i^+}+\psi_{\eta_i^-})$ and 
$(\delta_{tacn})_{\W/B}:=(\delta_{tacn})_{\Z/B}+\sum_{i\neq j} (\eta_i^{\pm} \cdot \eta_j^{\pm})$.
We set $\delta_{inner}:=\sum_{i=1}^{a} (\eta_i^{+}\cdot \eta_i^{-})$ and $n(B):=\sum_{i=1}^{a} \iota(\eta^{+}_i)$, where $\iota(\eta^{+}_i)$ is the index of the inner nodal transform $\eta^{+}_i$.
Then by Proposition \ref{P:generic-inner-node}:
\begin{multline}\label{E:D-g-3}
\left(\frac{39}{4}\lambda-\delta+\psi+\frac{5}{4}\psi_{cusp}-\frac{1}{4}c(B)+\frac{7}{8}\delta_{tacn}\right)_{\Z/B}
\\=\left(\frac{39}{4}\lambda-\delta+\psi-\frac{1}{4}\delta_{inner}-\frac{1}{4}n(B)-\frac{1}{4}c(B)+\frac{5}{4}\psi_{cusp}+\frac{7}{8}\delta_{tacn}\right)_{\W/B}.
\end{multline}

Passing to the relative minimal model of $\W/B$ does not increase the degree of 
the divisor on the right-hand side of \eqref{E:D-g-3}.
Hence we will assume that $\omega_{\W/B}$ is relatively nef. 
Then by Proposition \ref{P:CH-inequality}, we have $\left(8+4/h\right)\lambda-\delta \geq 0$. Since $h\geq 3$ and $\delta\geq 0$,
we obtain $
\frac{39}{4}\lambda-\delta>0
$ (when $\delta=0$, we have $\lambda>0$ by the existence of the Torelli morphism).
We proceed to estimate the remaining terms of \eqref{E:D-g-3}. Clearly, $\delta_{tacn}\geq 0$.
Since $h\geq 3$ and $\kappa=12\lambda-\delta> 0$, the inequalities of Lemmas \ref{L:cuspidal-section-high} and 
\ref{L:inner-sections-high} give 
\begin{align*}
\psi_{cusp} &=\sum_{i=1}^c\psi_{\xi_i} \geq \frac{(h-1)}{h}\sum_{i=1}^c \iota(\xi_i)+c\frac{\kappa}{4h(h-1)}=\frac{h-1}{2h}c(B)
+c\frac{\kappa}{4h(h-1)}>\frac{1}{3}c(B), \\
\sum_{i=1}^a (\psi_{\eta_i^+}+\psi_{\eta_i^-})&\geq 
\frac{2(h-1)}{h+1}\sum_{i=1}^{a}((\eta^+_i\cdot \eta^-_i)+\iota(\eta_i^+))+a\frac{\kappa}{h^2-1}>\delta_{inner}+n(B).
\end{align*}
 Summarizing, we conclude that the right hand side of \eqref{E:D-g-3} is strictly positive. 

\subsubsection{Case B: Relative genus $2$}
\label{geometric-genus-2}
Suppose $\Z/B$ is a family as in \ref{cuspidal-reduction} with relative geometric genus $h=2$.
Let $\W$ be the normalization of $\Z$ along the generic inner nodes.
As in \ref{genus-3}, we reduce to proving that
\begin{equation}\label{E:D-g-2}
\left(\frac{39}{4}\lambda-\delta+\psi-\frac{1}{4}\delta_{inner}-\frac{1}{4}n(B)-\frac{1}{4}c(B)+\frac{5}{4}\psi_{cusp}+\frac{7}{8}\delta_{tacn}\right)_{\W/B} \geq 0,
\end{equation}
under the assumption that $\omega_{\W/B}$ is relatively nef.

For any family $\W/B$ of arithmetic genus $2$ curves with a relatively nef $\omega_{\W/B}$, we have 
\begin{equation}\label{E:genus-2}
10\lambda=\delta_{\irr}+2\delta_{\red},
\end{equation}
This relation implies that $\delta\leq 10\lambda$ for any generically irreducible family
and, consequently, $\kappa =12\lambda-\delta \geq 2\lambda$, with the equality 
achieved only if $\delta_{\red}=0$, i.e., if there are no fibers where two genus $1$ components meet at a node.
It follows that $\frac{39}{4}\lambda-\delta\geq -\lambda/4$, with the equality only if $\delta_{\red}=0$.

By Lemma \ref{L:inner-sections-high}, we have 
\[
\sum_{i=1}^a (\psi_{\eta_i^+}+\psi_{\eta_i^-})  \geq \frac{2}{3} (\delta_{inner}+n(B))+a\frac{\kappa}{3}.
\]
By Lemma \ref{L:cuspidal-section-high}, we have
\[
\psi_{cusp} \geq \frac{1}{4}c(B)+c\frac{\kappa}{8}.
\]
Putting these inequalities together and using $\kappa\geq 2\lambda$, we obtain
\[
\frac{9}{4}\psi_{cusp}+\sum_{i=1}^a (\psi_{\eta_i^+}+\psi_{\eta_i^-}) \geq \frac{1}{4}\delta_{inner}+\frac{1}{4}n(B)+\frac{1}{4}c(B)+\left(\frac{2a}{3}+\frac{9c}{16}\right)\lambda.
\]
If $a+c\geq 1$, we obtain a strict inequality in \eqref{E:D-g-2} at once. Suppose $a=c=0$.
So far, we have that
\[ 
\left(\frac{39}{4}\lambda-\delta+\psi\right)_{\W/B}
\geq \sum_{i=1}^{n}\psi_{\sigma_i}+\sum_{i=1}^{b}\psi_{\zeta_i} - \frac{1}{4}\lambda.
\]
We now invoke Lemma \ref{L:genus-2-equality} that gives
\[
\sum_{i=1}^{n}\psi_{\sigma_i}+\sum_{i=1}^{b}\psi_{\zeta_i} \geq \frac{(n+b)}{8}\kappa \geq \frac{(n+b)}{4}\lambda.
\]
Since $n+b \geq 1$ (otherwise, $\W/B$ is an unpointed family of genus $2$ curves, which is impossible),
we conclude that $\sum_{i=1}^{n}\psi_{\sigma_i}+\sum_{i=1}^{b}\psi_{\zeta_i} - \lambda/4\geq 0$
and that equality is achieved if and only if $n+b=1$, $\delta_{\red}=0$, and equality is achieved in Lemma \ref{L:genus-2-equality}.
This is precisely the situation when $\Y/B=\W/B$ is a family of $A_1$-attached Weierstrass genus $2$ tails. 

Finally, if  $(h,a)=(1,1)$ or $(h,a)=(0,2)$, we proceed exactly as above
but without normalizing the inner nodes: 
For a family $(\Z/B, \sigman, \{\zeta_i\}_{i=1}^{b}, \{\xi_i\}_{i=1}^{c})$ as in \ref{cuspidal-reduction},
where the relative arithmetic genus of $\Z/B$ is $2$, we need to prove
\[
\left(\frac{39}{4}\lambda-\delta+\psi+\frac{5}{4}\psi_{cusp}-\frac{1}{4}c(B)+\frac{7}{8}\delta_{tacn}\right)_{\Z/B}\geq 0.
\]
Applying \eqref{E:genus-2} to estimate $\delta$, Lemma \ref{L:cuspidal-section-high} to estimate $\psi_{cusp}$,
and Lemma \ref{L:genus-2-equality} to estimate $\sum_{i=1}^{n}\psi_{\sigma_i}+\sum_{i=1}^{b}\psi_{\zeta_i}$
(all of which apply even if the total space $\Z$ is not normal), we obtain
\begin{multline*}
\frac{39}{4}\lambda-\delta+\psi+\frac{5}{4}\psi_{cusp}-\frac{1}{4}c(B)+\frac{7}{8}\delta_{tacn}
\geq -\frac{1}{4}\lambda+ \frac{4n+4b+9c}{16}\lambda+\frac{5}{16}c(B)+\frac{7}{8}\delta_{tacn} \geq 0.
\end{multline*}
Moreover, equality is achieved if and only if $\delta_{\red}=0$, $c=0$, and $n+b=1$, which is precisely 
the situation when $\Y/B=\Z/B$ is a family of $A_1$-attached (generically nodal) Weierstrass genus $2$ tails.

\subsubsection{Case C: Relative genus $1$}
\label{geometric-genus-1}

Suppose $\Z/B$ is a family as in \ref{cuspidal-reduction} of relative genus $1$ and
with $a$ generic inner nodes, where $a\neq 1$. We consider the case $a\geq 2$ first.
Let $\W$ be the family obtained from $\Z$ by the following operations:
\begin{enumerate}
\item Normalize $\Z$ along all generic inner nodes to obtain inner nodal pairs
$\{\eta_i^{+},\eta_i^{-}\}_{i=1}^{a}$.
\item Blow-up all cuspidal and inner nodal transforms to make them Cartier divisors.
\item Blow-up points of $\eta_i^{\pm}\cap \eta_j^{\pm}$ for all $i\neq j$. 
\item Blow-up points of $\zeta_i \cap \zeta_j$ for all $i\neq j$. 
\end{enumerate}
As a result, the sections of $\W/B$ 
do not intersect pairwise with the only possible exception
that $\eta_i^{+}$ is allowed to meet $\eta_i^{-}$. A node of $\Z$ through which $\xi_i$ passes is 
replaced in $\W$ by a balanced rational bridge meeting the strict transform of $\xi_i$, which we continue 
to denote by $\xi_i$.
We say that such a bridge is a \emph{cuspidal bridge 
associated to $\xi_i$}.
Moreover, if we let $c(\xi_i)$ be the sum of the indices of all bridges associated to $\xi_i$, then 
\[
2\iota(\xi_i)_{\Z/B}=c(\xi_i)_{\W/B}.
\]
Suppose $\{\eta_i^{+},\eta_i^{-}\}$ is an inner nodal pair of $\Z/B$. Then a node of $\Z$ through which $\eta_i^{+}$ and $\eta_i^{-}$
both pass is replaced in $\W$ by a balanced rational bridge meeting the strict transforms of $\eta_i^{+}$ and $\eta_i^{-}$,
which we continue to denote by $\eta_i^{+}$ and $\eta_i^{-}$.
We say that such a bridge is an \emph{inner nodal bridge associated to $\{\eta_i^{+},\eta_i^{-}\}$}.
Moreover, if we let $n(\eta_i)$ be the sum of the indices of all bridges associated to $\{\eta_i^{+},\eta_i^{-}\}$, then 
\[
((\eta_i^+\cdot \eta_i^-)+\iota(\eta_i^{+}))_{\Z/B}=((\eta_i^+ \cdot \eta_i^-)+n(\eta_i))_{\W/B}.
\]
On $\W/B$, we define
\begin{align*}
\delta_{inner}:=\sum_{i=1}^a (\eta^+_i \cdot \eta^-_i), \quad
\delta_{tacn}:= \sum_{i\neq j} \delta_{0, \{\eta_i^{\pm}, \eta_j^{\pm}\}}+\sum_{i\neq j} \delta_{0, \{\zeta_i, \zeta_j\}},
\end{align*}
and let $n(B)$ (resp., $c(B)$) be the sum of the indices of all inner nodal (resp., cuspidal) bridges.
We reduce to proving that
\[
\left(\frac{39}{4}\lambda-\delta+\psi-\frac{1}{4}\delta_{inner}-\frac{1}{4}n(B)-\frac{1}{4}c(B)+\frac{5}{4}\psi_{cusp}-\frac{1}{8}\delta_{tacn}\right)_{\W/B} \geq 0.
\]

We will make use of the standard relations 
for pointed families of genus $1$ curves and Lemmas \ref{L:inner-sections-1} and \ref{L:cuspidal-section-1}. 
Let $N=n+2a+b+c$ be the total number of marked sections of $\W/B$. Clearly, $N\geq 2$. We consider
first the case when $N\geq 3$. Then by Lemma \ref{L:inner-sections-1}, we have
\[
\delta_{inner}+n(B) \leq \frac{N}{2(N-2)}\sum_{i=1}^a (\psi_{\eta_i^+}+\psi_{\eta_i^-})+\frac{a}{2(N-2)^2}\delta_{\red}.
\]
Applying Lemma \ref{L:cuspidal-section-1}, we obtain 
\[
c(B)\leq \frac{2N}{N-1} \psi_{cusp}+\frac{c}{4(N-1)^2}\delta_{\red}.
\]
Using the above two inequalities and rewriting $\delta=12\lambda+\delta_{\red}$,
we see that
\begin{multline}
\label{genus-1}
\frac{39}{4}\lambda-\delta+\psi-\frac{1}{4}\left(\delta_{inner}+n(B)+c(B)\right)+\frac{5}{4}\psi_{cusp}-\frac{1}{8}\delta_{tacn} \\
\geq -\frac{9}{4}\lambda+\left(\frac{5}{4}-\frac{N}{2(N-1)}\right)\psi_{cusp}
+\psi-\frac{N}{8(N-2)}\sum_{i=1}^a (\psi_{\eta_i^+}+\psi_{\eta_i^-}) \\
-\left(1+\frac{a}{8(N-2)^2}+\frac{c}{16(N-1)^2}\right)\delta_{\red}-\frac{1}{8}\delta_{tacn}.
\end{multline}
We rewrite each $\psi$-class on the right-hand side of \eqref{genus-1} using the standard relation on families of arithmetic genus $1$ curves: 
\[
\psi_{\sigma}=\lambda+\sum_{\sigma\in S} \delta_{0, S}.
\]
The coefficient of $\lambda$ in the resulting expression for the right-hand side of \eqref{genus-1} is 
\begin{equation}
\label{coeff-lambda}
-\frac{9}{4}+c\left(\frac{5}{4}-\frac{N}{2(N-1)}\right)+N-\frac{aN}{4(N-2)}.
\end{equation}
Using $N\geq 2a+c$ and the assumption $N\geq 3$, 
it is easy to check that \eqref{coeff-lambda} is always positive.

A similarly straightforward but tedious calculation shows that each boundary divisor $\delta_{0,S}$ appears 
in the resulting expression for the right-hand side of \eqref{genus-1}
with a positive coefficient. Thus we have shown that the right-hand side of \eqref{genus-1} is positive for every non-isotrivial family with $N\geq 3$.

We consider now the case of $N=2$.
Since $\C/B$ in \ref{reduction-3} has no generic elliptic bridges (nodally or tacnodally attached),
we must have $c=1$ and $n+b=1$. Let $\xi$ be the corresponding cuspidal transform 
and $\tau$ be either a marked smooth section (if $n=1$) or an outer nodal transform (if $b=1$).
We trivially have $\delta_{inner}=n(B)=\delta_{tacn}=\delta_{\red}=0$.
Using $\delta_{\irr}=12\lambda$ and the inequality $c(B)\leq 4\psi_{cusp}$ from Lemma \ref{L:cuspidal-section-1}, we obtain:
\begin{multline*}
\frac{39}{4}\lambda-\delta+\psi-\frac{1}{4}\delta_{inner}-\frac{1}{4}n(B)-\frac{1}{4}c(B)+\frac{5}{4}\psi_{cusp}-\frac{1}{8}\delta_{tacn} \\
=\frac{39}{4}\lambda-\delta+\psi-\frac{1}{4}c(B)+\frac{5}{4}\psi_{cusp}
\geq \frac{39}{4}\lambda-12\lambda+\psi+\frac{1}{4}\psi_{cusp} \\
=\frac{39}{4}\lambda-12\lambda+2\lambda+\frac{1}{4}\lambda=0.
\end{multline*}
Moreover, equality holds only if equality holds in Lemma \ref{L:cuspidal-section-1}. 
This happens if and only if $2\xi \sim 2\tau$ and implies that $\Y/B$ in \ref{reduction-4} is a generically cuspidal family 
of $A_1$-attached Weierstrass genus $2$ tails. We are done with the analysis in the case $g=1$ and $a\neq 1$.

If $(g,a)=(0,1)$, we proceed exactly as
above, but without normalizing the inner node.

\subsubsection{Case D: Relative geometric genus $0$}
\label{geometric-genus-0}

Suppose $\Z/B$ is a family as in \ref{cuspidal-reduction} of relative geometric genus $0$ and
with $a$ generic inner nodes, where either $a\geq 3$ or $a=0$. We consider the case $a\geq 3$ first.
Let $\W$ be the family obtained from $\Z$ by the following operations:
\begin{enumerate}
\item Normalize $\Z$ along all generic inner nodes to obtain inner nodal pairs
$\{\eta_i^{+},\eta_i^{-}\}_{i=1}^{a}$.
\item Blow-up all cuspidal and inner nodal transforms to make them Cartier divisors.
This operation introduces cuspidal or nodal bridges as in \ref{geometric-genus-1}.
\item Blow-up points of $\eta_i^{\pm}\cap \eta_j^{\pm}$ for all $1\leq i < j \leq a$. 
\item Blow-up points of $\zeta_i \cap \zeta_j$ for all $1\leq i < j \leq b$. 
\item Blow-up points of $\eta_i^{+} \cap \eta_i^{-}$ for all $1\leq i \leq a$.
\item Blow-down all rational tails marked by a single section (such tails are necessarily adjacent 
either to cuspidal or inner nodal bridges). 
\end{enumerate}
As a result, $\W/B$ is a family in $\Mg{0,N}$, where $N=n+2a+b+c$ and $a\geq 3$.  
On $\W/B$, we define 
\begin{align*}
\delta_{inner}&:=\sum_{i=1}^a \delta_{\{\eta^+_i, \eta^-_i\}}, \qquad
\delta_{tacn}:= \sum_{i\neq j} \delta_{\{\eta_i^{\pm}, \eta_j^{\pm}\}}+\sum_{i\neq j} \delta_{\{\zeta_i, \zeta_j\}}, \\
\delta_{3}^{NB}&:=\sum_{i=1}^{a} \sum_{\beta \neq \eta_i^{+}, \eta_i^{-}}  \delta_{\{\eta_i^{+}, \eta_i^{-},\beta\}},  \qquad
\delta_{2}^{CB} :=\sum_{i=1}^{c}\sum_{\beta\neq \xi_i} \delta_{\{\xi_i, \beta\}},
\end{align*}
and let $n(B)$ (resp., $c(B)$) be the sum of the indices of all inner nodal (resp., cuspidal) bridges.
Then 
\begin{multline*}
\left(\frac{39}{4}\lambda-\delta+\psi+\frac{5}{4}\psi_{cusp}-\frac{1}{4}c(B)+\frac{7}{8}\delta_{tacn}\right)_{\Z/B} \\
=\left(\psi+\frac{5}{4} \psi_{cusp}-\delta-\frac{5}{4}\delta_{inner}-\frac{1}{4}(n(B)+c(B)+\delta_{3}^{NB}+\delta_2^{CB}) 
- \frac{1}{8} \delta_{tacn}\right)_{\W/B}.
\end{multline*}
We are going to prove that a (strict!) inequality
\begin{align*}
\psi+\frac{5}{4} \psi_{cusp}-\delta-\frac{5}{4}\delta_{inner}-\frac{1}{4}(n(B)+c(B)+\delta_{3}^{NB}+\delta_2^{CB}) - \frac{1}{8} \delta_{tacn} > 0
\end{align*}
always holds on $\W/B$. In doing so, we will use the following standard relation on $\Mg{0,N}$:
\begin{equation}\label{E:standard-relation}
\psi=\sum_{r\geq 2} \frac{r(N-r)}{N-1}\delta_{r}.
\end{equation}

First we deal with the case of a family with $3$ inner nodal pairs and no other marked sections, 
i.e., $a=3$ and $N=6$. The desired inequality in this case simplifies to
\begin{align*}
\psi-\delta-\frac{5}{4}\delta_{inner}-\frac{1}{4}(n(B)+\delta_{3}^{NB})-\frac{1}{8}(\delta_2-\delta_{inner}) > 0.
\end{align*}
We have an obvious inequality $2n(B) \leq \delta_{2}$. 
Thus we reduce to proving 
\begin{equation}
\label{E:06}
\psi > \frac{5}{4}\delta_{2}+\frac{9}{8}\delta_{inner}+\delta_{3}+\frac{1}{4}\delta_{3}^{NB}.
\end{equation}
For $a\geq 3$, Lemma \ref{L:inner-sections} gives 
\[
\psi \geq 4\delta_{inner}+ \delta_{tacn}+3\delta_3^{NB}=3\delta_{inner}+\delta_{2}+3\delta_3^{NB}.
\]
Combining this with the standard relation $5\psi=8\delta_{2}+9\delta_{3}$ gives
\[
8\psi\geq 9\delta_{inner}+11\delta_{2}+9\delta_{3}+9\delta_3^{NB}.
\]
This clearly implies \eqref{E:06} as desired.

Next, we consider the case of $N\geq 7$. 
In this case, every inner nodal or cuspidal bridge is adjacent to a node from $\sum_{r\geq 3} \delta_r$. As 
a result, we have $n(B)+c(B) \leq 2\sum_{r\geq 3} \delta_r$.
Furthermore, $\frac{1}{4}\delta_{2}^{CB}+\frac{1}{8}\delta_{tacn} +\frac{1}{4}\delta_{inner} \leq \frac{1}{4}\delta_2$
(because a node from $\delta_2$ can contribute only to one of the $\delta_{inner}$, $\delta_{tacn}$, or $\delta_{2}^{CB}$).
Hence we reduce to proving
\begin{equation}\label{E:07}
\psi+\frac{5}{4} \psi_{cusp}-\frac{5}{4}\delta_2 - \delta_{inner} - \frac{3}{2}\sum_{r\geq 3} \delta_r -\frac{1}{4}\delta_{3}^{NB} > 0
\end{equation}

We combine the inequality of Lemma \ref{L:inner-sections} with 
the standard relation \eqref{E:standard-relation}, and the obvious $\psi \geq \psi_{inner}$
to obtain 
\[
3\left(\psi-\sum_{r\geq 2} \frac{r(N-r)}{N-1}\delta_r \right)+\bigl(\psi_{inner}-4 \delta_{inner} - 3\delta_{3}^{NB}\bigr)+\bigl(\psi-\psi_{inner}\bigr) \geq 0.
\]
This gives the estimate
\[
4\psi\geq 4\delta_{inner}+\frac{6(N-2)}{N-1}\delta_{2}+\frac{9(N-3)}{N-1}\sum_{r\geq 3} \delta_{r}+ 3\delta_{3}^{NB}.
\]

Using $N\geq 7$ and $\psi_{cusp}\geq 0$, we finally get
\[
\psi+\frac{5}{4} \psi_{cusp} \geq \delta_{inner}+\frac{5}{4}\delta_2+\frac{3}{2}\sum_{r\geq 3} \delta_{r}+ \frac{3}{4}\delta_{3}^{NB}.
\]
Moreover, the equality could be achieved only if $N=7$ and $\psi-\psi_{inner}=0$
which is impossible because $\psi=\psi_{inner}$ implies that all sections are inner nodal transforms and so $N$ must be even.
Hence we have established \eqref{E:07} as desired. 

At last, we consider the case of $a=0$. Because the family $\W/B$ is non-isotrivial, we must have $N\geq 4$. 
In addition, if $N=4$, then there exists a unique family of $4$-pointed Deligne-Mumford stable rational curves.
The requisite inequality is easily verified for this family by hand. 
If $N\geq 5$, then using the inequality $2c(B)\leq \delta$, we reduce
to proving 
\begin{align*}
\psi+\frac{5}{4}\psi_{cusp}-\frac{9}{8} \delta-\frac{1}{4}\delta_{2}^{CB}-\frac{1}{8}\delta_{tacn} > 0.
\end{align*}

The standard relation \eqref{E:standard-relation} gives
\[
\psi\geq \frac{3}{2}\sum_{r\geq 2}\delta_r > \frac{11}{8}\delta_2+\frac{9}{8}\sum_{r\geq 3}\delta_r 
>  \frac{9}{8}\delta + \frac{1}{4}\delta_2.
\]
Finally, the inequality $\delta_{2} \geq \delta_{2}^{CB}+\delta_{tacn}$ gives the desired result.

This completes the proof of Theorem \ref{T:main-positivity} (b).

\appendix
\section{} \label{appendix}
In this appendix, we give examples of algebraic stacks including moduli stacks of curves which fail to have a good moduli space owing to a failure of conditions (1a), (1b), and (2) in Theorem \ref{T:general-existence}.  Note that there is an obviously necessary topological condition for a stack to admit a good moduli space, namely that every $\CC$-point has a unique isotrivial specialization to a closed point, and each of our examples satisfies this condition. The purpose of these examples is to illustrate the more subtle kinds of stacky behavior that can obstruct the existence of good moduli spaces.

\subsubsection*{Failure of condition (1a) in Theorem \ref{T:general-existence}}

\begin{example}  Let $\cX = [X/\ZZ_2]$ be the quotient stack where $X$ is the non-separated affine line and $\ZZ_2$ acts on $X$ by swapping the origins and fixing all other points.  The algebraic stack clearly satisfies condition (1b) and (2).  Then there is an \'etale, affine morphism $\AA^1 \to \cX$ which is stabilizer preserving at the origin but is not stabilizer preserving in an open neighborhood.  The algebraic stack $\cX$ does not admit a good moduli space.
\end{example}

While the above example may appear entirely pathological, we now provide two natural moduli stacks similar to this example.

\begin{example}
Consider the Deligne-Mumford locus $\cX \subseteq [\Sym^4 \PP^1 / \PGL_2]$ of unordered tuples $(p_1, p_2, p_3, p_4)$ where at least three points are distinct.  Consider the family  $(0, 1, \lambda, \infty)$ with $\lambda \in \PP^1$.  When $\lambda \notin \{ 0,1, \infty\}$, $\Aut(0,1, \lambda, \infty) \cong \ZZ/2\ZZ \times \ZZ/2\ZZ$; indeed, if $\sigma \in \PGL_2$ is the unique element such that $\sigma(0)=\infty$, $\sigma(\infty)=0$ and $\sigma(1) = \lambda$, then $\sigma([x,y]) = [y,\lambda x]$ so that $\sigma(\lambda)=1$ and therefore $\sigma \in \Aut(0,1, \lambda, \infty)$.  Similarly, there is an element $\tau$ which acts via $0 \stackrel{\tau}{\leftrightarrow} 1$, $\lambda \stackrel{\tau}{\leftrightarrow} \infty$ and an element $\alpha$ which acts via $0 \stackrel{\alpha}{\leftrightarrow} \lambda$, $1 \stackrel{\tau}{\leftrightarrow} \infty$.  However, if $\lambda \in \{0,1,\infty\}$, $\Aut(0,1, \lambda, \infty) \cong \ZZ/2\ZZ$.

Therefore if $x=(0,1,\infty, \infty)$, any \'etale morphism $f\co  [\Spec A/ \ZZ_2] \to \cX$, where $\Spec A$ is a $\ZZ_2$-equivariant algebraization of the deformation space of $x$, will be stabilizer preserving at $x$ but not in any open neighborhood.  This failure of condition (1a) here is due to the fact that automorphisms of the generic fiber do not extend to the special fiber.  The algebraic stack $\cX$ does not admit a good moduli space but we note that if one enlarges the stack $\cX$ to  $[(\Sym^4 \PP^1)^{\ss} / \PGL_2]$ by including the point $(0,0,\infty,\infty)$, there does exist a good moduli space.

\end{example}

\begin{example}
Let $\cV_{2}$ be the stack of all reduced, connected curves of genus 2, and let $[C] \in \cV_{2}$ denote a cuspidal curve whose pointed normalization is a generic 1-pointed smooth elliptic curve $(E,p)$. We will show that any Deligne-Mumford open neighborhood $\cM \subset \cV_{2}$ of $[C]$ is non-separated and fails to satisfy condition (1a).

Note that $\Aut(C)=\Aut(E,p)=\ZZ/2\ZZ$. Thus, to show that no \'etale neighborhood 
$$[\Def(C)/\Aut(C)] \rightarrow \cM$$ can be stabilizer preserving where $\Def(C) = \Spec A$ is an $\Aut(C)$-equivariant algebraized miniversal deformation space, it is sufficient to exhibit a family $\cC \rightarrow \Delta$ whose special fiber is $C$, and whose generic fiber has automorphism group $\ZZ/2\ZZ \times \ZZ/2\ZZ$. To do this, let $C'$ be the curve obtained by nodally gluing two identical copies of $(E,p)$ along their respective marked points. Then $C'$ admits an involution swapping the two components, and a corresponding degree 2 map $C' \rightarrow E$ ramified over the single point $p$. We may smooth $C'$ to a family $\cC' \rightarrow \Delta$ of smooth double covers of $E$, simply by separating the ramification points. By \cite[Lemma 2.12]{smyth_elliptic1}, there exists a birational contraction 
$\cC' \rightarrow \cC$ contracting one of the two copies of $E$ in the central fiber to a cusp. The family $\cC \rightarrow \Delta$ now has the desired properties; the generic fiber has both a hyperelliptic and bielliptic involution while the central fiber is $C$.
\end{example}

\subsubsection*{Failure of condition (1b) in Theorem \ref{T:general-existence} }

\begin{example}
Let $\cX = [\AA^2 \setminus 0 / \GG_m]$ where $\GG_m$ acts via $t \cdot (x,y) = (x,ty)$.  Let $\cU = \{y \neq 0 \}  = [\Spec \CC[x,y]_y / \GG_m] \subseteq \cX$.   Observe that the point $(0,1)$ is closed in $\cU$ and $\cX$.  Then the open immersion $f\co  \cU \to \cX$ has the property that $f(0,1) \in \cX$ is closed while for $x \neq 0$, $(x,1) \in \cU$ is closed but $f(x,1) \in \cX$ is not closed.  In other words, $f \co \cU \to \cX$ does not send closed points to closed points and, in fact, there is no \'etale neighborhood $\cW \to \cX$ of $(0,1)$ which sends closed points to closed points.  The algebraic stack $\cX$ does not admit a good moduli space.
\end{example}

\begin{example}
Let $\cM=\Mg{g} \cup \cM^1 \cup \cM^2$, where $\cM^1$ consists of all curves of arithmetic genus $g$ with a single cusp and smooth normalization, and 
$\cM^2$ consist of all curves of the form $D \cup E_0$, where $D$ is a smooth curve of genus $g-1$ and $E_0$ is a rational cuspidal curve attached to $C$ nodally. 

We observe that $\cM$ has the following property: If $C=D \cup E$, where $D$ is a curve of genus $g-1$ and $E$ is an elliptic tail, then $[C] \in \cM$ is a closed point if and only if $D$ is singular. Indeed, if $D$ is smooth, then $C$ admits an isotrivial specialization to $D \cup E_0$, where $E_0$ is a rational cuspidal tail.
Now consider any curve of the form $C=D \cup E$ where $D$ is a singular curve of genus $g-1$ and $E$ is a smooth elliptic tail, and, for simplicity, assume that $D$ has no automorphisms. We claim that there is no \'etale neighborhood of the form $[\Def(C)/\Aut(C)] \rightarrow \cM$, which sends closed points to closed points. Indeed, curves of the form $D' \cup E$ where $D'$ is smooth will appear in any such neighborhood and will obviously be closed in $[\Def(C)/\Aut(C)]$ (since this is a Deligne-Mumford stack), but are not closed in $\cM$.
\end{example}

\subsubsection*{Failure of condition (2) in Theorem \ref{T:general-existence}}

\begin{example} \label{example-nodal-cubic}
Let  $\cX = [X/\GG_m]$ where $X$ is the nodal cubic curve with the $\GG_m$-action given by multiplication.  Observe that $\cX$ is an algebraic stack with two points -- one open and one closed.  But $\cX$ does not admit a good moduli space; if it did, $\cX$ would necessarily be cohomologically affine and consequently $X$ would be affine, a contradiction.  However, there is an \'etale and affine (but not finite) morphism $\cW=[\Spec (\CC[x,y]/xy)  / \GG_m] \to \cX$ where $\GG_m = \Spec \CC[t,t^{-1}]$ acts on $\Spec \CC[x,y]/xy$ via $t \cdot (x,y) = (tx, t^{-1}y)$ which is stabilizer preserving and sends closed points to closed points;  however, the two projections $\cW \times_{\cX} \cW \rrarrows \cW$ do not send closed points to closed points.

To realize this \'etale local presentation concretely, we may express $X=Y/\ZZ_2$ where $Y$ is the union of two $\PP^1$'s with coordinates $[x_1,y_1]$ and $[x_2,y_2]$ glued via nodes at $p=0_1=0_2$ and $q=\infty_1 = \infty_2$ by the action of $\ZZ/2\ZZ$ where $-1$ acts via $[x_1,y_1] \leftrightarrow [y_2, x_2]$.  There is a  $\GG_m$-action on $Y$ given by $t \cdot [x_1,y_1] = [tx_1,y_1]$ and $t \cdot [x_2, y_2] = [x_1, ty_1]$ which descends to the $\GG_m$-action on $X$.  There is a finite \'etale morphism $[Y/\GG_m] \to \cX$, but $[Y/\GG_m]$ is not cohomologically affine. 
 If we instead, consider the open substack  $\cW = [ (Y \setminus \{p\} ) / \GG_m]$, then $\cW \cong [\Spec (\CC[x,y]/xy)  / \GG_m]$ is cohomologically affine and there is an \'etale representable morphism $f\co  \cW \to \cX$.  It is easy to see that
 $$\cW \times_{\cX} \cW \cong 
 [(Y \setminus \{p\})/\GG_m] \coprod [(Y \setminus \{p,q\}) / \GG_m] 
 $$
But  $[(Y \setminus \{p,q\}) / \GG_m] \cong \Spec \CC \coprod \Spec \CC$ and the projections $p_1, p_2\co  \cW \times_{\cX} \cW \to \cW$ correspond to the inclusion of the two open points into $\cW$ which clearly don't send closed points to closed points.
\end{example}

\begin{example}
Consider the algebraic stack $\cM_{g}^{\ss,1}$ of Deligne-Mumford semistable curves $C$ where any rational subcurve connected to $C$ at only two points is smooth.
Let $D_0$ be the Deligne-Mumford semistable curve $D' \cup \PP^1$, obtained by gluing a $\PP^1$ to a smooth genus $g-1$ curve $D'$ at two points $p,q$. For simplicity, let us assume that $\Aut(D',p,q)=0$, so $\Aut(D_0)=\GG_m$.   There is a unique isomorphism class of curves which isotrivially specializes to $D_0$, namely the nodal curve $D_1$ obtained by gluing $D$ at $p$ and $q$. Thus, $\overline{ \{ [D_1] \} }$ has two points -- one open and one closed.  In fact, 
$\overline{ \{ [D_1] \} }$ is isomorphic to the quotient stack $[X/\GG_m]$ 
considered in Example \ref{example-nodal-cubic}.
\end{example}

\newcommand{\netarxiv}[1]{\href{http://arxiv.org/abs/#1}{{\sffamily{\texttt{arXiv:#1}}}}}

\renewcommand{\bibliofont}{\normalfont\small} 
\bibliography{references-flip}{}

\providecommand{\bysame}{\leavevmode\hbox to3em{\hrulefill}\thinspace}
\providecommand{\MR}{\relax\ifhmode\unskip\space\fi MR }
\providecommand{\MRhref}[2]{%
  \href{http://www.ams.org/mathscinet-getitem?mr=#1}{#2}
}
\providecommand{\href}[2]{#2}
\begin{thebibliography}{BCHM10}

\bibitem[AC98]{AC}
Enrico Arbarello and Maurizio Cornalba, \emph{Calculating cohomology groups of
  moduli spaces of curves via algebraic geometry}, Inst. Hautes \'Etudes Sci.
  Publ. Math. (1998), no.~88, 97--127 (1999).

\bibitem[ACG11]{GAC}
Enrico Arbarello, Maurizio Cornalba, and Pillip~A. Griffiths, \emph{Geometry of
  algebraic curves. {V}olume {II}}, Grundlehren der Mathematischen
  Wissenschaften [Fundamental Principles of Mathematical Sciences], vol. 268,
  Springer, Heidelberg, 2011, With a contribution by Joseph Daniel Harris.

\bibitem[AFS14]{afs}
Jarod Alper, Maksym Fedorchuk, and David~Ishii Smyth, \emph{Singularities with
  $\mathbb{G}_m$-action and the log minimal model program for
  $\overline{M}_g$}, Journal f\"{u}r die reine und angewandte Mathematik
  (2014),
  \href{http://dx.doi.org/10.1515/crelle-2014-0063}{DOI:10.1515/crelle-2014-0063}.

\bibitem[Alp10]{alper_quotient}
Jarod Alper, \emph{On the local quotient structure of {A}rtin stacks}, J. Pure
  Appl. Algebra \textbf{214} (2010), no.~9, 1576--1591.

\bibitem[Alp12]{alper_local}
\bysame, \emph{Local properties of good moduli spaces}, Tohoku Math. J.
  \textbf{64} (2012), no.~9, 105--123.

\bibitem[{Alp}13]{alper_good}
Jarod {Alper}, \emph{Good moduli spaces for {A}rtin stacks}, Ann. Inst. Fourier
  (Grenoble) \textbf{63} (2013), no.~6, 2349--2402.

\bibitem[Alp14]{alper_adequate}
Jarod Alper, \emph{Adequate moduli spaces and geometrically reductive group
  schemes}, Algebraic Geometry (2014), to appear. Available at
  \netarxiv{1005.2398}.

\bibitem[BCHM10]{BCHM}
Caucher Birkar, Paolo Cascini, Christopher~D. Hacon, and James
  {M\textsuperscript{c}Kernan}, \emph{Existence of minimal models for varieties
  of log general type}, J. Amer. Math. Soc. \textbf{23} (2010), no.~2,
  405--468.

\bibitem[CH88]{CH}
Maurizio Cornalba and Joe Harris, \emph{Divisor classes associated to families
  of stable varieties, with applications to the moduli space of curves}, Ann.
  Sci. \'Ecole Norm. Sup. (4) \textbf{21} (1988), no.~3, 455--475.

\bibitem[CMJL12]{CMJL1}
Sebastian Casalaina-Martin, David Jensen, and Radu Laza, \emph{The geometry of
  the ball quotient model of the moduli space of genus four curves}, Compact
  moduli spaces and vector bundles, Contemp. Math., vol. 564, Amer. Math. Soc.,
  Providence, RI, 2012, pp.~107--136.

\bibitem[CMJL14]{CMJL2}
\bysame, \emph{Log canonical models and variation of {GIT} for genus four
  canonical curves}, J. Algebraic Geom. (2014),
  \href{http://dx.doi.org/10.1090/S1056-3911-2014-00636-6}{DOI:
  10.1090/S1056-3911-2014-00636-6}.

\bibitem[CML13]{radu-yano}
Sebastian Casalaina-Martin and Radu Laza, \emph{Simultaneous semi-stable
  reduction for curves with {ADE} singularities}, Trans. Amer. Math. Soc.
  \textbf{365} (2013), no.~5, 2271--2295.

\bibitem[Cor93]{cornalba}
M.~D.~T. Cornalba, \emph{On the projectivity of the moduli spaces of curves},
  J. Reine Angew. Math. \textbf{443} (1993), 11--20.

\bibitem[DH98]{dolgachev-hu}
Igor~V. Dolgachev and Yi~Hu, \emph{Variation of geometric invariant theory
  quotients}, Inst. Hautes \'Etudes Sci. Publ. Math. (1998), no.~87, 5--56,
  With an appendix by Nicolas Ressayre.

\bibitem[DM69]{deligne-mumford}
P.~Deligne and D.~Mumford, \emph{The irreducibility of the space of curves of
  given genus}, Inst. Hautes \'Etudes Sci. Publ. Math. (1969), no.~36, 75--109.

\bibitem[Fed12]{Fed4}
Maksym Fedorchuk, \emph{The final log canonical model of the moduli space of
  stable curves of genus 4}, Int. Math. Res. Not. IMRN (2012), no.~24,
  5650--5672.

\bibitem[FS13]{FSgenus5}
Maksym Fedorchuk and David~Ishii Smyth, \emph{Stability of genus five canonical
  curves}, A Celebration of Algebraic Geometry (Brendan Hassett, James
  {M\textsuperscript{c}Kernan}, Jason Starr, and Ravi Vakil, eds.), Clay
  Mathematics Proceedings, vol.~18, American Mathematical Society, Providence,
  RI, 2013, pp.~x+599.

\bibitem[Gie82]{gieseker}
D.~Gieseker, \emph{Lectures on moduli of curves}, Tata Institute of Fundamental
  Research Lectures on Mathematics and Physics, vol.~69, Published for the Tata
  Institute of Fundamental Research, Bombay, 1982.

\bibitem[Har84]{harris-smooth}
Joe Harris, \emph{Families of smooth curves}, Duke Math. J. \textbf{51} (1984),
  no.~2, 409--419.

\bibitem[Has05]{hassett_genus2}
Brendan Hassett, \emph{Classical and minimal models of the moduli space of
  curves of genus two}, Geometric methods in algebra and number theory, Progr.
  Math., vol. 235, Birkh\"auser Boston, Boston, MA, 2005, pp.~169--192.

\bibitem[HH09]{HH1}
Brendan Hassett and Donghoon Hyeon, \emph{Log canonical models for the moduli
  space of curves: the first divisorial contraction}, Trans. Amer. Math. Soc.
  \textbf{361} (2009), no.~8, 4471--4489.

\bibitem[HH13]{HH2}
\bysame, \emph{Log minimal model program for the moduli space of stable curves:
  the first flip}, Ann. of Math. (2) \textbf{177} (2013), no.~3, 911--968.

\bibitem[HL10]{hyeon-lee_genus3}
Donghoon Hyeon and Yongnam Lee, \emph{Log minimal model program for the moduli
  space of stable curves of genus three}, Math. Res. Lett. \textbf{17} (2010),
  no.~4, 625--636.

\bibitem[HL14]{hyeon-lee_genus4}
Donghoon Hyeon and Yongnam Lee, \emph{A birational contraction of genus 2 tails
  in the moduli space of genus 4 curves {I}}, Int. Math. Res. Not. IMRN (2014),
  no.~13, 3735--3757.

\bibitem[HM10]{hyeon-morrison}
Donghoon Hyeon and Ian Morrison, \emph{Stability of tails and 4-canonical
  models}, Math. Res. Lett. \textbf{17} (2010), no.~4, 721--729.

\bibitem[KM97]{keel-mori}
Se{\'a}n Keel and Shigefumi Mori, \emph{Quotients by groupoids}, Ann. of Math.
  (2) \textbf{145} (1997), no.~1, 193--213.

\bibitem[KM13]{KMc}
Se{\'a}n Keel and James {M\textsuperscript{c}Kernan}, \emph{Contractible
  extremal rays on {$\overline M_{0,n}$}}, Handbook of moduli. {V}ol. {II},
  Adv. Lect. Math. (ALM), vol.~25, Int. Press, Somerville, MA, 2013,
  pp.~115--130.

\bibitem[Kol90]{kollar-projectivity}
J{\'a}nos Koll{\'a}r, \emph{Projectivity of complete moduli}, J. Differential
  Geom. \textbf{32} (1990), no.~1, 235--268.

\bibitem[Laz04]{Laz1}
Robert Lazarsfeld, \emph{Positivity in algebraic geometry. {I}}, Ergebnisse der
  Mathematik und ihrer Grenzgebiete. 3. Folge. A Series of Modern Surveys in
  Mathematics, vol.~48, Springer-Verlag, Berlin, 2004.

\bibitem[Log03]{logan-kodaira}
Adam Logan, \emph{The {K}odaira dimension of moduli spaces of curves with
  marked points}, Amer. J. Math. \textbf{125} (2003), no.~1, 105--138.

\bibitem[Lun73]{luna}
Domingo Luna, \emph{Slices \'etales}, Sur les groupes alg\'ebriques, Soc. Math.
  France, Paris, 1973, pp.~81--105. Bull. Soc. Math. France, Paris, M\'emoire
  33.

\bibitem[Mor09]{morrison-git}
Ian Morrison, \emph{G{IT} constructions of moduli spaces of stable curves and
  maps}, Surveys in differential geometry. {V}ol. {XIV}. {G}eometry of
  {R}iemann surfaces and their moduli spaces, Surv. Differ. Geom., vol.~14,
  Int. Press, Somerville, MA, 2009, pp.~315--369.

\bibitem[Mum65]{git}
David Mumford, \emph{Geometric invariant theory}, Ergebnisse der Mathematik und
  ihrer Grenzgebiete, Neue Folge, Band 34, Springer-Verlag, Berlin, 1965.

\bibitem[Sch91]{schubert}
David Schubert, \emph{A new compactification of the moduli space of curves},
  Compositio Math. \textbf{78} (1991), no.~3, 297--313.

\bibitem[Smy11a]{smyth_elliptic1}
David~Ishii Smyth, \emph{Modular compactifications of the space of pointed
  elliptic curves {I}}, Compos. Math. \textbf{147} (2011), no.~3, 877--913.

\bibitem[Smy11b]{smyth_elliptic2}
\bysame, \emph{Modular compactifications of the space of pointed elliptic
  curves {II}}, Compos. Math. \textbf{147} (2011), no.~6, 1843--1884.

\bibitem[Sto08]{stoppino}
Lidia Stoppino, \emph{Slope inequalities for fibred surfaces via {GIT}}, Osaka
  J. Math. \textbf{45} (2008), no.~4, 1027--1041.

\bibitem[Tha96]{thaddeus}
Michael Thaddeus, \emph{Geometric invariant theory and flips}, J. Amer. Math.
  Soc. \textbf{9} (1996), no.~3, 691--723.

\bibitem[vdW10]{vdW}
Frederick van~der Wyck, \emph{Moduli of singular curves and crimping}, Ph.D.
  thesis, Harvard, 2010.

\bibitem[Xia87]{xiao}
Gang Xiao, \emph{Fibered algebraic surfaces with low slope}, Math. Ann.
  \textbf{276} (1987), no.~3, 449--466. \MR{875340 (88a:14046)}

\end{thebibliography}
\bibliographystyle{amsalpha}


\end{document}